\documentclass[12pt,a4paper,twoside]{book}

\usepackage{epsf}
\usepackage{epsfig}
\usepackage{graphicx}
\usepackage{graphics}

\usepackage{amsmath}
\usepackage{amssymb}
\usepackage{amsfonts}
\usepackage{pifont}
\usepackage{gensymb}

\newtheorem{teo}{Theorem}[chapter]
\newtheorem{coro}{Corollary}[chapter]
\newtheorem{ex}{Example}[chapter]
\newtheorem{obs}{Remark}[chapter]
\newtheorem{lema}{Lemma}[chapter]
\newtheorem{defi}{Definition}[chapter]
\newtheorem{prop}{Proposition}[chapter]
\newenvironment{proof}[1][Proof]{\noindent\textbf{#1.} }{\
\rule{0.5em}{0.5em}}
\pagestyle{myheadings}

\newcommand{\C}{{\mathbb C}}
\newcommand{\R}{{\mathbb R}}
\newcommand{\Q}{{\mathbb Q}}
\newcommand{\Z}{{\mathbb Z}}
\newcommand{\N}{{\mathbb N}}
\newcommand{\D}{{\mathbb D}}
\newcommand{\Pp}{{\mathbb P}}

\newcommand{\fol}{{\mathcal F}}

\newcommand{\tXX}{{\widetilde{X}}}

\newcommand{\wdc}{{\widetilde{\mathbb C}}}

\newcommand{\cpd}{{{\mathbb C}{\mathbb P}\, (2)}}
\newcommand{\spfl}{{\rm Fol}\, (\mathbb{C}\mathbb{P}\, (2), d)}
\newcommand{\llz}{\overline{\overline{\zeta}}}
\newcommand{\cpum}{{{\mathbb C} {\mathbb P}\, (1)}}
\newcommand{\fracx}{{\frac{\partial}{\partial x}}}
\newcommand{\fracy}{{\frac{\partial}{\partial y}}}
\newcommand{\fracz}{{\frac{\partial}{\partial z}}}
\newcommand{\calp}{{\mathcal P}}

\newcommand{\da}{{\alpha}}
\newcommand{\db}{{\beta}}

\newcommand{\dl}{{\lambda}}
\newcommand{\de}{{\varepsilon}}

\newcommand{\partx}{{\partial /\partial x}}
\newcommand{\party}{{\partial /\partial y}}

\newcommand{\qed}{\hfill \fbox{}}

\oddsidemargin 0.9cm \evensidemargin 0cm  %two-sided
\topmargin  -0.2cm
\textheight 22.5cm
\textwidth  15cm

\begin{document}

\title{Local Theory of Holomorphic Foliations and Vector Fields}
\author{Julio C. Rebelo \hspace{0.4cm} \& \hspace{0.4cm} Helena Reis}
\date{2010.12}
\maketitle

\cleardoublepage

\centerline{\textsc{Foreword}}

\vspace{0.8cm}

\noindent These notes constitutes a slightly informal introduction to what is often called the {\it theory of (singular) holomorphic
foliations}\, with emphasis on its local aspects that may also be called {\it singularity theory of holomorphic foliations and/or vector fields}.
Though the text is unpolished, we believe to have provided detailed proofs for all the statements given here. The material is roughly split into
two parts, namely Chapters~1 and~2. Chapter~1 is primarily devoted to motivation for the theory of ``(singular) holomorphic foliations'' and
in this direction it contains several examples of these foliations appearing in situations of interest. It also contains some general background
from analytic geometry and topology that are often used in their study. This chapter is essentially of global nature so its purpose is
indeed to motivate (and to provide some for) the study of globally defined foliations/differential equations rather than the study of their
singularities that is the object of Chapter~2. Explanations for this discrepancy begin by saying that these notes are planned to be continued
in the future with the inclusion of new chapters. Besides it is interesting to point out that the (local) study of singularities already entails a
global perspective once the dimension of the ambient is at least~$3$. The simplest way to notice the existence of global foliations (possibly with
rather complicate dynamics) hidden into the structure of a singularity of a vector field defined on, say, $(\C^3, 0)$, is to perform the blow-up of
the singularity in question (cf. Section~1.4.1). In general, the blown-up foliation will induce a globally defined foliation on the resulting exceptional divisor (isomorphic to
the complex projective plane in this case). Actually this foliation depends only on the first non-zero homogeneous component of the Taylor series
of the mentioned vector field. Besides all foliations on the projective plane are recovered in this way, see ``Example 3'' in Section~1.3.

Chapter~2 is then devoted to the singularity theory for vector fields/foliations. Most of the chapter is taken up by the case of singularities defined on
$(\C^2, 0)$ where the theory has reached a high level of development, with landmark progresses being represented by the papers of Mattei-Moussu
and of Martinet-Ramis, \cite{M-M}, \cite{Ma-R}. Much of this chapter is indeed devoted to present the results of these papers. Yet we have also included
some classical and more recent material concerning singularities in higher dimensions.

In a sense these notes are vaguely reminiscent from a course the first author lectured at Stony Brook several years ago. He would like to thank the
audience of his course and specially Andre Carvalho and Misha Lyubich for their interest. The project of writing these notes, however, would never
be undertaken if it were not by the interest of Gisela Marino to whom we are most grateful. Gisela was the first to put into typing and fill in details of definitions and proofs
for most of what is now ``Chapter~2'', cf. \cite{gisela}. Unfortunately she did not wish to continue her study and left to us to complete the present material
(as well as the planned subsequent chapters).

\vspace{0.3cm}

\noindent \hspace{9.0cm} The authors

\tableofcontents{}

\chapter{Foundational Material}

We are interested in understanding the behavior of first order
ordinary differential equations in $\mathbb{C}^2$, where the
``time'' parameter is in $\mathbb{C}$.

To begin with, let us recall the main aspects of real ordinary
differential equations. Complex Ordinary Differential Equations
(ODEs) can then be obtained from real ones from a natural
``complexification'' procedure. It will be seen that complex ODEs
are closely related to singular holomorphic foliations.

%--------------------------------------------------------------------------------------------------------------------------------------------------------------------------------------------------------------------
%--------------------------------------------------------------------------------------------------------------------------------------------------------------------------------------------------------------------
%--------------------------------------------------------------------------------------------------------------------------------------------------------------------------------------------------------------------

\section{Real Ordinary Differential Equations}

Let $X:\mathbb{R}^2 \rightarrow \mathbb{R}^2$ be a vector field
given by $X(x,y)=(P(x,y),Q(x,y))$, where $P$ and $Q$ are
polynomials. The ordinary differential equation associated to this
field is:
\[
\frac{d}{dt}(x(t),y(t))=X(x(t),y(t)) \, .
\]
It can also be regarded as the following system of ODEs
\begin{equation}\label{eq1}
\left \{ \begin{array}{l}
          \frac{d}{dt}x(t)=P(x(t),y(t)) \, ,\\
          \\
          \frac{d}{dt}y(t)=Q(x(t),y(t)) \, .
          \end{array}
\right.
\end{equation}

Since the vector field is sufficiently smooth ($C^\infty$, in
fact) we may assert that given a $t_0 \in \mathbb{R}$ and
$(x_0,y_0) \in \mathbb{R}^2$, there exists a unique solution
$\phi(t)=(\phi_1(t),\phi_2(t))$ of (\ref{eq1}) defined on a
neighborhood of $t_0$, and verifying
\[
\phi(t_0)=(\phi_1(t_0),\phi_2(t_0))=(x_0,y_0) \, .
\]
Furthermore, there exists the concept of extending a local
solution to obtain a solution defined on a maximal domain, i.e, on
the ``largest possible interval''. This means that for each
$(x_0,y_0) \in \mathbb{R}^2$ there exists an interval $I(x_0,y_0)$
and a solution $\varphi$ of (\ref{eq1}) defined on $I(x_0,y_0)$
which satisfies the following conditions:
\begin{enumerate}
\item $\varphi(t_0)=(x_0,y_0)\, .$

\item If $\psi$ defined on $I\subseteq\mathbb{R}$, $t_0\in I$, is
another solution of (\ref{eq1}) verifying $\psi(t_0)=(x_0,y_0)$,
then, $I\subset I(x_0,y_0)$ and, furthermore, $\varphi |_I=\psi$.
\end{enumerate}

Let $\Omega\subseteq\mathbb{R}\times\mathbb{R}^2$ be the open set
$\{(t,x_0,y_0)\in\mathbb{R}^3; t\in I(x_0,y_0)\}$. The {\it flow}
associated to (\ref{eq1}) is defined to be:
\begin{eqnarray*}
\Phi:\Omega &\rightarrow& \mathbb{R}^2\\
(t,x_0,y_0)&\mapsto& \phi(t) \, ,
\end{eqnarray*}
where $\phi(t)$ is the solution of (\ref{eq1}) such that
$\phi(t_0)=(x_0,y_0)$.

Notice that the flow may not be {\it complete}. That is, it may
not be defined for all $t\in \mathbb{R}$, since $I(x_0,y_0)$ is
not necessarily the whole real line. On the other hand, it is easy
to check that, when $I(x_0,y_0)\neq\mathbb{R}$ then
$\Phi(t,x_0,y_0)$ tends to infinity as $t$ approaches the
extremities of $I(x_0,y_0)$, for a fixed $(x_0,y_0)$. Here we say
that $\Phi(t,x_0,y_0)$ ``tends to infinity'' in the sense that it
leaves every compact set contained in the domain of definition of
$X$.

\begin{obs}\label{obs1}
{\rm The above discussion actually holds for every
regular (say $C^1$) vector field on arbitrary manifolds. Also it
is well-known that, if the orbits of a regular vector field are
contained on a compact set, then the maximal interval of
definition for the corresponding solutions is, indeed,
$\mathbb{R}$. In other words the flow generated by this vector
field is complete. In particular every regular vector field
defined on a compact manifold (without boundary) is complete.}
\end{obs}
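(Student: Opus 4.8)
The plan is to reduce everything to the local existence-and-uniqueness theorem, which is the only analytic input and which is purely local, hence chart-independent. First I would record that for a $C^1$ vector field $X$ on a manifold $M$, working in a coordinate chart reduces the initial value problem to the Euclidean case treated above, so Picard--Lindel\"of gives a unique local solution through each point; patching these by uniqueness over nested intervals (a one-line Zorn's lemma argument works) produces, for each $p\in M$, a solution $\varphi_p$ defined on a maximal interval $I(p)\ni t_0$, together with the flow $\Phi$ on the open set $\Omega=\{(t,p): t\in I(p)\}\subseteq\R\times M$. This establishes the first assertion.

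The heart of the matter is the following escape statement, which also upgrades the ``tends to infinity'' remark preceding this one to the manifold setting: if $K\subseteq M$ is compact and the positive (resp.\ negative) orbit $\varphi_p((t_0,\beta))$ is entirely contained in $K$, where $\beta=\sup I(p)$, then $\beta=+\infty$. The key quantitative ingredient is a \emph{uniform time of existence}: there is $\delta>0$ such that for every $q\in K$ the solution $\varphi_q$ is defined at least on $(t_0-\delta,t_0+\delta)$. To get this I would cover $K$ by finitely many coordinate charts and, in each chart, apply Picard--Lindel\"of on a slightly smaller compact piece, on which the $C^1$ field has a finite sup-norm bound and a finite Lipschitz constant; taking the minimum of the resulting existence times over the finite subcover gives the desired $\delta$. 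Granting this, suppose for contradiction that $\beta<\infty$. Pick $t_1\in(\beta-\delta,\beta)$; then $q:=\varphi_p(t_1)\in K$, so $\varphi_q$ is defined on $(t_1-\delta,t_1+\delta)$, an interval whose right endpoint $t_1+\delta$ exceeds $\beta$. By uniqueness, a suitable time-translate of $\varphi_q$ extends $\varphi_p$ past $\beta$, contradicting the maximality of $I(p)$. Hence $\beta=+\infty$, and the symmetric argument gives $\inf I(p)=-\infty$, so $I(p)=\R$.

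With the escape statement in hand the two remaining assertions are immediate. If every orbit of $X$ is contained in one fixed compact set $K$, the above applies with this $K$ to every $p$, so $I(p)=\R$ for all $p$ and the flow is complete. In particular, if $M$ itself is compact without boundary, take $K=M$: every orbit is trivially contained in $K$, so $X$ is complete.

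I expect the main obstacle to be the uniform-existence-time step: one must be careful that the norm and Lipschitz bounds are taken on compact sets lying strictly inside the charts, so that the Picard iteration has room to converge, and that the finitely many chart-wise existence times can legitimately be combined. In particular, one should note that a solution starting in $K$ may leave the chart in which it started, which is precisely why a finite covering together with a minimum over it is needed rather than a single chart.
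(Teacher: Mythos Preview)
Your argument is correct and is the standard one. Note, however, that the paper does not actually prove this remark: it is stated as a well-known fact without any argument, so there is no ``paper's own proof'' to compare against. Your escape-lemma route via a uniform local existence time on a compact set is exactly the classical proof one would supply if asked to justify the remark.
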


Let us now return to our polynomial vector field $X=(P,Q)$. What
precedes implies that the image of $\Phi$ decomposes
$\mathbb{R}^2$ into a set of curves (orbits of $X$) along with the
singular points of $X$. To develop this remark further, let us
recall the definition of (regular, real) foliations.

\begin{defi}
\label{foliationdef} Consider a manifold $M$ of real dimension
$n$. A foliation $\fol$ of class $C^r$ and of dimension $k$ $(1 \leq
k < n)$ on $M$ consists of a distinguished coordinate covering $\{
U_i ,\psi_i \}$, $i \in I$, of $M$ satisfying the conditions
below:
\begin{enumerate}

\item If $i \in I$, then $\psi_i (U_i) = U_i^1 \times U_i^2
\subset \R^k \times \R^{n-k}$, where $U_i^1, U_i^2$ are open discs
of $\R^k$ and $\R^{n-k}$ respectively.

\item If $i,j \in I$ and $U_i \cap U_j \neq \emptyset$, then the
change of coordinates $\psi_i \circ \psi_j^{-1} : \psi_j (U_i \cap
U_j) \rightarrow \psi_i (U_i \cap U_j)$ has the form $\psi_i \circ
\psi_j^{-1} (x,y) = (h_1(x,y) , h_2(y))$ where $x \in \R^k$ and $y
\in \R^{n-k}$.
\end{enumerate}
\end{defi}

Naturally a distinguished coordinate covering for a foliation
$\fol$ can automatically be enlarged to a maximal {\it foliated
atlas}. A distinguished coordinate $\psi_i : U_i \rightarrow \R^k
\times \R^{n-k}$ is sometimes called {\it a foliated chart}, {\it
a foliated coordinate} or even {\it a trivializing coordinate for
$\fol$}. Given a foliated chart $\psi$ as above, a set of the form
$\psi_i^{-1} ( U_i^1 \times {\rm cte})$ is called a {\it plaque}.
A {\it plaque chain}\, is a sequence of plaques $\alpha_1, \ldots
,\alpha_l$ such that $\alpha_i \cap \alpha_{i+1} \neq \emptyset$
for every $i \in \{ 1, \ldots ,l-1\}$. We then introduce an
equivalence relation between points of $M$ by stating that $p \in
M$ is equivalent to $q \in M$ if there is a plaque chain
$\alpha_1, \ldots ,\alpha_l$ such that $p \in \alpha_1$ and $q \in
\alpha_l$. The classes of equivalence of this relation are said to
be the {\it leaves}\, of $\fol$.

\begin{obs}
\label{obsfolholo} {\rm If $M$ is a complex manifold and the
changes of coordinates for a foliated atlas are, in fact,
holomorphic diffeomorphisms then we have a holomorphic foliation.}
\end{obs}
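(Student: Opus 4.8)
The statement is close to a definition, so the plan is first to say precisely what ``we have a holomorphic foliation'' should mean and then to carry out the (short) verification this requires. I would read the hypothesis as follows: $M$ is a complex manifold of complex dimension $n$, the foliated charts $\psi_i\colon U_i\to U_i^1\times U_i^2$ now take values in polydiscs $U_i^1\subset\C^k$ and $U_i^2\subset\C^{n-k}$, and the transition maps have the form $\psi_i\circ\psi_j^{-1}(x,y)=(h_1(x,y),h_2(y))$ with $h_1,h_2$ holomorphic. Under this reading the content of the remark is the conjunction of three assertions: (i) the data is in particular a $C^\infty$ (indeed $C^\omega$) foliation of real dimension $2k$ on the real $2n$-manifold underlying $M$, in the sense of Definition~\ref{foliationdef}; (ii) every leaf carries a canonical complex structure of dimension $k$ making the inclusion into $M$ a holomorphic immersion; and (iii) the tangent distribution $T\fol\subset TM$ is a holomorphic subbundle of the holomorphic tangent bundle of $M$.

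Step (i) I would dispose of immediately: identifying $\C^m$ with $\R^{2m}$, a biholomorphism between open subsets of $\C^m$ is a $C^\infty$ (even real-analytic) diffeomorphism, and the triangular shape $(h_1(x,y),h_2(y))$ is preserved when read in real coordinates, since the ``$h_2$'' block still does not involve the ``$x$'' variables. Hence the given atlas is literally a foliated atlas in the sense of Definition~\ref{foliationdef} with $r=\infty$, $k$ replaced by $2k$ and $n$ by $2n$; in particular plaques, plaque chains and leaves are already available.

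For (ii), the plan is to take the plaques through the points of a fixed leaf $L$ as charts: if $\alpha=\psi_i^{-1}(U_i^1\times\{c\})$ is such a plaque, then $\psi_i|_\alpha$ composed with the projection onto the first factor is a bijection $\alpha\to U_i^1\subset\C^k$, and these will be the holomorphic coordinates on $L$. On the overlap of two plaques $\alpha\subset U_i$, $\alpha'\subset U_j$ of $L$ the change of coordinates is $x'\mapsto h_1(x',c')$ for the constant $c'$ singled out by $\alpha'$, which is holomorphic; and it is a biholomorphism because the inverse cocycle $\psi_j\circ\psi_i^{-1}$ is itself of triangular form $(g_1(x,y),g_2(y))$, so its first block $x\mapsto g_1(x,c)$ provides a holomorphic inverse. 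Thus $L$ becomes a connected complex $k$-manifold, and in any foliated chart the inclusion $L\hookrightarrow M$ is $x\mapsto(x,c)$, a holomorphic immersion. Assertion (iii) then follows at once: in a foliated chart $T\fol$ is spanned by the holomorphic coordinate fields $\partial/\partial x_1,\dots,\partial/\partial x_k$, and the triangular transitions change this frame by the holomorphic Jacobian block $\partial h_1/\partial x$, so the local frames patch to a rank-$k$ holomorphic subbundle.

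I do not expect a real obstacle; this is essentially a bookkeeping lemma. The one point needing genuine, if routine, care is the compatibility check in (ii) — that the partial maps $x'\mapsto h_1(x',c')$ are not merely holomorphic but invertible with holomorphic inverse — which is exactly where one uses that the full cocycle $\psi_i\circ\psi_j^{-1}$ is a biholomorphism of triangular form, so that its inverse is again triangular and holomorphic. A secondary, purely expository care point is to keep the real-versus-complex dimension bookkeeping straight and to recall the standard fact that, although the leaf topology is finer than the subspace topology, a leaf is still Hausdorff and second countable, hence an honest complex manifold.
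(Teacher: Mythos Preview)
Your proposal is correct, but you should be aware that the paper treats this remark as purely definitional and gives no proof whatsoever: it is a one-sentence observation inserted between Definition~\ref{foliationdef} and the ensuing discussion of the Flow Box Theorem, intended simply to tell the reader what the phrase ``holomorphic foliation'' will mean. Your write-up unpacks the content of that definition into three explicit claims --- that one has an underlying real $C^\infty$ foliation, that each leaf inherits a complex structure making the inclusion a holomorphic immersion, and that $T\fol$ is a holomorphic subbundle --- and verifies each by direct inspection of the triangular transition maps. All of this is sound, and the one point you flag as needing care (that the block $x'\mapsto h_1(x',c')$ is biholomorphic because the inverse cocycle is again triangular and holomorphic) is exactly the right place to pause. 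So there is no gap; you have simply supplied the routine verification that the paper leaves implicit.
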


With this terminology, we return to the vector field $X$. A
standard fact about Ordinary Differential Equations is the
so-called Flow Box Theorem. It states the existence of a
diffeomorphism $R$ defined on a neighborhood $V$ of a non-singular
point of $X$, such that $R_* X=e_1$, where $(e_1,\ldots,e_n)$
denotes the canonical basis of $\mathbb{R}^n$. Once again, this
result holds for $C^r$ vector fields ($r\geq 1$) and the resulting
diffeomorphism $R$ has the same regularity $C^r$ of the vector
field.

In particular, away from the singular points of $X$, the Flow Box
Theorem implies that solutions of ODEs are the leaves of a
foliation of dimension~$1$. In this sense, we say that the image
of $\Phi$ defines a {\it singular foliation} on $\mathbb{R}^2$. We
shall make this definition more formal in the sequel.

%--------------------------------------------------------------------------------------------------------------------------------------------------------------------------------------------------------------------
%--------------------------------------------------------------------------------------------------------------------------------------------------------------------------------------------------------------------
%--------------------------------------------------------------------------------------------------------------------------------------------------------------------------------------------------------------------

\section{Complex Ordinary Differential Equations}

We now wish to extend these notions to the complex case. The main
idea is to identify $\mathbb{C}^{n}$ with $\mathbb{R}^{2n}$ by
considering a {\it complex structure} on $\mathbb{R}^{2n}$. To do
this, we must define an automorphism of $\mathbb{R}^{2n}$ that
plays the role of the multiplication by $\sqrt{-1}$ in a complex
vector space. More precisely, a {\it complex structure}\, on
$\mathbb{R}^{2n}$ consists of an automorphism
$J:\mathbb{R}^{2n}\rightarrow\mathbb{R}^{2n}$ satisfying
$J^2=-Id$. In the sequel we are going to use the ``standard''
complexification, where $J$ is given by
$J(x_1,y_1,\ldots,x_n,y_n)=(-y_1,x_1,\ldots,-y_n,x_n)$.

Now, let $X:\mathbb{C}^2 \rightarrow \mathbb{C}^2$ given by
$X(z_1,z_2)=(P(z_1,z_2),Q(z_1,z_2))$ be a polynomial vector field.
The complex ODE associated to this field is
\begin{equation}\label{eq2}
\left \{ \begin{array}{l}
          \frac{d}{dT}z_1(T)=P(z_1(T),z_2(T)) \\
          \\
          \frac{d}{dT}z_2(T)=Q(z_1(T),z_2(T)) \, ,
          \end{array}
\right.
\end{equation}
where the time parameter $T$ is complex. Once again, $X$ being a
holomorphic vector field, the complex version of the Theorem of
Existence and Uniqueness for regular ODEs guarantees that given
$T_0\in \mathbb{C}$ and $(a,b)\in \mathbb{C}^2$ there exists a
unique holomorphic solution $\phi(T)=(\phi_1(T),\phi_2(T))$ of
(\ref{eq2}) defined on a neighborhood $B$ of $T_0$, and
satisfying:
\[
\phi(T_0)=(\phi_1(T_0),\phi_2(T_0))=(a,b) \, .
\]

The next step would be to try to glue together these local
solutions so as to obtain a ``maximal domain of definition''.
However we notice that, in general, this is not possible since the
time parameter $T$ is in $\mathbb{C}$. Indeed, the problem is
that, as we try to glue together the neighborhoods, their union
$V$ \emph{may not} be simply connected (see figure below).
Consequently, the solution $\phi(T)$ may not be well-defined on
all of $V$, i.e, it may be multi-valued. This is an important
difference between real and complex ODEs. This phenomenon is
illustrated by Figure~(\ref{figmono}), since the intersection of
$V_1$ and $V_2$ is not connected, solutions defined on $V_1$ and
$V_2$ cannot, in general, be ``adjusted'' to coincide in both
connected components.

\begin{figure}[h]
\label{figmono}
\begin{center}
\includegraphics[scale=1]{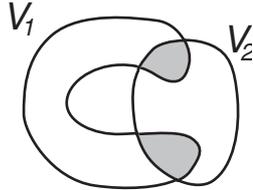}
\caption{$V=V_1\cup V_2$ is not simply connected.}
\end{center}
\end{figure}

Let us now introduce a more geometric point of view for these
topics. Using the above mentioned identification of $\mathbb{C}$ with
$\mathbb{R}^2$ (and $\mathbb{C}^2$ with $\mathbb{R}^4$), a
solution of (\ref{eq2}) starting at $(a,b)=(a_1+ia_2,b_1+ib_2)$
may be regarded locally as a ``piece'' of real $2$-dimensional
surface $L_0$ in $\mathbb{R}^4$ passing through the point
$(a_1,a_2,b_1,b_2)$. In addition, at $(a_1,a_2,b_1,b_2)$, $L_0$ is
tangent to the vector space spanned by the vectors
\begin{equation}
D_{(0,0)} (\phi_1 , \phi_2) . \left(
\begin{array}{c}
                                      1 \\ 0
                                     \end{array} \right)
\; \; ,\; \; D_{(0,0)} (\phi_1 , \phi_2) . \left( \begin{array}{c}
                                      0 \\ 1
                                     \end{array} \right) \, ,
\label{eq3}
\end{equation}
where $\phi=(\phi_1,\phi_2)$ is regarded as a real map from
$B\subseteq\mathbb{R}^2$ to $\mathbb{R}^4$, and $\phi_i$ ($i=1,2$)
satisfies the Cauchy-Riemann equations. Notice that $T_{(a,b)}L_0$
is invariant under the automorphism $J$, due to the Cauchy-Riemann
equations, so that $T_{(a,b)}L_0$ is indeed a {\it complex line},
i.e the image of a one-dimensional subspace (over $\mathbb{C}$) of
$\mathbb{C}^2\simeq\mathbb{R}^4$ under the preceding
identification.

As the initial conditions $(a,b)$ vary, from (\ref{eq3}) we obtain
a distribution of $2$-real dimensional real planes (or complex lines)
that can be integrated in the sense of Frobenius to yield
$2$-dimensional surfaces (or complex curves). In particular, away
from its singular set, the vector field defines a foliation of
real dimension equal to~$2$. Furthermore this foliation is
holomorphic as it follows from the complex version of the Flow Box
Theorem.

The leaves of the foliation in question inherit a natural
structure of Riemann surfaces. Actually an atlas for this
structure is provided by the local solutions $\phi$
of~(\ref{eq2}). More precisely, $\phi$ is a holomorphic
diffeomorphism from $B\subseteq \mathbb{C}$ to its image in the
leaf (Riemann Surface) $L_0$.

Summarizing what precedes, a holomorphic vector field on
$\mathbb{C}^2$ immediately yields a holomorphic foliation on
$\mathbb{C}^2$ away from its singular set. Again we also say that
the vector field defines a singular foliation on $\C^2$ which is
said to be its associated foliation  (or underlying foliation).
Conversely, given a (singular) holomorphic foliation $\cal{F}$, in
order to obtain the vector field whose non-constant orbits are the
leaves of $\cal{F}$, we need an extra data. More precisely, we
must associate a complex number (or vector in $\mathbb{R}^2$) to
the tangent space of each leaf, so as to recover the
parametrization of the leaves of $\cal{F}$, which in the above
situation was given by $\phi$. This complex number will be playing
the role of the ``speed'' of the flow of $X$. It allows us to
recover the local parametrizations $\phi$ for the leaves of the
foliation which are given as local solutions of~(\ref{eq2}).

Clearly the notion of singular holomorphic foliation is a
convenient geometric way to think of a complex ODE (equivalently a
holomorphic vector field). Nonetheless the above remark shows that
it does not capture all the information contained in a vector
field. As already mentioned, the local solutions $\phi$ in general
cannot be glued together and this makes the problem of extending
them, as far as possible, more subtle. We shall return to this
problem later.

%--------------------------------------------------------------------------------------------------------------------------------------------------------------------------------------------------------------------
%--------------------------------------------------------------------------------------------------------------------------------------------------------------------------------------------------------------------
%--------------------------------------------------------------------------------------------------------------------------------------------------------------------------------------------------------------------

\section{Basic Definitions and Examples}

After the relatively informal discussion of the previous section,
we shall now begin to provide precise definitions and detailed
statements.

\begin{defi}
A complex manifold $M^n$ of dimension $n$ is a differential
manifold equipped with an atlas $\{U_i,\psi_i\}$ such that:
\[
\psi_j\circ\psi_i^{-1}:\psi_i(U_i\cap U_j)\rightarrow \psi_j((U_i\cap
U_j)
\]
is holomorphic whenever $U_i\cap U_j\neq \varnothing$, and
$\psi_i:U_i\rightarrow V_i\subseteq\mathbb{C}^n$,
$\psi_j:U_j\rightarrow V_j\subseteq\mathbb{C}^n$.
\end{defi}

By virtue of the Cauchy-Riemann equations, the Jacobian
determinant of a holomorphic diffeomorphism is always positive. It
then follows that every complex manifold is orientable.

The Cauchy-Riemann equations also imply that a map
$F:U\subset\mathbb{R}^{2n}\rightarrow\mathbb{R}^{2n}$ is
holomorphic if and only if $DF(Jv)=J(DFv)$, for $v\in U$. So that
vectors in $\mathbb{R}^{2n}$ invariant under $J$ are sent by $DF$
to vectors that are invariant under $J$ as well. In other words,
if $F$ is holomorphic then $DF$ preserves $n$-dimensional complex
planes.

Next, we shall give a working definition of singular holomorphic
foliation on a complex manifold which is going to be used
throughout these notes.

\begin{defi}
\label{defncfoli}
A singular holomorphic foliation $\cal{F}$ defined on a complex
manifold $M^n$ consists of the following data:
\begin{enumerate}

\item There exists an atlas $\{U_i, \psi_i\}$ compatible with the
complex structure on $M^n$, where $\psi_i:U_i\rightarrow
V_i\subseteq\mathbb{C}^n$.

\item There exist holomorphic vector fields $X_i$ defined on each
$V_i$, given by $P_{1,V_i}\frac{\partial}{\partial z_1}+\ldots+
P_{n,V_i}\frac{\partial}{\partial z_n}$.

\item If $U_i\cap U_j \neq\varnothing$ then there exist functions
$h_{ij}:\psi_i(U_i\cap U_j)\rightarrow \mathbb{C}$ such that:
\[
(\psi_j\circ\psi_i^{-1})_*X_i(\psi_i(U_i\cap U_j))=h_{ij}(z_1,\ldots,z_n)X_j(\psi_j(U_i\cap %%@
U_j)) \, .
\]\label{d1}
\end{enumerate}
\end{defi}

When $M$ is a complex surface (i.e. $M$ has complex
dimension~$2$), we have initially thought of singular foliation as
being a regular foliation defined away from finitely many points
(the corresponding singularities). In this regard the definition
above seems to be more restrictive. Yet this is not the case.

If the functions $h_{ij}$ are actually all constant and equal to
$1$, then we have, indeed, a holomorphic vector field on $M$.

\begin{defi}
A holomorphic vector field $X$ defined on a manifold $M^n$ is such
that, given an atlas of $M^n$, $\{U_i,\psi_i\}$, the following
equation is satisfied:
\[
(\psi_j\circ\psi_i^{-1})_*X_i(\psi_i(U_i\cap U_j))=X_j(\psi_j(U_i\cap U_j)) \, ,
\]
where $\psi_i:U_i\rightarrow V_i\subset\mathbb{C}^n$ and $X_i$ are
as in Definition~\ref{defncfoli}.
\end{defi}

In the sequel we present a list of vector field and foliations
of varied nature. Our main purpose is to convince the reader of
the richness and importance of the subject. The first elementary
examples will also give us a hint that the condition required
to define a vector field on a complex manifold is much stronger
than the conditions that allow us to define a singular holomorphic
foliation.

%--------------------------------------------------------------------------------------------------------------------------------------------------------------------------------------------------------------------

\bigskip
\textbf{Example 1: Complex Tori}

Let $\Lambda$ be a lattice on $\mathbb{C}^n$. The $n$-dimensional
complex torus is the quotient space $\mathbb{C}^n/\Lambda$. Notice
that a constant vector field $Y$ on $\mathbb{C}^n$ induces a
holomorphic vector field on the torus. In fact, constant vector
fields are obviously preserved by the translations of $\C^n$
associated to the elements of $\Lambda$. Thus $Y$ descends as a
holomorphic vector field to the torus given by the quotient
$\mathbb{C}^n/\Lambda$.

%--------------------------------------------------------------------------------------------------------------------------------------------------------------------------------------------------------------------

\bigskip
\textbf{Example 2: Hopf Surfaces}

Consider $\lambda_1$, $\lambda_2$ in $\mathbb{C}^{\ast}$ such that
$\vert \, \lambda_1 \vert <1$ and $\vert \, \lambda_2 \vert<1$.
Let $\sigma(z_1,z_2)=(\lambda_1 z_1, \lambda_2 z_2)$. The Hopf
surface $M$ associated to $\sigma$ is the quotient
$(\mathbb{C}^2\setminus \{(0,0)\})/ \sigma$. It is immediate to
check that $M$ is, in fact, a complex $2$-dimensional manifold.

Let $X(z_1,z_2)=P(z_1,z_2)\frac{\partial}{\partial z_1} +
Q(z_1,z_2)\frac{\partial}{\partial z_2}$ be a polynomial vector
field on $\mathbb{C}^2\setminus\{(0,0)\}$ such that
\[
X(\sigma(z_1,z_2))=\alpha P(z_1,z_2)\frac{\partial}{\partial
z_1}+\beta P(z_1,z_2)\frac{\partial}{\partial z_2} \, ,
\]
where $\alpha / \lambda_1 = \beta / \lambda_2$. Then $X$ defines a
singular holomorphic foliation on $(\mathbb{C}^2\setminus
\{0,0\})/ \sigma$. The reader will notice, however, that in order
to obtain a holomorphic vector field in the Hopf surface, one
should have the above ratio equal to $1$. This indicates that in
Hopf surfaces, there are many more holomorphic foliations than
holomorphic vector fields.

The following concrete example illustrates this.
Let $\lambda_1=e^{-2}$ and $\lambda_2=e^{-4}$. Consider the
polynomial vector field given by $X=P(z_1, z_2) \ \partial/\partial z_1 + Q(z_1,z_2) \
\partial/\partial z_2$ where:
\begin{eqnarray*}
P(z_1,z_2)&=& z_1^3+z_1z_2 \, , \\
Q(z_1,z_2)&=& z_2^2+2z_1^2z_2 \, .
\end{eqnarray*}
Notice that
\[
X(\sigma(z_1,x_2))=e^{-6}P(z_1,z_2)\frac{\partial}{\partial
z_1}+e^{-8}Q(z_1,z_2)\frac{\partial}{\partial z_2} \, .
\]

On the other hand,
\[
D\sigma.X(z_1,z_2)= e^{-2}P(z_1,z_2)\frac{\partial}{\partial
z_1}+e^{-4}Q(z_1,z_2)\frac{\partial}{\partial z_2} \, ,
\]
so that $D\sigma.X(z_1,z_2)=e^{4}X(\sigma(z_1,z_2))$. In view of
the above discussion,  the vector field $X$ induces a holomorphic
foliation on $M$ but not a holomorphic vector field.

%--------------------------------------------------------------------------------------------------------------------------------------------------------------------------------------------------------------------

\bigskip
\textbf{Example 3: Complex Projective Plane (Space)}

Foliations on complex projective spaces constitute the main source
of examples, in the sense that they are easy to describe and, in
addition, they usually already encode the essential difficulties
of more general cases. For this reason, they are going to be
treated here with a significative amount of details. Let us begin
by considering the following relation of equivalence:
\[
z\sim z' \ \Leftrightarrow \
\exists \ \lambda\in\mathbb{C}^{\ast} \; \; \; \; \; \; \; ;\ \ \
\ \ \ z=\lambda z'; \ \ \ \ \ \ z, z'
\in\mathbb{C}^3\setminus\{(0,0,0)\}\, .
\]
The equivalence classes $(\mathbb{C}^3\setminus\{(0,0,0)\})/\sim$
form the $2$-dimensional complex projective space, denoted by
$\mathbb{CP} \, (2)$.

Two points $(a,b,c), (a', b' ,c') \in \mathbb{C}^3\setminus
\{(0,0,0)\}$ define the same point in $\cpd$ if and only if
\[
a/a'= b / b' = c / c'  \, ,
\]
so that the projection $\pi:\mathbb{C}^3\setminus
\{(0,0,0)\}\rightarrow \cpd$ is determined by the ratios between
the coordinates of $(a,b,c)$. Traditionally $\pi(a,b,c)$ is
denoted by $(a:b:c)$ and $a,b,c$ are called homogeneous
coordinates for $(a:b:c)$.

Let us consider the following open sets that cover $\cpd$:
\begin{eqnarray*}
U_a&=&\{(a:b:c)\in \cpd \; \; ;\ \ a\neq 0\} \, ; \\
U_b&=&\{(a:b:c)\in \cpd \; \; ;\ \ b\neq 0\} \, ; \\
U_c&=&\{(a:b:c)\in \cpd \; \; ;\ \ c\neq 0\} \, .
\end{eqnarray*}
Along with these open sets, we have the following coordinate
charts:
\begin{eqnarray*}
\varphi_a: U_a  &\rightarrow & \mathbb{C}^2 \\
(a:b:c)  &\mapsto &  (b/a,c/a)=(x,y) \, ;
\end{eqnarray*}
\begin{eqnarray*}
\varphi_b: U_b  &\rightarrow & \mathbb{C}^2 \\
(a:b:c)  &\mapsto &  (a/b,c/b)=(u,v) \, ;
\end{eqnarray*}
\begin{eqnarray*}
\varphi_c: U_c  &\rightarrow & \mathbb{C}^2 \\
(a:b:c)  &\mapsto &  (a/c,b/c)=(z,w) \, .
\end{eqnarray*}

Now let $L_{\infty}= \cpd \setminus E_a= \{(0:b:c)\in \cpd ; \ \
(b,c)\in \mathbb{C}^2\}$. A direct inspection using the coordinate
charts introduced above shows that $L_{\infty}$ is isomorphic to
$\mathbb{CP}\, (1)$. Thus $\cpd=\mathbb{C}^2\cup\mathbb{CP} \,
(1)$ i.e. $\cpd$ may be regarded as $\mathbb{C}^2$ being added to
the Riemann sphere. In this sense $\cpd$ is a compactification of
$\C^2$. Moreover in the affine coordinates $(x,y)$, $L_{\infty}$
corresponds to ``infinity'' and $L_{\infty}$ is linearly embedded
in $\cpd$. For these reasons it is called the {\it line at
infinity}. Finally, we note that this construction applies to any
affine coordinates in $\cpd$, that is, any affine $\C^2 \subset
\cpd$ gives rise to a ``line at infinity''.

We will construct singular holomorphic foliation on $\cpd$ in a
similar way to what was done in the case of Hopf surfaces.

Let us consider a homogeneous polynomial vector field
$X=P\frac{\partial}{\partial z_1}+Q\frac{\partial}{\partial
z_2}+R\frac{\partial}{\partial z_3}$ on
$\mathbb{C}^3\setminus\{(0,0,0)\}$ of degree~$d$. Consider the
action of $\C^{\ast}$ on $\C^3\setminus \{0,0,0\}$ given by the
homotheties $\sigma(z_1,z_2,z_3)=(\lambda z_1, \lambda z_2,
\lambda z_3)$, $\lambda\in\mathbb{C}^{\ast}$. As already pointed
out, the quotient of this action is precisely $\cpd$. On the other
hand, note that
\[
X(\sigma(z_1,z_2,z_3))=\lambda^d
P(z_1,z_2,z_3)\frac{\partial}{\partial z_1}+\lambda^d
Q(z_1,z_2,z_3)\frac{\partial}{\partial z_2}+\lambda^d
R(z_1,z_2,z_3)\frac{\partial}{\partial z_3} \, .
\]
Besides,
\[
D\sigma.X(z_1,z_2,z_3)=\lambda
P(z_1,z_2,z_3)\frac{\partial}{\partial z_1}+\lambda
Q(z_1,z_2,z_3)\frac{\partial}{\partial z_2}+\lambda
R(z_1,z_2,z_3)\frac{\partial}{\partial z_3} \, .
\]
Therefore $D\sigma.X=\lambda^{d-1}X$ so that ``the directions''
associated to $X$ are invariant under homotheties. Hence $X$
indeed defines a singular foliation on $\cpd$.

Another equivalent way to define a singular holomorphic foliation
in $\cpd$ is to consider a polynomial vector field in
$\mathbb{C}^2$ and see that it can be extended to an holomorphic
foliation on all of $\cpd$. Details are provided below.

In fact, it can be shown that every
holomorphic foliation in $\mathbb{CP}(2)$ is induced by a
polynomial vector field in $\mathbb{C}^2$.

Let $X=P\frac{\partial}{\partial x}+Q\frac{\partial}{\partial y}$
be a polynomial vector field in the affine coordinates $(x,y)$.
Naturally, it induces a rational vector field $Y$ (resp.$Z$)
defined on the affine coordinates $(u,v)$ (resp. $(z,w)$). Then we
only need to multiply the vector fields $Y,Z$ by their
denominators so as to have holomorphic ones (indeed polynomial
ones).

For example, $Y$ is given by:
\begin{eqnarray*}
Y(u,v)&=&(\varphi_a\circ\varphi_b)^{\ast}(X(x,y)) \\
&=& D(\varphi_a\circ\varphi_b)^{-1}.X(\varphi_a\circ\varphi_b(u,v))\\
&=&\left(%
\begin{array}{cc}
  -u^2 & 0 \\
  -uv & u \\
\end{array}%
\right)
\left(%
\begin{array}{c}
  P(1/u,v/u) \\
  Q(1/u,v/u) \\
\end{array}%
\right)\, .
\end{eqnarray*}
As already mentioned, this vector field is not holomorphic in the
domain of the coordinates $(u,v)$ since it has poles over $\{ u=0
\}$. However, multiplying $Y$ by $u^d$, the new vector field
$u^dY$ is obviously holomorphic (and with isolated singularities)
in the domain of $(u,v)$. Now changing from the coordinate system
$(u,v)$ to $(x,y)$ one obtains
\begin{eqnarray*}
D(\varphi_a\circ\varphi_b).(u^dY)&=& u^d
D(\varphi_a\circ\varphi_b).Y(u,v)\\
&=&u^dD(\varphi_a\circ\varphi_b).D(\varphi_a\circ\varphi_b)^{-1}.X(x,y)\\
&=&u^dX(x,y)\, .
\end{eqnarray*}
Repeating this procedure with the other coordinate charts, we
obtain a holomorphic foliation on $\cpd$. Finally note also that
the original polynomial vector field on $\mathbb{C}^2$ does not
induce a holomorphic vector field on $\cpd$. Instead it induces a
{\it meromorphic}\, vector field whose poles are contained in the
corresponding line at infinity.

This has an obvious generalization to higher dimensional complex
projective spaces which is left to the reader.

We have constructed singular holomorphic foliations on $\cpd$
following two {\it a priori}\, different methods:
\begin{itemize}

\item By means of a homogeneous polynomial vector field on $\C^3$.

\item By means of a polynomial vector field on $\C^2$.
\end{itemize}

It is easy to check that both constructions are equivalent in the
sense that they produce the same set of foliations. This
verification is implicitly carried out in the proof of
Lemma~(\ref{l2}). It is however much harder to show that these
constructions give rise to {\it all singular holomorphic
foliations on}\, $\cpd$. This is the contents of
Theorem~(\ref{spfoliation}) below.

\begin{teo}
\label{spfoliation} Let $\fol$ be a singular holomorphic foliation
on $\cpd$. Then there is $\fol$ is induced by a homogeneous
polynomial vector field $X$ on $\C^3$ which, in addition, has
singular set of codimension at least~$2$.
\end{teo}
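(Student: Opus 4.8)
The plan is to start from the intrinsic data of Definition~\ref{defncfoli}: an atlas $\{U_i,\psi_i\}$ of $\cpd$ together with local holomorphic vector fields $X_i$ satisfying the cocycle-type relation $(\psi_j\circ\psi_i^{-1})_*X_i=h_{ij}X_j$. First I would reduce to the three standard charts $U_a,U_b,U_c$ of Example~3 by taking a common refinement of the given atlas with this covering; since two overlapping foliated charts induce the same unparametrized line field, the vector fields can be transported to $U_a$, $U_b$, $U_c$ at the cost of multiplying by nonvanishing holomorphic factors. So without loss of generality we have holomorphic vector fields $X_a$ on $\C^2_{(x,y)}$, $X_b$ on $\C^2_{(u,v)}$, $X_c$ on $\C^2_{(z,w)}$, pairwise proportional on the overlaps by nonvanishing holomorphic functions. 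The key point, made available by Hartogs-type extension / the fact that $\cpd\setminus U_a$ is a projective line of complex codimension~$1$, is that $X_a$ extends meromorphically across $L_\infty$; more precisely, after clearing denominators one can force $X_a$, $X_b$, $X_c$ to glue into a single global object.

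The concrete mechanism is the computation already displayed before the theorem: if $X_a=P\,\partial/\partial x+Q\,\partial/\partial y$ with $P,Q$ \emph{a priori} only holomorphic on $\C^2$, one passes to the $(u,v)$ chart and finds that the proportionality with the holomorphic field $X_b$ forces $P$ and $Q$ to be polynomials of bounded degree. Indeed, the transition matrix between the $(x,y)$ and $(u,v)$ charts has entries that are monomials in $u,1/u$ (as in the formula for $Y(u,v)$ above), so the requirement that $u^kX_b$ be holomorphic and proportional to the pull-back of $X_a$ bounds the order of the pole of $X_a$ along $L_\infty=\{u=0\}$, hence bounds the growth of $P,Q$; a holomorphic function on $\C^2$ with polynomial growth is a polynomial. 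Let $d$ be the resulting degree. Then, mimicking the homogenization procedure in Example~3, I would lift the polynomial field $X_a$ on $\C^2\subset\cpd$ to a homogeneous vector field on $\C^3$: write $P,Q$ in homogeneous coordinates, multiply through by the appropriate power of $z_1$ to homogenize, and add a suitable multiple of the radial (Euler) vector field $z_1\partial_{z_1}+z_2\partial_{z_2}+z_3\partial_{z_3}$ so that the result is genuinely homogeneous of some degree $d'$ and tangent (projectively) to the given line field. This produces a homogeneous polynomial $X$ on $\C^3$ inducing $\fol$.

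Finally one must arrange that $X$ has singular set of codimension at least~$2$, i.e.\ that $P,Q,R$ have no common polynomial factor. If $g$ is a nonconstant common factor, then $X/g$ is still a polynomial vector field (of lower degree) inducing the same foliation away from $\{g=0\}$, and since the foliation is defined by line directions, $\{g=0\}$ contributes nothing: divide out all common factors to obtain the desired $X$ with $\mathrm{codim}\,\mathrm{Sing}(X)\ge 2$. One small subtlety here is the role of the Euler field: adding a multiple of it changes $X$ but not the induced line field on $\cpd$, so the freedom to adjust by the Euler field is exactly what lets us kill any spurious component of the singular set lying along a hyperplane; this has to be tracked carefully so that the final $X$ is simultaneously homogeneous, nonzero, and reduced.

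I expect the main obstacle to be the first reduction together with the degree bound: showing rigorously that the abstract local data of Definition~\ref{defncfoli} forces the local field in an affine chart to extend to a \emph{polynomial} (not merely meromorphic or holomorphic-with-essential-singularity) vector field. This is where the compactness of $\cpd$ and the precise form of the coordinate changes between $U_a,U_b,U_c$ enter decisively, and it is essentially the content alluded to as ``implicitly carried out in the proof of Lemma~(\ref{l2})''. Once polynomiality and the degree bound are in hand, the homogenization and the removal of common factors are routine algebra.
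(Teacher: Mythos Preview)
The paper does not actually supply a proof of Theorem~\ref{spfoliation}; it is stated as a known fact and the discussion moves directly on to the notion of degree. So there is no ``paper's own proof'' to compare against, and your outline is a reasonable sketch of the standard argument.

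That said, there is a genuine gap in your reduction step. You write that the proportionality with the holomorphic field $X_b$ in the $(u,v)$-chart ``bounds the order of the pole of $X_a$ along $L_\infty=\{u=0\}$'', and hence forces $P,Q$ to be polynomials. But Definition~\ref{defncfoli} only guarantees that on the overlap $U_a\cap U_b=\{u\neq 0\}$ one has $(\psi_b\circ\psi_a^{-1})_*X_a=h_{ab}\,X_b$ with $h_{ab}$ holomorphic and nonvanishing on $\C^*_u\times\C_v$. Nothing in the definition prevents $h_{ab}$ from having an essential singularity along $\{u=0\}$ (think of $e^{1/u}$), and in that case the pull-back of $X_a$ need not have a pole of finite order at all. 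Your argument silently assumes $h_{ab}$ is of the form $u^k\cdot(\text{unit extending across }u=0)$, which is precisely the point at issue.

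What closes this gap is the observation that the collection $\{h_{ij}\}$ satisfies the cocycle identities and therefore defines a holomorphic line bundle on $\cpd$. Invoking ${\rm Pic}(\cpd)\simeq\Z$ (equivalently Lemma~\ref{linecp1}/GAGA, both discussed later in the paper), one knows this bundle is $\mathcal{O}(k)$ for some $k$, so after a change of local frames the $h_{ij}$ may be taken to be the standard monomial transition functions. Only then does your growth argument go through and force $P,Q$ to be polynomials. You should also first normalise the local $X_i$ to have singular set of codimension~$\geq 2$ (dividing out local gcd's) \emph{before} reading off the cocycle, so that the $h_{ij}$ are genuinely well-defined invertible functions; this also makes the final ``remove common factors'' step automatic rather than something to be arranged afterwards.
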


%The proof of this theorem is given in the Appendix. Ultimately it
%is a manifestation of Serre's GAGA-principle.

In view of Theorem~(\ref{spfoliation}), it is natural to try to
define a notion of {\it degree}\, for a foliation on $\cpd$. At
first, one might feel tempted to define it as the degree of a
polynomial vector field representing the foliation in affine
coordinates. However, as the reader can easily verify, this degree
may, in fact, vary depending on the affine coordinates chosen.

The following lemma will motivate the correct definition of the
degree for a foliation $\fol$ on $\cpd$ induced by a polynomial
vector field $X=P\frac{\partial}{\partial
x}+Q\frac{\partial}{\partial y}$ on $\C^2$. Let $P=\sum_{i=0}^d
P_i(x,y)$ and $Q=\sum_{i=0}^d Q_i(x,y)$ where $P_i$, $Q_i$ are
homogeneous polynomials of degree $i$.

\begin{lema}
\label{l1} The ``line at infinity'', $L_\infty$, of
$\mathbb{CP}(2)$ is invariant under the foliation $\fol$, induced
by $X$ as above, if and only if the top-degree homogeneous
component $P_d\frac{\partial}{\partial
x}+Q_d\frac{\partial}{\partial y}$ is not of the form
$h(x,y)(x\frac{\partial}{\partial x}+y\frac{\partial}{\partial
y})$, for some polynomial $h$ of degree $d-1$.
\end{lema}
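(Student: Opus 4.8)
The plan is to test invariance of $L_\infty$ directly in a chart "at infinity''. First I would pass to the affine chart $(u,v)$ with $u=1/x$, $v=y/x$ (the chart $\varphi_b$ up to the obvious change), in which $L_\infty$ becomes the coordinate line $\{u=0\}$, and use the expression for $\fol$ in this chart already computed in "Example 3'', namely
\[
Y(u,v)=\left(\begin{array}{cc}-u^2 & 0\\ -uv & u\end{array}\right)\left(\begin{array}{c}P(1/u,v/u)\\ Q(1/u,v/u)\end{array}\right).
\]
Using homogeneity one has $P_i(1/u,v/u)=u^{-i}P_i(1,v)$ and likewise for $Q_i$, so after clearing denominators (multiply $Y$ by $u^{d-1}$) I get a holomorphic vector field $\widetilde Y = A\,\fracx|_{u}+B\,\fracy|_{v}$, more precisely $\widetilde Y=A\,\partial/\partial u+B\,\partial/\partial v$, in which
\[
A(u,v)=-u\sum_{i=0}^{d}u^{d-i}P_i(1,v),\qquad B(u,v)=\sum_{i=0}^{d}u^{d-i}\bigl(Q_i(1,v)-vP_i(1,v)\bigr).
\]
Thus $u$ divides $A$ identically, while $B$ restricts on $\{u=0\}$ to $B(0,v)=Q_d(1,v)-vP_d(1,v)$. (One should assume at the outset that the foliation genuinely has degree $d$ in these coordinates, i.e. $(P_d,Q_d)\ne(0,0)$, to dispose of the degenerate case.)

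Next I would pass to the canonical representative of $\fol$ in this chart, i.e. divide $\widetilde Y$ by $g=\gcd(A,B)$ so as to obtain isolated singularities, and observe that $\{u=0\}$ is invariant for $\fol$ if and only if the $\partial/\partial u$-component $A/g$ vanishes identically on $\{u=0\}$, i.e. if and only if $u\mid (A/g)$. Since $u\mid A$ always, this reduces to a dichotomy on whether $u\mid B$. If $u\nmid B$ then $u\nmid g$, so the factor $u$ survives in $A/g$ and $L_\infty$ is invariant. If $u\mid B$, write $B=uB_1$ and $g=u\,g_1$ with $g_1=\gcd\bigl(-\sum_i u^{d-i}P_i(1,v),\,B_1\bigr)$; now $u\mid B$ forces $Q_d(1,v)\equiv vP_d(1,v)$, which together with $(P_d,Q_d)\ne(0,0)$ forces $P_d\not\equiv0$, so the first argument of $g_1$ restricts on $\{u=0\}$ to $-P_d(1,v)\not\equiv0$, whence $u\nmid g_1$ and $A/g=-\bigl(\sum_i u^{d-i}P_i(1,v)\bigr)/g_1$ does not vanish on $\{u=0\}$; so $L_\infty$ is not invariant. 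Hence $L_\infty$ is invariant $\iff Q_d(1,v)\not\equiv vP_d(1,v)$.

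Finally I would translate the condition $Q_d(1,v)\equiv vP_d(1,v)$ into the stated form of the top-degree component: writing $P_d=\sum_{j}a_jx^{d-j}y^j$ and $Q_d=\sum_{j}b_jx^{d-j}y^j$, this polynomial identity in $v$ is equivalent to $a_d=b_0=0$ and $b_j=a_{j-1}$ for $1\le j\le d$, which says exactly that $h:=\sum_{k=0}^{d-1}a_kx^{d-1-k}y^k$ (homogeneous of degree $d-1$) satisfies $P_d=xh$ and $Q_d=yh$, i.e. $P_d\frac{\partial}{\partial x}+Q_d\frac{\partial}{\partial y}=h(x,y)\bigl(x\frac{\partial}{\partial x}+y\frac{\partial}{\partial y}\bigr)$; the converse implication is immediate. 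Combining the two equivalences gives the lemma. I expect the only delicate point to be the middle step — making precise that invariance of $\{u=0\}$ is decided by the representative with isolated singularities, and tracking how the extra factor of $u$ coming from the Jacobian of the chart change is cancelled precisely when the top-degree part is radial up to a scalar function; the polynomial manipulations at either end are routine.
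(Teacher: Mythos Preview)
Your proposal is correct and follows essentially the same route as the paper: pass to the chart $(u,v)=(1/x,\,y/x)$, clear denominators by multiplying by $u^{d-1}$, and decide invariance of $\{u=0\}$ according to whether $Q_d(1,v)-vP_d(1,v)$ vanishes identically. Your treatment is in fact a bit more careful than the paper's at the one point you flagged as delicate: when $u\mid B$, the paper simply asserts that after cancelling the common factor $u$ ``it becomes clear'' that $L_\infty$ is not invariant, whereas you verify explicitly (via $P_d(1,v)\not\equiv0$) that no further factor of $u$ can be extracted from the $\partial/\partial u$-component; likewise your explicit coefficient-matching for the equivalence $Q_d(1,v)\equiv vP_d(1,v)\Leftrightarrow(P_d,Q_d)=h\cdot(x,y)$ spells out what the paper leaves as an exercise.
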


\begin{proof}
To understand the behavior of
$X=P\frac{\partial}{\partial x}+Q\frac{\partial}{\partial y}$ near
infinity in the coordinate system $(x,y)$, we use the following
change of coordinates: $u=\frac{1}{x}$, $v=\frac{y}{x}$. So that
this vector field in the coordinate chart $(u,v)$ is given by:
\begin{eqnarray*}
&X(u,v)&=\left(%
\begin{array}{cc}
  -u^2 & 0 \\
  -uv & u \\
\end{array}%
\right)
\left(%
\begin{array}{c}
  P(1/u,v/u) \\
  Q(1/u,v/u) \\
\end{array}%
\right) \\
&=&-u^2(P(1/u,v/u)) \frac{\partial}{\partial u}+u(Q(1/u,v/u)-v
P(1/u,v/u)) \frac{\partial}{\partial v}
\end{eqnarray*}
and now we only need to analyze the corresponding foliation on a
neighborhood of $\{u=0\}$.

Let us denote $\sum_{i=0}^{d-1}P_i$ by $\widetilde{P}(1/u,v/u)$,
and $\sum_{i=0}^{d-1}Q_i$ by $\widetilde{Q}(1/u,v/u)$. Notice that
$u^d\widetilde{P}(1/u,v/u)=uh_1(u,v)$,
$u^d\widetilde{Q}(1/u,v/u)=uh_2(u,v)$ for appropriate polynomials
$h_1$ and $h_2$ of degree $d-1$. Also, $u^dP_d(1/u,v/u)=P_d(1,v)$,
naturally there is an analogous expression for $Q_d$. Multiplying
the vector field $X(u,v)$ by $u^{d-1}$ we obtain a holomorphic
vector field in the $(u,v)$-coordinate which is given by
\[
Y(u,v)=-u(P_d(1,v)+uh_1(u,v))\frac{\partial}{\partial
u}+(Q_d(1,v)-vP_d(1,v)+ug(u,v))\frac{\partial}{\partial v} \, ,
\]
where $g(u,v)=h_2(u,v)-vh_1(u,v)$.

Now, if $Q_d(1,v)-vP_d(1,v)\equiv 0$, the components of $Y(u,v)$
are both divisible for $u$. By eliminating this common factor, it
becomes clear that the line at infinity is not preserved by the
foliation. Conversely, if $Q_d(1,v)-vP_d(1,v)$ is not identically
zero, $L_\infty$ is preserved, since the component $\partial
/\partial u$ of $Y$ vanishes identically over $L_{\infty} \simeq
\{ u=0\}$.

Finally it is clear that $Q_d(1,v)-vP_d(1,v)$ vanishes identically
if and only if the top-degree homogeneous component is radial.
\end{proof}

\begin{defi}
The degree of a foliation $\fol$ on $\cpd$ given by the
compactification of the polynomial vector field
$X=P\frac{\partial}{\partial x}+Q\frac{\partial}{\partial y}$ of
degree $d$ and having only isolated zeros is equal to:
\begin{enumerate}

\item $d-1$, if there exists a polynomial $h(x,y)$ of degree $d-1$
such that $P_d\frac{\partial}{\partial
x}+Q_d\frac{\partial}{\partial y}=h(x,y)(x\frac{\partial}{\partial
x}+y\frac{\partial}{\partial y})$. In other words, $d-1$ if the
top-degree homogeneous component of $X$ is a multiple of the
radial vector field.

\item $d$, otherwise.
\end{enumerate}
\label{d2}
\end{defi}

It should be verified that this is indeed well-defined.

Here we shall give a more geometric interpretation of the degree of
a foliation as defined above. In fact, the contents of this lemma
can be used as an equivalent definition of degree.

\begin{lema}
\label{l2} Let $\fol$ be a singular holomorphic foliation on
$\cpd$ of degree $d$. Then the following holds:
\begin{enumerate}
\item There is a homogeneous polynomial vector field $Z$ of degree
$d$ on $\C^3$, with singular set of codimension at least $2$,
which induces $\fol$ by radial projection of its orbits;

\item The number of tangencies of $\fol$ with a generic projective
line is $d$.
\end{enumerate}
\end{lema}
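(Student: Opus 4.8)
The plan is to prove the two items more or less in parallel, building on the explicit coordinate computations already carried out in the proof of Lemma~\ref{l1} and in ``Example 3''.

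For item (1), I would start from a polynomial vector field $X=P\,\partial/\partial x+Q\,\partial/\partial y$ on $\C^2$ inducing $\fol$, with $P=\sum_{i=0}^m P_i$, $Q=\sum_{i=0}^m Q_i$ and isolated zeros, where $m$ is the ``affine degree''. By Definition~\ref{d2} and Lemma~\ref{l1}, the relation between $m$ and the foliation degree $d$ splits into the two cases: $d=m$ when $L_\infty$ is invariant (the top component is not radial), and $d=m-1$ when the top component is radial. In each case I want to exhibit a homogeneous $Z=A\,\partial/\partial z_1+B\,\partial/\partial z_2+C\,\partial/\partial z_3$ of degree exactly $d$ on $\C^3$ whose orbits project to the leaves of $\fol$. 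The natural candidate is the homogenization: writing $\widetilde P(z_1,z_2,z_3)=z_3^{m}P(z_1/z_3,z_2/z_3)$ and similarly $\widetilde Q$, one checks (this is the computation implicit in Example~3) that $Z=\widetilde P\,\partial/\partial z_1+\widetilde Q\,\partial/\partial z_2$ has directions invariant under the homotheties, hence induces a foliation on $\cpd$ agreeing with $\fol$ on the chart $z_3\neq 0$, and therefore everywhere by analytic continuation. In the non-radial case this $Z$ already has degree $m=d$, but one must still arrange the singular set to have codimension $\geq 2$; the point is that a common factor $\delta(z_1,z_2,z_3)$ of $\widetilde P,\widetilde Q$ can be divided out without changing the foliation, and after doing so Theorem~\ref{spfoliation} (or a direct argument) guarantees the codimension-two condition. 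In the radial case, where the affine degree is $m=d+1$, the homogenized field has degree $d+1$, and the radiality of the top component is exactly what forces $z_1\widetilde P+z_2\widetilde Q+z_3\cdot(\text{something})$, equivalently forces a common factor, so that dividing it out drops the degree to $d$; I would make this factorization explicit using $Q_d(1,v)-vP_d(1,v)\equiv 0$ from the proof of Lemma~\ref{l1}. Conversely one checks that degree cannot drop below $d$ without the induced affine field having the wrong degree, pinning it at exactly $d$.

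For item (2), I would compute the tangencies of $\fol$ with a generic line directly. Take the line $L=\{z_3=0\}$ first (it is generic up to a projective change of coordinates, and the count is projectively invariant). Using the chart computation from Lemma~\ref{l1}: in coordinates $(u,v)=(1/x,y/x)$ the field, after clearing denominators by $u^{m-1}$, becomes
\[
Y(u,v)=-u\bigl(P_m(1,v)+uh_1(u,v)\bigr)\frac{\partial}{\partial u}+\bigl(Q_m(1,v)-vP_m(1,v)+ug(u,v)\bigr)\frac{\partial}{\partial v}.
\]
A tangency of $\fol$ with $L=\{u=0\}$ occurs where $Y$ is tangent to $\{u=0\}$, i.e. where the $\partial/\partial u$-component vanishes on $u=0$; restricted to $u=0$ that component is $-u\bigl(P_m(1,v)\bigr)$ which vanishes identically, so instead one looks at the leaf being tangent to $\{u=0\}$, which amounts to the vanishing of the polynomial $Q_m(1,v)-vP_m(1,v)$ on $\{u=0\}$ (in the non-radial case this is the relevant tangency polynomial). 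When $L_\infty$ is invariant, $d=m$ and $\deg_v\bigl(Q_m(1,v)-vP_m(1,v)\bigr)$ is generically $m=d$, giving $d$ tangencies (counted with multiplicity, and checking the point at $v=\infty$ in the other chart, one sees the total is exactly $d$). When the top component is radial, $Q_m(1,v)-vP_m(1,v)\equiv 0$ but now $L_\infty$ is not invariant; one then intersects $\fol$ with a generic line and the count becomes the degree of the resolvent between the two components of $Z$ restricted to that line, which after the degree drop to $d$ again yields exactly $d$. I would phrase this uniformly: the tangency locus of $\fol$ with a generic line $\ell\cong\cpum$ is the zero divisor of a section of a line bundle on $\ell$ whose degree is read off from $Z$ of degree $d$, namely $d$.

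The main obstacle I anticipate is the bookkeeping of the degree drop and the codimension-two normalization in item (1) — making precise, in both the radial and non-radial cases, that after removing common factors the homogeneous field has degree exactly $d$ (neither more nor less) and isolated-in-codimension-$2$ singular set, and checking this is consistent with Definition~\ref{d2} independently of the affine chart (the well-definedness remark just before this lemma). Everything else is a careful but routine chart computation reusing the proof of Lemma~\ref{l1}; the genericity in item (2) is handled by a dimension count showing that lines through a singular point of $\fol$, or lines tangent to $\fol$ with higher multiplicity, or lines contained in the tangency set, form a proper subvariety of the dual projective plane.
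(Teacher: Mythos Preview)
Your approach to item~(1) goes in the converse direction to the paper's: you homogenize from $\C^2$ to $\C^3$, while the paper dehomogenizes a given $Z=\sum H_i\,\partial/\partial z_i$ of degree $d$ to the affine chart and checks directly via Definition~\ref{d2} that the resulting foliation has degree $d$, saying only ``the converse is analogous''. Your direction is perfectly legitimate, but your mechanism for the degree drop in the radial case is not quite right. The naive homogenization $\widetilde P\,\partial_1+\widetilde Q\,\partial_2$ has degree $d+1$ and its components do \emph{not} in general share a common factor. What actually happens is that one first subtracts a multiple $h\cdot R$ of the Euler field $R=z_1\partial_1+z_2\partial_2+z_3\partial_3$ (which projects to zero, so does not change the induced foliation); choosing $h$ with $h|_{z_3=0}$ equal to the degree-$d$ polynomial witnessing radiality makes all three components of $Z-hR$ divisible by $z_3$, and only then do you divide by that common factor to land in degree $d$. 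So the degree drop is a two-step move (subtract radial, then divide), not a one-step ``common factor'' argument.

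Item~(2) has a more serious problem. You propose to take the generic line to be $L_\infty=\{u=0\}$ and read off tangencies from the polynomial $Q_m(1,v)-vP_m(1,v)$. But Lemma~\ref{l1} says this polynomial is \emph{nonzero exactly when $L_\infty$ is invariant}, i.e.\ exactly when $L_\infty$ is \emph{not} a generic line for the tangency count. In that case every point of $L_\infty$ is tangent and the zeros of $Q_m(1,v)-vP_m(1,v)$ are the \emph{singularities} of $\widetilde\fol$ on $L_\infty$, which is a different count. Conversely, if you genuinely move a generic (non-invariant) line to infinity by a projective change of coordinates, you land in the radial case, $Q_m(1,v)-vP_m(1,v)\equiv 0$, your vector field $Y$ still has a common factor $u$, and your computation degenerates before producing any count. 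Your two cases are thus mismatched with the genericity hypothesis. The paper avoids this entirely by taking a generic \emph{affine} line $y=\lambda x$: the tangency condition becomes the single polynomial equation $\lambda P(x,\lambda x)-Q(x,\lambda x)=0$ in $x$, and one checks directly that its degree is $d$ in both cases (the top term cancels in the radial case precisely because $\lambda P_{d+1}(x,\lambda x)=Q_{d+1}(x,\lambda x)$, dropping the degree from $d+1$ to $d$). This is both simpler and correct; I would abandon the $L_\infty$ route.
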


\begin{proof}
Let us first show that the projection of
the foliation associated to a homogeneous polynomial vector field
of degree $d$ on $\C^3$ is, indeed, a foliation $\fol$ on $\cpd$
having degree $d$. Let $Z=\sum_{i=0}^2 H_i (z_0,z_1,z_2)
\frac{\partial}{\partial z_i}$, where $(z_0,z_1,z_2)$ stands for
the coordinates of $\C^3$. In the chart $(x,y)=(z_1/z_0,z_2/z_0)$,
the vector field $Z$ becomes
\[
X=P(x,y)\frac{\partial}{\partial x}+Q(x,y)\frac{\partial}{\partial
y} \;\; ,\;\; {\rm where}
\]
\begin{eqnarray*}
P(x,y)&=& H_1(1,x,y)-xH_0(1,x,y)\\
Q(x,y)&=& H_2(1,x,y)-yH_0(1,x,y)\, .
\end{eqnarray*}

If $H_0(1,x,y)$ has degree $d$ (i.e, $H_0$ is not divisible by
$z_0$), then $X$ has degree $d+1$. Furthermore, the top-degree
component of $X$ is given by
$-H_0^d(1,x,y)(x\frac{\partial}{\partial
x}+y\frac{\partial}{\partial y})$, where $H_0^d$ stands for the
homogeneous component of degree $d$ of $H_0(1,x,y)$. So, according
to Definition~(\ref{d2}), $\fol$ has degree~$d$.

If $H_0(1,x,y)$ has degree less than $d$ (i.e, $H_0$ is divisible
by $z_0$), then at least one between $H_1(1,x,y)$ and $H_2(1,x,y)$
must have degree $d$, otherwise all of the three polynomials would
be divisible by $z_0$. This would mean that the singular set of
$Z$ has codimension $1$, which contradicts our assumption.
Therefore, $X$ has degree $d$, and once again
Definition~(\ref{d2}) implies that $\fol$ has degree $d$. The
converse is analogous and establishes the first part of the
statement.

Let us now consider the tangencies between $\fol$ and a generic
line in $\cpd$. Modulo performing a projective change of
coordinate, we may suppose that the tangencies with a generic
projective line $y=\lambda x$ are all contained in the main affine
chart of $\cpd$ so that they are given by:
\[
\lambda P(x,\lambda x)=Q(x,\lambda x)\, ,
\]
that is, by the zeros of the polynomial $\lambda P(x,\lambda
x)-Q(x,\lambda x)$. Hence the number of tangencies (counted with
multiplicity) is the degree of $\lambda P(x,\lambda x)-Q(x,\lambda
x)$. However, if $d$ is the degree of the foliation, then either
$P_d(x,y)\frac{\partial}{\partial x} +
Q_d(x,y)\frac{\partial}{\partial y}$ is radial (and consequently
$X$ has degree $d+1$) or it is not (and $X$ has degree $d$). The
first case is equivalent to have $\lambda P_d(x,\lambda
x)-Q_d(x,\lambda x)=0$, which means that $\lambda P(x,\lambda
x)-Q(x,\lambda x)$ has degree $d$. The other case only happens
when the top-degree component of $\lambda P(x,\lambda
x)-Q(x,\lambda x)$ is not zero, implying that the degree of this
polynomial is $d$. The converse is again analogous.
\end{proof}

According to Theorem~(\ref{spfoliation}) and to Lemma~(\ref{l2}),
the space $\spfl$ consisting of singular holomorphic foliation of
degree~$d$ is naturally contained in the space of homogeneous
polynomial vector fields of degree~$d$ on three variables. Besides
two such vector fields having a singular set of codimension at
least~$2$ define the same foliation if and only if they differ by
a multiplicative constant. Thus a simple counting of coefficients
yields the following corollary:

\begin{coro}
\label{coro3} The space $\spfl$ is naturally identified with a
Zariski-open set of the complex projective space of dimension
$$
(d+1)(d+3) -1 \, .
$$
\mbox{ }\qed
\end{coro}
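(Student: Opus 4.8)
The plan is to combine the identification of $\spfl$ with a suitable space of homogeneous vector fields (supplied by Theorem~\ref{spfoliation} and Lemma~\ref{l2}) with an elementary dimension count. First I would observe that, by part~(1) of Lemma~\ref{l2} together with Theorem~\ref{spfoliation}, every foliation $\fol$ of degree~$d$ is induced, via radial projection of orbits, by a homogeneous polynomial vector field $Z = H_0 \partz_0 + H_1 \partz_1 + H_2 \partz_2$ on $\C^3$ of degree~$d$ whose singular set has codimension at least~$2$. Conversely every such $Z$ produces such a foliation. So the first step is to count the dimension of the linear space $\calV_d$ of all homogeneous polynomial vector fields of degree~$d$ on $\C^3$: each component $H_i$ is a homogeneous polynomial of degree~$d$ in three variables, and the space of such polynomials has dimension $\binom{d+2}{2} = (d+1)(d+2)/2$. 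Hence $\dim_{\C} \calV_d = 3(d+1)(d+2)/2$.

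The second step is to pass from $\calV_d$ to the actual parameter space of foliations. Two vector fields $Z, Z'$ in $\calV_d$ with singular set of codimension at least~$2$ induce the same foliation on $\cpd$ if and only if $Z' = cZ$ for some $c \in \C^{\ast}$: indeed, by Definition~\ref{defncfoli} they must differ by multiplication by a holomorphic (here: globally defined, hence constant by homogeneity/degree reasons) function, and the codimension hypothesis forces this function to be a nonzero constant rather than something vanishing on a hypersurface. Therefore the foliations are parametrized by a subset of the projectivization $\Pp(\calV_d)$, which has complex dimension $\dim_{\C}\calV_d - 1 = 3(d+1)(d+2)/2 - 1$. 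A short algebraic simplification shows this is \emph{not} yet the stated number, so the third step is to account for the radial vector field: adding a multiple $h(x,y)$ of the radial field $x\partx + y\party$ — equivalently, in homogeneous terms, adding a multiple of the Euler field $z_0\partz_0 + z_1\partz_1 + z_2\partz_2$ times a homogeneous polynomial of degree $d-1$ — does not change the induced foliation. The space of homogeneous polynomials of degree $d-1$ in three variables has dimension $\binom{d+1}{2} = d(d+1)/2$, and this gives a linear subspace of $\calV_d$ acting freely (modulo scalars) on the representatives. Quotienting out, the dimension of the projective space of foliations becomes
\[
\frac{3(d+1)(d+2)}{2} - \frac{d(d+1)}{2} - 1 = \frac{(d+1)\bigl(3(d+2) - d\bigr)}{2} - 1 = \frac{(d+1)(2d+6)}{2} - 1 = (d+1)(d+3) - 1 \, ,
\]
which is exactly the asserted value. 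Finally, the locus removed — vector fields whose singular set acquires codimension~$1$, i.e.\ those whose three components share a common factor — is a proper Zariski-closed subset, so $\spfl$ is a Zariski-open set of this projective space, as claimed.

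I expect the main obstacle to be the third step: one has to argue carefully that adding an Euler-field multiple is \emph{exactly} the ambiguity in representing a given foliation by a codimension-$\ge 2$ homogeneous vector field, so that the naive count $3(d+1)(d+2)/2 - 1$ over-counts by precisely $d(d+1)/2$. This is the content implicitly verified in the proof of Lemma~\ref{l2} (where the role of $H_0$ and of the radial top-degree component is analyzed), and making it rigorous requires checking that the map from homogeneous degree-$(d-1)$ polynomials into $\calV_d$ given by $h \mapsto h\cdot(\text{Euler field})$ is injective and that its image, together with the scalars, is the full fiber of the "induces the same foliation" relation — while simultaneously the codimension-$\ge 2$ condition remains generically satisfied after such modifications. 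The remaining steps are routine binomial-coefficient bookkeeping.
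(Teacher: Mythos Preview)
Your argument is correct and follows the same coefficient-counting route the paper intends. In fact you are more careful than the paper's one-line justification: the text preceding the corollary asserts that two homogeneous degree-$d$ vector fields with singular set of codimension $\ge 2$ define the same foliation on $\cpd$ if and only if they differ by a multiplicative constant, but taken literally this would yield dimension $3\binom{d+2}{2}-1 = 3(d+1)(d+2)/2 - 1$, which is too large. Your third step---quotienting by multiples of the Euler field, a subspace of dimension $\binom{d+1}{2}=d(d+1)/2$---is exactly what is needed to reach $(d+1)(d+3)-1$, and it is implicit in the proof of Lemma~\ref{l2} (the passage $P=H_1(1,x,y)-xH_0(1,x,y)$, $Q=H_2(1,x,y)-yH_0(1,x,y)$ shows that replacing $(H_0,H_1,H_2)$ by $(H_0+hz_0,\,H_1+hz_1,\,H_2+hz_2)$ leaves the affine representative unchanged). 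Your identification of the fibre of the ``same foliation'' relation as $\{\,cZ+hR:c\in\C^\ast,\ h\ \text{homogeneous of degree }d-1\,\}$ is precisely right, and the verification that the codimension-$\ge 2$ locus remains Zariski-open after passing to the quotient is routine.
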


It should also be noted that the group of automorphims of $\cpd$,
${\rm PSL}\, (3,\C)$, has a natural action on $\spfl$ through
projective changes of coordinates.

%--------------------------------------------------------------------------------------------------------------------------------------------------------------------------------------------------------------------

\bigskip
\textbf{Example 4: Foliations on weighted projective spaces}

This is a very natural generalization of the previous example
that often appears in higher dimensional questions related
to resolution of singularities, cf. for example \cite{arnold}.
They are also similar to foliations on Hopf surfaces.

A polynomial $P$ on $n$ variables $(x_1 , \ldots ,x_n)$ is said to
be {\it quasi-homogeneous}\, with {\it weights}\, $(k_1, \ldots ,k_n)$
and degree $d$ if and only if for
every $\lambda \in \C^{\ast}$ one has
$$
P(\lambda^{k_1} z_1, \ldots , \lambda^{k_n} z_n) =\lambda^d
P  (z_1 ,\ldots ,z_n) \, .
$$
For example $P(x,y,z) = xz + y^2$ is not only quadratic (ie
homogeneous of degree $2$) but also quasi-homogeneous of degree~$4$
relative to the weights $(1,2,3)$.

Chosen a set of weights $(k_1, \ldots ,k_n)$, we have a natural action
of $\C^{\ast}$ on $\C^n \setminus \{ (0, \ldots ,0)\}$ given by
$$
\lambda . (z_1 ,\ldots ,z_n) = (\lambda^{k_1} z_1, \ldots , \lambda^{k_n} z_n) \, .
$$
Consider the quotient space of $\C^n \setminus \{ (0, \ldots ,0)\}$
where two points are identified if and only if they belong to the
same orbit of $\C^{\ast}$. The resulting space is a compact manifold
with singularities called a {\it weighted projective space}\, whose dimension
is obviously equal to~$n-1$. Whether or not it actually has singularities,
this type of manifold can be given an algebraic structure since it can be
realized as a Zariski-closed set of a complex projective space with sufficiently
high dimension. The existence of this imbedding can easily be shown by
means of Pl\"ucker coordinates.

Alternatively the quotient of this $\C^{\ast}$-action can also
be realized as the quotient of the
projective space of dimension $n-1$ by some {\it finite group of automorphism}.

Next a polynomial vector field $P_1 \partial /\partial x_1 +
\cdots + P_n \partial /\partial x_n$ is said to be {\it quasi-homogeneous}\,
with weights $(k_1, \ldots ,k_n)$ and degree $d$ if and only if for
every $\lambda \in \C^{\ast}$ one has
$$
\Lambda^{\ast}X = \lambda^{d-1} X \, ,
$$
where $\Lambda$ stands for the map $(z_1 ,\ldots ,z_n) \mapsto
(\lambda^{k_1} z_1, \ldots , \lambda^{k_n} z_n)$. An example of
quasi-homogeneous vector field with weights $(1,2,3)$ and degree~$4$
is
$$
(xz +y^2) \fracx + (2zy + 3x^5) \fracy + (x^3z -y^3 +2z^2) \fracz \, .
$$

If $X$ is quasi-homogeneous with weights $(k_1, \ldots ,k_n)$ then the
definition above implies in particular
that $X$ unequivocally defines a complex direction at each point of the
projective space with the same weights $(k_1, \ldots ,k_n)$. Being of
complex dimension~$1$, these directions can naturally be integrated
to form a singular holomorphic foliation.
Therefore quasi-homogeneous vector fields give
rise to singular holomorphic foliations on weighted projective spaces.

%--------------------------------------------------------------------------------------------------------------------------------------------------------------------------------------------------------------------

\bigskip
\textbf{Example 5: Riccati Foliation}
The classical Riccati equation is given by
$$
\frac{dy}{dx} = a(x) y^2 + b(x) y + c(x) \, ,
$$
where $x,y$ are complex variables. We are interested in the
case where $a,b,c$ are rational functions of $x$. In this case, if
$P$ denotes the least common multiple of the denominators of $a,b,c,$,
the preceding equation is equivalent to the vector field
$$
X = P(x) \partial /\partial x + (a^{\ast} (x) y^2 + b^{\ast} (x) y +
c^{\ast} (x)) \partial /\partial y \, ,
$$
with $a^{\ast}, b^{\ast}, c^{\ast}$ polynomials. Although it is possible
to compactify the associated foliation on $\cpd$, it is more natural to
compactify it in $\C {\mathbb P} (1) \times \C {\mathbb P} (1)$. Consider the
``vertical'' projection $\pi_1$ onto the first factor. The change o coordinate
$(x,y) \mapsto (x, 1/y)$ allows one to check that the vector field $X$ has
a holomorphic extension to the ``infinite'' of the fibers (or to the section
at infinity). However $X$ has in general poles on the ``fiber over infinity''
of affine coordinates $(1/x,y)$. In the initial affine coordinates $(x,y)$,
we see that the set $\{ P=0\}$ is consituted by {\it invariant fibers}\, and
contains all the (affine) singularities of the underlying foliation $\fol$. A similar
calculation applies to the fiber over infinity. Thus we conclude that the
foliation associated to $X$ has the following properties:
\begin{enumerate}

\item It admits a {\it nonzero} number of invariant fibers given in the
affine coordinates $(x,y)$ by $\{ P =0\}$. The fiber over infinity may
or may not be invariant by this foliation.

\item The union of these invariant fibers contains all the singularities
of $\fol$.

\item Away from the invariant fibers, $\fol$ is transverse to the fibers
of $\pi_1$.

\end{enumerate}

Since the fibers of $\pi_1$ are compact, a simple remark due to Ehresmann
guarantees that the restriction of $\pi_1$ to the regular leaves of $\fol$
(different from the invariant fibers) defines a covering map onto
$\C {\mathbb P} (1) \setminus \{ p_1, \ldots ,p_k\}$ where $p_1 , \ldots
,p_k \in \C {\mathbb P} (1)$ are precisely the projection of the fibers
invariant under $\fol$.

Fix a fiber $\pi_1^{-1} (p)$, with $p \not\in \{ p_1, \ldots ,p_k\}$.
Thanks to the remark above, paths contained in $\C {\mathbb P} (1) \setminus \{ p_1, \ldots ,p_k\}$
can be lifted in the leaves of $\fol$. Therefore we can consider the {\it global
holonomy}\, of $\fol$ (w.r.t. $\pi_1$). This is given by a homomorphism
$$
\rho \Pi_1 (\C {\mathbb P} (1) \setminus \{ p_1, \ldots ,p_k\}) \longrightarrow
{\rm Aut}\, (\pi_1^{-1} (p)) \simeq {\rm Aut}\, (\C {\mathbb P} (1)) \, .
$$
Since ${\rm Aut}\, (\C {\mathbb P} (1))$ is isomorphic to ${\rm PSL}\, (2,\C)$,
the homomorphism $\rho$ can also be viewed as taking values in this latter
group. In particular the theory of the Riccati equations is naturally connected to
the theory of finitely generated subgroups of ${\rm PSL}\, (2,\C)$. In particular,
in the cases where the group is in addition discrete, to the theory of Kleinian
groups and consequently, at least to some extent, to hyperbolic geometry in dimension~$3$.
We also note that, conversely, a classical result due to Birkhoff states that every
finitely generated subgroup of ${\rm PSL}\, (2,\C)$ can be relized as the monodromy group
of some Riccati equation. As a matter of fact, this correspondence is not unique: there
are several Riccati equations possessing the same monodromy group. This raises the question
of trying to find the ``simplest'' Riccati equation realizing a given monodromy group.
As far as we know, this question has not been addressed to in the literature.

%--------------------------------------------------------------------------------------------------------------------------------------------------------------------------------------------------------------------

\bigskip
\textbf{Example 6: Linear Equations}

Linear equations are among the most classical topics in complex analysis.
They generalize Riccati equations as well as many other equations such as
Gauss hypergeometric equation, Fuchsian equations and so on. Furthermore
they play a significative role in the theory of vector bundles over
Riemann surfaces.

In a classical setting we consider a meromorphic function from
$\C$ with values on ${\rm GL}\, (n ,\C)$, for some $n \geq 2$. These
are simply a family of invertible matrices of rank~$n$ whose coefficients
are meromorphic functions defined from $\C$ to $\C$. The notion of invertible
matrices of course refer to those points away from the poles of these
coefficients.

%A point $p \in \C$ where the corresponding matrix has a pole is called
%a{\it singularity} of the equation. A singularity is said to be {\it regular}\,
%if................................................................
%.............................................................

%We begin by considering a compact Riemann surface $S$.......................
%.............................................................................

%--------------------------------------------------------------------------------------------------------------------------------------------------------------------------------------------------------------------

\bigskip
\textbf{Example 7: Jouanolou Foliation}

This is a very special example of foliation on the complex projective plane.
Fixed $n \in \N^{\ast}$, the {\it Jouanolou foliation $J_n$ of degree $n$}\, is the
foliation induced on $\cpd$ by the homogeneous $1$-form
$$
\Omega = (yx^n -z^{n+1}) \, dx \; + \; (zy^n - x^{n+1}) \, dy \; + \;
(xz^n - y^{n+1}) \, dz \; .
$$
The reader will notice that the kernel of $\Omega$ always contains the radial
direction so that it naturally induces a line field, and hence a foliation, on
$\cpd$. In terms of homogeneous vector field, the foliation $J_n$ is given by
$$
y^n \fracx + z^n \fracy + x^n \fracz \, .
$$
In standard affine coordinates, $\{ z=1 \}$, the foliation is induced by the
vector field
$$
(y^n -x^{n+1}) \fracx + (1-yx^n) \fracy
$$
The main result of Jouanolou states that $J_n$ leaves {\it no algebraic
curve}\, invariant. Foliations leaving algebraic curves invariant will be discussed
in detail in Chapter~3 and the foliations $J_n$ will play a significative role
in the discussion. For the time being, let us only mention some elementary
properties of $J_n$. The proofs of the assertions below amount to direct inspection
in the spirit of the preceding example, they are therefore left to the reader.

We begin$\{x=y=z\}$ by noticing that $J_n$ is left invariant by a nontrivial group of
automorphisms of $\cpd$. To identify this group, let $\zeta$ be a primitive
$N^{\rm th}$-root of the unit with $N =n^2 +n+1$. The cyclic group $H$ generated
by the automorphism $T(x,y,z) = (\zeta z , \zeta^{-n}y, z)$ preserves $J_n$ as
well as it does the group $K$ generated by the cyclic permutation $S(x,y,z)=
(y,z,x)$. The order of $H$ is precisely $n^2+n+1$ while the order of $K$ is
obviously equal to~$3$. It can be checked that $T$ and $S$ generate the maximal
group of automorphisms of $\cpd$ preserving $J_n$.

It is a remarkable fact that all the foliations $J_n$, $n \in \N^{\ast}$, are
obtained by means of a family of quadratic foliations on $\cpd$. This can be
done as follows. Consider that map $\Upsilon_n : \, \cpd \rightarrow \cpd$
given in homogeneous coordinates by $\Upsilon_n (x:y:z) = (y^{n+1} z: z^{n+1} x :
x^{n+1} y)$. This induces a ramified covering of $\cpd$ with degree $N=n^2+n+1$.
It can be checked that $J_n$ is the pull-back $\Upsilon_n {\mathcal Q}_n$ where
${\mathcal Q}_n$ is the quadratic foliation on $\cpd$ induced by the homogeneous
vector field
$$
X_n = x(x-ny) \fracx + y(y-nz) \fracy + z(z-nx) \fracz \; .
$$
The reader will immediately check that ${\mathcal Q}_n$ has exactly~$7$
singularities. These correspond to the radial lines invariant by $X_n$, namely
the coordinates axes, $\{ x=y=z\}$, $t. (0,n+1,1)$, $t.(n+1,1,0)$ and $t.(1,0,n+1)$
where $t\in \C$. ${\mathcal Q}_n$ also leaves $3$ projective lines $l_1,l_2,l_3$
invariant, namely those induced by projection of the coordinates $2$-planes. It is
easy to see that the lines $l_1,l_2,l_3$ form a ``triangle'' containing the singularities
of ${\mathcal Q}_n$ except for the singularity $\{ x=y=z\}$. This ``triangle'' is
the ``maximal'' algebraic curve invariant by ${\mathcal Q}_n$.

Naturally $X_n$ is still invariant by the permutation $S$, and hence by the group $K$.
It is therefore possible to further ``simplify'' ${\mathcal Q}_n$ by exploting these
symmetries. This would lead us to a foliation of higher degree leaving a single irreducible
algebraic curve invariant. Further information on Jouanolou foliations can be found
in Chapter~$3$ along with some specific references.

\bigskip
\textbf{Example 9: Ramanujan Differential Equation}

This is a system of differential equations introduced by
Ramanujan that plays a significative role in the part of Number
Theory dealing with transcendent numbers. The system closed related to
the following vector field defined on $\C^3$.
$$
X = \frac{1}{12} (x^2 - y) \fracx + \frac{1}{3} (xy-z) \fracy
+ \frac{1}{2} (xz-y^2) \fracz \, .
$$
This vector field is, indeed, a particular example of class of
differential equations known as Halphen vector fields. These were
detailed studied by A. Guillot in \cite{Guillot}. In later
chapters, we shall have occasion to talk more about Halphen vector
fields. For the time being, we would like to indicate very briefly the
connection between the above vector field and the theory of transcendent numbers,
for further information we refer the reader to \cite{LNM}.
This begins with the so-called Ramanujan $P,Q,R$ functions that can be
defined by explicit formulas as follows.
\begin{eqnarray*}
P(t) & = & 1-24\sum_{n=1}^{\infty} \sigma_1 (n) t^n \, , \\
Q(t) & = & 1 + 240 \sigma_{n=1}^{\infty} \sigma_3 (n) t^n \, , \\
R(t) & = & 1-504 \sigma \sum_{n=1}^{\infty} \sigma_5 (n) t^n \, ,
\end{eqnarray*}
where $t \in \C$ and $\sigma_k (n) = \sum_{d \vert n} d^k$, ie
$\sigma_k (n)$ is the sum of the $k^{\rm th}$-powers of the
positive integers dividing $n$. Since these formulas may
look especially weird, let us mention that they can be obtained
from rather natural procedures with automorphic functions. For our
present purpose however the above definition will suffice. For example,
it makes easy to see that these
functions are defined for $\Vert t \Vert <1$. It is less easy to
see that the unit circle constitutes an ``essential boundary for
them'' in the sense that the have no holomorphic extension to a
neighborhood  of any point in the circle. The interest of these
functions for number theorists is partially due to the fact that
they assume ``specially remarkable values'' for specific choices
of $t$.

According to Ramanujan, the functions $P,Q,R$ verify the following
{\it nonautonomous}\, system of differential equations:
\begin{eqnarray*}
tP'(t) & = & \frac{1}{12} (P^2 -Q) \, , \\
tQ'(t) & = & \frac{1}{3} (PQ -R) \, , \\
tR'(t) & = & \frac{1}{2} (PR-Q^2) \, .
\end{eqnarray*}
Although this system is not autonomous, it ensures that the
``velocity vector'' of the parametrized curve $t \mapsto
(P(t), Q(t) ,R(t))$ is parallel to $X$ at the point
$(P(t), Q(t) ,R(t))$. In other words, this curve parametrizes
a leaf of the foliation $\fol$ associated to $X$. Actually, the
image of this curve also contains the point $(1,1,1)$ that technically
speaking does not belong to the leaf in question since
$(1,1,1)$ is a singular point of $X$.

If we choose $t_0 \ne 0$ (for example $t_0 = $)
and the corresponding point $(P(t_0),
Q(t_0), R(t_0))$, we can compare the functions $P,Q,R$ with the
actual solutions $(x(t),y(t),z(t))$ of the vector field
$X$ satisfying the initial value condition $(x(t_0), y(t_0),
z(t_0)) = (P(t_0), Q(t_0), R(t_0))$. This would lead us
to the simple ($1$-dimensional) equations $x'=tP'$, $y' = tQ'$
and $z' = tR'$. Therefore the solutions are
\begin{eqnarray*}
 & & x(t) = tP(t) + (1-t_0) P(t_0) - \int_{t_0}^t P(s)ds \, .\\
 & & y(t) = tQ(t) + (1-t_0) Q(t_0) - \int_{t_0}^t Q(s)ds \, .\\
 & & z(t) = tR(t) + (1-t_0) R(t_0) - \int_{t_0}^t R(s)ds \, .
\end{eqnarray*}
Naturally the effect of changing the initial data $t_0 \neq 0$ and
$(P(t_0), Q(t_0), R(t_0))$ essentially amounts to performing
a translation for the functions $(x,y,z)$ and thus is of little
importance.

Summarizing the preceding discussion, we conclude that information
on the properties of $P,Q,R$
can be extract from the study of the solutions of the vector field $X$.

%--------------------------------------------------------------------------------------------------------------------------------------------------------------------------------------------------------------------

\bigskip
\textbf{Example 10: The Lorenz Attractor}

In the course of the past few years,
the dynamical system associated to Lorenz attractor has become a
paradigmatic example of ``chaotic dynamics''. The vector field
defining the dynamical system was introduced by Lorenz in \cite{lorenz}
in connection with a evolution problem for atmospheric conditions.
In general, the domain belongs to fluid dynamics and the phenomenon
is governed by the KdV equation. Since these equations are notoriously
hard to be analysed, Lorenz thought of his vector field as a simplified
model to describe this evolution. The vector field in question is
simply given by
$$
X = 10(y-x) \fracx + (28x -y - xz) \fracy + (xy -8z/3) \fracz \, .
$$
It is therefore a quadratic vector field (obviously $X$ is not homogeneous
so that ``quadratic'' here is a abuse of language) defined on $\R^3$. Despite
its innocent
appearance, $S$ exhibits a remarkably complicated dynamics. A traditional
picture of the plotting of orbits of $X$ looks like figure~\ref{l-attractor}.

\begin{figure}[ht!]
\centering
\includegraphics[scale=0.5]{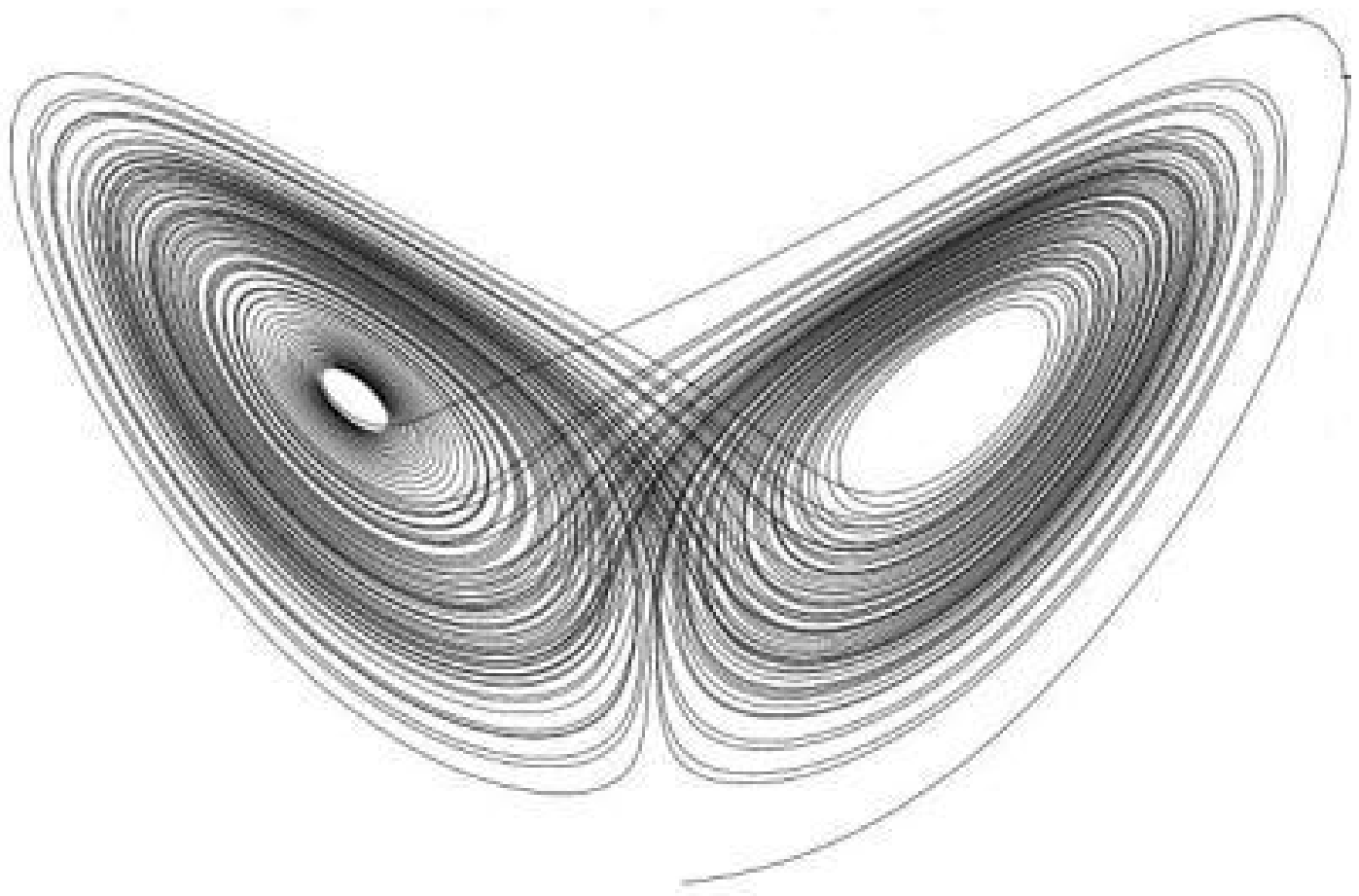}
\caption{}\label{l-attractor}
\end{figure}

Figure~\ref{l-attractor} seems to indicate the presence of an ``attractor''
for $X$. The definition of the term ``attractor'' may vary depending
on the context and we shall give none in this discussion. In any case the guiding
picture to bear in mind is that of a ``compact
invariant set'' attracting
nearby orbits. In particular, orbits of points that actually belong to the
attractor remain contained in it forever (both in ``past'' and
in ``future''). Since this attractor is not hyperbolic, it was called
``strange''. Here the reader may consider that hyperbolicity is the best known mechanism to produce this type of behavior. Besides, it has been established
that the attractor, if it existed, was {\it robust}, ie. it could not be
destroyed by performing
a small perturbation on $X$. Curiously enough, the very existence of Lorenz
attractor was settled only recently by W. Tucker through rigorous numerics
\cite{tucker}. An interesting survey on the Lorenz attractor
containing in particular precise definitions and notions is \cite{viana}.
For a beautiful geometric discussion of the properties
of this attractor, including some wonderful graphics, the reader is referred
to \cite{ghys-leys}.

Although the interest on Lorenz vector field has primarily to do with its
real dynamics, this vector field can equally well be thought of as
being a complex polynomial (and thus holomorphic) vector field defined
on $\C^3$. It is natural to consider this complexification not only for
it may provide new insight in the real dynamics, but also because it
is likely to be interesting in itself. Besides, in view of Example~3,
it may be useful to consider the holomorphic extension to $\C \Pp (3)$
of the foliation $\fol$ associated to $X$.

Elementary calculations yield some basic facts, such as singularities
and existence of invariant curves, about
$\fol$ viewed as a singular holomorphic foliation on $\C \Pp (3)$.
In the affine $\C^3$, the foliation has exactly three singularities
corresponding to the points $(0,0,0)$, $(6\sqrt{2}, 6\sqrt{2}, 27)$
and $(-6\sqrt{2}, -6\sqrt{2}, 27)$ which in addition have non-degenerate
linear part. Also the axis $\{ x=y=0\}$ is
obviously invariant by $\fol$. The plane at infinity $\Delta =
\C \Pp (3) \setminus \C^3$ is also invariant by $\fol$ and contains
$3$ other singularities of it. In coordinates $x=1/u$, $y=v/u$
and $z=w/u$, the plane at infinity is given by $\{ u=0\}$ the
restriction of $\fol$ to this plane coincides with the one induced
by the vector field $-28w \partial /\partial v + v \partial /\partial w$.
In particular, it has a singularity whose eigenvalues vanish at $u=
v=w=0$. The complement of the coordinates $v,w$ in $\Delta$ is a projective
line that is, in addition, invariant by $\fol$. This line contains
the last two singularities of $\fol$.

The picture of the existence of the Lorenz attractor and the
consequently existence of real orbits lying entirely in some compact set
of $\R^3$ raises the question of knowing if a similar phenomenon will
happen with the complex leaves of $\fol$. This is however not the case.
Still the question serves as motivation for us to state and proof
Proposition~\ref{unboundedorbits} below. This proposition will play a
significative role
in Chapter~4 as well as it does in many aspects of the study of holomorphic
foliations on complex projective spaces.

\begin{prop}
\label{unboundedorbits}
Let $\fol$ be a holomorphic foliation of $\C \Pp (n)$ and consider
the foliation $\fol^0$ obtained as the restriction of $\fol$ to
an affine $\C^n \subset \C \Pp (n)$. Then no regular leaf of
$\fol^0$ can entirely be contained in a compact set $K \subset \C^n$.
\end{prop}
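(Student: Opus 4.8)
The plan is to argue by contradiction: suppose some regular leaf $L$ of $\fol^0$ is entirely contained in a compact set $K\subset\C^n$. The key point is that a leaf of $\fol^0$ is a Riemann surface, and its closure $\overline{L}$ inside the compact leaf $\overline{\cal F}$-saturated picture on $\C\Pp(n)$ will interact with the hyperplane at infinity $\Delta=\C\Pp(n)\setminus\C^n$. First I would recall that $\fol$ extends to all of $\C\Pp(n)$ (this is exactly the content of Example~3 and Theorem~\ref{spfoliation} in dimension~$2$, with the analogous statement in higher dimension), so $\Delta$ is either invariant by $\fol$ or generically transverse to it; in either case the leaves of $\fol$ accumulating on $\Delta$ are well-understood. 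The idea is to produce a holomorphic or meromorphic function that is bounded on $L$ but nonconstant, contradicting that $L$ would then have to be compact — yet a compact leaf cannot be contained in the affine chart unless it is a point, because a compact complex curve in $\C^n$ is a point.

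More concretely, the cleanest route is the following. Pick a linear projection $\ell:\C^n\to\C$ (a coordinate function, say $z_1$) and consider its restriction to $L$. If $L\subset K$, then $z_1|_L$ is a bounded holomorphic function on the Riemann surface $L$. Now I would use the structure of $\fol$ near $\Delta$: choose the affine chart so that a generic line meets the picture transversally, and examine the behaviour of $z_1$ restricted to leaves. The main step is to show that $z_1|_L$ cannot be constant for a generic choice of the linear coordinate — otherwise $L$ would lie in an affine hyperplane, and iterating the argument with coordinates on that hyperplane would eventually force $L$ to be a single point, contradicting that $L$ is a regular leaf (a positive-dimensional leaf). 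Once $z_1|_L$ is a nonconstant bounded holomorphic function on $L$, I would invoke that $L$, being a leaf, is either parabolic or hyperbolic as a Riemann surface; in the parabolic case a bounded nonconstant holomorphic function is impossible (Liouville), and in the hyperbolic case one needs a different argument — so the cleaner approach is to instead use that $\overline{L}$ must accumulate somewhere, and a limit point of $L$ that lies in $K$ would be an ordinary point of $\fol^0$, forcing $L$ to accumulate on itself inside a flow box, which (by the Flow Box Theorem and the maximality of leaves) forces $L$ to be a nontrivial recurrent object; pushing $L$ to $\Delta$ via a one-parameter family of automorphisms of $\C\Pp(n)$ that blows the affine chart toward infinity then contradicts that $L\subset K$ is compact-contained.

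I expect the main obstacle to be handling the hyperbolic (or more precisely, the non-parabolic) case of the leaf: a bounded holomorphic function on a hyperbolic Riemann surface is perfectly possible, so a naive Liouville argument does not close the proof. The way around this is to not work on $L$ itself but on its closure in $\C\Pp(n)$ and to exploit that $K$ is a compact subset of the \emph{affine} part: since $\fol$ extends holomorphically across $\Delta$, and $\Delta$ is nonempty, one can apply an automorphism $\Phi_t\in\mathrm{PSL}(n+1,\C)$ (a homothety-type family in the affine chart) moving points of $\C^n$ out toward $\Delta$; the leaf $L$ is mapped to a leaf $\Phi_t(L)$ of the foliation $\Phi_t^*\fol$, and for $\|t\|$ large this image leaves every fixed compact subset of $\C^n$. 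Combining this with the invariance of $L$ being trapped in $K$, together with a normal-families / Hurwitz argument on the parametrizations $\phi$ of the leaves (which exist by the complex Flow Box Theorem), yields that $L$ must actually be a compact analytic curve in $\C^n$, hence a point — contradicting regularity. Making the normal-families step rigorous, i.e. controlling the local parametrizations uniformly, is the technical heart of the argument.
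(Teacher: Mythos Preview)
Your proposal has a genuine gap. You correctly identify that Liouville is the right tool, and you correctly diagnose why applying it to a coordinate function $z_1|_L$ fails: the leaf $L$ might be hyperbolic as an abstract Riemann surface, and hyperbolic surfaces carry plenty of bounded nonconstant holomorphic functions. Your fallback route via automorphisms $\Phi_t$ pushing toward $\Delta$ and a normal-families/Hurwitz argument is vague and does not close: moving $L$ by $\Phi_t$ changes the foliation to $\Phi_t^*\fol$, so you are no longer comparing leaves of the same foliation, and it is unclear what limiting object you would extract or why it would force $L$ to be compact analytic.

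The missing idea is to apply Liouville not to a function \emph{on} $L$ but to a \emph{parametrization} of $L$. The leaf $L$ is not merely a Riemann surface sitting in $\C^n$: it is an orbit of a polynomial vector field $X$ representing $\fol$ in the affine chart. Since $K$ is compact, $\|X\|$ is uniformly bounded on $K$, so by the standard existence theorem there is a fixed $\epsilon>0$ such that the local solution $\varphi$ of $X$ through any point of $K$ is defined on a disc of radius $\epsilon$ in the time variable. Because $L\subset K$, every such local solution stays in $K$, so you can repeat the argument: cover the boundary of the time-disc by new discs of radius $\epsilon$, patch via the monodromy theorem (the unions stay simply connected), and iterate. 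This extends the solution $\varphi$ to all of $\C$. Now $\varphi:\C\to K\subset\C^n$ is a bounded entire map, hence constant by Liouville, contradicting that $L$ is a regular leaf. The whole point is that compactness of $K$ forces the flow to be complete on $L$, which in turn forces $L$ to be parabolic---exactly the case in which your first instinct works.
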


It is to be noted that this proposition cannot be derived as an immediate
consequence of the maximum principle since the orbits of $\fol^0$ need not
be compact. Indeed, in the compact case, we can consider the projection
of the compact leaf on a chosen coordinate and argument that the image of
this projection must be a single point by virtue of the maximum principle.
In fact, there must be a point in the leaf corresponding to a projection of
``maximal modulus''. It then follows that the projection in question is constant.
In the case of open leaves, it is not obvious that the maximum of a
projection as above is attained so that we cannot conclude that the
projection is constant.

\begin{proof}
Suppose for a contradiction that $L$ is a regular
leaf of $\fol^0$ that is wholly contained in a compact set $K \subset
\C^3$. Consider a polynomial vector field $X$ tangent to $\fol$. The
leaf $L$ of $\fol$ can then be considered as an orbit of $X$. Since $X$
is uniformly bounded in $K$, there exists $\epsilon >0$ such that
for every $p \in K$ the solution $\varphi$ of $X$ satisfying the initial
condition $\varphi (t) = p$ is defined on a disc of radius
$r$ about $t \in \C$. Now we proceed by choosing a point $p_1 \in
L \cap K$ and considering the solution $\varphi_1$ satisfying
$\varphi_1 (t_1) =p_1$, for some $t_1 \in \C$.
Naturally $\varphi_1$ is defined on the disc
$B_{\epsilon} (t_1)$
of radius $\epsilon$ about $t_1 \in \C$. Next consider a point
$t_2$ in the boundary of $B_{\epsilon} (t_1)$. Modulo reducing
$\epsilon$ from the beginning, we can without loss of generality
suppose that $\varphi_1 (t_2) =p_2$. However $p_2$ must belong to
$K$ so that the solution $\varphi_2$ of $X$ satisfying $\varphi_2
(t_2) = p_2$ is defined on the disc $B_{\epsilon} (t_2)$ of radius
$\epsilon$ about $t_2$. Since the union $B_{\epsilon} (t_1) \cup
B_{\epsilon} (t_2)$ is simply connected, it follows from the monodromy
theorem that the solutions
$\varphi_1, \, \varphi_2$ can be patched together into a solution of
$X$ defined on $B_{\epsilon} (t_1) \cup B_{\epsilon} (t_2)$. Actually,
since $t_2$ is arbitrary in the boundary of $B_{\epsilon} (t_1)$, it
follows that this boundary can be covered by finitely many discs
as indicated above. By repeatedly using the monodromy theorem, we
conclude that the initial solution $\varphi_1$,
$\varphi_1 (t_1) =p_1$, can be extended to a disc of radius
$\epsilon + \delta$ about $t_1$ with $\delta >0$ uniformly chosen.
An obvious induction shows that this solution is defined on the
disc of radius $\epsilon + k\delta$ about $t_1$ for every $k \in \N$.
In other words, $\varphi_1$ is defined on all of $\C$.

The desired contradiction follows now from Liouville Theorem: since
$\varphi_1$ is a bounded holomorphic map from $\C$ to $\C^n$ it
must be constant. The proposition is proved.
\end{proof}

%--------------------------------------------------------------------------------------------------------------------------------------------------------------------------------------------------------------------
%--------------------------------------------------------------------------------------------------------------------------------------------------------------------------------------------------------------------
%--------------------------------------------------------------------------------------------------------------------------------------------------------------------------------------------------------------------

\section{Basic tools}

%--------------------------------------------------------------------------------------------------------------------------------------------------------------------------------------------------------------------
%--------------------------------------------------------------------------------------------------------------------------------------------------------------------------------------------------------------------
%--------------------------------------------------------------------------------------------------------------------------------------------------------------------------------------------------------------------

\subsection{The Blow-up of manifolds and of foliations}

At first sight, the ``blow-up'' may be thought as a device that
simply creates new manifolds out of previous ones. However, it will be
seen that it is particularly useful to understand the behavior of
foliations or vector fields at singular points.

\begin{defi}
The blow-up of $\C^2$ at $(0,0)$ is a complex manifold $\wdc^2$
obtained by identifying two copies of $\C^2$ in the following way:
\[
(x,t)\simeq (s,y) \Leftrightarrow \; s=\frac{1}{t};\;\;y=tx\;\; (t\neq0\; ,s\neq 0)\; ,
\]
where $(x,t)$ and $(s,y)$ are the coordinates of the two above
mentioned copies.
\end{defi}

By definition, the exceptional divisor of $\wdc^2$ is $E \subset
\wdc^2$ given by $\{x=0\}$ (resp. $\{y=0\}$) in the coordinates
$(x,t)$ (resp. $(s,y)$). Therefore $E$ is well-defined and it is
isomorphic to ${\mathbb C}{\mathbb P}\, (1)$.

The {\it blow-up mapping}\, $\pi: \wdc^2 \rightarrow \C^2$ is given
by $\pi(x,t)=(x,tx)$ and $\pi(s,y)=(sy,y)$. Moreover, it verifies the following:
\begin{itemize}
\item $\pi^{-1}(0,0)=E$;

\item $\pi:\wdc^2 \setminus E \rightarrow \C^2 \setminus
\{(0,0)\}$ is a holomorphic diffeomorphism;

\item $\pi$ is proper (i.e. the preimage of a compact set is also
compact).
\end{itemize}

Now we shall define the blow-up of a complex $2$-dimensional
manifold $M$ at a point $p\in M$. Consider a local coordinate
chart $\psi:U\rightarrow W\subset\mathbb{C}^2$ defined on a neighborhood
$U$ of $p$ and such that $\psi(p)=(0,0)$.

Let $\widetilde{W}=\pi^{-1}(W)$, where $\pi$ is the blow-up
mapping. Let $M'$ be the disjoint union of $M \setminus \{p\}$
with $\widetilde{W}$, and consider the following equivalence
relation:
\[
q_0 \simeq q_1 \Longleftrightarrow q_0\in U \setminus \{p\} \; , \;\;\;
q_1\in \widetilde{W} \setminus E \;\; \rm and \;\;
q_1=\pi^{-1}(\psi(q_0))\; .
\]
The blow-up $\widetilde{M}$ of $M$ at $p$ is the quotient
$M'/\simeq$. Notice that $\widetilde{M}$ is indeed a smooth
complex manifold for $\widetilde{W}$ is a manifold and
$\pi^{-1}\circ\psi:U\setminus \{p\}\rightarrow
\widetilde{W}\setminus E$ is a holomorphic diffeomorphism.

Similarly, there is a blow-up mapping from $\widetilde{M}$ to $M$
(which will also be denoted by $\pi$) that is proper and takes $E$
to $p$, i.e. $\pi (E)=p$. Moreover, $\pi:\widetilde{M}\setminus
E\rightarrow M\setminus \{p\}$ is a holomorphic diffeomorphism.

The blow-up of a foliation or vector field can also be defined in
a natural way. Let $X=F \frac{\partial}{\partial x}+G
\frac{\partial}{\partial y}$ be a vector field on a neighborhood
$U$ of the origin in $\mathbb{C}^2$,
where $F$ and $G$ are holomorphic functions.
Suppose that $(0,0)$ is a singularity of $X$ of
order $k$ ($k$ is the minimum between the orders of $F$ and $G$ at
$(0,0)$).

Let $F=\sum_{n=k}^\infty F_n$, and $G=\sum_{n=k}^\infty G_n$,
where $F_n$ and $G_n$ are the homogeneous components of degree $n$
of the Taylor series of $F$ and $G$, respectively.

By using the blow-up mapping $\pi:\widetilde{\mathbb{C}}^2 \setminus
E \rightarrow\mathbb{C}^2 \setminus \{(0,0)\}$ in the $(x,t)$
coordinates, namely $\pi(x,t)=(x, tx)$ ($x\neq 0$),
$\pi^\ast X$ has a natural meaning. In fact, one has
\[
\ \pi^\ast X=\left(%
\begin{array}{cc}
  1 & 0 \\
  -t/x & 1/x \\
\end{array}%
\right)
\left(%
\begin{array}{c}
  x^k F_k(1,t)+ x^{k+1}(F_{k+1}(1,t)+xF_{k+2}(1,t)+\ldots) \\
  x^k G_k(1,t)+ x^{k+1}(G_{k+1}(1,t)+xG_{k+2}(1,t)+\ldots) \\
\end{array}%
\right) \; .\]
Setting $f(x,t) = (F_{k+1}(1,t)+xF_{k+2}(1,t)+\ldots)$ and
$g(x,t) = (G_{k+1}(1,t)+xG_{k+2}(1,t)+\ldots)$, the above
equation becomes
\begin{equation}
\pi^\ast X=x^k [F_k(1,t)+ xf(x,t)] \; \frac{\partial}{\partial
x}+x^{k-1}[-tF_k(1,t)-xtf(x,t)+G_k(1,t)+xg(x,t)] \;
\frac{\partial}{\partial t} \; . \label{eq5}
\end{equation}
It is to be noted that
this vector field admits a holomorphic extension
$\tXX$ to $E$ ($\{x=0\}$). Thus, $\tXX$ is defined to be the
{\it blow-up}\, of $X$ at the singular point $(0,0)$. Moreover, if
$k\geq 2$ then $\tXX$ is singular at every point of $E$.
Similarly, the {\it blow-up} of the foliation $\fol$ associated
to $X$ is the foliation $\widetilde{\fol}$ associated to $\tXX$.

The behavior of $\widetilde{\fol}$ (and of $\tXX$) on a
neighborhood of $E$ is significantly different, depending on
whether or not $G_k(1,t)-tF_k(1,t)$ is identically zero. Let us
analyze each case separately.

\begin{itemize}
\item If $G_k(1,t)-tF_k(1,t)$ is {\it not}\, identically
{\it zero}.

Dividing Equation~(\ref{eq5}) by $x^{k-1}$, the foliation remains
unchanged and $\widetilde{\fol}|_{\wdc^2 \setminus E}$ is given by
\[
x [F_k(1,t)+ xf(x,t)] \; \frac{\partial}{\partial
x}+[-tF_k(1,t)-xtf(x,t)+G_k(1,t)+xg(x,t)] \;
\frac{\partial}{\partial t} \; .
\]
Thus, the singularities of $\widetilde{\fol}$ on $E$ are determined by
the equation $G_k(1,t)-tF_k(1,t)=0$. Furthermore, we see that
$E$ ($\{x=0\}$) is invariant under the foliation in question.

\item If $G_k(1,t)-tF_k(1,t) \equiv 0$ (equivalently, if
$F_k(x,y)\frac{\partial}{\partial
x}+G_k(x,y)\frac{\partial}{\partial y}$ is a multiple of the
radial vector field $x\frac{\partial}{\partial
x}+y\frac{\partial}{\partial y}$)

In this case, we may divide Equation~(\ref{eq5}) by $x^k$ so
as to obtain:
\[
[F_k(1,t)+ xf(x,t)] \; \frac{\partial}{\partial
x}+[-tf(x,t)+g(x,t)] \; \frac{\partial}{\partial t} \; .
\]
Hence
\[
\widetilde{\fol}|_E=F_k(1,t) \; \frac{\partial}{\partial
x} + [-tF_{k+1}(1,t)+G_{k+1}(1,t)] \; \frac{\partial}{\partial t}
\; .
\]

Notice that $F_k(1,t)$ is {\it not}\, identically {\it zero}, for
if it were, then $G_k(1,t)\equiv 0$ and $(0,0)$ would not be a
singularity of order $k$ as assumed from the beginning. Hence, the exceptional
divisor $E$ is not preserved by $\widetilde{\fol}$ and
$\widetilde{\fol}$ may have no singularity contained in $E$.
\end{itemize}

The leaves of $\widetilde{\fol}$ are transverse to $E$ and are
projected by $\pi$ onto curves passing through $(0,0)$ which
are obviously invariant by $\fol$. Due to the fact that $\pi$ is proper, the
projection is an analytic set (the common zeros of a finite number
of holomorphic functions). A local analytic curve invariant by a
foliation and containing the singularity of $\fol$ is called a
{\it separatrix} of $\fol$. A singularity possessing infinitely many
separatrizes is called {\it dicritical}. These singularities will
be characterized in Chapter~2 in connection with Seidenberg's theorem.

%--------------------------------------------------------------------------------------------------------------------------------------------------------------------------------------------------------------------
%--------------------------------------------------------------------------------------------------------------------------------------------------------------------------------------------------------------------
%--------------------------------------------------------------------------------------------------------------------------------------------------------------------------------------------------------------------

\subsection{Intersection numbers:}

Let $M$ be a real $4$-dimensional compact oriented
manifold. Consider two submanifolds $S_1, S_2$ of
real dimension~$2$ with the induced orientation.

Recall that the (set-theoretic) intersection of $S_1,
S_2$ consists of a finite number of points provided
that $S_1$ is transverse to $S_2$ (we also say that $S_1,
S_2$ are in general position). If $p \in M$ is a
point of $S_1 \cap S_2$, which is necessarily isolated,
then the corresponding tangent space of $M$ splits into
a direct sum
$$
T_p M = T_p S_1 \oplus T_p S_2 \, ,
$$
where $T_p S_1$ (resp. $T_p S_2$) stands for the tangent space
to $S_1$ (resp. $S_2$) at $p$.
In particular the orientations of $T_p S_1$ and $T_p S_2$
taken together yield an orientation for $T_p M$ which
may or may not coincide with the original orientation
of $T_p M$. We let $\nu (p) =1$ if these two orientations coincide
and we let $\nu (p) = -1$ otherwise. With these notations
we define the {\it intersection number}\, $\sharp (S_1 \cap
S_2)$ as
$$
\sharp (S_1 \cap S_2 ) = \sum_{p_i \in S_1 \cap S_2}
\nu (p_i) \,
$$
provided that $S_1 ,S_2$ are in general position. In the general
case, we perturb $S_1$ into $S_1'$ so that $S_1' ,S_2$ are
in general position and then set $\sharp (S_1 \cap S_2 )
= \sharp (S_1' \cap S_2 )$. The existence of the perturbation
$S_1'$ and the fact that $\sharp (S_1' \cap S_2 )$ does not
depend on the perturbation chosen are standard basic facts
of Differential Topology (see for instance \cite{top}).
When $S_1= S_2 = S$, the number $\sharp (S_1 \cap S_2 )$
is called the {\it self-intersection}\, of $S$.

\begin{obs}\label{seila}
{\rm Assume for a moment that $M, S_1, S_2$ are all
complex manifolds (and that $S_1, S_2$ are submanifolds). If $S_1,
S_2$ are transverse and $p \in S_1 \cap S_2$, then the fact that
complex manifolds have a preferred orientation implies that the
orientations of $M , S_1, S_2$ at $p$ are all compatible in the
sense that $\nu (p) =1$. Thus, if $S_1, \, S_2$ are complex
submanifolds in general position, one automatically has $\sharp (S_1
\cap S_2 ) \geq 0$.

Nonetheless if $S_1, S_2$ are not in general position, then it is
possible to have $\sharp (S_1 \cap S_2 ) < 0$. In fact, to obtain a
perturbation $S_1'$ of $S_1$ in general position with $S_2$, it may
be necessary to work in the differential category, i.e. the perturbed
submanifold $S_1'$ need no longer be a complex submanifold.}
\end{obs}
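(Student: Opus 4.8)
The plan is to treat the two assertions of the remark separately: the first is a purely local, linear-algebra statement about orientations, and the second will be settled by exhibiting one explicit compact complex surface carrying a complex curve of negative self-intersection.

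For the first assertion everything reduces to the following elementary fact about a complex vector space $V$ of complex dimension~$2$: if $W_1 \neq W_2$ are complex lines in $V$ (so that $V = W_1 \oplus W_2$), then the orientation of $V$ obtained by concatenating the canonical complex orientations of $W_1$ and of $W_2$ coincides with the canonical complex orientation of $V$. To prove this I would first recall that a $\C$-linear automorphism $A$ of a complex vector space satisfies $\det_{\R} A = |\det_{\C} A|^2 > 0$, so the canonical orientation of a complex vector space is intrinsic, i.e. independent of the complex basis used to define it. Then choose $v_1 \in W_1 \setminus \{0\}$ and $v_2 \in W_2 \setminus \{0\}$; since $V = W_1 \oplus W_2$, the pair $(v_1, v_2)$ is a $\C$-basis of $V$, so the canonical orientation of $V$ is represented by the real basis $(v_1, iv_1, v_2, iv_2)$, while those of $W_1$ and $W_2$ are represented by $(v_1, iv_1)$ and $(v_2, iv_2)$; their concatenation is precisely $(v_1, iv_1, v_2, iv_2)$. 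Applying this with $V = T_pM$, $W_1 = T_pS_1$, $W_2 = T_pS_2$ shows $\nu(p) = 1$ at every point of $S_1 \cap S_2$, hence $\sharp(S_1 \cap S_2) \geq 0$ whenever $S_1, S_2$ are complex submanifolds in general position.

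For the second assertion I would use the blow-up construction just introduced. Let $M = \widetilde{\cpd}$ be the blow-up of $\cpd$ at a point $p$, with exceptional divisor $E$; I claim $\sharp(E \cap E) = -1$. Pick two distinct projective lines $\ell_1, \ell_2 \subset \cpd$ through $p$; their strict transforms $\widetilde{\ell_1}, \widetilde{\ell_2}$ in $M$ are disjoint (distinct directions at $p$ get separated by the blow-up) and each meets $E$ transversally in a single point, so $\sharp(\widetilde{\ell_1} \cap \widetilde{\ell_2}) = 0$. Using the standard facts that $\sharp$ depends only on homology classes, that the two strict transforms are homologous, that the total transform satisfies $\pi^{\ast}[\ell] = [\widetilde{\ell}] + [E]$, and that $(\pi^{\ast}[\ell])^2 = [\ell]^2 = 1$ while $\pi^{\ast}[\ell]\cdot[E] = 0$, expanding $0 = [\widetilde{\ell}]^2 = (\pi^{\ast}[\ell] - [E])^2 = 1 - 0 + [E]^2$ gives $[E]^2 = -1$. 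This also accounts for the closing sentence of the remark: if there were a \emph{complex} curve $E'$ homologous to $E$ and in general position with $E$, the first assertion would force $\sharp(E' \cap E) \geq 0$, contradicting $\sharp(E' \cap E) = [E]^2 = -1$; hence any perturbation of $E$ into general position must leave the holomorphic category, i.e. one is forced into the differential category exactly as the remark states.

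The routine ingredients — the normalization of the complex orientation, the standard properties of $\pi^{\ast}$ and of the homology intersection pairing on a compact oriented $4$-manifold (so that $\sharp$ computes the homological product), and the well-known behaviour of strict transforms of lines through the center of a blow-up — I would merely cite (in the spirit of \cite{top}). The one step deserving genuine care is the orientation bookkeeping in the first assertion: fixing the ordering convention $(v_1, iv_1, v_2, iv_2)$ and verifying that the orientation induced on a direct sum of oriented subspaces is the concatenation of positively oriented bases in that order. I expect this sign bookkeeping, rather than anything conceptual, to be the only real pitfall.
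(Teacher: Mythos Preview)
Your argument is correct on both counts. The paper does not actually supply a proof of this remark: it is stated as an observation, and the key example of a negative self-intersection (the exceptional divisor of a blow-up having $\sharp(E\cap E)=-1$) is recorded immediately afterwards as a ``Fact'' whose verification is explicitly left to the reader. Your proposal thus fills in exactly what the paper omits, and your choice of the exceptional divisor as the witnessing example is precisely the one the paper has in mind.

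One small comment: your computation of $[E]^2=-1$ via $\pi^{\ast}[\ell]$ and strict transforms is perfectly fine, but slightly more elaborate than needed. Since the paper works with $\widetilde{\C}^2$ rather than the blow-up of $\cpd$, a more direct route (and closer to the paper's setup) is to take the two local charts $(x,t)$, $(s,y)$ of $\widetilde{\C}^2$, observe that $E$ is the zero section of the tautological line bundle, and read off the self-intersection from the transition function $t$ of the normal bundle computed in the paper's Example~1.2.E. Either way the conclusion is the same, and your contrapositive argument explaining why no \emph{holomorphic} perturbation into general position can exist is a nice touch that makes the final sentence of the remark fully explicit.
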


A standard topological argument ensures
that $\sharp (S_1 \cap S_2 )$ depends only
on the {\it homology class}\, of $S_1, S_2$. In the present case,
this gives rise to a symmetric {\it pairing}
$$
\langle \; \, , \, \; \rangle \, : \; H_2 (M ,\Z) \times
H_2 (M, \Z) \longrightarrow \Z
$$
which can be extended by {\it linearity}\, to an bilinear form
on $H_2 (M, \R)$ called the {\it intersection form}\, of $M$.
In particular the {\it signature}\, of the intersection form is
an important invariant of a differentiable $4$-manifold.

Let us close this short discussion with two simple facts whose
verification is left to the reader. They are going to be freely used
in the course of this text.

\vspace{0.2cm}

\noindent {\sc Fact}:  The self-intersection of
the exceptional divisor $\pi^{-1} (0)$ of $\widetilde{\C}^2$
is equal to~$-1$.

\vspace{0.2cm}

\noindent {\sc Fact}: Suppose that $S$ is a compact
Riemann surface $S \subset M$ realized as a submanifold on a complex
surface $M$. Suppose that $\widetilde{M}$ is the blow-up of $M$ at
a point $p \in S \subset M$. Then, if $\widetilde{S}$ denotes the
proper transform of $S$, one has
$$
\sharp (\widetilde{S} \cap \widetilde{S}) = \sharp (S \cap S) - 1 \, .
$$

%--------------------------------------------------------------------------------------------------------------------------------------------------------------------------------------------------------------------
%--------------------------------------------------------------------------------------------------------------------------------------------------------------------------------------------------------------------
%--------------------------------------------------------------------------------------------------------------------------------------------------------------------------------------------------------------------

\subsection{Complex and holomorphic vector bundles:}

Assume that $M$ is a compact differentiable manifold. A
{\it $C^{\infty}$-complex vector bundle on $M$}\, consists
of a family of {\it complex vector spaces}\, $\{ E_x \}_{x \in M}$
parametrized by $M$ along with a $C^{\infty}$-manifold structure
on $E = \bigcup_{x \in M} E_x$ such that:
\begin{enumerate}

\item The projection map $\pi : E \longrightarrow M$ taking $E_x$
to $x \in M$ is a $C^{\infty}$-map.

\item For every $x_0 \in M$, there exists an open
neighborhood $U \subset M$ of $x_0$ and a diffeomorphism
$$
\varphi_U : \pi^{-1} (U) \longrightarrow U \times C^k
$$
commuting with $\pi_1$, namely $\pi_{\mid E_x} = \pi_1 \circ
\varphi_{U \mid E_x}$ or $\pi_1 \circ \varphi_U (E_x) ={\rm const}$.

\item The diffeomorphism $\varphi_U$ considered above takes
$E_x$ isomorphically onto $\{ x \} \times \C^k$.

\end{enumerate}
The integer $k$ is said to be the {\it rank}\, of the vector
bundle and the diffeomorphisms $\varphi_U$ considered above are called its
{\it local trivializations}. Given two
trivializations $\varphi_U , \varphi_V$ of the same vector
bundle, the map
$$
g_{UV} \, : \; U \cap V \longrightarrow {\rm GL} \, (k, \C)
$$
given by $g_{UV} (x) = (\varphi_U \circ \varphi_V^{-1})_{\mid \{x
\} \times \C^k}$ is differentiable. In addition these maps
satisfy the identities
\begin{eqnarray}
 & & g_{UV} (x) \, . \, g_{VU} (x) = {\rm Id}  \label{ident1}\\
 & & g_{UV} (x) \, . \, g_{VW} (x) \, . \, g_{WU} (x)
= {\rm Id} \, \label{ident2} ,
\end{eqnarray}
where ${\rm Id}$ stands for the Identity map and the dot ``$.$''
for the multiplication of matrices in ${\rm GL} \, (k, \C)$.
The functions $g_{UV}$ are called the {\it transition functions}\,
of the complex vector bundle.

Conversely, to {\it specify}\, a structure of complex vector bundle
on $M$, all we need is an open covering ${\mathcal U} = \{ U_{\alpha}
 \}$ of $M$ together with $C^{\infty}$-maps $g_{\alpha \beta}
: U_{\alpha} \cap U_{\beta} \rightarrow {\rm GL} \, (k, \C)$
satisfying the identities~(\ref{ident1}) and~(\ref{ident2}).
The description of complex vector bundles by means of transition
functions is well-suited to define operations with them.
Precisely let $E \rightarrow M$ and $F \rightarrow M$ be complex
vector bundles having transition functions
$\{ g_{\alpha \beta} \}$ and $\{ h_{\alpha \beta} \}$ with respect to
the same covering ${\mathcal U} = \{ U_{\alpha} \}$ of $M$.
Denote by~$k$ (resp.~$l$) the rank of $E$ (resp. $F$). We give
below some standard examples of new vector bundles constructed
from the previous ones.

\noindent 1. $E \oplus F$ (the direct sum). This vector bundle
is determined by the transition functions $j_{\alpha \beta}$
given by
$$
j_{\alpha \beta} (x) = \left( \begin{array}{cc}
                               g_{\alpha \beta} (x) & 0 \\
                                0 & h_{\alpha \beta} (x)
                               \end{array} \right)
\in {\rm GL} \, (\C^k \oplus \C^l) \subset {\rm GL} \, (\C^{k+l}) \, .
$$

\noindent 2. $E \otimes F$ (the tensor product). The transition
functions are $j_{\alpha \beta} (x) = g_{\alpha \beta} (x)
\otimes h_{\alpha \beta} (x) \in {\rm GL} \, (\C^k \otimes \C^l)$.

\noindent 3. $\Lambda^r E$ (the exterior power). One sets
$j_{\alpha \beta} (x) = \Lambda^r g_{\alpha \beta} (x) \in
{\rm GL} \, ( \Lambda^r \C^k)$.

\noindent In particular $\Lambda^k E$ is a {\it line bundle}\, given by
$j_{\alpha \beta} (x) = {\rm Det} \, g_{\alpha \beta} (x) \in
{\rm GL} \, (\C ) \simeq \C^{\ast}$. This line bundle is referred
to as the {\it determinant bundle of $E$}\,.

\begin{defi}
A subbundle $F \subset E$ of a bundle $E$ is a collection
$\{ F_x \subset E_x \}_{x \in M}$ of subspaces of the fibers
$E_x$ such that $F = \bigcup_{x \in M} F_x$ is a submanifold
of $E$.
\end{defi}

The condition that $F\subset E$ is a submanifold is equivalent
to saying that, for every $x \in M$, there exists a neighborhood
$U \subset M$ of $x$ and a trivialization $\varphi_U : E_U
\rightarrow U \times \C^k$ such that
$$
\varphi_{U \mid F_U} : F_u \rightarrow U \times \C^l \subset
U \times \C^k \, .
$$
Relative to these trivializations, the transition functions
$g_{UV}$ of $E$ have the form
$$
g_{UV} (x) = \left( \begin{array}{cc}
                    h_{UV} (x) & i_{UV} (x) \\
                      0 & j_{UV} (x)
                     \end{array} \right) \, .
$$
The bundle $F$ will have transition functions $h_{UV}$. Notice
that the maps $j_{UV}$ verify the identities~(\ref{ident1}),
(\ref{ident2}) so that they define themselves a vector bundle
on $M$. This vector bundle, denoted by $E/F$, is called the
quotient bundle of $E, F$. As vector spaces we have $(E/F)_x
= E_x /F_x$, nonetheless there is no natural notion of orthogonality.

It is natural to work out the condition on two sets of
transition functions defined w.r.t. the same covering of $M$ so that
they define the same vector bundle.

\begin{defi}
\begin{itemize}
\item[1)] A {\it map between vector bundles} $E, \, F$ is a $C^{\infty}$ map $f: E \rightarrow F$ such that $f(E_x) \subseteq E_y$ and $f_x: E_x \rightarrow F_x$ is linear. The vector bundle $E$ is said isomorphic to $F$ if there if $f$ such that $f_x$ is an isomorphism for all $x \in M$. A vector bundle isomorphic to the product $M \times \C^k$ is called trivial.

\item[2)] A {\it section} $\delta$ of the vector bundle $E \rightarrow^{\pi} M$ over $U \subseteq M$ is a $C^{\infty}$-map $\sigma: U \rightarrow E$ such that $\sigma(x) \in E_x$ for all $x \in U$. In other words, is a map $\sigma: U \rightarrow E$ such that $\pi \circ \sigma = Id$. The set of sections (resp. local sections) is denoted by $\Gamma(E)$ (resp. $\Gamma_U(E)$).

\item[3)] A {\it frame} for $E$ over $U \subseteq M$ is a collection of $\sigma_1, \, \ldots, \, \sigma_k$ sections of $M$ over $U$ such that $\{\sigma_1(x), \, \ldots, \, \sigma_k(x)\}$ is a basis of $E_x$ for all $x \in U$. A frame is essentially the same thing as a trivialization over $U$.
\end{itemize}
\end{defi}

Now let us assume that $M$ is a {\it complex manifold}. A
{\it holomorphic vector bundle}\, $E \rightarrow M$ is a complex
vector bundle together with a structure of complex manifold
on $E$ such that the trivializations
$$
\varphi_U \, : \; E_U \longrightarrow U \times \C^k
$$
are holomorphic diffeomorphisms (we say that they are holomorphic
trivializations). Transition maps $g_{UV} : U \cap V \rightarrow
{\rm GL} \, (k ,\C)$ are now {\it holomorphic}. Conversely given
holomorphic maps $g_{UV} : U \cap V \rightarrow
{\rm GL} \, (k ,\C)$ satisfying identities~(\ref{ident1}),
(\ref{ident2}), we can construct a holomorphic vector bundle
realizing these maps as its transition functions.

\noindent {\bf Example 1.2.D}: The complex tangent bundle.

Let $M$ be a complex manifold and $T_xM$ the complex
tangent space of $M$ at $x\in M$ whose dimension over $\C$
is~$2N$. Consider a neighborhood
$U$ of $x$ with a coordinate chart $\psi_U : U \rightarrow \C^n$.
This coordinate induces a map
$$
\psi_U^{\ast} : T_x M \rightarrow T_{\psi_U (x)} \C^n \simeq
\C \{ \partial /\partial x_j \; , \; \partial /\partial y_j \} \simeq
\C^{2n}
$$
for each $x$ in $U$. Hence we have a map
$$
\psi_U^{\ast} : \bigcup_{x \in U} T_xM \longrightarrow
U \times \C^{2n}
$$
which gives $TM =\bigcup_{x \in U} T_xM$ the structure of
a complex vector bundle called the {\it complex tangent
bundle}\, of $M$. Transition functions for $TM$ are
$$
j_{UV} = {\rm Jac}_{\R} (\psi_U \circ \psi_V^{-1}) \, .
$$
Now $T_xM$ splits into $T_xM = T_xM' \oplus T_xM''$
where $T_xM' = \C \{ \partial /\partial z_j \}$ with
$\partial /\partial z_j = \partial /\partial x_j
- \imath \partial /\partial y_j$. The collection of
$T_xM'$ forms a subbundle of $T_xM$ which is called the
{\it holomorphic tangent bundle}\, of $M$. The last bundle
has transition functions given by
$$
j_{UV} = {\rm Jac}_{\C} (\psi_U \circ \psi_V^{-1}) \, .
$$

\noindent {\bf Example 1.2.E}: $\widetilde{\C}^2 \rightarrow \pi^{-1}
(0)$.

Recall that the blow-up $\widetilde{\C}^2$ of $\C^2$ can be
viewed as a holomorphic line bundle (i.e. a holomorphic
vector of rank~$1$) over the exceptional divisor $\pi^{-1} (0)
\simeq \cpum$. Consider the coordinates $(x,t)$, $(s,y)$
for $\widetilde{\C}^2$ introduced in paragraph~1.2 and set
$$
U = \{ (x,t) \in \C^2 \; ; \; t \neq 0 \} \; \; \; \;
{\rm and} \; \; \; \; V = \{ (s,y) \in \C^2 \; ; \;
s \neq 0 \} \, .
$$
We also have the projection $\pi$ given in coordinates by
$\psi_U (x,t) = (x, tx)$ and $\psi_V (s,y) = (sy,y)$.
The transition function for the holomorphic
tangent space of $\widetilde{\C}^2$ is the Jacobian matrix
of $\psi_V \circ \psi_U^{-1}$ for $ts \neq 0$. Thus we
obtain
$$
{\rm Jac} \, (\psi_V \circ \psi_U^{-1}) = \left( \begin{array}{cc}
                                               -1/t^2 & 0 \\
                                                x & t
                                               \end{array} \right)
\, .
$$
Letting $x=0$ we obtain in particular the transition function
for the tangent and normal bundles of $\pi^{-1} (0)$ which
are respectively $-1/t^2$ and $t$.

Among vector bundles, line bundles (i.e. complex bundles of
rank~$1$) play a prominent role in the study of Complex ODEs as well
as in Algebraic Geometry as a whole. For this reason, we are going
to say a few words about their classification, further information
can be obtained in Chapter~4. To begin with, we note that the set of
holomorphic line bundles over a compact complex manifold $M$ form a
group under the tensor product. The group structure can be made
explicit by considering two line bunles $L,\, L'$ and a common open
covering $\{ U_i \}$ for $M$ such that both $L, \, L'$ are trivial
over the $U_i$'s. In particular, for $i\neq j$ and $U_i \cap U_j
\neq \emptyset$, we have transition functions $g_{ij}, \, g_{ij}'$
respectively for $L, \, L'$. By definition, the functions $g_{ij},
\, g_{ij}'$ take values on $\C^{\ast}$. Setting $h_{ij} = g_{ij} .
g_{ij}'$, it is immediate to check that the functions $h_{ij}$
defined on $U_i \cap U_j$ verify the natural cocycle relations so that they define
a new line bundle over $M$. The line bundle associated to these transition
functions $h_{ij}$ is $L \otimes L'$. The resulting group is called
the {\it Picard group}\, of $M$ and it is denoted by ${\rm Pic}\, (M)$.

In general the structure of ${\rm Pic}\, (M)$ is rather subtle and varies with
the holomorphic structure on $M$ (for a fixed underlying differentiable structure).
In particular, if $M$ is in addition {\it algebraic}\, then ${\rm Pic}\, (M)$ is
isomorphic to the group of divisors of $M$ (cf. Chapter~4). On the other hand,
the classification of complex line bundles in class $C^{\infty}$, i.e. in differentiable
category, depends only on certain characteristic classes associated to $M$. These
classes are called {\it Chern classes}. The first Chern class, $c_1(L)$, admits a
particularly simple geometric interpretation. We consider a $C^{\infty}$ section
$\sigma$ of $L$ such that $M$ and $\sigma (M)$ are in general position. The
intersection $M \cap \sigma (M)$ is then a codimension $2$ compact submanifold
of $M$. Thus it defines an element of the homology of $M$ in codimension~2 with
integral coefficients. Finally Poincar\'e duality yields an element in $H^2(M, \Z)$
which is exactly $c_1 (L)$.

When $M$ has dimension~$1$ i.e. it is a Riemann surface, then we have
some precise statements. Already, it follows from the preceding that the
$C^{\infty}$ classification of complex line bundles is given by the
self-intersection of the null section. In particular they are topologically
trivial if and only if the self-intersection equals {\it zero}.
As to the holomorphic classification, we have (cf. \cite{beau})

\begin{lema}
\label{linecp1}
Two holomorphic line bundles over $\cpum$ are isomorphic
(holomorphically diffeomorphic) if and only if their null-section
has the same self-intersection.
\end{lema}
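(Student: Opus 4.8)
The plan is to prove both directions by reducing line bundles over $\cpum$ to their transition functions with respect to the standard two-chart covering $U_0 = \{ [1:z] \}$, $U_1 = \{ [w:1] \}$, where $w = 1/z$ on the overlap $U_0 \cap U_1 \simeq \C^{\ast}$. A holomorphic line bundle $L$ is then given by a single transition function $g_{01} : \C^{\ast} \rightarrow \C^{\ast}$, holomorphic and nowhere vanishing. The forward direction (isomorphic bundles have null-sections with equal self-intersection) is the easy half: the self-intersection of the null section is the first Chern class evaluated on $\cpum$, which by the discussion preceding the lemma is a homeomorphism (indeed $C^{\infty}$) invariant, hence certainly a holomorphic invariant. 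So the real content is the converse.

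For the converse, the first step is to show that every holomorphic $L$ over $\cpum$ is isomorphic to one whose transition function is the monomial $g_{01}(z) = z^{-n}$ for some $n \in \Z$ (this $n$ being, up to sign, the self-intersection of the null section; one computes this for the tautological bundle exactly as in Example~1.2.E above, where the normal bundle of $\pi^{-1}(0)$ has transition function $t$, giving self-intersection $-1$). To get there, I would expand the given $g_{01}$ as a Laurent series $g_{01}(z) = \sum_{k \in \Z} a_k z^k$ on the annulus, split it multiplicatively: writing $\log g_{01}$ — which is well-defined up to a branch since $\C^{\ast}$ is the target and we only need it on the annulus after extracting the winding — as a sum of a part holomorphic on $U_0$ (nonnegative powers) and a part holomorphic on $U_1$ (nonpositive powers, in the coordinate $w$), one obtains $g_{01}(z) = z^{-n}\, h_0(z)\, h_1(1/z)$ with $h_0$ holomorphic and nonvanishing near $U_0$, $h_1$ likewise near $U_1$, and $n$ the winding number of $g_{01}$ around $0$. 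The functions $h_0, h_1$ are precisely an isomorphism of $L$ with the monomial bundle $\mathcal{O}(-n)$: they realize the coboundary relation identifying the two cocycles. This additive/multiplicative splitting on the annulus is the standard Laurent-series argument and is where the one-dimensionality of $\cpum$ is essential.

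The second step is to identify the integer $n$ with the self-intersection of the null section, so that the classification by $n$ becomes a classification by self-intersection. For the monomial bundle with transition function $z^{-n}$, one exhibits an explicit $C^{\infty}$ (or even meromorphic) section, puts it in general position with the zero section, and counts intersection points with signs — since everything is complex-analytic the signs are all $+1$ (as in Remark~\ref{seila}), and one gets exactly $n$ (with a sign convention). Combined with the first Fact in Section~1.2.2 and the blow-up computation, this pins down the correspondence. Then two bundles $L, L'$ with null-sections of equal self-intersection both reduce to the same monomial bundle $\mathcal{O}(-n)$, hence are isomorphic to each other by composing the two isomorphisms.

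**The main obstacle** I expect is the multiplicative Laurent splitting: one must be careful that after extracting the winding number $z^{-n}$, the remaining function $g_{01}(z) z^{n}$ has a holomorphic logarithm on the annulus, and then that the additive splitting of that logarithm into $U_0$- and $U_1$-holomorphic pieces exponentiates to genuinely nonvanishing holomorphic functions on (neighborhoods of) the respective charts — this requires controlling the domains of convergence of the two Laurent tails, which is routine but is the step where a hand-wavy argument could go wrong. Everything else — the Chern-class invariance giving the forward direction, and the intersection count identifying $n$ — is bookkeeping that the excerpt has already set up.
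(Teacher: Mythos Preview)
The paper does not actually prove this lemma; it simply states it with a reference to \cite{beau} and moves on to construct the models $\mathcal{O}(n)$ explicitly. So there is no argument in the text to compare against.

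Your proposal is the standard direct proof and is correct. The multiplicative Laurent splitting on $\C^{\ast}$ --- extract the winding number $z^{-n}$, take a logarithm of the remainder (well-defined since the winding is now zero), split the Laurent series of the logarithm additively into nonnegative and negative powers, and re-exponentiate --- is exactly how one computes ${\rm Pic}(\cpum) \simeq \Z$ by hand without cohomological machinery. The obstacle you flag is a non-issue here precisely because the overlap $U_0 \cap U_1$ is all of $\C^{\ast}$: the Laurent series of $\log(g_{01}(z)\, z^{n})$ converges on the full annulus $0 < |z| < \infty$, which forces the nonnegative-power tail to converge on all of $\C$ (hence define a nonvanishing holomorphic function on $U_0$) and the negative-power tail, rewritten in $w = 1/z$, to converge on all of $\C$ as well (giving the $U_1$ piece). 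The identification of $n$ with the self-intersection via an explicit section, using Remark~\ref{seila} and the computation of Example~1.2.E, is routine as you say.
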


By virtue of Lemma~(\ref{linecp1}) above, it is easy to construct
models for all line bundles over $\cpum$. Indeed to construct
a line bundle whose null-section has self-intersection $n \in \Z$
we take two copies $(x,y)$, $(u,v)$ of $\C^2$ and identify
the points satisfying $u=1/x$ and $v= x^{-n}y$.

Consider now holomorphic line bundles over an elliptic curve $S$. More precisely let
us restrict our attention to the case of topologically trivial line bundles. Unlike
rational curves, these holomorphic line bundles have ``moduli''. The following is also
very well known (cf. \cite{arnold}).

\begin{lema}
\label{lineelliptic}
Topologically trivial holomorphic line bundles over an elliptic
curve $S$ are in one-to-one correspondence with points in $\C^{\ast}$.
\end{lema}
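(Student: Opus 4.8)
The plan is to realize an elliptic curve $S$ as a quotient $\C/\Lambda$ with $\Lambda = \Z + \tau\Z$ (with $\mathrm{Im}\,\tau >0$) and to describe holomorphic line bundles on $S$ by their transition data after pulling back to the universal cover $\C$. A topologically trivial holomorphic line bundle $L$ has a first Chern class equal to zero, so after fixing a suitable open cover of $S$ its transition functions can be chosen to admit a flat representative; equivalently, the pull-back $\pi^{\ast}L$ to $\C$ is holomorphically trivial (any holomorphic line bundle on $\C$ is trivial, since $H^1(\C,\hol^{\ast})=0$), so $L$ is obtained from $\C\times\C$ by an action of $\Lambda$ covering the translation action on the base. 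Such an action is given by automorphy factors: for each $\gamma\in\Lambda$ a nowhere-vanishing holomorphic function $e_\gamma(z)$ on $\C$ satisfying the cocycle relation $e_{\gamma+\gamma'}(z) = e_\gamma(z+\gamma')\,e_{\gamma'}(z)$. The first step, then, is to reduce the classification of topologically trivial $L$ to the classification of such cocycles $\{e_\gamma\}$ modulo the coboundaries $e_\gamma(z) = f(z+\gamma)/f(z)$ coming from nowhere-vanishing holomorphic $f$ on $\C$.

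Next I would normalize the cocycle. Writing $f = e^{g}$ for a holomorphic $g$ on $\C$ (possible since $\C$ is simply connected and $f$ is nowhere zero), a coboundary has the form $e^{g(z+\gamma)-g(z)}$. Using this freedom one shows that every class contains a representative that is \emph{constant in $z$}: one first arranges $e_1 \equiv 1$ by an explicit choice of $g$, and then the cocycle relation forces $e_\tau(z+1) = e_\tau(z)$, so $e_\tau$ is a nowhere-vanishing holomorphic function on the compact curve $\C/\Z\cong\C^{\ast}$ that is, moreover, bounded (again by periodicity and a further normalization, or by a Fourier/Laurent argument showing the only $z$-dependence that survives is $e^{2\pi i k z}$ for $k\in\Z$, which can be killed by a further coboundary $e^{2\pi i k z} = f(z+1)/f(z)$ is false — rather $e^{-\pi i k (2z+\gamma)}$-type factors handle it). The upshot is that $e_\tau$ may be taken to be a nonzero constant $a\in\C^{\ast}$, and $e_1\equiv 1$. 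Thus every topologically trivial holomorphic line bundle on $S$ is isomorphic to one of the bundles $L_a$ defined by the automorphy factor $e_1\equiv 1$, $e_\tau\equiv a$, with $a\in\C^{\ast}$.

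Finally I would prove that the assignment $a\mapsto L_a$ is a bijection onto the set of isomorphism classes. Surjectivity onto the topologically trivial bundles is exactly the normalization just carried out. For injectivity, suppose $L_a\cong L_{a'}$; an isomorphism is a nowhere-vanishing holomorphic $f$ on $\C$ with $f(z+1) = f(z)$ and $f(z+\tau) = (a'/a) f(z)$. The first relation makes $f$ a nonzero holomorphic function on $\C^{\ast}$, hence $f(z) = \sum_{n\in\Z} c_n e^{2\pi i n z}$; plugging into the second relation forces $c_n e^{2\pi i n \tau} = (a'/a) c_n$ for all $n$, so at most one $c_n$ is nonzero and $a'/a = e^{2\pi i n\tau}$. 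But for $L_a$ and $L_{a'}$ to be topologically trivial with the \emph{same} degree the only consistent choice is $n=0$ (any $n\neq 0$ would correspond to tensoring by a degree-$0$ but non-split factor — here one checks $e^{2\pi i n \tau}$ with $n\ne 0$ actually does give an allowed isomorphism only when it equals $1$, i.e. never for $\mathrm{Im}\,\tau>0$, $n\neq 0$), hence $a = a'$. Therefore $a\mapsto L_a$ is the desired one-to-one correspondence with $\C^{\ast}$.

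The main obstacle is the middle step: showing rigorously that the $z$-dependence of the automorphy factor can always be removed, i.e. that the group of cocycles modulo coboundaries is captured entirely by the single constant $a\in\C^{\ast}$. This is where the special geometry of the elliptic curve enters (contrast Lemma~\ref{linecp1}, where on $\cpum$ the analogous group is trivial and only the degree survives); handling it cleanly requires either a careful Fourier-series bookkeeping on $\C/\Z\cong\C^{\ast}$ or invoking the identification of $\mathrm{Pic}^0(S)$ with the dual torus and then with $\C^{\ast}$ via the exponential. I would present the Fourier-series computation, as it is elementary and self-contained given the automorphy-factor description already set up.
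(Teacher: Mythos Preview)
The paper does not actually prove this lemma; it merely states it with a reference to \cite{arnold}, so there is no proof in the paper to compare your approach against.

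Your approach via automorphy factors is the standard one, and your surjectivity outline is essentially correct (though the normalization step is sketched loosely). However, your injectivity argument contains a genuine error. You correctly derive that an isomorphism $L_a\cong L_{a'}$ forces $f(z)=c\,e^{2\pi i n z}$ for some $n\in\Z$ and hence $a'/a=e^{2\pi i n\tau}=q^n$. But your claim that ``the only consistent choice is $n=0$'' is wrong: for every $n\in\Z$ the function $f(z)=e^{2\pi i n z}$ is a nowhere-vanishing holomorphic function on $\C$ satisfying $f(z+1)=f(z)$ and $f(z+\tau)=q^n f(z)$, so it defines a genuine holomorphic isomorphism $L_a\cong L_{aq^n}$. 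Nothing about degrees or topological triviality rules this out, since every $L_a$ has degree zero regardless of $a$; the integer $n$ appearing here is the Fourier index of the isomorphism, not a Chern class.

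The correct conclusion of your own computation is therefore that the moduli space of topologically trivial holomorphic line bundles on $S$ is $\C^{\ast}/q^{\Z}$ with $q=e^{2\pi i\tau}$. This quotient is itself an elliptic curve (indeed $\mathrm{Pic}^0(S)\cong S$), and it is \emph{not} $\C^{\ast}$. So either the lemma as stated should be read loosely as ``these bundles are parametrized by points of $\C^{\ast}$'' (i.e.\ $\C^{\ast}$ surjects onto the moduli space, which is how the paper uses it in the subsequent tubular-neighborhood example), or the intended target is $\C^{\ast}/q^{\Z}$. In either case your argument cannot be completed to a bijection with $\C^{\ast}$, because no such bijection exists.
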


%--------------------------------------------------------------------------------------------------------------------------------------------------------------------------------------------------------------------
%--------------------------------------------------------------------------------------------------------------------------------------------------------------------------------------------------------------------
%--------------------------------------------------------------------------------------------------------------------------------------------------------------------------------------------------------------------

\section{Tubular Neighborhoods}

When a (complex) manifold $M$ contains a (complex) submanifold $S$,
it is interesting to analyse the structure of a neighborhood of $S$
in $M$. This is of much interest for Complex EDOs since they occasionally
possess a {\it compact leaf}. It is then natural to investigate the structure
of our equation on a neighborhood of this leaf since it makes a lot of
information on the global behavior of the equation accessible to us. In
particular, it is often necessary to know the structure of the neighborhood
of this leaf as a submanifold.
In the real setting, this structure is determined by the
normal bundle of $S$ in $M$ as follows from the classical tubular
neighborhood theorem whose statement we are going to recall.

\begin{defi}
\label{tubuneigh}
Let $S$ be a (complex) submanifold of a (complex) manifold
$M$. A tubular neighborhood of $S$ in $M$ is an open set
$V \subset M$ together with a (holomorphic) submersion $\Pi:
V \rightarrow S$ such that
\begin{itemize}

\item $V \stackrel{\Pi}{\longrightarrow} S$ is a (holomorphic)
vector bundle.

\item $S \subset V$ is naturally identified with the
{\it zero} section of this vector bundle.
\end{itemize}
\end{defi}

\noindent According to the well-known ``tubular neighborhood'' theorem,
a tubular neighborhood always exist in the differentiable category.
Indeed a tubular neighborhood $V$ is isomorphic (as real vector
space) to the normal bundle $T_N S$ where the fiber $N_x$ over
$x \in S$ is isomorphic to $T_x M /T_x S$. Actually the tubular
neighborhood should be thought of as a ``realization'' of the normal
bundle as an open neighborhood of $S$ in $M$.

In the holomorphic setting a holomorphic tubular neighborhood need
not exist in general. This means that there may exist {\it no
holomorphic}\, diffeomorphism between an neighborhood $V$ of $S
\subset M$ and a neighborhood of the zero-section in the normal
bundle of $S$.

\begin{obs}
{\rm In principle this may cause some confusion
since the expression ``tubular neighborhood'' is often used to refer to
a neighborhood of a submanifold without taking in consideration the
existence of any diffeomorphism between the neighborhood in question
and a neighborhood of the null section of the corresponding normal bundle.
Maybe we should use another word, for example collar neighborhood, to tell
apart one situation from the other. Unfortunately, it seems that this
``abuse of language'' is already well established. Hopefully it will lead
to no misunderstanding.}
\end{obs}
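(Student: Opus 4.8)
The final statement is a Remark of purely terminological character: it observes that the phrase ``tubular neighborhood'' is employed with two inequivalent meanings (a mere neighborhood of a submanifold versus one carrying a diffeomorphism onto a neighborhood of the zero section of the normal bundle) and suggests reserving a distinct name for each usage. As such it makes no mathematical assertion, and there is literally nothing here to prove. Any ``proof proposal'' can therefore only concern the genuine mathematical claim stated just before the Remark, namely that in the holomorphic category a holomorphic tubular neighborhood of a compact complex submanifold $S \subset M$ need \emph{not} exist; I will sketch how one would substantiate that claim, since it is the sole nearby statement amenable to proof.

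The plan is to produce an explicit pair $S \subset M$ for which no biholomorphism carries a neighborhood of $S$ onto a neighborhood of the zero section of the normal bundle $N$ of $S$. The natural test case is a compact curve $S$ inside a complex surface $M$, where $N$ is a holomorphic line bundle over $S$ and the whole question reduces to whether the germ of $M$ along $S$ is \emph{linearizable}, i.e.\ biholomorphic to the germ of $N$ along its zero section. First I would describe the germ of $M$ along $S$ through the successive infinitesimal neighborhoods of $S$; the data needed to pass from one infinitesimal order to the next are classified by cohomology groups of the form $H^1(S, N^{\otimes k})$ and $H^1(S, N^{\otimes k} \otimes T S)$, and a holomorphic linearization exists exactly when all of these obstructions can be simultaneously removed by a suitable change of coordinates.

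The key step is then to choose $S$ so that these groups do not vanish and the obstructions genuinely survive. The elliptic curve furnishes such an example: by Lemma~(\ref{lineelliptic}) the topologically trivial holomorphic line bundles over an elliptic curve $S$ already form a nontrivial family parametrized by $\C^{\ast}$, so taking $S$ with topologically trivial but holomorphically nontrivial normal bundle forces nonvanishing $H^1$, and the germ of $M$ along $S$ then carries moduli that the normal bundle alone cannot record. This is precisely the phenomenon the Remark cautions the reader about. The hard part will be the cohomological bookkeeping required to show that the obstruction cannot be killed at \emph{every} infinitesimal order simultaneously rather than merely at the first order; this is where Grauert's theory of formal and convergent neighborhoods must be invoked, and where the sharp contrast with the negative self-intersection case, in which linearization does hold, becomes apparent.
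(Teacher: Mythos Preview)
You are correct that the Remark is purely terminological and requires no proof; the paper offers none either.

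Your voluntary sketch of the nearby claim (that a holomorphic tubular neighborhood need not exist) is a legitimate route but markedly different from the paper's. The paper gives a completely elementary direct construction: a fibration over the unit disc $D$ whose fibers are elliptic curves with \emph{varying} complex structure. The normal bundle of the central fiber $L_0$ is trivial, so a holomorphic tubular neighborhood would come with a holomorphic retraction $\Pi$ onto $L_0$; restricting $\Pi$ to a nearby fiber $L_\varepsilon$ would then give a biholomorphism $L_\varepsilon \to L_0$, contradicting the fact that the complex structures were chosen to be distinct. No cohomology, no infinitesimal neighborhoods, no Grauert machinery. The paper also gives a second, related example: an elliptic curve $\Gamma$ embedded in a surface $\Sigma$ obtained from $\C \times D$ by the identifications $(x,y)\simeq(x+2\pi,y)\simeq(x+\tau,\lambda y)$, where for generic $\lambda$ the curve is rigid despite having topologically trivial normal bundle.

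Your cohomological approach via $H^1(S,N^{\otimes k})$ is the ``right'' framework for a systematic theory, and it is what ultimately underlies Grauert's theorem in the negative self-intersection case. But for the mere purpose of exhibiting one counterexample it is heavy: you yourself flag the hard part as showing the obstruction survives at every infinitesimal order, and that step is genuinely delicate. The paper's example sidesteps all of this by arranging an obstruction that is visible already at the level of the complex structure of nearby curves, with a one-line argument.
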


To provide an example where a holomorphic
diffeomorphism as above does not exist, it suffices to consider a trivial
fibration over the unit disc $D \subset \C$ whose fibers are
elliptic curves with different complex structures. Clearly the
normal bundle of the fiber $L_0$ over $0 \in D$ is trivial and, if a
holomorphic tubular neighborhood existed, then the restriction of
$\Pi$ to a fiber $L_{\varepsilon}$ would be a holomorphic
diffeomorphism between $L{\varepsilon}$ and $L_0$. This is
impossible provided that we vary the complex structures of the
fibers. Here is an explicit construction.

We consider an elliptic curve $\Gamma$ of periods $2\pi$ and $\tau$. We also
consider $\C \times D$ equipped with coordinates $(x,y)$ and impose the
identifications
$$
(x,y) \simeq (x + 2\pi , y ) \simeq (x + \tau ,y) \, .
$$
Next let us consider the identifications on $\C \times D$ given
by $(x,y) \simeq (x + 2\pi , y) \simeq (x + \tau , \lambda y)$
where $\lambda \in \C^{\ast}$. With these identifications
$\C \times D$ becomes a surface $\Sigma$ and $\{ y =0 \}$ can be thought
of as an embedding of an elliptic curve $\Gamma$ in $\Sigma$.
Furthermore a neighborhood of the null section in
the normal bundle of $\Gamma$ in $\Sigma$ is equivalent to
a neighborhood of $\Gamma$ in the original $\C \times D$. However
neighborhoods of $\Gamma$ in $\C \times D$ and in $\Sigma$
are not equivalent.
Actually even more is true: $\Sigma$ is a topologically trivial
line bundle over $\Gamma$. However, if $\lambda \neq e^{2\pi i k}$,
Fourier series shows that $\Gamma$ is a {\it rigid curve} in
$\Sigma$ (i.e. $\Gamma$ does not admit a holomorphic deformation)
which contrasts with the topological triviality of $\Sigma$
as bundle over $\Gamma$.

It is an equally remarkable fact that, as far as fibrations are
concerned, the preceding example is universal in a precise sense. To
explain it, let us consider a $C^{\infty}$-trivial fibration whose
fibers are complex manifolds (as well as the basis and the
projection). More generally the reader may consider a smooth family
of compact complex manifolds, i.e. a triplet consisting of a pair of
complex spaces ${\mathcal X}$ and ${\mathcal S}$ along with a proper
holomorphic map $\Pi$ form ${\mathcal X}$ onto ${\mathcal S}$ with
connected fibers and {\it flat}. The condition of {\it flatness}\,
of $\Pi$ can be expressed by saying the ring of germs of holomorphic
functions at every point $x \in {\mathcal X}$ is a module over the
ring of germs of holomorphic functions on $\mathcal X$ at $\Pi (x)$.
If the spaces ${\mathcal X}, \, {\mathcal S}$ are smooth, this
condition simply means that $\Pi$ is a submersion. The ``universal''
character of our examples results from the following theorem.

\begin{teo}
{\rm ({\bf Fisher \& Grauert})}
A fibration as above is locally {\it holomorphically
trivial}\, if and only if all fibers are isomorphic as complex
manifolds.\qed
\end{teo}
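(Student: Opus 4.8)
The statement has two implications, and the reverse one is immediate: if $\Pi$ is locally holomorphically trivial, then every point of $\mathcal{S}$ has a neighbourhood over which $\mathcal{X}$ is a product, so nearby fibres are isomorphic; chaining such neighbourhoods along paths in the (connected) base $\mathcal{S}$ then shows that all fibres are mutually isomorphic. The content is therefore the forward implication, which is the only one I will address. Assume then that every fibre is isomorphic to a fixed compact complex manifold $X_{0}$, fix $s_{0}\in\mathcal{S}$, and note that the assertion is local on $\mathcal{S}$; so I may replace $\mathcal{S}$ by a small Stein neighbourhood of $s_{0}$. For clarity I present the argument with $\mathcal{S}$ a polydisc and $X_{s_{0}}=X_{0}$, the general complex-space case being the same with Douady's relative deformation spaces in place of the Kuranishi germ.

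The plan is to compare $\mathcal{X}\to\mathcal{S}$ with the semi-universal deformation of $X_{0}$. Because $X_{0}$ is compact, Kuranishi's theorem provides a germ of complex space $(\mathrm{Kur}(X_{0}),0)$ carrying a semi-universal family $\mathcal{Z}\to\mathrm{Kur}(X_{0})$ with central fibre $X_{0}$; after shrinking $\mathcal{S}$, our family is induced from $\mathcal{Z}$ by a holomorphic classifying map $g\colon(\mathcal{S},s_{0})\to(\mathrm{Kur}(X_{0}),0)$, i.e. $\mathcal{X}\cong g^{*}\mathcal{Z}$ over $\mathcal{S}$. If I can show that $g$ is constant, then $\mathcal{X}$ is induced from $\mathcal{Z}$ by a constant map, hence $\mathcal{X}\cong X_{0}\times\mathcal{S}$ over $\mathcal{S}$, and local triviality is proved. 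Thus everything reduces to proving \emph{$g\equiv0$ near $s_{0}$}.

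This last point is the crux. The hypothesis says precisely that $g$ takes values in the isotriviality locus $T=\{t\in\mathrm{Kur}(X_{0}):Z_{t}\cong X_{0}\}$, so it suffices to prove that $T=\{0\}$ as a germ at $0$. Here one invokes the finer structure of the Kuranishi family: the group $G=\mathrm{Aut}(X_{0})$ is a complex Lie group of finite dimension (Bochner--Montgomery, again using compactness of $X_{0}$), it acts on the germ $(\mathrm{Kur}(X_{0}),0)$ fixing the base point $0$, and for $t,t'$ close to $0$ one has $Z_{t}\cong Z_{t'}$ if and only if $t$ and $t'$ lie in a single $G$-orbit. Granting this, $T$ is the $G$-orbit of the fixed point $0$, i.e. $T=\{0\}$, so $g$ is constant. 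I expect the verification of this orbit description of nearby isomorphic Kuranishi fibres to be the genuinely hard input; it is the only place where the isotriviality hypothesis is used in an essential way, and it is equivalent to the infinitesimal statement that the Kodaira--Spencer map $\kappa_{s}\colon T_{s}\mathcal{S}\to H^{1}(X_{s},\Theta_{X_{s}})$ of the family vanishes identically, the Kodaira--Spencer class being the differential of the classifying map.

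If instead one establishes the vanishing of $\kappa$ directly --- say from upper semicontinuity of $h^{q}(X_{s},\Theta_{X_{s}})$ combined with Grauert's coherence theorem for the direct images $R^{q}\Pi_{*}\Theta_{\mathcal{X}/\mathcal{S}}$ and the semi-universality bookkeeping --- then local triviality follows by a holomorphic Ehresmann argument: the vanishing of $\kappa$ removes the obstruction to lifting the coordinate vector fields $\partial/\partial s_{i}$ on $\mathcal{S}$ to holomorphic vector fields $v_{i}$ on $\mathcal{X}$, and since $\Pi$ is proper the local flows of the $v_{i}$ are defined along entire (compact) fibres and patch together into a holomorphic isomorphism $X_{0}\times\mathcal{S}\xrightarrow{\sim}\mathcal{X}$ over $\mathcal{S}$. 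Either way, the skeleton --- easy direction; localisation; classifying map; constancy of $g$ (equivalently vanishing of $\kappa$); integration --- is routine modulo deformation theory, with properness of $\Pi$ supplying all the needed finiteness (Kuranishi's theorem, Bochner--Montgomery, Grauert coherence, finiteness of $H^{1}(X_{0},\Theta_{X_{0}})$, completeness of the flows), and the one real difficulty is promoting the merely set-theoretic hypothesis that all fibres are abstractly isomorphic to the infinitesimal rigidity $\kappa\equiv0$.
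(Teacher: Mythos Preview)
The paper does not prove this theorem: note the \qed\ appended directly to the statement, and the authors' warning at the start of the section that the material collected there is not self-contained. So there is nothing in the paper to compare against.

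Your outline is the standard modern route and is correctly organised, and you have correctly isolated the single genuine difficulty --- promoting the set-theoretic hypothesis ``every $X_s\cong X_0$'' to the infinitesimal conclusion $\kappa\equiv 0$ (equivalently, constancy of the classifying map $g$). You are also right that this is where all the content lies. Two cautions about the reductions you propose, though. First, the orbit description ``$Z_t\cong Z_{t'}$ iff $t,t'$ lie in the same $\mathrm{Aut}(X_0)$-orbit'' is not an independent lemma one can grant on the way to Fischer--Grauert: showing that the isotriviality locus $T\subset\mathrm{Kur}(X_0)$ reduces to $\{0\}$ already requires exactly the passage from pointwise isomorphism to infinitesimal triviality that you are after (indeed, $T=\{0\}$ and Fischer--Grauert imply each other in one line), so invoking it does not shrink the problem. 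Second, your alternative route via semicontinuity and Grauert coherence does not close the gap either: constancy of $h^0(X_s,\Theta_{X_s})$ is immediate from the hypothesis, but by itself it does not force the connecting map $\Theta_{\mathcal S}\to R^1\Pi_*\Theta_{\mathcal X/\mathcal S}$ to vanish. What is still missing --- and what constitutes the actual theorem --- is an argument that the relative isomorphism space $\mathrm{Isom}_{\mathcal S}(\mathcal X,\,X_0\times\mathcal S)\to\mathcal S$ (a complex space by Douady, surjective over $\mathcal S$ by hypothesis, with every fibre a copy of the Lie group $\mathrm{Aut}(X_0)$) admits local holomorphic sections; a section is exactly a local trivialisation. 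This is the substantive analytic step that Fischer and Grauert supply.
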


As a matter of fact, it appears that the existence of tubular neighborhoods
for complex submanifolds was not very intensively studied. For the case of
Riemann surfaces, a remarkable theorem due
to Grauert gives a sufficient condition for the existence of
a holomorphic tubular neighborhood.

\begin{teo}\label{graue}
{\rm ({\bf Grauert})} Assume that $S$ is a Riemann surface embedded
as complex submanifold of a complex surface $M$. If the self-intersection
of $S$ is strictly negative, then there exists a holomorphic
tubular neighborhood for $S$ in $M$.
\end{teo}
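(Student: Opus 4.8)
The plan is to show that a sufficiently small neighbourhood $V$ of $S$ in $M$ is biholomorphic, through a map fixing $S$ pointwise, to a neighbourhood of the zero section of the normal bundle $N=N_S(M)$; since $\dim_{\C} S =1$ and $\dim_{\C} M =2$, this $N$ is a holomorphic \emph{line}\, bundle over $S$, and the resulting identification together with the bundle projection $N\to S$ yields the holomorphic tubular neighbourhood demanded by Definition~\ref{tubuneigh}. Throughout, write $\calL\subset\hol$ for the ideal sheaf of $S$: as $S$ is a smooth divisor one has $\calL^n/\calL^{n+1}\simeq (N^{\ast})^{\otimes n}$ for every $n\geq 1$. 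Let $S_n=(S,\hol/\calL^{n+1})$ be the $n$-th infinitesimal neighbourhood of $S$ in $M$, and let $S_n'$ be the analogous infinitesimal neighbourhood of the zero section inside $N$. The argument has a formal (infinitesimal) stage and an analytic (convergence) stage, and it is the hypothesis $\sharp (S\cap S)<0$ that makes both of them work.

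\emph{Formal stage.}\, Set $e=\sharp (S\cap S)<0$, so that $\deg N^{\ast}=-e=|e|>0$, and let $g$ be the genus of $S$. One builds by induction on $n$ a compatible family of isomorphisms $\dv_n\colon S_n\to S_n'$, each inducing the identity on $S_0=S$, starting from the canonical $\dv_1$ (the first infinitesimal neighbourhood of a smooth divisor always coincides with that of the zero section of its normal bundle, both being the square-zero extension of $\hol_S$ by $N^{\ast}$ whose gluing is linear modulo $\calL^2$). Given $\dv_{n-1}$, standard obstruction theory along the square-zero extension $S_{n-1}\hookrightarrow S_n$ places the obstruction to the existence of a lift $\dv_n$ in $H^1\big(S,\,\Theta_M|_S\otimes (N^{\ast})^{\otimes n}\big)$, the set of lifts (when non-empty) being a torsor over $H^0\big(S,\,\Theta_M|_S\otimes (N^{\ast})^{\otimes n}\big)$. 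Tensoring the normal sequence $0\to\Theta_S\to\Theta_M|_S\to N\to 0$ by $(N^{\ast})^{\otimes n}$ gives
\[
0\longrightarrow \Theta_S\otimes (N^{\ast})^{\otimes n}\longrightarrow \Theta_M|_S\otimes (N^{\ast})^{\otimes n}\longrightarrow (N^{\ast})^{\otimes (n-1)}\longrightarrow 0 \, ,
\]
whose outer bundles have degrees $2-2g+n|e|$ and $(n-1)|e|$; these exceed $2g-2$ once $n$ is large, so the corresponding $H^1$ vanish and with them the obstruction group. In the case of chief interest to us, $S\simeq\cpum$ (for instance an exceptional divisor of a blow-up), one has $\Theta_S\otimes(N^{\ast})^{\otimes n}=\hol(2+n|e|)$ and $(N^{\ast})^{\otimes(n-1)}=\hol((n-1)|e|)$ with $|e|\geq 1$, so \emph{all}\, these $H^1$ vanish for every $n\geq 1$ and the induction runs with no obstruction whatsoever. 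The outcome is a formal isomorphism $\widehat{\dv}$ (the projective limit of the $\dv_n$) between the formal completion of $M$ along $S$ and that of $N$ along its zero section.

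\emph{Analytic stage --- the crux.}\, It remains to promote $\widehat{\dv}$ to an honest biholomorphism between genuine neighbourhoods, and this is where negativity does the essential work. Fix a finite cover of $S$ by trivialising charts of $N$; the successive solutions of the cohomological equations above amount to solving Cousin/$\bar\partial$-type problems for the bundles $(N^{\ast})^{\otimes n}$. Because $N^{\ast}$ has positive degree these bundles become increasingly positive, so the associated solution operators can be bounded with constants growing at most polynomially in $n$, while the Taylor remainders one is correcting carry a geometrically small factor coming from the shrinking of the polydiscs involved. A rapidly convergent iteration --- a majorant estimate, or a Newton/Grauert ``bumping'' scheme --- then shows that the series defining $\widehat{\dv}$ converges on a full neighbourhood of $S$, yielding the sought biholomorphism $\dv$ from $V$ onto a neighbourhood of the zero section of $N$. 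Composing with the line-bundle projection gives the submersion $\Pi$ of Definition~\ref{tubuneigh}, and the image being (the germ of) the total space of a line bundle over $S$ completes the proof. I expect this convergence step to be the genuine obstacle; when $S\simeq\cpum$ one may afterwards invoke Lemma~\ref{linecp1} to recognise the resulting line bundle as the one whose zero section has self-intersection $\sharp(S\cap S)$, but producing the convergent linearisation in the first place still rests on Grauert's estimates.
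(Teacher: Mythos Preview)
The paper does not contain a proof of this theorem: it is stated as a result of Grauert, then immediately used to derive the corollary on contracting $(-1)$-curves, and afterwards only commented upon informally. There is therefore no proof in the paper for your attempt to be compared against.

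A brief assessment of your outline on its own merits. The two-stage scheme (formal linearisation via obstruction theory, then a convergence argument) is indeed the right architecture for results of this kind. However, your formal stage is complete only for $S\simeq\cpum$: for higher genus you establish vanishing of $H^1\big(S,\Theta_M|_S\otimes (N^{\ast})^{\otimes n}\big)$ only for $n$ large, and the low-$n$ obstruction groups are in general non-zero when $g\geq 2$ and $|e|$ is small, so the inductive step does not go through as written. Note that the only application the paper makes of the theorem is precisely the rational case (the corollary on $(-1)$-curves), where your argument for the formal stage is sound. As for the analytic stage, you correctly identify it as the crux, but what you have written is a description of the strategy rather than a proof: the actual estimates controlling the iteration are the substance of Grauert's contribution and are not supplied here.
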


Before discussing further Theorem~(\ref{graue}), let us state a useful
corollary which was already known to the Italian geometers. If $S$ is a Riemann
surface in a complex surface $M$, we say that $S$ is
{\it contractible} if there exists another complex surface
$N$ and a proper holomorphic map $\pi : M \rightarrow N$
which collapses $S$ into a point $p \in M$ and induces
a holomorphic diffeomorphism from $M\setminus S$
to $N \setminus \{ p \}$.

\begin{coro}
Let $S$ be a rational curve in $M$ and assume that $S$ has
self-intersection~$-1$. Then $S$ is contractible.
\end{coro}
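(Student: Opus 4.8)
The plan is to combine Grauert's theorem (Theorem~\ref{graue}) with the explicit description of the blow-up as a line bundle given in Example~1.2.E. Since $S$ has self-intersection $-1<0$, Theorem~\ref{graue} provides a holomorphic tubular neighborhood $V$ of $S$ in $M$; that is, $V$ is biholomorphic to a neighborhood of the zero section of the normal bundle $\calL$ of $S$ in $M$, with $S$ corresponding to the zero section. Now $S$ is a rational curve, hence isomorphic to $\cpum$, and the self-intersection of the zero section of $\calL$ equals $\sharp(S\cap S)=-1$. By Lemma~\ref{linecp1}, $\calL$ is therefore holomorphically isomorphic to the unique line bundle over $\cpum$ whose null section has self-intersection $-1$, which is precisely $\wdc^2$ regarded as a line bundle over the exceptional divisor $E=\pi^{-1}(0)$ (Example~1.2.E). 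Consequently there is a neighborhood $\widetilde W$ of $E$ in $\wdc^2$ and a biholomorphism $\Phi:V\to\widetilde W$ carrying $S$ onto $E$.

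Next I would transport the blow-down through $\Phi$. Recall from Section~1.4.1 that the blow-up map $\pi:\wdc^2\to\C^2$ is proper, satisfies $\pi^{-1}(0)=E$, and restricts to a biholomorphism $\wdc^2\setminus E\to\C^2\setminus\{(0,0)\}$. Using properness of $\pi$ — which gives a neighborhood basis of the compact set $E$ of the form $\pi^{-1}(B)$ with $B$ a ball about $0$ — we may, after shrinking $\widetilde W$, assume $\widetilde W=\pi^{-1}(W)$ for an open ball $W\ni 0$ in $\C^2$. Then $\beta:=\pi\circ\Phi$ restricts to a biholomorphism $V\setminus S\to W\setminus\{(0,0)\}$. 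Define $N$ to be the complex surface obtained by gluing $M\setminus S$ and $W$ along the open subsets $V\setminus S\subset M\setminus S$ and $W\setminus\{(0,0)\}\subset W$ via $\beta$, and let $p\in N$ be the image of $0\in W$. Let $\pi_N:M\to N$ be the identity on $M\setminus S$ and equal to $\pi\circ\Phi$ on $V$; the two definitions agree on the overlap $V\setminus S$ by construction, so $\pi_N$ is a well-defined holomorphic map. It collapses $S$ to $p$, and since both $\beta$ and $\pi|_{\wdc^2\setminus E}$ are biholomorphisms, $\pi_N$ restricts to a holomorphic diffeomorphism $M\setminus S\to N\setminus\{p\}$; properness of $\pi_N$ follows from that of $\pi$ together with the fact that $\pi_N$ is a homeomorphism outside a compact set. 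This exhibits $S$ as contractible.

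The step requiring the most care is checking that the glued space $N$ is a genuine complex manifold, in particular that it is Hausdorff at $p$; this is exactly where properness of $\pi$ enters, since it prevents a sequence leaving every compact subset of $M\setminus S$ from accumulating at $p$, so that $p$ can be separated from points of $N\setminus\{p\}$. One must also arrange that the tubular neighborhood $V$ and the ball $W$ are small enough for $\beta$ to be defined on all of $V\setminus S$; as indicated above, both matters are handled by replacing $\widetilde W$ by $\pi^{-1}(B)$ for a sufficiently small ball $B$ and invoking that $\{\pi^{-1}(B)\}$ is a neighborhood basis of $E$. The remaining verifications — holomorphy of the transition map defining $N$, compatibility of $\pi_N$ on the overlap, and the biholomorphism property away from $p$ — are routine.
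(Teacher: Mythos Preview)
Your argument is correct and follows exactly the paper's route: invoke Grauert's theorem to identify a neighborhood of $S$ with a neighborhood of the exceptional divisor in $\wdc^2$ (the paper leaves implicit the appeal to Lemma~\ref{linecp1} and Example~1.2.E that you spell out), and then collapse via the blow-down map $\pi$. The paper's proof is a two-line sketch that declares the collapsing step ``obvious''; you have simply supplied the details of the gluing construction and the Hausdorff verification that the paper omits.
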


\begin{proof}
Because of Grauert's theorem, a neighborhood
of $S$ in $M$ is equivalent to a neighborhood of $\pi^{-1} (0)$
in $\widetilde{\C}^2$. It is then obvious that $S$ can be collapsed.
\end{proof}

Let us now make some comments concerning Grauert's theorem or, more generally,
the existence of tubular neighborhoods for compact Riemann surfaces $S$ embedded
in complex surface $M$.

To begin with it is necessary to point out the natural role played by
the assumption on the negativity of the self-intersection of $S$. This lies
in the fact that a Riemann surface $S$ with negative self-intersection in $M$
is {\it isolated}\, in the sense that there is a neighborhood $U$ of $S \subset M$
containing no embedded compact Riemann surface other than $S$ itself. Otherwise,
we may consider a Riemann surface $S'$ contained in a sufficiently small
neighborhood $U$ as a {\it deformation of} $S$, i.e. they belong to the same
class in the second homology group of $M$. Hence we would have $S.S = S.S'$. However
$S.S'$ is nonnegative as the intersection number of two complex submanifolds
and the resulting contradiction shows that $S$ is isolated in the mentioned sense.
Clearly for an isolated Riemann surface $S$, the mechanism we have used before to exclude the
existence of a tubular neighborhood (as in Definition~\ref{tubuneigh}) for $S$
can no longer apply: we cannot find in $U$ other Riemann surfaces with complex
structure different from that of $S$.

A second comment involving Theorem~(\ref{graue}) is that it suggests a rather
naive way to study the case of nonnegative self-intersection. This consists of
blowing up a number of points in $S$ so as to bring the self-intersection of
$S$ to, say, $-1$ (where $S$ is naturally identified with its proper transform).
The effect of these blow-ups amounts to adding a finite number of exceptional
rational curves having normal crossings with $S$. In the new manifold, $S$ has
negative self-intersection and thus a neighborhood of it is isomorphic to a
neighborhood of the null section in the normal bundle. To establish the existence
of a tubular neighborhood for the ``initial'' $S$ is then equivalent to show that
the isomorphism in question can be extended to a neighborhood of the exceptional
curves in a natural way. Although naive, this remark is useful in some cases,
especially if one wants to prove index theorems.

%Let us now proceed to a direct discussion of the case of a Riemann surface
%$S \subset M$ whose self-intersection is trivial.
%Consider first the case where $S \subset M$ is a rational curve. If $S$
%is a regular fiber of a complex fibration, then Fisher-Grauert theorem
%ensures the existence of the desired tubular neighborhood since the complex
%structure of a rational curve cannot vary. The problem is then reduced
%to decide whether or not the rational curve in question belongs
%to a local (regular) fibration..............................................
%.........................................................................

%--------------------------------------------------------------------------------------------------------------------------------------------------------------------------------------------------------------------
%--------------------------------------------------------------------------------------------------------------------------------------------------------------------------------------------------------------------
%--------------------------------------------------------------------------------------------------------------------------------------------------------------------------------------------------------------------

\section{Miscellaneous}

This section is intended to collect several results, some of them very
non-trivial, that are needed for a better understanding of the material
discussed in the subsequent chapters. The discussion below is by no means
self-contained. In fact, at some points it is necessary to use notions
that are introduced only in Chapter~3, this is of course compounded with
the several results presented without proof. To remedy for this situation,
we try to keep the following two botton lines:
\begin{enumerate}

\item Precise references for the theorems stated without proofs.

\item Whenever we state a theorem using notions and/or terminology that
will be introduced only later, we also state the most used especial versions
of it in a simpler language that hopefully can immediately be ``decoded''.
\end{enumerate}

The section is organized in three main families of results: we begin
with results belonging to Complex Analysis more directly. The second type of
results discussed revolves around Serre's GAGA theorems. Finally we shall discuss
the nature of automorphisms of compact complex manifolds.

%--------------------------------------------------------------------------------------------------------------------------------------------------------------------------------------------------------------------
%--------------------------------------------------------------------------------------------------------------------------------------------------------------------------------------------------------------------
%--------------------------------------------------------------------------------------------------------------------------------------------------------------------------------------------------------------------

\subsection{Local results in Complex Analysis}

We are going to begin with the well-known Weierstrass preparation and division theorems for
germs of holomorphic functions. The corresponding proofs can be found in any standard book
about several complex variables.

\begin{teo}
\label{WPTHM}
{\rm ({\bf Weierstrass Preparation Theorem})}
Let $f$ be a holomorphic function defined on a neighborhood $U$
of the origin of $\C^n$ having the form $U =U' \times \{ \vert z_n \vert
<r\}$ for some $r >0$ and where $U'$ is a neighborhood of the origin
in $\C^{n-1}$. Suppose also that $f(0, z_n)$ is not identically zero on
the disc $\vert z_n \vert <r$. Suppose also that $f(0,z_n)$ does not
have zeros on the circle of radius $r$ and let $k$ denote the number
of its zeros, counted with multiplicities, in the disc of radius~$r$.
Then, in a suitable neighborhood $V = V' \times \{ \vert z_n \vert < r\}
\subset U$, we have
$$
f(z',z_n) = u(z',z_n) . (z_n^k + c_1(z') z_n^{k-1} + \cdots + c_k (z'))
$$
where $z' = (z_1, \ldots ,z_{n-1})$, $u(0,\ldots ,0) \neq 0$ and $c_i$
is holomorphic for $i=1 ,\ldots ,k$.
\end{teo}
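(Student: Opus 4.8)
The plan is to prove the Weierstrass Preparation Theorem by realizing the coefficients $c_i(z')$ via contour integrals (Newton's identities applied to the roots of $f(z', \cdot)$) and then using Rouché's theorem to control how those roots move with $z'$.

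First I would fix $r$ so that $f(0,z_n)$ has no zeros on $|z_n|=r$ and exactly $k$ zeros (with multiplicity) inside. By continuity of $f$ and compactness of the circle $|z_n|=r$, there is a neighborhood $V'$ of the origin in $\C^{n-1}$ such that $f(z',z_n)\neq 0$ for all $z'\in V'$ and $|z_n|=r$. By Rouché's theorem (or the argument principle, noting that the integer $\frac{1}{2\pi i}\oint_{|z_n|=r}\frac{\partial_{z_n} f(z',z_n)}{f(z',z_n)}\,dz_n$ depends continuously on $z'$ hence is locally constant), for every $z'\in V'$ the function $z_n\mapsto f(z',z_n)$ has exactly $k$ zeros $a_1(z'),\dots,a_k(z')$ in the disc $|z_n|<r$, counted with multiplicity.

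Next I would define the candidate Weierstrass polynomial by its power sums: for each $m\geq 1$ set
$$
p_m(z') = \sum_{j=1}^k a_j(z')^m = \frac{1}{2\pi i}\oint_{|z_n|=r} z_n^m\,\frac{\partial_{z_n} f(z',z_n)}{f(z',z_n)}\,dz_n \, .
$$
Each $p_m$ is holomorphic on $V'$ because the integrand is holomorphic in $z'$ (the denominator does not vanish on the contour) and one may differentiate under the integral sign. Then Newton's identities express the elementary symmetric functions $c_i(z') = (-1)^i e_i(a_1(z'),\dots,a_k(z'))$ as polynomials in $p_1,\dots,p_k$, so each $c_i$ is holomorphic on $V'$, and $c_i(0)=0$ since $a_j(0)=0$ for all $j$. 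Set $P(z',z_n)=z_n^k + c_1(z')z_n^{k-1}+\cdots+c_k(z')=\prod_{j=1}^k (z_n - a_j(z'))$; for fixed $z'$ this polynomial has exactly the same zeros, with the same multiplicities, as $f(z',\cdot)$ inside $|z_n|<r$, and neither vanishes on $|z_n|=r$.

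Finally I would define $u = f/P$ on $V'\times\{|z_n|<r\}$. For each fixed $z'\in V'$, the quotient $f(z',z_n)/P(z',z_n)$ has only removable singularities (the zeros cancel), so it extends to a holomorphic, nowhere-zero function of $z_n$ on the full disc. Holomorphy of $u$ jointly in $(z',z_n)$ follows by writing $u$ itself as a Cauchy-type integral,
$$
u(z',z_n) = \frac{1}{2\pi i}\oint_{|\zeta|=r} \frac{f(z',\zeta)}{P(z',\zeta)}\,\frac{d\zeta}{\zeta - z_n} \, ,
$$
valid for $|z_n|<r$, whose integrand is jointly holomorphic; then $u(0,\dots,0)=f(0,0)/0^k \neq 0$ after shrinking, since $f(0,z_n)/z_n^k$ is nonzero at $z_n=0$ (the order of vanishing of $f(0,\cdot)$ at $0$ is at most $k$; if it is exactly $k$ this is immediate, and in general one checks $u(0,\cdot)$ is a nonvanishing holomorphic function on the disc so in particular nonzero at $0$). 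Shrinking $V'$ to a polydisc $V''$ on which $u$ stays away from zero gives the neighborhood $V=V''\times\{|z_n|<r\}$ in the statement.

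The main obstacle is the rigorous justification that the symmetric-function-valued maps $z'\mapsto c_i(z')$ are genuinely holomorphic rather than merely continuous: the individual roots $a_j(z')$ need not depend holomorphically (or even continuously in a single-valued way) on $z'$ when multiplicities collide, so one cannot argue fibrewise. The contour-integral formula for the power sums $p_m$ is exactly what circumvents this — it produces honest holomorphic functions of $z'$ without ever naming an individual root — and the same device (Cauchy integral) handles the joint holomorphy of $u$. Everything else is Rouché's theorem plus Newton's identities.
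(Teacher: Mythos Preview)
The paper does not actually prove this theorem; it states it and refers the reader to ``any standard book about several complex variables.'' Your argument is precisely the standard contour-integral proof (Rouch\'e for the root count, logarithmic residues for the power sums, Newton's identities for the $c_i$, and a Cauchy integral for the joint holomorphy of $u$), and it is correct.

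One minor slip worth flagging: you assert $c_i(0)=0$ because ``$a_j(0)=0$ for all $j$,'' but the hypotheses as written only say that $f(0,z_n)$ has $k$ zeros in the disc $|z_n|<r$, not that they are all concentrated at $z_n=0$. The theorem as stated does not claim $c_i(0)=0$ either, so nothing in the conclusion is affected; your fallback argument --- that $u(0,\cdot)=f(0,\cdot)/P(0,\cdot)$ is holomorphic and nowhere zero on the whole disc, hence $u(0,\ldots,0)\neq 0$ --- is the correct one and does not depend on where the $a_j(0)$ sit.
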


It is convenient to say that a {\it Weierstrass polynomial}\, of degree
$k$ in $z_n$ is a monic polynomial of degree $k$ in $z_n$ having coefficients
that are holomorphic functions of $z'=(z_1, \ldots ,z_{n-1})$. Here these
coefficients, other than the leading one, are supposed to vanish at the
origin. Summarizing, a Weierstrass polynomial as above is a function of
the form
$$
z_n^k + c_1(z') z_n^{k-1} + \cdots + c_k (z') \, ,
$$
with $c_i$ holomorphic satisfying $c_i (0) =0$, $i=1, \ldots ,k$.

The companion result of Theorem~(\ref{WPTHM}) is the so-called
Weierstrass Division Theorem stated below:

\begin{teo}
\label{WDTHM}
{\rm ({\bf Weierstrass Division Theorem})}
Let $f$ be as before. Consider a Weierstrass polynomial $h$ of degree
$k$ in $z_n$. Then $f$ can uniquely be written in the form $f = gh + q$
where $g$ is holomorphic on a neighborhood of the origin in $\C^n$ and
$q$ is a polynomial in $z_n$ of degree strictly less than~$k$.
\end{teo}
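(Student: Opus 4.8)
The plan is to prove the Weierstrass Division Theorem by essentially the same Cauchy-integral technique that underlies the Preparation Theorem, reading the quotient $g$ and remainder $q$ directly off of contour integrals in the $z_n$-variable. Throughout I work in a polydisc $V = V' \times \{|z_n| < r\}$ where, after shrinking $V'$ around the origin if necessary, the Weierstrass polynomial $h(z',z_n)$ has, for each fixed $z' \in V'$, exactly $k$ zeros (counted with multiplicity) inside $\{|z_n| < r\}$ and no zeros on the circle $|z_n| = r$; this is possible because $h(0,z_n) = z_n^k$ has all $k$ zeros at the origin and the zero set depends continuously on $z'$.

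First I would define the candidate quotient by the explicit formula
\[
g(z',z_n) = \frac{1}{2\pi i} \int_{|\zeta| = r} \frac{f(z',\zeta)}{h(z',\zeta)} \cdot \frac{d\zeta}{\zeta - z_n} \, ,
\]
which makes sense for $|z_n| < r$ since $h(z',\zeta)$ does not vanish on the integration circle, and is holomorphic in $(z',z_n)$ by differentiation under the integral sign. Then I would set $q = f - gh$ and check that $q$ is in fact a polynomial in $z_n$ of degree $< k$: substituting the integral for $g$ and using that $f(z',z_n) = \frac{1}{2\pi i}\int_{|\zeta|=r} \frac{f(z',\zeta)}{\zeta - z_n}\, d\zeta$ by the Cauchy integral formula, one obtains
\[
q(z',z_n) = \frac{1}{2\pi i} \int_{|\zeta|=r} \frac{f(z',\zeta)}{h(z',\zeta)} \cdot \frac{h(z',\zeta) - h(z',z_n)}{\zeta - z_n} \, d\zeta \, .
\]
The point is that $\big(h(z',\zeta) - h(z',z_n)\big)/(\zeta - z_n)$ is, because $h$ is monic of degree $k$ in its last variable, a polynomial in $z_n$ of degree exactly $k-1$ whose coefficients are holomorphic functions of $(z',\zeta)$; integrating out $\zeta$ leaves a polynomial in $z_n$ of degree $\le k-1$ with holomorphic coefficients in $z'$. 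This establishes existence.

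For uniqueness, suppose $gh + q = \tilde g h + \tilde q$ with both remainders of degree $< k$ in $z_n$. Then $(g - \tilde g) h = \tilde q - q$. For each fixed $z'$, the right-hand side is a polynomial in $z_n$ of degree $< k$, while the left-hand side, if $g - \tilde g$ were not identically zero, would vanish at the $k$ zeros of $h(z',\cdot)$ in $|z_n| < r$ (with multiplicity); a nonzero polynomial of degree $< k$ cannot have $k$ roots with multiplicity, forcing $\tilde q = q$ and hence $(g-\tilde g)h \equiv 0$, so $g = \tilde g$ since $h$ is not a zero divisor in the ring of germs. The main obstacle — really the only subtle point — is the bookkeeping on the zeros of $h(z',\cdot)$: one must invoke the argument principle (or Rouché's theorem, exactly as in the proof of the Preparation Theorem) to guarantee that the count of $k$ zeros inside the circle is preserved uniformly for $z'$ in a small enough $V'$, and to justify that a uniform radius $r$ works. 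Once that is in place the rest is the routine contour-integral manipulation sketched above.
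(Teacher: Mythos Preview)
Your argument is correct and is in fact the standard contour-integral proof of the Division Theorem (as found, for instance, in Griffiths--Harris or Gunning--Rossi). Note, however, that the paper does not supply its own proof of this statement: it states the theorem and explicitly refers the reader to ``any standard book about several complex variables'' for the proof, so there is no paper-proof to compare against. Your write-up would serve perfectly well as the omitted argument; the only cosmetic point is that in the uniqueness step the vanishing of $\tilde q - q$ at the $k$ zeros of $h(z',\cdot)$ follows directly from $(\tilde q - q) = (g-\tilde g)h$ without any hypothesis on $g-\tilde g$, so you can streamline that sentence.
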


Recall that an {\it analytic subvariety $V$}\, of a complex manifold $M$
is a subset which is locally given as the intersection of the zero-set
of finitely many analytic functions. This means that if $p \in V$, there
is a neighborhood $U \subset M$ containing $p$ together with holomorphic
functions $g_1 ,\ldots ,g_r$ defined on $U$ such that $V \cap U =
\bigcap_{i=1}^r g_i^{-1} (0)$. It is clear that if we have an holomorphic
map $f$ between complex manifolds $M,\, N$, the preimage by $f$ of an analytic
subvariety $V \subset N$ is itself an analytic subvariety of $M$. This is however
not true for {\it direct images}\, unless the map in question is, in addition, proper.
Indeed this is the contents of the
{\it Proper Mapping Theorem}, also called Remmert's mapping theorem,
whose statement is as follows.

\begin{teo}
\label{PMT}
Consider complex manifolds $M, \, N$ along with a proper holomorphic map
$f: M \rightarrow N$. If $V \subset M$ is a closed analytic subset of $M$, the
its image $f(V) \subset N$ is closed as well.\qed
\end{teo}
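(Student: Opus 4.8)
The plan is to deduce the statement from a purely point-set fact: a proper continuous map into a locally compact Hausdorff space is a closed map. First I would observe that every complex manifold is locally compact and Hausdorff, and that the closed analytic subset $V \subset M$ is closed in the ordinary topology (this is part of the hypothesis, and in any case zero sets of holomorphic functions are closed). Thus the analytic content of the statement evaporates, and what remains is to show that $f(V)$ is closed in $N$ knowing only that $f$ is proper and continuous and that $N$ is locally compact Hausdorff.

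The key step is the closed-map argument. Let $C \subseteq M$ be closed and let $y \in N \setminus f(C)$. Choose a compact neighbourhood $K$ of $y$ in $N$. By properness $f^{-1}(K)$ is compact, hence so is $f^{-1}(K) \cap C$ (a closed subset of a compact set), and therefore $A := f\bigl(f^{-1}(K) \cap C\bigr)$ is a compact, hence closed, subset of $N$. Since $y \notin f(C)$, we have $y \notin A$, so $K^{\circ} \setminus A$ is an open neighbourhood of $y$. I then check it is disjoint from $f(C)$: if $z \in K^{\circ}$ and $z = f(c)$ with $c \in C$, then $c \in f^{-1}(K^{\circ}) \subseteq f^{-1}(K)$, so $c \in f^{-1}(K) \cap C$ and hence $z \in A$. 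Therefore $K^{\circ} \setminus A$ avoids $f(C)$, proving $f(C)$ closed. Applying this with $C = V$ gives the assertion.

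I do not expect a genuine obstacle for the statement exactly as phrased — it is elementary topology once properness is in hand. The real difficulty lies only in the stronger and more frequently used form of Remmert's theorem, namely that $f(V)$ is not merely closed but is itself an analytic subvariety of $N$. Establishing that requires a local analysis: one reduces, after a linear change of coordinates, to the situation of a finite branched covering and uses the Weierstrass preparation and division theorems (Theorems~\ref{WPTHM} and~\ref{WDTHM}) to manufacture the monic polynomial relations cutting out $f(V)$ near each of its points. That is where the serious work would be; the closedness recorded in the statement above is the soft part, proved as indicated.
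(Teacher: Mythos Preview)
Your argument is correct for the statement exactly as written. The paper gives no proof at all (the theorem is stated and immediately followed by \qed), so there is nothing to compare against; what you have supplied is more than the paper does.

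Your closing remark is also on point and worth emphasizing. From the surrounding text --- the paper calls this ``Remmert's mapping theorem'' and motivates it by contrasting with the fact that \emph{preimages} of analytic sets are analytic --- it is clear that the intended conclusion is that $f(V)$ is a closed \emph{analytic} subset of $N$, not merely topologically closed. The word ``analytic'' seems to have been dropped from the statement. You have correctly separated the soft part (closedness, which is pure topology via the closed-map property of proper maps into locally compact Hausdorff spaces) from the hard part (analyticity of the image), and your sketch of the latter via local finite branched coverings and the Weierstrass theorems is the standard route. So your proposal handles the literal statement completely and accurately diagnoses what the genuine theorem requires.
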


%For a proof the reader can consult \cite{pmt}.

It is also relevant to have some feeling concerning the local nature
of an analytic subvariety (also called ``an analytic set''). In the sequel
we present
a rather elementary and yet useful way to ``parametrize'' one given set.
The procedure is very standard and is detailed in several textbooks.
Since we are dealing with local parametrizations, it suffices
to consider analytic sets defined on a neighborhood of the origin in $\C^n$
which will still be denoted by $V$.

Let us begin with the case of {\it curves}, i.e. analytic sets of dimension~$1$
(the dimension of a singular set is the maximal dimensional of the complement
of the singular set). Note also that the singular set must have codimension at least
one inside the analytic set in question. In particular for the case of (germs
of) analytic curves, we can suppose that the origin is the unique singularity of
it. Germs of curves are known to admit a {\it Puiseaux
parametrization} (cf. for example \cite{fultoncurves}). Precisely
there exists a holomorphic map ${\sc P}: \D \rightarrow V$,
where $\D \subset \C$ stands for the unit disc, satisfying the conditions
below.
\begin{itemize}

\item ${\sc P} (0) =(0, \ldots ,0)$ and ${\sc P}$ is one-to-one from
$\D$ to $V$.

\item ${\sc P}$ is a diffeomorphism from $\D^{\ast}$ to $V \setminus \{ (0,
\ldots ,0)\}$.
\end{itemize}
In terms of a coordinate $t \in \D$, we can set
$$
{\sc P} (t) = (t^k , {\sc P}_2 (t), \ldots , {\sc P}_n (t)) \, .
$$

Unfortunately, the existence of the Puiseaux parametrization does
not generalize to higher dimensional analytic sets. Parametrizing
these sets is therefore a less precise procedure that essentially amounts to
noticing that they (locally) are ramified coverings of a plane of suitable dimension.
We describe below only the associated geometric picture. Proofs for our
statements can easily be produced with the help of Weierstrass Preparation Theorem.

%--------------------------------------------------------------------------------------------------------------------------------------------------------------------------------------------------------------------
%--------------------------------------------------------------------------------------------------------------------------------------------------------------------------------------------------------------------
%--------------------------------------------------------------------------------------------------------------------------------------------------------------------------------------------------------------------

\subsection{Introduction to GAGA theorems and to Hartogs principle}

Let us now move to the second topic to be discussed in the section, namely
the GAGA theorems \cite{gaga}. In vague but meaningful terms, one states the
{\it GAGA principle}\, by saying that {\it any analytic object defined over an
algebraic manifold is indeed algebraic}.

To begin the term {\it algebraic variety} will be used in what follows
to refer to an irreducible Zariski-closed subset of some complex projective
space  $\C \Pp (n)$. Recall that a Zariski-closed subset of
$\C \Pp (n)$ is nothing but the common zero-set of a finite
collection of homogeneous
polynomials in the variables $(x_0, x_1 ,\ldots ,x_n)$. It should be
pointed out here that Hilbert Basis Theorem tells us that one does not
obtain a ``larger class'' of sets by allowing infinite collections of
these polynomials. Finally the term ``irreducible'' means that the ideal
of regular functions vanishing identically on this set forms a prime
ideal of the ring of regular functions on $\C \Pp (n)$.
As usual, by an {\it algebraic
manifold} it is meant a {\it smooth}\, algebraic variety. The first
instance of ``GAGA principle'' is manifested by the celebrated Chow Lemma.

\begin{teo}
\label{chowlemma}
{\rm ({\bf Chow Lemma})}
An analytic subvariety of a projective space is algebraic.
\end{teo}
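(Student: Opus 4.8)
The plan is to prove the statement via the classical affine cone construction combined with a symmetric-function argument that manufactures the required homogeneous polynomials directly. First I would reduce to the case where $V \subset \C \Pp (n)$ is irreducible, say of dimension $k$: a compact analytic subvariety has only finitely many irreducible components, and a finite union of Zariski-closed sets is Zariski-closed. If $k = n$ then $V = \C \Pp (n)$, and if $k = 0$ then $V$ is a finite set of points; so assume $1 \le k \le n-1$. Let $\rho \colon \C^{n+1} \setminus \{0\} \to \C \Pp (n)$ be the tautological projection and put $\widehat{V} = \rho^{-1}(V) \cup \{0\}$. This set is closed in $\C^{n+1}$ and analytic away from the origin, and by the Remmert--Stein extension theorem (one removes a point, of dimension strictly below that of $\rho^{-1}(V)$) it is an irreducible analytic subvariety of $\C^{n+1}$ of dimension $k+1$ that is invariant under the scaling action of $\C^{\ast}$ -- an affine cone.

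Next I would put things in convenient position. A generic-dimension count gives a linear $\C \Pp (n-k-1) \subset \C \Pp (n)$ disjoint from $V$; after a linear change of homogeneous coordinates I may assume it is $\{ z_0 = \cdots = z_k = 0 \}$, so that the linear projection $\pi \colon \C^{n+1} \to \C^{k+1}$, $(z_0, \ldots, z_n) \mapsto (z_0, \ldots, z_k)$, satisfies $(\ker \pi) \cap \widehat{V} = \{0\}$. Because $\widehat V$ is a cone, a rescaling-to-the-unit-sphere argument shows that $\pi|_{\widehat V}$ is proper; since a compact analytic subset of $\C^{n+1}$ is finite, every fibre is finite, so $\pi|_{\widehat V}$ is a finite branched covering, necessarily surjective onto the connected space $\C^{k+1}$, of some degree $d$. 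Off the branch locus $B$ (a proper analytic subset of $\C^{k+1}$) the fibre over $a$ consists of $d$ points, and for each $j \in \{k+1, \ldots, n\}$ let $\sigma_1^{(j)}(a), \ldots, \sigma_d^{(j)}(a)$ be the elementary symmetric functions of their $j$-th coordinates. These are holomorphic on $\C^{k+1} \setminus B$, locally bounded by properness, hence extend holomorphically across $B$ by Riemann's extension theorem; so they are entire on $\C^{k+1}$. The key observation is that, $\widehat V$ being a cone, the fibre over $\lambda a$ is $\lambda$ times the fibre over $a$, whence $\sigma_i^{(j)}(\lambda a) = \lambda^{i} \sigma_i^{(j)}(a)$ -- and an entire function on $\C^{k+1}$ that is homogeneous of degree $i$ is a homogeneous polynomial of degree $i$ (compare the terms of its Taylor series). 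It follows that each
$$
P_j(z_0, \ldots, z_n) \; = \; z_j^{\,d} - \sigma_1^{(j)}(z_0, \ldots, z_k)\, z_j^{\,d-1} + \cdots + (-1)^{d}\, \sigma_d^{(j)}(z_0, \ldots, z_k)
$$
is a homogeneous polynomial of degree $d$ vanishing identically on $\widehat V$.

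Finally I would upgrade these to equations that cut out $\widehat V$ exactly, the point being that the common zeros of $P_{k+1}, \ldots, P_n$ form a possibly larger algebraic set (tuples whose coordinates are fibre values coming from different fibre points). Fix a generic linear form $\ell = c_{k+1} z_{k+1} + \cdots + c_n z_n$; on a generic fibre of $\pi|_{\widehat V}$ the $d$ values of $\ell$ are distinct, so Lagrange interpolation writes each $z_j$ on that fibre as a polynomial in $\ell$ whose coefficients are symmetric functions of the fibre. Running the extension-and-homogeneity argument again, these coefficients are quotients of homogeneous polynomials in $(z_0, \ldots, z_k)$ by a fixed homogeneous ``discriminant'' $D(z_0, \ldots, z_k)$; clearing denominators produces homogeneous polynomials $R_{k+1}, \ldots, R_n$ vanishing on $\widehat V$, together with a homogeneous polynomial $P_\ell$ expressing that $\ell$ is a root of its fibrewise norm. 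On the Zariski-dense open set $\{ D \neq 0 \} \setminus \pi^{-1}(B)$ the equations $P_\ell = R_{k+1} = \cdots = R_n = 0$ single out precisely the $d$ fibre points, so $\widehat V$ agrees there with the algebraic set they define; since $\widehat V$ is irreducible of dimension $k+1$ and meets that open set, a dimension count forces $\widehat V$ to be the irreducible component of this algebraic set through its generic points, hence $\widehat V$ is algebraic, and therefore $V = \rho(\widehat V \setminus \{0\})$ is an irreducible Zariski-closed, i.e. algebraic, subset of $\C \Pp (n)$. The heart of the argument -- and the part requiring care rather than a single deep theorem -- is the analytic bookkeeping around $\pi|_{\widehat V}$: establishing properness and finiteness, and extending the symmetric functions across the branch locus; the last step can alternatively be streamlined by projecting $\widehat V$ to $\C^{k+2}$ via $(z_0, \ldots, z_k, \ell)$, identifying the image with a homogeneous hypersurface, and recovering $\widehat V$ as a graph over it.
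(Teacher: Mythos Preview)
The paper does not supply a proof of Chow's Lemma; it merely states the result and refers the reader to \cite{gh} and \cite{mumford}, remarking that Mumford's proof is self-contained while the Griffiths--Harris proof is shorter but relies on the Proper Mapping Theorem. So there is no in-paper argument to compare against.

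Your outline is essentially the classical proof, closest in spirit to Mumford's: pass to the affine cone $\widehat V$, extend across the vertex by Remmert--Stein, project finitely onto a linear $\C^{k+1}$, and build homogeneous equations from the elementary symmetric functions of the fibre coordinates (the homogeneity of $\widehat V$ forcing these entire functions to be polynomials). The Lagrange-interpolation step to cut out $\widehat V$ exactly, or the alternative projection to $\C^{k+2}$ via a generic linear form, is likewise standard --- the latter is precisely the Griffiths--Harris shortcut that invokes the Proper Mapping Theorem.

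One small point to tighten: the clause ``necessarily surjective onto the connected space $\C^{k+1}$'' is doing more work than you acknowledge. Properness gives a closed image, but connectedness of the target alone does not force surjectivity; you need either the Proper Mapping Theorem (the image is analytic of dimension $k+1$, hence all of $\C^{k+1}$) or the observation that the map is open at smooth points of $\widehat V$ where the differential has full rank. Since the paper has just stated the Proper Mapping Theorem, you may simply invoke it here. With that adjustment the argument is correct.
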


On algebraic varieties one can consider the ring of regular
functions as well as its fraction field called the field of rational
functions on $M$. It is well-known that a rational function on $V$
is given in (say inhomogeneous) coordinates as the quotient of two
polynomials. A rational function is said to be regular at $p \in V$,
if it can be represented as a quotient of polynomials with denominator
different from {\it zero} at $p$. A regular function is nothing but
a rational function that is regular at every point of $V$. Finally
a rational (resp. regular) map is simply a map whose coordinates are
rational (resp. regular) functions. In particular, a rational map between
algebraic varieties $V_1 \subset \C \Pp (n)$ and $V_2 \subset \C \Pp (m)$
is given by the restriction of a rational map from $\C \Pp (n)$
to $\C \Pp (m)$. All this is fairly basic material in algebraic geometry
that can be found in any textbook, for example \cite{shafarevich1,shafarevich2}.

The second fundamental instance of GAGA principle can then be stated
as

\begin{teo}
\label{merofunctions}
Every meromorphic function defined on an algebraic variety is rational.
\end{teo}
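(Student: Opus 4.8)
The plan is to represent the meromorphic function by its graph and then apply Chow's Lemma (Theorem~\ref{chowlemma}). Let $V \subset \C \Pp (n)$ be an algebraic variety and let $f$ be meromorphic on $V$. Regard $f$ as a meromorphic map from $V$ to $\C \Pp (1)$; then $f$ is holomorphic on the complement $V \setminus A$ of a proper analytic subset $A \subset V$ (the union of the polar locus of $f$ with its indeterminacy set). Over $V \setminus A$ the set $\Gamma_f^0 = \{ (x, f(x)) \, ; \, x \in V \setminus A \}$ is an analytic subvariety of $(V\setminus A) \times \C \Pp (1)$, biholomorphic to $V \setminus A$ via the first projection. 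Let $\Gamma_f$ denote the closure of $\Gamma_f^0$ in $V \times \C \Pp (1)$. The strategy is to show that $\Gamma_f$ is algebraic and then to recover $f$ from $\Gamma_f$ as a ratio of polynomials.

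First I would check that $\Gamma_f$ is an analytic subvariety of $V \times \C \Pp (1)$; this is the step where the hypothesis that $f$ is genuinely \emph{meromorphic} is used. Working in a local chart on $V$, write $f = g/h$ with $g, h$ holomorphic and coprime. In the affine coordinate $w$ on $\C \Pp (1)$ the locus $\{ (x,w) \, ; \, h(x) w = g(x) \}$ is analytic, and $\Gamma_f$ is obtained from it by keeping only the components that meet $\Gamma_f^0$ (discarding possible spurious components lying over $\{ g = h = 0 \}$); near the poles one uses instead the coordinate $1/w$, where the defining equation becomes $g(x)(1/w) = h(x)$, to see that $\Gamma_f$ remains analytic there as well. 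Hence $\Gamma_f$ is cut out locally by holomorphic equations, so it is analytic, and it is irreducible because $\Gamma_f^0 \cong V \setminus A$ is. I expect this verification — in particular the careful treatment of the extra components of $\{ hw = g\}$ and a small Riemann-extension-type argument at the poles — to be the main technical obstacle; the remaining steps are essentially formal.

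Next I would pass to a projective ambient space. Since $V$ is Zariski-closed in $\C \Pp (n)$, the product $V \times \C \Pp (1)$ is a closed algebraic subset of $\C \Pp (n) \times \C \Pp (1)$, which in turn is realized as a closed algebraic subset of $\C \Pp (2n+1)$ through the Segre embedding. Under this embedding $\Gamma_f$ becomes an analytic subvariety of $\C \Pp (2n+1)$, so Chow's Lemma applies and shows that $\Gamma_f$ is algebraic, i.e. Zariski-closed (and irreducible).

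Finally I would descend from the algebraic graph to a rational formula for $f$. The first projection $\pi_1 : \Gamma_f \to V$ is a regular map of algebraic varieties which is biregular over the dense open set $V \setminus A$, hence birational; by the standard fact that a birational regular map of irreducible varieties has a rational inverse, there is a rational map $\sigma : V \dashrightarrow \Gamma_f$ inverting $\pi_1$. Composing with the second projection $\pi_2 : \Gamma_f \to \C \Pp (1)$ gives a rational map $\pi_2 \circ \sigma : V \dashrightarrow \C \Pp (1)$ which, by construction, agrees with $f$ on $V \setminus A$. A rational map into $\C \Pp (1)$ is given in homogeneous coordinates by $[F_0 : F_1]$ with $F_0, F_1$ homogeneous of the same degree, so in the affine coordinate on $\C \Pp (1)$ it equals the ratio $F_1/F_0$ of two polynomials restricted to $V$, that is, a rational function on $V$. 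Since this rational function coincides with $f$ on a dense open subset of the irreducible variety $V$, it coincides with $f$ wherever both are defined, and therefore $f$ is rational.
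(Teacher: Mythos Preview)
The paper does not actually prove Theorem~\ref{merofunctions}; it merely cites \cite{gh} and \cite{mumford} as references, remarking that the proof in \cite{gh} is shorter but relies on the Proper Mapping Theorem. Your approach --- passing to the graph closure in $V \times \C\Pp(1)$, embedding via Segre into a projective space, invoking Chow's Lemma, and reading off $f$ as a rational map --- is precisely the Griffiths--Harris route the paper alludes to, and it is correct as outlined. The one place to be careful is exactly where you flagged it: showing that the closure $\Gamma_f$ is genuinely analytic at points over the indeterminacy locus requires a bit of local work (Remmert--Stein or a direct argument with the local quotient $g/h$), but your sketch is on the right track.
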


Nice treatments of the above statements are given in \cite{gh} and in
\cite{mumford}. The proof in \cite{mumford} has the advantage
of being rather self-contained while the proof in \cite{gh} is shorter,
however it relies on the Proper Mapping Theorem.

%A sometimes useful way to take advantage of the GAGA principle, and more precisely
%of Theorem~\ref{merofunctions}, is to use them to obtain
%{\it a priori}\, extensions of arbitrary functions. Here is a curious
%situation where this idea applies.

%Suppose that we have a complex fibration over the unit disc ${\bf D}$
%whose unique
%singular fiber lies over $0 \in \C$. Let $L_{1/2}$ be the fiber over
%the point $1/2 \in {\bf D}$ and consider a nonconstant meromorphic function
%$f$ defined on $L_{1/2}$..............................................
%...........................................................

%--------------------------------------------------------------------------------------------------------------------------------------------------------------------------------------------------------------------
%--------------------------------------------------------------------------------------------------------------------------------------------------------------------------------------------------------------------
%--------------------------------------------------------------------------------------------------------------------------------------------------------------------------------------------------------------------

\subsection{The automorphism group of complex manifolds}

Finally let us briefly review some basic facts about automorphism
groups of a compact complex manifold. The first very general and
important result is:

\begin{teo}
\label{liegroup}
The automorphism group of a compact complex manifold $M$ is a complex Lie
group of finite dimension acting holomorphically on $M$.
\end{teo}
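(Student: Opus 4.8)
The automorphism group $\mathrm{Aut}(M)$ of a compact complex manifold $M$ is a complex Lie group of finite dimension acting holomorphically on $M$.

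The plan is to realise $\mathrm{Aut}(M)$ first as a locally compact topological group, then to upgrade it to a Lie group by a structural theorem, and finally to recognise its Lie algebra as the space of holomorphic vector fields on $M$, which is finite-dimensional and carries a complex structure. First I would equip the set $\mathrm{Hol}(M,M)$ of holomorphic self-maps of $M$ with the topology of uniform convergence, which, since $M$ is compact, agrees with the compact-open topology. Because the target $M$ is compact, in local coordinates the maps in any subset of $\mathrm{Hol}(M,M)$ are uniformly bounded, so Cauchy estimates bound all their derivatives; the Arzel\`a--Ascoli theorem then shows that $\mathrm{Hol}(M,M)$ is compact and that composition, as well as inversion on the invertible elements, is continuous, so $\mathrm{Aut}(M)$ is a topological group acting continuously and effectively on $M$ by holomorphic diffeomorphisms.

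The key elementary point is that $\mathrm{Aut}(M)$ is locally compact. I would show that a sufficiently small closed $C^{0}$-neighbourhood $\mathcal{U}$ of $\mathrm{id}$ in $\mathrm{Hol}(M,M)$ consists entirely of automorphisms: if $f$ is $C^{0}$-close to $\mathrm{id}$, the Cauchy estimates force $df$ to be close to the identity endomorphism of $TM$, so $f$ is a local biholomorphism at every point, hence (being proper, as $M$ is compact) a holomorphic covering of $M$; since $f$ is then homotopic to $\mathrm{id}$ it has degree $1$, so it is a biholomorphism. Thus $\mathcal{U} \subset \mathrm{Aut}(M)$, and $\mathcal{U}$ is compact as a closed subset of the compact space $\mathrm{Hol}(M,M)$; by homogeneity every point of $\mathrm{Aut}(M)$ has a compact neighbourhood. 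Now that $\mathrm{Aut}(M)$ is a locally compact group acting continuously and effectively on the manifold $M$ by holomorphic --- in particular $C^{\infty}$ --- diffeomorphisms, the theorem of Bochner and Montgomery applies and yields that $\mathrm{Aut}(M)$ is a (real) Lie group and that the action map $\mathrm{Aut}(M) \times M \to M$ is real-analytic.

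It remains to identify the Lie algebra and to produce the complex structure. A one-parameter subgroup $t \mapsto \phi_{t}$ of $\mathrm{Aut}(M)$ differentiates to a real vector field $X$ on $M$ whose flow consists of biholomorphisms; this forces $\mathcal{L}_{X} J = 0$, i.e. $Z := \frac{1}{2}(X - iJX)$ is a global holomorphic vector field on $M$. Conversely, given $Z \in H^{0}(M, \Theta_{M})$, the real and imaginary parts of $Z$ generate commuting real flows which are complete because $M$ is compact, producing a local holomorphic $\C$-action and in particular a one-parameter subgroup of $\mathrm{Aut}(M)$. Hence $\mathrm{Lie}(\mathrm{Aut}(M))$ is canonically identified, as a real Lie algebra, with $\mathfrak{g} := H^{0}(M, \Theta_{M})$. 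Since the tangent sheaf $\Theta_{M}$ is a coherent analytic sheaf on the compact manifold $M$, the Cartan--Serre finiteness theorem gives $\dim_{\C} \mathfrak{g} < \infty$, so $\mathrm{Aut}(M)$ is finite-dimensional. Moreover $\mathfrak{g}$ is a complex vector space and the Lie bracket of vector fields is $\C$-bilinear on it, since constants can be pulled out of the bracket; thus $\mathfrak{g}$ is a complex Lie algebra, the identity component $\mathrm{Aut}(M)^{0}$ is a complex Lie group, and the other components inherit a compatible complex structure because each adjoint map $\mathrm{Ad}(g)$ acts $\C$-linearly on $\mathfrak{g}$, being the push-forward of vector fields by a biholomorphism. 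Finally the action is holomorphic: near $(\mathrm{id}, p)$ it is given by $(Z,p) \mapsto \Phi^{Z}_{1}(p)$, the time-one flow of $Z$, which depends holomorphically on $Z \in \mathfrak{g}$ and on $p$ by holomorphic dependence of solutions of holomorphic ODEs on parameters and initial conditions; group translation then spreads holomorphicity over all of $\mathrm{Aut}(M)$.

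The hard part is the passage from ``locally compact topological group of holomorphic transformations'' to ``Lie group'', which is precisely the content of the Bochner--Montgomery theorem and is genuinely deep, resting on the no-small-subgroups analysis behind Hilbert's fifth problem; I would invoke it as a black box. An alternative route avoiding it is to embed $\mathrm{Aut}(M)$ into the Douady space of compact complex subspaces of $M \times M$ via $f \mapsto \mathrm{graph}(f)$, to observe that graphs of biholomorphisms form an open subset, and to read off the complex structure together with the tangent space $H^{0}(M, \Theta_{M})$ at the diagonal from Douady's theorem --- but this merely trades one substantial input for another.
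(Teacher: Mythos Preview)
Your overall strategy matches the paper's: the paper does not prove the theorem but indicates that it rests on exactly the two inputs you invoke, namely Cartan--Serre finiteness (to get $\dim_{\C} H^{0}(M,\Theta_M)<\infty$) and the Bochner--Montgomery theorem. The paper quotes Bochner--Montgomery in the form ``every automorphism $C^0$-close to the identity is the time-one map of a holomorphic vector field'', which packages together your local-compactness step, the Lie-group upgrade, and the identification of the Lie algebra; your unbundled version via ``locally compact effective transformation group $\Rightarrow$ Lie group'' is the same circle of ideas.

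There is, however, a genuine slip in your argument for local compactness. You claim that $\mathrm{Hol}(M,M)$ is compact because the maps are uniformly bounded (target compact) and Cauchy estimates then bound all derivatives, so Arzel\`a--Ascoli applies. This is false: take $M=\C\Pp(1)$ and the sequence $f_n(z)=nz$ in $\mathrm{PSL}(2,\C)\subset\mathrm{Hol}(M,M)$; there is no equicontinuity at $0$ and no continuous limit. The point is that Cauchy estimates require the image of a coordinate ball to lie in a \emph{fixed} coordinate chart, and for an arbitrary family in $\mathrm{Hol}(M,M)$ this fails. Consequently your sentence ``$\mathcal{U}$ is compact as a closed subset of the compact space $\mathrm{Hol}(M,M)$'' does not stand.

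The fix is local and easy: argue compactness only for a small $C^0$-ball $\mathcal{U}$ around $\mathrm{id}$. For $f\in\mathcal{U}$ and $p\in M$, choose a coordinate chart containing the $2\varepsilon$-ball about $p$; then $f$ maps a fixed small ball around $p$ into this chart, Cauchy estimates now legitimately bound $df$ there, and you get equicontinuity of $\mathcal{U}$. Arzel\`a--Ascoli then gives relative compactness of $\mathcal{U}$ in $C(M,M)$, uniform limits of holomorphic maps are holomorphic, and your degree argument shows the closure still consists of automorphisms. With this correction the rest of your proof goes through and agrees with the route the paper sketches.
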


A proof of this result can be found in the nice survey article
of B. Deroin \cite{bertrand}. It is based on a couple of very
important theorems whose interest goes beyond Theorem~\ref{liegroup}.
These are as follows.

\begin{teo}
\label{cartanserre}
{\rm ({\bf Cartan \& Serre}, \cite{cartanS})}
Let $E$ be a (finite dimensional)
holomorphic vector bundle over a compact complex manifold $M$.
Then the vector space $\Gamma \, (M, E)$ consisting of holomorphic sections
of $E$ has finite dimension.
\end{teo}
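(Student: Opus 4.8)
The plan is to realize $\Gamma(M,E)$ as a Banach space and to show that its closed unit ball is compact; by the classical theorem of F.~Riesz this forces the space to be finite dimensional. The only genuine analytic input is Montel's theorem (equivalently, the Cauchy estimates together with Arzel\`a--Ascoli) for holomorphic functions of several variables, plus the elementary fact that a uniform limit of holomorphic sections is again holomorphic.

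First I would set up the data needed to measure sections. Since $M$ is compact I choose a finite open cover $\{U_i\}_{i=1}^N$ by coordinate charts $\psi_i : U_i \to \C^n$ over which $E$ is holomorphically trivial, say $\varphi_i : E_{\mid U_i} \to U_i \times \C^k$; and for each $i$ I choose relatively compact open subsets $V_i \Subset W_i \Subset U_i$ with $\{V_i\}$ still a cover of $M$. Fixing once and for all a Hermitian metric $h$ on $E$, I set $\|s\| = \sup_{x \in M} |s(x)|_h$. Then $(\Gamma(M,E),\|\cdot\|)$ is complete: a Cauchy sequence converges uniformly to a continuous section which, read in each chart, is a uniform limit of $\C^k$-valued holomorphic maps and hence (by Weierstrass' theorem, applied in polydiscs via the Cauchy integral formula) holomorphic. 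Thus $\Gamma(M,E)$ is a Banach space.

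The heart of the argument is the claim that the closed ball $B=\{s : \|s\|\le 1\}$ is sequentially compact. Given a sequence $s^{(m)}\in B$, I read it in the chart $U_1$ as holomorphic maps $s^{(m)}_1 : \psi_1(U_1) \to \C^k$. On the compact set $\overline{W_1}$ the metric $h$ is comparable to the Euclidean norm, so $\|s^{(m)}\|\le 1$ gives a uniform bound for $s^{(m)}_1$ on $\psi_1(W_1)$; the Cauchy estimates then make the family equicontinuous on compact subsets of $\psi_1(W_1)$, in particular on $\psi_1(\overline{V_1})$, so a subsequence converges uniformly on $\overline{V_1}$. Repeating this successively for $i=2,\ldots,N$ and extracting further subsequences, I obtain one subsequence converging uniformly on every $\overline{V_i}$, hence uniformly on $M=\bigcup_i V_i$. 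Its limit is a continuous section, holomorphic in each chart, and lies in $B$. Therefore $B$ is compact, and Riesz's lemma yields $\dim \Gamma(M,E) < \infty$.

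I expect the main obstacle to be bookkeeping rather than ideas: one must interpose the intermediate sets $W_i$ so that a \emph{global} sup-norm bound over $M$ genuinely produces a bound on an open neighborhood strictly containing $\overline{V_i}$ — precisely the hypothesis Montel's theorem needs before it applies on $\overline{V_i}$. One should also check that finitely many successive subsequence extractions combine into a single subsequence working on all charts at once, which is routine since $N$ is finite. The remaining ingredients (completeness of $\Gamma(M,E)$, holomorphy of uniform limits, Riesz's lemma) are classical and may simply be quoted.
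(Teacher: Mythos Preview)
The paper does not prove this theorem: it is stated with attribution to Cartan and Serre and a reference to \cite{cartanS}, as one of two ingredients (the other being Bochner--Montgomery) invoked to justify Theorem~\ref{liegroup}, itself only referenced to \cite{bertrand}. So there is no ``paper's own proof'' to compare against.

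Your argument is correct and is in fact the standard elementary route to this result: equip $\Gamma(M,E)$ with the sup norm induced by a Hermitian metric, use Montel's theorem in local trivializations over a finite cover (with the intermediate sets $V_i \Subset W_i \Subset U_i$ arranged exactly as you describe so that a global bound yields the open-neighborhood bound Montel needs), extract a single convergent subsequence via finitely many successive extractions, and conclude by Riesz's lemma. The bookkeeping points you flag --- comparability of $h$ with the Euclidean norm on the compact $\overline{W_i}$, holomorphy of uniform limits, and the fact that $N$ is finite so no diagonal argument is required --- are precisely the ones that matter, and you have handled them. There is no gap.
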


\begin{teo}
\label{bochnermontgomery}
{\rm ({\bf Bochner \& Montgomery}, \cite{bochnerM})}
Let $M$ be as before. Then every automorphism of $M$ sufficiently
$C^0$-close to the identity is the time-one map induced by a
holomorphic vector field on $M$.
\end{teo}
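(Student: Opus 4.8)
The plan is to exhibit a $C^{0}$-neighborhood of the identity in $\mathrm{Aut}(M)$ as the image, under the time-one-flow map, of a neighborhood of the origin in the (finite-dimensional) space of holomorphic vector fields on $M$. I would first record that $\mathrm{Aut}(M)$, taken with the compact-open topology — which on the compact manifold $M$ coincides with the $C^{0}$ topology and makes $\mathrm{Aut}(M)$ a topological group — is \emph{locally compact}: applying the Cauchy estimates in coordinate charts, a $C^{0}$-bounded family of biholomorphisms of $M$ is bounded in every $C^{k}$-norm and hence equicontinuous, so Arzel\`{a}--Ascoli makes $C^{0}$-bounded subsets relatively compact. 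The same estimates show that $C^{0}$-convergence in $\mathrm{Aut}(M)$ automatically upgrades to $C^{\infty}$-convergence on $M$; this regularity upgrade will be used below.

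Next I would introduce $\mathfrak g$, the space of global holomorphic vector fields on $M$, that is, the holomorphic sections of the holomorphic tangent bundle. By the Cartan--Serre theorem (Theorem~\ref{cartanserre}) $\mathfrak g$ is a finite-dimensional complex vector space, and it is a Lie algebra because the bracket of holomorphic vector fields is holomorphic. Since $M$ is compact, each $X \in \mathfrak g$ is complete (Remark~\ref{obs1}), so its flow $\phi^{t}_{X}$ is a one-parameter group of automorphisms of $M$ (extending holomorphically in complex time), and one gets a map $\Phi : \mathfrak g \to \mathrm{Aut}(M)$, $\Phi(X) = \phi^{1}_{X}$, whose associated map $\mathfrak g \times M \to M$ is holomorphic, with $\Phi(0) = \mathrm{id}$ and $d\Phi_{0} = \mathrm{Id}_{\mathfrak g}$. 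Integrating the finite-dimensional Lie algebra $\mathfrak g$ of complete vector fields, one obtains an action on $M$ of the associated simply connected Lie group; its image is a connected subgroup $G \subset \mathrm{Aut}(M)$ carrying a Lie group structure with Lie algebra $\mathfrak g$, acting holomorphically on $M$, and whose Lie-group exponential is exactly $\Phi$. At this stage $G$ is known only to be an immersed subgroup.

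The hard part — the only genuinely hard step, as I see it — is to show that $G$ contains a $C^{0}$-neighborhood of $\mathrm{id}$ in $\mathrm{Aut}(M)$; equivalently, that no automorphism arbitrarily $C^{0}$-close to $\mathrm{id}$ can fail to be a time-one flow $\phi^{1}_{X}$. This is precisely the analysis of Bochner and Montgomery, \cite{bochnerM}: one shows that the locally compact group $\mathrm{Aut}(M)$, acting effectively on $M$ by diffeomorphisms, has \emph{no small subgroups}. Here one exploits that $C^{0}$-closeness to $\mathrm{id}$ forces $C^{1}$-closeness (the regularity upgrade above), so the linear parts in charts of automorphisms near $\mathrm{id}$ generate relatively compact subgroups of $\mathrm{GL}$, and a careful examination of the iterates of such an automorphism — the technical heart of the argument — excludes arbitrarily small nontrivial subgroups. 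A locally compact topological group without small subgroups is a Lie group (Gleason--Montgomery--Zippin; for groups acting by diffeomorphisms one can appeal directly to \cite{bochnerM} and bypass the full solution of Hilbert's fifth problem), and the same circle of ideas yields that its action on $M$ is $C^{\infty}$, hence holomorphic — the transformations being biholomorphic and the Cauchy--Riemann equations passing to the group parameter. In particular the identity component $\mathrm{Aut}(M)^{0}$ is a finite-dimensional Lie group, open in $\mathrm{Aut}(M)$.

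To finish, I would identify the two descriptions. The Lie algebra of $\mathrm{Aut}(M)^{0}$ consists of complete vector fields whose flows are biholomorphisms, hence of holomorphic vector fields, so it is a subalgebra of $\mathfrak g$; conversely each $X \in \mathfrak g$ generates the one-parameter subgroup $\phi^{t}_{X}$ of $\mathrm{Aut}(M)^{0}$. Thus this Lie algebra is all of $\mathfrak g$ and $G = \mathrm{Aut}(M)^{0}$, so the abstract Lie-group exponential of $\mathrm{Aut}(M)^{0}$ is the map $\Phi$. Since the exponential of a Lie group is a diffeomorphism from a neighborhood of $0$ in the Lie algebra onto a neighborhood of the identity in the group, and $\mathrm{Aut}(M)^{0}$ is open in $\mathrm{Aut}(M)$ for the $C^{0}$ topology, it follows that every automorphism of $M$ sufficiently $C^{0}$-close to $\mathrm{id}$ equals $\phi^{1}_{X}$ for a unique small $X \in \mathfrak g$, i.e. is the time-one map of a holomorphic vector field on $M$. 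Apart from the no-small-subgroups step and the attendant structure theory of locally compact groups, everything here is soft, using only Cauchy estimates, the Cartan--Serre finiteness theorem, and completeness of vector fields on a compact manifold.
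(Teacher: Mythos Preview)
The paper does not give a proof of this theorem: it is stated as a known result with a citation to \cite{bochnerM}, and is used (together with the Cartan--Serre finiteness theorem) as a black-box ingredient in the discussion of Theorem~\ref{liegroup}. There is therefore no ``paper's own proof'' to compare your proposal against.

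Your outline is a reasonable and well-organized sketch of how the full argument goes, and you correctly isolate the single hard step. One remark worth making explicit: in your step~4 you write that the no-small-subgroups assertion ``is precisely the analysis of Bochner and Montgomery, \cite{bochnerM},'' and later that one can ``appeal directly to \cite{bochnerM}'' to avoid the full Gleason--Montgomery--Zippin machinery. That is honest, but it means your proposal, like the paper, ultimately defers the substantive content to the cited reference rather than reproving it; the surrounding material (local compactness of $\mathrm{Aut}(M)$ via Cauchy estimates and Arzel\`a--Ascoli, finite-dimensionality of $\mathfrak g$ via Cartan--Serre, completeness of holomorphic vector fields on a compact $M$, identification of the Lie algebra of $\mathrm{Aut}(M)^0$ with $\mathfrak g$) is soft and correctly assembled. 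If you wanted a genuinely self-contained argument you would need to actually carry out the no-small-subgroups analysis for a locally compact transformation group acting smoothly on a manifold, which is the real content of \cite{bochnerM}.
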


Suppose we want to determine the connected component containing
the identity of the group of automorphism of a
complex manifold $M$. An equivalent formulation for this problem
is to describe the Lie algebra of this automorphism group
denoted by ${\rm Aut}\, (M)$. We can starting thinking of
the $n$-dimensional complex projective space $\C \Pp (n)$. It is well-known
that the corresponding
automorphism group is ${\rm PSL}\, (n+1, \C) = {\rm PGL}\, (n+1, \C)$
and the associated action is nothing but the projectivization
along radial lines of the linear action of the invertible matrices
of ${\rm GL}\, (n+1, \C)$ on $\C^{n+1}$. As a motivation for our
discussion, and in particular for the classical Blanchard Lemma, let us
sketch a proof of this fact. First of all, it is clear that
${\rm PSL}\, (n+1, \C)$ is contained in ${\rm Aut}\, (\C \Pp (n))$.
If $g$ is an algebraic automorphism automorphism of $\C \Pp (n)$
then it is represented in affine coordinates by algebraic maps, ie
quotient of polynomials. Imposing that these expressions must glue
together by change of coordinates is a very strong condition and allows
us to show that $g$ coincides with the action of an element of
${\rm PSL}\, (n+1, \C)$.

The above discussion did not totally solve the problem since there
might exist {\it nonalgebraic}\, automorphisms. Naturally the
GAGA theorems allows to rule out this possibility and thus to actually
establish that ${\rm Aut}\, (\C \Pp (n))$ coincides with
${\rm PSL}\, (n+1, \C)$. However Blanchard Lemma provides with a
more elementary alternative that does not rely on GAGA Principle.
First we state Blanchard Lemma. Here is convenient to recall that
a {\it fibration}\, on a compact complex manifold $M$ is nothing but a
holomorphic map $\calp : M \rightarrow N$ onto another compact
complex manifold of smaller dimension that is a submersion at
generic points. If $S \subset M$ stands for the singular values
of $\calp$, then  the well-known theorem of Ereshemann
implies the existence of a {\it locally trivial differentiable
fibration}\, of $M \setminus \calp^{-1} (S)$ over $N \setminus S$.
Now we have:

\begin{teo}
\label{blancahrd}
{\rm ({\bf Blanchard Lemma})}
Consider a fibration as above in a complex compact manifold
$M$. Then every holomorphic vector field globally defined on $M$
is such that its flow sends fibers of $\calp$ to fibers of $\calp$.
\end{teo}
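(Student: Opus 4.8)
The plan is to use the finite-dimensionality of spaces of holomorphic sections (Cartan--Serre, Theorem~\ref{cartanserre}) together with the rigidity of compact complex manifolds to show that the flow of a globally defined holomorphic vector field $Y$ on $M$ cannot ``move'' a fiber of $\calp$ off the fiber space. First I would reduce to a local statement near a generic point $q\in N\setminus S$, where $\calp$ is a locally trivial differentiable fibration by Ehresmann's theorem. Let $F_q=\calp^{-1}(q)$ be a smooth fiber. Consider the flow $\phi_t$ of $Y$; for small $t$ it is a holomorphic automorphism of $M$ defined on a neighborhood of $F_q$ (or globally, since $M$ is compact and $Y$ is holomorphic, hence complete). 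The claim is that $\calp\circ\phi_t$ is constant along $F_q$, i.e. $\phi_t(F_q)$ is again a fiber.

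The key idea is to look at the holomorphic map $\calp\circ\phi_t\colon M\to N$ for $t$ near $0$. Restricting to a fiber $F_{q}$, we obtain a holomorphic map $g_t=\calp\circ\phi_t|_{F_q}\colon F_q\to N$. At $t=0$ this map is constant (equal to $q$). I would then argue that $g_t$ must remain constant for all small $t$: the space of holomorphic maps from the compact complex manifold $F_q$ to $N$ that are homotopic to a constant, or more precisely that land in a fixed coordinate chart, is controlled by holomorphic functions on $F_q$, which by the maximum principle (or by Cartan--Serre applied to the trivial bundle, giving $\Gamma(F_q,\mathcal O)=\C$ when $F_q$ is connected and compact) are constant. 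Concretely: pick local coordinates on $N$ near $q$; then the components of $g_t$ are holomorphic functions on the compact connected manifold $F_q$, hence constant, as long as $g_t(F_q)$ stays inside the chart, which holds for $t$ small by continuity. Therefore $\phi_t(F_q)\subset\calp^{-1}(c(t))$ for some point $c(t)\in N$; comparing dimensions (since $\phi_t$ is a diffeomorphism and fibers of $\calp$ over regular values all have the same dimension) forces $\phi_t(F_q)=\calp^{-1}(c(t))$, a full fiber.

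Having established that fibers over $N\setminus S$ are permuted by $\phi_t$ for small $t$, I would extend to all $t$ by the group property of the flow, and then extend across the singular values $S$ by continuity: $\phi_t$ maps the closure of a generic fiber to the closure of its image, and since $S$ has empty interior, the set of fibers sent to fibers is closed and dense, hence everything. Finally, passing from ``small $t$'' to arbitrary $t$ is immediate once we know the statement for a neighborhood of the identity in the flow.

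The main obstacle I anticipate is making rigorous the step ``$g_t$ stays in a chart for small $t$,'' uniformly over the fiber $F_q$, and handling the behavior over the singular locus $S$ of $\calp$. The uniformity is fine because $F_q$ is compact and $\phi_t$ depends continuously (indeed holomorphically) on $t$; the singular-locus issue is handled purely topologically by a density/closedness argument as indicated. One should also be slightly careful that the relevant object is really the induced map on fibers and that ``constant components'' genuinely yields ``image is a single point of $N$'' --- this is where connectedness of the fibers (part of the definition of a fibration in this excerpt) is used. No deep input beyond the maximum principle / Cartan--Serre and Ehresmann's theorem is needed.
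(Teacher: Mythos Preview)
Your proposal is correct and follows essentially the same argument as the paper: restrict $\calp\circ\phi_t$ to a compact fiber $F_q$, observe that for small $t$ the image lies in a local chart (polydisc) on $N$, and invoke the maximum principle to conclude the map is constant. The paper's proof is more terse---it does not spell out the extension to all $t$ via the group law, nor the passage across the singular locus $S$, nor the dimension count---but these are the routine details you correctly identify and handle; the invocation of Cartan--Serre is harmless but unnecessary, since the maximum principle on compact connected $F_q$ suffices directly.
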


\begin{proof}
The theorem is of local nature so that
we consider a open set $U$ of $M$ fibering over unit polydisc of some
$\C^k$. Note that every vector field as in the statement is complete
since $M$ is compact. Next let $\varphi^t$ denote the induced flow
for $t \in \C$. Denote by $F_0$ the fiber sitting over the origin of
$\C^k$. If $t$ is sufficiently small in absolute value, it follows
that the image $\varphi^t (F_0)$ is still contained in $U$. It can
therefore be composed with the projection from $U$ to the unit
polydisc. The resulting map is holomorphic from $F_0$ to the
polydisc. Because $F_0$ is compact, this map must be constant. In
other words, $\varphi^t$ is contained in a fiber of $\calp$. The
statement promptly follows from this observation.
\end{proof}

Since ${\rm Aut}\, (M)$ is a Lie group of finite dimension, its
connected component ${\rm Aut}_0 (M)$ containing the identity
is obtained through the corresponding Lie algebra. Precisely, the
elements of ${\rm Aut}_0 (M)$ are induced by the holomorphic
vector fields carried by $M$. In this context, the
upshot of Blanchard Lemma is that, by construting fibrations in
our manifolds, the problem of identifying holomorphic vector fields
on $M$ can be reduced to smaller dimensional cases. In particular,
if $M$ is algebraic, then fibrations can always be constructed by
considering the ``level hypersurfaces'' of a nonconstant meromorphic
function on $M$. Strictly speaking these hypersurfaces define a pencil,
rather than a fibration, on $M$. Indeed, a meromorphic function is
in general not defined on a subvariety of codimension~$2$. Desingularization
results however allow us to easily turn this pencil in an actual
fibration.

In the special case of $\C \Pp (n)$, we can consider the
meromorphic function given in affine coordinates by
$x_2/x_1$. Clearly this function is not defined on the subvariety
$\{x_2=x_1=0\}$. Nonetheless, we blow this submanifold up to produce
a new manifold $M$ where it is defined a fibration whose basis is
$\C \Pp (1)$ and whose typical fiber is $\C \Pp (n-1)$. Thanks to
Blanchard Lemma, holomorphic vector fields on $M$ can be projected
over the basis to define a vector field on $\C \Pp (1)$.
When this projection is trivial, the vector field must be tangent
to the fibers and, in particular, it has to induce a holomorphic
vector field on $\C \Pp (n-1)$.

To start out the recurrence procedure, it is therefore enough to
determine the automorphism group of $\C \Pp (1)$. This case can
easily be handed: it suffices to consider the stereographic projection
of $\C \Pp (1)$ to $\C$ and apply, for example, Picard theorem to
conclude that an automorphism of $\C$ must have an ``algebraic nature''
at infinity. With this information in hand, the recurrence procedure
described above becomes effective. In particular, for $\C \Pp (2)$,
where the blown-up manifold $M$ coincides with the first Hirzebruch
surface $F_1$, the reader can check \cite{akh} for explicit
calculations.

It remains the problem of passing from ${\rm Aut}_0 (M)$ to ${\rm Aut}\, (M)$.
This is more elaborate. In the case of $\C \Pp (n)$ we can exploit the
size of ${\rm Aut}_0 (\C \Pp (n))$ to show that ${\rm Aut}\,
(\C \Pp (n))$ actually coincides
with ${\rm Aut}_0 (\C \Pp (n))$. Let us briefly sketch the argument.
Suppose that $f$
is an element of ${\rm Aut}\, (\C \Pp (n)) \setminus {\rm Aut}_0 (\C \Pp (n))$.
Then $f$ naturally acts on the Lie algebra
of holomorphic vector fields of $\C \Pp (n)$. We claim the existence
of an eigenvector for this action, ie. there is a holomorphic vector
field $X$ on $\C \Pp (n)$ such that $f^{\ast} X = cX$ where $c \in \C$.
The proof amounts to observe that ${\rm Aut}_0 (M) \simeq
{\rm PSL}\, (n+1, \C)$ acts transitively on $n+2$-tuples of points.
Hence we can suppose that $f$ has, say, $n+1$ fixed points. Besides the
Lie subalgebra of vector fields having singularities at the $n+1$ fixed
points of $f$ is generated by a single vector field $X$. Thus we must
have $f^{\ast} X = cX$ as desired. To finish the proof, we observe that
$X$ has a first integral, i.e. its orbits can naturally be compactified
into rational curves in $\C \Pp (n)$. By construction $f$ must preserve
the corresponding pencil of rational curves so that we can now apply a
recurrence procedure to obtain a contradiction with the assumption
that $f$ belongs to
${\rm Aut}\, (\C \Pp (n)) \setminus {\rm Aut}_0 (\C \Pp (n))$.

Even in dimension~$2$, an automorphism of an algebraic surface can have
a ``rich dynamics'', in particular, they need not preserve any fibration
on $M$. Clearly these diffeomorphisms cannot belong to ${\rm Aut}_0 (M)$.
There is however a simple way to measure the chances of an automorphism
of an algebraic surface $M$ be strictly more complicated than those
belonging to ${\rm Aut}_0 (M)$. In fact, if the corresponding
action on the second homology group $H^2 (M)$ of $M$ is trivial, then
the automorphism must preserve every fibration defined on $M$. The
proof of this claim is given below; we left to the reader the straightforward
generalizations of this lemma to higher dimensions.

\begin{lema}
\label{exploitingintersection}
Suppose that $M$ is a compact complex surface supporting a fibration
$\calp : M \rightarrow S$ over a Riemann surface $S$. Let $f$
be an automorphism of $M$ acting trivially on $H^2 (M)$. Then $f$
preserves $\calp$.
\end{lema}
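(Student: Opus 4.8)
The plan is to combine the positivity of intersection numbers of complex curves (Remark~\ref{seila}) with the homological triviality of $f$. Since $M$ is a compact complex surface it is in particular a compact oriented real $4$-manifold, and $f$, being holomorphic, preserves the orientation; Poincar\'e duality then upgrades the hypothesis that $f$ acts trivially on $H^2(M)$ to the statement that $f_\ast$ is the identity on $H_2(M;\Z)$ (modulo torsion, which is all that matters for intersection numbers). I would then fix a regular value $s_0$ of $\calp$, so that the fiber $F=\calp^{-1}(s_0)$ is a smooth connected compact curve; regular values form a cofinite subset of $S$. Distinct fibers over regular values are disjoint and homologous, so $F\cdot F=0$; and since $f(F)$ is a smooth irreducible compact curve with $[f(F)]=f_\ast[F]=[F]$ in $H_2(M;\Z)$, one also gets $f(F)\cdot F=0$.

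Next I would examine $f(F)$ through the holomorphic map $\calp|_{f(F)}\colon f(F)\to S$. If $\calp|_{f(F)}$ were non-constant, it would be surjective onto $S$ (a non-constant holomorphic map out of a compact Riemann surface), so $f(F)$ would meet $F=\calp^{-1}(s_0)$; since $f(F)\neq F$ and both curves are irreducible, $f(F)\cap F$ would be a nonempty finite set, and by the positivity recorded in Remark~\ref{seila} each intersection point contributes a strictly positive local multiplicity, forcing $f(F)\cdot F\ge 1$. This contradicts $f(F)\cdot F=0$. Hence $\calp|_{f(F)}$ is constant, i.e. $f(F)$ is contained in a single fiber of $\calp$; being an irreducible curve it is an irreducible component of that fiber, and — discarding finitely many further values of $s_0$, namely those for which the image fiber fails to be a smooth connected fiber — it must coincide with a whole fiber of $\calp$.

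Consequently there is a cofinite subset $S'\subset S$ and a map $\hat f\colon S'\to S$ with $f(\calp^{-1}(s))=\calp^{-1}(\hat f(s))$ for every $s\in S'$. This $\hat f$ is holomorphic on $S'$, and since $S$ is a compact Riemann surface it extends to a holomorphic self-map of $S$; running the same argument for $f^{-1}$ supplies a holomorphic inverse, so $\hat f\in\mathrm{Aut}(S)$. Finally $\calp\circ f=\hat f\circ\calp$ on the dense open set $\calp^{-1}(S')$, and both sides are continuous (indeed holomorphic) on $M$, so the identity holds on all of $M$; thus $f$ sends every fiber of $\calp$ to a fiber, which is exactly what it means for $f$ to preserve $\calp$.

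I expect the genuinely delicate part to be the passage from generic fibers to all fibers: ensuring that $f$ does not split a singular or reducible fiber into pieces lying over distinct points of $S$, and that the induced map $\hat f$ really extends to an automorphism of the base. The intersection-theoretic core of the argument (the clash between $F\cdot F=0$ and the strict positivity of intersections of complex curves) only controls the smooth fibers directly; the remaining bookkeeping relies on analytic continuation together with the properness of $\calp$ (cf.\ the Proper Mapping Theorem~\ref{PMT}), and should be carried out with some care.
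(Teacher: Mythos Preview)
Your proof is correct and follows essentially the same approach as the paper: both argue that if $f$ failed to preserve $\calp$ then $f(F)$ would meet another fiber nontrivially, contradicting $[f(F)]\cdot[F]=[F]\cdot[F]=0$ via the positivity of intersections of complex curves. The paper's proof is terser and stops at the contradiction for a generic fiber, whereas you add the bookkeeping (constructing $\hat f$ and extending over singular fibers by continuity) that the paper leaves implicit; this extra care is welcome but not a different strategy.
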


\begin{proof}
We begin by noticing that all the fibers of
$\calp$ are homologous one to the others. Thus they are associate
to a well defined class $[L]$ in $H^2 (M)$. Since $f$ acts trivially
in $H^2 (M)$, we have $f^{\ast} [L] =[L]$. On the other hand,
since $L$ is a fiber of a fibration, one has $[L].[L] =0$. However,
if $f$ did not preserve $\calp$, then the image $f(L)$ of a generic chosen fiber
$L$ would intersect other fibers non trivially another fiber, say $L'$.
Since $f(L)$ and $L'$ are both complex submanifolds of $M$, it would
follows that $\sharp (f(L) \cap L' ) >0$ (strictly). However the intersection
number depends only on the homology class of the submanifold, so we
have
$$
0 < [f(L)][L'] = [L][L'] = [L][L] = 0 \, .
$$
The resulting contradiction establishes the lemma.
\end{proof}

\chapter{Singularities of holomorphic vector fields}

%--------------------------------------------------------------------------------------------------------------------------------------------------------------------------------------------------------------------
%--------------------------------------------------------------------------------------------------------------------------------------------------------------------------------------------------------------------
%--------------------------------------------------------------------------------------------------------------------------------------------------------------------------------------------------------------------

\section{Introduction}

The basic topic of the present chapter is the study of singularities
of complex vector fields in dimension $2$. In the preceding chapter
we had already traced a parallel between real and complex ordinary
differential equations and point out the main aspects in which they
differ. For instance, the analogue to the maximal interval of
definition for solutions of {\it real} ODEs does not exist, in general,
in the complex case. On the other hand, the geometric idea of viewing
the solutions of real ODEs (away from the singular set) as
$2$-dimensional leaves of a foliation can easily be transported to
the complex scenario. In the case of complex ODEs, we can say that
the orbits are Riemann Surfaces and that, away from the
singularities, they are leaves of a holomorphic foliation.

In Section~$2$ we give some basic results related to singularities
of holomorphic foliations. We begin with the foliation $\fol$
associated to a vector field $X$, having an isolated simple
singularity at $(0,0)$, with non-zero eigenvalues $\lambda_1$ and
$\lambda_2$. In other words,
\begin{equation}\label{eq38}
X(x_1,x_2)=[\lambda_1
x_1+\varphi_1(x_1,x_2)]\frac{\partial}{\partial x_1}+[\lambda_2
x_2+\varphi_2(x_1,x_2)]\frac{\partial}{\partial x_2}\,.
\end{equation}
We discuss the problem of linearizing such vector fields. More
precisely, understand under which conditions there exists a
holomorphic change of coordinates that linearizes the system. In
fact, a {\it formal} change of coordinates can easily be found,
except for very specific resonant cases. What is more challenging
is to prove its convergence. It becomes clear that the existence
of a holomorphic change of charts linearizing $X$ depends
entirely on the values of the eigenvalues. For instance, if
$\lambda_1/\lambda_2$ do not belong to $\R_-$ and if neither
$\lambda_1/\lambda_2$, nor $\lambda_2/\lambda_1$ belong to $\N$
then, in appropriate coordinates, it is indeed linear. This is the
contents of the well-known Poincar\'{e} Linearization Theorem. Now
if the singularity belongs to the Siegel domain, i.e.,
$\lambda_1/\lambda_2\in\R_-$, we cannot find a linearizing
holomorphic change of coordinates. Nevertheless, in local
coordinates $(y_1,y_2)$, Equation~\ref{eq38} may be expressed as
\[
X = \lambda_1 y_1[1 +
(h.o.t.)] \partial /\partial y_1 + \lambda_2 y_2 [1 + (h.o.t.)]
\partial /\partial y_2 \, .
\]

Next we shall investigate the case in which one of the
eigenvalues, say $\lambda_2$ is {\it zero} and $\lambda_1\neq 0$.
Such singularities are called {\it saddle-nodes}. We obtain a
normalization for this type of singularity, known as Dulac's
Normal Form. This result simply states that vector fields
containing saddle-node singularities may be given in local
coordinates $(y_1,y_2)$ by
\begin{equation}\label{eq39}
X(y_1,y_2) = [y_1 (1 + \lambda y_2^p) + y_2
R(y_1,y_2)] \, \frac{\partial}{\partial y_1} \; + \; y_2^{p+1} \,
\frac{\partial}{\partial y_2} \, ,
\end{equation}
up to an invertible factor.

Subsection~$3.2$ is devoted to a brief study of singularities in
higher dimensions. In particular we give a generalization of
Poncar\'{e} Linearization Theorem and some results related to
saddle-node singularities in dimension $3$.

Section~$3$ is the core of the present text and undoubtedly
contains the most important results of this text. It is strongly
inspired in the work of J.-F. Mattei and R. Moussu (cf.
\cite{M-M}) and J. Martinet and J.-P. Ramis (cf. \cite{Ma-R}). We
begin by revisiting saddle-node singularities, but this time from
a more advanced standpoint. We are interested in understanding
whether there exists a holomorphic change of coordinates where the
term $R(y_1,y_2)$ in Equation~\ref{eq39} becomes {\it identically}
zero. Even though it is possible to obtain a formal conjugacy
between the two normal forms, in general, this application does
not converge on a neighborhood of the singularity. However, in
certain sectors of the neighborhood, the formal conjugacy is
indeed {\it summable}. This is basically the contents of the
Hukuara-Kimura-Matuda Theorem. So the next step is to study the
functions that ``glue'' together these sectors; i.e., the sector
changing diffeomorphisms. In the simplest case, there are two
diffeomorphisms that realize the two possible changes of sectors,
depending on the connected component of the domain of intersection
that is being considered. One of them is a translation and the
other one happens to be a diffeomorphism tangent to the identity.
The interesting issue here, is that these diffeomorphisms are not
unique. Only their conjugacy classes is canonic and can be used to
parameterize the moduli space of the saddle-nodes. This leads us
to a related topic, namely, the classification of diffeomorphisms
of the type $f(z)=z+z^2+\cdots$, following the work of S. Voronin
\cite{Vo}. The procedure to be employed is again based on
sectorial normalizations, whereas the normalizing maps will now be
constructed by means of the Measurable Riemann Theorem.

Up to this point we have only been dealing with simple
singularities. One might ask what is to be done about more
degenerate singularities. Indeed, in dimension~$2$ we do not
really have to worry about them, since there is an effective
method to reduce any singularity to a ``superposition'' of simple
ones. This is precisely what is done in Subsection~$4.2$, following
\cite {M-M}. Essentially, the idea is that by composing a finite
number of blow-up applications we reduce a singularity of higher
order to an arrangement of curves containing only simple
singularities. This is the contents of the Seidenberg Theorem.
Another reduction may yet be done in the case of simple
singularities: either it is reduced to a saddle-node singularity
or both $\lambda_1/\lambda_2$ and $\lambda_2/\lambda_1$ do not
belong to $\N$.

Finally Subsection~$4.3$ is devoted to Mattei-Moussu Theorem,
regarding the existence of holomorphic first integrals for
foliations. In their joint work (\cite{M-M}), J.-F. Mattei
and R. Moussu find necessary and sufficient conditions for
the existence of non-constant holomorphic functions
that are constant along the leaves of a foliation $\fol$ with an
isolated singularity. Moreover these conditions are of topological
nature. Their theorem is as follows.
\begin{teo}[Mattei-Moussu \cite{M-M}]
Consider the holomorphic foliation $\fol$ defined on $U\subset
\C^2$ with an isolated singularity at $(0,0)$. Suppose that it
satisfies the following conditions:
\begin{enumerate}
\item Only a {\it finite} number of leaves of $\fol$ accumulates
on $(0,0)$;
\item The leaves of $\fol$ are closed on $U\setminus \{(0,0)\}$.
\end{enumerate}
Then $\fol$ has holomorphic non-constant first integral
$f:U\rightarrow \C$.
\end{teo}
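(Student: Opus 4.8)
The plan is to resolve the singularity at $(0,0)$, observe that hypotheses~(1)--(2) become extremely rigid on the blown-up space, build a holomorphic first integral on a neighbourhood of the exceptional divisor, and then push it down to $U$. Concretely, by Seidenberg's theorem (Subsection~4.2) there is a finite composition of blow-ups $\pi \colon \widetilde{U} \to U$ such that the transformed foliation $\tilf = \pi^{\ast}\fol$ has only reduced singularities along the exceptional divisor $E = \pi^{-1}(0)$, with $E$ a tree of embedded copies of $\cpum$ meeting transversally. Since $\pi$ is proper and restricts to a biholomorphism $\widetilde{U}\setminus E \to U\setminus\{0\}$, a leaf $L$ of $\fol$ accumulates at $0$ exactly when the corresponding leaf of $\tilf$ accumulates at some point of $E$, and $L$ is closed in $U\setminus\{0\}$ exactly when the corresponding leaf of $\tilf$ is closed in $\widetilde{U}\setminus E$. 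Thus (1)--(2) say: only finitely many leaves of $\tilf$ accumulate at $E$, and every leaf of $\tilf$ not contained in $E$ is closed in $\widetilde{U}\setminus E$.

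Next I would extract the structure these conditions force. No component of $E$ is dicritical: the separatrices of $\fol$ at $0$ are among the finitely many leaves accumulating at $0$, so $(0,0)$ is a non-dicritical singularity, and dicritical components of the divisor can only arise over dicritical singularities; hence every component of $E$ is $\tilf$-invariant. Moreover no reduced singularity of $\tilf$ on $E$ is a saddle-node, since the holonomy of a separatrix of a saddle-node is a nontrivial germ tangent to the identity whose orbits accumulate at the origin of the transversal, which would produce a non-closed leaf of $\tilf$ and contradict~(2). Therefore each singularity of $\tilf$ on $E$ is a non-degenerate reduced singularity with eigenvalue ratio $\lambda \notin \Q_{+}$, one of whose separatrices lies in $E$. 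Invoking the local study of reduced singularities and their holonomy underlying the Mattei--Moussu theory, conditions (1)--(2) force the holonomy of every separatrix through such a point to be periodic; hence $\lambda$ is a negative rational $-q/p$, the singularity is analytically linearizable, and in suitable local coordinates $(u,v)$ it admits the holomorphic first integral $u^{p}v^{q}$ with $E = \{uv=0\}$ locally. Along the smooth locus of each component the foliation is transverse to $E$ and trivially has a local first integral.

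The heart of the matter is then gluing these local first integrals into a single $\widetilde{f} \in \hol(V)$ on a neighbourhood $V$ of $E$. Working one component $D_i \simeq \cpum$ at a time, $D_i$ minus its finitely many singular and crossing points is a punctured sphere, and the holonomy of $\tilf$ along $D_i$ is generated by the local holonomies at the punctures, each of finite order; using that the nearby leaves are closed one shows --- and this is the crux of the whole argument --- that this holonomy group is finite, hence analytically conjugate to a finite group of rotations. A suitable power of the transverse coordinate is then a single-valued holomorphic first integral near $D_i \setminus \mathrm{Sing}$, and, after adjusting these powers, the local integrals $u^{p}v^{q}$ at the singular and crossing points match up; the required compatibility of the exponents propagates through $E$ without obstruction precisely because $E$ is a tree, so one concludes by induction on the number of components of $E$. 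The resulting $\widetilde{f}$ is non-constant (locally of the form $u^{p}v^{q}$) and holomorphic on $V$, hence constant on the compact connected divisor $E$; normalise so that $\widetilde{f}\equiv 0$ on $E$.

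Finally, $\widetilde{f}\circ\pi^{-1}$ is holomorphic on $U\setminus\{0\}$ and bounded near $0$, being the restriction of a function continuous on a compact neighbourhood of $E$; by Riemann's removable singularity theorem it extends to a holomorphic function $f$ on $U$ with $f\circ\pi=\widetilde{f}$. Its level sets are unions of leaves of $\fol$ because those of $\widetilde{f}$ are unions of leaves of $\tilf$ together with components of $E$, and $f$ is non-constant because $\widetilde{f}$ is; this is the desired first integral. I expect the two genuinely hard points to be the finiteness of the holonomy groups along the components of $E$ (extracted from closedness of leaves) and the consistent gluing of exponents over the tree $E$; the Seidenberg reduction, the exclusion of dicritical and saddle-node singularities, and the final push-down are comparatively routine.
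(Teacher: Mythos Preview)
Your proposal is correct and follows essentially the same three-part strategy as the paper: Seidenberg resolution; exclusion of dicritical components, saddle-nodes, hyperbolic and positive-ratio singularities via holonomy and leaf dynamics, arriving at linearizable singularities with ratio $-q/p$ and local integrals $u^{p}v^{q}$; then gluing via finiteness (indeed cyclicity) of the holonomy along each divisor component, induction over the tree $E$, and Riemann extension to push down. The two hard points you single out---extracting finiteness of the holonomy groups from closedness of leaves, and propagating the exponents consistently along the tree---are precisely what the paper elaborates in its Parts~2 and~3 (Lewowicz's lemma and Proposition~\ref{p6} for the former, Lemmas~\ref{l13}--\ref{14} and the inductive construction for the latter).
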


We have divided the proof of Mattei-Moussu Theorem into~$3$ parts
so as to make the exposition more transparent. The first step is
to show that under these assumptions the singularity can be
reduced to a superposition of singularities belonging to the
Siegel domain. This is done by studying the local holonomy of a
leaf with respect to a loop encircling each type of singularity
that might be contained in the Seidenberg tree of $\fol$. We reach
the conclusion that the only way we do not violate Conditions~$1$
and~$2$ is if the singularities are in the Siegel domain.

The next step is to show that the singularities in the Seidenberg
tree admit local first integrals. This is done by showing that
$\lambda_1/\lambda_2$ belong to $\Q_-$ and using the fact (also
due to J.-F. Mattei and R. Moussu) that if the holonomy associated
to a leaf of $\fol$ is linearizable, then the vector field related
to this foliation is linearizable as well.

Finally we extend the local first integrals in order to obtain a
global one. We show this in the case where all of the
singularities are reduced by a single blow-up and the general case
follows easily by induction.

%--------------------------------------------------------------------------------------------------------------------------------------------------------------------------------------------------------------------
%--------------------------------------------------------------------------------------------------------------------------------------------------------------------------------------------------------------------
%--------------------------------------------------------------------------------------------------------------------------------------------------------------------------------------------------------------------

\section{Normal Forms for Singularities of Holomorphic Foliations}

This section is devoted to the study of isolated singularities of
holomorphic foliations in dimension $2$.

Consider a holomorphic foliation $\fol$ with an isolated
singularity at $(0,0)$. As previously seen, there is a
holomorphic vector field $X$ associated to this foliation, which
is uniquely defined up to a multiplicative function $f$ such that
$f(0,0)\neq 0$.

The {\it eigenvalues} for the foliation $\fol$ at $(0,0)$ are
defined to be the eigenvalues $\lambda_1$, $\lambda_2$ associated
to the vector field $X$ at $(0,0)$, i.e. the eigenvalues of ${\rm Jac}X(0,0)$.
Notice however that the precise values of $\lambda_1, \, \lambda_2$ are not well
defined (except for the case where both are equal to zero) since the representative
of the foliation is defined up to a multiplicative unitary function. What is, in
fact, well defined for the foliation is the ratio $\lambda_1 / \lambda_2$ (assuming
$\lambda_2 \ne 0$). The ration is clearly invariant by the choice of vector field
associated to $\fol$. We have therefore three possibilities:
\begin{itemize}
\item[(a)] $\lambda_1 = \lambda_2 = 0$;
\item[(b)] $\lambda_1 = 0$, $\lambda_2 \ne 0$;
\item[(c)] $\lambda_1 \ne 0$, $\lambda_2 \ne 0$.
\end{itemize}
which are well-defined in the sense that the eigenvalues of two representatives of
$\fol$ belong to the same case (a),(b) or (c). A singularity is said to be \emph{simple}
if at least one of its eigenvalues is different from zero. If exactly one of its
eigenvalues is equal to zero, then the singularity is called a
\emph{saddle-node}.

The \emph{order} of a foliation $\fol$ at $(0,0)$ is the degree of
the first non-vanishing homogeneous component of the Taylor series
of $X$ based at $(0,0)$. Naturally, this order does not depend on
the chosen vector field $X$.

We will begin by studying simple singularities, taking into
account the problem of linearization. Afterwards the more general
case will be dealt with using the Seidenberg's Theorem.

Let $X, \, Y$ be two holomorphic vector fields defined in a neighborhood of
the origin, singular at this point. We say that $X$ is analytically (resp.
formally, $C^\infty$, $C^k$) conjugate to $Y$ if there  exists a holomorphic
(resp. formal, $C^\infty$, $C^k$) diffeomorphism $H$, such that $H(0)=0$ and
\[
Y=(DH)^{-1}(X \circ H).
\]
We say that $X$ and $Y$ are analytically (resp. formally, $C^\infty$, $C^k$)
equivalent if $X$ is analytically (resp. formally, $C^\infty$, $C^k$) conjugate
to $fY$, for some holomorphic function $f$ verifying $f(0) \ne 0$.

By analytic (resp. formal $C^{\infty}$, $C^k$) classification we mean a partition
of the set of holomorphic vector fields in classes whose elements are analytically
(resp. formally, $C^{\infty}$, $C^k$) conjugate to the others in the same class. A
vector field $X$ is said analytic (resp. formal, $C^{\infty}$, $C^k$) linearizable
if it belongs to the analytic (resp. formal, $C^{\infty}$, $C^k$) class of the vector
field $J^1_0X$ (i.e. the linear part of $X$).

%--------------------------------------------------------------------------------------------------------------------------------------------------------------------------------------------------------------------
%--------------------------------------------------------------------------------------------------------------------------------------------------------------------------------------------------------------------
%--------------------------------------------------------------------------------------------------------------------------------------------------------------------------------------------------------------------

\subsection{Vector Fields with Non-Zero Eigenvalues}\label{sectionvfnzeigenvalues}

Let us consider the ODE generated by a holomorphic vector field
$X$ with an isolated singularity at $(0,0)$. Suppose further that
its eigenvalues at $(0,0)$ are $\lambda_1$ and $\lambda_2$, both
different from \emph{zero}. In other words, we are considering the
foliation associated to the system of ODEs:
\begin{equation}
\begin{cases}
         \dot{x_1} = \lambda_1 x_1 + \varphi_1 (x_1 ,x_2)\\
         \dot{x_2} = \lambda_2 x_2 + \varphi_2 (x_1 ,x_2)
\end{cases} \, . \label{eq6}
\end{equation}
where $\varphi_1, \, \varphi_2$ have order at least~$2$. By $\dot{x}$ we mean
$dx/dT$. Note that we are assuming ${\rm Jac}X(0,0)$ diagonalizable. We now
wish to investigate the existence of a formal change of coordinates that
linearizes this system. That is, the existence of a formal map $H$ such that
$DH. X = J^1_{(0,0)}X \circ H$.

We shall adopt the the following notations. Let $Q = (q_1 ,q_2)$, $x^Q =x_1^{q_1} x_2^{q_2}$
and $\|Q\| = q_1 + q_2$. Suppose that, under the notations above, $\varphi_1, \, \varphi_2$
are written in the form
\[
\varphi_1 = \sum_{\| Q \| >1} \varphi_{1,Q} x^Q \;\; ; \;\; \varphi_2 = \sum_{\| Q \| >1} \varphi_{2,Q} x^Q \, .
\]
and consider the formal change of coordinates
\begin{eqnarray}
x_1 = y_1 + \zeta_1 (y_1 ,y_2) \; & {\rm where} & \;
\zeta_1 (y_1 ,y_2) = \sum_{\| Q \| >1} \zeta_{1,Q} y^Q \; \label{eq7} ,\\
x_2 = y_2 + \zeta_2 (y_1 ,y_2) \; & {\rm where} & \; \zeta_2 (y_1
,y_2) = \sum_{\| Q \| >1} \zeta_{2,Q} y^Q \; . \label{eq8}
\end{eqnarray}
Assume that, in these new coordinates, system (\ref{eq6}) is given by:
\begin{equation}
\begin{cases}
         \dot{y_1} = \lambda_1 y_1 + \psi_1 (y_1, y_2)\\
         \dot{y_2} = \lambda_2 y_2 + \psi_2 (y_1, y_2)
\end{cases}  \, . \label{eq9}
\end{equation}
Note that the right hand side corresponds to the linear part of $X$ if
and only if $\psi_1, \, \psi_2$ vanishes identically.

Substituting (\ref{eq7}) and (\ref{eq8}) in (\ref{eq6}), we obtain the following
relations:
\begin{eqnarray}
\sum_{\| Q\| >1} (\delta_{1,Q} \zeta_{1,Q} + \psi_{1,Q}) y^Q =
\varphi_1 (y_1+\zeta_1,y_2+\zeta_2) - \sum_{k=1,2} \frac{\partial
\zeta_1}
{\partial y_k} \psi_k \; , \label{eq10} \\
\sum_{\| Q\| >1} (\delta_{2,Q} \zeta_{2,Q} + \psi_{2,Q}) y^Q =
\varphi_2 (y_1+\zeta_1,y_2+\zeta_2) - \sum_{k=1,2} \frac{\partial
\zeta_2} {\partial y_k} \psi_k \; , \label{eq11}
\end{eqnarray}
where $\delta_{1,Q} = \lambda_1 q_1 + \lambda_2 q_2 - \lambda_1$
and $\delta_{2,Q} = \lambda_1 q_1 + \lambda_2 q_2 - \lambda_2$.
Notice that if $\delta_{i,Q}\neq 0$ ($i=1,2$) we can always find
$\zeta_{i,Q}$ such that $\psi_{i,Q}=0$. This can be seen by
induction on $\|Q\|$ and recalling that $\varphi_i$ has terms of order
at least $2$. In fact, this last implies that the coefficients
of $y^Q$ on the right hand side of Equations~(\ref{eq10})
and~(\ref{eq11}) depend only on terms $\zeta_{i,P}$ for $\|P\|<\|Q\|$.
Therefore $\zeta_{i,Q}$ depend on the coefficients of $\varphi_i$, on the
terms $\zeta_{i,P}$ with $\|P\|<\|Q\|$ and on $\delta_{i,Q}$ (which are
non-zero by assumption). More precisely, $\zeta_{i,Q}=
\frac{f}{\delta_{i,Q}}$, where $f$ is a function of certain
coefficients of $\varphi_i$ and $\zeta_{i,P}$ for $\|P\|<\|Q\|$. Thus if
$\delta_{i,Q}\neq 0$ for all $Q \in \N^2$ then there always exist a formal
change of coordinates linearizing $X$.

This settles the problem of finding a formal change of coordinates. We have
therefore that a vector field is formally conjugate to its linear part if
there are no \emph{resonances}.

\begin{defi}
Suppose that the origin is a singular point of a vector field $X$. Let $\dl=(\dl_1,\dl_2)$
be the vector constituted by the eigenvalues of ${\rm Jac}X(0,0)$. We say that the eigenvalues
are resonant if, for some $i$, there exists $I=(i_1, i_2) \in \N_0^2$ with
$\sum_{j=1}^2i_j \geq 2$ such that
\[
\dl_i=(I,\dl )=i_1\dl_1+i_2\dl_2.
\]
In this case the monomials $x^{I} \partial /\partial x_i$, where $x^I=x_1^{i_1} x_2^{i_2}$,
are said to be resonant. If $\dim \{m \in \Z^2:(m,\dl)=0\}=k$ then $X$ is said $k$-resonant.
\end{defi}

We have just seen that in the absence of resonances there exists a formal diffeomorphism
$H$ taking $X$ into its linear part. However, nothing guarantees that $H$, or equivalently
the series $\zeta_i (y_1 ,y_2)$ ($i=1,2$), do indeed converge. In fact this may not occur.

Before dealing with the problem of convergence, we shall introduce some useful notations.
Given a formal power series $\xi = \sum_{Q} \xi_Q x^Q$, we let $\overline{\xi} = \sum_{Q}
\| \xi_Q \| y^Q$. We also consider the series $\overline{\overline{\xi}}$, on one complex
variable $z$, obtained as $\overline{\overline{\xi}} = \sum_Q \| \xi_Q \| z^{\| Q \|}$.
Equivalently, $\overline{\overline{\xi}}(z) = \overline{\xi}(z,z)$. Given another series
$\varpi = \sum_Q \varpi_Q y^Q$, we say that $\varpi \prec \xi$ if and only if $\| \varpi_Q \|
\leq \| \xi_Q \|$ for all  $Q$.

The proof of convergence of the formal diffeomorphism $H$, under assumptions that will be
presented below, is based on the following result:

\begin{teo}[Cauchy Majorant Method]
For $i=1, 2$, let $\varphi_i$ be two holomorphic functions and $\zeta_i$ be two formal series
as in~$($\ref{eq7}$)$ and~$($\ref{eq8}$)$. Suppose there exists $\delta > 0$ such that
\[
\delta \overline{\zeta_i} \prec \overline{\varphi}_i (y_1 +
\overline{\zeta}_1 , y_2 +\overline{\zeta}_2)\; .
\]
Then the series $\zeta_i$ ($i=1,2$) converge, hence defining a
holomorphic change of coordinates. \label{t1}
\end{teo}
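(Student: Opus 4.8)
The plan is to set up the recursion for the coefficients $\zeta_{i,Q}$ and observe that the hypothesis $\delta\,\overline{\zeta_i}\prec\overline{\varphi}_i(y_1+\overline{\zeta}_1,y_2+\overline{\zeta}_2)$ already gives, term by term, a closed inequality that the formal coefficients must satisfy; the only thing left is to compare them with the coefficients of a genuinely convergent series. Concretely, I would first recall from the discussion preceding the statement that, in the absence of resonances, each $\zeta_{i,Q}$ is determined from the $\zeta_{i,P}$ with $\|P\|<\|Q\|$ by $\zeta_{i,Q}=f_{i,Q}/\delta_{i,Q}$, where $f_{i,Q}$ is a polynomial with positive coefficients in the $\varphi_{i,R}$ and the $\zeta_{i,P}$; hence, passing to absolute values and using the hypothesis $|\delta_{i,Q}|\geq\delta$, one gets the majorization $\|\zeta_{i,Q}\|\leq \delta^{-1}\,[\,\overline{\varphi}_i(y_1+\overline{\zeta}_1,y_2+\overline{\zeta}_2)\,]_Q$, i.e. exactly $\delta\overline{\zeta_i}\prec\overline{\varphi}_i(y_1+\overline{\zeta}_1,y_2+\overline{\zeta}_2)$. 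So the hypothesis is really the assertion that this auxiliary relation holds, and the content of the theorem is that such a relation forces convergence.

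The second step is to reduce to one variable by the substitution $y_1=y_2=z$, using the operators $\overline{\overline{\cdot}}$ introduced in the text: $\overline{\overline{\zeta_i}}(z)=\overline{\zeta_i}(z,z)$ and $\overline{\overline{\varphi}}_i(z)=\overline{\varphi}_i(z,z)$. Since all series involved have nonnegative coefficients, the relation $\delta\overline{\zeta_i}\prec\overline{\varphi}_i(y_1+\overline{\zeta}_1,y_2+\overline{\zeta}_2)$ specializes to a one-variable majorant inequality $\delta\,\overline{\overline{\zeta_i}}(z)\preceq \overline{\overline{\varphi}}_i\!\big(z+\overline{\overline{\zeta_1}}(z),\,z+\overline{\overline{\zeta_2}}(z)\big)$, and convergence of $\overline{\overline{\zeta_i}}$ near $z=0$ is equivalent to convergence of $\zeta_i$ near the origin of $\C^2$. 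I would then introduce a common majorant $\Phi(z)$ for the two convergent functions $\overline{\overline{\varphi}}_1,\overline{\overline{\varphi}}_2$ — say $\Phi$ holomorphic near $0$ with $\Phi(0)=0$ and $\overline{\overline{\varphi}}_i\preceq\Phi$ — and look for a single scalar series $w(z)=\sum_{k\geq 2}w_k z^k$, with nonnegative coefficients, solving the implicit equation $\delta\,w = \Phi(z+w)$.

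The third step is to solve this implicit equation and compare. The equation $\delta w - \Phi(z+w)=0$ has, at $(z,w)=(0,0)$, value $0$ and $w$-derivative $\delta-\Phi'(0)=\delta\neq 0$ if we first shrink so that $\Phi'(0)$ is as small as we like — or, more cleanly, absorb the linear part and apply the holomorphic implicit function theorem directly to get a unique holomorphic solution $w(z)$ near $z=0$ with $w(0)=0$; since $\Phi$ has nonnegative Taylor coefficients and order $\geq 2$, the recursion for $w_k$ shows $w_k\geq 0$. Finally, an induction on $\|Q\|$ (equivalently on the degree $k$) shows $\overline{\overline{\zeta_i}}\preceq w$: the base case is immediate, and the inductive step uses monotonicity of $\Phi$ and of the $\zeta\mapsto\overline{\varphi}_i(y+\overline{\zeta})$ construction in each coefficient together with $\delta\,\overline{\overline{\zeta_i}}\preceq \overline{\overline{\varphi}}_i(z+\overline{\overline{\zeta_1}},z+\overline{\overline{\zeta_2}})\preceq\Phi(z+\max_i\overline{\overline{\zeta_i}})$. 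Hence $\overline{\overline{\zeta_i}}$ is dominated by a convergent series and therefore converges, so $\zeta_i$ converges and defines a holomorphic change of coordinates.

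The step I expect to be the main obstacle is the bookkeeping in the inductive comparison: one must be careful that the coefficient of $z^k$ on the right-hand side of the majorant inequality for $\overline{\overline{\zeta_i}}$ depends only on the coefficients $w_j$ with $j<k$ (so the induction closes), and that the composition $\overline{\varphi}_i(y+\overline{\zeta})$ genuinely has positive-coefficient polynomial dependence on the lower-order $\zeta_{i,P}$. This positivity-and-lower-order structure is exactly what makes the Cauchy majorant argument work, and stating it cleanly — rather than the implicit-function-theorem solve, which is routine — is where the care is needed.
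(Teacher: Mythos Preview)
Your plan is correct and follows essentially the same route as the paper: reduce to one variable via $y_1=y_2=z$, produce a scalar majorant equation, solve it, and induct on degree to dominate the $\overline{\overline{\zeta_i}}$. The only real difference is that the paper works with the sum $\overline{\overline{\zeta_1}}+\overline{\overline{\zeta_2}}$ rather than a coefficientwise max, and chooses an explicit rational majorant $\frac{a_0 z^2}{1-az}$ for $\frac{1}{\delta}(\overline{\overline{\varphi}}_1+\overline{\overline{\varphi}}_2)$, so the majorant equation becomes an algebraic (quadratic) relation for $v$ whose convergence is immediate---whereas you invoke the holomorphic implicit function theorem on a generic $\Phi$; since $\varphi_i$ have order $\geq 2$ you already have $\Phi'(0)=0$, so your hedge about ``shrinking'' or ``absorbing the linear part'' is unnecessary.
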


\begin{proof}
From $\delta \overline{\zeta_i} \prec \overline{\varphi}_i (y_1 +
\overline{\zeta}_1 , y_2 +\overline{\zeta}_2)$ we obtain directly that
\begin{equation}
\label{eq12} \llz_1+\llz_2 \prec
\frac{1}{\delta}(\overline{\varphi}_1(z + \llz_1 + \llz_2,z +
\llz_1 + \llz_2) + \overline{\varphi}_2(z + \llz_1 + \llz_2,z +
\llz_1 + \llz_2))\,.
\end{equation}
Since the series $\varphi_i$ are convergent by assumption, there exist
$a_0$, $a>0$ such that:
\begin{equation}
\label{eq13} \frac{1}{\delta}(\overline{\overline{\varphi}}_1 +
\overline{\overline{\varphi}}_2) \prec \frac{a_0 z^2}{1-az}\, .
\end{equation}
From (\ref{eq12}) and (\ref{eq13}) we obtain
\begin{eqnarray*}
\frac{1}{z} \left(\llz_1 + \llz_2 \right) & \prec & \frac{1}{z \delta}
\left(\overline{\overline{\varphi}}_1(z + \llz_1 + \llz_2) +
\overline{\overline{\varphi}}_2(z + \llz_1 + \llz_2)\right) \\
& \prec & \frac{1}{z}\Bigg(\frac{a_0(z + \llz_1 + \llz_2)^2}{1 - a
(z + \llz_1 + \llz_2)}\Bigg) \\
& \prec & \frac{a_0 z [\frac{1}{z}(z + \llz_1 + \llz_2)]^2} {1- az
(\frac{1}{z} (z + \llz_1 + \llz_2))}\; ,
\end{eqnarray*}

Denoting by $u=\sum_{i}u_i z^i$ the series $\frac{1}{z}(\llz_1 +
\llz_2)$ we have just proved that
\begin{equation}
\label{eq14} u \prec \frac{a_0 z (1+u)^2} {1 - a z (1+u)}\; .
\end{equation}
We shall now compare $u$ with the series $v=\sum_i v_i z^i$, that
is a solution of
\[
v=\frac{a_0 z(1+v)^2}{1-az(1+v)}\; .
\]
or, equivalently, a solution of
\[
\Big(\sum v_i z^i\Big)^2(-az-a_0z) + \Big(\sum v_iz^i\Big)(-az -2a_0z
+1)-a_0z=0\; ,
\]

The series $v$ is convergent and, in particular, $v_1=a_0$. Notice that for every
$i$, $v_i$ is a polynomial with positive coefficients in the
variables $v_1,\ldots,v_{i-1}$, denoted by $P_i(v_1,\ldots,v_{i-1})$.
We may choose $a_0>u_1$ and it follows from (\ref{eq14}) that
\[u_i\leq P_i(u_1,\ldots,u_{i-1})\; .\]

Now we show that $u \prec v$ by induction. Suppose that $u_j \leq
v_j$ for $j \leq i-1$. The fact that $P_i$ has positive
coefficients for every $i$ implies that
\[
u_i \leq P_i(u_1,\ldots,u_{i-1}) \leq
P_i(v_1,\ldots,v_{i-1})=v_i\; .
\]
Hence $u$ is convergent, and consequently so is are $\zeta_i$
($i=1,2$).
\end{proof}

We are now able to prove the convergence of $H$ under the conditions below:

\begin{teo}[Poincar\'{e} Linearization Theorem]
\label{PoincLintheo} Consider a system of differential equations as in $($\ref{eq6}$)$. Assume that
$\lambda_1 \lambda_2 \neq 0$ and that $\lambda_1 /\lambda_2 \not\in \R_-$. Suppose also that neither
$\lambda_1 /\lambda_2$ nor $\lambda_2 /\lambda_1$ belongs to $\N$. Then there is an analytic change
of coordinates in which the system becomes linear.
\end{teo}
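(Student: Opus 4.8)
The plan is to assemble two ingredients already at hand in this section. First, the formal computation following \eqref{eq10}--\eqref{eq11}: when we ask that the new system be linear, i.e. $\psi_1 \equiv \psi_2 \equiv 0$, the coefficients of the change of variables \eqref{eq7}--\eqref{eq8} are forced by the recursion $\delta_{i,Q}\,\zeta_{i,Q} = \bigl[\varphi_i(y_1+\zeta_1,y_2+\zeta_2)\bigr]_Q$, where $[\,\cdot\,]_Q$ denotes the coefficient of $y^Q$ and $\delta_{i,Q} = \lambda_1 q_1 + \lambda_2 q_2 - \lambda_i$; this determines the $\zeta_{i,Q}$ inductively on $\|Q\|$ provided every $\delta_{i,Q}$ with $\|Q\|\ge 2$ is nonzero. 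Second, Theorem~\ref{t1} (Cauchy Majorant Method): once we produce a $\delta>0$ with $\delta\,\overline{\zeta_i} \prec \overline{\varphi}_i(y_1+\overline{\zeta}_1, y_2+\overline{\zeta}_2)$, the formal series $\zeta_i$ automatically converge.

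First I would verify that the hypotheses rule out all resonances, so that the formal linearization exists. A resonance for the first coordinate would mean $\lambda_1 = i_1\lambda_1 + i_2\lambda_2$ with $i_1+i_2\ge 2$, $i_j\ge 0$: the case $i_2=0$ forces $(1-i_1)\lambda_1=0$ with $i_1\ge 2$, impossible; the case $i_1=0$ gives $\lambda_1/\lambda_2 = i_2 \in \N$, excluded; and if $i_1,i_2\ge 1$ then either $i_1=1$ (forcing $i_2\lambda_2=0$, impossible) or $i_1\ge 2$ and $\lambda_1/\lambda_2 = i_2/(1-i_1) \in \R_-$, again excluded. The second coordinate is symmetric, using $\lambda_2/\lambda_1 \notin \N$ and $\lambda_1/\lambda_2 \notin \R_-$. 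Hence every $\delta_{i,Q}$ with $\|Q\|\ge 2$ is nonzero.

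The crux is the uniform small-divisor estimate $\delta := \inf_{i,\ \|Q\|\ge 2}|\delta_{i,Q}| > 0$, and this is where $\lambda_1/\lambda_2 \notin \R_-$ is genuinely used. That condition (together with $\lambda_1,\lambda_2\ne 0$) says that $0$ does not lie on the segment joining $\lambda_1$ and $\lambda_2$, hence the closed convex cone $\{q_1\lambda_1+q_2\lambda_2 : q_1,q_2\ge 0\}$ meets $0$ only at the vertex. By compactness of the simplex $\{q_1+q_2=1,\ q_i\ge 0\}$ and homogeneity, there is $c>0$ with $|q_1\lambda_1+q_2\lambda_2|\ge c\,\|Q\|$ for all $Q\in\N_0^2$; therefore $|\delta_{i,Q}| \ge c\|Q\| - \max(|\lambda_1|,|\lambda_2|)$, which is bounded below by a positive constant once $\|Q\|$ exceeds some $N$. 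For the finitely many $Q$ with $2\le\|Q\|\le N$ we already know $\delta_{i,Q}\ne 0$; taking the minimum of this finite set of positive numbers with the tail bound yields $\delta>0$.

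Finally I would feed this into Theorem~\ref{t1}. From $\delta_{i,Q}\zeta_{i,Q} = [\varphi_i(y_1+\zeta_1,y_2+\zeta_2)]_Q$ and $|\delta_{i,Q}|\ge\delta$ we get $\delta\,|\zeta_{i,Q}| \le \bigl|[\varphi_i(y+\zeta)]_Q\bigr|$; since the coefficients of a composition are polynomials with nonnegative integer coefficients in the coefficients of $\varphi_i$ and of the $\zeta_j$, the triangle inequality gives $\bigl|[\varphi_i(y+\zeta)]_Q\bigr| \le \bigl[\overline{\varphi}_i(y_1+\overline{\zeta}_1,y_2+\overline{\zeta}_2)\bigr]_Q$, hence $\delta\,\overline{\zeta_i} \prec \overline{\varphi}_i(y_1+\overline{\zeta}_1,y_2+\overline{\zeta}_2)$. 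This is exactly the hypothesis of the Cauchy Majorant Method, so the $\zeta_i$ converge and the formal diffeomorphism $H$ from \eqref{eq7}--\eqref{eq8} is a genuine holomorphic change of coordinates linearizing the system. The main obstacle is the middle step: one must treat the infinitely many high-order multi-indices (via the cone/compactness argument supplied by the Poincaré condition) separately from the finitely many low-order ones (via the non-resonance check) — and it is precisely this estimate that collapses in the Siegel case $\lambda_1/\lambda_2\in\R_-$, which is why that regime behaves differently.
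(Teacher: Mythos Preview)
Your proof is correct and follows essentially the same approach as the paper: check that the hypotheses rule out all resonances so the formal recursion for $\zeta_{i,Q}$ is solvable, establish the uniform lower bound $\inf|\delta_{i,Q}|\ge\delta>0$ from the Poincar\'e-domain condition (the paper simply invokes ``the convex separability theorem'' where you give the explicit cone/compactness argument), and then feed this into Theorem~\ref{t1}. Your write-up is in fact more explicit than the paper's in justifying the small-divisor bound, but the structure of the argument is the same.
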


\begin{proof}
We first note that neither $\delta_{1,Q}$ nor $\delta_{2,Q}$ vanish independently
of $Q$. Indeed if $\delta_{1,Q}$ vanishes then
\[
\frac{\lambda_1}{\lambda_2}=\frac{q_2}{(1-q_1)}
\]
So that whenever $q_2=0$, $\lambda_1=0$; when $q_1=0$, $\lambda_1/\lambda_2 \in \N$; and if
$q_1>1$ then $\lambda_1/\lambda_2 \in \R_-$. A similar argument holds for $\delta_{2,Q}$.
Therefore, $\zeta_{i,Q}$
can always be found in such a way that $\psi_{i,Q}=0$. This means that there exists a formal
change of coordinates that linearizes system (\ref{eq6}). Now it must be shown
that this change of coordinates is indeed convergent.

In fact our assumptions on $\lambda_1$ and $\lambda_2$ imply that
there exists $\delta >0$ such that $\inf_{Q} \{ |\delta_{1,Q}| , |\delta_{2,Q}| \} \geq
\delta$. This is a simple consequence of the convex separability theorem.
From equations (\ref{eq10}) and (\ref{eq11}), we obtain for $j=1,2$
\begin{eqnarray*}
\delta \overline{\zeta}_j  & \prec & \sum_{Q} \delta_{j,Q}
\| \zeta_{j,Q} \| y^Q + \overline{\psi}_j \\
& \prec & \overline{\varphi}_j (y_1 + \overline{\zeta}_1, y_2
+\overline{\zeta}_2) + \sum_{k=1,2} \frac{\partial
\overline{\zeta}_j}{\partial y_k} \overline{\psi}_k \, .
\end{eqnarray*}
Since $\overline{\psi}_k =0$, we have
\[
\delta \overline{\zeta_j} \prec \overline{\varphi}_j (y_1 +
\overline{\zeta}_1, y_2 +\overline{\zeta}_2) \, .
\]
Finally, Theorem~\ref{t1} implies that this change of coordinates is
convergent.
\end{proof}

Assume now that $\dl_1 /\dl_2 \in \N$. Although not necessarily linearizable the vector field
has a very simple normal form in that case.

\begin{lema}
Suppose that $\dl_1 /\dl_2 \in \N$. Then there is an analytic change of coordinates where the original
system is given $($in terms of vector fields$)$ by
\[
X = ( \dl_1 y_1 + a y_2^n) \partial /\partial y_1 + \dl_2 y_2 \partial /\partial y_2 \, .
\]
where $n$ is such that $\dl_1 = n\dl_2$ and $a \in \C$.
\end{lema}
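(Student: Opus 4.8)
The plan is to follow the two-step pattern of Poincar\'e's Linearization Theorem (Theorem~\ref{PoincLintheo}): first produce the \emph{formal} change of coordinates carrying $X$ into the stated normal form, then prove its convergence by the Cauchy Majorant Method (Theorem~\ref{t1}). Write $\dl_1 = n\dl_2$ with $n=\dl_1/\dl_2\in\N$; I shall assume $n\ge 2$ (then $\dl_1\ne\dl_2$, so ${\rm Jac}\,X(0,0)$ is automatically diagonalizable), the case $n=1$ producing no resonant monomial of order $\ge 2$ and following from the very argument of Theorem~\ref{PoincLintheo} with $a=0$.

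First I would locate the resonances. Substituting $\dl_1=n\dl_2$ into the quantities $\delta_{1,Q}=\dl_1 q_1+\dl_2 q_2-\dl_1$ and $\delta_{2,Q}=\dl_1 q_1+\dl_2 q_2-\dl_2$ of~(\ref{eq10})--(\ref{eq11}) gives $\delta_{1,Q}=\dl_2\,\bigl(n(q_1-1)+q_2\bigr)$ and $\delta_{2,Q}=\dl_2\,\bigl(nq_1+q_2-1\bigr)$; a one-line inspection shows that, among the multi-indices with $\|Q\|\ge 2$, one has $\delta_{2,Q}\ne 0$ always while $\delta_{1,Q}=0$ occurs precisely for $Q=(0,n)$. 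Hence $y_2^n\,\partial/\partial y_1$ is the unique resonant monomial. Now I would look for a change of coordinates as in~(\ref{eq7})--(\ref{eq8}) taking $X$ into $Y=(\dl_1 y_1+ay_2^n)\,\partial/\partial y_1+\dl_2 y_2\,\partial/\partial y_2$, i.e.\ imposing $\psi_1=ay_2^n$ and $\psi_2\equiv 0$ in~(\ref{eq9}). Then~(\ref{eq10})--(\ref{eq11}) become a recursion on $\|Q\|$ in which, for $Q\ne(0,n)$, the coefficient $\zeta_{i,Q}$ is uniquely determined so as to force $\psi_{i,Q}=0$ (normalizing $\zeta_{i,(0,n)}=0$), while the coefficient of $y_2^n$ in the first relation fixes $a$. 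The point to check is that the extra term now present on the right-hand sides, namely $-\sum_k(\partial\zeta_i/\partial y_k)\psi_k=-a\,(\partial\zeta_i/\partial y_1)\,y_2^n$, involves --- precisely because $n\ge 2$ --- only the coefficients $\zeta_{i,P}$ with $\|P\|=\|Q\|-n+1<\|Q\|$, so the recursion stays triangular and the formal normalization is well defined.

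It remains to prove convergence, which is the heart of the matter. The decisive point is that $(\dl_1,\dl_2)$ lies in the \emph{Poincar\'e domain}: $\dl_1$ and $\dl_2$ are positive multiples of one another, so the formulas above give the uniform lower bound $|\delta_{i,Q}|\ge|\dl_2|>0$ over all $Q$ with $\|Q\|\ge 2$, $Q\ne(0,n)$ --- there are no small divisors. Passing to majorant series as in the proof of Theorem~\ref{PoincLintheo}, using $\psi_2\equiv 0$ and writing $C=|a|$, relations~(\ref{eq10})--(\ref{eq11}) yield, for $i=1,2$,
\[
|\dl_2|\,\overline{\zeta_i}\;\prec\;\overline{\varphi_i}\bigl(y_1+\overline{\zeta_1},\,y_2+\overline{\zeta_2}\bigr)\;+\;C\,y_2^n\,\frac{\partial\,\overline{\zeta_i}}{\partial y_1}\,.
\]
Without the last summand this is precisely the hypothesis of Theorem~\ref{t1}. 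That summand, produced by the single surviving resonant monomial, is the main obstacle, and I would handle it not by collapsing everything to one variable (on the diagonal the naive bound $y_2^n\,\partial/\partial y_1\,(\cdot)\prec z^n\,d/dz$ is too lossy and produces a divergent majorant) but by exhibiting directly a rational super-solution: for $M,N>0$ chosen large enough one checks that $\Psi(y_1,y_2)=\frac{M(y_1+y_2)^2}{1-N(y_1+y_2)}$ satisfies
\[
\overline{\varphi_i}\bigl(y_1+\Psi,\,y_2+\Psi\bigr)\;+\;C\,y_2^n\,\frac{\partial\Psi}{\partial y_1}\;\prec\;|\dl_2|\,\Psi\,,
\]
using that $\varphi_i$ is holomorphic, that $\Psi$ and $\partial\Psi/\partial y_1$ are holomorphic on $\{|y_1|+|y_2|<1/N\}$, and that the factor $(1-N(y_1+y_2))^{-1}$ absorbs, for $N$ large, the polynomially growing contributions of the two terms on the left. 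Granting this, a straightforward induction on $\|Q\|$ --- legitimate since the recursion is triangular, by the remark in the previous paragraph --- gives $\overline{\zeta_i}\prec\Psi$ for $i=1,2$, and since $\Psi$ is convergent so are the $\zeta_i$. Thus $X$ is analytically conjugate to $Y$, as claimed; the one genuinely computational step is the choice of $M$ and $N$ making the displayed super-solution inequality hold.
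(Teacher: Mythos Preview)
Your approach mirrors the paper's: the formal step is done exactly as in the paper (locate the unique resonance $Q=(0,n)$, set $\zeta_{1,(0,n)}=0$, and let $a=\psi_{1,(0,n)}$ absorb the resonant coefficient), and you then turn to convergence. You are right to flag the extra term $-a\,y_2^{\,n}\,\partial\zeta_1/\partial y_1$ on the right of~(\ref{eq10}); the paper's ``follow the proof of Theorem~\ref{PoincLintheo}'' does sweep this under the rug, since in that proof the key simplification was precisely $\overline{\psi}_k=0$.

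However, your proposed super-solution $\Psi=\dfrac{M(y_1+y_2)^2}{1-N(y_1+y_2)}$ does \emph{not} satisfy the inequality you claim, for any choice of $M,N$. Write $s=y_1+y_2$, so $\Psi=Ms^2/(1-Ns)$ and $\partial\Psi/\partial y_1=\Psi'(s)=Ms(2-Ns)/(1-Ns)^2$. Restrict to $y_1=0$ and compare coefficients of $y_2^{\,j}$ for $j\ge n+1$: the term $C\,y_2^{\,n}\,\partial\Psi/\partial y_1$ contributes $CM(j-n+1)N^{\,j-n-1}$, whereas $|\lambda_2|\,\Psi$ contributes $|\lambda_2|\,MN^{\,j-2}$. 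The ratio $C(j-n+1)/\bigl(|\lambda_2|N^{\,n-1}\bigr)$ tends to infinity with $j$, so the claimed domination fails. The same obstruction defeats any majorant of the shape $Ms^2/(1-Ns)^p$: passing from $\Psi$ to $\partial\Psi/\partial y_1$ always costs an extra factor of $(1-Ns)^{-1}$, and the prefactor $y_2^{\,n}$ cannot cancel it in the $\prec$ ordering. So the ``one genuinely computational step'' is not merely computational --- as stated it is false, and with it the induction on $\|Q\|$ collapses.

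What actually rescues the argument is not a cleverer rational majorant but the observation that in the Poincar\'e domain one has the \emph{linear} lower bound $|\delta_{i,Q}|\ge c\,\|Q\|$ for $\|Q\|$ large (here explicitly $|\delta_{1,Q}|=|\lambda_2|\,|nq_1+q_2-n|\ge |\lambda_2|\,(\|Q\|-n)$), not merely the uniform bound $|\delta_{i,Q}|\ge|\lambda_2|$ that you invoke. This extra factor of $\|Q\|$ is precisely what compensates the first-order differential operator on the right: heuristically it upgrades the left-hand side from $\delta\,\overline{\zeta_1}$ to something comparable to $(y\cdot\nabla)\,\overline{\zeta_1}$, against which $|a|\,y_2^{\,n}\,\partial_{y_1}\overline{\zeta_1}$ can be absorbed once the domain is shrunk. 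One clean way to package this is to work with the full homological operator $L_Y-\lambda_i$ (where $Y$ is the normal form, so that $L_Y-\lambda_1=E_1+a\,y_2^{\,n}\partial_{y_1}$) and show directly that its inverse is bounded on a small polydisc, using that along each ``resonance chain'' $(q_1,q_2)\to(q_1-1,q_2+n)\to\cdots$ the divisor $\delta_{1,Q}$ stays constant while the derivative factors $q_1,q_1-1,\ldots$ decrease. This is the classical Poincar\'e--Dulac argument; the paper's one-line reference to Theorem~\ref{PoincLintheo} is really a reference to that.
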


\begin{proof}
We first note that, under the assumptions above, $\inf_{Q} \{ \delta_{1,Q} , \delta_{2,Q} \} \geq 1$
for all $Q$ with exception to $Q = (0,n)$. In fact, the vector field hs a unique resonant relation
($\dl_1 = n \dl_2$ for some $n \in \N$). This implies that the coefficient of $\zeta_{1,(0,n)}$
in Equation~(\ref{eq10}) (which is given by $\delta_{1,(0,n)} = n\dl_2 - \dl_1$) vanishes. Since no
more resonance relations appear, the vector field associated to the differential equation is then
formally conjugated by a diffeomorphism $H$ to
\[
X = ( n\dl_2 y_1 + a y_2^n) \partial /\partial y_1 + \dl_2 y_2 \partial /\partial y_2
\]
for some $a \in \C$. The fact that  $\inf_{Q} \{ |\delta_{1,Q}| , |\delta_{2,Q}| \} \geq 1$ for all
$Q$ distinct from $(0,n)$ ensures that we can follow the proof of Theorem~\ref{PoincLintheo} in
order to prove the convergence of $H$.
\end{proof}

Now we shall obtain a characterization of singular holomorphic foliations in the case where
$\lambda_1/\lambda_2 \in \R_-$.

\begin{lema}\label{Siegel}
If $\lambda_1 /\lambda_2 \in \R_-$, then there is an analytic change of coordinates
where the vector field associated to the original system is given by
\[
X = \lambda_1 y_1[1 + (h.o.t.)] \partial /\partial y_1 + \lambda_2
y_2 [1 + (h.o.t.)] \partial /\partial y_2 \, .
\]
In particular such vector field has two smooth transverse separatrizes.
\end{lema}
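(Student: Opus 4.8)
The plan is to construct the two separatrices by hand and then straighten them to the coordinate axes; in those coordinates the asserted normal form can simply be read off. Since $\lambda_1/\lambda_2\in\R_-$ both eigenvalues are nonzero and $\lambda_1\neq\lambda_2$, so the linear part is diagonalizable and I may assume $X$ is written as in~(\ref{eq6}) with ${\rm Jac}\,X(0,0)={\rm diag}(\lambda_1,\lambda_2)$ and $\varphi_1,\varphi_2$ of order at least~$2$.

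\textbf{Step 1: the separatrices.} First I would look for an invariant curve tangent to the $x_1$-axis, of the form $\{x_2=h(x_1)\}$ with $h(x_1)=\sum_{k\geq 2}h_k x_1^k$. Invariance is equivalent to the singular differential equation
\[
h'(x_1)\bigl(\lambda_1 x_1+\varphi_1(x_1,h(x_1))\bigr)=\lambda_2 h(x_1)+\varphi_2(x_1,h(x_1))\,.
\]
Comparing coefficients of $x_1^k$, and using that $\varphi_1,\varphi_2$ have order at least~$2$ (so that the coefficient of $x_1^k$ on both sides, apart from the linear-in-$h_k$ term, involves only $h_2,\dots,h_{k-1}$), one obtains a recursion of the shape $(k\lambda_1-\lambda_2)h_k=(\text{polynomial in }h_2,\dots,h_{k-1})$. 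Here $k\lambda_1-\lambda_2\neq 0$ for every $k\geq 2$, since $\lambda_1/\lambda_2=1/k$ would force $\lambda_1/\lambda_2\in\R_{>0}$; moreover $|k\lambda_1-\lambda_2|\to\infty$, so $\inf_{k\geq 2}|k\lambda_1-\lambda_2|>0$. Thus the formal solution $h$ exists and is unique, and — crucially, because there are no small divisors — a Cauchy majorant argument in exactly the spirit of the proof of Theorem~\ref{t1} (the single divisor $k\lambda_1-\lambda_2$ being bounded below in modulus) shows that $h$ converges. This yields a smooth separatrix $C_1=\{x_2=h(x_1)\}$ tangent to the eigendirection of $\lambda_1$. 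Exchanging the roles of $x_1$ and $x_2$ produces a second smooth separatrix $C_2=\{x_1=g(x_2)\}$, $g(x_2)=O(x_2^2)$, tangent to the eigendirection of $\lambda_2$; as these two directions are linearly independent, $C_1$ and $C_2$ are transverse at the origin.

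\textbf{Step 2: straightening and reading off the normal form.} Next I would apply the local biholomorphism $(x_1,x_2)\mapsto(x_1,x_2-h(x_1))$, which sends $C_1$ to $\{x_2=0\}$ and carries $C_2$ to a smooth curve still transverse to $\{x_2=0\}$, hence a graph $\{x_1=\tilde g(x_2)\}$ with $\tilde g(0)=\tilde g'(0)=0$; composing with $(x_1,x_2)\mapsto(x_1-\tilde g(x_2),x_2)$, which fixes $\{x_2=0\}$ pointwise, one reaches holomorphic coordinates $(y_1,y_2)$ in which both axes are invariant by $\fol$. Invariance of $\{y_1=0\}$ forces the $\partial/\partial y_1$-component of $X$ to be divisible by $y_1$, and invariance of $\{y_2=0\}$ forces the $\partial/\partial y_2$-component to be divisible by $y_2$, so $X=y_1 A(y_1,y_2)\,\partial/\partial y_1+y_2 B(y_1,y_2)\,\partial/\partial y_2$ with $A,B$ holomorphic. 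Comparing with the linear part gives $A(0,0)=\lambda_1$ and $B(0,0)=\lambda_2$, whence $A=\lambda_1(1+a)$, $B=\lambda_2(1+b)$ with $a,b$ holomorphic and vanishing at the origin. This is precisely the claimed form $X=\lambda_1 y_1[1+(\mathrm{h.o.t.})]\,\partial/\partial y_1+\lambda_2 y_2[1+(\mathrm{h.o.t.})]\,\partial/\partial y_2$, and the two coordinate axes $\{y_1=0\}$, $\{y_2=0\}$ are two smooth separatrices meeting transversely at the origin.

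\textbf{Main obstacle.} The only genuinely non-routine point is the convergence of the formal separatrix $h$ in Step~1. What makes it tractable here is exactly the hypothesis $\lambda_1/\lambda_2\in\R_-$: it keeps $|k\lambda_1-\lambda_2|$ away from $0$ (indeed it grows), so the small-divisor difficulty that obstructs linearization in general is simply absent, and the majorant machinery already developed for Theorem~\ref{t1} transfers with only cosmetic changes. Everything else — the coefficient recursion, straightening two transverse smooth curves, and extracting $a$ and $b$ — is elementary.
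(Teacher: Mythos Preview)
Your proof is correct, but it takes a genuinely different route from the paper's.

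The paper works entirely inside the formal normal-form framework of equations~(\ref{eq7})--(\ref{eq11}): it seeks a single change of coordinates $x_i=y_i+\zeta_i(y_1,y_2)$ in which only the ``pure'' coefficients (those $Q=(q_1,q_2)$ with $q_1=0$ or $q_2=0$) are killed in $\psi_1,\psi_2$, leaving $\psi_1,\psi_2$ divisible by $y_1y_2$. The point is that for these particular $Q$ the divisors $\delta_{i,Q}$ are bounded away from zero precisely because $\lambda_1/\lambda_2\in\R_-$, so Theorem~\ref{t1} applies directly and one convergent change of coordinates straightens both axes at once.

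Your approach instead builds each separatrix individually as a graph (a Briot--Bouquet type argument), via a one-variable recursion whose divisors $k\lambda_1-\lambda_2$ and $k\lambda_2-\lambda_1$ are bounded below for the same reason, and then straightens the two curves in two steps. This is more geometric and the one-variable majorant is arguably simpler; the paper's version is more in keeping with the machinery already developed in the section and accomplishes both separatrices in a single stroke. Both arguments hinge on exactly the same observation---that $\lambda_1/\lambda_2\in\R_-$ suppresses small divisors for the relevant indices---so neither is deeper than the other.
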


\begin{proof}
It will be shown that there exists a convergent change of coordinates which
allows us to suppose that $\varphi_1$ is divisible by the first variable and
that $\varphi_2$ is divisible by the second one, where $\varphi_1$ and $\varphi_2$
are as in (\ref{eq6}).

As before, we consider the change of coordinates $x_1 = y_1 + \zeta_1 (y_1 ,y_2)$
and $x_2 = y_2 + \zeta_2 (y_1 ,y_2)$ but setting:
\[
\left\{ \begin{array}{ll}
         \psi_{1,Q} = 0 & {\rm when} \; \; q_1=0\;\;{\rm or}\;\;q_2=0  \\
         \zeta_{1,Q} =0 & {\rm when} \; \; q_1\neq 0\;\;{\rm and}\;\;q_2\neq 0
        \end{array} \right.
\; \; \; \; {\rm and} \; \; \; \; \left\{ \begin{array}{ll}
         \psi_{2,Q} = 0 & {\rm when} \; \;q_1=0\;\;{\rm or}\;\;q_2=0  \\
         \zeta_{2,Q} =0 & {\rm when} \; \; q_1\neq 0\;\;{\rm and}\;\;q_2\neq 0
        \end{array} \right.
\]
If this change of coordinates is indeed convergent then, in these appropriate
coordinates, $\{y_1=0\}$ and $\{y_2=0\}$ are invariant which is
the contents of the lemma. So we must analyze the expressions of $\delta_{1,Q}$
only in the case where $q_1=0$ or $q_2=0$. After all, when $q_1\neq 0$ and
$q_2\neq 0$, $\zeta_{1,Q}=0$, so that these terms do not count in
the series $\overline{\zeta}_1$. Suppose that $q_1 = 0$. In this situation, we have
\begin{eqnarray*}
\delta_{1,Q} & = & \lambda_2q_2 -\lambda_1 \\
\Rightarrow \frac{\delta_{1,Q}}{\lambda_2}& = & q_2 -
\frac{\lambda_1}{\lambda_2}
>\varepsilon_1
\end{eqnarray*}
for some $\varepsilon_1 > 0$. This guarantees that $|\delta_{1,Q}|$ is bounded
from below by a constant $c > 0$. A similar argument implies that $|\delta_{2,Q}|$
is bounded from below either  by a positive constant. More
precisely, $|\delta_{2,Q}| > c > 0$ for all $Q$ as above with $\|Q\| \geq 2$, i.e.
such that $q_2>1$. The case $q_2 = 0$ is analogous.

Therefore, there exists $\delta >0$ such that $\inf_{Q} \{ |\delta_{1,Q}| ,
|\delta_{2,Q}| \} \geq \delta$, where the $\inf$ is taken between $Q$ such that
$q_1 = 0$ or $q_2 = 0$. Using the above mentioned fact along with Equations~(\ref{eq10})
and~(\ref{eq11}), we have:
\[
\delta \overline{\zeta}_1 \prec \sum_{Q} \delta_{1,Q}
\| \zeta_{1,Q} \| y^Q  \prec \overline{\varphi}_1 (y_1 +
\overline{\zeta}_1, y_2 +\overline{\zeta}_2) + \frac{\partial
\overline{\zeta}_1}{\partial y_1} \overline{\psi}_1 +
\frac{\partial \overline{\zeta}_1}{\partial y_2} \overline{\psi}_2
\, .
\]

By construction, the non-zero coefficients in $\overline{\zeta}_1$
are related only to monomials that are powers of $y_1$ or powers
of $y_2$, i.e. there are no monomials that mix the two variables
$y_1$ and $y_2$. On the other hand, all the non-zero terms that
enter $\overline{\psi}_1$ and $\overline{\psi}_2$ are such that
$q_1=0$ and $q_2=0$. So that the following stronger estimate
holds:
\[
\delta\overline{\zeta}_1 \prec \overline{\varphi}_1
(y_1 + \overline{\zeta}_1, y_2 +\overline{\zeta}_2)\; .
\]
A similar argument implies
\[
\delta\overline{\zeta}_2 \prec \overline{\varphi}_2
(y_1 + \overline{\zeta}_1, y_2 +\overline{\zeta}_2)
\]
and the application of Theorem~\ref{t1} guarantees the convergence
of the series
\end{proof}

%--------------------------------------------------------------------------------------------------------------------------------------------------------------------------------------------------------------------
%--------------------------------------------------------------------------------------------------------------------------------------------------------------------------------------------------------------------
%--------------------------------------------------------------------------------------------------------------------------------------------------------------------------------------------------------------------

\subsection{Elementary Aspects of Saddle-Node Singularities}\label{sectionsaddlenode}

%--------------------------------------------------------------------------------------------------------------------------------------------------------------------------------------------------------------------
%--------------------------------------------------------------------------------------------------------------------------------------------------------------------------------------------------------------------
%--------------------------------------------------------------------------------------------------------------------------------------------------------------------------------------------------------------------

\subsubsection{Dulac Normal Form and Consequences}

We shall now consider the case of \emph{saddle-nodes}, i.e. the case where $\lambda_1 \ne 0$
and $\lambda_2=0$. Obviously we can assume without loss of generality that $\dl_1 = 1$ and
$\dl_2 = 0$. We will see by successive appropriate changes of coordinates that, in this case,
the system of ODEs (or the $1$-form) that induces the foliation has a canonic
representation. This is the contents of the following result.

\begin{teo}[Dulac]
Let $\fol$ be a foliation which defines a saddle-node at $(0,0)$
in $\C^2$. Then, in appropriate coordinates $(y_1,y_2)$, $\fol$ is
given by the holomorphic $1$-form
\[
\omega = [y_1 (1 + \lambda y_2^p) + y_2 R(y_1,y_2)] \, dy_2 \; - \;
y_2^{p+1} \, dy_1 \, .
\]
for some $p \in \N$.
\end{teo}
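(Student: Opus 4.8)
The plan is to follow the classical route of Dulac: straighten the strong separatrix, reduce the ``vertical'' coefficient to a pure power of $y_2$ by an order‑by‑order normalisation, and then read off $\lambda$ and $R$ from whatever is left over.

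To begin, I would take a holomorphic vector field $X$ generating $\fol$ and, since the eigenvalues $1$ and $0$ are distinct, assume its linear part is $x_1\,\partial/\partial x_1$. A saddle-node has a \emph{convergent strong separatrix}, namely an invariant holomorphic curve tangent to the eigenspace of the eigenvalue~$1$; this is obtained by solving for $\{x_2=h(x_1)\}$ with $h(0)=h'(0)=0$, which leads to a holomorphic ordinary differential equation of Briot--Bouquet type solvable by a majorant argument in the spirit of Theorem~\ref{t1}. Straightening this curve to $\{x_2=0\}$ forces the $\partial/\partial x_2$-component of $X$ to be divisible by $x_2$, and a subsequent change of the single variable $x_1$ normalises the restriction of $X$ to $\{x_2=0\}$ to $x_1\,\partial/\partial x_1$. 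After these reductions one may assume
\[
X=\big(x_1+x_2\,a(x_1,x_2)\big)\,\frac{\partial}{\partial x_1}\;+\;x_2\,b(x_1,x_2)\,\frac{\partial}{\partial x_2}\,,
\]
with $a,b$ holomorphic, $b(0,0)=0$, and $b\not\equiv 0$ (otherwise $\fol$ would be defined by the nonsingular form $dx_2$, contradicting that $(0,0)$ is a singularity).

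The heart of the argument is to produce a convergent change of coordinates $y_1=x_1$, $y_2=x_2\,\phi(x_1,x_2)$ with $\phi(0,0)\neq 0$ after which the $\partial/\partial y_2$-component of $X$ equals exactly $y_2^{p+1}$ for a suitable $p\in\N$. Writing $\phi=\sum_{k\ge 0}\phi_k(x_1)\,x_2^k$ and imposing $X(y_2)=y_2^{p+1}$, the coefficient of $x_2^k$ yields a linear first-order ODE of the form
\[
x_1\,\phi_k'+(k+1)\,b(x_1,0)\,\phi_k=F_k(x_1)\,,
\]
where $F_k$ depends only on $a$, $b$ and on $\phi_0,\dots,\phi_{k-1}$, together with the contribution of the right-hand side $y_2^{p+1}$, which enters only for $k\ge p$. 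Since $b(x_1,0)$ vanishes at the origin, this ODE has a holomorphic solution precisely when $F_k(0)=0$; one then \emph{defines} $p$ to be the first index at which this fails, absorbs the discrepancy through the free scalar $\phi_0(0)$, and uses the still-free constants $\phi_{k-p}(0)$ (left over from the earlier steps $k-p<k$) to enforce $F_k(0)=0$ for every $k>p$. The series $\phi$ so obtained is then shown to converge by a Cauchy majorant estimate; a final change of the variable $y_2$ alone, if needed, brings the vertical component into the exact form $y_2^{p+1}$.

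It only remains to identify the horizontal component. Since the coordinate change altered $x_2$ by a holomorphic unit and left $x_1$ fixed, one has $x_2=y_2\,\psi(y_1,y_2)$ for a unit $\psi$, so the $\partial/\partial y_1$-component $x_1+x_2\,a(x_1,x_2)$ becomes $y_1+y_2\,\widehat{a}(y_1,y_2)$ with $\widehat{a}$ holomorphic. Taking $\lambda$ to be the coefficient of $y_1y_2^{p-1}$ in $\widehat{a}$ and setting $R:=\widehat{a}-\lambda\,y_1y_2^{p-1}$ (holomorphic because $p\ge 1$), the vector field reads $\big(y_1(1+\lambda y_2^p)+y_2R(y_1,y_2)\big)\,\partial/\partial y_1+y_2^{p+1}\,\partial/\partial y_2$, whose associated $1$-form is exactly $\omega=[y_1(1+\lambda y_2^p)+y_2R]\,dy_2-y_2^{p+1}\,dy_1$. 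The main obstacle is the convergence of the normalising series in the middle step: the \emph{complete} formal normal form of a saddle-node diverges in general, so the delicate point is to isolate the finite data ($p$ and the coefficient $\lambda$) reachable convergently, while checking that the remaining (possibly divergent) terms can always be lodged inside the holomorphic expression $y_2\,R(y_1,y_2)$ — which is exactly what the free-parameter bookkeeping above guarantees.
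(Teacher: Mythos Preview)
Your route is genuinely different from the paper's. After the common first step (straightening the strong separatrix to $\{x_2=0\}$, so $X=(x_1+x_2a)\,\partial_{x_1}+x_2b\,\partial_{x_2}$), the paper does not solve ODEs level by level. Instead it identifies $p$ \emph{algebraically}: since the singularity is isolated, the Nullstellensatz forces some power $x_2^{p+1}$ into the ideal $(A,B)=(x_1+x_2a,\,x_2b)$, and Weierstrass-type division of $B$ by $A$ in $\C\{x_1\}$ gives $B=AQ+x_2^{p+1}U(x_2)$ with $U(0)\neq0$ and $Q|_{x_2=0}=0$. The Flow Box Theorem applied to the auxiliary \emph{nonsingular} field $Y=U^{-1}\partial_{x_1}-QU^{-1}\partial_{x_2}$ then rectifies $Y$ while keeping $x_2$ fixed, and the Dulac form drops out. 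This buys $p$ as an intrinsic ideal-theoretic invariant and requires no convergence estimate beyond the preliminary one from Theorem~\ref{t1}.

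Your scheme of solving $X(x_2\phi)=(x_2\phi)^{p+1}$ by recursion on the $x_2$-order is plausible in spirit but has two soft spots as written. First, the bookkeeping ``use $\phi_{k-p}(0)$ to kill $F_k(0)$'' is not accurate: for $p=1$, the linear-in-$c_{k-1}$ contributions on the two sides of the level-$k$ condition are $k\bigl[b_1(0)-a_0(0)b_0'(0)\bigr]$ and $2c_0$ respectively, and since level~$1$ forces $c_0=b_1(0)-a_0(0)b_0'(0)$ these cancel exactly when $k=2$, so the level-$2$ equation does \emph{not} see $c_1$ and instead imposes a further constraint on the already-fixed $c_0$. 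The full system of obstructions is consistent (a posteriori, because the Dulac form exists), but your account of which free constant absorbs which obstruction needs repair. Second, ``converges by a Cauchy majorant estimate'' is precisely the crux and cannot be waved through: each $\phi_k$ is produced by integrating a linear ODE with a regular singular point at $x_1=0$, and bounding both the particular solutions and the inductively determined constants $c_k$ is not covered by the purely algebraic majorant of Theorem~\ref{t1}. The paper's algebraic route sidesteps both difficulties at once.
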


\begin{proof}
Assume that
\begin{equation}\label{eqsd}
\begin{cases}
         \dot{x_1} = x_1 + \varphi_1 (x_1 ,x_2)\\
         \dot{x_2} = \varphi_2 (x_1 ,x_2)
\end{cases} \, ,
\end{equation}
where $\varphi_1, \, \varphi_2$ have order at least~$2$, is a representative of
$\fol$. First of all we will prove that there exists a change of coordinates
$x_1 = y_1 + \zeta_1 (y_1 ,y_2)$, $x_2 = y_2 + \zeta_2 (y_1 ,y_2)$ in which the
vector field associated to the differential equation above can be written in the
form:
\[
[y_1 +y_2R(y_1,y_2)] \partial /\partial y_1+ y_2 \phi(y_1,y_2) \partial /\partial y_2, \, .
\]
where $R(0,0) = \phi(0,0) = 0$.

Let us first note that, in that case, $\delta_{1,Q} = q_1 - 1$ whereas $\delta_{2,Q} = q_1$.
We have that $\delta_{1,Q} = 0$ if and only if $q_1 = 1$ while $\delta_{2,Q} = 0$ if
and only if $q_1 = 0$. This implies that we can always solve the correspondent to
Equation~(\ref{eq10}) (resp.~(\ref{eq11})) in order to $\zeta_{1,Q}$ (resp. $\zeta_{2,Q}$)
for $q_1 \ne 1$ (resp. $q_1 \ne 0$). So, let us set
\[
\left\{ \begin{array}{ll}
         \psi_{1,Q} = 0 & {\rm whenever} \; \; q_2 = 0 \\
         \zeta_{1,Q} =0 & {\rm whenever} \; \; q_2 \ne 0
        \end{array} \right.
\; \; \; \; {\rm and} \; \; \; \; \left\{ \begin{array}{ll}
         \psi_{2,Q} = 0 & {\rm whenever} \; \; q_2 = 0 \\
         \zeta_{2,Q} =0 & {\rm whenever} \; \; q_2 \ne 0
        \end{array} \right.
\]
for each index $Q= (q_1, q_2)$. Since $\| Q \| \geq 2$, it follows that
$\delta_{1,Q} = q_1 -1 \ne 0$ (resp. $\delta_{2,Q} = q_1 \ne 0$) whenever
$\zeta_{1,Q} \ne 0$ (resp. $\zeta_{2,Q} \neq 0$). This proves  that the two vector
fields are formally conjugate. In order to prove the convergence of the change of
coordinates we shall also note that $\delta_{1,Q} = q_1 -1 \geq 1$ (resp. $\delta_{2,Q}
= q_1 \geq 2$) whenever $\zeta_{1,Q} \ne 0$ (resp. $\zeta_{2,Q} \neq 0$). Thus
we have
\[
\delta\overline{\zeta}_1 \prec \overline{\varphi}_1 (y_1 +
\overline{\zeta}_1, y_2 +\overline{\zeta}_2) + \frac{\partial
\overline{\zeta}_1}{\partial y_1} \overline{\psi}_1 +
\frac{\partial \overline{\zeta}_1}{\partial y_2} \overline{\psi}_2
\]
for $0 < \delta < 1$.

Notice that $\zeta_{1,Q}=0$ whenever $q_2\neq 0$ then
$\overline{\zeta}_1$ depends only on $y_1$, so that
$\frac{\partial \overline{\zeta}_1}{\partial y_2}=0$. In
particular, the non-zero coefficients of the series
$\overline{\zeta}_1$ are such that $q_2=0$, then it follows from
the above change of coordinates that all the monomials entering
$\frac{\partial \overline{\zeta}_1}{\partial y_1} \overline{\psi}_1$
depend on $y_2$. Therefore these monomials do not appear in the
series of $\zeta_1$ and we conclude the stronger estimate
\[
\delta\overline{\zeta}_1 \prec \overline{\varphi}_1
(y_1 + \overline{\zeta}_1, y_2 +\overline{\zeta}_2)\; .
\]
A similar argument implies that
\[
\delta\overline{\zeta}_2 \prec \overline{\varphi}_2
(y_1 + \overline{\zeta}_1, y_2 +\overline{\zeta}_2)\; .
\]

The convergence of the coordinate change follows by applying the
Cauchy Majorant Method. This allows us to suppose that $\varphi_1$
and $\varphi_2$ are divisible by $x_2$. In other words, the
original system of ODEs is given by:
\[
X(x_1,x_2)=[x_1 +x_2R(x_1,x_2)]\frac{\partial}{\partial x_1}+ x_2 \phi(x_1,x_2)\frac{\partial}{\partial x_2}
\]
as we intended to prove.

Now, set $A(x_1,x_2)=x_1 +x_2R(x_1,x_2)$ and $B(x_1,x_2)=x_2 \phi(x_1,x_2)$.
The set $\{A=0\}\cap\{B=0\}$ is reduced to the origin $(0,0)$. The ideal
associated to the point $(0,0)$ is therefore
maximal and generated by $x_1$ and $x_2$. It follows from the
appropriate version of Hilbert's Nullstellensatz that this maximal
ideal is the radical of the ideal generated by $A$ and $B$. In
particular, there is $p+1\geq 2$ such that $x_2^{p+1}$ belongs to
the ideal generated by $A$ and $B$ (since $x_2$ itself cannot
belong to this ideal). Next we expand both $A$ and $B$ in terms of
$x_2$, i.e. we set $A=a_0(x_1)+\sum_{i=1}^\infty a_i(x_1)x_2^i$
and $B=\sum_{i=1}^\infty b_i(x_1)x_2^i$. The division of $B$ by
$A$ in the ring $C\{x_1\}$ then gives us
\[
B=AQ+x_2^{p+1} U(x_2)
\]
being $Q = 0$ when $\{x_2 = 0\}$.

Indeed, $a^{\prime}_0(x_1)=1$ so that the rest does not depend on $x_1$.
Besides $U(0)\neq 0$ since $p+1$ is the smallest positive power of
$x_2$ belonging in the ideal generated by $A$ and $B$. Now
consider the vector field
\[
Y = \frac{1}{U}\frac{\partial}{\partial x_1} - \frac{Q}{U}
\frac{\partial}{\partial x_2} \, ,
\]
which satisfies $Y(0,0)\neq (0,0)$ since $U(0,0)=U(0)\neq 0$. It
follows from the fact that $Q=0$ when $\{x_2=0\}$) that $\{x_2=0\}$ is a
solution of $Y$. By the Flow Box Theorem, there exist coordinates
$(z_1,z_2)$ such that the vector field becomes
\[
Y=\frac{\partial}{\partial z_1}\;\;\;\;\mbox{and}\;\;\;z_2=x_2\, ,
\]
and the result follows.
\end{proof}

Consider a foliation $\fol$ as in the previous theorem, i.e. defining a saddle-node
singularity. Hence, there exist appropriate coordinates where the associated vector
field is given by the normal form:
\[
X = [y_1 (1 + \lambda y_2^p) + y_2 R(y_1,y_2)] \frac{\partial}{\partial y_1}+y_2^{p+1}
\frac{\partial}{\partial y_2}\, .
\]
In particular we can see that $\fol$ admits a separatrix through the origin which
is given, in the coordinates above, by $\{y_2 = 0\}$. Consider a loop on this separatrix
encircling the saddle-node singularity and let $\Sigma$ be a transverse section to
the separatrix passing through a point of the loop. We shall now compute the holonomy
$h(z)$, where $z$ is a local coordinate on $\Sigma$. This simple
calculation  will prove to be quite useful in the next sections.
For the time being, it already provides a geometric interpretation
of the number $p+1$ appearing in the normal form above.

\begin{lema}\label{holvariedadefraca}
The holonomy associated to the separatrix $\{y_2=0\}$ is given by $h(z) = z + z^{p+1} + \cdots$.
\end{lema}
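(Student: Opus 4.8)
The plan is to compute the holonomy directly from the normal form of the vector field, integrating along a loop $\{y_2 = 0\}$ and tracking how a nearby leaf returns to a transverse section. First I would fix the transverse section $\Sigma = \{y_1 = 1\}$ (or $\{y_1 = \epsilon\}$), parametrized by the coordinate $z = y_2$, and consider the loop on the separatrix $\{y_2 = 0\}$ obtained by letting $y_1$ traverse a small circle. Since the leaf through a point $(1, z)$ with $z$ small is a Riemann surface transverse to $\{y_2 = 0\}$, its intersection with $\Sigma$ after going once around is $h(z)$. The vector field $X = [y_1(1+\lambda y_2^p) + y_2 R(y_1,y_2)]\,\partial/\partial y_1 + y_2^{p+1}\,\partial/\partial y_2$ gives the associated foliation the first-order ODE
\[
\frac{dy_2}{dy_1} = \frac{y_2^{p+1}}{y_1(1+\lambda y_2^p) + y_2 R(y_1,y_2)} \, .
\]
The key observation is that near $\{y_2 = 0\}$ the right-hand side is $y_2^{p+1}/y_1$ plus higher-order corrections in $y_2$, so to leading order the leaf satisfies $dy_2/dy_1 \approx y_2^{p+1}/y_1$, i.e. $d(y_2^{-p})/dy_1 \approx -p/y_1$.

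The next step is to integrate this along the loop. Writing $w = y_2^{-p}$ (a coordinate valid away from $y_2 = 0$), the equation becomes $dw/dy_1 = -p/y_1 + (\text{terms bounded near } y_2 = 0)$. Parametrizing the loop by $y_1 = e^{i\theta}$, $\theta \in [0, 2\pi]$, the dominant term integrates to $-p \cdot (2\pi i)$, so $w$ is shifted by $-2\pi i p$ plus a correction that, crucially, tends to $0$ as $z \to 0$ (since the correction terms carry positive powers of $y_2$, hence of $1/w$). Translating back via $y_2 = w^{-1/p}$, a shift $w \mapsto w - 2\pi i p + o(1)$ corresponds to $z \mapsto z(1 + z^p(\text{const} + o(1)))$, which after absorbing the nonzero constant into a linear rescaling of the coordinate $z$ on $\Sigma$ becomes $h(z) = z + z^{p+1} + \cdots$. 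I would present this last bookkeeping carefully: the point is that $h$ is tangent to the identity to order exactly $p+1$, the "$p+1$" being forced by the exponent in the $y_2$-component of $X$, and the leading coefficient being normalizable to $1$.

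The main obstacle I anticipate is making the "correction terms are $o(1)$" claim rigorous, i.e. controlling the solutions of the ODE along the whole loop uniformly for small $z$ and showing that the holonomy map is holomorphic in $z$ with the asserted Taylor expansion. This requires a continuity/compactness argument: the loop is compact and stays in a region where the vector field is holomorphic and nonvanishing (away from $y_2 = 0$), so solutions depend holomorphically on initial conditions and the error from the non-dominant terms can be estimated by a Gronwall-type bound that is $O(|z|^p)$ times a bounded factor. Alternatively, one can argue more cleanly by changing coordinates so that the vector field becomes $y_1\,\partial/\partial y_1 + y_2^{p+1}\,\partial/\partial y_2$ up to a holomorphic unit (which is essentially the content of Dulac's normal form together with the observation that a unit factor does not change the foliation), in which case the two components decouple enough that the holonomy computation is an explicit integration of $dy_2/dy_1 = y_2^{p+1}/y_1$ along $y_1 = e^{i\theta}$, giving $y_2(2\pi)^{-p} = y_2(0)^{-p} - 2\pi i p$ exactly, hence $h(z) = z(1 - 2\pi i p\, z^p)^{-1/p} = z + z^{p+1} \cdot c + \cdots$ with $c \neq 0$, and a final rescaling of $z$ puts it in the stated form. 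I would lean toward this second route as the cleaner presentation, flagging that the unit factor is harmless for holonomy.
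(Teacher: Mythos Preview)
Your approach is correct but proceeds differently from the paper. The paper parametrizes the loop by $y_1(t)=re^{2\pi i t}$, writes the solution as a power series in the initial condition, $y_2(t)=\sum_{k\geq 1}a_k(t)z^k$ with $y_2(0)=z$, and reads off the $a_k$ recursively from the ODE $\dot y_2=2\pi i\,y_2^{p+1}(1+\text{h.o.t.})$: one gets $a_1\equiv 1$, $a_2\equiv\cdots\equiv a_p\equiv 0$, and $a_{p+1}'(t)=2\pi i$, hence $h(z)=y_2(1)=z+2\pi i\,z^{p+1}+\cdots$, with a final rescaling. Your route via the substitution $w=y_2^{-p}$ is equally valid and arguably more conceptual, since it is precisely the Fatou--Leau coordinate that reappears in the paper's later treatment of parabolic germs; it makes the exponent $p+1$ visible as the shift in $w$ after one turn. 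What the paper's method buys is that no asymptotic or Gronwall estimate is needed: holomorphic dependence on $z$ is automatic, and the Taylor coefficients of $h$ are computed one at a time without any $o(1)$ bookkeeping. One caution on your ``second route'': Dulac's normal form is \emph{not} a holomorphic unit times $y_1\,\partial_{y_1}+y_2^{p+1}\,\partial_{y_2}$, so you cannot quite decouple the components as claimed; but since the $\lambda y_2^p$ and $R$ terms only affect $h$ at order $\geq p+2$, this does no harm for the leading coefficient, which is all the lemma asserts.
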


{\it Proof}. In order to prove this result we proceed as follows. Set
$y_1(t)=re^{2 \pi i t}$. Thus,
\begin{eqnarray}\label{eq34}
\frac{dy_2}{dt} &=& \frac{dy_2}{dy_1} \frac{dy_1}{dt} \nonumber\\
&=& \frac{y_2^{p+1}}{y_1 (1 + \lambda y_2^p) + y_2 R(y_1,y_2)} 2\pi i re^{2 \pi i t} \nonumber\\
&=& \frac{y_2^{p+1}}{re^{2 \pi i t}[1 + \lambda y_2^p + y_2 Q(re^{2 \pi i t},y_2)]}2\pi i re^{2 \pi i t}\nonumber\\
&=& 2\pi i y_2^{p+1}(t)(1+ {\rm h.o.t.})
\end{eqnarray}
where $Q$ is holomorphic relatively to $y_2$. Denote $y_2(t)=\sum_{k\geq 1} a_k(t)z^k$
and with initial data $y_2(0)=z$, so that
\begin{equation}\label{eq35}
\frac{dy_2}{dt}=\sum_{k\geq 1} a^{\prime}_k(t)z^k \, .
\end{equation}
By comparing the expressions obtained for $dy_2/dt$ on (\ref{eq34}) and on (\ref{eq35})
and taking account that
\begin{equation}\label{eq36}
y_2^{p+1}(t) = \left(\sum_{k \geq 1} a_k(t) z^k \right)^{p+1}
\end{equation}
we see that $a^{\prime}_k(t)=0$ for $k\leq p$, i.e. the functions $a_k(t)$ are all constants
for $k \leq p$. Since we have set $y_2(0)=z$, we have that $a_1(0)= 1$, $a_2(0) = \cdots = a_p(0) = 0$.

Now we compare the term $k=p+1$. Using~(\ref{eq36}) we obtain:
\[
a^{\prime}_{p+1}(t)=2\pi i a_1^{p+1}(t) \, ,
\]
So that $a_{p+1}(t)= 2\pi i t$ since $a_1(t)=1$ for all $t$.

Since $h(z) = y_2(1)$ we conclude that $h(z) = z + 2\pi i z^{p+1} + \cdots$. By
performing a change of coordinates we obtain the desired result, i.e. the holonomy can
be written in the form $h(z) = z + z^{p+1} + \cdots$.
\qed

%--------------------------------------------------------------------------------------------------------------------------------------------------------------------------------------------------------------------
%--------------------------------------------------------------------------------------------------------------------------------------------------------------------------------------------------------------------
%--------------------------------------------------------------------------------------------------------------------------------------------------------------------------------------------------------------------

\subsubsection{Fatou Coordinates and the Leau Flower}

In view of Lemma~\ref{holvariedadefraca}, we are naturally led to investigate
the topological dynamics of diffeomorphisms which are tangent to
the identity. In particular, we would like to understand the role
of the multiplicity ``$p+1$'' on the topological dynamics of these
diffeomorphisms. Here we follow closely the approach given in
\cite{Car-G}.

So let us first analyze applications of the form
$f(z)=z+z^{p+1}+\cdots$, in the prototypical case where $p=1$. To
begin with, let us apply a holomorphic change of coordinates,
$A_1(z)=-1/z$, taking $0$ to $\infty$. In these new coordinates
$f$ becomes
\[
g(z)=z+1+b/z+\cdots
\]
Fix $c \in \R^+$ sufficiently large so that $|g(z) - (z+1)| < 1/2$ for all $z \in \C$ such that
$|z| > c$ and let $R_c=\{z\in \C\, : \; {\rm Re}(z)> c\}$. We will first prove that $g$ is analytically
conjugate to $z \mapsto z+1$ on $R_c$.

First of all we note that $g(R_1)\subset R_1$. In fact, since ${\rm Re}(z) > -|z|$ we have
\begin{equation}\label{estimateg}
\frac{1}{2} < 1 + {\rm Re}\left(\frac{b}{z} + O\left(\frac{1}{z^2}\right)\right) < \frac{3}{2}
\end{equation}
and therefore
\[
{\rm Re}(g(z)) = {\rm Re}(z) + 1 + {\rm Re}\left(\frac{b}{z} + O\left(\frac{1}{z^2}\right)\right) > {\rm Re}(z) > c \, .
\]
This implies that the map $\varphi_n(z)=g^n(z)-n-b\log n$ is well defined on $R_c$, where $g^n=g\circ\cdots\circ g$,
$n$ times. If $\varphi_n$ converges to a holomorphic function $\varphi$ then $g$ is holomorphically conjugate to the
translation $T(z)=z+1$. Indeed, one has
\[
\lim_{n \rightarrow \infty} \varphi_n(g(z)) = \lim_{n \rightarrow \infty}
[\varphi_{n+1}(z) + 1 + b\log(1+1/n)] = \lim_{n \rightarrow \infty}
\varphi_{n+1}(z) + 1 \, .
\]
The convergence of $\varphi_n$ is the contents of the next lemma.

\begin{lema}
The sequence $\varphi_n$ converges to a conformal function
$\varphi$.
\end{lema}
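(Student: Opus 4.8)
The plan is to show that the sequence $\varphi_n(z) = g^n(z) - n - b\log n$ is uniformly Cauchy on compact subsets of $R_c$ (or on a slightly smaller half-plane $R_{c'}$ with $c' > c$), hence converges locally uniformly to a holomorphic function $\varphi$; conformality will then follow from a separate argument showing $\varphi'$ does not vanish.

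First I would obtain precise asymptotics for the orbit $g^n(z)$. Writing $w_n = g^n(z)$, the estimate~(\ref{estimateg}) already gives ${\rm Re}(w_n) > {\rm Re}(z) + n/2$, so $w_n \to \infty$ and in fact $|w_n| \geq c + n/2$. Feeding this back into $g(w) = w + 1 + b/w + O(1/w^2)$ and summing the telescoping relation $w_n = z + n + b\sum_{k=0}^{n-1} 1/w_k + \sum_{k=0}^{n-1} O(1/w_k^2)$, one shows $w_n = n + b\log n + O(1)$, and more importantly that the ``correction'' stabilizes: the key is to compare $1/w_k$ with $1/k$. Since $w_k = k + b\log k + O(1)$, we get $1/w_k = 1/k + O(\log k / k^2)$, so $\sum_{k=1}^{n-1}(1/w_k - 1/k)$ converges, and $\sum_{k=1}^{n-1} 1/k = \log n + \gamma + o(1)$. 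This already gives convergence of $\varphi_n(z)$ pointwise; the same computation carried out with uniform constants on $R_{c'}$ gives local uniform convergence.

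To make the Cauchy estimate clean I would instead directly bound $\varphi_{n+1}(z) - \varphi_n(z) = g(w_n) - w_n - 1 - b\log(1 + 1/n) = b/w_n - b/n + O(1/w_n^2) + O(1/n^2)$, and using $w_n = n + O(\log n)$ conclude this is $O(\log n / n^2)$, summable. Hence $\varphi = \lim \varphi_n$ exists and is holomorphic on $R_{c'}$ as a locally uniform limit of holomorphic functions. The functional equation $\varphi \circ g = \varphi + 1$ is already verified in the text preceding the lemma.

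The main obstacle — and the part requiring a little care — is proving that $\varphi$ is \emph{conformal} (injective with nonvanishing derivative), not merely holomorphic. For $\varphi' \neq 0$: differentiating $\varphi_n$, one has $\varphi_n'(z) = (g^n)'(z) = \prod_{k=0}^{n-1} g'(w_k)$, and $g'(w) = 1 - b/w^2 + O(1/w^3)$, so $\log \varphi_n'(z) = \sum_{k=0}^{n-1}\log g'(w_k) = \sum O(1/w_k^2) = \sum O(1/k^2)$, which converges; thus $\varphi_n' \to \varphi'$ with $\varphi'$ bounded away from $0$ on $R_{c'}$ (taking $c'$ large enough that the tail sum is small). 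Injectivity then follows either from the functional equation together with the fact that $\varphi$ is close to the identity-plus-constant for large $\mathrm{Re}(z)$ (so by Rouché/Hurwitz, being a locally uniform limit of the injective maps $g^n$ precomposed appropriately, or simply: $\varphi(z) = z + b\log z + (\text{bounded holomorphic, }\to \text{const})$, which is injective on a half-plane far to the right, and the relation $\varphi\circ g = \varphi+1$ propagates injectivity to all of $R_{c'}$). Undoing the coordinate change $A_1$ then yields the Fatou coordinate on a petal at the origin, as needed for the Leau flower picture.
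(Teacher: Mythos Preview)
Your convergence argument is essentially the same as the paper's: both obtain the orbit growth $|g^n(z)| \asymp n$, then bound the telescoping differences by $\varphi_{n+1}(z)-\varphi_n(z) = b/g^n(z) - b/n + O(1/n^2) = O(\log n / n^2)$ using $g^n(z) = n + b\log n + \varphi_n(z)$, and sum. The paper organizes this as a two-pass bootstrap (first $\varphi_{k+1}-\varphi_k = O(1/k)$ giving $|\varphi_n(z)-z| = O(\log n)$, then feeding that back to get $O(\log n/n^2)$), but the content is identical to yours.

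Where you diverge is the conformality step, and here you are working much harder than necessary. The maps $\varphi_n(z) = g^n(z) - n - b\log n$ are themselves conformal on $R_c$: each $g^n$ is a composition of the injective holomorphic map $g$ (restricted to an invariant domain), hence injective, and subtracting a constant preserves this. The paper then simply invokes Hurwitz: a locally uniform limit of injective holomorphic functions is either injective or constant, and $\varphi$ is visibly non-constant (e.g.\ from $\varphi\circ g = \varphi+1$). This disposes of both $\varphi'\neq 0$ and global injectivity in one stroke. Your explicit product computation for $\varphi_n'$ and the separate injectivity-propagation argument are correct but superfluous once you notice that the $\varphi_n$ are already univalent.
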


\begin{proof}
We note that $g^n(z) = z + n + O(1/n)$. The estimate (\ref{estimateg}) implies that
\[
\frac{n}{2}\leq |g^n(z)|\leq |z|+2n \, .
\]
This can be easily verified by induction on $n$. In fact for the upper bound estimate we have
\[
|g(z)| \leq |z| + |1 + O(1/z)| \leq |z| + 2 \, .
\]
Assuming that the estimate is valid for $n$ then
\[
|g^{n+1}(z)| = |g^n(g(z))| \leq |g(z)| + 2n \leq |z| + 2(n+1)
\]
Relatively to the lower estimate we have that
\[
|g^n(z)| > {\rm Re}(g^n(z)) >{\rm Re}(z) + \frac{n}{2} > \frac{n}{2} \, .
\]

We have therefore that $g^{k+1}(z)=g^k(z)+1+\frac{b}{g^k(z)}+ O(1/k^2)$. Thus we obtain that
\[
\varphi_{k+1}-\varphi_k(z)=b[\log k-\log(k+1)]+\frac{b}{g^k(z)}+
O(1/k^2)=O(1/k) \, .
\]
Hence, for $z\in R_1$ the following estimate holds
\[
|\varphi_n(z) - z| \leq|\varphi_1(z) - z| + \sum_{k=1}^{n-1} |\varphi_{k+1}(z) -
\varphi_k(z)| = O(\log n) \, .
\]
It remains to prove that $\varphi_n$ is uniformly convergent on the compact subsets of $R_c$. We have
the estimates
\begin{eqnarray*}
\varphi_{n+1}(z) - \varphi_n(z) &=& b\log n - b\log(n+1) + g^{n+1}(z) - g^n(z) - 1\\
&=& -\frac{b}{n} + \frac{b}{g^n(z)} \left(= O \left(\frac{1}{n^2} \right) \right)\\
&=& b \left[\frac{1}{n + b\log n + \varphi_n(z)} - \frac{1}{n} \right] + O(\frac{1}{n^2})\\
&=& \frac{1}{n^2} O\left( \left|b\log n + \varphi_n(z) \right| \right) +
O \left(\frac{1}{n^2} \right)\\
&=& O \left( \frac{\log n}{n^2} \right) \, ,
\end{eqnarray*}
so that $\sum|\varphi_{n+1}(z)-\varphi_n(z)|<\infty$. Since all the $\varphi_n$ are conformal,
so is the uniform limit $\varphi$.
\end{proof}

Next we note that we can extend $\varphi$ analytically to any domain $\Omega$,
contained in the domain of $g$, verifying $g(\Omega)\subset \Omega$ and such that
${\rm Re}(g^n(z))$ tends to $\infty$, for $z\in \Omega$. In fact we can construct
one such invariant domain $\Omega$ with smooth boundary, as is shown in Figure~\ref{f6}.
We have therefore that $f(z)=z+z^2+\cdots$ is conjugate to the translation $T(z)=z+1$
on the cardioid-shaped region, $A_1^{-1}(\Omega)$, as shown in Figure~\ref{f7}. The set
$A_1^{-1}(\Omega)$ is therefore what we call an attracting petal centered at an attracting
direction. Basically this means that $A_1^{-1}(\Omega)$ is an invariant set whose orbit
converges to the origin tangentially to the direction $v$.

\begin{figure}[!htb]
\begin{minipage}[b]{0.45\linewidth}
\centering
\includegraphics[scale=0.9]{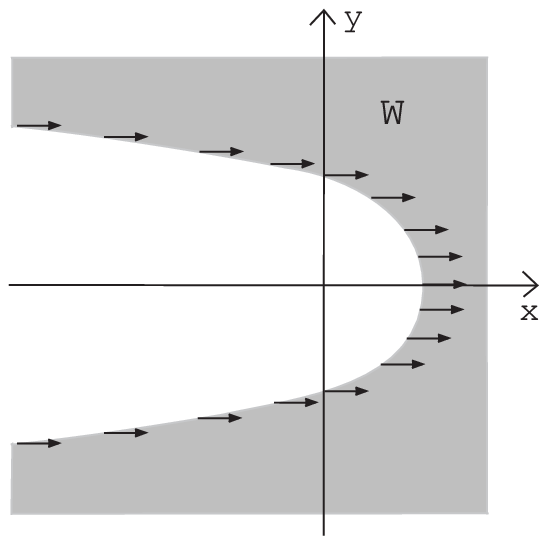}
\caption{Invariant domain by $g$} \label{f6}
\end{minipage} \hfill
\begin{minipage}[b]{0.45\linewidth}
\centering
\includegraphics[scale=0.9]{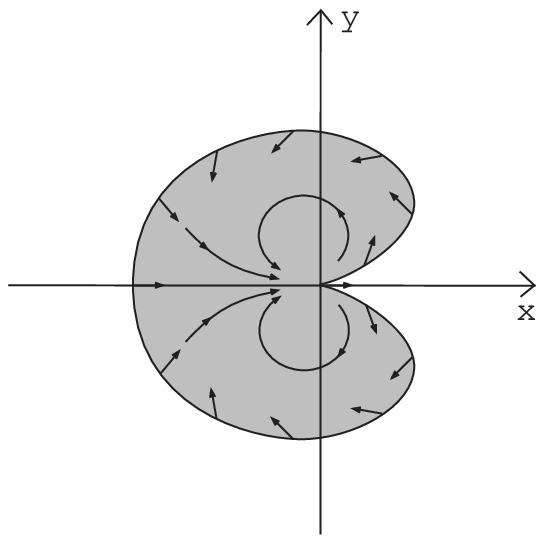}
\caption{Cardioid-shaped Dynamics of~$f$} \label{f7}
\end{minipage}
\end{figure}

\begin{defi}
Let $f \in {\rm Diff} (\C, 0)$ be such that $f(z) = z + az^{p+1} + \cdots$, where $a \ne 0$. An attracting
petal for $f$ centered at an attracting direction $v$ is a simply connected open set $P$ such that
\begin{itemize}
\item[a)] $0 \in \partial P$
\item[b)] $f(P) \subseteq P$
\item[c)] $\lim_{n \rightarrow +\infty} f^{(n)} = 0$ with $\lim_{n \rightarrow +\infty} \frac{f^n(z)}{|f^n(z)|} = v$ for all $z \in P$
\end{itemize}
A repelling petal centered at a repelling direction $v$ is an attracting petal for $f^{-1}$ centered at
the attracting direction $v$ for $f^{-1}$.
\end{defi}

We should note that an attracting (resp. repelling) direction is an element $v$ of $\C$ such that
$av^k/|a| \in \R_-$ (resp. $av^k/|a| \in \R_+$). Moreover, the union of the attracting
petals and the repelling ones constitutes a neighborhood of the origin.

In the case that $p = 1$ we have exactly one attracting petal (resp. direction) and one repelling petal
(resp. direction). The general case where $p\geq 2$ can be treated in a similar way. In fact, this case
can be reduced to previous one. Let $f_p(z) = z + z^{p+1} + \cdots$. Up to conjugation by an homothety,
we can suppose that $f_p(z) = z + \frac{1}{p} z^{p+1} + \cdots$. Now conjugating by $A_p(z) = -z^{1/p}$
we obtain
\[
A_p^{-1}\circ f_p \circ A_p(z) = z \left(1 + \frac{1}{p} z + \cdots \right)^p = z + z^2 + \cdots \, .
\]
Thus we are reduced to the case $p = 1$, which has been analyzed before. We note that the sectors
$|\arg z - 2k\pi/p| <\pi/p$ are mapped conformally, by $A_p^{-1}$, onto the plane minus the
negative real axis. This implies that in the case $p \geq 2$ we have essentially a ramification
of order $p$ of the previous case. The mapping $f_p$ has therefore $p$ {\it attracting petals}
$P_k$. The petals are (invariant domains) bounded by piecewise analytic Jordan curves ($c_1,
\ldots, c_k$) and they are symmetric relatively to the rays $\arg z= (2k\pi)/p$. At the origin,
$c_i$ has two tangents $\arg z= (2i \pm 1)\pi/p$. The final picture is summarized by the theorem below.

\begin{teo}[Flower Theorem]\label{t14}
Let $f \in {\rm Diff} (\C, 0)$ be given by $f(z) = z + az^{p+1} + \cdots$, where $a \ne 0$. Denote
by $v_1^+, \ldots, v_p^+$ the attracting directions and by $v_1^-, \ldots, v_p^-$ the repelling
ones. Assume that they are ordered by the following rule: starting at $v_i^+$ and moving in the
counterclockwise direction we first meet $v_i^-$ and then $v_{i+1}^+$. Then
\begin{itemize}
\item[a)] For each $v_i^+$ (resp. $v_i^-$) there exists an attracting (resp. repelling) petal $P_i^+$
(resp. $P_i^-$) centered at $v_i^+$ (resp. $v_i^-$)
\item[b)] The union of all attracting and repelling petals constitutes a neighborhood of the origin
\item[c)] $P_i^+ \cup P_j^+ = \empty$ and $P_i^- \cup P_j^- = \empty$ for $i \ne j$
\item[d)] The diffeomorphism $f$ is holomorphically conjugated to the translation map $T(z) = z+1$ on
each attracting petal.
\end{itemize}
\end{teo}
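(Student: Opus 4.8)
The plan is to bootstrap from the analysis of the model map $f(z)=z+z^2+\cdots$ already carried out in the text. First I would reduce the general case to $p=1$ exactly as sketched there: conjugating $f$ by a suitable homothety $z\mapsto cz$ we may assume $a=1/p$, and then, restricting attention to one of the $p$ sectors $S_k=\{\, |\arg z-2k\pi/p|<\pi/p\,\}$ and conjugating by the corresponding branch of $A_p(z)=-z^{1/p}$, the map $f$ becomes a diffeomorphism of the form $z\mapsto z+z^2+\cdots$ on the slit plane $A_p^{-1}(S_k)$. For this model the text has already produced, via the change $A_1(z)=-1/z$, a map $g(z)=z+1+b/z+\cdots$ near $\infty$, an invariant domain $\Omega$ with $g(\Omega)\subset\Omega$ and ${\rm Re}(g^n)\to\infty$ on $\Omega$, and a conformal Fatou coordinate $\varphi=\lim_n\varphi_n$ satisfying $\varphi\circ g=T\circ\varphi$ with $T(w)=w+1$. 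Pulling $\Omega$ back through $A_1$ and then through the chosen branch of $A_p$ produces, for each $k$, an attracting petal $P_k^+$ centred at $v_k^+$ on which $f$ is holomorphically conjugate to $T$. This gives part (a) for attracting petals and part (d).

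For the repelling petals I would apply the same construction to $f^{-1}$. Since $f^{-1}(z)=z-az^{p+1}+\cdots$, a direction $v$ is attracting for $f^{-1}$ precisely when $(-a)v^p/|a|\in\R_-$, i.e. when $av^p/|a|\in\R_+$, which is exactly the condition for $v$ to be repelling for $f$. Hence the attracting petals of $f^{-1}$ are the repelling petals $P_j^-$ of $f$, and on each of them $f^{-1}$ is conjugate to $T$ (equivalently $f$ is conjugate to $T^{-1}$), completing (a). The ordering $v_1^+,v_1^-,v_2^+,\ldots$ claimed in the statement is then just the cyclic arrangement on the unit circle of the $2p$ directions $v$ with $av^p\in\R$, which alternate between the condition $av^p/|a|\in\R_-$ and $av^p/|a|\in\R_+$ as one rotates by $\pi/p$.

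Part (c) follows from the sector bookkeeping: the $p$ attracting petals are obtained one from each of the pairwise disjoint sectors $S_0,\ldots,S_{p-1}$, so after shrinking the invariant domain $\Omega$ if necessary (which affects neither the conjugacy nor the other conclusions) each $P_k^+$ is contained in a slightly enlarged copy of $S_k$, and these are pairwise disjoint away from the origin; the same argument applies to the $P_j^-$, which come from the sectors bisected by the repelling directions.

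The delicate point is part (b): the union of all $2p$ petals must be a genuine neighbourhood of $0$. The difficulty is that attracting and repelling petals are only tangent to the bounding rays of their sectors at the origin, so the invariant domains on the attracting and repelling sides must be chosen \emph{simultaneously and compatibly} so that their pullbacks overlap across every sector boundary. Concretely, in the $g$-coordinate I would take for $f$ a cardioid-type domain $\Omega^+=\{\,{\rm Re}\, w>\psi(|{\rm Im}\, w|)\,\}$ with $\psi$ growing slowly enough that $g(\Omega^+)\subset\Omega^+$ and ${\rm Re}(g^n)\to\infty$ on $\Omega^+$, and dually a domain $\Omega^-$ with $g^{-1}(\Omega^-)\subset\Omega^-$, ${\rm Re}(g^{-n})\to-\infty$ on $\Omega^-$, chosen so that $\Omega^+\cup\Omega^-\supset\{\,|w|>M\,\}$ for $M$ large. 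Pulling these back through $A_1$ and the branches of $A_p$ shows that in each sector $S_k$ the set $P_k^+\cup P_k^-$, together with the adjacent petals from $S_{k\pm1}$, covers a punctured disc, whence the union of all petals is a punctured neighbourhood of $0$; adjoining the origin (which lies in each $\partial P$) yields (b). Verifying the inclusions $g(\Omega^+)\subset\Omega^+$ and $\Omega^+\cup\Omega^-\supset\{\,|w|>M\,\}$ is the only genuine work here, and it is elementary plane geometry combined with the estimate $|g(w)-(w+1)|<1/2$ for $|w|$ large already established in the text; this interlocking of the two families of domains is, I expect, the main obstacle.
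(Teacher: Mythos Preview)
Your proposal is correct and follows essentially the same approach as the paper: the paper does not give a formal proof of the theorem but treats it as a summary of the preceding discussion, which establishes the case $p=1$ in detail and then sketches exactly the reduction you describe (homothety to normalize $a=1/p$, then conjugation by branches of $A_p(z)=-z^{1/p}$ on the sectors $|\arg z - 2k\pi/p|<\pi/p$ to reduce to $z+z^2+\cdots$). If anything, you are more careful than the paper about parts (b) and (c): the text does not address the disjointness of petals or the covering property explicitly, whereas you correctly identify the compatible choice of $\Omega^+$ and $\Omega^-$ as the only point requiring work.
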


We recall that a domain $V$ is called a {\it Leau domain} if $f$ is conjugate to the
translation $T(z)=z+1$ on $V$ and also if the sequence $f^n$ converges to a point on
$\partial V$. The cardioid-shaped region illustrated on Figure~\ref{f7} is an example
of a Leau domain.

%--------------------------------------------------------------------------------------------------------------------------------------------------------------------------------------------------------------------
%--------------------------------------------------------------------------------------------------------------------------------------------------------------------------------------------------------------------
%--------------------------------------------------------------------------------------------------------------------------------------------------------------------------------------------------------------------

\subsection{Some Normal Forms in Higher Dimensions}

The Cauchy Majorant Method was essential to prove the convergence of the formal conjugating
diffeomorphisms for generic vector fields in dimension~$2$. We begin this section
noticing that the generalization of this result to higher dimension is straightforward. We
will not prove it.

\begin{teo}[Cauchy Majorant Method in Several Variables]\label{CMMhigherdim}
Let $\phi_i$, $i=1, \ldots, n$, be~$n$ holomorphic functions with trivial linear part and
let $\zeta_i$, $i=1, \ldots, n$, be~$n$ formal series. Consider the change of coordinates
\begin{equation}\label{eq16}
x_i = y_i + \zeta_1 (y_1 ,\ldots,y_n) \, \, \, {\rm where} \, \, \,
\zeta_i (y_1 ,\ldots, y_n) = \sum_{\| Q \| >1} \zeta_{i,Q} y^Q \, \, \, \, \, (i=1, \ldots, n)
\end{equation}
and assume the existence of $\delta>0$ such that
\[
\delta\overline{\zeta}_i \prec \overline{\varphi}_i
(y_1 + \overline{\zeta}_1,\ldots, y_n +\overline{\zeta}_n)
\]
for all $i=1,\ldots, n$. Then the series of $\zeta_i$ converges
and hence defines a holomorphic change of coordinates. \qed
\end{teo}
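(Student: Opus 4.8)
The plan is to imitate exactly the proof of Theorem~\ref{t1}, keeping track only of what changes when the number of variables grows from~$2$ to~$n$. First I would observe that the hypothesis $\delta \overline{\zeta}_i \prec \overline{\varphi}_i (y_1 + \overline{\zeta}_1, \ldots, y_n + \overline{\zeta}_n)$ for each $i = 1, \ldots, n$ can be summed over~$i$, and then one passes to the single-variable series by setting every variable equal to a common variable~$z$. Writing $\llz_i$ for the resulting single-variable majorant of $\zeta_i$ (i.e. $\llz_i(z) = \overline{\zeta}_i(z, \ldots, z)$), and summing the $n$ inequalities, one gets
\[
\sum_{i=1}^n \llz_i \;\prec\; \frac{1}{\delta} \sum_{i=1}^n \overline{\overline{\varphi}}_i\!\left(z + \sum_{j=1}^n \llz_j\right),
\]
which is the exact analogue of inequality~(\ref{eq12}). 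The only genuine difference from the two-variable case is bookkeeping: there are now $n$ summands instead of~$2$, but the structure is identical.

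Next I would use that each $\varphi_i$ is a convergent power series vanishing to order at least~$2$, so there exist constants $a_0, a > 0$ with $\frac{1}{\delta}\sum_{i=1}^n \overline{\overline{\varphi}}_i \prec \dfrac{a_0 z^2}{1 - a z}$, the analogue of~(\ref{eq13}). Combining this with the displayed inequality above and setting $u = \dfrac{1}{z}\sum_{i=1}^n \llz_i$, exactly the same algebra as in the proof of Theorem~\ref{t1} yields
\[
u \;\prec\; \frac{a_0 z (1+u)^2}{1 - a z (1+u)}.
\]
From here the argument is verbatim that of Theorem~\ref{t1}: one introduces the convergent series $v$ solving $v = \dfrac{a_0 z (1+v)^2}{1 - a z (1+v)}$ (convergence of $v$ follows since it is an algebraic function holomorphic near $z = 0$), notes that each coefficient $v_i$ is a polynomial with nonnegative coefficients in $v_1, \ldots, v_{i-1}$, and proves $u \prec v$ by induction on the index. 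Hence $u$ converges, so $\sum_i \llz_i$ converges, so each $\llz_i$ converges (the $\llz_i$ have nonnegative coefficients and are dominated by the sum), and therefore each $\overline{\zeta}_i$ converges, which gives convergence of the $\zeta_i$ and hence a bona fide holomorphic change of coordinates.

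Honestly, there is no real obstacle here: the whole point of the statement is that the two-variable proof did not use the dimension in any essential way, and the author even says "the generalization of this result to higher dimension is straightforward. We will not prove it." If I had to name the one place requiring a word of care, it is the passage from "$\sum_i \llz_i$ converges" to "each $\llz_i$ converges": this needs the remark that the $\llz_i$ are majorant series with nonnegative coefficients, so that $0 \prec \llz_i \prec \sum_j \llz_j$ coefficientwise, and a convergent majorant of a nonnegative series forces convergence. Everything else is a transcription of the proof of Theorem~\ref{t1} with the index running from $1$ to $n$.
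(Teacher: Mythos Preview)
Your proposal is correct and is precisely the argument the paper has in mind: the paper does not give a proof of this statement, explicitly noting that ``the generalization of this result to higher dimension is straightforward. We will not prove it,'' and your write-up is exactly that straightforward transcription of the proof of Theorem~\ref{t1} with indices running from $1$ to $n$. The one point you flag as requiring care --- deducing convergence of each $\llz_i$ from convergence of $\sum_j \llz_j$ via nonnegativity of the coefficients --- is indeed the only place where a word is needed, and you have handled it.
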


We shall obtain the correspondent to Theorem~\ref{PoincLintheo} for several variables.
Before that let us introduce some definitions.

\begin{defi}
Suppose that the origin is a singular point of a vector field $X$ on $(\C^n, 0)$. Let
$\dl=(\dl_1,\ldots, \dl_n)$ be the vector of eigenvalues of ${\rm Jac}X(0)$. We say that
the eigenvalues are resonant if, for some $i \in \{1, \ldots, n\}$, there exists
$I=(i_1, \ldots, i_n) \in \N_0^n$ with $\sum_{j=1}^n i_j \geq 2$ such that
\[
\dl_i=(I,\dl )=i_1\dl_1 + \ldots + i_n\dl_n
\]
i.e. if at least one of the eigenvalues can be written as a non-trivial positive linear
combination of all of them. In this case the monomials $x^{I} \partial /\partial x_i$ are
said to be resonant. Like in the two dimensional case, if $\dim \{m \in \Z^n : (m,\dl) = 0\}
= k$ then $X$ is said to be $k$-resonant.
\end{defi}

\begin{defi}
We say that a $n$-tuple $(\lambda_1 ,\ldots ,\lambda_n) \in \C^n$ belongs to the
{\it Poincar\'e's domain} if the convex hull of the complex numbers $\lambda_1 ,\ldots ,\lambda_n$,
i.e. if the set $\{ z \in \C \, : \, t_1\lambda_1+\cdots + t_n\lambda_n=z \, , \,
t_1+\cdots+t_n=1\}$, does not contain the origin $0 \in \C$. Otherwise we say that
$(\dl_1, \ldots, \dl_n)$ belongs to the {\it Siegel domain}.

A linear vector field $X$ defined on $\C^n$ is said to be of {\it Poincar\'e-type}
(resp. {\it Siegel-type}) if its spectrum is in the Poincar\'e's domain (resp.
Siegel's domain).
\end{defi}

To belong to the Poincar\'e domain is equivalent to the existence of a straight line
through the origin such that all the eigenvalues $\dl_1, \ldots, \dl_n$ belong to a
same half-plane defined by this straight line.

\begin{teo}[Poincar\'e Linearization Theorem]\label{PLT}
Let $X$ be a holomorphic vector field defined in a neighborhood of the origin of
$\C^n$ such that its linear part is of Poincar\'e-type. Let $\lambda_1, \ldots,
\lambda_n$ be the eigenvalues of ${\rm Jac}X(0)$. Assume that there there exists no
resonance relation. Then there exists a holomorphic change of coordinates that linearizes
$X$.
\end{teo}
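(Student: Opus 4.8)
The plan is to mimic the two--dimensional argument of Theorem~\ref{PoincLintheo} (and Theorem~\ref{t1}), the only new ingredients being the combinatorial analysis of the divisors $\delta_{i,Q}$ in $n$ variables and the passage to the majorant series in several variables (Theorem~\ref{CMMhigherdim}). First I would set up the formal normalization exactly as in Subsection~\ref{sectionvfnzeigenvalues}: writing $X = \sum_i (\lambda_i x_i + \varphi_i)\,\partial/\partial x_i$ with $\varphi_i$ of order $\geq 2$, and looking for a formal change of coordinates $x_i = y_i + \zeta_i(y_1,\ldots,y_n)$, $\zeta_i = \sum_{\|Q\|>1}\zeta_{i,Q}y^Q$, conjugating $X$ to its linear part. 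Substituting and collecting the coefficient of $y^Q$ gives, for each multi-index $Q$ and each $i$, a relation of the form
\[
\delta_{i,Q}\,\zeta_{i,Q} + \psi_{i,Q} = (\text{terms involving only } \zeta_{j,P} \text{ with } \|P\|<\|Q\|),
\]
where $\delta_{i,Q} = (Q,\lambda) - \lambda_i = \sum_{j}q_j\lambda_j - \lambda_i$. Since there is no resonance relation, $\delta_{i,Q}\neq 0$ for every $i$ and every $Q$ with $\|Q\|\geq 2$, so one may recursively set $\psi_{i,Q}=0$ and solve $\zeta_{i,Q} = (\cdots)/\delta_{i,Q}$; an induction on $\|Q\|$ (using that $\varphi_i$ has order $\geq 2$, so the right-hand side depends only on lower-order data) produces a well-defined formal linearizing transformation.

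The heart of the matter is the \emph{uniform lower bound} on the divisors, which is where the Poincar\'e--type hypothesis enters and is the step I expect to be the main obstacle (or at least the only genuinely new point compared to the $n=2$ case). The claim is that there is $\delta>0$ with $\inf_{Q,\,i}|\delta_{i,Q}| \geq \delta$, where the infimum is over $i\in\{1,\ldots,n\}$ and $Q\in\N_0^n$ with $\|Q\|\geq 2$. To see this, note that $\delta_{i,Q} = (Q,\lambda) - \lambda_i$ and, since $X$ is of Poincar\'e type, the origin is not in the convex hull of $\{\lambda_1,\ldots,\lambda_n\}$; by the separating-hyperplane (convex separability) theorem there is a complex linear functional, i.e.\ a vector $w\in\C$ with ${\rm Re}(\bar w \lambda_j) \geq c > 0$ for all $j$. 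Hence ${\rm Re}(\bar w\,(Q,\lambda)) \geq c\|Q\| \geq 2c$, while ${\rm Re}(\bar w \lambda_i) \leq \max_j {\rm Re}(\bar w\lambda_j) =: C$. As $\|Q\|\to\infty$ we get $|\delta_{i,Q}| \geq {\rm Re}(\bar w\,\delta_{i,Q})/|w| \to \infty$, so $\delta_{i,Q}$ stays bounded away from $0$ for all large $\|Q\|$; for the finitely many remaining $Q$ with $\|Q\|$ below that threshold, each $\delta_{i,Q}\neq 0$ by non-resonance, and a finite set of nonzero numbers has positive minimum modulus. Taking $\delta$ to be the minimum of the two bounds gives the claim. (One should be slightly careful that there are infinitely many $Q$ in any shell $\|Q\|=m$ only if... no: each shell is finite, so only the "at infinity" estimate does the real work and the finite part is trivially handled.)

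With the uniform bound $|\delta_{i,Q}|\geq\delta$ in hand, the convergence is exactly the $n$-variable analogue of the proof of Theorem~\ref{PoincLintheo}. From the recursion relations, since $\psi_{i,Q}\equiv 0$, one passes to majorant series: $\delta\overline{\zeta}_i \prec \sum_Q |\delta_{i,Q}|\,\|\zeta_{i,Q}\|\,y^Q \prec \overline{\varphi}_i(y_1+\overline{\zeta}_1,\ldots,y_n+\overline{\zeta}_n)$, using that all the coefficients on the right-hand side of the defining relations are majorized by the corresponding coefficients of $\overline{\varphi}_i$ evaluated at $y+\overline{\zeta}$. Then Theorem~\ref{CMMhigherdim} (the several-variables Cauchy Majorant Method) applies directly and yields that each $\zeta_i$ converges on a neighborhood of the origin, so the formal linearizing diffeomorphism is in fact holomorphic. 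This completes the proof; essentially the only place where anything beyond a routine transcription of the two-dimensional argument is needed is the convex-separation estimate for the divisors, and even that is the natural generalization of the "$\lambda_1/\lambda_2\notin\R_-$" discussion used in Theorem~\ref{PoincLintheo}.
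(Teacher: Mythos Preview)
Your proposal is correct and follows essentially the same route as the paper's proof: set up the formal conjugacy via the recursion on the $\zeta_{i,Q}$, use non-resonance to solve it, and then invoke the $n$-variable Cauchy Majorant Method (Theorem~\ref{CMMhigherdim}) once a uniform lower bound $|\delta_{i,Q}|\geq\delta>0$ is available. In fact you are more explicit than the paper on the one substantive point: the paper simply says the lower bound ``goes as in the two dimensional case'' (where it was attributed to the convex separability theorem), whereas you spell out the separating-hyperplane argument showing $|\delta_{i,Q}|\to\infty$ as $\|Q\|\to\infty$ and then handle the finitely many remaining $Q$ by non-resonance.
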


\begin{proof}
The idea of the proof is the same as in the case of
dimension $2$. Substituting Equations~(\ref{eq16}) in the differential equation
associated to $X$, we obtain the following relations
\[
\sum_{\| Q\| > 1} (\delta_{i,Q} \zeta_{i,Q} + \psi_{i,Q}) y^Q =
\varphi_i (y_1 + \zeta_1,\ldots, y_n + \zeta_n) - \sum_{k=1}^n
\frac{\partial \zeta_i} {\partial y_k} \psi_k \; ,
\]
for $i=1, \ldots, n$, where $y^Q=y_1^{q_1}\ldots y_n^{q_n}$ and $\delta_{i,Q} =
q_1 \lambda_1 + \cdots + q_n \lambda_n  - \lambda_i$, with $q_i\in \N$. Due to
the non-resonant assumption, we have that $\delta_{i,Q}\neq 0$ for all $i=1,\ldots, n$.
Thus the equations above are solvable in order to $\zeta_{i,Q}$, for all $i$ and $Q$.
and, consequently, $X$ is formally linearizable. In order to prove the convergent
it is sufficient to prove that $\{|\delta_{i,Q}|\}_{i, Q}$ is bounded from below by
a positive constant. The proof goes as in the two dimensional case.

Let $\delta$ be a positive constant such that $\inf_{i,Q} \{\delta_{i,Q}|\} \geq \delta > 0$.
Therefore,
\begin{eqnarray*}
\delta \overline{\zeta}_j  & \prec & \sum_{Q} \delta_{j,Q}
\| \zeta_{j,Q} \| y^Q + \overline{\psi}_j \\
& \prec & \overline{\varphi}_j (y_1 + \overline{\zeta}_1,\ldots,
y_n +\overline{\zeta}_n) + \sum_{k=1}^n \frac{\partial
\overline{\zeta}_j}{\partial y_k} \overline{\psi}_k \, .
\end{eqnarray*}
Since $\overline{\psi}_k =0$, we have
\[
\delta \overline{\zeta_j} \prec \overline{\varphi}_j (y_1 +
\overline{\zeta}_1,\ldots, y_n +\overline{\zeta}_n)\; .
\]

The convergence of the desired coordinate change results by
applying Theorem~\ref{Siegel}.
\end{proof}

Now we shall approach the case of a saddle-node singularity in
dimension $3$. First we will consider the case where only one of
the eigenvalues is {\it zero}.

\begin{teo}\label{saddlenode3}
Let $X$ be a holomorphic vector field defined in a neighborhood of
the origin of $\C^3$. Denote by $\dl_1, \dl_2, \dl_3$ the eigenvalues
of the linear part of $X$ at the origin. Assume that $\dl_3 = 0$. Assume
also that $\dl_1, \, \dl_2$ are non-vanishing eigenvalues not satisfying
any resonance relation and belonging to the Poincar\'e domain. Then there
is an analytic change of coordinates where the original system is given
$($in terms of vector fields$)$ by
$$
X = [\lambda_1 y_1 + y_3\psi_1(y_1,y_2,y_3)] \partial /\partial y_1 +
[\lambda_2 y_2 + y_3\psi_2(y_1,y_2,y_3)] \partial /\partial y_2 +
y_3H(y_1, y_2, y_3)\partial /\partial y_3\, .
$$
with $\psi_1(0,0,0) = \psi_2(0,0,0) = H(0,0,0) = 0$.
In particular, $\{y_3=0\}$ is an invariant 2-plane.
\end{teo}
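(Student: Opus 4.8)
The plan is to reproduce, in the present three–dimensional setting, the argument used for Dulac's normal form in dimension~$2$: produce a \emph{formal} change of coordinates that kills every monomial of the nonlinear part which is not divisible by $y_3$, and then prove its convergence by the Cauchy Majorant Method (Theorem~\ref{CMMhigherdim}). As a preliminary step I would put $X$ in a convenient form: after a linear change of coordinates we may assume the linear part of $X$ is diagonal, so that
\[
X = (\lambda_1 x_1 + \varphi_1)\frac{\partial}{\partial x_1} + (\lambda_2 x_2 + \varphi_2)\frac{\partial}{\partial x_2} + \varphi_3\frac{\partial}{\partial x_3} \, ,
\]
where the $\varphi_i$ have order at least~$2$ (here we already use $\lambda_3=0$, so the third component carries no linear term).

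Next I would look for a change of coordinates $x_i = y_i + \zeta_i(y)$ as in~(\ref{eq16}) and write the resulting system as $\dot y_i = \lambda_i y_i + \psi_i$ (again $\lambda_3=0$). This produces homological equations of exactly the shape of~(\ref{eq10})--(\ref{eq11}), with $\delta_{i,Q} = q_1\lambda_1 + q_2\lambda_2 - \lambda_i$ for $Q=(q_1,q_2,q_3)$, the $q_3$–term dropping out because $\lambda_3=0$. I would then impose, for every multi-index $Q$ and every $i\in\{1,2,3\}$,
\[
\psi_{i,Q}=0 \ \text{ whenever } q_3=0\, , \qquad \zeta_{i,Q}=0 \ \text{ whenever } q_3\neq 0\, ,
\]
so that the only coefficients one has to solve for are the $\zeta_{i,Q}$ with $q_3=0$ and $\|Q\|\geq 2$.

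The crux of the argument is that for such indices the denominators $\delta_{i,Q}$ are nonzero and, moreover, bounded away from~$0$. For $i=1,2$ one has $\delta_{i,Q}=0$ with $q_3=0$, $q_1+q_2\geq 2$ precisely when $\lambda_i=q_1\lambda_1+q_2\lambda_2$ with $(q_1,q_2,0)$ a resonance multi-index for $\lambda_i$; this is ruled out by the non-resonance hypothesis on $\lambda_1,\lambda_2$. (Note that the resonances $\lambda_i=\lambda_i+q_3\lambda_3$ forced by $\lambda_3=0$ are harmless: they correspond to $q_3\geq 1$, i.e. to monomials we deliberately retain.) For $i=3$, $\delta_{3,Q}=q_1\lambda_1+q_2\lambda_2$ with $q_1+q_2\geq 2$; if this vanished with both $q_1,q_2\geq 1$ then $\lambda_1/\lambda_2\in\R_-$, contradicting the Poincaré–domain hypothesis, while if one of $q_1,q_2$ were zero the other eigenvalue would have to vanish. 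Finally, since $\lambda_1,\lambda_2$ lie strictly on one side of a line through the origin (Poincaré domain), the convex-separability estimate already used in the proofs of Theorems~\ref{PoincLintheo} and~\ref{PLT} gives $\delta:=\inf_{i,Q}|\delta_{i,Q}|>0$ over all the relevant indices. Hence the homological equations are solvable and $X$ is formally conjugate to a vector field of the claimed shape, with $\psi_1,\psi_2,H$ of order $\geq 1$ (so vanishing at the origin), because the retained terms of $\varphi_1,\varphi_2,\varphi_3$ have order $\geq 2$ and are divisible by $y_3$.

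For convergence I would argue exactly as for Dulac's theorem. From the homological equations one gets
\[
\delta\,\overline{\zeta}_i \prec \overline{\varphi}_i(y_1+\overline{\zeta}_1,\ldots,y_n+\overline{\zeta}_n) + \sum_{k}\frac{\partial\overline{\zeta}_i}{\partial y_k}\,\overline{\psi}_k \, .
\]
By construction $\zeta_i$ involves only monomials in $y_1,y_2$, whereas each $\psi_k$ involves only monomials divisible by $y_3$; hence every monomial of $\sum_k(\partial\overline{\zeta}_i/\partial y_k)\,\overline{\psi}_k$ is divisible by $y_3$ and therefore contributes nothing to the ($q_3=0$) coefficients that define $\overline{\zeta}_i$. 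This yields the stronger estimate $\delta\,\overline{\zeta}_i \prec \overline{\varphi}_i(y_1+\overline{\zeta}_1,\ldots,y_n+\overline{\zeta}_n)$, and Theorem~\ref{CMMhigherdim} gives convergence of the $\zeta_i$. In the resulting analytic coordinates $X$ has the stated form, and since its $\partial/\partial y_3$–component is $y_3 H(y_1,y_2,y_3)$, the plane $\{y_3=0\}$ is invariant. The main obstacle is not the formal computation but the bookkeeping guaranteeing $\delta_{i,Q}\neq 0$ and uniformly bounded below for the indices that actually occur — in particular realizing that the case $i=3$ relies on the Poincaré and non-vanishing hypotheses rather than on non-resonance, and that the resonances introduced by $\lambda_3=0$ cause no trouble.
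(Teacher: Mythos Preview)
Your proof is correct and follows essentially the same approach as the paper: impose $\psi_{i,Q}=0$ for $q_3=0$, $\zeta_{i,Q}=0$ for $q_3\neq 0$, use the Poincar\'e/non-resonance hypotheses to bound the relevant $|\delta_{i,Q}|$ from below, and then exploit that $\overline{\zeta}_i$ depends only on $y_1,y_2$ while $\overline{\psi}_k$ is divisible by $y_3$ to get the clean majorant estimate. Your treatment of the small-divisor bookkeeping (especially the case $i=3$) is in fact more explicit than the paper's.
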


\begin{proof}
Using the same notation as before, we
consider a formal change of coordinates such that:
\[\left\{ \begin{array}{ll}
         \psi_{1,Q} = 0 & {\rm when} \; \; q_3 =0  \\
         \zeta_{1,Q} =0 & {\rm when} \; \; q_3\neq 0
        \end{array} \right.
\;\; ;\;\;
 \left\{ \begin{array}{ll}
         \psi_{2,Q} = 0 & {\rm when} \; \; q_3 =0 \\
         \zeta_{2,Q} =0 & {\rm when} \; \; q_3 \neq 0
        \end{array} \right.
\; \; \;  {\rm and} \; \; \; \left\{ \begin{array}{ll}
         \psi_{3,Q} = 0 & {\rm when} \; \; q_3 =0 \\
         \zeta_{3,Q} =0 & {\rm when} \; \; q_3 \neq 0
        \end{array} \right.
\]
As in Theorem~\ref{PLT}, the assumption over the eigenvalues guarantee
the existence of a positive constant $\delta>0$ such that $\inf_i
\{|\delta_{i,Q}| : q_1 q_2 q_3 \ne 0\} > \delta > 0$.
Hence,
\[\delta \overline{\zeta}_i \prec \overline{\varphi}_j (y + \overline{\zeta}) + \frac{\partial
\overline{\zeta}_i}{\partial y_1} \overline{\psi}_1 +
\frac{\partial \overline{\zeta}_i}{\partial y_2} \overline{\psi}_2
+ \frac{\partial \overline{\zeta}_i}{\partial y_3}
\overline{\psi}_3 \, .\]

Notice that $\overline{\zeta}_i$ depends only on $y_1$ and $y_2$,
so that the last term in the above estimate vanishes for
$i=1,2,3$. On the other hand, by construction, all the non-zero
coefficients $\psi_{i,Q}$ are associated to monomials that depend
on $y_3$, in such a way that we are able to conclude:
\[
\delta \overline{\zeta_i} \prec \overline{\varphi}_i (y +
\overline{\zeta})\; ,
\]
and, as before, the application of Theorem~\ref{Siegel} concludes the
proof.
\end{proof}

Next, let us consider the case where two eigenvalues are equal to
{\it zero}. We will notice that the method used in the
previous case to prove the existence of an invariant plane does
not work in the present situation.

Let $X$ be a holomorphic vector field defined in a neighborhood of the origin
of $\C^3$, whose linear part has eigenvalues $\lambda_1 = \lambda_2 = 0$ and
$\lambda_3\neq 0$. We could try to see if the previous method allows us to prove
the existence of an invariant plane. Let $X$ be given, in local coordinates, by:
\[
\varphi_1(x_1,x_2,x_3)\partial /\partial x_1 + \varphi_2(x_1,x_2,x_3)\partial /\partial
x_2 +  [\lambda_3x_3 + \varphi_1(x_1,x_2,x_3)]\partial /\partial
x_3\,
\]
where $\varphi_1, \, \varphi_2, \, \varphi_3$ are holomorphic functions of order
at least~$2$. Using the same method it is possible to prove the existence of a
formal change of coordinates such that $\varphi_1$ is divisible by $x_1$, and
$\varphi_2$, $\varphi_3$ are divisible by $x_2$. In this case, the
plane $\{x_1=0\}$ is invariant under $X$. However, the method used in the above
situation does not allow us to prove the convergence of the series.

Let us now go back to the saddle-node case with exactly one eigenvalue equal to zero.

\begin{teo}
\label{t6} Let $X$ be a holomorphic vector field defined in a neighborhood of
the origin of $\C^3$, with eigenvalues $\lambda_1\neq 0$,
$\lambda_2\neq 0$ and $\lambda_3=0$. Let $n_0 \in \N$ be an
arbitrary positive integer. Then there is an analytic change of
coordinates where the original system is given $($in terms of vector
fields$)$ by
$$
X = F(y_1,y_2,y_3) \partial /\partial y_1 + G(y_1,y_2,y_3)
\partial /\partial y_2 + H(y_1, y_2, y_3)\partial /\partial
y_3\, ,
$$
such that $F(0,0,y_3)=G(0,0,y_3)\equiv 0$, i.e, the axis
$\{y_1=y_2=0\}$ is invariant under $X$.
\end{teo}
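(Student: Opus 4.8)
The key difference with Theorem~\ref{saddlenode3} is that we no longer require the eigenvalues $\lambda_1,\lambda_2$ to satisfy a non-resonance condition nor to lie in the Poincar\'e domain, but in exchange we only ask for the axis $\{y_1=y_2=0\}$ to be invariant rather than a whole $2$-plane, and moreover we are allowed to work modulo terms of arbitrarily high order $n_0$ (so in fact one should read the statement as: the normal form can be achieved up to an error vanishing to order $n_0$ along the axis, or after finitely many preliminary steps). The plan is to proceed by a formal elimination argument analogous to the one used in the proof of Dulac's Theorem and of Theorem~\ref{saddlenode3}, but with the roles of the variables arranged so that the ``bad'' direction $y_3$ (the one with zero eigenvalue) is the base, and $y_1,y_2$ are the fibre directions carrying the non-zero eigenvalues.

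First I would write $X$ in coordinates as
\[
X = [\lambda_1 x_1 + \varphi_1]\,\partial/\partial x_1
+ [\lambda_2 x_2 + \varphi_2]\,\partial/\partial x_2
+ \varphi_3\,\partial/\partial x_3 \, ,
\]
with $\varphi_i$ of order at least $2$, and look for a formal change of coordinates $x_i = y_i + \zeta_i(y_1,y_2,y_3)$ killing, in the first two components, all monomials $y^Q$ with $q_1 = q_2 = 0$ (i.e. all pure powers of $y_3$); these are exactly the monomials whose vanishing forces $F(0,0,y_3) = G(0,0,y_3) \equiv 0$. For such a monomial $Q = (0,0,q_3)$ the relevant small divisor is $\delta_{i,Q} = q_3\lambda_3 - \lambda_i = -\lambda_i \ne 0$ for $i = 1,2$, so the corresponding $\zeta_{i,Q}$ can always be solved for, and one sees by the usual induction on $\|Q\|$ that a formal conjugacy exists making $F$ and $G$ divisible by the ideal $(y_1,y_2)$. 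Since no divisor among the $\delta_{i,(0,0,q_3)}$ degenerates, and since $|\delta_{i,(0,0,q_3)}| = |\lambda_i|$ is bounded below, the majorant estimates $\delta\overline{\zeta}_i \prec \overline{\varphi}_i(y+\overline{\zeta})$ go through exactly as in Theorem~\ref{saddlenode3}, using that the non-zero coefficients of $\zeta_i$ and of the remainder $\psi_i$ sit on disjoint sets of monomials; then Theorem~\ref{t1} (or Theorem~\ref{CMMhigherdim}) gives convergence.

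The point where care is genuinely needed --- and this is why the statement only claims invariance of the axis rather than of a plane, and invokes the auxiliary integer $n_0$ --- is that one must check that the monomials one wishes to eliminate (the pure powers of $y_3$ in the first two components) do \emph{not} receive contributions, through the nonlinear terms $\varphi_i(y+\zeta)$ and $\sum_k (\partial\zeta_i/\partial y_k)\psi_k$, that recreate a divisor $\delta_{i,Q}$ with $|\delta_{i,Q}|$ not bounded below; here the divisors $\delta_{3,Q} = q_1\lambda_1 + q_2\lambda_2$ with $q_1,q_2$ large and $\lambda_1/\lambda_2 \in \R_-$ can be arbitrarily small, which is exactly why one cannot also clean up the $\partial/\partial x_3$-component to make a $2$-plane invariant. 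The remedy, following the spirit of \cite{M-M}, is to eliminate only finitely many orders at a time: fix $n_0$, kill all monomials $y^Q$ with $q_1=q_2=0$ and $\|Q\| \le n_0$ in the first two components by a \emph{polynomial} change of coordinates (no convergence issue, finitely many steps), and observe that the remaining resonant-type obstructions in the $y_3$-component affect only monomials divisible by $(y_1,y_2)$ to high order, hence do not reintroduce pure powers of $y_3$ in $F,G$; the divisors that actually occur in the surviving part of the normalization are again the harmless $-\lambda_1,-\lambda_2$, so the Cauchy majorant method applies to the tail and yields a convergent correction. Assembling the polynomial prenormalization with the convergent tail gives the desired analytic coordinates with $F(0,0,y_3)=G(0,0,y_3)\equiv 0$. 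The main obstacle, then, is precisely the bookkeeping that separates the ``eliminable'' monomials (small in number, harmless divisors) from the Siegel-type small divisors in the third component, and verifying that the former form a set closed under the recursion so that the majorant argument is not contaminated by the latter.
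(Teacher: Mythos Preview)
Your plan conflates two different things, and the confusion stems from the statement itself: despite the phrase ``$F(0,0,y_3)=G(0,0,y_3)\equiv 0$'', the presence of the arbitrary integer $n_0$ and the paper's own argument make clear that what is actually proved is only that $F(0,0,y_3)$ and $G(0,0,y_3)$ vanish to order $n_0$. Exact vanishing would amount to finding an \emph{analytic} center manifold tangent to the zero-eigenvalue direction, and this fails in general: already the planar system $\dot x_1 = x_1 - x_3^2$, $\dot x_3 = x_3^2$ leads to the Euler equation $x_3^2\zeta_1' = \zeta_1 - x_3^2$ for the invariant graph, with divergent formal solution $\sum n!\,x_3^{n+1}$. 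So your ``convergent tail'' after the polynomial prenormalization cannot exist in general, and that part of the plan must be dropped.

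Your diagnosis of the obstruction is also misplaced. You locate the trouble in the small divisors $\delta_{3,Q}=q_1\lambda_1+q_2\lambda_2$, but those never enter: you are not normalizing the third component at all ($\zeta_3=0$), so no $\delta_{3,Q}$ is ever inverted. The genuine obstruction to the majorant argument is the cross term $\dfrac{\partial \zeta_i}{\partial y_3}\,\psi_3$ in the recursion for $i=1,2$. Since $\zeta_i$ depends only on $y_3$ and $\psi_3(0,0,y_3)=\varphi_3(\zeta_1(y_3),\zeta_2(y_3),y_3)$ is generically nonzero, this product contributes pure $y_3$-monomials, precisely the ones carried by $\overline{\zeta}_i$; the disjoint-support trick used in the proofs of Lemma~\ref{Siegel}, Dulac, and Theorem~\ref{saddlenode3} therefore breaks down here, and Theorem~\ref{CMMhigherdim} does not apply.

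What remains of your plan --- the polynomial change $x_1=y_1+\sum_{i=2}^{N}a_i y_3^i$, $x_2=y_2+\sum_{i=2}^{N}b_i y_3^i$, $x_3=y_3$ killing pure $y_3$-terms up to degree $n_0$ in the first two components --- is exactly the paper's proof. The paper carries this out by an explicit triangular recursion: at degree $m\le n_0$ the coefficient of $y_3^m$ to be killed is $\lambda_i a_m$ (or $\lambda_i b_m$) plus a polynomial in the previously chosen $a_2,b_2,\ldots,a_{m-1},b_{m-1}$, so one solves step by step using only that $\lambda_1,\lambda_2\neq 0$. No convergence argument is needed because the change of coordinates is polynomial.
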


\begin{proof}
Consider a change of variables of the form
\begin{equation}
\label{eq17} x_1 = y_1 + \sum_{i=2}^N a_i y_3^i\,,\;\;\; \;\;\;
x_2 = y_2 + \sum_{i=2}^N b_i y_3^i\,,\; \;\;\; \; \; x_3 =y_3\,.
\end{equation}
We have to show that $N,a_i ,b_i$ can be chosen so as to fulfill
our requirements. In the coordinates $(y_1 ,y_2, y_3)$ the vector
field $X$ is given by
\begin{equation}
 \left[ \begin{array}{cccc}
           1 & 0 & -\sum_{i=2}^N ia_i y_3^{i-1} \\
           0 & 1 & -\sum_{i=2}^N ib_i y_3^{i-1} \\
           0 & 0 & 1
          \end{array} \right] \left[ \begin{array}{c}
                                       F_1 \\
                                       G_1 \\
                                       H_1
                                      \end{array} \right]
\, . \label{eq20}
\end{equation}
In the above formula, the functions $F_1 ,G_1 ,H_1$ admit
respectively the expressions below:
\begin{eqnarray} F_1 & = & \lambda_1 y_1 +
\sum_{i=2}^N\lambda_1a_iy_3^i + (\sum_{i=2}^N a_i
y_3^i)(\sum_{i=2}^N b_i y_3^i) (f_{1,1} (y_3)
+ (\ast)) + f_{1,2} (y_3) + (\ast \ast), \label{eq18} \\
G_1 & = & \lambda_2 y_2 + \sum_{i=2}^N\lambda_2 b_iy_3^i +
(\sum_{i=2}^N a_i y_3^i)(\sum_{i=2}^N b_i y_3^i) (g_{1,1} (y_3) +
(\ast \ast \ast)) + g_{1,2} (y_3) + (\ast \ast \ast \ast) ,
\label{quinze22} \\
H_1 & = & h(y_3) + (\ast \ast \ast \ast \ast). \label{eq19}
\end{eqnarray} In the above equations, the components represented
as $(\ast), \ldots , (\ast \ast \ast \ast \ast)$ do not contain
neither constants nor terms depending only on $y_3$. In other
words these components belong to the ideal $I (y_1) \cup I (y_2)$.

On the other hand, we want to consider the terms depending only on
$y_3$ which appear in the $2$ first coordinates of $X$. After
performing the matricial product~(\ref{eq20}), these coordinates
are respectively given by
\begin{eqnarray}
& & \sum_{i=2}^N \lambda_1 a_i y_3^i + (\sum_{i=2}^N a_i
y_3^i)(\sum_{i=2}^N b_i y_3^i) f_{1,1} (y_3) + f_{1,2} (y_3) -
(\sum_{i=2}^N ia_i y_3^{i-1}) h(y_3) \; \, \mbox{and}
\label{dezesseis} \\
& & \sum_{i=2}^N \lambda_2 b_i y_3^i + (\sum_{i=2}^N a_i
y_3^i)(\sum_{i=2}^N b_i y_3^i) g_{1,1} (y_3) + g_{1,2} (y_3) -
(\sum_{i=2}^N ib_i y_3^{i-1}) h(y_3) \, .\label{dezessete}
\end{eqnarray}
We now consider the Taylor expansions of $f_{1,1} ,g_{1,1}$,
namely we set
\begin{equation}
f_{1,1} (y_3) = \alpha_0^{(1)} + \alpha_1^{(1)} y_3 + \cdots \; \;
\mbox{and} \; \; g_{1,1} (y_3) = \beta_0^{(1)} + \beta_1^{(1)} y_3
+ \cdots \, . \label{dezoito}
\end{equation}
Furthermore the assumption on the linear part of $X$ and the
expression of the change of coordinates in~(\ref{eq17}) allow us
to write
\begin{eqnarray}
f_{1,2} (y_3) = \alpha_2^{(2)} y_3^2 + \cdots & ; &
g_{1,2} (y_3) = \beta_2^{(2)} y_3^2 + \cdots \, , \nonumber \\
h (y_3) = c_k y_3^k + \cdots &  & (c_k \neq 0 , \; k \geq 2) \, .
\label{vinte}
\end{eqnarray}

Because we just want to cancel the coefficients of degree less
than $n_0+1$ which depend solely on $y_3$, the
Equations~(\ref{dezesseis}) and~(\ref{dezessete}) can respectively
be replaced by~(\ref{vintetres}) and~(\ref{vintequatro}) without
loss of generality, where
\begin{eqnarray}
 & & \sum_{i=2}^{n_0} \lambda_1 a_i y_3^i + (\sum_{i=2}^{n_0-1} a_i y_3^i)
(\sum_{i=2}^{n_0-1} b_i y_3^i) (\alpha_0^{(1)} + \alpha_1^{(1)}
y_3
+ \cdots + \alpha_{n_0-2}^{(1)} y_3^{n_0-2}) + \nonumber \\
 & & \mbox{    } + (\alpha_2^{(2)} y_3^2 + \cdots + \alpha_{n_0}^{(2)}
y_3^{n_0} ) - (\sum_{i=2}^{n_0-k} ia_i y_3^{i-1})(c_k y_3^k +
\cdots + c_{n_0} y_3^{n_0}) \,  \label{vintetres}
\end{eqnarray}
and
\begin{eqnarray}
 & & \sum_{i=2}^{n_0} \lambda_2 b_i y_3^i + (\sum_{i=2}^{n_0-1} a_i y_3^i)
(\sum_{i=2}^{n_0-1} b_i y_3^i) (\beta_0^{(1)} + \beta_1^{(1)} y_3
+ \cdots + \beta_{n_0-2}^{(1)} y_3^{n_0-2}) + \nonumber \\
 & & \mbox{    } + (\beta_2^{(2)} y_3^2 + \cdots + \beta_{n_0}^{(2)}
y_3^{n_0} ) - (\sum_{i=2}^{n_0-k} ib_i y_3^{i-1})(c_k y_3^k +
\cdots + c_{n_0} y_3^{n_0}) \, . \label{vintequatro}
\end{eqnarray}

In view of the two formulas~(\ref{vintetres})
and~(\ref{vintequatro}) above, the proposition is reduced to the
following claim.

\vspace{0.1cm}

\noindent {\bf Claim:} The coefficients $\alpha_0^{(1)},
\beta_0^{(1)}$ are constant (that is they do not depend on $a_i
,b_i$). Furthermore $\alpha_l^{(1)} ,\beta_l^{(1)}$ are
polynomials on $a_1 ,b_1 ,\ldots ,a_l ,b_l$ for $1 \leq l \leq
n_0-2$. In addition, $\alpha_l^{(2)},\beta_l^{(2)}$ are
polynomials on $a_1  ,b_1, \ldots ,a_{l-1} ,b_{l-1}$ for $1 \leq l
\leq n_0$. In particular none of these coefficients depend on
$a_{n_0} ,b_{n_0}$.

\vspace{0.1cm}

\noindent {\it Proof of the Claim}\,. The facts concerning
$\alpha_l^{(1)} ,\beta_l^{(1)}$ ($l=0,1, \ldots ,n_0-2$) are
immediate consequences of~(\ref{dezoito}) and of the form of the
change of coordinates~(\ref{eq17}).

The assertion regarding $\alpha_l^{(2)} ,\beta_l^{(2)}$
($l=1,\ldots ,n_0$) has a similar justificative which however
deserves further comments.

When we perform the change of coordinates given by~(\ref{eq17})
and consider, for instance, $dy_1 /dT$, we obtain
$$
\lambda_1 y_1 + \sum_{i=2}^N \lambda_1a_i y_3^i + (\sum_{i=2}^N
a_i y_3^i)(\sum_{i=2}^N b_i y_3^i) (f_1^1 (y_3)  + (\ast)) + f_2^1
(y_3) + (\ast \ast) \, ,
$$
for appropriate holomorphic functions $f_1^1 ,f_2^1$. Moreover
$(\ast)$ represents terms which do not depend solely on the
variable $y_3$ while $(\ast \ast)$ represents terms which are
neither constant nor depend solely on the variable $y_3$. Fix a
monomial of $f_2^1$ having degree $r \in \N$. The coefficients
entering into the term $\alpha_q^{(2)}$ are those corresponding to
the monomials $y_1^{q_1} y_2^{q_2} y_3^{q_3}$ such that $1 < q_1
+q_2 +q_3 < r$. This implies the claim. The proof of the theorem
is over.
\end{proof}

%\textbf{Final Comments}: Consider a vector field $X$ having an
%isolated singularity at the origin of $\C^n$. $X$ is said to
%eigenvalues at the origin is equal to {\it zero}. For codimension
%$1$ saddle-nodes, there is a reasonable analogue of Dulac Normal
%Form, at least under mild assumptions on the eigenvalues. We give
%here a well-known statement.

%However, already in dimension $3$, for a codimension $2$
%saddle-node (i.e., as above, except that it has exactly $2$
%eigenvalues equal to zero). no reasonable form is known. We can
%however establish the existence of a ``strong invariant manifold''
%tangent to the direction of the nonzero eigenvalue.

%--------------------------------------------------------------------------------------------------------------------------------------------------------------------------------------------------------------------
%--------------------------------------------------------------------------------------------------------------------------------------------------------------------------------------------------------------------
%--------------------------------------------------------------------------------------------------------------------------------------------------------------------------------------------------------------------

\section{Advanced Aspects of Singularities}

%--------------------------------------------------------------------------------------------------------------------------------------------------------------------------------------------------------------------
%--------------------------------------------------------------------------------------------------------------------------------------------------------------------------------------------------------------------
%--------------------------------------------------------------------------------------------------------------------------------------------------------------------------------------------------------------------

\subsection{Saddle-Node in Dimension $2$}

%--------------------------------------------------------------------------------------------------------------------------------------------------------------------------------------------------------------------
%--------------------------------------------------------------------------------------------------------------------------------------------------------------------------------------------------------------------
%--------------------------------------------------------------------------------------------------------------------------------------------------------------------------------------------------------------------

\subsubsection{Basic Properties}

As was seen earlier, for a foliation $\fol$ (or vector field) with
a saddle-node singularity in $(0,0)\in \C^2$ there is a
holomorphic change of coordinates, by means of which $\fol$ may be
given by the $1$-form (the Dulac's normal form):
\[
\omega(y_1,y_2) = [y_1 (1 + \lambda y_2^p) + y_2 R(y_1,y_2)] \,
dy_2 \; - \; y_2^{p+1} \, dy_1 \, ,
\]
where $\lambda\in \C$, $p\in \N^\ast$ and the order of $R$ at
$(0,0)$ with respect to $y_1$ is at least $p+1$.

Let us consider the {\it formal} change of coordinates:
\begin{equation}
\label{eq20} (y_1,y_2)\mapsto (\varphi(y_1,y_2), y_2) \, ,
\end{equation}
taking $\omega$ into its formal normal form $\omega_{p,\lambda}$,
where $\varphi(y_1,y_2)=y_1 + \sum_{i=1}^\infty a_i(y_1) y_2^i$
and
\[
\omega_{p,\lambda}(y_1,y_2)=[y_1 (1 + \lambda y_2^p)] \,
dy_2 \; - \; y_2^{p+1} \, dy_1 \, .
\]
A careful look shows that the functions $a_1(y_1)$ are holomorphic on a
common neighborhood of $0\in \C$, though the change of variables
is {\it not} necessarily convergent. In other words, the
$1$-forms, $\omega$ and $\omega_{p,\lambda}$ are {\it not}
holomorphically conjugate, in general.

\begin{ex}
Let us consider the following example due to Euler:
\[
(y-x^2) dx - x^2 dy=0\,.
\]
It admits a formal solution given by
\[
y(x) = \sum_{n=1}^{\infty} n! x^{n+1} \, ,
\]
which does not converge in any neighborhood of $0$. Therefore the vector field cannot
be holomorphically conjugated to its formal normal form.
\end{ex}

In this section we are going to discuss the problem of the
analytic classification of saddle-nodes following \cite{Ma-R}. Our
purpose is to summarize the main points of \cite{Ma-R} by
explaining the role of the sectorial normalizations along with the
analytic invariants of the diffeomorphisms arising from ``changing
the sector''. By a {\it sector $V$ with vertex at $0$}, we mean an
angular sector of angle $\theta < 2\pi$, intersected with the ball
$B_r\subset \C$ of radius $r$ centered in $0\in \C$.

Let $f$ be a holomorphic function in $U \times V$, where $U$ is a
neighborhood of $0 \in \C$ and $V\subseteq \C$ is a sector with
vertex at $0$. We say that $\hat{f}=\sum_{i=1}^\infty a_i(y_1)y_2^i$
is the {\it asymptotic expansion} of $f$ at $0\in \C$ if for each
$n \in \N$ there exists $A_n(y_1)>0$ such that for every $y_1\in U$:
\[
\left| f(y_1,y_2)-\sum_{r=0}^{n-1} a_r(y_1) y_2^r \right| \leq A_n(y_1)y_2^n \; .
\]

The idea is that certain formal series can be realized as
asymptotic expansions of holomorphic functions that are defined on
sectors of angles $\theta<2\pi$ and sufficiently small radius $r$.
In other words they are not defined on a neighborhood of zero,
otherwise the series would have to converge since they would agree
with the Taylor series of the functions.

The next theorem, due to H. Hukuara, T. Kimura and T. Matuda
\cite{H-K-M} implies that, although in general the $1$-forms
$\omega$ and $\omega_{p,\lambda}$ are not holomorphically
conjugate in neighborhoods of $(0,0)\in \C^2$, they are actually
holomorphically conjugate in conveniently chosen sectors.

\begin{teo}[Hukuara-Kimura-Matuda \cite{H-K-M}]
\label{t7}
Let $\varphi$ be a formal series as defined in
(\ref{eq20}), i.e. the change of coordinates that takes $\omega$
into $\omega_{p,\lambda}$. Then, for every sector $V \subseteq \C$ of
angle less than $\frac{2\pi}{p}$, there exists a bounded
holomorphic map
\begin{eqnarray*}
\Phi_V : B_r \times (V\setminus \{0\}) & \rightarrow & \C \times
(V\setminus \{0\})\\
(y_1,y_2) &\mapsto & (\varphi_v(y_1,y_2),y_2)\; ,
\end{eqnarray*}
such that:
\begin{enumerate}
\item $\Phi_V^\ast \omega\wedge\omega_{p,\lambda}=0$;

\item $\varphi$ is the asymptotic expansion of $\Phi_V$ at $0\in \C$.
\end{enumerate}

$\Phi_V$ is called a normalizing application.
\end{teo}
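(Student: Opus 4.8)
The plan is to prove Theorem~\ref{t7} by the classical method of Borel--Laplace summation applied sector by sector, following the scheme of \cite{H-K-M}. First I would reduce the problem to a functional equation for the conjugating map. Writing the desired conjugacy as $(y_1,y_2)\mapsto(\varphi_V(y_1,y_2),y_2)$ and imposing $\Phi_V^\ast\omega\wedge\omega_{p,\lambda}=0$, one obtains a singular partial differential equation for $\varphi_V$: substituting into the two one-forms and equating kernels gives an equation of the shape
\[
y_2^{p+1}\,\frac{\partial\varphi_V}{\partial y_1}\;=\;\varphi_V\,(1+\lambda y_2^p)\;-\;\big[y_1(1+\lambda y_2^p)+y_2R(y_1,y_2)\big]\circ(\varphi_V,y_2)\;+\;(\text{lower order}),
\]
where I am being schematic about the exact algebra. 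The key structural point is that $\varphi_V-y_1=\sum_{i\ge 1}a_i(y_1)y_2^i$ formally, so setting $\varphi_V=y_1+\psi_V$ turns this into an equation for $\psi_V$ whose formal solution is exactly the series $\hat\psi=\sum a_i(y_1)y_2^i$ appearing in~(\ref{eq20}), and whose coefficients $a_i(y_1)$ are holomorphic on a fixed disc $B_r$.

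Next I would establish that this formal series is Gevrey-$1$ of order $1/p$ in $y_2$, i.e. $\|a_i\|_{B_r}\le C\, A^i\, (i!)^{1/p}$ for suitable constants; this is a direct majorant-series estimate from the recursion satisfied by the $a_i$, very much in the spirit of the Cauchy majorant computations already used in Theorem~\ref{t1} and its higher-dimensional analogue Theorem~\ref{CMMhigherdim}. Because of the $y_2^{p+1}$ weight, the natural variable is $w=y_2^p$; after the ramified substitution the series becomes Gevrey-$1$ in $w$ in the ordinary sense. Then I would apply Borel summation in $w$: the formal Borel transform in $w$ of $\hat\psi$ converges on a disc, extends analytically along every direction except possibly finitely many singular rays of the Borel plane, and has at most exponential growth along the non-singular directions — this is precisely what the recursion, combined with the holomorphy of $R$ and the non-vanishing of the coefficient $1$ in front of $y_1$ in the normal form, guarantees. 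Laplace transform in any admissible direction then yields a holomorphic function $\psi_V$ on a sector $V$ of opening slightly less than $\pi$ in the $w$-variable, which translates back to opening slightly less than $2\pi/p$ in $y_2$, and $\psi_V$ has $\hat\psi$ as asymptotic expansion by Watson-type / Borel--Ritt arguments. Boundedness on $B_r\times(V\setminus\{0\})$ follows from the construction since the Laplace integral converges uniformly there.

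The final step is to check that the function $\Phi_V=(y_1+\psi_V,y_2)$ so obtained actually satisfies $\Phi_V^\ast\omega\wedge\omega_{p,\lambda}=0$ on the nose, not merely asymptotically. Here one argues that the ``error'' one-form $\Phi_V^\ast\omega\wedge\omega_{p,\lambda}$ is holomorphic on $B_r\times(V\setminus\{0\})$, bounded, and has identically zero asymptotic expansion at $y_2=0$ because $\psi_V$ solves the functional equation formally to all orders; a flatness argument (a function holomorphic and bounded on such a sector with zero asymptotic expansion and satisfying the relevant linear ODE in $y_1$ must vanish) then forces it to be $\equiv 0$. I expect the main obstacle to be the second step: getting the sharp Gevrey estimate with the correct order $1/p$ and controlling the location and nature of the singularities of the Borel transform so that an admissible Laplace direction exists for any prescribed sector $V$ of opening $<2\pi/p$ — this is where the hypothesis on the opening is genuinely used and where the bookkeeping with the ramification $w=y_2^p$ must be done carefully. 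Once the summability is in place, the verification that the sum conjugates $\omega$ to $\omega_{p,\lambda}$ is comparatively soft.
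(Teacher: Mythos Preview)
The paper does not prove this theorem: it is stated with a reference to \cite{H-K-M} and then used as a black box for the subsequent discussion of sectorial normalizations and the Martinet--Ramis invariants. So there is no ``paper's own proof'' to compare against.

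Your Borel--Laplace strategy is the standard modern route to this result (closer in spirit to \cite{Ma-R} than to the original direct construction in \cite{H-K-M}) and the outline is sound: Gevrey-$1/p$ estimates on the $a_i(y_1)$, ramification $w=y_2^p$, Borel sum in an admissible direction, then a flatness argument to upgrade asymptotic conjugacy to exact conjugacy. One concrete slip: in your displayed functional equation you wrote $y_2^{p+1}\,\partial\varphi_V/\partial y_1$, but the singular term must be $y_2^{p+1}\,\partial\varphi_V/\partial y_2$. Indeed, imposing $D\Phi_V\cdot X_{p,\lambda}=X\circ\Phi_V$ with $\Phi_V=(\varphi_V,y_2)$ gives
\[
y_2^{p+1}\,\frac{\partial\varphi_V}{\partial y_2}\;+\;y_1(1+\lambda y_2^p)\,\frac{\partial\varphi_V}{\partial y_1}\;=\;\varphi_V(1+\lambda y_2^p)\;+\;y_2\,R(\varphi_V,y_2),
\]
and it is precisely the $y_2^{p+1}\partial_{y_2}$ term that makes this an irregular equation in $y_2$ and dictates the Gevrey order and the sectorial opening $<2\pi/p$. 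Your later discussion (Borel transform in $w=y_2^p$, sectors in $y_2$) shows you understand this, so the error is presumably typographical, but it should be fixed since as written the equation is regular in the variable you are summing in and the whole mechanism would collapse.
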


To simplify, we will consider the particular case of $p=1$. Note
that the cases $p>1$ may be dealt with in a similar way, except
that the number of sectors which are necessary to cover the ball
centered in $0 \in \C$ increases. The general procedure would be a
straightforward generalization of the case $p=1$ somehow in the
spirit of the Flower Theorem~(\ref{t14}) compared to the case
$p=1$ (cardioid-shaped region).

According to Hukuara-Kimura-Matuda Theorem, we can cover
the neighborhood $B_r$ of $0\in \C$ with two sectors of angles
less than $2\pi$. To fix ideas, we may suppose that $V_1=B_r \cap
\{z\in \C; \arg\in [0, 5\pi/4) \cup (7\pi/4, 2\pi] \}$ and
$V_2=B_r \cap \{z\in \C; \arg\in [0,\pi/4) \cup (3\pi/4, 2\pi]
\}$. Notice that $V_1 \cap V_2$ has two connected components. The
bisectrix of one of them is the positive real semi-axis, therefore
we shall denote it by $V^+$. Analogously for $V^-$, the component
whose bisectrix is the negative real semi-axis.

Let us consider the foliation induced by the equation
$\omega_{1,\lambda}=0$ on a neighborhood of $(0,0)\in \C^2$. Let
$\fol_1$ (resp. $\fol_2$) be the restriction of the foliation to $B_r
\times V_1$ (resp. $B_r \times V_2$). Naturally $\fol_1$ and
$\fol_2$ coincide in the intersection of the domains, so we denote
the foliation in $B_r \times V^+$ (resp. $B_r \times V^-$) by
$\fol^+$ (resp. $\fol^-$).

Theorem~\ref{t7} assures the existence of normalizing applications
$H_1$ and $H_2$ defined on $B_r \times (V_1\setminus \{0\})$ and
$B_r \times (V_2\setminus \{0\})$, respectively.
In the following, our aim is to understand the behavior of the
applications that are responsible for the changing of sectors.
First of all, the change of sectors $H_1 \circ H_2^{-1}$ gives
rise to two diffeomorphisms denoted by $g^+$ and $g^-$. The first
one corresponds to the restriction of $H_1 \circ H_2^{-1}$ to $V^+$
while the second one corresponds to the restriction of the same map
to $V^-$. The next proposition gives a characterization for these
functions.

\begin{prop}
\label{p1} The diffeomorphism $g^+=H_1 \circ H_2^{-1}|_{V+}$ is a
translation and the diffeomorphism $g^-=H_1 \circ H_2^{-1}|_{V^-}$
is tangent to the identity.
\end{prop}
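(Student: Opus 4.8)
The strategy is to exploit the special structure of the model foliation $\fol = \{\omega_{1,\lambda}=0\}$ and the fact that both $H_1$ and $H_2$ conjugate $\fol$ restricted to their respective sectorial domains to the \emph{same} model foliation. Consequently $g = H_1 \circ H_2^{-1}$, on each connected component $V^\pm$ of the overlap, is an automorphism of the model foliation $\omega_{1,\lambda}$ on $B_r \times (V^\pm \setminus\{0\})$ that is moreover asymptotic to the identity as $y_2 \to 0$ (since both $H_i$ have the same asymptotic expansion $\varphi$, their composition $g$ is asymptotic to the identity). So the first key point is: classify the automorphisms of the model foliation that fix the separatrix $\{y_2=0\}$ fiberwise and are tangent to the identity, then read off what the holonomy forces.

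First I would write down explicitly the first integral of the model foliation $\omega_{1,\lambda} = y_1(1+\lambda y_2)\, dy_2 - y_2^2\, dy_1$. Integrating $dy_1/y_1 = (1+\lambda y_2)\, dy_2 / y_2^2 = (1/y_2^2 + \lambda/y_2)\, dy_2$ gives leaves of the form $y_1 = C \exp(-1/y_2) y_2^\lambda$, i.e. the function $F(y_1,y_2) = y_1 y_2^{-\lambda} \exp(1/y_2)$ is a (multivalued, because of $y_2^\lambda$) first integral. On a sector $V^\pm$ of angle $<2\pi$ one can choose a branch of $y_2^\lambda$, so $F$ is a genuine holomorphic first integral there. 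Now $g=(g_1(y_1,y_2), y_2)$ preserves the fibers $\{y_2 = \text{const}\}$ and the foliation, hence it preserves the level sets of $F$ on each fiber; fixing $y_2$, the map $y_1 \mapsto g_1(y_1,y_2)$ must send level sets of $y_1 \mapsto F(y_1,y_2)$ (which is linear in $y_1$) to level sets, so $g_1$ is affine in $y_1$: $g_1(y_1,y_2) = a(y_2) y_1 + b(y_2)$. Since $g$ is tangent to the identity in $y_1$, in fact $a(y_2)\to 1$ and $b(y_2)\to 0$; but more is true because $g$ must send the whole foliation (not just individual leaves) to itself, and a short computation with $F$ shows that the only affine-in-$y_1$ automorphisms are of the form $y_1 \mapsto c\, y_1$ with $c$ constant (the scaling $F \mapsto cF$), composed with the shift coming from the deck transformation of $\exp(1/y_2)$, namely $y_1 \mapsto y_1 \exp(2\pi i k)$ — which is trivial. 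The honest statement: on $V^+$ the overlap transformation, transported to the ``straightened'' coordinate, is a translation in the variable $w = -1/y_2 - \lambda \log y_2$ by a constant, whence $g^+$ is a translation; on $V^-$ one instead picks up a factor, which in the original coordinate $y_1$ is precisely the statement that $g^-$ is tangent to the identity.

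The cleanest way to organize this is via holonomy, and I expect that to be the main technical step. The holonomy of $\fol$ along a loop around $\{y_2=0\}$ in the separatrix $\{y_1=0\}$ — wait, the relevant separatrix here is $\{y_2=0\}$, the strong separatrix being $\{y_1=0\}$; one computes (as in Lemma~\ref{holvariedadefraca}) that the holonomy of the model along the weak separatrix is $z \mapsto z/(1 - 2\pi i z) + \cdots$, a parabolic germ with a cardioid Leau flower exactly as in Theorem~\ref{t14}. The two sectors $V_1, V_2$ are adapted to the two petals; $H_1$ and $H_2$ each linearize the holonomy on a petal (conjugating it to the translation $T(z)=z+1$ in a Fatou coordinate). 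The composition $H_1 \circ H_2^{-1}$ on the overlap $V^+$ (the attracting-to-attracting transition, where the translation direction of the two Fatou coordinates agree) must commute with $T$, and a holomorphic map commuting with $z \mapsto z+1$ and asymptotic to the identity at the appropriate end is exactly a translation $z \mapsto z + c$; transporting back to $V^+$ this gives that $g^+$ is a translation. On $V^-$ (the other transition, relating the attracting petal of $f$ to the repelling one, i.e. the attracting petal of $f^{-1}$), the Fatou coordinates differ in orientation, the ``commuting'' condition becomes commuting with $T$ only up to the identity-tangency normalization at $0$, and one gets that $g^-$ fixes $0$ with derivative $1$ — i.e. it is tangent to the identity.

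The hard part, concretely, will be bookkeeping the precise domains and asymptotic directions: making sure that ``asymptotic to the identity'' is used in the form that actually pins down the constant (translation) versus only the $1$-jet (tangent to identity), and checking that on $V^+$ the relevant Fatou coordinates really are compatible so that no extra tangent-to-identity part can sneak in. I would handle this by introducing the first integral $F$ above, writing $\Psi_i = F \circ H_i^{-1}$ in the straightened coordinates, noting $\Psi_1, \Psi_2$ differ by an automorphism of the standard model (translation by an integer period plus a scaling), and then invoking Theorem~\ref{t7}(2) (that $\varphi$ is the common asymptotic expansion) to kill the scaling on $V^+$ entirely and to kill all but the $1$-jet on $V^-$. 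Everything else — that affine-in-$y_1$ plus fiber-preserving plus foliation-preserving forces these normal forms — is a direct computation with the explicit $\omega_{1,\lambda}$ and is left as the routine verification.
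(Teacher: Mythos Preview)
Your setup is exactly right and matches the paper's: $g = H_1 \circ H_2^{-1}$ is an element of $\Lambda_{\omega_{1,\lambda}}(V^\pm)$, i.e.\ a fiber-preserving automorphism of the model foliation that is asymptotic to the identity. Your first integral $F(y_1,y_2) = y_1\, y_2^{-\lambda} e^{1/y_2}$ is also the paper's leaf-space coordinate. But from here your argument goes off track, and the genuine content of the proof is missing.

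The claim that $g_1$ is affine in $y_1$ is unjustified and in fact false: writing the induced leaf-space map as $\psi(c) = c + \sum_{j\ge 0} a_j c^j$, one gets $g_1(y_1,y_2) = y_1 + \sum_j a_j\, y_1^j\, (y_2^{-\lambda} e^{1/y_2})^{\,j-1}$, which is affine only if $\psi$ already is. So you cannot conclude affineness first and then argue --- the whole point is to determine which $a_j$ survive. The paper does this directly: expanding $g_1 = y_1 + \sum_j b_j(y_2) y_1^j$ and imposing $d\phi(X_{1,\lambda}) = X_{1,\lambda}(\phi)$ gives the ODE $y_2^2\, b_j' + (j-1)(1+\lambda y_2)\, b_j = 0$, whence
\[
b_j(y_2) = c_j\, y_2^{(1-j)\lambda}\, e^{(j-1)/y_2}.
\]
This is the computation you are circling around but never write down, and it is the entire proof: the asymptotic condition $b_j(y_2)\to 0$ as $y_2\to 0$ in $V^\pm$ is governed by the sign of $\Re\big((j-1)/y_2\big)$. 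On $V^+$ one has $\Re(1/y_2)\to +\infty$, so $b_j$ blows up for $j\ge 2$ unless $c_j=0$, while $b_0$ is flat --- hence $g^+(y_1,y_2) = (y_1 + b_0(y_2),\, y_2)$, a translation. On $V^-$ the signs reverse: $b_0$ blows up (so $c_0=0$) while $b_j$ for $j\ge 2$ is flat --- hence $g^-$ is tangent to the identity.

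Your detour through Fatou coordinates and the holonomy of the weak separatrix conflates this proposition with a later, separate story: the paper uses Leau--Fatou petals and the Basic Lemma (Lemma~\ref{l6}) to classify the germs $g^-$ \emph{after} one already knows they are tangent to the identity, not to prove Proposition~\ref{p1} itself. Commutation with $z\mapsto z+1$ is the mechanism for the Ecalle--Voronin invariants of $g^-$, not for distinguishing $V^+$ from $V^-$ here. Drop that paragraph, carry out the ODE (or equivalently your first-integral substitution) explicitly, and do the one-line asymptotic analysis on each sector; that is the complete argument.
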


Before proving this proposition we will give a geometric approach
to $g^+$ and $g^-$. Firstly, notice that the solutions of
$\omega_{1,\lambda}=0$, i.e. the leaves of the foliation, are given
by
\begin{equation} \label{eq40}
y_1(y_2)=c y_2^\lambda \exp \left(-\frac{1}{y_2} \right)\, ,
\end{equation}
with $c\in \C$. So each leaf of $\fol^+$ (resp. $\fol^-$) is in
correspondence with $c \in \C$. In other words, the leaf space is
isomorphic to $\C$, being parameterized by the constants $c \in
\C$. Therefore $g^+$ (or $g^-$, depending on whether $Re(x)>0$ or
$Re(x)<0$) is defined on $\C$ and $g^+(c)$ (or $g^-(c)$) is the
corresponding leaf when we change sectors.

The following is classical after \cite{Ma-R}, we follow however
the discussion in \cite{R1}. Fix a sector $V$ and let us consider
the group of automorphisms $\Lambda_{\omega_{1,\lambda}}(V)$, such
that each element is a diffeomorphism $\phi$ defined on $V$ with
the following properties:
\begin{enumerate}
\item $\phi(y_1,y_2)=(\varphi(y_1,y_2), y_2)$,

\item $\phi$ is asymptotic to the identity,

\item $\phi$ preserves the foliation $\fol$ induced by
$\omega_{1,\lambda}=0$, i.e. $\phi^{\ast} \omega_{1,\lambda}
\wedge \omega_{1,\lambda} = 0$.
\end{enumerate}

Note that if $H_1, \, H_2$ are normalizing applications on
$B_r \times V_1, \, B_r \times V_2$, respectively, then the restriction
of $H_1 \circ H_2^{-1}$ to $V^+$ (resp. $V^-$) is an element of
$\Lambda_{\omega_{1,\lambda}}(V^+)$ (resp. $\Lambda_{\omega_{1,\lambda}}(V^-)$).
The normalizing application obtained in Theorem~\ref{t7} is
{\it not} however uniquely defined. Indeed, if $H_1$ is a normalizing
application in $B_r \times (V_1\setminus \{0\})$, then so is $H_1 \circ
\phi$, for all $\phi \in \Lambda_{\omega_{1,\lambda}}(B_r \times
V_1)$. In other words, the normalizing application
is uniquely defined up to the composition with an element of
$\Lambda_{\omega_{1,\lambda}}(V)$. This is why the change $H_1
\circ H_2^{-1}$ is {\it not} the identity in general. The change
is indeed an {\it asymptotic expansion of the identity}. More
precisely, it is an element of $\Lambda_{\omega_{1,\lambda}}
(V_1)$. The special case in which the gluing of the leaves
is the identity is exactly the case where the formal normal form
is holomorphically conjugate to Dulac's normal form.

\begin{proof}[Proof of Proposition~\ref{p1}]
As already mentioned, the sector change $\phi=H_1 \circ H_2^{-1}$ is an element of
$\Lambda_{\omega_{1,\lambda}}$. Indeed Theorem~\ref{t7} guarantees
that conditions~$1$ and~$2$ are fulfilled. As to the third
condition, we notice that $H_i$ ($i=1,2$) are such that:
\[
dH_i(X) = X_{1, \dl}(H_i)
\]
where $X$ is the given vector field (i.e. $X = y_1 (1 + \lambda y_2) + y_2
R(y_1,y_2) \partx + y_2^2 \party$) and $X_{1, \dl}$ is its correspondent formal
normal form (i.e. $X_{1, \dl} = y_1 (1 + \lambda y_2) \partx + y_2^2 \party$). Thus,
\begin{eqnarray*}
d(H_1 \circ H_2^{-1})(X_{1, \dl})(H_1 \circ H_2^{-1})^{-1} & = & dH_1 \circ dH_2^{-1}(X_{1, \dl})H_2 \circ H_1^{-1} \\
&=& dH_1 \circ dH_2^{-1}\circ dH_2(X) \circ H_1^{-1} \\
&=& dH_1(X) \circ H_1^{-1} \\
&=& X_{1, \dl}(H_1) \circ H_1^{-1} \\
&=& X_{1, \dl} \, .
\end{eqnarray*}

The map $\phi(y_1,y_2)=(y_1 + b_0(y_2) + \sum_{i=1}^\infty
b_i(y_2)y_1^i, y_2)$ belongs to $\Lambda_{\omega_{1,\lambda}}(V)$, if
it is asymptotic to the identity and preserves the foliation. If we
think in terms of vector fields the last condition implies that:
\begin{equation}\label{preservedvf}
d \phi(X_{1, \dl}) = X_{1, \dl}(\phi)\, ,
\end{equation}
while the first one is satisfied if and only if the functions
$b_j(y_2)$ are asymptotic to the zero function when $y_2$ tends
to $0 \in \C$. The left hand side of Equation~\ref{preservedvf}
is given by
\[
y_1 (1 + \lambda y_2)(1 + \sum_j j b_j(y_2) y_1^{j-1}) +
y_2^2 b^{\prime}_0(y_2) + y_2^2 \sum_j b^{\prime}_j(y_2) y_1^j
\]
while the right one is given by
\[
\Big[y_1 + b_0(y_2) + \sum_j b_j(y_2) y_1^j \Big] (1 + \lambda y_2) \, .
\]
The equality between those expression leads us to the following ordinary
differential equation:
\[
b^{\prime}_j(y_2) y_2^2 + b_j(y_2) (j-1) (1 + \lambda y_2) = 0\, .
\]
whose solution is given by
\[
b_j(y_2)= c_j y_2^{(1 - j)\lambda} \exp \Big( \frac{j - 1}{y_2} \Big) \, ,
\]
where $c_j$ is the initial data. Therefore unless $c_j = 0$ we have
\[
\left\{ \begin{array}{ll}
         \displaystyle\lim_{\Re (y_2) \rightarrow 0^+} b_j(y_2) = \infty \\
         \displaystyle\lim_{\Re (y_2) \rightarrow 0^-} b_j(y_2) = 0
\end{array} \right.
\]
for all $j > 1$, where by $\Re (y_2)$ we mean the real part of $y_2$.
We have therefore that, for $j > 1$, $b_j(y_2)$ is asymptotic to the null
function if and only if $y_2 \in V^-$ or $c_j = 0$. This is equivalent to
say that $c_j = 0$ on $V^+$ for all $j > 1$, i.e. that $g^+$ takes the form
\[
g^+(y_1,y_2) = (y_1 + b_0(y_2), y_2)
\]

In the same way we notice that
\[
\left\{ \begin{array}{ll}
         \displaystyle\lim_{\Re (y_2) \rightarrow 0^+} b_0(y_2) = 0 \\
         \displaystyle\lim_{\Re (y_2) \rightarrow 0^-} b_0(y_2) = \infty
\end{array} \right.
\]
Therefore or $y_2 \in V^+$ or $c_0 = 0$ in order to $b_0$ to be asymptotic to
the null function. This implies that $b_0$ vanishes identically on $V^-$, i.e.
that $g^-$ is tangent to the identity.
\end{proof}

Since each leaf of $X_{1, \dl}$ is parametrized by a constant $c \in \C$
(Equation~\ref{eq40}) the diffeomorphismes $g^-, \, g^+$ can be expressed
in terms of the parametrization by those constants. In this way Proposition~\ref{p1}
says that
\[
\left\{ \begin{array}{ll}
         g^+(c) = c + a_0\\
         g^-(c) = c + \sum_{i=2}^\infty a_i c^i
\end{array} \right.
\]

Now given two saddle-node foliations that are holomorphically
conjugate, it is interesting to work out the relation between
their sector changing diffeomorphisms (which in the $p=1$ case, we
denoted by $g^+$ and $g^-$). Indeed, we are going to see that if
the saddle-nodes are holomorphically conjugate, then their sector
changing diffeomorphisms are also conjugate by an automorphism of
the leaf space. The converse is also true, though it will not be
fully proved.

Let $\fol_1$ and $\fol_2$ be holomorphically conjugate foliations
given by $1$-forms $\omega_1$ and $\omega_2$, respectively. In
other words, there exists a diffeomorphism $H$ that takes the
leaves of $\fol_1$ in leaves of $\fol_2$. There are also formal
changes of coordinates $h_1$ and $h_2$ that take $\omega_1$ and
$\omega_2$, respectively in formal normal forms $\tilde{\omega}_1$
and $\tilde{\omega}_2$. As was seen in the earlier discussion,
though $h_1$ (resp. $h_2$) is not analytic on a neighborhood of
$0\in \C$, there are sectors in which the series does converge. In
the case $p=1$ there are  functions $g_1^+$, $g_1^-$ (resp. $g_2^+$,
$g_2^-$) that glue together the leaves of the sectors $V^+$ and $V^-$.

We shall define the following equivalence relation: two
diffeomorphisms $f$ and $\tilde{f}$ are said to be equivalent (and
we write $f \sim \tilde{f}$) if there exists $\sigma\in {\rm
Diff} (\C,0)$ with $\sigma^{\prime}(0)=1$ such that:
\[
\tilde{f}=\sigma^{-1}\circ f\circ \sigma\, .
\]
In other words, conjugate functions belong to the same equivalence
class. In this sense, if the foliations are holomorphically
conjugate, the transition function $g_1^+$ (resp. $g_1^-$) is
equivalent to $g_2^+$ (resp. $g_2^-$). Indeed, notice
that the function $\sigma= h_2\circ H\circ h_1^{-1} \in {\it Diff}
(\C,0)$ is an automorphism of the leaf space (which is isomorphic
to a neighborhood of $0\in \C$ as observed in Formula~\ref{eq40})
tangent to the identity. Denoting by $\sigma_+$ the restriction of
$\sigma$ to $V^+$ and by $\sigma_-$ the restriction of $\sigma$ to
$V^-$ it follows that $g_1^+ = \sigma_+^{-1} \circ g_2^+ \circ \sigma_+$
and $g_1^- = \sigma_-^{-1} \circ g_2^- \circ \sigma_-$.

Summarizing, the analytic type of saddle-nodes with $p=1$ are in
correspondence with the conjugacy class of the diffeomorphisms
$g^+$ and $g^-$. Since the conjugacy class of translations is
obvious, we can think that the information is totally encoded in
the conjugacy of the diffeomorphism $g^-$ tangent to the identity.
It is then natural to study the moduli space of diffeomorphisms of
$(\C,0)$ tangent to the identity so as to have concrete invariants
for saddle-node singularities.

%--------------------------------------------------------------------------------------------------------------------------------------------------------------------------------------------------------------------
%--------------------------------------------------------------------------------------------------------------------------------------------------------------------------------------------------------------------
%--------------------------------------------------------------------------------------------------------------------------------------------------------------------------------------------------------------------

\subsubsection{Automorphisms of $(\C, 0)$ with identity linear part}

Now we will concentrate on the study of the automorphisms $\sigma$
of $(\C,0)$ tangent to the identity that were used in the above
definition of equivalence relation. The description of the moduli
spaces is independently due to Ecalle and Voronin. In what follows
we shall follow Voronin construction for the prototypical case
$p=1$.

Precisely, we shall give an analytic classification of the
mappings of the set $\cal{A}=$ $\{f \in {\rm Diff}(\C,0)\; ; \;
f(z)=z+az^2+\cdots , \; a\neq 0 \} $. As usual, we define two
mappings $f_1$, $f_2 \in \cal{A}$ to be equivalent if and only if
there exists a holomorphic diffeomorphism $H$ such that $H\circ
f_1=f_2\circ H$. Here we describe these classes of equivalence.

To begin with we give some Analysis results and definitions that
will be used in this section.

\begin{defi}
A homeomorphic mapping $f:\Omega\rightarrow\C$ of a domain
$\Omega\subset\C$ is said to be quasiconformal if
\[|f_{\bar{z}}|\leq k |f_z|\]
for $k<1$ almost everywhere.
\end{defi}

The function $h_f=f_{\bar{z}}/f_z$ is called the {\it
characteristic} of the quasiconformal map $f$, and the quantity
\[
K_f(z_0)={\lim}\;{\rm sup}_{r \rightarrow 0} \frac{\sup_{|z-z_0|=r}
|f(z)-f(z_0)|}{\inf_{|z-z_0|=r} |f(z)-f(z_0)|}\,.
\]
is called the {\it quasiconformal deviation} of the map $f$ at the
point $z_0$. Notice that if $g$ is a quasiconformal map with
characteristic $h_g=h_f$, then $g\circ f^{-1}$ is conformal.

\begin{prop}
\label{p3} A map is quasiconformal in $\Omega$ if and only if
$K_f(z_0)<\infty$ for all $z_0\in \Omega$ and
$K=\|K_f\|_{L_\infty(\Omega)}<\infty$.
\end{prop}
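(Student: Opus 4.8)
The plan is to establish the equivalence of the \emph{analytic} definition of quasiconformality (the Beltrami inequality $|f_{\bar z}|\le k|f_z|$ a.e.\ with $k<1$, read together with the standing hypothesis that $f$ is an orientation‑preserving homeomorphism that is absolutely continuous on lines, so that $f_z,f_{\bar z}$ exist a.e.) and the metric definition provided by the two conditions of the statement. First I would record the infinitesimal dictionary relating the two. At a point $z_0$ where $f$ is real‑differentiable with $J_f(z_0)=|f_z(z_0)|^2-|f_{\bar z}(z_0)|^2>0$, the $\R$‑linear map $Df(z_0)\colon w\mapsto f_z(z_0)w+f_{\bar z}(z_0)\bar w$ carries the unit circle onto an ellipse with semi‑axes $|f_z(z_0)|+|f_{\bar z}(z_0)|$ and $|f_z(z_0)|-|f_{\bar z}(z_0)|$, so that
\[
\lim_{r\to 0}\frac{\sup_{|z-z_0|=r}|f(z)-f(z_0)|}{\inf_{|z-z_0|=r}|f(z)-f(z_0)|}
= D_f(z_0):=\frac{|f_z(z_0)|+|f_{\bar z}(z_0)|}{|f_z(z_0)|-|f_{\bar z}(z_0)|}=\frac{1+|h_f(z_0)|}{1-|h_f(z_0)|}.
\]
Thus at each such point $K_f(z_0)=D_f(z_0)$, and the whole issue is to control $K_f$ off the set of differentiability and, conversely, to recover differentiability from a bound on $K_f$.

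For the implication ``analytic $\Rightarrow$ metric'', the Beltrami bound gives $D_f(z_0)\le (1+k)/(1-k)$ at a.e.\ point. The classical regularity theory of ACL homeomorphisms (Lehto--Virtanen) shows such an $f$ is differentiable a.e.\ with $J_f>0$ a.e., and a length--area (Gr\"otzsch‑type) estimate upgrades the a.e.\ bound to the \emph{pointwise} bound $K_f(z_0)\le (1+k)/(1-k)$ for every $z_0\in\Omega$. Hence $K=\|K_f\|_{L_\infty}\le (1+k)/(1-k)<\infty$. I would invoke this length--area lemma rather than reprove it.

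The substantive direction is ``metric $\Rightarrow$ analytic''. Assume $K_f(z_0)<\infty$ for all $z_0$ and $K:=\|K_f\|_{L_\infty(\Omega)}<\infty$. The steps would be: (a) by the Gr\"otzsch/Rengel inequality for conformal moduli of rectangles, the bound on the circular dilatation yields a uniform bound on the distortion of moduli of quadrilaterals under $f$, from which one deduces that $f$ is absolutely continuous on almost every horizontal and vertical line; (b) by the Gehring--Lehto theorem, a planar homeomorphism that is ACL is differentiable at a.e.\ point; (c) $f$ being orientation‑preserving gives $J_f\ge 0$ a.e., and applying (a)--(b) to $f^{-1}$, which satisfies the same metric bounds, rules out $J_f=0$ on a set of positive measure, so $J_f>0$ a.e.; (d) at each point of differentiability with $J_f>0$ the dictionary above gives $\tfrac{1+|h_f(z_0)|}{1-|h_f(z_0)|}=D_f(z_0)\le K_f(z_0)\le K$, whence $|h_f(z_0)|\le (K-1)/(K+1)<1$. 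Taking $k:=(K-1)/(K+1)$ concludes the argument.

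The main obstacle is concentrated entirely in steps (a) and (b): passing from a purely metric, pointwise condition on circular distortion to absolute continuity on lines and then to almost‑everywhere differentiability. These are precisely the foundational theorems of Gr\"otzsch, Rengel, Gehring and Lehto, and I would use them as black boxes with precise references (Ahlfors, \emph{Lectures on Quasiconformal Mappings}; Lehto--Virtanen, \emph{Quasiconformal Mappings in the Plane}). Everything else---the ellipse computation and the bookkeeping with the characteristic $h_f$---is elementary linear algebra.
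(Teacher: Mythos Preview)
The paper does not prove this proposition: it is listed among the ``Analysis results and definitions that will be used in this section'' and is simply stated as background, alongside the Measurable Riemann Mapping Theorem, with no argument given. So there is nothing to compare your proposal against.

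That said, your outline is the standard route to the equivalence of the analytic and metric definitions of quasiconformality, exactly as developed in Ahlfors' \emph{Lectures on Quasiconformal Mappings} and Lehto--Virtanen's \emph{Quasiconformal Mappings in the Plane}. The infinitesimal dictionary between $D_f$ and $h_f$ is correct, and you have correctly located the nontrivial content in steps (a) and (b): the passage from a pointwise metric bound to the ACL property (via moduli of quadrilaterals and a length--area argument) and then to a.e.\ differentiability (Gehring--Lehto). Since the paper treats the proposition as a black box, citing those references is precisely what is expected here; your sketch would serve well as an expanded footnote but is not needed for the paper's purposes.
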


\begin{teo}[Measurable Riemann Theorem]
\label{t8} For any measurable function $h$ such that
$\|h\|_{L_\infty}<1$, there exists a quasiconformal map $f$ of the
plane $\C$ onto itself, having the function $h$ as its
characteristic $h=h_f$.
\end{teo}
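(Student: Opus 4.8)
The plan is to recognize the statement as the existence theorem for the Beltrami equation $f_{\bar z} = h\, f_z$ (the Ahlfors--Bers form of the measurable Riemann mapping theorem) and to prove it in two stages: first for $h$ smooth and compactly supported, then for general measurable $h$ with $\|h\|_{L^\infty} = k < 1$ by an approximation argument resting on the compactness properties of families of quasiconformal maps.

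For the smooth, compactly supported case I would look for $f$ of the form $f(z) = z + (P\varphi)(z)$, where $P$ is the Cauchy--Pompeiu transform $(P\varphi)(z) = -\frac{1}{\pi}\int_{\C}\frac{\varphi(w)}{w-z}\,dA(w)$, which satisfies $\partial_{\bar z}(P\varphi) = \varphi$ and $\partial_z(P\varphi) = T\varphi$, with $T$ the Beurling transform (a principal-value singular integral with Fourier multiplier $\bar\zeta/\zeta$). Substituting the ansatz into $f_{\bar z} = h f_z$ gives the functional equation $(I - hT)\varphi = h$. Since $T$ is an isometry of $L^2(\C)$ and, by Calder\'on--Zygmund theory together with interpolation, is bounded on every $L^p(\C)$, $1<p<\infty$, with $\|T\|_{L^p}\to 1$ as $p\to 2$, one may pick $p>2$ close enough to $2$ so that $\|h\|_{L^\infty}\|T\|_{L^p} < 1$; then $I - hT$ is invertible on $L^p$ by a Neumann series, producing $\varphi \in L^p$ and hence $f$ with $f_z = 1 + T\varphi$, $f_{\bar z} = \varphi$.

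Next I would verify that this $f$ is the required quasiconformal homeomorphism: $L^p$-regularity with $p>2$ makes $f$ locally H\"older continuous; outside the support of $h$ it is holomorphic, its Jacobian is $|f_z|^2 - |f_{\bar z}|^2 = |f_z|^2(1 - |h|^2) > 0$, and a properness/degree argument shows $f$ is a proper local homeomorphism of $\C$ onto $\C$, hence a homeomorphism, with $|f_{\bar z}| \le k|f_z|$ and characteristic $h_f = h$. For general measurable $h$, choose smooth compactly supported $h_n$ with $\|h_n\|_{L^\infty}\le k$ and $h_n \to h$ almost everywhere, take the corresponding solutions $f_n$ normalized to fix three points, extract a locally uniformly convergent subsequence using the normal-family property of uniformly $K$-quasiconformal maps, and show the limit $f$ is $K$-quasiconformal with $h_f = h$ by a stability argument for the Beltrami equation in $L^2_{\mathrm{loc}}$.

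The main obstacle is the $L^p$-boundedness of the Beurling transform for $p$ near $2$ with operator norm tending to $1$: this is precisely where the Calder\'on--Zygmund machinery (or an explicit interpolation estimate) enters, and it is what lets the Neumann series converge in a space ($p>2$) good enough to yield a continuous solution. The secondary difficulty is supplying the compactness input for the limiting step, namely the equicontinuity of normalized $K$-quasiconformal maps and the fact that this class is closed under locally uniform limits.
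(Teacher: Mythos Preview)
Your proposal outlines the standard Ahlfors--Bers proof of the measurable Riemann mapping theorem, and as an outline it is correct: the Cauchy--Pompeiu ansatz reduces the Beltrami equation to inverting $I - hT$ on $L^p$, the Calder\'on--Zygmund estimate $\|T\|_{L^p}\to 1$ as $p\to 2$ makes the Neumann series converge for some $p>2$, and the passage to general $h$ via smooth approximation and normal-family compactness of $K$-quasiconformal maps is the classical route.

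However, you should be aware that the paper does \emph{not} prove this theorem at all. It is stated without proof as a background analytic input, alongside Proposition~\ref{p3} and Shcherbakov's theorem, and is then invoked once in the proof of the Basic Lemma (Lemma~\ref{l6}, Claim~2) to produce a quasiconformal map with prescribed characteristic. So there is no ``paper's own proof'' to compare against; your sketch is simply a correct account of why the quoted result is true, going well beyond what the paper itself supplies.
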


Next, we consider the specific case of the function $f_0(z)=
z/(1-z)$ in $\cal{A}$. Notice that the inversion $A_0(z)=-1/z$
conjugates $f_0$ conformally to the translation $T(z)=z+1$ on
$\C^\ast$, that is,
\begin{equation}
\label{eq23} A_0\circ f_0=T\circ A_0\,.
\end{equation}
Next we are going to see that the last remark holds in general.
Namely for every function $f$ belonging to $\cal{A}$, there always
exist certain domains in $\C$ where $f$ is {\it quasiconformally}
conjugate to $T$. We prove this by using the next two results.

\begin{teo}[A. Shcherbakov]
\label{t9}Let $f\in \cal{A}$, $f_0(z)=z/(1-z)$. For every
$\varepsilon
>0$ one can find $\delta$ and a homeomorphism $H(z)=z+h(z)$ of the
disk $K_\delta$ (of radius $\delta$) onto itself such that:
\begin{eqnarray}
H\circ f_0 &=& f\circ H\; \mbox{on}\; K_\delta \label{eq21} \\
|h(z_1)-h(z_2)| &<& \varepsilon|z_1-z_2|, \; z_j\in K_\delta
\label{eq22}
\end{eqnarray}
\end{teo}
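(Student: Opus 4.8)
The plan is to build $H$ by gluing together partial conjugacies defined on the petals furnished by the Flower Theorem~\ref{t14}. First I would reduce to the case in which $f$ and $f_0$ have the same $2$-jet, i.e. $f(z)=z+z^2+\cdots$; this costs only a conjugation by a homothety, which is harmless (being itself close to the identity on a small disc) and is the normalization implicit in applying the statement. With this normalization both $f$ and $f_0$ are, by Theorem~\ref{t14}(d), holomorphically conjugate to the translation $T(w)=w+1$ on an attracting petal $P^{+}$ and on a repelling petal $P^{-}$ whose union is a punctured neighbourhood of $0$; for $f_0$ one such conjugacy is the explicit global Fatou coordinate $A_0(z)=-1/z$ of~(\ref{eq23}). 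Writing $\Phi_f^{\pm}$ for the attracting and repelling Fatou coordinates of $f$, the maps $H^{\pm}:=(\Phi_f^{\pm})^{-1}\circ A_0$ are holomorphic and satisfy $H^{\pm}\circ f_0=f\circ H^{\pm}$ on $P^{\pm}$ respectively.

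Next I would glue $H^{+}$ and $H^{-}$ into a single map on $K_\delta$. On the two connected components (``lunes'') of $P^{+}\cap P^{-}$ the maps $H^{+}$ and $H^{-}$ differ by the transition, or \emph{horn}, maps of $f$; after adjusting $\Phi_f^{\pm}$ by translations in the $w$-coordinate one may arrange that $H^{+}=H^{-}$ on one of the two lunes, leaving a single mismatch. There one performs a quasiconformal interpolation between $H^{+}$ and $H^{-}$ inside a sub-crescent, carried out on a fundamental domain of the restricted $\langle f\rangle$-action and spread by the dynamics, so that the resulting map $\widetilde H$ is $f_0$-to-$f$ equivariant throughout $K_\delta\setminus\{0\}$. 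Since the $2$-jets of $f$ and $f_0$ agree, the orbits $f_0^{\,n}(z)$ and $f^{\,n}(z)$ both decay like $1/n$ towards $0$, which forces $\widetilde H$ to extend across $0$ with $\widetilde H(0)=0$; the Measurable Riemann Theorem~\ref{t8}, together with Proposition~\ref{p3}, is what lets the interpolated Beltrami coefficient be realized by an honest quasiconformal homeomorphism and reorganizes $\widetilde H$ into a homeomorphism of $K_\delta$ onto itself.

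It then remains to extract the quantitative bound $|h(z_1)-h(z_2)|<\varepsilon|z_1-z_2|$. The mechanism is that on a disc of small radius $\delta$ both $f$ and $f_0$ are $C^{1}$-close to the identity (uniformly in $\delta$), hence so are their petals, their Fatou coordinates, and the partial conjugacies $H^{\pm}$, and the interpolation region can be taken to shrink with $\delta$; thus the Beltrami coefficient of $\widetilde H$ has sup-norm $o(1)$ as $\delta\to 0$. Conversely a map of the form $z+h(z)$ with $h$ Lipschitz of constant $<\varepsilon<1$ is automatically quasiconformal with dilatation close to $1$, so the Lipschitz estimate and quasiconformality are essentially the same assertion, and the desired $\varepsilon$ is obtained by choosing $\delta=\delta(\varepsilon)$ small enough.

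The step I expect to be the main obstacle is making the equivariant interpolation quantitatively tame: one needs uniform control on how fast the sectorial Fatou coordinates of $f$ approach the model coordinate as $\delta$ shrinks, and one must keep the construction continuous at the parabolic point $0$, where all the coordinates degenerate and only the coincidence of the $2$-jets of $f$ and $f_0$ holds things together.
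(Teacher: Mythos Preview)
The paper does not actually prove this theorem: it attributes it to Shcherbakov, remarks that ``the proof of the theorem amounts to some finer estimates involving the Fatou coordinates,'' and refers to \cite{Car-G}. So there is no detailed argument in the paper to compare against; your Fatou-coordinate strategy is consistent with that one-line hint.

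That said, your sketch contains two genuine gaps. First, the claim that after translating $\Phi_f^{\pm}$ one can arrange $H^{+}=H^{-}$ on one of the two lunes is false. The mismatch $H^{+}\circ (H^{-})^{-1}$ on a lune is (conjugate to) a horn map, and in Fatou coordinates these have the form $w\mapsto w+c+\sum_{k\ge 1}a_k e^{\pm 2\pi i k w}$; the exponential terms are exactly the \'Ecalle--Voronin invariants discussed later in the paper and cannot be killed by a translation. You must interpolate on \emph{both} overlap regions.

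Second, and more seriously, the implication ``small Beltrami coefficient $\Rightarrow$ $h=H-\mathrm{id}$ has small Lipschitz constant'' is not valid; you are asserting the converse of Lemma~\ref{l4}, and it fails (e.g.\ $H(z)=2z$ is conformal but $h(z)=z$ has Lipschitz constant $1$). Invoking the Measurable Riemann Theorem to ``reorganize $\widetilde H$'' does not help either: MRT produces \emph{some} quasiconformal map with the given Beltrami coefficient, with no control on its closeness to the identity. The Lipschitz bound~(\ref{eq22}) has to be obtained directly. On the petals this is straightforward: the Fatou coordinates satisfy $\Phi_f^{\pm}(z)=-1/z+O(\log z)$, hence $H^{\pm}(z)=z+O(z^2\log(1/z))$ and $(H^{\pm})'(z)-1=O(z\log(1/z))$, which gives Lipschitz constant $o(1)$ on $K_\delta$. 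The delicate part is the interpolation on the lunes, where one must balance the size of $|H^{+}-H^{-}|$ against the derivative of the cutoff; this is precisely where the ``finer estimates'' the paper alludes to are needed, and your outline does not supply them.
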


This theorem basically asserts that every function tangent to the
identity is, in a sufficiently small disk, conjugate to $f_0$ by a
Lipschitz mapping with Lipschitz constant close to $1$. The proof
of the theorem amounts to some finer estimates involving the Fatou
coordinates \cite{Car-G}.

\begin{lema}
\label{l4} Suppose that for the homeomorphism $H(z)=z+h(z)$,
Estimate (\ref{eq22}) holds for $\varepsilon<1$, then $H$ is
quasiconformal in $K_\delta$.
\end{lema}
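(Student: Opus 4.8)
The statement asserts that a homeomorphism $H(z) = z + h(z)$ of the disc $K_\delta$ satisfying the Lipschitz-type bound $|h(z_1) - h(z_2)| < \varepsilon |z_1 - z_2|$ with $\varepsilon < 1$ is quasiconformal. The plan is to reduce this to the characterization of quasiconformality given in Proposition~\ref{p3}, namely that it suffices to control the quasiconformal deviation $K_f(z_0)$ pointwise and uniformly. First I would observe that $H$ is a homeomorphism by hypothesis, so the only thing to check is the distortion estimate. The key point is that writing $H = \mathrm{Id} + h$ with $h$ a contraction (Lipschitz constant $\varepsilon < 1$) forces $H$ to be bi-Lipschitz: for $z_1 \ne z_2$ one has
\[
(1-\varepsilon)|z_1 - z_2| \le |H(z_1) - H(z_2)| \le (1+\varepsilon)|z_1 - z_2| \, ,
\]
the lower bound coming from the reverse triangle inequality $|H(z_1)-H(z_2)| \ge |z_1 - z_2| - |h(z_1) - h(z_2)|$.

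From this two-sided bound the estimate on the quasiconformal deviation is immediate. Fixing $z_0 \in K_\delta$ and $r$ small, every point on the circle $|z - z_0| = r$ satisfies $(1-\varepsilon) r \le |H(z) - H(z_0)| \le (1+\varepsilon) r$, so
\[
\frac{\sup_{|z-z_0|=r} |H(z) - H(z_0)|}{\inf_{|z-z_0|=r} |H(z) - H(z_0)|} \le \frac{(1+\varepsilon) r}{(1-\varepsilon) r} = \frac{1+\varepsilon}{1-\varepsilon} \, .
\]
Taking $\limsup_{r \to 0}$ gives $K_H(z_0) \le (1+\varepsilon)/(1-\varepsilon) < \infty$, and since this bound is independent of $z_0$, we also get $\|K_H\|_{L_\infty(K_\delta)} \le (1+\varepsilon)/(1-\varepsilon) < \infty$. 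By Proposition~\ref{p3}, $H$ is quasiconformal in $K_\delta$, which is exactly the claim.

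I do not expect any serious obstacle here; the lemma is essentially a bookkeeping consequence of Proposition~\ref{p3} once one notices that a contraction perturbation of the identity is bi-Lipschitz. The only mild subtlety worth a sentence is that Proposition~\ref{p3} as stated is the metric (Gehring--type) characterization of quasiconformality, so one should make sure the definition of quasiconformal map being used (the analytic one, $|f_{\bar z}| \le k|f_z|$ a.e.) is genuinely equivalent to finiteness of $K_f$ — but this equivalence is precisely what Proposition~\ref{p3} provides, so there is nothing further to prove. If one preferred to avoid invoking Proposition~\ref{p3}, an alternative route would be to note that a bi-Lipschitz homeomorphism is in the Sobolev class $W^{1,2}_{\mathrm{loc}}$ with bounded distortion and argue directly with the Beltrami coefficient, but the route through Proposition~\ref{p3} is cleaner and is the one the text has set up.
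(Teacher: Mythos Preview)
Your proof is correct and follows exactly the same route as the paper: invoke Proposition~\ref{p3} and bound the quasiconformal deviation by $(1+\varepsilon)/(1-\varepsilon)$ via the bi-Lipschitz estimate coming from $H = \mathrm{Id} + h$ with $h$ an $\varepsilon$-contraction. The paper's version is terser (it simply states the bound on $K_H(z_0)$ without spelling out the triangle-inequality step), but the argument is identical.
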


\begin{proof}
This lemma follows immediately from
Proposition~\ref{p3}. Indeed, under these conditions, the
quasiconformal deviation of $H$, $K(z_0)$ is such that
$K(z_0)\leq(1+\varepsilon)/(1-\varepsilon)$ for $z\in K_\delta$.
\end{proof}

\begin{prop}
For each $f\in \cal{A}$ there exist domains $R \, , L \subseteq \C$
and a quasiconformal mapping $G$ defined on $R\cup L$, satisfying
\begin{itemize}
\item $G\circ T=f\circ G$ on $R$,

\item $G\circ T^{-1}=f^{-1}\circ G$ on $L$.
\end{itemize}
\end{prop}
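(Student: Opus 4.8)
The plan is to transfer the conformal picture for the model map $f_0(z)=z/(1-z)$ to an arbitrary $f\in\mathcal{A}$ by combining A. Shcherbakov's theorem (Theorem~\ref{t9}), Lemma~\ref{l4}, and the Measurable Riemann Theorem (Theorem~\ref{t8}). First I would record the model: for $f_0$ the inversion $A_0(z)=-1/z$ conjugates $f_0$ conformally to $T(z)=z+1$ by~(\ref{eq23}), so $f_0$ is conformally conjugate to the translation on the image of a punctured disc under $A_0$. Concretely, the attracting and repelling petals of $f_0$ (as in the Flower Theorem, here with $p=1$) are carried by $A_0$ onto a right half-plane $R_c$ and a left half-plane, on which $f_0$ becomes $T$ and $T^{-1}$ respectively. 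Call the corresponding petal-type domains $R_0$ and $L_0$ for $f_0$ inside $K_\delta$; on $R_0$ one has a conformal $G_0$ with $G_0\circ T=f_0\circ G_0$ and on $L_0$ a conformal $G_0$ with $G_0\circ T^{-1}=f_0^{-1}\circ G_0$.

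Next, given $f\in\mathcal{A}$, apply Theorem~\ref{t9} with $\varepsilon<1$ to obtain $\delta$ and a homeomorphism $H(z)=z+h(z)$ of $K_\delta$ onto itself with $H\circ f_0=f\circ H$ and with the Lipschitz estimate~(\ref{eq22}). By Lemma~\ref{l4}, $H$ is quasiconformal on $K_\delta$; let $\mu=h_H=H_{\bar z}/H_z$ be its Beltrami coefficient, which satisfies $\|\mu\|_{L^\infty}\le\varepsilon<1$. Set $R=H(R_0)$ and $L=H(L_0)$; since $H$ conjugates $f_0$ to $f$, we have $f(R)\supseteq$ the appropriate part of $R$ (and likewise for $L$), and on these domains $H$ already conjugates $T$-dynamics to $f$-dynamics after precomposition with $G_0$, namely $H\circ G_0$ satisfies $(H\circ G_0)\circ T=f\circ(H\circ G_0)$ on $R_0$ and $(H\circ G_0)\circ T^{-1}=f^{-1}\circ(H\circ G_0)$ on $L_0$. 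However $H\circ G_0$ is merely quasiconformal, not conformal, so it is not yet the desired $G$.

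To upgrade to a genuinely quasiconformal-but-with-controlled-distortion conjugacy — or, if one wants, to straighten it — I would invoke the Measurable Riemann Theorem: extend the Beltrami coefficient $\mu$ (defined on $K_\delta$ via $H$, and pulled back appropriately to $R_0\cup L_0$ via $G_0$, which is conformal so it only transports $\mu$ without enlarging its norm) to a measurable function on all of $\C$ with $\|\mu\|_{L^\infty}<1$, and let $w$ be the quasiconformal self-map of $\C$ with $h_w=\mu$. Then $w^{-1}\circ(H\circ G_0)$ has trivial Beltrami coefficient on $R_0\cup L_0$, hence is conformal there; but the statement of the proposition only asks for a \emph{quasiconformal} mapping $G$ on $R\cup L$, so in fact it already suffices to take $G=H\circ G_0$ directly, the quasiconformality being exactly the content of Lemma~\ref{l4} combined with the conformality of $G_0$. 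The conjugacy identities $G\circ T=f\circ G$ on $R$ and $G\circ T^{-1}=f^{-1}\circ G$ on $L$ then follow by composing the corresponding identities for $f_0$ and for $H$.

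The main obstacle, and the step requiring genuine care, is the construction of the petal domains $R_0,L_0$ for $f_0$ and the verification that their images $R=H(R_0)$, $L=H(L_0)$ are actually invariant in the correct one-sided sense under $f$ and $f^{-1}$ — i.e. that $f(R)\subseteq R$ up to the boundary behaviour needed so that the functional equation $G\circ T=f\circ G$ makes sense on all of $R$ and not just on a shrinking subdomain. This is where one must use the explicit cardioid/half-plane geometry from the Flower Theorem discussion, together with the fact that $H$ is close to the identity (the Lipschitz constant near $1$ in~(\ref{eq22})), so that $H$ distorts the petals only mildly and preserves their nesting under iteration. Everything else is bookkeeping: transporting Beltrami coefficients through conformal maps preserves the $L^\infty$ norm, and composing conjugacies is formal.
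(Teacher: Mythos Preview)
Your core idea matches the paper's exactly: take Shcherbakov's quasiconformal $H$ conjugating $f_0$ to $f$, compose it with the conformal conjugacy $A_0^{-1}$ from the translation to $f_0$, and set $G=H\circ A_0^{-1}$. The Measurable Riemann Theorem is not needed here (you noticed this yourself halfway through); the paper uses it only later, in the Basic Lemma.

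There is, however, a consistent directional confusion in your domains. The proposition requires $G\circ T=f\circ G$ on $R$: this means $T$ acts on $R$, so $R$ must live in the \emph{translation plane}, not in the disc $K_\delta$. In the paper, $R=\{z:\ |\mathrm{Im}\,z|>c,\ \mathrm{Re}\,z>c\}$ and $L=\{z:\ |\mathrm{Im}\,z|>c,\ \mathrm{Re}\,z<-c\}$ are explicit regions near infinity, and $G=H\circ A_0^{-1}$ maps them into $K_\delta$; the images $\Omega_1=G(R)$, $\Omega_2=G(L)$ are the petals for $f$. You have this backwards: you call $R_0,L_0$ the petals for $f_0$ inside $K_\delta$, write ``on $R_0$ one has $G_0$ with $G_0\circ T=f_0\circ G_0$'' (which is incoherent, since $T$ does not act on $R_0$), and then set $R=H(R_0)$, which is a petal for $f$, not a domain for $T$. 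Once this is straightened out, your ``main obstacle'' in the last paragraph evaporates: invariance is just $T(R)\subset R$, which is immediate from the explicit half-plane description and has nothing to do with $H$ distorting petals.
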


\begin{proof}
Fix $\varepsilon$ such that $0<\varepsilon<1$. From Theorem~\ref{t9}
there exists a homeomorphism $H:K_\delta\rightarrow K_\delta$,
conjugating $f$ and $f_0$ as in (\ref{eq21}). Moreover, this
homeomorphism is quasiconformal by Lemma~\ref{l4}.

Let $R=\{z\in \C;\; |{\rm Im}\, z|> c\, ,\; {\rm Re}> c\}$ for a
fixed complex number $c=1/\rho$ with $\rho<\delta$. The mapping
$G=H\circ A_0^{-1}$ defined on $R$ is well-defined and
quasiconformal, being a composition of a quasiconformal map with
a conformal one. Set $\Omega_1=G(R)$. It follows from
(\ref{eq23}) and (\ref{eq21}) that $G$ is precisely the conjugacy
we were looking for. In conclusion, $f$ is quasiconformally
conjugate to a translation.

Moreover, since $T(R)\subset R$ it follows that $\Omega_1$ is invariant
by $f$, i.e. $f(\Omega_1)\subset\Omega_1$. Analogously, we can define a
domain $\Omega_2$ that is invariant by $f^{-1}$. More precisely,
$\Omega_2=G(L)$ where $L=\{z\in \C;\; |{\rm Im}\, z|> c\, ,\;
{\rm Re}< -c\}$ with $c$ as before. Naturally, $f^{-1}$ is defined on
$L$ and is quasiconformally conjugate to the translation $T^{-1}$. Thus
establishing the proposition.
\end{proof}

Equivalently, one may define the domains $R$ and $L$ so as to make
their boundaries smooth (refer to \cite{Car-G}). Under these
circumstances $A_0^{-1}(R)$ is the well-known cardioid-shaped
region (cf. Section $3.2.2$). The reader may keep this figure in
mind whenever we refer to $R$ and $L$ as it may help intuitively.

The next lemma gives analytic coordinates $A_1$ and $A_2$ defined
on the domains $\Omega_1$ and $\Omega_2$. In what follows, we can
trace a parallel between the saddle-node case, analyzed in the
previous section, and the gluing of the domains $\Omega_1$ and
$\Omega_2$. In this sense, we are still considering conjugacies
which are not defined on a full neighborhood of the origin. In the
saddle-node case, we have sectorial normalizations that are
provided by Hukuara-Kimura-Matuda Theorem. The analogue of this
theorem in the present case is the lemma below.

\begin{lema}[Basic Lemma]
\label{l6} Let $R$ and $\Omega_1$ be as before. Set $T(z)=z+1$ and
consider a quasiconformal homeomorphism $G_1$ of the domain $R$
onto $\Omega_1$. Let $f$ be analytic in $\Omega_1$ such that
\begin{equation}
\label{eq25} G_1\circ T= f\circ G_1\, .
\end{equation}
Then there exists an analytic mapping $A_1:\Omega_1\rightarrow\C$
satisfying
\begin{enumerate}
\item $A_1$ is univalent in $\Omega_1$.

\item $A_1 \circ f = T \circ A_1$.

\item If $A^{\prime}_1$ is another analytic mapping on $\Omega_1$
verifying conditions $1$ and $2$, then there exists $c\in \C$ such
that $A^{\prime}_1= A_1+c$ on $\Omega_1$.
\end{enumerate}
\end{lema}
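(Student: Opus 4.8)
The plan is to upgrade the merely quasiconformal conjugacy $G_1$ into a genuinely conformal Fatou coordinate $A_1$ by means of the Measurable Riemann Theorem (Theorem~\ref{t8}). The first observation is that the Beltrami coefficient $\mu:=(G_1)_{\bar z}/(G_1)_z$ is defined on $R$ with $\|\mu\|_{L^\infty(R)}<1$, since $G_1$ is quasiconformal (cf. Proposition~\ref{p3} and Lemma~\ref{l4}). Moreover $\mu$ is invariant under $T$: composing the relation $G_1\circ T=f\circ G_1$ and using that both $T$ and $f$ are holomorphic, post-composition by $f$ leaves the Beltrami coefficient unchanged while pre-composition by $T$ replaces $\mu(z)$ by $\mu(z+1)$; hence $\mu(z+1)=\mu(z)$ on $R$. (Here one uses that $T(R)\subset R$, so the relation makes sense for every $z\in R$.)

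Next I would extend $\mu$ to a $T$-invariant Beltrami coefficient defined on all of $\C$, so that Theorem~\ref{t8} applies. Since $T(R)\subset R$, the set $\widetilde R:=\bigcup_{n\ge 0}T^{-n}(R)$ is $T$-invariant, and the periodicity above lets one extend $\mu$ unambiguously to $\widetilde R$ by $\mu(z):=\mu(z+n)$ for any $n$ with $z+n\in R$; putting $\mu\equiv 0$ on the (also $T$-invariant) complement $\C\setminus\widetilde R$ yields a measurable $\mu$ on $\C$ with $\|\mu\|_{L^\infty(\C)}<1$ and $\mu\circ T=\mu$ a.e. The Measurable Riemann Theorem then provides a quasiconformal homeomorphism $\psi:\C\to\C$ with $\mu_\psi=\mu$.

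Now I would normalise $\psi$ so that it commutes with $T$. The maps $\psi\circ T$ and $T\circ\psi$ have the same Beltrami coefficient $\mu$ ($T$-periodicity of $\mu$ handles the first one, holomorphy of $T$ the second), so $S:=\psi\circ T\circ\psi^{-1}$ is $1$-quasiconformal, hence a conformal automorphism $w\mapsto aw+b$ of $\C$; since $T$ has no fixed point, neither does $S$, which forces $a=1$, i.e. $S(w)=w+b$ with $b\neq 0$. Replacing $\psi$ by $L\circ\psi$ with $L(w)=w/b$ (a holomorphic post-composition, which does not change $\mu_\psi$) we may assume $\psi\circ T=T\circ\psi$. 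Set $A_1:=\psi\circ G_1^{-1}$ on $\Omega_1$. As $\psi$ and $G_1$ are quasiconformal on $R$ with the same Beltrami coefficient, $A_1$ is conformal on $\Omega_1=G_1(R)$; being a composition of homeomorphisms it is injective, which is property~(1). From $G_1\circ T=f\circ G_1$ one gets $G_1^{-1}\circ f=T\circ G_1^{-1}$, whence $A_1\circ f=\psi\circ T\circ G_1^{-1}=T\circ\psi\circ G_1^{-1}=T\circ A_1$, which is property~(2).

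The remaining point, uniqueness up to an additive constant, is the one I expect to be delicate. If $A_1'$ also satisfies (1) and (2), then $B:=A_1'\circ A_1^{-1}$ is a biholomorphism of $A_1(\Omega_1)=\psi(R)$ onto $A_1'(\Omega_1)$ and, by a direct computation from the two conjugacy relations, $B\circ T=T\circ B$; hence $\sigma:=B-\mathrm{id}$ is holomorphic and $1$-periodic on the forward-$T$-invariant domain $\psi(R)$ and descends to a holomorphic function on the quotient surface $\psi(R)/\langle T\rangle$. One then wants to argue that $\sigma$ extends holomorphically across the ``attracting end'' of this surface and is therefore constant — this is the classical rigidity of Fatou coordinates on a petal, using that $\Omega_1$ is (a component of) an attracting petal in the sense of the Flower Theorem~\ref{t14}, so that the dynamics of $f$ accumulate at a single puncture of $\psi(R)/\langle T\rangle$, and that the univalence of $B$ controls $\sigma$ near that puncture (concretely: in a half-plane coordinate the $1$-periodic $\sigma$ has a Laurent expansion in $e^{2\pi i w}$, and the non-constant frequencies would destroy injectivity of $B$ at one or the other end). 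The main obstacles I anticipate are precisely in making this last step rigorous — identifying the geometry of $\psi(R)$ near its end and verifying that $A_1$ maps a sub-petal of $\Omega_1$ onto (a domain containing) a genuine half-plane, so that the removable-singularity/Liouville argument really closes — together with the bookkeeping needed to see that the ad hoc $T$-invariant extension of $\mu$ in Step~2 is compatible with the exact shape of $R$.
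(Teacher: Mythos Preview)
Your construction of $A_1$ is correct and takes a route genuinely different from the paper's, though both rest on the Measurable Riemann Theorem. The paper passes to the \emph{quotient}: it notes that $R/T$ is conformally $\C^\ast$, that the conjugacy $G_1\circ T=f\circ G_1$ makes $\Omega_1/f$ a Riemann surface $S$ onto which $G_1$ descends as a quasiconformal homeomorphism $\tilde G:\C^\ast\to S$, and then applies Theorem~\ref{t8} to the characteristic of $\tilde G^{-1}$ (extended by zero) to conclude that $S$ is conformally $\C^\ast$; finally $A_1$ is obtained by lifting the holomorphic projection $\Omega_1\to S\simeq\C^\ast$ through the universal cover $\C\to\C^\ast$. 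You instead stay on the cover: you extend the Beltrami coefficient of $G_1$ $T$-periodically to all of $\C$, straighten it globally, and normalise so that the straightening commutes with $T$. What your approach buys is a concrete formula $A_1=\psi\circ G_1^{-1}$ and no need to form the quotient Riemann surface; what the paper's approach buys is that the leaf space $\Omega_1/f\simeq\C^\ast$ is made explicit, which feeds directly into the moduli discussion that follows.

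On item~(3) you are right to flag the argument as the delicate point, but you should know that the paper does not carry it out either: it asserts that the lift satisfies all three properties and refers to \cite{Vo} for details. The quotient viewpoint does streamline your sketch, since both $A_1$ and $A_1'$ induce conformal isomorphisms $\Omega_1/f\to A_1(\Omega_1)/T$ and $\Omega_1/f\to A_1'(\Omega_1)/T$, each of the targets being again $\C^\ast$; hence $B=A_1'\circ A_1^{-1}$ descends to a conformal automorphism of $\C^\ast$ that respects the end singled out by the forward dynamics of $f$, forcing it to be a rotation, whose lift to $\C$ is a translation. This replaces the half-plane/Fourier bookkeeping you were anticipating.
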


Notice that an analogous lemma may be formulated for the existence
of the analytic mapping $A_2:\Omega_2\rightarrow \C$ with the
obvious modifications.

\begin{proof}
As already seen, $f\in \cal{A}$ is conjugated to a translation. What
this lemma asserts is that the conjugating homeomorphism is, in fact,
analytic.

The quotient space $R/T$ is conformally equivalent to the
punctured plane $\C^{\ast}$, where $\pi_0:R\rightarrow \C^\ast$ is
the projection. Since $G_1$ is a homeomorphic mapping satisfying
(\ref{eq25}) the orbits of $f$ are discrete and $\Omega_1/f=S$ is
a Riemann Surface. Let $\tilde{\pi}:\Omega_1\rightarrow S$ be the
projection of the quotient space. From (\ref{eq25}), we obtain a
quasiconformal homeomorphism $\tilde{G}:\C^\ast\rightarrow S$
between the quotient spaces.

{\it Claim $1$} : If there exists a conformal mapping
$B:S\rightarrow\C^{\ast}$ then there is an analytic mapping
$A_1:\Omega_1\rightarrow\C$ verifying $1$, $2$ and $3$.

Indeed, notice that $\Omega_1$ is a covering for $\C^{\ast}$ with
holomorphic projection $\pi = B \circ \tilde{\pi}$. However, the
universal covering of $\C^\ast$ is $\C$ with projection
$\hat{\pi}$. Since $\C$ is a simply connected covering of
$\C^\ast$ then there exists an inclusion
$A:\Omega_1\hookrightarrow \C$ such that $\pi=\hat{\pi}\circ A$.
This mapping satisfies $1$, $2$ and $3$ (details may be found in
\cite{Vo}).

{\it Claim $2$} : If there exists a quasiconformal mapping
$\tilde{G}:\C^\ast\rightarrow S$, then $S$ is conformally
equivalent to $\C^\ast$.

Since $\tilde{G}^{-1}:S\rightarrow U$ is a quasiconformal mapping
from $S$ to a domain $U$, the characteristic $h$ of its inverse is
independent on the choice of a local parameter on $S$, and
$\|h\|_{L_\infty(U)}=k<1$. Setting $h|_{\C\setminus U}$, $h$
remains measurable and $\|h\|_{L_\infty(U)}=k$. By
Theorem~\ref{t8} there exists a quasiconformal homeomorphism $F$
of $\C$ onto itself with characteristic $h$ and we may suppose
that $F(0)=0$. So that the map $B=F\circ
\tilde{G}^{-1}:S\rightarrow \C^{\ast}$ is conformal.
\end{proof}

Now we analyze the functions that change the sector, i.e. the
analogous to the functions $g_\pm$ in the saddle-node case.

Let $\Phi_+$ (resp. $\Phi_-$) be the restriction of $A_2 \circ
A_1^{-1}|_{V_+}$ (resp. $A_2 \circ A_1^{-1}|_{V_-}$), where
$V_+=\Omega_1\cap\Omega_2|_{ \{z:\;\Im (z)>0\}}$ and
$V_-=\Omega_1\cap\Omega_2|_{ \{z:\;\Im (z)<0\}}$ ($\Im (z)$ denotes
the imaginary part of $z$).

\begin{obs}
{\rm It is not obvious, but with some effort an expression for
$\Phi_\pm$ may be obtained. More precisely,
$\Phi_\pm(z)=z+\sum_{k\geq 0} c_k^\pm\exp(\pm 2 \pi i k z)$ (cf.
\cite{Vo}).}
\end{obs}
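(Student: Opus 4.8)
The plan is to read the expansion off from the functional equation satisfied by the normalizing coordinates, a Fourier ($q$-)expansion on a half-plane, and an asymptotic estimate near the vertex that kills the negative Fourier modes; only that last point is non-formal.

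First I would record the functional equation. By Lemma~\ref{l6} and its analogue for $A_2$, both $A_1$ and $A_2$ conjugate $f$ to $T(z)=z+1$, so wherever the composites make sense
$$
\Phi_\pm\circ T \;=\; A_2\circ A_1^{-1}\circ T \;=\; A_2\circ f\circ A_1^{-1}\;=\; T\circ A_2\circ A_1^{-1}\;=\; T\circ\Phi_\pm \, .
$$
Hence $g_\pm(z):=\Phi_\pm(z)-z$ is holomorphic and $1$-periodic on the domain of $\Phi_\pm$. Next I would pin down that domain: in the normalizing coordinate the two lens-shaped components $V_+,V_-$ of $\Omega_1\cap\Omega_2$ are carried by $A_1$ onto regions containing an upper half-plane $\{\Im z>M\}$ and a lower half-plane $\{\Im z<-M\}$, for $M$ large. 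This is the Fatou-coordinate translation of the petal picture behind the Flower Theorem~\ref{t14}: for the model $f_0(z)=z/(1-z)$ the two overlap components are the upper and lower halves of the cardioid $A_0^{-1}(\{\Re z>c,\,|\Im z|>c\})$, which become these half-planes under $A_0(z)=-1/z$, and by Theorem~\ref{t9} a general $f\in\mathcal{A}$ is a small Lipschitz perturbation of the model, so the same picture persists.

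With $g_+$ a $1$-periodic holomorphic function on $\{\Im z>M\}$, the substitution $q=e^{2\pi i z}$ (a universal covering onto $0<|q|<e^{-2\pi M}$) produces a holomorphic $\psi_+$ with $g_+(z)=\psi_+(q)$, hence a Laurent series $g_+(z)=\sum_{k\in\Z}c_k^+e^{2\pi i kz}$; likewise on $\{\Im z<-M\}$, with $q=e^{-2\pi i z}$, one gets $g_-(z)=\sum_{k\in\Z}c_k^-e^{-2\pi i kz}$. To obtain the asserted form it then suffices to prove that $g_\pm$ is \emph{bounded} as $\Im z\to\pm\infty$: then $\psi_\pm$ is bounded near $q=0$, the singularity is removable by Riemann's theorem, $\psi_\pm(q)=\sum_{k\geq 0}c_k^\pm q^k$, and therefore $\Phi_\pm(z)=z+\sum_{k\geq 0}c_k^\pm e^{\pm2\pi i kz}$, as claimed.

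The boundedness of $g_\pm$ near the vertex is where I expect the real work to lie. It rests on the fact that, along the overlap $V_\pm$, the two holomorphic coordinates $A_1$ and $A_2$ share one and the same asymptotic expansion at the fixed point, namely the formal Fatou coordinate of $f$ (a formal series of the shape $-1/z+\beta\log z+O(1)$ after normalization); granting this, $A_2(z)-A_1(z)$ tends to a constant as $z\to0$ inside $V_\pm$, i.e. $g_\pm(w)=A_2(A_1^{-1}(w))-w$ tends to a finite limit as $\Im w\to\pm\infty$, and in particular is bounded. The cleanest way I see to establish this asymptotic statement is again comparison with the model: by Theorem~\ref{t9} there is $H=\mathrm{id}+h$ tangent to the identity, with $h$ Lipschitz of small constant, conjugating $f_0$ to $f$; then $A_0\circ H^{-1}$ is a quasiconformal conjugacy of $f$ to $T$ and a direct computation gives $A_0\circ H^{-1}\circ A_0^{-1}(w)=w+O(1)$ near $\infty$; finally, straightening $A_0\circ H^{-1}$ into the holomorphic coordinate of Lemma~\ref{l6} through the Measurable Riemann Theorem~\ref{t8} (and normalizing via its uniqueness clause) perturbs it only by a quantity vanishing at the vertex, so the $O(1)$ estimate survives for $g_\pm$ and the argument above closes. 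The resulting coefficients $(c_k^\pm)_{k\geq 0}$ are the \'Ecalle--Voronin analytic invariants of $f$; see \cite{Vo} and \cite{Car-G} for the details of these estimates.
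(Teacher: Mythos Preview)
The paper does not actually prove this remark; it is stated as an observation with a reference to Voronin \cite{Vo} and the phrase ``with some effort'' in lieu of an argument. So there is no paper proof to compare against, and your proposal is supplying precisely the content that the paper outsources.

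Your argument is correct and is essentially the standard one found in the cited reference. The key points---that $\Phi_\pm$ commutes with $T$ (your functional-equation computation is right: from $A_2\circ f^{-1}=T^{-1}\circ A_2$ one gets $A_2\circ f=T\circ A_2$, so both $A_i$ conjugate $f$ to $T$), that $g_\pm=\Phi_\pm-\mathrm{id}$ is therefore $1$-periodic on a half-plane, and that boundedness at the end of the half-plane forces a one-sided Fourier expansion via Riemann's removable singularity theorem---are exactly the ingredients. The one place where you correctly flag that real work is needed is the boundedness of $g_\pm$ as $\Im z\to\pm\infty$; this follows, as you indicate, from the fact that both $A_1$ and $A_2$ share the same asymptotic expansion (the formal Fatou coordinate) on the overlap, so their difference is bounded. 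Your sketch of how to obtain this via comparison with the model and Theorem~\ref{t9} is reasonable, though in practice one usually derives it directly from the estimates producing the Fatou coordinates (as in \cite{Car-G}) rather than going through the quasiconformal straightening.
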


To each $f\in \cal{A}$ we may associate a pair $\Phi_\pm$ of
holomorphic functions. Since we are interested in the classes of
conjugacy of the elements of $\cal{A}$, it is natural to work out
the relations between the maps $\Phi_\pm$ and $\tilde{\Phi}_\pm$
associated to conjugate diffeomorphisms $f$ and $\tilde{f}$,
respectively. The advantage of the preceding construction lies in
the fact that the corresponding transition maps are related in a
particularly simple way. Indeed, by Lemma~\ref{l6} there exists
analytic mappings $A_1$ and $A_2$ such that:
\begin{eqnarray*}
A_1 \circ f &=& T \circ A_1\\
A_2 \circ f^{-1} &=& T^{-1} \circ A_2\, .\\
\end{eqnarray*}
By assumption, $f=H_0^{-1}\circ\tilde{f}\circ H_0$ so that
\begin{eqnarray*}
A_1\circ H_0^{-1}\circ\tilde{f} &=& T \circ A_1\circ H_0^{-1}\\
A_2 \circ H_0^{-1}\circ\tilde{f}^{-1} &=& T^{-1} \circ A_2\circ H_0^{-1} \\
\end{eqnarray*}
Suppose that $\tilde{A}_1$ is given by Lemma~\ref{l6} for
$\tilde{f}$. Then by item $3$ of this same lemma, it follows that
\[A_1\circ H_0^{-1}\equiv \tau_1\circ\tilde{A}_1\, .\]
Similarly,
\[A_2\circ H_0^{-1}\equiv \tau_2\circ\tilde{A}_2\, ,\]
where $\tau_1$ and $\tau_2$ are translations. Then
\begin{eqnarray*}
\Phi_{\pm}&=& A_2\circ A_1^{-1}|_{V_\pm}\\
&=&\tau_2\circ\tilde{A}_2\circ H_0\circ
H_0^{-1}\circ\tilde{A}_1^{-1}\circ\tau_1^{-1}\\
&=&\tau_2\circ\tilde{\Phi}_{\pm}\circ\tau_1^{-1}\, .
\end{eqnarray*}
In conclusion, we have obtained the following

\begin{teo}
\label{t15} If $f$ and $\tilde{f}$ belong to the same class of
analytic equivalence then the transition functions $\Phi_\pm$ and
$\tilde{\Phi}_\pm$ are conjugate by a translation.
\end{teo}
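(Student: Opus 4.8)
The plan is to obtain the statement as a formal consequence of the Basic Lemma (Lemma~\ref{l6}), the only genuine work being to track how the conjugating diffeomorphism $H_0$ interacts with the sectorial domains and the normalizing coordinates. First I would fix a holomorphic $H_0 \in {\rm Diff}(\C,0)$ realizing the equivalence, i.e. $H_0 \circ f = \tilde{f} \circ H_0$, equivalently $H_0^{-1} \circ \tilde{f} = f \circ H_0^{-1}$. The preliminary point is that $H_0$ transports the whole construction attached to $f$ onto the one attached to $\tilde{f}$: if $G$ is a quasiconformal map with $G \circ T = f \circ G$ and $\Omega_1 = G(R)$, $\Omega_2 = G(L)$, then $H_0 \circ G$ satisfies $(H_0 \circ G) \circ T = \tilde{f} \circ (H_0 \circ G)$, so it may be used as the corresponding map for $\tilde{f}$; hence one may take $\tilde{\Omega}_1 = H_0(\Omega_1)$ and $\tilde{\Omega}_2 = H_0(\Omega_2)$, and $H_0$ restricts to biholomorphisms $\Omega_j \to \tilde{\Omega}_j$ intertwining $f$ with $\tilde{f}$ on the overlaps $V_+$ and $V_-$.

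Next I would compare the analytic coordinates. By Lemma~\ref{l6} there are univalent analytic maps $A_1 : \Omega_1 \to \C$, $A_2 : \Omega_2 \to \C$ with $A_1 \circ f = T \circ A_1$ and $A_2 \circ f^{-1} = T^{-1} \circ A_2$, and similarly $\tilde{A}_1$, $\tilde{A}_2$ for $\tilde{f}$. The map $A_1 \circ H_0^{-1}$ is univalent on $\tilde{\Omega}_1$ and, using $H_0^{-1} \circ \tilde{f} = f \circ H_0^{-1}$, satisfies $(A_1 \circ H_0^{-1}) \circ \tilde{f} = A_1 \circ f \circ H_0^{-1} = T \circ (A_1 \circ H_0^{-1})$; thus it meets conditions~1 and~2 of Lemma~\ref{l6} for $\tilde{f}$. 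The uniqueness clause (condition~3) then yields a constant $c_1$ with $A_1 \circ H_0^{-1} = \tau_1 \circ \tilde{A}_1$, $\tau_1(z) = z + c_1$, and in the same way $A_2 \circ H_0^{-1} = \tau_2 \circ \tilde{A}_2$ for a translation $\tau_2$.

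Finally, writing $A_1 = \tau_1 \circ \tilde{A}_1 \circ H_0$ and $A_2 = \tau_2 \circ \tilde{A}_2 \circ H_0$ and substituting into the definition $\Phi_\pm = A_2 \circ A_1^{-1}|_{V_\pm}$, the copies of $H_0$ cancel and one gets
\[
\Phi_\pm \;=\; \tau_2 \circ \tilde{A}_2 \circ H_0 \circ H_0^{-1} \circ \tilde{A}_1^{-1} \circ \tau_1^{-1} \;=\; \tau_2 \circ \tilde{\Phi}_\pm \circ \tau_1^{-1},
\]
which is precisely the asserted relation. I do not expect a real obstacle at this stage: all the analytic difficulty has already been spent in Lemma~\ref{l6} and, through it, in the Measurable Riemann Theorem (Theorem~\ref{t8}); what is left is bookkeeping. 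The one step deserving attention is the identification $\tilde{\Omega}_j = H_0(\Omega_j)$, i.e. the freedom to choose the quasiconformal conjugacy for $\tilde{f}$ to be exactly $H_0 \circ G$ rather than an unrelated map --- this uses the flexibility in how $\Omega_1, \Omega_2$ are produced --- after which the uniqueness part of the Basic Lemma does the rest.
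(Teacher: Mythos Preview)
Your proposal is correct and follows essentially the same approach as the paper: both arguments use the conjugacy $H_0$ to transport the analytic coordinates $A_j$ to maps $A_j \circ H_0^{-1}$ satisfying the hypotheses of the Basic Lemma for $\tilde{f}$, then invoke the uniqueness clause (item~3) to identify these with $\tilde{A}_j$ up to translations $\tau_j$, from which $\Phi_\pm = \tau_2 \circ \tilde{\Phi}_\pm \circ \tau_1^{-1}$ follows by direct substitution. Your treatment is slightly more careful than the paper's in making explicit the identification $\tilde{\Omega}_j = H_0(\Omega_j)$ via the transported quasiconformal map $H_0 \circ G$, a point the paper leaves implicit.
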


\begin{obs}{\rm The reader may notice that, starting from the
problem of classifying the saddle-nodes, we have iterated twice a
procedure of ``sectorial normalization''. This might give the
impression that no real progress was made in the second iteration
towards a concrete description of a suitable moduli space. This is
however not the case. In fact, in the saddle-node case, we have
used sectorial normalizations provided by the Hukuara-Kimura
Matuda Theorem to obtain diffeomorphisms $g^+$, $g^-$ where
conjugacy classes determine the analytic type of the saddle-node.
The difficulty is that $g^-$ is tangent to the identity but it is
{\it not} uniquely determined. Only the conjugacy class of $g^-$
in ${\rm Diff}(\C,0)$ is canonical in the sense that it is
uniquely determined. Thus, in a ``concrete'' comparison between
two saddle-nodes we would be reduced to tell whether or not two
``different'' diffeomorphisms tangent to the identity are
conjugate in ${\rm Diff}(\C, 0)$. The answer to this problem is by
no means obvious. To tackle this new question, we again applied
sectorial normalizations (this time provided by the Basic Lemma)
to obtain new functions $\phi_\pm$. The advantage of this second
normalization is that $\phi_\pm$ are ``almost uniquely
determined'', in the sense that two of them are conjugate by a
translation. In particular, it is easy to decide whether or not
two of them define the same point in the corresponding moduli
space.}
\end{obs}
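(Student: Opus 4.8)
The final statement is an expository \emph{Remark} rather than a proposition, so it carries no self-contained assertion demanding a proof in the usual sense; it recapitulates the two-fold sectorial-normalization strategy and claims that the invariants produced at the second stage are ``almost uniquely determined''. My plan is therefore to treat it as a bundle of sub-claims, to attach to each the already-established result that validates it, and then to isolate the one assertion that carries genuine mathematical content and indicate how it would be proved in full.

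First I would dispatch the first-stage claims. That the pair $g^+, g^-$ extracted from the Hukuara-Kimura-Matuda normalizations (Theorem~\ref{t7}) controls the analytic type of the saddle-node, with $g^+$ a translation and $g^-$ tangent to the identity, is exactly Proposition~\ref{p1}. That $g^-$ is not canonical but only its conjugacy class is intrinsic follows from the non-uniqueness of the normalizing map: since $H_1$ may be replaced by $H_1 \circ \psi$ for any $\psi \in \Lambda_{\omega_{1,\lambda}}(V_1)$, the transition $g^-$ is pinned down only up to the action of this group, whence only its class in ${\rm Diff}(\C,0)$ survives. The reduction of ``comparing two saddle-nodes'' to ``deciding conjugacy of two germs tangent to the identity'' is then read off from the relation $g_1^\pm = \sigma_\pm^{-1} \circ g_2^\pm \circ \sigma_\pm$ derived just before Theorem~\ref{t15}.

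Next I would address the substantive second-stage claim. The assertion that the $\phi_\pm$ (the maps written $\Phi_\pm$ in the Voronin construction) are ``almost uniquely determined, two of them conjugate by a translation'' is precisely Theorem~\ref{t15}. It is forced by the rigidity clause, item~$3$, of the Basic Lemma~\ref{l6}: any two univalent linearizing charts on $\Omega_i$ differ by an additive constant, so $A_i \circ H_0^{-1} = \tau_i \circ \tilde{A}_i$ for translations $\tau_i$, and the transition maps consequently transform by $\Phi_\pm = \tau_2 \circ \tilde{\Phi}_\pm \circ \tau_1^{-1}$. Thus the closing sentence of the remark---that one can concretely decide when two $\phi_\pm$ represent the same moduli point---is just the statement that the translation-conjugacy class of $(\Phi_+, \Phi_-)$ is a well-defined coordinate on the moduli space.

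The hard part, and the only piece the remark genuinely elides, is \emph{completeness}: showing that this translation-conjugacy class is not merely an invariant but parametrizes the moduli space, so that the second normalization really does resolve the ambiguity left by the first. This needs the converse of Theorem~\ref{t15}---from translation-conjugate transition data one must reconstruct an honest analytic conjugacy---and the obstacle there is the reconstruction step, which would proceed by gluing the sectorial charts $A_1, A_2$ across $V_+, V_-$ into an abstract Riemann surface and uniformizing it via the Measurable Riemann Theorem (Theorem~\ref{t8}), exactly in the spirit of the Basic Lemma. Verifying that the resulting homeomorphism is holomorphic and tangent to the identity is where the real work sits; its omission is consistent with the purely expository character of the remark.
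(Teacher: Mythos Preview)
Your reading is correct: the statement is an expository \emph{Remark} and the paper attaches no proof to it whatsoever. Your unpacking of the sub-claims and the attributions to Proposition~\ref{p1}, the non-uniqueness discussion of $\Lambda_{\omega_{1,\lambda}}(V)$, item~3 of Lemma~\ref{l6}, and Theorem~\ref{t15} are all accurate and match exactly the results the remark is summarizing; your observation that the completeness direction (the converse of Theorem~\ref{t15}) is left unproved in the paper is also correct.
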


%--------------------------------------------------------------------------------------------------------------------------------------------------------------------------------------------------------------------
%--------------------------------------------------------------------------------------------------------------------------------------------------------------------------------------------------------------------
%--------------------------------------------------------------------------------------------------------------------------------------------------------------------------------------------------------------------

\subsection{Reduction of Singularities in Dimension $2$}

So far we have only been analyzing vector fields with simple
singularities. However, as it will become clear throughout this
section, there is a particularly effective way of dealing with
higher order singularities. The Seidenberg Theorem basically
asserts that by composing a finite number of blow-up applications,
it is possible to reduce the order of an isolated singularity
until we only obtain simple ones. This section is devoted to
explain how it can be done.

The reader will notice that the present exposition is very
strongly inspired in the treatment given in [M-M], which, in turn,
is a blend between the original work of A. Seidenberg \cite{Se}
and that of Ven den Essen.

Let us now fix notations and give a few definitions. Suppose that
$\fol$ is a singular holomorphic foliation associated to the
holomorphic vector field $X$ having an isolated singularity at
$(0,0)$. We can also think in terms of $1$-forms. After all,
the $1$-form $\omega= Adx+Bdy$ and the vector field
$X(x,y)=B(x,y)\frac{\partial}{\partial
x}-A(x,y)\frac{\partial}{\partial y}$ define the same foliation
$\fol$. The {\it eigenvalues} of $\omega=Adx+Bdy$ at $(0,0)$ are
defined to be the eigenvalues $\lambda_1$, $\lambda_2$ of $X$ at
the same point.

Let $A_n$ (resp. $B_n$) denote the homogeneous component of degree $n$ of
the Taylor series of $A$ (resp. $B$) centered at the origin. Let $k$ be the
degree of the first non-trivial homogeneous component of the $1$-form
$\omega=Adx+Bdy$. The blow-up of $\omega$ in the coordinates $(x,t)$ is given by:
\[
\pi^\ast(\omega)=[A_k(1,t) + tB_k(1,t) + x (\tilde{A}(x,t) + t\tilde{B}(x,t))] dx + x [B_k(1,t) + x\tilde{B}(x,t)]dt\, .
\]
while in the coordinates $(t,y)$ it is given by:
\[
\pi^\ast(\omega) = y[A_k(t,1) + y\tilde{A}(t,y)] dt + [B_k(t,1) + tA_k(t,1) + y(\tilde{B}(t,y) + t\tilde{a}(t,y))] dy\,
\]
where
\[
\tilde{A}(x,t) = A_{k+1}(1,t) + xA_{k+2}(1,t) + x^2A_{k+3}(1,t) + \cdots=\frac{1}{x^k} \sum_{n>k}A_n(x,tx) \, .
\]
We adopt an analogous notation for $\tilde{B}$.

Let $J^1_{(0,0)}(\omega)=A_1(x,y) dx + B_1(x,y)dy$, and denote by
$C_\omega$ the subset of the exceptional divisor
$E \simeq \C\mathbb{P}(1)$ formed by the singularities of the
``new'' foliation $\widetilde{\fol}$ (i.e. the foliation
associated to $\tilde{\omega}=\pi^\ast\omega$). Notice that in
the non-dicritical case ($A_1(1,t)+tB_1(1,t)$ is {\it not}
identically zero) the set $C_\omega$ is given by the solutions
of $A_1(1,t)+tB_1(1,t)=0$. Denote by $\mu_c$ the order of the
singularity $c\in C_\omega$.

Suppose that $F$ is a mapping defined on $U\subset\C^2$ with a
singularity at $(0,0)\in \C^2$. The {\it order} of $F$ at $0$,
denoted by $\nu_0(F)$, is the degree of the first non-trivial
homogeneous component of the Taylor series of $F$ based at the
origin. We may also define the order of a vector field (as was
done in Section $2.4$). If $X(x,y)=(F(x,y), G(x,y))$ is singular
at $(0,0)$ the {\it order} of $X$ is the minimum between the
orders of $F$ and $G$. Analogously one may define the {\it order}
of a $1$-form $\omega$ at $0$.

Let $f, \, g$ be two polynomials defined on $\C^2$. Consider two
algebraic curves $V$ and $W$ associated, respectively, to $f$ and $g$, i.e.
consider
\[
V=\{(x,y)\in\C^2;\;f(x,y)=0\},\;\;\;\;\;\;\;\;\;\;W=\{(x,y)\in\C^2;\;g(x,y)=0\} \, ,
\]
and suppose that they intersect each other at $(0,0)\in \C^2$.
Recall that if $g$ is irreducible then there exists a local
normalization (Puiseux Parametrization)
\[
\gamma: t\mapsto (t^k, \sum_{n=m}^\infty a_n t_n) \, ,
\]
for a certain $k$ and $m<k$. We are now able to introduce the notion
of {\it intersection number} of two algebraic curves.

\begin{defi}
If $g$ is irreducible then the {\it intersection number} of $V$
and $W$ at $(0,0)$ is defined to be
\[
I(f,g;0)=\nu_0(f\circ\gamma) \, .
\]
If $g$ is not irreducible but $g = g_1^{\alpha_1}\ldots
g_p^{\alpha_p}$, where $g_1,\ldots,g^p$ are irreducible, then the
{\it intersection number} of $V$ and $W$ at $(0,0)$ is defined to
be
\[
I(f,g;0)=\sum_{i=1}^p \alpha_i I(f,g_i;0) \, .
\]
Finally, the {\it intersection number} of the $1$-form $\omega=
Adx + Bdy$ is defined to be
\[
I_0(\omega)=I(A,B;0) \, .
\]
\end{defi}

At this point we are able to state Seidenberg's Theorem.

\begin{teo}[Seidenberg]
\label{t10} Let $\fol$ be a singular holomorphic foliation
associated to the $1$-form $\omega= Adx+Bdy$ defined on an
open set $U \subseteq \C^2$ admitting
an isolated singularity at $(0,0)$. There exists a proper
analytic application $\pi:V\rightarrow U$ $($obtained as a
composition of blow-ups$)$ of a complex $2$-dimensional manifold $V$
onto $U$, such that:
\begin{itemize}
\item $\pi^{-1}(0,0)=E$, where $E$ is the exceptional divisor of
$V$;

\item $\pi:V \setminus E \rightarrow U \setminus \{(0,0)\}$ is a
holomorphic diffeomorphism;

\item $\nu_p(\pi^\ast(\omega))\leq 1$ for all $p\in V$

\end{itemize}
In other words, the proper transform of $\fol$ is defined on a complex manifold $V$
and is such that all of its points $p \in V$ are either regular or simple
singularities.
\end{teo}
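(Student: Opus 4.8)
The plan is to argue by induction on a well-chosen numerical invariant attached to an isolated singularity of a holomorphic $1$-form, the most natural candidate being the intersection number $I_0(\omega) = I(A,B;0)$, which is a finite positive integer precisely because the singularity is isolated. Since $\pi$ is to be built as a composition of blow-ups and the blow-up is a local construction near each singular point, it suffices to prove the following reduction step: if $\omega = A\,dx + B\,dy$ has an isolated singularity at $0$ with $\nu_0(\omega) = k \geq 1$, then after performing the single blow-up $\pi_1$ at $0$, the foliation $\widetilde{\fol} = \pi_1^\ast \fol$ has, on a neighborhood of the exceptional divisor $E$, only finitely many singular points, each of which is either already a simple singularity or has invariant strictly smaller than $I_0(\omega)$. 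Iterating on each of the finitely many "bad" points, and using that the exceptional divisors created along the way are compact (so each stage produces only finitely many new singularities), the process terminates after finitely many blow-ups, yielding the desired $\pi : V \to U$. The two genuinely different situations to examine, already isolated in the discussion preceding the theorem, are the non-dicritical case, where $A_k(1,t) + t B_k(1,t) \not\equiv 0$ and $E$ is invariant, and the dicritical case, where this polynomial vanishes identically and $E$ is generically transverse to $\widetilde{\fol}$.

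In the non-dicritical case I would use the coordinate expressions for $\pi_1^\ast\omega$ recorded above: after dividing by the power $x^{k-1}$ of the equation of $E$, the singular locus $C_\omega$ on $E$ is cut out by $A_k(1,t) + t B_k(1,t) = 0$, hence is finite, and at each $c \in C_\omega$ the local intersection number $I_c(\widetilde{\omega})$ of the transformed form is computable from the orders at $c$ of $A_k(1,t)+tB_k(1,t)$ and of $B_k(1,t)$. The key estimate to establish is a Noether-type identity of the shape
\[
I_0(\omega) \; = \; k(k-1) \; + \; \sum_{c \in C_\omega} I_c(\widetilde{\omega}) \, ,
\]
(with the term $k(k-1)$ replaced by the appropriate correction coming from the extra power of $x$ one may remove in the dicritical case), which forces $I_c(\widetilde{\omega}) < I_0(\omega)$ for every new singularity as soon as $k \geq 2$. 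For $k = 1$ one checks directly that the singularity already satisfies $\nu_p(\omega) \leq 1$, and a short separate analysis of the linear part (nonzero eigenvalue present, diagonalizable-nilpotent, or purely nilpotent) concludes, possibly after a bounded number of further elementary blow-ups to dispose of the nilpotent subcase. For the dicritical case one argues that $\widetilde{\fol}$ meets $E$ transversally outside the finite zero set of the relevant next homogeneous component, that at those points the same drop of the invariant occurs, and that after finitely many such steps no new dicritical component can be produced.

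The main obstacle, where essentially all the work lies, is proving the strict decrease of the invariant uniformly over \emph{all} the blow-ups the algorithm is forced to perform — not only those centered at the original singularity, but also those centered at points where the exceptional divisor already has a normal crossing with an earlier divisor or with a separatrix, and those needed to break a nilpotent linear part. This requires (i) the careful multiplicity bookkeeping behind the Noether-type formula above, including verifying that $x^{k-1}$ (resp. the appropriate power in the dicritical case) is exactly the power of the divisor equation to be factored out; (ii) a proof that the chosen invariant is genuinely well-founded, i.e. that there is no infinite strictly decreasing sequence and, crucially, that the algorithm cannot cycle through configurations of constant invariant; and (iii) the terminal classification showing that an isolated singular point with $\nu_p(\omega) \leq 1$ either is simple in the sense of the earlier definition or becomes so after a controlled, finite sequence of additional blow-ups. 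Once these local statements are in hand, assembling them into the global map $\pi : V \to U$ is routine: one patches the successive local blow-ups, using that each $\pi_1$ is proper and a biholomorphism off its exceptional divisor, so that the composition retains both properties.
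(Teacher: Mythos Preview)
Your proposal is essentially the same approach as the paper's: induct on the intersection number $I_0(\omega)$, blow up, and use a Noether-type identity to force strict decrease at every new singularity when $k\ge 2$. The paper's exact identities are $I_0(\omega)=k^2-k+1+\sum_{c}I_c(\widetilde\omega)$ in the non-dicritical case and $I_0(\omega)=k^2+k-1+\sum_{c}I_c(\widetilde\omega)$ in the dicritical case (so your $k(k-1)$ is off by the constant $+1$, resp.\ replaced by $k(k+1)-1$), but since you explicitly hedged ``of the shape'' this is harmless for the descent.

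Two remarks where you overshoot. First, the statement of this theorem only asks for $\nu_p\le 1$ everywhere; the further reduction of order-$1$ points (nilpotent linear part, eigenvalue ratios in $\N$, etc.) is the content of the \emph{next} theorem in the paper and is proved separately there, so your item~(iii) and the ``nilpotent subcase'' discussion are not needed here. Second, your concern in~(ii) about cycling through configurations of constant invariant is unfounded: $I_0(\omega)$ is a positive integer and the identity forces a drop of at least $k^2-k+1\ge 3$ whenever $k\ge 2$, so well-foundedness is immediate from the well-ordering of $\N$ and there is nothing subtle to check.
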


\begin{proof}
Suppose that $(0,0)$ is the only singularity of $\omega$ in $U$.
Let $k$ be the order of $\omega$ at $(0,0)$. If $k>1$, we use the
blow-up procedure, described in Section $2.4$. In other words,
we consider the pull-back of $\omega$ by the proper mapping
$\pi: \wdc^2 \rightarrow \C^2$ given by $\pi(x,t)=(x,tx)$. Let
$\pi^{-1}(0,0)=E$ denote the exceptional divisor. In coordinates
$(x,t)$ the blow-up of $\omega$ is given by
\begin{equation}
\tilde{\omega} = [A_k(1,t) + tB_k(1,t) + x(\tilde{A}(x,t) + t\tilde{B}(x,t))]dx + x[B_k(1,t) + x\tilde{B}(x,t)]dt
\label{eq26}
\end{equation}
with $\tilde{A}, \, \tilde{B}$ as above.

As previously seen, the behavior of $\tilde{\omega}$ varies
significantly on the neighborhood of $E$ depending on whether or
not $A_k(1,t)+tB_k(1,t)$ is identically zero. By analyzing these
two different cases, it is not hard to obtain equalities relating
the intersection number of $\omega$ and its order at $(0,0)$
(refer to \cite{M-M} for details). More specifically,
\begin{itemize}
\item If $A_k(1,t)+tB_k(1,t)$ is \emph{not} identically zero, i.e. if the
exceptional divisor is invariant under the foliation, then
\begin{equation}
\label{eq27} I_0(\omega)=k^2-k+1+\sum_{c\in E}
I_c(\tilde{\omega})\, .
\end{equation}

\item If $A_k(1,t)+tB_k(1,t) \equiv 0$, then
\begin{equation}
\label{eq28} I_0(\omega)=k^2+k-1+\sum_{c\in E}
I_c(\tilde{\omega})\, .
\end{equation}
This corresponds to the dicritical case, i.e. to the case where the leaves
of the foliation $\tilde{\fol}$ associated to $\tilde{\omega}$ are regular
and generically transverse to $E$.
\end{itemize}

There are two possibilities, either the $1$-form $\tilde{\omega}$
has only simple singularities and regular points on $\pi^{-1}(U)$
and the theorem is proved, or there still are points $c\in\pi^{-1}(U)$,
such that $\nu_c(\tilde{\omega})>1$. So, let us assume that the second
case occurs. We first set
\[
V_1 = \pi^{-1}(U) \, ,\;\;\;\;\;\; \pi_1 = \pi|_{V_1} : V_1 \rightarrow U.
\]
Next, we simultaneously blow-up all the points $c\in E\subset V_1$
such that $\nu_c(\tilde{\omega})>1$. Let $\pi_2:V_2\rightarrow
V_1$ be the resulting application corresponding to these blow-ups.
We then define
\[
\pi^2 = \pi_1 \circ \pi_2 : V_2 \rightarrow U \, .
\]

Inductively we construct applications $\pi_i:V_i\rightarrow
V_{i-1}$, by blowing-up all the points $c$ of $V_{i-1}$ such that
$\nu_c((\pi^{i-1})^\ast (\omega))>1$, where
\[
\pi^{i-1} = \pi_1 \circ \cdots \circ \pi_{i-1} \, .
\]

The Equations~(\ref{eq27}) and~(\ref{eq28}) guarantee that this
procedure is finite. Indeed, despite the two different behaviors
the blown-up foliation $\widetilde{\fol}$ may assume, in both
cases the intersection number $I_c$ decreases if $k>1$. So if we
repeat this procedure sufficiently many times, there will be a
large enough $i$ such that for every $c$ on $V_i$,
$\nu_c((\pi^i)^\ast (\omega))\leq 1$.
\end{proof}

Now we shall treat the case of a foliation $\fol$ with only simple
singularities. By performing additional blow-ups it is possible to
obtain a simpler expression for the $1$-form $\omega=Adx+Bdy$
associated to $\fol$. This corresponding form cannot be further
simplified by extra blow-ups, and in this sense they are ``final''
or ``irreducible''.

\begin{teo}
\label{t11} Consider the singular holomorphic foliation associated
to the $1$-form $\omega= Adx+Bdy$ defined on $U\in\C^2$ with an
isolated singularity at $(0,0)$. Let $\pi:V\rightarrow U$ the
blow-up application obtained by Theorem~\ref{t10}. Then for every
$p\in V$ where $\nu_p(\pi^\ast(\omega))= 1$, there exists a
coordinate chart $(u,v)$ centered on $p$ such that
\begin{equation}\label{eq29}
\pi^\ast(\omega) = (\lambda_1 + {\rm h.o.t}) v du -
(\lambda_2 + {\rm h.o.t.}) u dv\, ,
\end{equation}
where $\lambda_1.\lambda_2\neq 0$, and $\lambda_1/\lambda_2$,
$\lambda_2/\lambda_1$ do not belong to $\N$, or
\begin{equation}\label{eq30}
\pi^\ast (\omega) = (v + {\rm h.o.t.}) du\, .
\end{equation}
\end{teo}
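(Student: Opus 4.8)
The plan is to start from the blow-up map $\pi$ given by Seidenberg's theorem (Theorem~\ref{t10}), after which every point of $V$ is either regular or a \emph{simple} singularity of $\pi^\ast\omega$, and then to perform \emph{finitely many further blow-ups}, only at the simple singularities that are ``resonant nodes''. So everything reduces to a local analysis at a simple singularity $p$ with eigenvalues $\dl_1,\dl_2$, at least one of them nonzero; recall that when both are nonzero it is the ratio $\dl_1/\dl_2$ that is the genuine invariant of $\fol$ at $p$, and that ``one eigenvalue zero'' versus ``both nonzero'' is invariant as well.

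Consider first the cases needing no further blow-up. If one eigenvalue vanishes, $p$ is a saddle-node, and Dulac's theorem gives coordinates $(y_1,y_2)$ with $\omega=[y_1(1+\dl y_2^m)+y_2R(y_1,y_2)]\,dy_2-y_2^{m+1}\,dy_1$ for some $m\in\N$; putting $u=y_2$, $v=y_1$ this is $(v+{\rm h.o.t.})\,du-u^{m+1}\,dv$, the second normal form (the $dv$-coefficient being of order $\ge2$). If both eigenvalues are nonzero and neither $\dl_1/\dl_2$ nor $\dl_2/\dl_1$ belongs to $\N$, then $\dl_1\ne\dl_2$, so ${\rm Jac}\,X(0,0)$ is diagonalizable. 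When $\dl_1/\dl_2\in\R_-$, Lemma~\ref{Siegel} gives coordinates with $X=\dl_1y_1[1+{\rm h.o.t.}]\partx+\dl_2y_2[1+{\rm h.o.t.}]\party$, and the associated $1$-form is precisely $(\dl_2+{\rm h.o.t.})\,v\,du-(\dl_1+{\rm h.o.t.})\,u\,dv$ (with $u=y_1$, $v=y_2$), the first normal form. When $\dl_1/\dl_2\notin\R_-$ we are in the Poincar\'e domain; a direct inspection of the numbers $\delta_{i,Q}=q_1\dl_1+q_2\dl_2-\dl_i$ from the proof of Theorem~\ref{PoincLintheo} shows that none vanishes --- vanishing would force $\dl_1/\dl_2$ or $\dl_2/\dl_1$ into $\N$, or $\dl_1/\dl_2$ into $\R_-$ --- so $X$ is formally linearizable, and since we are in the Poincar\'e domain with non-integer ratios, Theorem~\ref{PoincLintheo} upgrades this to an analytic linearization: again the first form, with the coordinate axes as separatrices.

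There remain the nodes with $\dl_1/\dl_2\in\N$ or $\dl_2/\dl_1\in\N$; after relabelling, $\dl_1=n\dl_2$ with $n\ge1$, and the normal form obtained earlier for this case makes $X$ analytically equal to $(\dl_1y_1+ay_2^n)\partx+\dl_2y_2\party$, $a\in\C$. I then blow up the origin once and compute $\pi^\ast X$ in the two affine charts of $\wdc^2$, dividing out the exceptional factor where necessary. In the chart $y_1=sy_2$ one gets $[(n-1)\dl_2s+ay_2^{n-1}]\,\partial_s+\dl_2y_2\,\partial_{y_2}$: if $n\ge2$ its singularity at the chart origin is again a field of the same form with $n$ replaced by $n-1$ and the same $a$ (a resonant node if $n-1\ge2$, a node with nilpotent linear part if $n=2$ and $a\ne0$); if $n=1$ this chart has no singularity ($a\,\partial_s+\dl_2y_2\,\partial_{y_2}$ when $a\ne0$, and the radial, dicritical field --- whose blow-up has no singularity on the new divisor --- when $a=0$). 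In the complementary chart the only new singularity has eigenvalues $n\dl_2$ and $(1-n)\dl_2$: a Siegel node of ratio $-n/(n-1)<0$ when $n\ge2$, a saddle-node when $n=1$ and $a\ne0$, nothing when $n=1$ and $a=0$; in every case it is already in one of the two final forms. Finally, one further blow-up of a node $(\dl y_1+ay_2)\partx+\dl y_2\party$ with nilpotent linear part ($a\ne0$) produces in one chart a singularity whose linear part has eigenvalues $1,0$, i.e. a saddle-node. An induction on $n$ now closes the argument: a resonant node with $n\ge2$ blows up to one of the same form with smaller $n$ plus a final Siegel node; a nilpotent node blows up to a saddle-node; a radial node blows up to nothing; so after at most $n$ blow-ups every descendant of a resonant node is regular, a saddle-node, or a Siegel node. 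As $V$ has finitely many singularities and each spawns finitely many blow-ups, the theorem follows.

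The step to carry out with care is the blow-up bookkeeping for the resonant node: one must verify, in \emph{both} charts and after removing the exceptional factor, that the descendants are exactly those listed (a resonant node with $n$ lowered by one, together with a Siegel node; then, successively, a nilpotent node and a saddle-node), so that $n$ is a genuine termination quantity and, as it should be, no singularity of order $>1$ is ever produced. Everything else --- Dulac's normal form, Lemma~\ref{Siegel}, Theorem~\ref{PoincLintheo}, and the non-vanishing of the $\delta_{i,Q}$ in the Poincar\'e domain --- is already available.
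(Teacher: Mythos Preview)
Your overall strategy---reduce via normal forms (Dulac, Theorem~\ref{PoincLintheo}, Lemma~\ref{Siegel}, and the resonant-node lemma), then blow up only the resonant nodes---is sound for the cases you treat, and it differs from the paper's proof, which works directly with the blow-up formula~(\ref{eq26}) at the level of $1$-jets without ever invoking analytic normal forms. Your route reuses machinery already built; the paper's route remains independent of the convergence arguments behind those normal forms.

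There is, however, a genuine gap. You begin by assuming that after Theorem~\ref{t10} every order-$1$ singularity has at least one nonzero eigenvalue, citing the phrase ``simple singularities'' in that theorem's statement. But the \emph{proof} of Theorem~\ref{t10} only establishes $\nu_p(\pi^\ast\omega)\le1$, and nothing there rules out a nilpotent Jacobian $\left(\begin{smallmatrix}0&1\\0&0\end{smallmatrix}\right)$: for instance $X=y\,\partial/\partial x+x^2\,\partial/\partial y$ has order~$1$ and $\lambda_1=\lambda_2=0$. The paper's proof of the present theorem devotes its longest portion (its Case~1, with subcases 1.a, 1.b, and a further trichotomy inside 1.b) to precisely this situation: a single blow-up of such a point can again produce a nilpotent order-$1$ singularity, and reaching the final forms may require two or three further blow-ups governed by the quadratic part of $\omega$. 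None of the normal forms you invoke apply when both eigenvalues vanish, so you must either supply this case or argue that Seidenberg's procedure already excludes it---and it does not.
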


Let us begin our approach to Theorem~\ref{t11} by stating the following result:

\begin{lema}\label{l7}
Suppose that the $1$-form $\omega=Adx+Bdy$ is non-dicritical. If there
exists $c\in \C\mathbb{P}(1)$ such that $\mu_c=1$, then
$J^1(\tilde{\omega}_c) = \tilde{A}_1 dx + \tilde{B}_1 dy\neq 0$. In
other words, the blow-up of $\omega$ at $c$, $\tilde{\omega}_c$,
does not raise the order $\mu_{\pi^{-1}(c)}$. Moreover,
$\tilde{\omega}_c$ has a non-zero eigenvalue.
\end{lema}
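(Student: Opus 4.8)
The plan is to work directly with the blow-up formula in the $(x,t)$ chart, which is already recorded in the excerpt. Recall that in coordinates $(x,t)$ one has
\[
\tilde{\omega} = [A_k(1,t) + tB_k(1,t) + x(\tilde{A}(x,t) + t\tilde{B}(x,t))]\,dx + x[B_k(1,t) + x\tilde{B}(x,t)]\,dt \, .
\]
Here $k=\nu_0(\omega)$, and since $\mu_c = 1$ we are assuming that $c \in \mathbb{CP}(1) \simeq E$ is a simple zero of the polynomial $P(t) := A_k(1,t) + tB_k(1,t)$ (the non-dicritical hypothesis says $P \not\equiv 0$, so $\deg P$ is well defined and its zeros on $E$ are exactly $C_\omega$). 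First I would move the coordinate $c$ to the origin by the translation $t \mapsto t + c$, so that $P(0) = 0$ and $P'(0) \neq 0$ because $c$ is a simple root. Then I would read off the $1$-jet of $\tilde{\omega}$ at $(x,t) = (0,0)$: the coefficient of $dx$ has linear part $P'(0)\,t + (\text{coefficient of } x)$, where the $x$-coefficient at the origin is $\tilde{A}(0,0) + 0 \cdot \tilde{B}(0,0) = A_{k+1}(1,c)$ after the translation; the coefficient of $dt$ is $x[B_k(1,t) + \cdots]$, whose linear part is $B_k(1,0)\,x = B_k(1,c)\,x$. Thus $J^1(\tilde{\omega}_c)$ is visibly nonzero: its $dx$-part already contains the term $P'(0)\,t$ with $P'(0) \neq 0$. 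This gives the first assertion, $J^1(\tilde{\omega}_c) = \tilde{A}_1\,dx + \tilde{B}_1\,dy \neq 0$, and hence $\nu_{\pi^{-1}(c)}(\tilde{\omega}) = 1$, i.e. the blow-up does not raise the order at $c$.

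For the second assertion, that $\tilde{\omega}_c$ has a nonzero eigenvalue, I would pass to the associated vector field. If $\omega = \tilde{A}_1\,dx + \tilde{B}_1\,dy$ is the $1$-jet (in the translated chart), the linear vector field is $X_1 = \tilde{B}_1\,\partial/\partial x - \tilde{A}_1\,\partial/\partial y$, and one must check that the $2\times 2$ matrix of this linear part is not nilpotent, equivalently that its trace or determinant is nonzero. From the computation above, the linear part of $\tilde{\omega}_c$ is
\[
(P'(0)\,t + A_{k+1}(1,c)\,x)\,dx + B_k(1,c)\,x\,dt \, .
\]
So the linear vector field is $X_1 = B_k(1,c)\,x\,\partial/\partial x - (P'(0)\,t + A_{k+1}(1,c)\,x)\,\partial/\partial t$; its matrix in the basis $(\partial/\partial x, \partial/\partial t)$ is upper/lower triangular with diagonal entries $B_k(1,c)$ and $-P'(0)$. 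Since $P'(0) \neq 0$, the eigenvalue $-P'(0)$ is nonzero, and we are done. (Because $E = \{x = 0\}$ is invariant in the non-dicritical case, the eigenvalue along the divisor direction $\partial/\partial t$ is $-P'(0)$, which is exactly the nonzero one.)

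The main thing to get right — and the only real obstacle — is bookkeeping: making sure that after the translation $t \mapsto t+c$ the quantity that I am calling $P'(0)$ is genuinely the derivative at the simple root $c$ of $A_k(1,t) + tB_k(1,t)$ and that it survives as a coefficient in $J^1(\tilde{\omega})$ without being cancelled by the higher-order terms $x\tilde{A}$, $x\tilde{B}$ (it is not, since those contribute only to the $x$-coefficient, not the $t$-coefficient). One should also note the edge case where $c$ is the point of $E$ not covered by the $(x,t)$ chart, i.e. $c = \infty$ in the $t$-coordinate; there one repeats the identical argument in the $(t,y)$ chart using the formula
\[
\pi^\ast(\omega) = y[A_k(t,1) + y\tilde{A}(t,y)]\,dt + [B_k(t,1) + tA_k(t,1) + y(\tilde{B}(t,y) + t\tilde{a}(t,y))]\,dy \, ,
\]
and the fact that $c$ is a simple zero of $B_k(t,1) + tA_k(t,1)$. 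Everything else is the routine linear-algebra check that a triangular matrix with a nonzero diagonal entry has a nonzero eigenvalue.
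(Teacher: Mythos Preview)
Your argument is correct and is exactly the natural computation. There is one small bookkeeping slip: after translating $t \mapsto t + c$, the factor $t$ multiplying $\tilde{B}$ becomes $t+c$, so the $x$-coefficient of the $dx$-part at the new origin is $\tilde{A}(0,c) + c\,\tilde{B}(0,c) = A_{k+1}(1,c) + c\,B_{k+1}(1,c)$, not just $A_{k+1}(1,c)$. This is harmless for your purposes: it only alters the off-diagonal entry of the linear-part matrix, and your conclusion rests entirely on the diagonal entry $-P'(0) \neq 0$, which is untouched.

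As for comparison with the paper: the paper does not actually supply a proof of this lemma. The proof environment that immediately follows the lemma's statement is in fact the proof of Theorem~\ref{t11} (the five-case analysis reducing every order-$1$ singularity to one of the two irreducible forms), and that proof \emph{invokes} Lemma~\ref{l7} as a black box (``By applying Lemma~\ref{l7}, we conclude that in these coordinates $\lambda_1 \neq 0$''). Your direct reading-off of the $1$-jet from the explicit blow-up formula, together with the observation that the resulting linear part is triangular with $-P'(0)$ on the diagonal, is precisely the elementary verification that the paper takes for granted.
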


\begin{proof}
We may suppose that the order
of $\omega$ at $(0,0)$ is $k=1$. According to the eigenvalues of
$\omega$, there are $5$ different situations that should be
analyzed separately:
\begin{enumerate}
\item $\lambda_1=\lambda_2=0$;

\item $\lambda_1=\lambda_2=\lambda\neq 0$;

\item $\lambda_1\neq\lambda_2$, $\lambda_1.\lambda_2\neq 0$ and
$\lambda_1/\lambda_2$, $\lambda_2/\lambda_1$ belong to $\N$ ;

\item $\lambda_1\neq\lambda_2$, $\lambda_1.\lambda_2\neq 0$ and
$\lambda_1/\lambda_2$, $\lambda_2/\lambda_1$ do {\it not} belong
to $\N$ ;

\item $\lambda_1\neq\lambda_2$, $\lambda_1.\lambda_2= 0$.

\end{enumerate}

Notice that the cases $4$ and $5$ correspond precisely to the
situations reflected by Equations~(\ref{eq29}) and~(\ref{eq30}),
respectively. Now we show that by blowing up the manifold on
certain points, the $3$ previous cases are reduced to cases $4$ or
$5$.

\textbf{Case $1$} \newline In this situation, the jacobian matrix
of the vector field $X=(B, -A)$ at $(0,0)$ is similar to
\[\left(%
\begin{array}{cc}
  0 & 1 \\
  0 & 0 \\
\end{array}%
\right).\]

Therefore, $J^1_{(0,0)}(\omega)=ydy$, so that the set $C_\omega$
is formed by the point $c=(1,0)$ and $\mu_c=2$.

Now we blow-up the foliation at $c$, obtaining
\[\tilde{\omega}_c=[t^2+x(t \tilde{b}(x,t)+\tilde{a}(x,t))]dx + x[t+x\tilde{b}(x,t)]dt\, ,\]

Suppose that $A_2(1,t)=\alpha_1+\alpha_2 t + \alpha_3 t^2$.

Notice that the order of $\tilde{\omega}$ may be $2$ or $1$
depending on whether or not $\alpha_1$ is equal to zero. We
analyze the two possibilities separately:
\begin{itemize}
\item \textbf{(1.a)} $\alpha_1\neq 0$

In this case, $\nu(\tilde{\omega})=1$ and to simplify the notation
we denote $\tilde{\omega}$ by:

\[\eta=[y^2+x(y \tilde{b}(x,y)+\tilde{a}(x,y))]dx + x[y+x\tilde{b}(x,y)]dy\, ,\]

Notice that $C_\omega$ is given by the equation
\[\alpha_1 x^2=0,\]

hence formed by the point $c=(0,1)$ with $\mu_c=2$. Now we blow-up
at $c$ and in the coordinate chart $(t,y)$, $t=x/y$ to obtain

\[\tilde{\eta}_c=y[\alpha_1 t + y(1+\cdots)]dt+[\alpha_1
t^2+y(2t+\cdots)]dy.\]

Therefore, $C_{\tilde{\eta}_c}$ is given by the equation
\[ty(3y + 2\alpha_1 t)=0,\]

and hence it contains only simple points. By applying
Lemma~\ref{l7}, we conclude that in these coordinates
$\lambda_1\neq 0$. Therefore we have reduced $\omega$ to a
$1$-form which does not belong to \textbf{Case $1$}.

\item \textbf{(1.b)} $\alpha_1= 0$

Here, $\nu(\tilde{\omega})=2$, and as before, denote
$\tilde{\omega}$ by $\eta$. The set $C_\eta$ is given by
\[x(2y^2+\gamma x y+\delta x^2)=x(y-a_1x)(y-a_2x)=0\, ,\]

for certain constants $\gamma$, $\delta$, $a_1$ and $a_2$.

\begin{itemize}
\item If $a_1\neq a_2$ we apply Lemma~\ref{l7} and the conclusion
is the same as before.

\item If $a_1=a_2=a\neq 0$ then
\[\tilde{\eta}= [(t-a)^2+x(\cdots)]dx + x[t+x(\cdots)]dt\, .\]

Hence the jacobian matrix associated to $\tilde{\omega}$ is $\left(%
\begin{array}{cc}
  \ast & \ast \\
  0 & 2a \\
\end{array}%
\right)$, therefore admitting a non-zero eigenvalue.

\item If $a=0$ then the point $c=(1,0)\in C_\eta$, has
multiplicity $\mu_c=2$ and
\[\tilde{\eta}_c=[2t^2+ x(\cdots)] dx+ x[t+x(\cdots)]dt \]

So, $\tilde{\eta}$ is still of the same type as $\eta$. Notice
that by Theorem~\ref{t10} $\nu(\tilde{\eta})=1$. Therefore this
possibility leads to \textbf{Case (1.a)}.
\end{itemize}

\end{itemize}

In conclusion, there always exists a way to blow-up $\omega$ so
that both of its eigenvalues are different from {\it zero}.

\textbf{Case $2$}

Without loss of generality we may suppose that $\lambda=1$. Hence
there are basically two possibilities to be considered:
\begin{itemize}
\item The jacobian matrix of the vector field $X$ at $(0,0)$ is
similar to
\[\left(%
\begin{array}{cc}
  1 & 0 \\
  0 & 1 \\
\end{array}%
\right)\]

In this situation, $J^1_{(0,0)}(\omega)=ydx-xdy$. It is therefore
the dicritical case. By blowing-up $\omega$ at each $c\in
\C\mathbb{P}(1)$, notice that $\tilde{\omega}_c$ has no
singularities ($\nu_p(\tilde{\omega}_c)= 0$ for all $p\in V$).

\item The jacobian matrix of the vector field $X$ at $(0,0)$ is
similar to
\[\left(%
\begin{array}{cc}
  1 & 1 \\
  0 & 1 \\
\end{array}%
\right)\]

Here, $J^1_{(0,0)}(\omega)=-xdx+(x+y)dy$, and the set $C_\omega$
is the point $c=(1,0)$ with $\mu_c=2$. By blowing-up $\omega$ at
$c$ we obtain:
\[\tilde{w}_c=[t^2+x(\cdots)]dx+x[1+t+x(\cdots)]dt\]

Notice that the jacobian matrix associated to $\tilde{\omega}$
at $c$ is equivalent to $\left(%
\begin{array}{cc}
  1 & 0 \\
  0 & 0 \\
\end{array}%
\right)$. This corresponds to \textbf{Case $5$}, so that $\omega$
is reduced to Equation~(\ref{eq30}).
\end{itemize}

\textbf{Case $3$}

In this case, $J^1_{(0,0)}(\omega)=-\lambda_2 ydx+\lambda_1 xdy$.
We may suppose that $\lambda_1=1$ and $\lambda_2=n$. So, $\omega$
may be written as
\[\omega=[-ny+(\cdots)]dx+[x+(\cdots)]dy\, .\]
Notice that there are exactly two points on $C_\omega$, namely
$c_1=(1,0)$ and $c_2=(0,1)$. Blowing-up $\omega$ at $c_1$ on the
coordinate chart $(x,t)$, $t=y/x$ we obtain
\[\tilde{\omega}_{c_1}=[t(1-n)+x(\cdots)]dx + x[1+x(\cdots)]dt\]
so that the jacobian matrix is:
\[\left(%
\begin{array}{cc}
  \lambda_1 & 0 \\
  \ast & \lambda_2-\lambda_1 \\
\end{array}%
\right)\thicksim \left(%
\begin{array}{cc}
  1 & 0 \\
  0 & n-1 \\
\end{array}%
\right)\]

Therefore, we have reduced $\omega$ to a $1$-form having
eigenvalues $\tilde{\lambda}_1=1$ and there are three
possibilities for $\tilde{\lambda}_2$:
\begin{itemize}
\item If $n=0$ then $\tilde{\lambda}_2=-1$ and the reduction of
$\omega$ belongs to \textbf{Case $4$}.

\item If $n=1$ then $\tilde{\lambda}_2=0$ and the reduction of
$\omega$ belongs to \textbf{Case $5$}.

\item If $n>1$ then $\tilde{\lambda}_2=n-1$ and by blowing-up
$n-1$ times, the reduction of $\omega$ belongs to \textbf{Case
$2$}.
\end{itemize}

Now let us analyze the blowing-up of $\omega$ at $c_2$ on the
coordinate chart $(t,y)$, $t=y/x$ where
\[\tilde{\omega}_{c_2}=y[-n+y(\cdots)]dt + [t(1-n)+y(\cdots)]dy\, .\]

Hence the jacobian matrix is given by:
\[\left(%
\begin{array}{cc}
  \lambda_1-\lambda_2 & \ast \\
  0 & \lambda_2 \\
\end{array}%
\right)\thicksim \left(%
\begin{array}{cc}
  1-n & 0 \\
  0 & n \\
\end{array}%
\right)\]

\begin{itemize}
\item If $n=0$ then $\tilde{\lambda}_1=1$ and
$\tilde{\lambda}_2=0$, hence the reduction of $\omega$ belongs to
\textbf{Case $5$}.

\item If $n=1$ then $\tilde{\lambda}_1=0$ and
$\tilde{\lambda}_2=1$, hence the reduction of $\omega$ belongs to
\textbf{Case $5$}.

\item If $n>1$ then $\tilde{\lambda}_1<0$ and
$\tilde{\lambda}_2>0$, hence the reduction of $\omega$ belongs to
\textbf{Case $4$}.
\end{itemize}
This completes the proof of the theorem.
\end{proof}

%--------------------------------------------------------------------------------------------------------------------------------------------------------------------------------------------------------------------
%--------------------------------------------------------------------------------------------------------------------------------------------------------------------------------------------------------------------
%--------------------------------------------------------------------------------------------------------------------------------------------------------------------------------------------------------------------

\subsection{Existence of First Integrals}

To illustrate the applications of the preceding techniques on
concrete problems about singularities of foliations, we shall
provide in this section a detailed proof of a fundamental result
due to J.-F Mattei and R. Moussu \cite{M-M} concerning the
existence of first integral for those singularities.

Recall that a holomorphic {\it first integral} of a holomorphic
foliation $\fol$ is a non-constant holomorphic function
$f:(\C^2,0)\rightarrow \C$ that is constant when restricted to the
leaves of $\fol$. If $\omega$ is a $1$-form defining $\fol$ then $f$
is such that $\omega\wedge df=0$. Equivalently, the leaves of
$\fol$ are the irreducible components of the level surfaces of
$f$.

This section is devoted to the topological characterization of
foliations admitting a holomorphic first integral, which is itself
the main result of [M-M].

Let $\fol$ be a holomorphic foliation defined on an open set $U\subset \C^2$,
with an isolated singularity at $(0,0)$. Suppose that $f$ is a
holomorphic first integral for $\fol$. Then the following
conditions hold:
\begin{enumerate}
\item Only a {\it finite} number of leaves of $\fol$ accumulates
on $(0,0)$.
\item The leaves of $\fol$ are closed as subsets of $U\setminus
\{(0,0)\}$.
\end{enumerate}

Indeed, if this function exists then the set $f^{-1}(0)$ is the set of
the separatrizes of $\fol$. Recall that a separatrix is simply an
irreducible analytic curve invariant by $\fol$, which contains the
singularity. Now $f$ can have only finitely many irreducible
factors, i.e. $f=g_1^{k_1}\ldots g_n^{k_n}H$ for some positive integer
constants $k_1, \ldots, k_n$ and holomorphic functions $g_1, \ldots, g_n, H$ such
that $H(0,0) \neq 0$. Hence the number of separatrizes must also be finite, as
these are given by the equations $\{g_i=0\}$. Clearly this is the
contents of Condition $1$.

Since $f$ is constant when restricted to the leaves, Condition $2$
also holds. In particular, a leaf $L$ of $\fol$ that is not
contained in the set $f^{-1}(0)$ cannot accumulate on a
separatrix. This last remark will often be used in the sequel.

Conversely, a foliation satisfying Conditions $1$ and $2$ admits a
first integral. This is the contents of the next theorem that
plays the main role in this section.

\begin{teo}[Mattei-Moussu \cite{M-M}]
\label{t12} Consider the holomorphic foliation $\fol$ defined on an
open set $U\subset \C^2$ with an isolated singularity at $(0,0)$. Suppose
that it satisfies the following conditions:
\begin{enumerate}
\item Only a {\it finite} number of leaves of $\fol$ accumulates
on $(0,0)$;
\item The leaves of $\fol$ are closed on $U\setminus \{(0,0)\}$.
\end{enumerate}
Then $\fol$ has a holomorphic non-constant first integral
$f:U\rightarrow \C$.
\end{teo}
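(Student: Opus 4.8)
The plan is to follow the three-step strategy outlined in the introduction of this chapter, using the reduction of singularities (Theorem~\ref{t10}, Theorem~\ref{t11}) as the main structural tool and induction on the height of the reduction tree. First I would apply Seidenberg's Theorem to $\fol$ at $(0,0)$, obtaining a proper map $\pi : V \rightarrow U$ (a composition of blow-ups) such that the proper transform $\widetilde{\fol}$ has only regular points and simple singularities along the exceptional divisor $E = \pi^{-1}(0,0)$. Conditions~1 and~2 on $\fol$ lift to corresponding conditions on $\widetilde{\fol}$ near $E$ (finitely many leaves accumulate on $E$, leaves closed off $E$), with the caveat that one must check the blow-up is non-dicritical; indeed if some blow-up were dicritical, infinitely many separatrices would appear through the singularity, contradicting Condition~1. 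So $E$ is invariant by $\widetilde{\fol}$ and is a tree of rational curves with normal crossings, all singularities of $\widetilde{\fol}$ lying on $E$.

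The second step is to show that every singularity $q$ of $\widetilde{\fol}$ on $E$ has a holomorphic first integral, and moreover that $\widetilde{\fol}$ is linearizable near $q$ with eigenvalue ratio in $\Q_-$. For this I would compute the local holonomy of each component of $E$ around each singularity and argue, case by case using Theorem~\ref{t11}, that the only possibilities compatible with Conditions~1 and~2 are singularities in the Siegel domain with $\lambda_1/\lambda_2 \in \Q_-$: a saddle-node has a separatrix with holonomy $z + z^{p+1} + \cdots$ (Lemma~\ref{holvariedadefraca}), whose orbits spiral and whose leaves are not closed, violating Condition~2; a singularity with $\lambda_1/\lambda_2 \notin \Q_-$ (the case of Equation~\ref{eq29}) likewise has non-closed leaves or infinitely many accumulating leaves unless the holonomy is periodic, forcing the ratio to be a negative rational. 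Once the ratio is a negative rational and the holonomy is periodic (hence linearizable, being a finite-order germ in ${\rm Diff}(\C,0)$), the Mattei--Moussu linearization criterion — that a foliation whose holonomy is linearizable is itself linearizable — gives that $\widetilde{\fol}$ near $q$ is $p\,y_1\,\partial/\partial y_1 - q\,y_2\,\partial/\partial y_2$ up to a unit, which has the first integral $y_1^q y_2^p$.

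The third step is to glue these local first integrals into a global holomorphic function on a neighborhood of $E$, and then push it down by $\pi$. I would treat first the base case in which a single blow-up suffices: then $E \simeq \C\Pp(1)$, the singularities $q_1,\dots,q_m \in E$ each carry a local first integral $f_i$ that is constant on $E$ (since $E$ is a leaf-closure, i.e. a union of separatrices), and on the overlaps the quotients $f_i/f_j$ are non-vanishing holomorphic functions constant on the leaves; using that the leaf space transverse to $E$ is, away from the $q_i$, a covering, together with the finiteness from Condition~1, one shows these transition units are locally constant and can be adjusted so the $f_i$ patch to a single holomorphic first integral $\widetilde{f}$ on a neighborhood of $E$, vanishing on $E$ to a uniform order. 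Since $\pi$ contracts $E$ to a point and is a biholomorphism elsewhere, $\widetilde{f}$ descends to a continuous function on $U$ which is holomorphic off $(0,0)$ and bounded, hence holomorphic at $(0,0)$ by Riemann's extension theorem; this is the desired first integral $f$. The general case follows by induction on the number of blow-ups in the reduction: applying the base-case argument to the last blow-up produces a first integral on a neighborhood of the corresponding exceptional component, which descends to a first integral of the foliation one level down, still satisfying Conditions~1 and~2, and the inductive hypothesis finishes the job. The main obstacle I expect is the second step — ruling out saddle-nodes and irrational Siegel singularities via holonomy and promoting the periodic holonomy to linearizability of the foliation — since this is where the topological hypotheses must be converted into rigid analytic information; the gluing in step three is comparatively routine once one knows all local models are $y_1^q y_2^p$.
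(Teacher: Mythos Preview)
Your three-part strategy matches the paper's exactly, and Steps~1 and~2 are essentially what the paper does (Proposition~\ref{l10} for ruling out saddle-nodes, hyperbolic singularities, and positive-ratio singularities via their holonomies; then Propositions~\ref{p6}, \ref{p8}, and~\ref{p4} together with Theorem~\ref{TMM} to force $\lambda_1/\lambda_2\in\Q_-$ and linearize). Your remark that the irrational Siegel case is where the real work lies is accurate: the paper spends Lemmas~\ref{l9}--\ref{l15} on exactly this.

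The difference is in Step~3, and there your cocycle-style gluing has a gap. The local first integrals $f_i = u_i^{a_i} v_i^{b_i}$ at the different singularities $p_i$ on $E$ need not vanish to the same order along $E$, so the quotients $f_i/f_j$ are not a priori non-vanishing units on the overlaps; and even after adjusting powers, the fact that the transition functions are \emph{constant} (not merely leaf-constant holomorphic functions on an annulus) requires knowing that the \emph{global} projective holonomy of the leaf $E\setminus\{p_1,\ldots,p_k\}$ is finite, not just the individual local holonomies around each $p_i$. This is the content of Lemmas~\ref{l13} and~\ref{14} in the paper: one must first show the projective holonomy group is abelian (otherwise a commutator would be a nontrivial parabolic germ, contradicting Condition~1), hence finite cyclic since its generators have finite order. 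The paper then bypasses the gluing problem entirely: rather than patching local first integrals, it picks a single transversal $\Sigma$, a coordinate $z$ in which the generator of the cyclic holonomy group is $z\mapsto e^{2\pi i/m}z$, sets $h(z)=z^m$, and extends $h$ along leaves using Mattei's estimate (Proposition~\ref{p7}) plus Riemann extension over the transverse separatrices. Your inductive step for higher reduction trees is correct in spirit and close to the paper's, but again the paper phrases it via holonomy rather than descent of first integrals.
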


Before proving the theorem, consider two holomorphic foliations
with isolated singularities, $\fol$ and $\fol '$ defined on a
neighborhood $U$ of the origin. We say that $\fol$ and $\fol '$
are {\it topologically equivalent} if there exists a homeomorphism
$h: U\rightarrow U$, such that $h(0,0)=(0,0)$ taking the leaves of
$\fol$ into the leaves of $\fol '$. It follows from the above
theorem that the existence of a holomorphic first integral obeys a
topological criterion: if $\fol$ admits one such first integral so
does $\fol '$.

Roughly speaking, the proof of Theorem~\ref{t12} consists of three
main steps. These are as follows:

\textbf{Part $1$}: If $\fol$ satisfies Conditions~$1$ and~$2$,
then, by performing successive blow-ups, the only irreducible
singularities found in the Seidenberg tree are those having two
non-zero eigenvalues $\lambda_1$ and $\lambda_2$ verifying in addition
$\lambda_1/\lambda_2\in \R_-$. Thus the singularities belong to the
Siegel domain.

\textbf{Part $2$}: Study of $\fol$ on a neighborhood of a
singularity in the Siegel domain. In particular the
characterization of those possessing a holomorphic first integral.

\textbf{Part $3$}: Extension of the local ``first integrals''
obtained in \textbf{Part $2$} to a neighborhood of the exceptional
divisor $E$. If $\tilde{f}$ denotes this extension, then
$f=\pi_\ast \tilde{f}$ is the first integral we were looking for.

We will give a detailed approach to each part described above in
the next $3$ sections. Together, they will conclude the proof of
the Mattei-Moussu Theorem.

%--------------------------------------------------------------------------------------------------------------------------------------------------------------------------------------------------------------------
%--------------------------------------------------------------------------------------------------------------------------------------------------------------------------------------------------------------------
%--------------------------------------------------------------------------------------------------------------------------------------------------------------------------------------------------------------------

\subsubsection{Part $1$: Analysis of Singularities in the Seidenberg
Tree of $\fol$}

Let us now begin by analyzing the irreducible singularities
obtained by the Seidenberg procedure. From now on we shall assume
that $\fol$ satisfies the assumptions of Theorem~\ref{t12}.

\begin{prop}
\label{l10} Let $\fol$ be a foliation satisfying Conditions $1$
and $2$. Then the only irreducible singularities that exist in the
Seidenberg tree of $\fol$ are those having non-zero eigenvalues
$\lambda_1$ and $\lambda_2$ such that the quotient
$\lambda_1/\lambda_2$ belongs to  $\R_-$.
\end{prop}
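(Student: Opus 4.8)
The plan is to proceed by induction on the number of blow-ups in the Seidenberg reduction tree, analyzing at each stage what types of irreducible singularities are compatible with Conditions~1 and~2. By Seidenberg's Theorem~\ref{t10} and Theorem~\ref{t11}, after finitely many blow-ups every singularity of the proper transform is either of the form~(\ref{eq29}) with $\lambda_1/\lambda_2 \notin \N$ (and $\lambda_2/\lambda_1 \notin \N$), or a saddle-node of the form~(\ref{eq30}). So the work consists in ruling out, one by one, all irreducible singularities whose eigenvalue ratio lies outside $\R_-$: namely (i) saddle-nodes, (ii) singularities with $\lambda_1/\lambda_2 \in \C \setminus \R$, and (iii) singularities with $\lambda_1/\lambda_2 \in \R_+$ (in reduced, non-$\N$ form this forces an irrational positive ratio, the Siegel-vs-Poincaré distinction being the sign). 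Throughout one must remember that Conditions~1 and~2 are preserved under blow-up (a blow-up is a biholomorphism off the divisor, and the divisor contributes only finitely many leaves), so it suffices to contradict them for a single reduced singularity.

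First I would treat the saddle-node case. By Lemma~\ref{holvariedadefraca}, the holonomy of the weak separatrix $\{y_2=0\}$ around the singularity has the form $h(z)=z+z^{p+1}+\cdots$ with $p\geq 1$. A diffeomorphism tangent to the identity with nontrivial second-order (or $(p{+}1)$-order) term has, by the Flower Theorem~\ref{t14}, attracting petals: there are points $z$ with $h^{(n)}(z)\to 0$ but $h^{(n)}(z)\neq 0$ for all $n$. Such an orbit of the holonomy map corresponds to a leaf of $\fol$ that spirals toward the separatrix without reaching it, hence a leaf that is \emph{not closed} in a punctured neighborhood of the singularity (it accumulates on the separatrix). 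This contradicts Condition~2. Equivalently, this orbit sits on a leaf accumulating on the singularity that is distinct from the finitely many separatrices, and one can squeeze a contradiction with Condition~1 as well; but the cleanest route is closedness. For the case $\lambda_1/\lambda_2 \in \C\setminus\R$, one uses that in suitable coordinates the singularity is linearizable (Poincaré, Theorem~\ref{PoincLintheo}, applies since the ratio is then automatically non-resonant and not in $\R_-$), so $\fol$ is given by a linear vector field $\lambda_1 y_1 \partial_{y_1} + \lambda_2 y_2 \partial_{y_2}$. Its leaves are $y_1^{\lambda_2} = c\, y_2^{\lambda_1}$, and when $\lambda_1/\lambda_2 \notin \R$ these leaves wind around both axes and fail to be closed away from the origin (their closure picks up points of both separatrices); again Condition~2 fails. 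The holonomy of a separatrix here is the linear map $z \mapsto e^{2\pi i \lambda_1/\lambda_2} z$, whose orbits are dense on circles when the ratio is irrational — this gives non-closed leaves directly.

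The remaining, and I expect most delicate, case is $\lambda_1/\lambda_2 \in \R_+$. In the reduced setting Theorem~\ref{t11} guarantees $\lambda_1/\lambda_2,\lambda_2/\lambda_1\notin\N$, so the ratio is a positive non-integer; if it is irrational the singularity is again linearizable by Poincaré and the separatrix holonomy $z\mapsto e^{2\pi i \lambda_1/\lambda_2}z$ is an irrational rotation with dense orbits on circles, producing leaves that accumulate on themselves and hence are not closed, contradicting Condition~2. If the ratio is rational, say $\lambda_1/\lambda_2 = p/q$ in lowest terms with $p,q\geq 2$ (the cases $p=1$ or $q=1$ being excluded), there may be resonances and one must argue via the normal form / holonomy more carefully: one shows the separatrix holonomy is periodic or, in the resonant non-linearizable case, is a map with a fixed point and nontrivial higher-order terms of ``parabolic'' type again producing petals — any such behavior yields leaves accumulating on the separatrix or finitely many leaves through the singularity failing. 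The positive resonant rational case is the main obstacle: here I would invoke the resonance structure, note that a resonant normal form $\lambda_1 y_1\partial_{y_1} + \lambda_2 y_2 \partial_{y_2} + (\text{resonant terms})$ either is linearizable (periodic holonomy, which forces the leaves to be \emph{closed} — so this possibility must instead be eliminated by Condition~1, since then infinitely many leaves through the singularity would be genuine separatrices-like curves, or rather one checks the monodromy forces a first integral which is fine, contradiction with being an irreducible non-Siegel singularity that appears in a non-first-integrable tree) — this is subtle, and in fact the original argument in \cite{M-M} handles it by a direct holonomy computation showing that only $\lambda_1/\lambda_2\in\R_-$ is consistent with both conditions simultaneously, where $\R_-$ is exactly where two transverse smooth separatrices coexist (Lemma~\ref{Siegel}) and local first integrals of the form $y_1^a y_2^b$ can exist. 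I would follow that computation: for each candidate ratio, write down the holonomy map along a loop around the singularity, identify its dynamics (parabolic, elliptic/irrational rotation, or linearizable with rational rotation number), and check that in every case except $\lambda_1/\lambda_2\in\R_-$ one produces either a leaf that is not closed in the punctured neighborhood (violating Condition~2) or infinitely many leaves accumulating at the singularity (violating Condition~1). Finally, one notes that the inductive step is automatic: Conditions~1 and~2 for $\fol$ pass to the blown-up foliation (off the exceptional divisor the map is a biholomorphism, and the divisor itself accounts for only finitely many additional leaves, with invariance of the divisor or its dicritical nature handled by Lemma~\ref{l1} and the dichotomy in Section~1.4.1), so the analysis at the leaves of the tree applies and the proposition follows.
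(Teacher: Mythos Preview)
Your overall strategy --- case analysis on the eigenvalue ratio, ruling out saddle-nodes, hyperbolic singularities, and positive-ratio singularities one at a time via holonomy --- matches the paper's. The saddle-node case is handled correctly and essentially as the paper does.

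There is a genuine gap in the case $\lambda_1/\lambda_2 \in \R_+$, and a related error in the hyperbolic case. In the hyperbolic case you claim the holonomy orbits are ``dense on circles''; this is wrong. When $\lambda_1/\lambda_2 \notin \R$ the multiplier $e^{2\pi i \lambda_2/\lambda_1}$ has modulus $e^{-2\pi\, \mathrm{Im}(\lambda_2/\lambda_1)} \neq 1$, so the linearized holonomy is a contraction or expansion and every orbit spirals to $0$ (or to $\infty$). The paper uses exactly this: all nearby leaves accumulate on $\{y=0\}$, contradicting Conditions~1 and~2. Your conclusion survives, but the mechanism is not an irrational rotation.

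For $\lambda_1/\lambda_2 \in \R_+$ you get stuck, and this is where you miss the key idea. First, your worry about resonances is unfounded: after Theorem~\ref{t11} the reduced ratio is never in $\N$, and in dimension~$2$ that is the \emph{only} possible resonance, so Poincar\'e linearization applies uniformly to rational and irrational positive ratios alike. Second, for a rational positive ratio the holonomy is a periodic rotation, so your holonomy-based argument cannot produce non-closed leaves; this is why you find the case ``the main obstacle''. The paper bypasses holonomy entirely here: in the linear model $\lambda_1 x\,\partial_x + \lambda_2 y\,\partial_y$, lift the radial path $x(t)=x_0 e^{-t}$, $t \in \R_+$, to the leaf through $(x_0,y_0)$ and get $y(t)=y_0 e^{-(\lambda_2/\lambda_1)t}$. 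Since $\lambda_2/\lambda_1>0$, \emph{every} leaf flows to the origin, so infinitely many leaves accumulate at $(0,0)$, violating Condition~1. This radial-line argument is short, handles rational and irrational ratios at once, and is what you are missing.
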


As seen in the previous section, every singularity of a given
foliation $\fol$ may be reduced by successive blow-ups so that the
blown-up foliation $\widetilde{\fol}$ only contains simple
singularities with eigenvalues $\lambda_1$ and $\lambda_2$ which
fall in one of the following cases:
\begin{itemize}
\item $\lambda_1.\lambda_2= 0$ where at least one between $\dl_1, \, \dl_2$
is distinct from zero (the case of a saddle-node singularity).

\item $\lambda_1.\lambda_2\neq 0$ with neither $\lambda_1/\lambda_2$ nor
$\lambda_2/\lambda_1$ being a positive integer.
\end{itemize}

Let us discuss the different possibilities separately.

\begin{lema}
The Seidenberg tree contains no saddle-nodes.
\end{lema}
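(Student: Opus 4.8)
The plan is to exploit the holonomy computation from Lemma~\ref{holvariedadefraca} together with the two standing hypotheses (finitely many leaves accumulate at the origin; leaves are closed in the punctured neighborhood). Suppose, for a contradiction, that the Seidenberg resolution tree of $\fol$ contains a saddle-node singularity $q$, sitting on some component $D$ of the exceptional divisor. A saddle-node has exactly the two formal eigenvalues $\lambda_1 \neq 0$, $\lambda_2 = 0$, and by Dulac's normal form it admits the "strong" separatrix $\{y_2 = 0\}$ tangent to the eigendirection of $\lambda_1$ and the (possibly only formal) "weak" separatrix tangent to the kernel of $\lambda_1$. The key local input is Lemma~\ref{holvariedadefraca}: the holonomy of the strong separatrix, computed along a small loop around $q$, has the form $h(z) = z + z^{p+1} + \cdots$ with $p \geq 1$, i.e.\ it is a nontrivial diffeomorphism tangent to the identity.

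First I would argue that the strong separatrix $\{y_2 = 0\}$ of $q$ can be taken to lie inside a divisor component $D$ (after possibly one extra blow-up, the resolution can be arranged so that each separatrix of each irreducible singularity is either contained in $E$ or is transverse to $E$; in the non-dicritical situation forced by Conditions 1–2 one checks the strong separatrix is a local branch of $E$). Then the loop around $q$ inside $D$, pushed down appropriately, represents a homotopy class in $D$ minus the finitely many singular points of $\widetilde\fol$ on it, and the holonomy of $\widetilde\fol$ along this loop — acting on a transversal $\Sigma$ — is conjugate to $h(z) = z + z^{p+1} + \cdots$. Now invoke the Leau–Fatou flower picture (Theorem~\ref{t14}): such an $h$ has attracting petals on which $h^n \to 0$, so a generic leaf through a point of $\Sigma$ close to $D$ spirals and accumulates on $D$, hence on the singularity $q$; moreover distinct petal-orbits of the same leaf are separated, which forces infinitely many leaves of $\widetilde\fol$ near $q$ to accumulate on $q$, contradicting the finiteness Condition~1 (which is preserved under $\pi_*$).

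The cleanest route, and the one I expect the paper to take, is slightly different and avoids delicate petal bookkeeping: a diffeomorphism $h(z) = z + z^{p+1} + \cdots$ is not periodic, its orbits are infinite and accumulate only at $0$, and by the flower theorem the quotient of a petal by $h$ is an infinite cylinder, so the leaf of $\widetilde\fol$ containing a generic transversal point is \emph{not closed} near $D$ — its closure picks up the whole local separatrix (and the singularity), yet the leaf itself omits it. Pushing this down by $\pi$ (which is a biholomorphism off $E$) produces a leaf of $\fol$ on $U \setminus \{(0,0)\}$ that is not closed there, directly contradicting Condition~2. Either way, the assumed saddle-node cannot exist.

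The main obstacle, and the point requiring care, is the global-to-local bridge: one must be sure that the \emph{local} holonomy of $\widetilde\fol$ around the saddle-node singularity on $D$ really does control the accumulation/closedness of the \emph{global} leaves of $\fol$ on the punctured neighborhood $U \setminus \{(0,0)\}$. This needs (i) that $D$ is $\widetilde\fol$-invariant (automatic off the dicritical case, which Conditions 1–2 exclude since a dicritical blow-up produces infinitely many separatrices hence infinitely many accumulating leaves), (ii) that the strong separatrix of the saddle-node lies in $E$ after the resolution, so the relevant loop is realized inside the divisor, and (iii) a lemma — implicit in the Mattei–Moussu machinery and essentially a consequence of $\pi$ being proper and a biholomorphism off $E$ — saying that closedness of leaves and finiteness of accumulating leaves are properties transported faithfully between $\fol$ and $\widetilde\fol$. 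With those in hand the argument is just: saddle-node $\Rightarrow$ holonomy $z + z^{p+1}+\cdots$ $\Rightarrow$ Leau flower $\Rightarrow$ violation of Condition~1 (or~2), completing the proof.
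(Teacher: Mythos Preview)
Your approach is essentially the same as the paper's: holonomy of the strong separatrix is $h(z)=z+z^{p+1}+\cdots$, the Leau--Fatou flower dynamics produce infinitely many distinct orbits accumulating at $0\in\Sigma$, hence infinitely many leaves accumulating on the strong separatrix, and this contradicts the standing hypotheses.

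The one place you overcomplicate things is your point (ii): you try to argue that ``the strong separatrix of the saddle-node lies in $E$ after the resolution'', possibly after an extra blow-up. This is unnecessary and potentially delicate (an extra blow-up could create new saddle-nodes and you would have to control that). The paper simply does not care where the strong separatrix sits: it splits into two cases. If $\{y_2=0\}$ is contained in the exceptional divisor, it collapses to $(0,0)$ under $\pi$, so infinitely many leaves accumulate on the origin, contradicting Condition~1. If $\{y_2=0\}$ is transverse to the exceptional divisor, then $\pi$ (being proper) sends it to a separatrix of $\fol$, and infinitely many leaves accumulate on that separatrix, contradicting Condition~2. Either way you are done, with no need to rearrange the resolution. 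Drop your requirement (ii) and replace it with this dichotomy; the rest of your argument is fine.
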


\begin{proof}
Suppose for a contradiction that the statement is
false. As we have seen in previous sections there are local
coordinates $(x,y)$ about a saddle-node singularity where the
blown-up foliation $\widetilde{\fol}$ is given by
\[
\tilde{\omega}(x,y) = [x (1 + \lambda y^p) + y R(x,y)]dy- y^{p+1}dx \, .
\]
Note that $\{y=0\}$ is invariant by the blown-up foliation
$\widetilde{\fol}$. Next let us consider the holonomy application
of $\widetilde{\fol}$ associated to a small loop contained in
$\{y=0\}$ encircling the origin. We have already seen on
Section~\ref{sectionsaddlenode} that the holonomy is given
by $h(z)=z+z^{p+1}+\cdots$ for $z$ on some local transverse
section $\Sigma$.

From the dynamical structure of $h$ (in particular the existence
of Fatou coordinates) we see that there are infinitely many points
whose orbit under the iteration of $h$ accumulats on $0\in \C \simeq
\Sigma$. These points naturally correspond to distinct leaves of $\fol$
accumulating on the regular part of $\{y=0\}$. Now we have two
possibilities.

If $\{y=0\}$ is contained on the exceptional divisor, then it is
projected onto $(0,0)$. Hence there are infinitely many leaves
accumulating on the origin. This is not possible, for we are
assuming Condition~$1$.

On the other hand, if $\{y=0\}$ is transverse to the exceptional
divisor, then it is projected by $\pi$ as a separatrix (recall
that $\pi$ is proper). There are infinitely many leaves
accumulating on this separatrix, which again is impossible, since
it contradicts Condition~$2$.

In conclusion the reduction of $\omega$ does not contain any
saddle-node singularity.
\end{proof}

Next we shall discuss irreducible singularities having two
eigenvalues $\lambda_1$, $\lambda_2$ different from zero. Recall
that such a singularity is called {\it hyperbolic} if
$\lambda_1/\lambda_2\in \C\setminus \R$.

\begin{lema}
There is no hyperbolic singularity in the Seidenberg tree.
\end{lema}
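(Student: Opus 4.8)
The plan is to mirror the argument just used to rule out saddle-nodes, replacing the Fatou-coordinate dynamics by the much simpler dynamics of a linear contraction. First I would observe that a hyperbolic singularity, whose eigenvalue ratio satisfies $\lambda_1/\lambda_2 \in \C \setminus \R$, automatically falls under the hypotheses of the Poincar\'e Linearization Theorem (Theorem~\ref{PoincLintheo}): the ratio does not belong to $\R_-$, and neither $\lambda_1/\lambda_2$ nor $\lambda_2/\lambda_1$ is a positive integer, since a non-real number cannot lie in $\N$; moreover the linear part is diagonalizable because $\lambda_1 \neq \lambda_2$. Hence, in suitable local coordinates $(x,y)$ centred at the singularity, the blown-up foliation $\widetilde{\fol}$ is induced by the linear vector field $\lambda_1 x\,\partial/\partial x + \lambda_2 y\,\partial/\partial y$, so that $\{x=0\}$ and $\{y=0\}$ are its two smooth separatrizes.

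Next I would compute the holonomy of $\widetilde{\fol}$ along a small loop contained in the separatrix $\{y=0\}$ and encircling the singularity, exactly as in the saddle-node computation of Lemma~\ref{holvariedadefraca}: parametrizing the loop by $x(t)=re^{2\pi i t}$ and integrating $dy/dx = (\lambda_2/\lambda_1)(y/x)$ along it, one obtains on a transverse section $\Sigma$ the linear map $h(z) = e^{2\pi i \lambda_2/\lambda_1}\, z$. The crucial point is that its multiplier has modulus $e^{-2\pi \,\mathrm{Im}(\lambda_2/\lambda_1)} \neq 1$, precisely because $\lambda_2/\lambda_1 \notin \R$; up to replacing $h$ by $h^{-1}$ (equivalently, interchanging the two coordinate axes) we may assume that $h$ is a linear contraction. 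Consequently every orbit $\{h^n(z_0)\}_{n\geq 0}$ with $z_0 \neq 0$ converges to $0 \in \Sigma$; these points lie on a single leaf of $\widetilde{\fol}$, and leaves associated to $z_0$'s in distinct $h$-orbits are distinct, so infinitely many leaves of $\widetilde{\fol}$ accumulate on the regular part of $\{y=0\}$.

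The conclusion is then identical to the saddle-node case. If $\{y=0\}$ is a branch of the exceptional divisor $E$, then $\pi$ collapses it to $(0,0)$, whence $\fol$ has infinitely many leaves accumulating on the origin, contradicting Condition~$1$. If $\{y=0\}$ is not contained in $E$, then $\pi(\{y=0\})$ is an analytic curve through $(0,0)$ invariant by $\fol$, that is, a separatrix, and the infinitely many leaves above project to infinitely many leaves of $\fol$ accumulating on the regular part of this separatrix; such leaves are therefore not closed in $U\setminus\{(0,0)\}$, contradicting Condition~$2$. Either way one reaches a contradiction, proving that the Seidenberg tree of $\fol$ contains no hyperbolic singularity.

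I do not anticipate a serious obstacle: the only points requiring mild care are the bookkeeping that distinct holonomy orbits yield distinct leaves of $\fol$, and that accumulation on the regular part of $\{y=0\}$ is preserved under the projection $\pi$ (a biholomorphism off $E$) — both routine and already used implicitly in the saddle-node lemma. The genuinely substantive inputs, namely that linearizability forces the separatrix holonomy to be linear and that non-realness of the ratio forces $|h'(0)|\neq 1$, are immediate from Theorem~\ref{PoincLintheo}.
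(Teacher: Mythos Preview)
Your proposal is correct and follows essentially the same approach as the paper: linearize via Poincar\'e's theorem, compute the linear holonomy $h(z)=e^{2\pi i\lambda_2/\lambda_1}z$ around the separatrix $\{y=0\}$, observe that its multiplier has modulus $\neq 1$ because $\lambda_2/\lambda_1\notin\R$, and conclude that infinitely many leaves accumulate on $\{y=0\}$, contradicting Conditions~$1$ or~$2$ according to whether $\{y=0\}$ lies in the exceptional divisor or projects to a separatrix. The only cosmetic slip is the parenthetical ``equivalently, interchanging the two coordinate axes'': replacing $h$ by $h^{-1}$ corresponds to reversing the loop, not swapping axes, but this is harmless since either way one of $h$, $h^{-1}$ is a contraction.
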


\begin{proof}
By the Poincar\'{e} Linearization Theorem (cf. Section~\ref{sectionvfnzeigenvalues})
we may suppose that the vector field associated to
$\widetilde{\fol}$ is given in local coordinates by
\[
\widetilde{X}=\lambda_1 x \frac{\partial}{\partial
x}+\lambda_2y \frac{\partial}{\partial y}
\]
Again, $\{y=0\}$ is invariant by the foliation and, in
order to study the behavior of the leaves close to $\{y=0\}$, we
are going to consider its local holonomy.

Let $x(t) = \varepsilon e^{2 \pi i t}$ be a small loop around the origin
of $\C_x$. Let $\Sigma = \{(\varepsilon, y): y \in \C\}$ be a transversal
section to the leaf $\{y=0\}$ through the point $(\varepsilon, 0)$. The lift
of the loop to the leaf through the point $(\varepsilon, y) \in \Sigma$ is
described by the differential equation
\[
\frac{dy}{dt} = \frac{dy}{dx} \frac{dx}{dt} = 2\pi i\frac{\dl_2}{\dl_1} y
\]
whose solution is given by $y(t) = y e^{2 \pi i \dl_2/\dl_1 t}$.
Hence the holonomy map turns out to be $h(y)=e^{2\pi i \lambda_2/\lambda_1}y$.
However we have that $\Re(2\pi i \lambda_2/\lambda_1) \neq 0$ since $\lambda_2/\lambda_1$
is not in $\R$. Now by iterating $h$, we obtain
\[
h^n(y) = r^n \alpha^n y \, .
\]
where $r = e^{\Re(2\pi i \lambda_2/\lambda_1)}$ and $\da = e^{\Im(2\pi i \lambda_2/\lambda_1)}$.
The absolute value of $h^n(y)$ is determined by the value of $r$. More specifically, if
$r = e^{- 2\pi {\rm Im}(\lambda_2/\lambda_1)} < 1$ then $|h^n(y)|$ tends to zero as $n$
increases and, consequently the correspondent leaf accumulate on $\{y=0\}$. If $r > 1$ then
$|h^n(y)|$ goes to zero as $n$ goes to minus infinity. This also ensures that our leaf accumulate
on $\{y = 0\}$. However, Conditions~$1$ and~$2$ prevent this type of behavior
from happening and we conclude that the reduced foliation does not
contain this kind of singularities.
\end{proof}

\begin{lema}
The Seidenberg tree does not contain singularities with non-zero
eigenvalues such that $\lambda_1/\lambda_2\in \R_+$.
\end{lema}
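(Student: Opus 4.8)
The plan is to follow the same contradiction scheme that was used for the saddle-node and the hyperbolic cases. Assume that the Seidenberg tree of $\fol$ contains an irreducible singularity $c$ with non-zero eigenvalues $\lambda_1,\lambda_2$ satisfying $\lambda_1/\lambda_2\in\R_+$; note that $c$ is necessarily a point of the exceptional divisor $E=\pi^{-1}(0,0)$, so $\pi(c)=(0,0)$. I will deduce that infinitely many leaves of $\fol$ accumulate on $(0,0)$, contradicting Condition~$1$ of Theorem~\ref{t12}.

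First I would normalize $\widetilde{\fol}$ near $c$. Since $c$ is one of the final singularities of the reduction, Theorem~\ref{t11} tells us that neither $\lambda_1/\lambda_2$ nor $\lambda_2/\lambda_1$ lies in $\N$; in particular $\lambda_1\neq\lambda_2$, so ${\rm Jac}\,\widetilde{X}(c)$ is diagonalizable. Combined with $\lambda_1/\lambda_2\in\R_+\subset\C\setminus\R_-$, these are exactly the hypotheses of the Poincar\'e Linearization Theorem~\ref{PoincLintheo}, and therefore there are local coordinates $(x,y)$ centered at $c$ in which $\widetilde{\fol}$ is generated by the linear field $\widetilde{X}=\lambda_1 x\,\partial/\partial x+\lambda_2 y\,\partial/\partial y$.

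Next I would examine the leaves of this linear model and show that each of them has $c$ in its closure. Since $\lambda_1/\lambda_2$ is a positive real, $\lambda_1$ and $\lambda_2$ share the same argument $\varphi$, say $\lambda_j=\rho_j e^{i\varphi}$ with $\rho_j>0$. The orbit of a point $(\varepsilon,y_0)$, with $\varepsilon,|y_0|$ small and $y_0\neq0$, is $T\mapsto(\varepsilon e^{\lambda_1 T},y_0 e^{\lambda_2 T})$, and taking $T=-s\,e^{-i\varphi}$ with $s\to+\infty$ produces the sequence $(\varepsilon e^{-\rho_1 s},y_0 e^{-\rho_2 s})$, which stays inside the chart and converges to $c=(0,0)$. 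Hence the leaf through $(\varepsilon,y_0)$ accumulates on $c$; and since two such leaves coincide only when $|y_0|$ is unchanged, this is a one-parameter family of pairwise distinct leaves of $\widetilde{\fol}$, each disjoint from $E$ near $c$. Pushing them down by $\pi$ — which is proper, sends $c$ to $(0,0)$, and is a biholomorphism off $E$ — yields infinitely many distinct leaves of the restriction of $\fol$ to $U\setminus\{(0,0)\}$ whose closures contain $(0,0)$, which is the desired contradiction. (Alternatively, when $\lambda_1/\lambda_2=p/q\in\Q_+$ in lowest terms, one may simply note that the curves $\{y^q=c\,x^p\}$, $c\in\C^{\ast}$, are infinitely many separatrices of $\widetilde{\fol}$ at $c$, projecting to infinitely many separatrices of $\fol$ at the origin; the accumulation argument above has the advantage of covering the irrational ratio as well.)

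The point to be careful about — rather than a genuine obstacle — is that here, unlike in the hyperbolic case, the holonomy of the invariant axis $\{y=0\}$ has modulus $1$ (it equals $y\mapsto e^{2\pi i\lambda_2/\lambda_1}y$ with $\lambda_2/\lambda_1\in\R_+$), so it does not by itself force any accumulation on $\{y=0\}$; that is precisely why one must read the accumulation on the singular point $c$ off the global parametrization of the leaves instead. The remaining bookkeeping — that the chosen orbit points avoid $E$ near $c$, so that their $\pi$-images genuinely lie on distinct leaves — is routine: $E$ is invariant by $\widetilde{\fol}$ at $c$, hence a union of coordinate axes in the linearizing chart, and the points $(\varepsilon e^{-\rho_1 s},y_0 e^{-\rho_2 s})$ have both coordinates different from zero.
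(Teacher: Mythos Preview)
Your proof is correct and essentially identical to the paper's. Both arguments linearize via Theorems~\ref{PoincLintheo} and~\ref{t11}, observe that the holonomy along $\{y=0\}$ is a rotation and hence uninformative, and then follow a real ray in complex time to drive the orbit $(\varepsilon e^{\lambda_1 T}, y_0 e^{\lambda_2 T})$ into the singular point; the paper phrases this last step as lifting the radial path $x(t)=x_0 e^{-t}$ to the leaf, which amounts to your choice $T=-s\,e^{-i\varphi}$ after the reparametrization $s=t/\rho_1$.
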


\begin{proof}
By Theorems~\ref{PoincLintheo} and~\ref{t11}, the vector field
is still linearizable and $\{y=0\}$ is invariant by the
correspondent foliation. Notice that in this case the holonomy
application will not help us to conclude that this singularity
does not occur. In fact the holonomy map has the form $y\mapsto
e^{2\pi i \theta}y$ for some $\theta \in \R$, i.e. the holonomy
is conjugate to a rotation of ${\rm Diff}(\C,0)$.

However, by following the radial lines connecting a point $x_0$ on
the plane $\{y=0\}$ to the origin, we see that the neighboring
leaves accumulate on the origin. Indeed, fix a point $(x_0, y_0)
\in \C^2$ and consider the lift of the radial line $x(t) = x_0 e^{-t}$,
$t \in \R_+$, contained in $\{y=0\}$ to the leaf through $(x_0, y_0)$.
This lift is described by the differential equation
\[
\frac{dy}{dt} = \frac{dy}{dx} \frac{dx}{dt} = -\frac{\dl_2}{\dl_1} y
\]
whose solution is given by $y(t) = y_0 e^{-\dl_2/\dl_1 t}$. Since the
quotient $\dl_2 /\dl_1$ is positive it follows that $(x(t), y(t))$ goes
to $(0,0)$ as $t$ goes to infinity. Ultimately, this implies that there
are infinitely many leaves accumulating on $(0,0)$. Again, such
singularities do not appear on the blown-up foliation.
\end{proof}

Finally, we have reached the conclusion that we may assume that
the blown-up foliation $\widetilde{\fol}$ only has singularities
with eigenvalues such that $\lambda_1/\lambda_2\in \R_-$. Before
proceeding we only remark that:

\begin{lema}
All the irreducible components of the Seidenberg tree are
invariant by the corresponding foliation.
\end{lema}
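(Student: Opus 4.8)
The plan is to derive the statement from a single structural fact about $\pi$ together with Condition~1 of Theorem~\ref{t12}. First I would record the following: by Seidenberg's Theorem~\ref{t10}, $\pi : V \to U$ is a composition of blow-ups, and since after the very first blow-up $\fol$ has no singular point off the exceptional divisor (because $\pi$ is a biholomorphism away from $E$ and $(0,0)$ is the only singularity of $\fol$), every centre of every blow-up in the process lies over the origin. Consequently $\pi^{-1}(0,0)=E$ and, for every irreducible component $E'$ of the exceptional divisor $E$, one has $\pi(E') = \{(0,0)\}$. So each irreducible component of the Seidenberg tree of $\fol$ is collapsed by $\pi$ to the origin.

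Next I would invoke the standard dichotomy for a holomorphic foliation $\widetilde{\fol}$ and an irreducible analytic curve $E'\subset V$: either $E'$ is invariant (a union of leaves and singular points of $\widetilde{\fol}$), or the set of tangencies of $\widetilde{\fol}$ with $E'$ is a proper, hence finite, subset of $E'$, so that $E'$ is generically transverse to $\widetilde{\fol}$; in the latter situation $E'$ arises from a dicritical blow-up in the sense recalled in Section~1.4.1, and it remains generically transverse under any further blow-up. The lemma therefore amounts to excluding the second alternative, and I would argue by contradiction, assuming some irreducible component $E'\subset E$ is not invariant.

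Then there is a regular point $p$ of $\widetilde{\fol}$ on $E'$ at which $\widetilde{\fol}$ is transverse to $E'$. Taking a flow box around $p$ in which $E'$ figures as a transverse section, the local leaves of $\widetilde{\fol}$ through that box are parametrized by the points of $E'$ near $p$; since a leaf of $\widetilde{\fol}$, being transverse to $E'$, meets $E'$ in a set discrete in the leaf, all but countably many of these local leaves belong to pairwise distinct global leaves of $\widetilde{\fol}$. Because $\pi$ restricts to a biholomorphism $V\setminus E \to U\setminus\{(0,0)\}$ carrying leaves of $\widetilde{\fol}$ to leaves of $\fol$, the images of these leaves are uncountably many distinct leaves of $\fol$ on $U\setminus\{(0,0)\}$, each accumulating on $\pi(E')=(0,0)$. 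This contradicts Condition~1, which says that only finitely many leaves of $\fol$ accumulate on $(0,0)$. Hence $E'$ is invariant, as claimed.

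I expect the only delicate point to be the last paragraph's counting of leaves, i.e. making rigorous that generic transversality of $\widetilde{\fol}$ to $E'$ at $p$ really produces infinitely many \emph{distinct} leaves of $\fol$ accumulating at the origin, rather than one leaf repeatedly returning to the flow box; this is settled by the discreteness of $E'\cap\widetilde{L}$ inside any leaf $\widetilde{L}$. Alternatively one may simply quote the discussion of dicritical singularities in Section~1.4.1, where it is already noted that a dicritical component is projected by $\pi$ onto a point through which infinitely many separatrices (hence infinitely many leaves) pass, which is precisely what Condition~1 forbids.
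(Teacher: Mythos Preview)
Your argument is correct and is essentially the same as the paper's: a non-invariant component is dicritical, hence yields infinitely many separatrices (equivalently, infinitely many leaves accumulating on the origin), contradicting Condition~1. The paper dispatches this in a single sentence, while you spell out the flow-box counting and the discreteness of leaf intersections with $E'$; your final paragraph's alternative phrasing via separatrices is in fact exactly the paper's one-line proof.
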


\begin{proof}
If it were not so there would be infinitely many
separatrizes, contradicting our assumptions.
\end{proof}

\begin{obs}
{\rm The argument used in the preceding lemma can be extended to
show that a singularity has infinitely many separatrizes if and
only if by performing a finite sequence of blow-ups, we arrive to
an irreducible component of the total exceptional divisor which is
{\it not} invariant by the corresponding foliation. This type of
singularity is called {\it dicritical}.}
\end{obs}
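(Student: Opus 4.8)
The plan is to prove the remark as an ``if and only if'' by carrying the argument of the preceding lemma out in both directions and making it quantitative. For the direction ``non-invariant component $\Rightarrow$ infinitely many separatrices'', suppose that after a finite composition of blow-ups $\pi : V \rightarrow (\C^2,0)$, with total exceptional divisor $E = \pi^{-1}(0)$, some irreducible component $D \subset E$ is not invariant by the transformed foliation $\mathcal{G} = \pi^{\ast}\fol$. Since $D \simeq \cpum$ is compact and $\mathcal{G}$ does not leave it invariant, the subset $T \subset D$ consisting of the singularities of $\mathcal{G}$ on $D$, the tangency points between $\mathcal{G}$ and $D$, and the intersections of $D$ with the other components of $E$ is \emph{finite}. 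For each $p \in D \setminus T$ the foliation $\mathcal{G}$ is regular at $p$ and transverse to $D$, so its local leaf $L_p$ is a small disc crossing $D$ transversally at $p$ alone, with $L_p \setminus \{p\} \subset V \setminus E$. Because $\pi$ is a biholomorphism off $E$ and is proper, $\pi(L_p)$ is an analytic curve germ at $0$ invariant by $\fol$, i.e. a separatrix $S_p$, and $p \mapsto S_p$ is injective since $\pi$ is injective on $V \setminus E$. Hence $\fol$ has infinitely many separatrices. One should also note that this conclusion is stable under further blow-ups — the strict transform of $D$ remains non-invariant away from the new centre — so it is irrelevant at which stage the non-invariant component first appears; this is exactly the \emph{dicritical} situation.

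For the converse, assume $\fol$ has infinitely many separatrices and let $\pi : V \rightarrow (\C^2,0)$ be the reduction of singularities provided by Theorem~\ref{t10}, so $\pi^{\ast}\fol$ has only regular points and simple singularities, with exceptional divisor $E$. The strict transform $\widetilde{S}$ of any separatrix $S$ is an analytic curve in $V$ invariant by $\pi^{\ast}\fol$ and \emph{not} contained in $E$ (otherwise $\pi(\widetilde{S}) = \{0\}$, impossible since $\pi$ is a biholomorphism off $E$ and $S \setminus \{0\}$ is a curve). Hence $\widetilde{S}$ meets $E$ at some point $q$; if \emph{every} component of $E$ were invariant by $\pi^{\ast}\fol$, then at $q$ two distinct local invariant curves would pass ($\widetilde{S}$ and a component of $E$), forcing $q$ to be a singular point of $\pi^{\ast}\fol$, of which there are only finitely many on the compact curve $E$. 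Moreover, by the normal forms of Lemma~\ref{Siegel} and of Theorems~\ref{PoincLintheo} and~\ref{t11}, each simple singularity carries at most two local separatrices. Projecting down, $\fol$ would then have only finitely many separatrices — a contradiction. Therefore some irreducible component of the total exceptional divisor of the Seidenberg reduction is non-invariant, and since that reduction is a finite sequence of blow-ups, the stated implication follows.

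The only genuine point requiring care is the bookkeeping in the converse: that every separatrix of $\fol$ corresponds, via $\pi$, to a local separatrix of $\pi^{\ast}\fol$ transverse to $E$ at one of its finitely many singular points on $E$, and that a simple singularity has at most two such local separatrices. The first assertion rests on the ``properness plus biholomorphism off $E$'' argument used above; the second is read off directly from the local models — two smooth transverse separatrices in the Siegel case (Lemma~\ref{Siegel}) and in the case of the normal form~(\ref{eq29}), and one or two for a saddle-node corresponding to~(\ref{eq30}). With this in hand the remark is precisely the quantitative form of the preceding lemma, and the word \emph{dicritical} is then adopted as a name for the singularities exhibiting this behaviour.
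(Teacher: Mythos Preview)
The paper does not actually supply a proof of this remark --- it is stated as an observation extending the one-line argument of the preceding lemma --- so there is no ``paper's proof'' to compare against. Your forward direction (non-invariant component $\Rightarrow$ infinitely many separatrices) is exactly the argument the paper has in mind, made explicit and quantitative; it is fine.

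The converse, however, has a genuine gap. You assert that ``each simple singularity carries at most two local separatrices'' and justify this by the local models of Lemma~\ref{Siegel} and the normal forms~(\ref{eq29}), (\ref{eq30}) of Theorem~\ref{t11}. This is false for~(\ref{eq29}) in general: that normal form only requires $\lambda_1\lambda_2\neq 0$ and $\lambda_1/\lambda_2,\ \lambda_2/\lambda_1\notin\N$, which allows $\lambda_1/\lambda_2 = p/q\in\Q_+$ with $p,q\geq 2$ coprime. Such a singularity lies in the Poincar\'e domain with no resonances, hence is linearizable (Theorem~\ref{PoincLintheo}) to $px\,\partial/\partial x + qy\,\partial/\partial y$, and the curves $y^p = c\,x^q$ give \emph{infinitely many} local separatrices. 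So the finiteness count at the end of your argument breaks down.

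The fix is short and in the spirit of the remark. Work under the contradiction hypothesis that \emph{no} finite sequence of blow-ups ever produces a non-invariant component. After the reduction of Theorem~\ref{t11}, if some singularity on $E$ has $\lambda_1/\lambda_2=p/q\in\Q_+$ with $p,q\geq 2$, blow it up repeatedly: the eigenvalue pair evolves by the Euclidean algorithm $(p,q)\mapsto(p,q-p)$ or $(p-q,q)$ along the new exceptional components, and in finitely many steps reaches $(1,1)$, i.e.\ a radial linear part. At that step the new exceptional divisor is \emph{not} invariant (this is the dicritical case in the blow-up computation of Section~1.4.1), contradicting the hypothesis. Hence no such $\Q_+$--type singularity occurs, the remaining reduced singularities genuinely have at most two local separatrices, and your counting argument then goes through.
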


The above lemmata leading to Proposition~\ref{l10}, corresponds to
the first part of our proof.

%--------------------------------------------------------------------------------------------------------------------------------------------------------------------------------------------------------------------
%--------------------------------------------------------------------------------------------------------------------------------------------------------------------------------------------------------------------
%--------------------------------------------------------------------------------------------------------------------------------------------------------------------------------------------------------------------

\subsubsection{Part $2$: Existence of Local First Integrals}

Let us take a closer look at the only singularities that appear in
the Seidenberg tree, i.e. those having eigenvalues $\dl_1, \, \dl_2$ such that
$\lambda_1/\lambda_2$ is in $\R_-$. We obtain from Lemma~\ref{Siegel}
that in local coordinates $(x,y)$, the vector field is given by:
\begin{equation}\label{eq32}
\lambda_1 x (1 + {\rm h.o.t.}) \partial /\partial x + \lambda_2 y (1 + {\rm h.o.t.}) \partial /\partial y \, ,
\end{equation}
for $\lambda_1/\lambda_2 \in \R_-$. First, what needs to be
studied is whether or not this vector field is linearizable. More
generally we would like to tell when two vector fields as above
(with $\lambda_1$, $\lambda_2$ fixed) are conjugate. The next
proposition due to  J.-F. Mattei and R. Moussu is of fundamental
importance for it allows us to restrict ourselves to
understanding whether or not the corresponding {\it holonomy} is
linearizable (conjugate).

\begin{teo}[Mattei-Moussu \cite{M-M}, \cite{M}]\label{TMM}
Assume that $\fol_1 ,\fol_2$ are given in coordinates
$(x,y)$ by Equation~(\ref{eq32}) with $\lambda_1 /\lambda_2 \in
\R_-$. Denote by $h_1$ (resp. $h_2$) the holonomy of $\fol_1$ (resp.
$\fol_2$) relative to the axis $\{y=0\}$. Then there is an
analytic diffeomorphism $\phi$ defined on a neighborhood of $(0,0)
\in \C^2$ and conjugating $\fol_1 ,\fol_2$ if and only if there is
an analytic diffeomorphism $\varphi$ defined on a neighborhood of
$0 \in \C$ and conjugating $h_1, h_2$.
\end{teo}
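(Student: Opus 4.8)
The strategy is to exploit the fact that a foliation of the form (\ref{eq32}) with $\lambda_1/\lambda_2 \in \R_-$ is ``essentially determined'' by its holonomy along $\{y=0\}$, because the two separatrices $\{x=0\}$ and $\{y=0\}$ are smooth and transverse (Lemma~\ref{Siegel}), and every leaf near $\{y=0\}$ (but off the separatrices) meets a fixed transverse disc $\Sigma = \{(\varepsilon, y)\}$ in an orbit of the holonomy pseudo-group. One direction is trivial: if $\phi$ conjugates $\fol_1$ to $\fol_2$ and sends $\{y=0\}$ to $\{y=0\}$, then restricting $\phi$ to a transverse section produces a holomorphic diffeomorphism of $(\C,0)$ conjugating $h_1$ to $h_2$ (one must first observe that any conjugacy must preserve the set of separatrices, hence can be arranged to fix the axis $\{y=0\}$, possibly after composing with the symmetry exchanging the two axes). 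So the whole content is the converse: given $\varphi$ conjugating $h_1$ to $h_2$, build $\phi$.

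First I would set up the global picture. Fix small polydisc coordinates, let $D = \{|x| < \varepsilon\}$ be a disc in the $x$-axis minus the origin, and let $\Sigma_p$ be the transverse disc over $p \in D$. Because $\fol_i$ is transverse to the fibres $\{x = \mathrm{const}\}$ away from $\{x=0\}$ and the fibres over $D$ are simply connected after choosing a base point and a simply-connected cut, leaves of $\fol_i$ restricted to a neighbourhood of the punctured axis are graphs; equivalently, over the universal cover of $D^\ast$ the foliation is a trivial fibration and the holonomy $h_i$ (around the loop generating $\pi_1(D^\ast)$) is exactly the monodromy. Concretely, for $y_0 \in \Sigma_{p_0}$ small, the leaf through $(p_0,y_0)$ is parametrised over $D^\ast$ by a multivalued map $y = L_i(x; y_0)$ with $L_i(x\, e^{2\pi i}; y_0) = L_i(x; h_i(y_0))$. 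Now define $\phi$ fibrewise: on $\Sigma_{p_0}$ set $\phi = \varphi$; and propagate along leaves, i.e. send the point of the $\fol_1$-leaf through $(p_0,y_0)$ lying over $x$ to the point of the $\fol_2$-leaf through $(p_0,\varphi(y_0))$ lying over the same $x$. Symbolically $\phi(x, L_1(x;y_0)) = (x, L_2(x;\varphi(y_0)))$. The relation $L_i(x e^{2\pi i};y_0)=L_i(x;h_i(y_0))$ together with $\varphi \circ h_1 = h_2 \circ \varphi$ shows this is \emph{single-valued} on $D^\ast \times (\text{small disc})$: going once around in $x$ replaces $y_0$ by $h_1(y_0)$ on the source and $\varphi(y_0)$ by $h_2(\varphi(y_0)) = \varphi(h_1(y_0))$ on the target, which is consistent. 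By construction $\phi$ is holomorphic where defined, sends $\fol_1$ to $\fol_2$, and fixes $\{x=0\}\cup\{y=0\}$.

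The main obstacle — and where the hypothesis $\lambda_1/\lambda_2 \in \R_-$ is really used — is extending $\phi$ holomorphically \emph{across the exceptional locus} $\{x=0\}$ and across the origin, i.e. showing the map just built, a priori defined and bounded only on a ``horn'' neighbourhood of the punctured axis, actually extends to a genuine holomorphic diffeomorphism of a full neighbourhood of $0\in\C^2$. Here I would argue as follows: because both eigenvalue ratios are negative (Siegel domain), the leaves of $\fol_i$ near the singularity spiral and accumulate onto \emph{both} separatrices, so a neighbourhood of $0$ is swept out by the leaves meeting $\Sigma_{p_0}$ — there is no ``gap'' — and the normal form of Lemma~\ref{Siegel} gives uniform control (the leaves are $\{y = c\,x^{\lambda_2/\lambda_1}(1+\cdots)\}$-type graphs), from which one gets that $\phi$ is \emph{bounded} on a punctured neighbourhood of the origin. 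Then Riemann's removable singularity theorem (applied on the codimension-one set $\{x=0\}$, and Hartogs at the origin) promotes $\phi$ to a holomorphic map on a full neighbourhood of $0$; it fixes the origin, is tangent enough to a linear isomorphism there (its derivative along $\Sigma$ is $\varphi'(0)\neq 0$, and along the axis it is the identity), hence it is a biholomorphism conjugating $\fol_1$ to $\fol_2$. A technical point to be careful about is the choice of the cut making the fibre simply connected and checking that $\phi$ does not depend on it — this is precisely the cocycle computation above — and the verification that the local holonomy representative $h_i$ is well-defined up to conjugacy independently of the choice of transverse section, which is standard.
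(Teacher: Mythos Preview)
Your proposal is correct and follows essentially the same route as the paper's proof: define $\phi$ by propagating the transverse conjugacy $\varphi$ along leaves (the paper does this by lifting the loop $S^1$ and then radial segments $R_{\theta_0}$ with respect to the projection $\pi_1(x,y)=x$; your multivalued graph parametrisation $L_i(x;y_0)$ is the same construction in different packaging), check single-valuedness via $\varphi\circ h_1=h_2\circ\varphi$, and then extend across $\{x=0\}$ by boundedness and the Riemann extension theorem.

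The one place where your write-up is a little loose is precisely where the paper also pauses: the claim that the saturated set of $\Sigma$ together with $\{x=0\}$ fills a full neighbourhood of the origin. Your phrasing (``leaves accumulate on both separatrices, so there is no gap'') is heuristic; the actual content is that when you follow a radial line $x\to 0$, the transverse disc $\Sigma_{z_0}$ over the endpoint contains a disc of radius bounded \emph{below} uniformly (indeed, $|y|$ grows as $|x|$ shrinks, the saddle picture), which is exactly what the hypothesis $\lambda_1/\lambda_2\in\R_-$ provides. The paper isolates this as Proposition~\ref{p7} (Mattei's estimate) and, like you, does not spell out the integration in full detail here, referring instead to \cite{M} and the higher-dimensional version in Section~2.4. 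So your sketch matches the paper's level of detail, but you should be aware that this estimate is the genuine technical heart of the argument and would need to be written out carefully in a complete proof.
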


There are two markedly different cases to be considered according
to whether or not $\lambda_1/\lambda_2$ is rational. Firstly,
suppose that $\lambda_1/ \lambda_2=-m/n$ where $m,n \in
\N$. In this case, the holonomy of $\fol$ associated to a loop
around the origin in the separatrix $\{y=0\}$, is given by
\[
h(z) = e^{-2\pi i n/m}z + {\rm h.o.t.} \, ,
\]
where $z$ is the parameter on a local section transverse to
$\{y=0\}$. This application is linearizable if and only if
$h^m(z)=z$. It is obvious that if $h$ is linearizable then
$h^m(z)=z$. The converse is not obvious but can easily be verified
by noticing that $f=(\frac{1}{m}\sum_{i=0}^{m-1}
\lambda^{-i}h^i)^{-1}$ linearizes $h$. Note that when it is not
linearizable (i.e. when $h^m(z) = z + {\rm h.o.t.}$) we end up in the case
already studied. Namely, we end up on the dynamics of the cardioid-shaped
region analyzed on Section~\ref{sectionsaddlenode}. Similarly to the saddle-node
case, this cannot occur since otherwise we would have infinitely many
leaves accumulating on $\{y=0\}$, what contradicts our
assumptions.

Hence, we may suppose that $h$ is indeed linearizable. From
Theorem~\ref{TMM}, the vector field associated to
$\widetilde{\fol}$ admits the normal form, given in local
coordinates $(x,y)$ by
\begin{equation}\label{eq31}
\widetilde{X} = m x \frac{\partial}{\partial x} - n y \frac{\partial}{\partial y}\,.
\end{equation}

Let us now focus on the case where $\alpha=\lambda_1/ \lambda_2$
is irrational. As before, the holonomy application is given by
$h(z)= e^{2\pi i \alpha}z +\cdots$. However, since $\alpha$ is an
irrational number, the difficulty of knowing whether or not $h$ is
linearizable increases considerably. The problem of finding out
which are the irrational values of $\alpha$ that make the holonomy
$h$ linearizable is known as the ``Siegel Problem''. In fact, for
certain values of $\alpha$, it is possible to obtain local
diffeomorphisms $h$ as above that are non-linearizable.

\begin{obs}
{\rm Note that it is not difficult to obtain a {\it formal}
conjugacy $f$ that linearizes any such $h$. More precisely, by
formally solving $f^{-1}\circ h\circ f(z)=\lambda z$, one obtains
$f(z)=\sum^{+\infty}_{i=1} f_iz^i$, with $f_1=1$ and
\[
f_i=\frac{1}{\lambda^i-\lambda} \left[f_i + \sum^{i-1}_{p=2} f_p
\sum_{j_1+\cdots+j_p=i,\, h_{j_k}\geq 1}h_{j_1}\cdots h_{j_p}
\right]\, .
\]
}
\end{obs}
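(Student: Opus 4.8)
The statement asserts that, whenever $\lambda = e^{2\pi i \alpha}$ with $\alpha$ irrational, the equation $f^{-1}\circ h\circ f(z)=\lambda z$ admits a formal solution $f(z)=\sum_{i\ge 1}f_iz^i$ with $f_1=1$, the coefficients being governed by the displayed recursion. The plan is to rewrite the conjugacy equation as $h\circ f = f\circ R_\lambda$, where $R_\lambda(z)=\lambda z$, and to expand both sides as formal power series in $z$, matching the coefficient of $z^i$ for each $i\ge 2$. Writing $h(z)=\lambda z+\sum_{j\ge 2}h_jz^j$, the right-hand side $f(\lambda z)$ has coefficient $\lambda^i f_i$ on $z^i$, while the left-hand side $h(f(z))=\lambda f(z)+\sum_{j\ge 2}h_j f(z)^j$ has coefficient $\lambda f_i + P_i$, where $P_i$ is the coefficient of $z^i$ in $\sum_{j\ge 2}h_j f(z)^j$. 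Since $f(z)=z+O(z^2)$ we have $f(z)^j=z^j+O(z^{j+1})$, so $P_i$ is a polynomial in $h_2,\dots,h_i$ and in $f_2,\dots,f_{i-1}$ only; it involves no $f_k$ with $k\ge i$. Comparing the two sides yields the identity $(\lambda^i-\lambda)f_i = P_i$, and expanding each power $f(z)^j$ by the multinomial formula as a sum over compositions $j_1+\cdots+j_p=i$ of monomials in the $f_k$ and $h_j$ recasts this precisely into a recursion of the form displayed in the remark (the exact shape depends on whether one conjugates $h$ or $h^{-1}$ to the rotation, which is the source of the harmless discrepancy in the printed formula).

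The single point that makes the recursion actually solvable is the non-vanishing of the ``small divisor'' $\lambda^i-\lambda=\lambda(\lambda^{i-1}-1)$ for every $i\ge 2$. This is immediate in the present situation: $|\lambda|=1$ so $\lambda\neq 0$, and $\lambda^{i-1}=e^{2\pi i (i-1)\alpha}=1$ would force $(i-1)\alpha\in\Z$, impossible for $i\ge 2$ with $\alpha$ irrational. Granting this, one argues by induction on $i$: the value $f_1=1$ is prescribed, and assuming $f_1,\dots,f_{i-1}$ already determined, the number $P_i$ is determined, so $f_i:=P_i/(\lambda^i-\lambda)$ is uniquely defined. This produces a well-defined formal power series $f\in z\,\C[[z]]$ with $f_1=1$; by construction each coefficient equation $(\lambda^i-\lambda)f_i=P_i$ holds, that is, $h\circ f=f\circ R_\lambda$ as formal series, and since $f$ has invertible linear part it is a formal diffeomorphism, whence $f^{-1}\circ h\circ f=R_\lambda$, as claimed.

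There is no genuine obstacle to this construction — it is ``not difficult'', as the remark states — precisely because the denominators $\lambda^i-\lambda$ never vanish in the irrational case, so the coefficient equations have a strictly triangular structure. The real difficulty, deliberately left aside here, is whether the formal series $f$ so obtained has positive radius of convergence: the divisors $\lambda^i-\lambda$ can be extremely small for infinitely many $i$ when $\alpha$ is poorly approximable, and deciding convergence is exactly the Siegel linearization problem referred to in the text. Hence, for the present statement, the only thing to verify is the elementary bookkeeping above — the triangular form of the coefficient equations together with $\lambda^{i-1}\neq 1$ — and I expect no step to require more than that.
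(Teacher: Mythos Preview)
Your argument is correct and is exactly the standard derivation; the paper states this remark without proof, merely recording the recursion, so there is nothing further to compare. You also rightly flag that the displayed formula in the paper contains a typographical slip (the $f_i$ inside the bracket should be $h_i$, coming from the $p=1$ term when one expands $f\circ h$ rather than $h\circ f$), and your explanation of why $\lambda^i-\lambda\neq 0$ for irrational $\alpha$ is the only nontrivial point needed.
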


To deal with the irrational values of $\alpha$, let us first
consider the linearizable case. Then $h$ is conjugate to an
irrational rotation of ${\rm Diff}(\C,0)$. Let $C$ be the set
formed by the points of intersection between the leaf of
$\widetilde{\fol}$ passing by $(1, z_0)\in \C^2$ and the circle
$S=\{z\in \Sigma\,;\, |z|=z_0\}$. As always, $\Sigma$ is the
transverse section to $\{y=0\}$ passing by $1\in \C$. Since an
irrational rotation has dense orbits, it follows that $C$ is dense
over the circle $S$. In other words, the leaves are not locally
closed, which contradicts Condition~$2$. In particular, any
possible holomorphic first integral $f$, would be constant
everywhere. Indeed, the restriction of $f$ to $\Sigma$ is constant
over circles about $0\simeq \Sigma\cap\{y=0\}$. As a result $f$
would be constant everywhere.

We are then reduced to discuss the case in which $h$ is not
linearizable. To deal with this case we are going to show the
existence of leaves of $\widetilde{\fol}$ that are not closed on
arbitrarily small neighborhoods of $0\in \Sigma$.

Let us begin with a simple lemma attributed to J. Lewowicz but,
before stating it, we give some definitions that will be useful
throughout the text. Let $U$ be a neighborhood of $0$, where
$h\in{\rm Diff}(\C, 0)$ is defined, holomorphic, and injective.
Let $V$ be a subset of $U$ and fix a point $x \in V$.

\begin{defi}
The $V$-orbit of $x$ is the set of points on $V$, obtained by
iterating $h$ forward (i.e., $h^p=h\circ \cdots \circ h$, $p$
times) and backwards (i.e., $h^ {-p}=h^{-1}\circ \cdots \circ
h^{-1}$, $p$ times), along with $x$ itself (denoting by
$h^0(x)=x$). In other words,
\[O_V(x)=\{y\in V\,;\;\;y=h^i(x)\,,\;
i\in \Z\}\]
\end{defi}

The {\it number of iterations} of $x$ is the number of times $h$
is iterated taking $x$ to a point on $O_V(x)$. It is denoted by
$\mu_V(x)$ and belongs to $\N\cup \{\infty\}$.

Note that there may exist points $x$ on $V$ such that
$\mu_V(x)=\infty$ but $\# O_V(x)<\infty$. These points are called
{\it periodic} on $V$. Naturally, if $\mu_V(x)$ is finite then $\#
O_V(x)$ is necessarily finite.

\begin{defi}\label{d3}
An element $h\in {\rm Diff}(\C,0)$ is said to have
finite orbits if there exists an arbitrarily small open
neighborhood $V$ of $0$ where $h$ is defined, holomorphic,
injective and satisfies
\[
\# O_V(x)<\infty \, ,
\]
for every $x\in V$.
\end{defi}

\begin{lema}\label{l9}
Let $K$ be a compact connected neighborhood of $0\in
\R^n$ and $h$ a homeomorphism of $K$ onto $h(K) \subseteq \R^n$,
verifying $h(0)=0$. Then there exists a point $x$ on the boundary
$\partial K$ of $K$ such that the number of iterations in $K$ is
infinite, i.e. such that $\mu_K(x)=\infty$.
\end{lema}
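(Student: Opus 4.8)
The plan is to argue by contradiction. Suppose that $\mu_K(x)<\infty$ for every $x\in\partial K$. Set $K^+=\{x\in K:\ h^n(x)\in K\ \text{for all }n\ge 0\}$ and $K^-=\{x\in K:\ h^{-n}(x)\ \text{is defined and lies in }K\ \text{for all }n\ge 0\}$. Then the hypothesis says precisely that $\partial K\cap(K^+\cup K^-)=\emptyset$. First I would record some elementary point-set remarks: each $K_m^+:=\{x\in K:\ h^j(x)\in K\ \text{for }0\le j\le m\}$ is a compact neighbourhood of $0$, these form a decreasing sequence with $\bigcap_m K_m^+=K^+$, and symmetrically for the sets $K_m^-$ and $K^-$; in particular $K^+$ and $K^-$ are compact, both contain $0$, $h(K^+)\subseteq K^+$ and $h^{-1}(K^-)\subseteq K^-$. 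A further routine compactness argument shows that the first forward exit time $x\mapsto\min\{n\ge 1:\ h^n(x)\notin K\}$ is bounded on every compact subset of $K\setminus K^+$, and similarly for the backward exits.

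The next step is a reduction to a uniform statement. Since $K^+\cap\partial K=\emptyset$ and $K^+=\bigcap_m K_m^+$, the nested sequence of compacta $K_m^+\cap\partial K$ has empty intersection, so $K_{m_0}^+\cap\partial K=\emptyset$ for some $m_0$; arguing in the same way for $K^-$ one gets an $m_1$, and taking $m=\max(m_0,m_1)$ we obtain an integer $m$ such that every point of $\partial K$ leaves $K$ within $m$ forward iterations and within $m$ backward iterations. Hence $G:=\{x\in K:\ h^j(x)\in K\ \text{for all }|j|\le m\}$ is a compact neighbourhood of $0$ contained in $\operatorname{int}K$; moreover, along the topological boundary $\partial G$ one checks (again by continuity, passing to subsequences) that every point $q$ has some iterate $h^j(q)$ with $|j|\le m$ lying on $\partial K$.

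The crux, and the main obstacle, is to derive a contradiction from the existence of such a ``trapped'' compact neighbourhood $G$ lying strictly inside the connected set $K$, together with the interior fixed point $0$. The guiding model is the one-dimensional case: a monotone self-homeomorphism of an interval with an interior fixed point always has, at one of the two endpoints, either the forward or the backward orbit monotone and hence trapped forever inside the interval. In the general case I would proceed as follows: choose a path in $\operatorname{int}K$ (which is connected) from $0$ to a point arbitrarily close to $\partial K$, and let $t_0$ be the last parameter at which the path still lies in $K^+$; using the boundedness of the forward exit time on compacta of $K\setminus K^+$, the point $\gamma(t_0)\in K^+$ is ``exposed'' in $K^+$, and exploiting $h(K^+)\subseteq K^+$ together with $\partial K\cap K^{\pm}=\emptyset$ one propagates it along the dynamics to a point of $\partial K$ that still lies in $K^+\cup K^-$. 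An equivalent way to phrase this step is as a Wa\.zewski-type retraction obstruction: the assumption would produce a continuous ``escape'' retraction incompatible with the connectedness and separation properties of $K$. Making this final propagation argument precise is where the genuine work of the proof lies; once a point $x_0\in\partial K$ with $x_0\in K^+\cup K^-$ is produced, it satisfies $\mu_K(x_0)=\infty$, contradicting the hypothesis and thereby establishing the lemma.
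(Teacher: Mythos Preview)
Your setup through the uniform bound $m$ is correct and matches the paper's first move (the paper states the uniform bound more tersely, but your nested-compacta argument is exactly how one justifies it). The problem is what comes after: you yourself flag that ``making this final propagation argument precise is where the genuine work of the proof lies,'' and indeed you never carry it out. The path-propagation and Wa\.zewski-retract heuristics you sketch are not obviously completable in this generality; in particular, the step where a point of $K^+$ ``exposed'' along a path is ``propagated along the dynamics to a point of $\partial K$ that still lies in $K^+\cup K^-$'' has no mechanism behind it. So as written the proposal has a genuine gap at its crux.

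The paper's argument bypasses all of this with one extra idea you are missing: compare the count $\mu_K$ with the count $\mu_{K^\circ}$ taken in the \emph{open} interior. With $N$ the uniform bound on $\partial K$, set
\[
A=\{x\in K:\ \mu_K(x)<N\}\supset\partial K,\qquad
B=\{x\in K^\circ:\ \mu_{K^\circ}(x)\ge N\}\ni 0.
\]
Both are open in $K$, and they are disjoint since $\mu_{K^\circ}\le\mu_K$. Connectedness of $K$ then produces a point $x_0\notin A\cup B$, i.e.\ $\mu_K(x_0)\ge N$ but $\mu_{K^\circ}(x_0)<N$. The discrepancy forces the $K$-orbit of $x_0$ to meet $\partial K$ at some $y$, and since $y$ is on the same orbit, $\mu_K(y)=\mu_K(x_0)\ge N$, contradicting $\partial K\subset A$. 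That is the whole proof: the interior/closure comparison replaces your unfinished propagation step with a one-line connectedness argument.
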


\begin{proof}
Denote by $K \degree$ the interior of $K$ and let $\mu_K(x)$
(resp. $\mu_{K \degree}(x)$) denote the number
of iterations of $x$ on $K$ (resp. $K \degree$). Suppose, by
contradiction, that $\mu_K$ only attains finite values on the
boundary $\partial K$ of $K$. Since $K$ is compact, $\mu_K$ is
uniformly bounded on $\partial K$, i.e. there exists
$N\in \N$ such that
\[
\mu_K(x) < N<\infty \, \, \, \, , \forall x \in \partial K \, .
\]

Consider the sets
\begin{eqnarray*}
A & = & \{x \in K \, : \ ; \; \mu_K(x) < N \} \supset \partial K\\
B & = & \{x \in K \degree \, : \; \; \mu_{K \degree}(x) \geq N\} \ni 0
\end{eqnarray*}
which are non-empty open sets. In fact we can verify that $\partial K
\subseteq A$ and $0 \in B$.
Moreover, since $\mu_{K \degree}(x)\leq \mu_K(x)$ for all $x \in K$, those sets are
disjoint. Notice that there exists $x_0\in K$ such that $x_0$ does
not belong to $A\cup B$. Indeed, if it did not exist, then every
$x\in K$ would be such that $x\in A\cup B$, in other words,
$K=A\cup B$. The connectivity of $K$ would imply that $K=A$ or
$K=B$, which is impossible. Thus there exists $x_0$ such that
$\mu_K(x_0) \geq N>\mu_{K \degree}(x_0)$. It follows that the
orbit sets of $x_0$ on $K$ and on $K \degree$ are different
($O_K(x_0)\neq O_{K \degree)}(x_0)$). In other words, the orbit
of $x_0$ passes by $\partial K$. Let $y\in O_K(x_0)\cap\partial
K$, therefore, $\mu_K(y)=\mu_K(x_0)\geq N$, contradicting
$\mu_K|_{\partial K} < N$.
\end{proof}

Lewowicz's Lemma (Lemma~\ref{l9}) ensures the existence of points
whose orbit never leaves the neighborhood of $0\in \C$. However,
it does not exclude the possibility that all these points are
periodic. In this direction, the next proposition will be
fundamental.

\begin{prop}\label{p6}
If $h\in {\it Diff}(\C,0)$ is not periodic then there
exists open neighborhoods $U$ of $0$ on $\C$ such that $h$ is
holomorphic, injective and for each $U$, the set of points $x\in
U$ with infinite $U$-orbit is uncountable and $0$ is an
accumulation point.
\end{prop}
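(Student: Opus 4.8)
The strategy is to argue by contradiction: suppose that for some fixed small neighborhood $U$ of $0$ on which $h$ is holomorphic and injective, the set of points of $U$ having infinite $U$-orbit is at most countable. I would then produce from this a proof that $h$ is periodic on $U$ (in the sense that some iterate $h^m$ is the identity), contradicting the hypothesis. The key tool is Lewowicz's Lemma (Lemma~\ref{l9}): applied to a compact connected neighborhood $K\subset U$ of $0$, it yields at least one point on $\partial K$ whose $K$-orbit is infinite. The plan is to exploit the freedom in choosing $K$ --- in particular, choosing $K=\overline{D_r}$ for a continuum of radii $r$ --- to manufacture uncountably many such points, and then to upgrade ``infinite number of iterations'' to ``genuinely infinite orbit'' (i.e. to rule out periodicity) using the countability assumption.

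\textbf{Key steps.} First I would fix $U$ small enough that $h$ is holomorphic and injective on $\overline{U}$, and that $h(z)\ne z$ for $z\in\overline{U}\setminus\{0\}$ unless $h\equiv\mathrm{id}$ near $0$ (if $h$ is the identity it is certainly periodic, excluded by hypothesis; if $h$ has isolated fixed points, shrink $U$ to kill the nonzero ones). Second, for each $r$ with $\overline{D_r}\subset U$, apply Lemma~\ref{l9} to $K=\overline{D_r}$ to obtain $x_r\in\partial D_r$ with $\mu_K(x_r)=\infty$. Third, observe that since the $\partial D_r$ for distinct $r$ are pairwise disjoint, the points $x_r$ are pairwise distinct, giving an uncountable family $\{x_r\}$ of points of $U$, each with infinitely many iterates staying inside $\overline{D_r}\subset U$. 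Fourth --- and this is the crucial reduction --- I must pass from ``$\mu_U(x_r)=\infty$'' to ``$\#O_U(x_r)=\infty$''. A point with $\mu_U(x)=\infty$ but $\#O_U(x)<\infty$ is periodic on $U$; the claim is that, under the countability hypothesis being contradicted, there can be only countably many such points $x_r$. Indeed, if $x$ is periodic on $U$ with $\#O_U(x)=q$, then $x$ is a fixed point of $h^q$; but $h^q-\mathrm{id}$ is holomorphic and not identically zero on $U$ (as $h$ is not periodic), hence has only isolated zeros, so for each fixed $q$ there are only finitely many points of period $q$, and summing over $q\in\N$ gives at most countably many periodic points in $U$ altogether. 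Therefore, after removing this countable set of periodic $x_r$, uncountably many values of $r$ remain, and for those $x_r$ we have $\#O_U(x_r)=\infty$: an uncountable set of points of $U$ with infinite $U$-orbit, contradicting the assumption. Finally, since $0$ lies in the closure of $\bigcup_r\{x_r\}$ as $r\to 0$ (each $x_r\in\partial D_r$ has $|x_r|=r$), $0$ is an accumulation point of this set, which yields the last assertion of the proposition; and the same argument works verbatim for arbitrarily small $U$, since the only constraints on $U$ were smallness constraints.

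\textbf{Main obstacle.} The genuinely delicate point is step four: the interplay between Lewowicz's purely topological conclusion (infinitely many iterations inside $K$) and the holomorphic/analytic structure of $h$ (which forces periodic points to be isolated). Lemma~\ref{l9} alone does not distinguish a point whose orbit is infinite from a periodic point whose orbit repeatedly re-enters $K$; it is precisely the holomorphy of $h$ --- via the identity theorem applied to each $h^q-\mathrm{id}$ --- that rules out an uncountable set of periodic points and thereby lets the uncountable family $\{x_r\}$ survive the removal of periodic points. A secondary technical nuisance is ensuring that the neighborhoods $\overline{D_r}$ are legitimately \emph{inside} the domain where $h$ and all the relevant forward/backward iterates make sense; this is handled by working inside a fixed small $U$ on which $h$ is injective, so that $h^{-1}$ is defined on $h(U)\supset$ a smaller disc, and only finitely many iterates need to be controlled for each individual statement about $\mu_K$. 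Once these analytic points are pinned down, the contradiction --- uncountably many infinite-orbit points versus the countability assumption --- closes the argument.
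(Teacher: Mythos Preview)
Your argument reproduces precisely the ``basic idea'' that the paper sketches and then explicitly rejects as incomplete. The gap is in step four: you assert that $h^q-\mathrm{id}$ is holomorphic and not identically zero on $U$, hence has isolated zeros, so there are only finitely many points of period $q$. But $h^q$ is \emph{not} defined on all of $U$; it is defined only on the open set $A_q=\{x\in U:\,h(x),\ldots,h^{q-1}(x)\in U\}$, and this set can be disconnected. The hypothesis ``$h$ is not periodic'' only tells you that the germ of $h^q$ at $0$ is not the identity, so $h^q\not\equiv\mathrm{id}$ on the connected component $C_q$ of $A_q$ containing $0$. On other connected components of $A_q$, nothing prevents $h^q$ from coinciding with the identity, which would produce an open set --- hence uncountably many --- periodic points of period $q$ in $U$. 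Your countability estimate for periodic points therefore fails.

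The paper's proof is built around exactly this obstacle. It introduces the nested sets $C_n$ (the connected component of $A_n$ containing $0$) and $C=\bigcap_n C_n$, first disposes of the case where $C$ is uncountable (there the identity theorem does apply, via Lemma~\ref{l11}), and then, when $C$ is countable, constructs a carefully chosen compact $K$ whose boundary avoids all periodic points (Lemma~\ref{l14}). On $K$ the periodic sets $P_n'$ are shown to have finite boundary (Lemma~\ref{l12}) and hence $\partial P'$ is countable (Lemma~\ref{l16}); combining this with Lewowicz's Lemma and a connectedness argument forces $I'$ to be uncountable. In short, the extra machinery in the paper is not decorative --- it is there precisely to control the connected components that your application of the identity theorem silently ignores.
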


The basic idea behind its proof is to consider sets $U_n$, for
$n\in\N$ such that $U_n=\{x\in U;\,h(x), \cdots, h^n(x)\,\mbox{
are defined, and}\,\, h^n(x)=x\}$. If the domain of $h^n$ were
connected and $h^n \neq {\rm id}$ then each $U_n$ would be a
finite set. In particular, $\bigcup_{n\in\N} U_n$ would be
countable. By the lemma the set of points with infinite number of
iterations is uncountable, hence there would be an uncountable set
of points with infinite orbit. The difficulty with this argument
is that the domain of $h^n$ may be disconnected and $h^n$ may
coincide with the identity on one connected component and not on
the component containing $0$. This is why the proof of this
proposition is slightly more subtle as we need to consider the
various connected components of the domain of $h^n$.

\begin{proof}[Proof of Proposition~\ref{p6}]
Let $D_{\rho_0}$ be a closed disc centered at $0$ with radius
$\rho_0$. Using the same notations as in Lemma~\ref{l9}, we
define the following sets:
\begin{eqnarray*}
P & = & \{x \in D_{\rho_0} : \; \mu_{D_{\rho_0}}(x) = \infty , \; \#O_{D_{\rho_0}}(x) < \infty\}\\
F & = & \{x \in D_{\rho_0} : \; \mu_{D_{\rho_0}}(x) < \infty , \; \#O_{D_{\rho_0}}(x) < \infty\}\\
I & = & \{x \in D_{\rho_0} : \; \mu_{D_{\rho_0}}(x) = \infty , \; \#O_{D_{\rho_0}}(x) = \infty\}\\
\end{eqnarray*}
Naturally, $D_{\rho_0} = P \cup F \cup I$. Furthermore, Lemma~\ref{l9}
implies that for every $\rho\leq\rho_0$
\[
(P\cup I) \cap \partial D_\rho\neq \emptyset \, .
\]

{\it Claim}: One of the sets, either $P$ or $I$ (or both) is uncountable.

Indeed by Lemma~\ref{l9}, for every compact disc $D_\rho$ with
$\rho \leq \rho_0$ there is a point $x_0$ on its boundary with an
infinite number of iterations. Hence there are {\it uncountably}
many points $x_0$ in $D_{\rho_0}$ such that
$\mu_{D_{\rho_0}}(x_0)=\infty$. These points belong to $I\cup P$,
thus either $P$ or $I$ must be {\it uncountable}.

Under these notations, the contents of the proposition is that the
set $I$ is uncountable. Thus, it suffices to show that if $P$ is
uncountable then $I$ is necessarily uncountable as well. This is
what we shall do in the sequel.

Before continuing, let us define the following sets containing
$0$:
\[
A_1 = D_{\rho_0} , \; \; \; \; A_2 = D_{\rho_0} \cap h^{-1}(A_1) , \; \; \; \;
\cdots \; \; \; \; A_n = D_{\rho_0} \cap h^{-1}(A_{n-1}) , \; \; \; \; \cdots \, .
\]
Note that $A_n$ is precisely the domain of definition of $h^n$.

Let $C_n$ be the connected (compact) component of $A_n$ that
contains $0$ and let
\[
C=\bigcap_{n\in\N} C_n \, .
\]
$C$ is, in particular, connected.

\begin{lema}\label{l11}
We can assume, without loss of generality, that $C$ is countable.
\end{lema}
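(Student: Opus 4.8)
The plan is to prove Lemma~\ref{l11} by showing that if $C$ is \emph{uncountable} then the full conclusion of Proposition~\ref{p6} already follows; hence for the rest of the argument one loses nothing by assuming $C$ countable. So suppose, toward this reduction, that $C$ is uncountable.

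First I would record the elementary structure of $C$. Since $A_{n+1}\subseteq A_n$ and each $C_n$ is the connected component of $0$ in the compact set $A_n$, the $C_n$ are compact, connected, nested and all contain $0$; hence $C=\bigcap_n C_n$ is compact, connected and contains $0$. Moreover $h(C)\subseteq C$: indeed $C_{n+1}\subseteq A_{n+1}\subseteq h^{-1}(A_n)$, so $h(C_{n+1})$ is a connected compact subset of $A_n$ containing $h(0)=0$, whence $h(C_{n+1})\subseteq C_n$, and intersecting over $n$ gives $h(C)\subseteq C$. Consequently, for every $x\in C$ the entire forward orbit $x,h(x),h^2(x),\dots$ lies in $D_{\rho_0}$, so $\mu_{D_{\rho_0}}(x)=\infty$ and therefore $C\subseteq P\cup I$, with $P$ and $I$ as defined above.

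Next I would show that $I\cap C$ is uncountable. If it were not, then $P\cap C$ would be uncountable; writing $P=\bigcup_{k\ge 1}\mathrm{Fix}(h^{k})$, some set $\mathrm{Fix}(h^{k_0})\cap C$ is uncountable. Now a non‑degenerate continuum has no isolated points, so every non‑empty relatively open subset of it is locally compact, dense‑in‑itself, hence (by Baire) uncountable; applying this inside $C$ to the set $W\cap C$, where $W$ is the connected component of $0$ in the interior of the domain $A_{k_0}$ of $h^{k_0}$, we see $W\cap C$ is uncountable. On $W$ the fixed‑point set of the holomorphic map $h^{k_0}$ is analytic, hence either discrete or all of $W$; since $h$ is \emph{not} periodic it cannot be all of $W$, so $\mathrm{Fix}(h^{k_0})\cap W$ is countable. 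The remaining task — and the technical crux — is to see that the uncountably many fixed points of $h^{k_0}$ in $C$ cannot all escape into $\partial A_{k_0}$ or into the other components of the interior of $A_{k_0}$; in other words that $\mathrm{Fix}(h^{k_0})\cap W$ must itself be uncountable. I would secure this by choosing $\rho_0$ so that $\overline{D_{\rho_0}}$ lies inside the domain of holomorphy of $h$, so that $h^{k_0}$ is holomorphic on a neighbourhood of $A_{k_0}$ and its fixed locus is a genuine analytic set even across $\partial A_{k_0}$, thereby forcing the dichotomy discrete/everywhere to be decided on the single component $W\ni 0$. This yields $h^{k_0}\equiv\mathrm{id}$ near $0$, contradicting non‑periodicity; hence $I\cap C$ is uncountable.

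Finally I would upgrade this to Proposition~\ref{p6} and close the reduction. Run the same discussion with $D_\rho$ in place of $D_{\rho_0}$ for each $\rho\le\rho_0$, obtaining $C(\rho)$. Either $C(\rho)$ is countable for all sufficiently small $\rho$ — which is exactly the statement of Lemma~\ref{l11} — or $C(\rho_m)$ is uncountable along some sequence $\rho_m\downarrow 0$, in which case the previous paragraph produces, inside each arbitrarily small disc $D_{\rho_m}$, an uncountable set of points with infinite $D_{\rho_m}$‑orbit, so that $0$ is an accumulation point of such points and Proposition~\ref{p6} holds outright; in that event there is nothing further to prove and assuming $C$ countable costs nothing. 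I expect the genuinely delicate point to be the passage from ``uncountably many fixed points in the continuum $C$'' to ``$h^{k_0}$ is the identity near $0$'', precisely because $C$ can be a pathological continuum meeting $\partial D_{\rho_0}$ and because the domains $A_k$ of the iterates need not be connected; the safeguards are the freedom to shrink $\rho_0$ and the systematic use of the $0$‑component of the interior of each domain.
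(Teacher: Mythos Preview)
Your overall strategy matches the paper's: if $C$ is uncountable, show directly that $I$ is uncountable, so that assuming $C$ countable costs nothing for the remainder of the proof of Proposition~\ref{p6}. The paper's execution of the key step is, however, considerably cleaner and sidesteps the ``crux'' you flag.

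The simplification you miss is to work with an open neighborhood of $C_{n_0}$ rather than with $W$, the $0$-component of $\mathrm{int}(A_{n_0})$. Since the $C_n$ are nested, $C\subseteq C_{n_0}$, and $C_{n_0}$ is compact and connected. Because $h$ is holomorphic on an open set containing the closed disc $D_{\rho_0}$, the iterate $h^{n_0}$ is holomorphic on an open set containing all of $A_{n_0}\supseteq C_{n_0}$; shrink to a \emph{connected} open neighborhood $U_0\supset C_{n_0}$. If $C\cap P$ is uncountable, some period-$n_0$ stratum $P_{n_0}\subseteq C$ is infinite; it accumulates inside the compact $C_{n_0}\subseteq U_0$, and the Identity Theorem on the connected $U_0$ forces $h^{n_0}\equiv\mathrm{id}$ on $U_0\ni 0$, contradicting non-periodicity of $h$. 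That is the entire argument in the paper.

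Your detour through $W$ creates an artificial difficulty: points of $C$ may indeed lie on $\partial A_{n_0}$, and nothing guarantees $C\subseteq W$. Your proposed fix---shrinking $\rho_0$ so that $h^{n_0}$ is holomorphic across $\partial A_{n_0}$---is exactly the right ingredient, but once you have it the correct move is to apply the Identity Theorem on a connected neighborhood of $C_{n_0}$, not on $W$; the worry about ``other components'' then evaporates because $C\subseteq C_{n_0}$ already sits in a single connected set. Your final paragraph about varying $\rho$ is not needed for Lemma~\ref{l11}, which is stated for the fixed $\rho_0$ chosen at the outset.
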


\begin{proof}Consider the case where $C$ is uncountable. Suppose
for a contradiction that $I$ is countable (and $P$ uncountable).
Then $I\cap C$ would also be countable. We consider now $C \cap P$
and note that it must be {\it uncountable}, otherwise $C$ would be
countable, since $C\subset P\cup I$. Let
\[
C\cap P = \bigcup_{n\in\N} P_n \, ,
\]
where $P_n$ is the set of points $x \in C \cap P$ of period $n$.

Note that there exists a certain $n_0\in \N$ such that $P_{n_0}$
is infinite, otherwise all of the $P_n$ would be finite and $C \cap
P$ would be countable. Being infinite, $P_{n_0}$ has a non-trivial
accumulation point in $C_{n_0}$. The application $h^{n_0}$ is
holomorphic on an open neighborhood $U_0$ of $C_{n_0}$ and it is
the identity on $P_{n_0} \cap U_0$. Since this set has an accumulation
point on $C_{n_0}$ then $h^{n_0}(z)=z$ on $U_0$, by the Identity Theorem.
By construction, $C_{n_0}$ contains the origin so that
$U_{n_0}$ is a neighborhood of $0\in\C$. This contradicts the
hypothesis of non-periodicity of $h$. Hence $I$ is
uncountable.
\end{proof}

In view of the preceding lemma, in the sequel we always assume
that $C$ consists of countably many points. First, note that
there exists $\rho<\rho_0$ such that $C\cap \partial D_\rho =
\emptyset$, otherwise $C$ would contain a point $x_0$ on the
boundary of $D_\rho$ for every $\rho<\rho_0$. This is obviously
impossible since $C$ countable. From now on let us fix one
such $\rho>0$. We note that, in particular, $C \subseteq D_{\rho}$.

Next, notice that the sets $C_1 \cap \partial D_\rho, (C_1 \cap
C_2) \cap \partial D_\rho, (C_1 \cap C_2 \cap C_3)\cap
\partial D_\rho, \ldots$, for $\rho$ as above, form
a decreasing sequence of compact sets. Hence the
intersection $\bigcap_{n\in\N} C_n \cap \partial D_\rho$ is
nonempty, unless there exists $n_0 \in \N$ such that $C_{n_0}\cap
\partial D_\rho=\emptyset$. The last case must occur, since $\rho$
was chosen so that $C \cap \partial D_\rho=\emptyset$.

Let $K$ be a compact connected neighborhood of $C_{n_0}$ that
does not intersect the other connected components of $A_{n_0}$, if
they exist. In particular one has $\partial K \cap
A_{n_0} = \emptyset$.

\begin{lema}\label{l14}
For every $x\in\partial K$ there exists $m\leq n_0$
such that $h^{m}(x)$ is not on $D_\rho$. Besides $\partial K\cap
P=\emptyset$.
\end{lema}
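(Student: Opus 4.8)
# Proof Plan for Lemma~\ref{l14}

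The plan is to exploit the way $K$ was constructed: $K$ is a compact connected neighborhood of the component $C_{n_0}$ of $A_{n_0}$ containing $0$, chosen so that $\partial K \cap A_{n_0} = \emptyset$. Recall that $A_{n_0}$ is precisely the domain of definition of $h^{n_0}$, built inductively as $A_j = D_{\rho_0} \cap h^{-1}(A_{j-1})$; in particular $A_{n_0} \subseteq A_{n_0 - 1} \subseteq \cdots \subseteq A_1 = D_{\rho_0}$. The first assertion is essentially a restatement of $\partial K \cap A_{n_0} = \emptyset$ once one unwinds what membership in $A_{n_0}$ means.

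First I would prove the first assertion by contradiction. Fix $x \in \partial K$ and suppose that $h^m(x) \in D_\rho$ for every $m \le n_0$ (here one reads $h^0(x) = x$, and $x \in \partial K \subseteq K$, and $K$ can be taken inside $D_\rho$ since $C_{n_0} \subseteq C_1 \subseteq D_{\rho_0}$ and $K$ is a sufficiently small neighborhood; more carefully, $x \in \partial K$ lies in $D_{\rho_0}$, which is what is actually needed). The point is that $D_\rho \subseteq D_{\rho_0}$. Now I would run the inductive definition of the $A_j$ backwards: $h^{n_0}(x) \in D_{\rho_0}$ means nothing special, but the \emph{assumption} $h^m(x) \in D_{\rho_0}$ for all $m = 0, 1, \ldots, n_0$ is exactly the statement that $x \in A_{n_0}$. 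Indeed $x \in D_{\rho_0}$ and $h(x) \in A_{n_0 - 1}$ (by downward induction on $n_0 - m$, since $h(x), h^2(x), \ldots, h^{n_0}(x)$ all lie in $D_{\rho_0}$), hence $x \in D_{\rho_0} \cap h^{-1}(A_{n_0 - 1}) = A_{n_0}$. This contradicts $\partial K \cap A_{n_0} = \emptyset$. So some $h^m(x)$, with $m \le n_0$, must fall outside $D_\rho$ (strengthening $D_{\rho_0}$ to $D_\rho$ requires that $\rho$ was chosen with $C \cap \partial D_\rho = \emptyset$ and $K$ shrunk accordingly so that the ``escape'' already happens before leaving $D_\rho$; alternatively one simply records the escape from $D_{\rho_0}$, which is what subsequent arguments use).

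For the second assertion, $\partial K \cap P = \emptyset$: a point $x \in P$ has, by definition, $\mu_{D_{\rho_0}}(x) = \infty$ and $\# O_{D_{\rho_0}}(x) < \infty$, so in particular the entire forward orbit $h^m(x)$, $m \ge 0$, stays in $D_{\rho_0}$ (it is contained in the finite set $O_{D_{\rho_0}}(x)$, which lies in $D_{\rho_0}$). Thus $x \in A_j$ for every $j$, in particular $x \in A_{n_0}$, so $x \notin \partial K$ by the defining property $\partial K \cap A_{n_0} = \emptyset$. This is immediate once the first part's identification of $A_{n_0}$ with ``orbit stays in $D_{\rho_0}$ for $n_0$ steps'' is in hand.

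The main obstacle is bookkeeping rather than mathematics: one must be careful that $\rho$, $K$, and $n_0$ were fixed in a compatible order — $\rho$ so that $C \cap \partial D_\rho = \emptyset$, then $n_0$ so that $C_{n_0} \cap \partial D_\rho = \emptyset$, then $K$ a small enough compact connected neighborhood of $C_{n_0}$ with $\partial K \cap A_{n_0} = \emptyset$ and not meeting the other components of $A_{n_0}$ — and that the escape statement is phrased consistently with whatever disc ($D_\rho$ vs.\ $D_{\rho_0}$) is used downstream. Modulo tracking these choices, the proof is just the unwinding of the definitions of $A_{n_0}$, $P$, and $\partial K$ as indicated above.
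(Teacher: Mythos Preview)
Your proposal is correct and follows essentially the same approach as the paper's proof: both parts are proved by contradiction, showing that the negation forces $x \in A_{n_0}$, which contradicts $\partial K \cap A_{n_0} = \emptyset$. Your parenthetical worry about ``strengthening $D_{\rho_0}$ to $D_\rho$'' is unnecessary: since $D_\rho \subseteq D_{\rho_0}$, the assumption $h^m(x) \in D_\rho$ for all $m \le n_0$ already yields $h^m(x) \in D_{\rho_0}$ for all $m \le n_0 - 1$, hence $x \in A_{n_0}$ directly, with no extra shrinking of $K$ required.
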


\begin{proof}
To verify the existence of $m$, suppose for a
contradiction that for every $m\leq n_0$, $h^m(x)$ belongs to
$D_\rho$, for $x\in
\partial K$. In this case, $x$ would have at least $n_0$ positive
iterations of $h$. This means that $x\in A_{n_0}$, what
contradicts the construction of $K$.

Moreover, if there existed a periodic point of $D_{\rho_0}$ on
$\partial K$, then $x$ would belong to every set $A_n$. In
particular it would belong to $A_{n_0}$, what is a
contradiction.
\end{proof}

Before continuing to prove that $I$ is uncountable, we define the
following sets:
\begin{eqnarray*}
P^{\prime} & = & \{x \in K ; \; \mu_K(x) = \infty , \; \#O_K(x) < \infty\}\\
F^{\prime} & = & \{x \in K ; \; \mu_K(x) < \infty , \; \#O_K(x) < \infty\}\\
I^{\prime} & = & \{x \in K ; \; \mu_K(x) = \infty , \; \#O_K(x) = \infty\}\, .\\
\end{eqnarray*}
Let
\[
P^{\prime}=\bigcup_{n\in\N} P'_n \, ,
\]
where $P^{\prime}_n$ is the set of the periodic points on $K$ with period $n$.

\begin{lema}\label{l12}
$P^{\prime}_n={\rm int}(P^{\prime}_n)\cup \partial P^{\prime}_n$, where ${\rm
int}(P^{\prime}_n)$ is open without boundary and $\partial P^{\prime}_n$ is
finite, consisting of isolated points.
\end{lema}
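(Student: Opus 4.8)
The plan is to realize $P'_n$ as, essentially, the zero set of a single holomorphic function and then apply the identity (isolated‑zeros) theorem in one complex variable. First I would record that $x\in P'_n$ precisely when $h^n(x)=x$, the iterates $x,h(x),\ldots,h^{n-1}(x)$ all lie in $K$, and $h^j(x)\neq x$ for $0<j<n$. A preliminary remark, drawn from Lemma~\ref{l14}: if $x\in P'_n$, then neither $x$ nor any point of its $h$-orbit lies on $\partial K$, since such a point would be periodic relative to $D_{\rho_0}$, hence would belong to $P$, while $\partial K\cap P=\emptyset$. Thus $P'_n$, together with the full $h$-orbit of each of its points, is contained in $\mathrm{int}(K)$. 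This lets me write $P'_n=S_n\cap W_n$, where $S_n:=\{z\in\mathrm{dom}(h^n):h^n(z)=z\}$ is the zero set of the holomorphic function $h^n(z)-z$, and $W_n:=\{z:h^j(z)\in\mathrm{int}(K)\text{ for }0\le j\le n-1\}\cap\{z:h^j(z)\neq z\text{ for }0<j<n\}$ is open.

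Next I would use the elementary structure of analytic subvarieties of a planar domain: on each connected component $\Omega$ of the open set $W_n\cap\mathrm{dom}(h^n)$, the holomorphic function $h^n-\mathrm{id}$ either vanishes identically or has only isolated zeros. Writing $\Omega'$ for the union of the components of the first kind and $\Omega''$ for the union of those of the second kind, one gets $P'_n\cap\Omega'=\Omega'$ (an open set) while $P'_n\cap\Omega''$ consists of isolated points. An easy identity-theorem argument — any small ball contained in $P'_n$ lies in a single component on which $h^n-\mathrm{id}$ vanishes on an open set, hence vanishes identically there — shows that $\mathrm{int}(P'_n)=\Omega'$; this is the open, ``boundaryless'' part. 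The remaining points $P'_n\setminus\mathrm{int}(P'_n)=P'_n\cap\Omega''$ are isolated in $P'_n$ and will form $\partial P'_n$.

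It then remains to prove that $P'_n\cap\Omega''$ is finite, and I would argue by contradiction. If it contained infinitely many distinct points $x_k$, then, $K$ being compact, a subsequence converges to some $x_*\in K$; since $h$ is holomorphic on a fixed neighborhood of $K$ and the orbits of the $x_k$ stay in $\mathrm{int}(K)$, passing to the limit gives $h^n(x_*)=x_*$ with $h^j(x_*)\in K$ for $0\le j\le n-1$. By the preliminary remark $x_*$ cannot lie on $\partial K$ (this is exactly where the earlier choice of $K$ via $n_0$ and Lemma~\ref{l14} is used), so $x_*\in\mathrm{int}(K)$, and a connected neighborhood $N$ of $x_*$ satisfies $h^j(N)\subseteq\mathrm{int}(K)$ for $0\le j\le n$. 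On $N$ the holomorphic function $h^n-\mathrm{id}$ vanishes along the accumulating sequence $x_k$, hence vanishes identically on $N$; but then, for $k$ large, $x_k$ has an open neighborhood (on which $h^n=\mathrm{id}$, $h^j\in\mathrm{int}(K)$, and $h^j\neq\mathrm{id}$ for $0<j<n$) contained in $P'_n$, so $x_k\in\mathrm{int}(P'_n)=\Omega'$, contradicting $x_k\in\Omega''$. Hence $P'_n\cap\Omega''$ is a finite set of isolated points, which together with $\mathrm{int}(P'_n)=\Omega'$ yields $P'_n=\mathrm{int}(P'_n)\cup\partial P'_n$ as claimed. I expect the main obstacle to be not the complex analysis — which reduces to the one‑variable identity theorem — but the bookkeeping near $\partial K$: one must ensure that an accumulation of periodic points cannot be ``hidden'' against the boundary of $K$, and this is precisely what the prior construction of the compact $K$ (chosen so that $\partial K$ misses $A_{n_0}$, via Lemma~\ref{l14}) was set up to prevent.
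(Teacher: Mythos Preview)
Your proposal is correct and follows essentially the same route as the paper: both arguments hinge on applying the identity theorem to $h^n-\mathrm{id}$ near an accumulation point of $P'_n$, and both invoke Lemma~\ref{l14} to rule out such an accumulation point lying on $\partial K$, so that a small neighborhood of it has all iterates in $\mathrm{int}(K)$ and hence sits inside $\mathrm{int}(P'_n)$. Your version is in fact a bit more careful than the paper's, since you explicitly track the minimal-period condition $h^j(z)\neq z$ for $0<j<n$ via the open set $W_n$, whereas the paper's write-up glosses over this point.
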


\begin{proof}
Let $p$ be the limit of a sequence of points in
$P^{\prime}_n$. Note that we do not assume that this sequence is
constituted by pairwise distinct points so that every point
belonging to $P^{\prime}_n$ automatically satisfies this condition.
Clearly, it suffices to show that there is a neighborhood of $p$
contained in $P^{\prime}_n$ in the case that the sequence is
not trivial, i.e. $p$ is an accumulation point of the sequence.
By definition, there exists an open connected neighborhood $W$ of
$p$ where $h^n$ is defined and holomorphic. Moreover $h^n$ is the identity
on $P^{\prime}_n\cap W$. Hence, if $p$ is an accumulation point of the
limit sequence of points~in $P^{\prime}_n$ then it $h^n(z)=z$ on $W$ from
the Identity Theorem. On the other hand, the $K$-orbit of $p$ does
not intersect $\partial K$ (cf. Lemma~\ref{l14}). Thus $W$ can be
chosen sufficiently small so that $h(W), h^2(W), \ldots,
h^n(W)\subset {\rm int}(K)$. This proves that $W\subset {\rm
int}(P^{\prime}_n)$. Hence $P^{\prime}$ consists on the union
of isolated points with ${\rm int}(P^{\prime}_n)$. Clearly the
number of isolated points must be finite, hence implying the lemma.
\end{proof}

\begin{lema}\label{l16}
The boundary of $P^{\prime}$ is countable $($consisting of the
union of the isolated points in $P^{\prime}_n$, for $n\in\N)$.
\end{lema}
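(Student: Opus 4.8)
The plan is to prove the inclusion $\partial P' \subseteq \bigcup_{n\in\N}\partial P'_n$; since Lemma~\ref{l12} asserts that each $\partial P'_n$ is finite, the right-hand side is a countable union of finite sets, hence countable, and the countability of $\partial P'$ follows at once. The parenthetical identification in the statement is then obtained by checking the reverse inclusion by the same methods.

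First I would record the structural consequence of Lemma~\ref{l12}: setting $G := \bigcup_{n\in\N}\mathrm{int}(P'_n)$ and $D := \bigcup_{n\in\N}\partial P'_n$, one has $P' = G \cup D$ with $G$ open and $D$ countable. In particular $G \subseteq \mathrm{int}(P')$, so any $p \in \partial P'$ satisfies $p \notin \mathrm{int}(P'_m)$ for every $m$. Hence, if such a $p$ actually lies in $P'$, it belongs to some $P'_m$ but not to $\mathrm{int}(P'_m)$, so $p \in \partial P'_m \subseteq D$, as wanted. The whole matter therefore reduces to ruling out boundary points of $P'$ lying in $\overline{P'}\setminus(P'\cup D)$, i.e. points approached by periodic points but not themselves in $D$.

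The heart of the argument — and the step I expect to be the main obstacle — is to handle a point $p$ approached by periodic points whose periods are not bounded, say $p = \lim p_k$ with $p_k \in P'_{n_k}$ and $n_k \to \infty$. (If infinitely many of the $p_k$ shared a common period $m$, then $p$ would be an accumulation point of $P'_m$, and the Identity-Theorem argument from the proof of Lemma~\ref{l12} would force $h^m \equiv \mathrm{id}$ near $p$, hence $p \in \mathrm{int}(P'_m)\subseteq\mathrm{int}(P')$, a contradiction.) To dispose of the unbounded-period case one must exploit the construction of $K$: by Lemma~\ref{l14} the $K$-orbit of a point of $\overline{P'}$ stays away from $\partial K$, so a single connected neighbourhood $W$ of $p$ can be chosen on which all the iterates $h, h^2, \ldots$ that are needed are defined and map $W$ into $\mathrm{int}(K)$. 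On $W$ each set $\{h^n = \mathrm{id}\}$ is an analytic subvariety, and the non-periodicity of $h$ — together with $0 \in C_{n_0} \subseteq K$ — prevents $h^n\equiv\mathrm{id}$ on a neighbourhood of any point joined to $0$ through the domain of $h^n$; a compactness and analytic-continuation argument then yields that only finitely many $P'_n$ meet a sufficiently small $W$, each in a finite set. Consequently $p$ is either an isolated point of $\bigcup_n P'_n$ near $W$, so $p\in D$, or $p\in\mathrm{int}(P')$; in neither case is $p\in\partial P'\setminus D$.

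Combining the pieces gives $\partial P' \subseteq D = \bigcup_n\partial P'_n$, a countable union of finite sets, so $\partial P'$ is countable. I expect the delicate point to be making the ``only finitely many $P'_n$ meet a small $W$'' claim rigorous, i.e. extracting genuine local finiteness of the family $\{P'_n\}_{n\in\N}$ near a boundary point out of the orbit control provided by the construction of $K$ and Lemma~\ref{l14}; the rest is bookkeeping with the decomposition $P' = G\cup D$ and the Identity Theorem.
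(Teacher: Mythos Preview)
Your approach diverges from the paper's in an essential way, and the divergence is exactly where your acknowledged gap sits.

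The paper does not split into bounded versus unbounded periods at all. Instead it extracts from Lemma~\ref{l12} a stronger conclusion than the one you use: not merely that each $\partial P'_n$ is finite, but that each ${\rm int}(P'_n)$ is \emph{open without boundary}, i.e.\ clopen. With this in hand, the paper argues directly that $G=\bigcup_n{\rm int}(P'_n)$ has empty boundary: given $p\in\partial G$, pick a single nearby $p_k\in{\rm int}(P'_{n_0})$, join $p_k$ to $p$ by a path in the connected compact $K$, and observe that the path must cross $\partial\bigl({\rm int}(P'_{n_0})\bigr)$, which is empty --- contradiction. No control on the periods of any approximating sequence is ever needed; the argument uses only one period $n_0$ and the connectedness of $K$.

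Your unbounded-period case is a genuine gap, and I do not think the ``local finiteness'' claim can be rescued along the lines you sketch. If $p=\lim p_k$ with $p_k\in P'_{n_k}$ and $n_k\to\infty$, then all forward iterates $h^j(p)$ lie in $K$ (by continuity and closedness of $K$), so $\mu_K(p)=\infty$ and $p\in P'\cup I'$. Nothing prevents $p\in I'$; such a $p$ would lie in $\partial P'\setminus D$, so the inclusion $\partial P'\subseteq D$ you are aiming for cannot be obtained by orbit-control arguments alone. Your proposed remedy --- that ``only finitely many $P'_n$ meet a small $W$'' --- is precisely what fails here: the periods $n_k$ are unbounded by hypothesis, and neither analyticity of each $h^n$ nor the non-periodicity of $h$ near $0$ gives a uniform bound on periods near an arbitrary accumulation point of $P'$.

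The fix is to go back to Lemma~\ref{l12} and use its full force (the clopen conclusion) together with the connectedness of $K$, as the paper does; this dissolves the unbounded-period case rather than confronting it.
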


\begin{proof}
Let us first consider $\bigcup_{n\in\N}{\rm int}(P^{\prime}_n)$.
Naturally it is an open set and we will prove that its
boundary is empty. Suppose, for a contradiction, that there exists
$p$ in the boundary of $\bigcup_{n\in\N}{\rm int}(P^{\prime}_n)$. Then there is
a sequence of points $\{p_k\}$ in $\bigcup_{n\in\N}{\rm int}
(P^{\prime}_n)$ converging to $p$. Fix a small disc
about $p$ and choose a point $p_k\in \bigcup_{n\in\N}{\rm
int}(P^{\prime}_n)$ belonging to this ball. Hence there is $n_0$ such that
$p_k\in P^{\prime}_{n_0}$. Clearly $p_k$ belongs to $K$ since
$P^{\prime}\subseteq K$. Furthermore, $p$ belongs to $K$ as well since
$K$ is closed. Finally, because $K$ is connected (by
construction), there is a path $c:[0,1]\rightarrow K$ joining
$p_k$ to $p$ (i.e. such that $c(0)=p_k$, $c(1)=p$). Since $p_k$ lies in
$P^{\prime}_{n_0}$ and $p$ does not, there is $t_0\in [0,1]$ such that
$c(t_0)$ belongs to the boundary of ${\rm int}(P^{\prime}_{n_0})$. This is
a contradiction to the previous lemma. Thus $\partial P^{\prime}$ is
countable as the union of the finite sets $\partial P^{\prime}_n$.
\end{proof}

Finally, we have one last simple lemma that will allow us to
conclude the proof.

\begin{lema}
\label{l15} $F^{\prime}$ is an open set of $K$.
\end{lema}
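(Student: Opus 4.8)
The plan is to reduce the statement to the soft observation that \emph{escaping from a compact set under forward (resp.\ backward) iteration is an open condition on the base point}: once we know this for both time directions simultaneously, openness of $F^{\prime}$ is immediate, and this also automatically rules out nearby periodic points being trapped in $K$.

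First I would fix notation and the standing hypotheses. Since $h\in{\rm Diff}(\C,0)$ is a germ of biholomorphism fixing $0$, after shrinking $\rho_0$ if necessary we may assume that both $h$ and $h^{-1}$ are defined, holomorphic and injective on an open set $\Omega$ containing the closed disc $D_{\rho_0}$, hence in particular containing $K$. Now fix $x\in F^{\prime}$, so that $\mu_K(x)<\infty$. Because the forward orbit of $x$ stays in $K$ for only finitely many iterations, there is an integer $a\geq 0$ with $h^{j}(x)\in K$ for $0\leq j\leq a$ and $h^{a+1}(x)\notin K$; here $h^{a+1}(x)$ is well defined since $h^{a}(x)\in K\subseteq\Omega$. (If no such $a$ existed the whole forward orbit would lie in $K$, forcing $\mu_K(x)=\infty$.) Applying the same reasoning to $h^{-1}$ yields an integer $b\geq 0$ with $h^{-j}(x)\in K$ for $0\leq j\leq b$ and $h^{-(b+1)}(x)\notin K$.

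Next I would invoke continuity. Since $h^{j}(x)\in K\subseteq\Omega$ for $0\leq j\leq a$ and $h$ is holomorphic on $\Omega$, an easy induction shows that $h^{a+1}$ is holomorphic, hence continuous, on a neighborhood of $x$; similarly for $h^{-(b+1)}$. As $K$ is closed and $h^{a+1}(x),\,h^{-(b+1)}(x)\in\C\setminus K$, with $\C\setminus K$ open, there is an open neighborhood $W$ of $x$ on which $h^{j}$ and $h^{-j}$ are defined for $0\leq j\leq\max(a,b)+1$ and such that $h^{a+1}(W)\cap K=\emptyset$ and $h^{-(b+1)}(W)\cap K=\emptyset$. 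Then for any $y\in W\cap K$ the forward orbit of $y$ leaves $K$ within $a+1$ steps and the backward orbit within $b+1$ steps, so $\mu_K(y)\leq a+b<\infty$, whence also $\#O_K(y)\leq a+b+1<\infty$ and thus $y\in F^{\prime}$. Since $W\cap K$ is an open subset of $K$ containing $x$, this proves $F^{\prime}$ is open in $K$.

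I expect the only delicate point to be the bookkeeping in the last paragraph: making sure that all of the finitely many iterates $h^{\pm j}(x)$, and then $h^{\pm j}(y)$ for $y$ in a common neighborhood $W$ of $x$, are genuinely defined. This is exactly where the inclusion $K\subseteq D_{\rho_0}$ and the fact that $h$ and $h^{-1}$ are biholomorphic on a neighborhood of $D_{\rho_0}$ are used — they guarantee that one may always legitimately take one more step, forward or backward, starting from a point of $K$. Everything else is a formal consequence of the continuity of the maps $h^{n}$ together with the openness of $\C\setminus K$.
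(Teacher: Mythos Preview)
Your proof is correct and follows essentially the same approach as the paper's: both arguments use continuity of the iterates together with the openness of $\C\setminus K$ to show that if the two-sided orbit of $x$ escapes $K$ in finitely many steps, then so does that of every nearby point. The paper phrases this via the decomposition $F'=\bigcup_n F'_n$ and asserts each $F'_n$ is open, while your version is more careful about the domain of definition of $h^{\pm j}$ and only concludes $\mu_K(y)\leq a+b$ (which is all that is needed); substantively the two proofs are the same.
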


\begin{proof}
Note that, by definition, for points $x\in F^{\proof}$ we have
$\# O_K(x) = \mu_K(x) + 1$. Let
\[
F^{\prime} = \bigcup_{n\geq 0} F^{\prime}_n \, ,
\]
where $F^{\prime}_n$ is the set of points in $F^{\prime}$ such that $\mu_K(x)=n$.
We claim that $F^{\prime}_n$ are open sets. Indeed by continuity of $h$,
every point of $F^{\prime}_n$ has a neighborhood of points also having $n$
iterations on $K$. Since $F^{\prime}$ is the union of open sets, it is
itself an open set.
\end{proof}

To conclude the proposition we proceed as follows. Clearly we have
\[
K={\rm int}(P^{\prime})\cup F^{\prime}\cup( I^{\prime}\cup \partial P^{\prime}) \, .
\]
We shall analyze separately the following possibilities:
\begin{itemize}
\item Suppose that $F^{\prime}=\emptyset$.

Then $\partial K\subseteq {\rm int}(P^{\prime})\cup (I^{\prime} \cup \partial P^{\prime})$.
However, by Lemma~\ref{l14}, $\partial K$ does not contain
periodic points and so $\partial K\subseteq I^{\prime} \cup\partial P^{\prime}$.
Now $\partial P^{\prime}$ is countable, by Lemma~\ref{l16} and by
construction $\partial K$ in uncountable. Therefore we conclude
that $I^{\prime}$ is uncountable.

\item Suppose that $F^{\prime} \neq \emptyset$ and ${\rm
int}(P^{\prime})=\emptyset$.

In this case, we have that $K = F^{\prime} \cup (I^{\prime} \cup \partial P^{\prime})$. Note
that for sufficiently small values of $r>0$, the compact disks
$D_r$ are contained in $K$. And so, by Lewowicz's Lemma
(Lemma~\ref{l9}), we have that $\partial D_r\cap (I^{\prime} \cup \partial
P^{\prime})$ is non-empty. Therefore, $(I^{\prime} \cup \partial P^{\prime})$ is
uncountable, and as before, $I^{\prime}$ must be uncountable as well.

\item Suppose that $F^{\prime} \neq \emptyset$ and ${\rm
int}(P^{\prime}) \neq \emptyset$.

Note that ${\rm int}(F^{\prime})$ and ${\rm int}(P^{\prime})$ are non-empty
disjoint open sets of ${\rm int}(K)$. Thus $K\setminus ({\rm
int}(F^{\prime})\cup{\rm int}(P^{\prime}))$ is non-empty so it must be
uncountable. Naturally, it is contained on $(I^{\prime} \cup \partial P^{\prime})$
and so, $I^{\prime}$ must also be uncountable.
\end{itemize}

This completes the proof of Proposition~\ref{p6}.
\end{proof}

\begin{prop}\label{p8}
Suppose that the holonomy associated to
$\widetilde{\fol}$ and to a local transverse section $\Sigma$ is
given by $h=e^{2\pi i \alpha}z + {\rm h.o.t.}$, where $\alpha$ is
irrational. Then there exist leaves of $\widetilde{\fol}$ that are
not closed on arbitrarily small neighborhoods of $0$.
\end{prop}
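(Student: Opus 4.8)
The plan is to combine the dynamical information about $h$ proved in Proposition~\ref{p6} with the dictionary between orbits of the holonomy and leaves of $\widetilde{\fol}$. First I would set up the geometry: let $\Sigma$ be the transverse section to the invariant axis $\{y=0\}$ along which the holonomy $h(z)=e^{2\pi i \alpha}z+{\rm h.o.t.}$ is computed, with $0\simeq \Sigma\cap\{y=0\}$. The key structural remark is that, for a point $z\in\Sigma\setminus\{0\}$, the leaf $L_z$ of $\widetilde{\fol}$ through $z$ meets a fixed neighborhood of the axis precisely along the $h$-orbit of $z$ (together with the points obtained by analytic continuation of the leaf along loops in $\{y=0\}$, which are exactly the iterates $h^i(z)$, $i\in\Z$). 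Hence the closure of $L_z$ inside a small bidisc around the origin is controlled by the closure of $O_U(z)=\{h^i(z):i\in\Z\}$ inside the corresponding disc $U\subset\Sigma$: if $z$ has an infinite $U$-orbit that is moreover not periodic, then $\overline{O_U(z)}\setminus O_U(z)\neq\emptyset$ inside $U$, and a limit point $z_\infty$ of this orbit lies on a different leaf, so $L_z$ is not closed.

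The second step is to invoke the hypothesis. By Part~1 we are in the Siegel case $\lambda_1/\lambda_2\in\R_-$ and, in the irrational subcase, the relevant situation is the one where $h$ is \emph{not} linearizable; the linearizable case was already disposed of (an irrational rotation has dense orbits on circles, contradicting Condition~2). If $h$ were periodic, i.e. $h^m={\rm id}$ on a neighborhood of $0$ for some $m$, then its linear part $e^{2\pi i\alpha}$ would be an $m$-th root of unity, forcing $\alpha\in\Q$, contrary to assumption. Therefore $h\in{\rm Diff}(\C,0)$ is non-periodic, so Proposition~\ref{p6} applies: for arbitrarily small neighborhoods $U$ of $0$ in $\Sigma$, the set of points $z\in U$ whose $U$-orbit is infinite is uncountable and accumulates at $0$. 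Pick such a $z$ in a prescribed small $U$; in particular its $U$-orbit is infinite and, since $h$ is not periodic, no iterate of $z$ is a periodic point, so $O_U(z)$ is genuinely infinite with $\#O_U(z)=\infty$. Being an infinite subset of the relatively compact set $\overline{U}$, it has an accumulation point $z_\infty\in\overline{U}$.

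The third step is to transfer this non-closedness from $\Sigma$ to the foliation. One must be slightly careful: $z_\infty$ could a priori be $0$. But by Proposition~\ref{p6} we may choose $U$ small and then, if necessary, shrink $U$ and re-choose $z$ so that the orbit accumulates at an interior point $z_\infty\neq 0$ of the chosen neighborhood — indeed the proof of Proposition~\ref{p6} (via Lewowicz's Lemma) produces orbits meeting $\partial D_\rho$ for every $\rho<\rho_0$, so one obtains accumulation points bounded away from $0$. Then $z_\infty$ lies on a leaf $L_{z_\infty}$ of $\widetilde{\fol}$ distinct from $L_z$ (two leaves are equal iff the corresponding points of $\Sigma$ are in the same $h$-orbit, and $z_\infty\notin O_U(z)$ in general; if $z_\infty$ happened to lie in $O_U(z)$ one replaces $z_\infty$ by a nearby orbit point, using that $O_U(z)$ is not locally finite near $z_\infty$). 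Since points of $O_U(z)$ accumulate on $z_\infty$, the leaf $L_z$ accumulates on $L_{z_\infty}$ without containing it, hence $L_z$ is not closed in any neighborhood of $0$ containing the bidisc over $U$. As $U$ can be taken arbitrarily small, this proves the proposition.

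The main obstacle I expect is purely bookkeeping around the point $z_\infty$: making sure the accumulation point of the chosen infinite orbit is an interior point of $\Sigma$ different from the origin (so that it genuinely sits on another leaf of $\widetilde{\fol}$ rather than being swallowed by the singularity), and confirming that the leaf through $z_\infty$ is distinct from $L_z$. Both are handled by exploiting the quantitative content of Proposition~\ref{p6} — the uncountability and the accumulation of infinite-orbit points at $0$ — together with the observation that an infinite $h$-orbit cannot be locally finite near any of its accumulation points unless $h$ is periodic there, which is excluded. Everything else is the standard holonomy–leaf correspondence already used throughout Part~1.
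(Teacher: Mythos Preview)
Your overall strategy matches the paper's: invoke Proposition~\ref{p6} to produce an infinite $h$-orbit in $\Sigma$, find an accumulation point, and translate this into a non-closed leaf. The gap is in your third step, specifically in the case where the accumulation point $z_\infty$ lies on the \emph{same} leaf, i.e.\ $z_\infty\in O_U(z)$.

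Your proposed fix --- ``replace $z_\infty$ by a nearby orbit point, using that $O_U(z)$ is not locally finite near $z_\infty$'' --- does not work. Any nearby \emph{orbit} point is still on $L_z$, so you have not produced a point of $\overline{L_z}\setminus L_z$. What you need is an accumulation point of $O_U(z)$ that is \emph{not} in $O_U(z)$, and ``not locally finite'' alone does not deliver this: a priori the orbit could be its own derived set. The paper closes this gap with a perfect-set argument. If $z_\infty\in L\cap\Sigma$, then by continuity of the holonomy one transports the accumulating sequence from $z_\infty$ to any other point $q\in L\cap\Sigma$ along a path in $L$; hence $L\cap\Sigma$ has \emph{no} isolated points. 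Its closure is therefore a nonempty perfect set, hence uncountable. But a leaf meets a transverse section in at most countably many points, so $L\cap\Sigma$ itself is countable, forcing $\overline{L\cap\Sigma}\setminus(L\cap\Sigma)\neq\emptyset$. Any point in this difference witnesses that $L$ is not closed.

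Incidentally, your worry about $z_\infty=0$ is misplaced: if the orbit accumulates at $0\in\Sigma$, then $L_z$ accumulates on the separatrix $\{y=0\}$, a distinct leaf, and you are done immediately. You do not need to engineer accumulation points bounded away from the origin.
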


\begin{proof}
It follows from the preceding proposition that there exists
a sequence of points $p_n = h^n(p_0) \in \Sigma$, pairwise
distinct and accumulating on a point $p \in \Sigma$. In fact
there exists an uncountable number of such sequences. Note
that each $p_n$ belongs to a same leaf $L$. We will show
that the leaf $L$ passing through $p_n$ is not closed.

Suppose that the accumulation point $p$ is in $L$, otherwise the
claim is obvious. Suppose that there exists an isolated point $q$
of $L\cap\Sigma$. Consider a path on $L$ connecting these two
points, by continuity of the holonomy application, there is also a
sequence of points of $L\cap\Sigma$ accumulating on $q$. In other
words, $L\cap \Sigma$ has no isolated points. Therefore, the
closure $\overline{L\cap \Sigma}$ of $L\cap \Sigma$ is uncountable
(being a perfect set). However, $L\cap\Sigma$ is countable, for
every leaf in a foliation can intersect a transverse section only
countably many times. Hence, $L\cap \Sigma$ has points of
accumulation that do not belong to it. In particular, $L$ is not
closed.
\end{proof}

%\begin{obs}
%{\rm The local dynamics of a non-linearizable diffeomorphism $h$
%as above is sometimes referred to as the dynamics at Cremer
%points. This dynamics can be very complicated, in particular
%exhibiting non-trivial compact sets. COMPLETAR + TARDE!!  The
%dynamics of $h$ on these sets is similar to that of an irrational
%rotation (cf. \cite{PM}). Obviously Lemma~\ref{l9} is infinitely
%simpler and sufficient for our problem.}
%\end{obs}

\begin{prop}\label{p4}
Let $\fol$ be a foliation satisfying Conditions~$1$ and~$2$.
Then its Seidenberg tree only contains singularities that
admit a local first integral. In particular, the local holonomy of
their separatrizes is finite.
\end{prop}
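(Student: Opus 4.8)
The plan is to combine Proposition~\ref{l10} with the analysis of the Siegel-domain singularities carried out in \textbf{Part~2}. By Proposition~\ref{l10} every irreducible singularity appearing in the Seidenberg tree of $\fol$ has non-zero eigenvalues $\lambda_1, \lambda_2$ with $\lambda_1/\lambda_2 \in \R_-$, so it is of the form given by Lemma~\ref{Siegel}, i.e. locally given by a vector field as in Equation~(\ref{eq32}). Thus it suffices to show that each such singularity, once we know that $\fol$ (hence the blown-up foliation) satisfies Conditions~$1$ and~$2$, admits a holomorphic first integral, and that its separatrix holonomy is finite. The separatrices here are the two coordinate axes $\{x=0\}$ and $\{y=0\}$ provided by Lemma~\ref{Siegel}, and by the last lemma of \textbf{Part~1} these axes are invariant components of the exceptional divisor configuration.

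First I would split according to whether $\alpha = \lambda_1/\lambda_2$ is rational or irrational. If $\alpha$ is irrational, the holonomy of the separatrix $\{y=0\}$ has the form $h(z) = e^{2\pi i \alpha} z + \mathrm{h.o.t.}$. If $h$ were linearizable it would be conjugate to an irrational rotation, whose dense orbits force the leaves of $\widetilde{\fol}$ near $\{y=0\}$ to fail to be locally closed, contradicting Condition~$2$ (this is exactly the density argument sketched just before Lemma~\ref{l9}). If $h$ is not linearizable, then $h$ is not periodic, so Proposition~\ref{p8} (which rests on Proposition~\ref{p6} and Lewowicz's Lemma~\ref{l9}) produces leaves of $\widetilde{\fol}$ that are not closed on arbitrarily small neighborhoods of $0$, again contradicting Condition~$2$. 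Hence the irrational case cannot occur at all in the Seidenberg tree of a foliation satisfying Conditions~$1$ and~$2$.

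Therefore $\alpha = \lambda_1/\lambda_2 = -m/n$ with $m,n \in \N$, i.e. $\lambda_1/\lambda_2 \in \Q_-$. The separatrix holonomy is then $h(z) = e^{-2\pi i n/m} z + \mathrm{h.o.t.}$, a diffeomorphism whose linear part is a primitive $m$-th root of unity. If $h$ is not linearizable, then $h^m(z) = z + \mathrm{h.o.t.}$ is a nontrivial tangent-to-identity map, and its Leau-flower (cardioid) dynamics, exactly as in the saddle-node discussion of Section~\ref{sectionsaddlenode}, produces infinitely many leaves accumulating on $\{y=0\}$ — contradicting Condition~$1$ if $\{y=0\}\subset E$ and Condition~$2$ otherwise. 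So $h$ must be linearizable; in particular $h^m = \mathrm{id}$, which is a finite group, establishing the finiteness of the local holonomy of the separatrix. (The averaging map $f = (\frac1m\sum_{i=0}^{m-1}\lambda^{-i}h^i)^{-1}$ exhibited in \textbf{Part~2} linearizes $h$ explicitly once $h^m=\mathrm{id}$.) Then Theorem~\ref{TMM} (Mattei--Moussu) upgrades the linearizability of the holonomy to the linearizability of the foliation: $\widetilde{\fol}$ is, in suitable local coordinates $(x,y)$, the foliation of the linear vector field $\widetilde{X} = m x\,\partial/\partial x - n y\,\partial/\partial y$, cf. Equation~(\ref{eq31}). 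This linear foliation manifestly has the holomorphic first integral $(x,y)\mapsto x^n y^m$, which is the desired local first integral.

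The main obstacle — and the only genuinely delicate point — is the irrational case, specifically ruling out non-linearizable $h$ with linear part an irrational rotation; this is precisely what Proposition~\ref{p6} and Proposition~\ref{p8} are engineered to handle, so here I would simply quote them. The rest is bookkeeping: assembling the trichotomy of cases, invoking the already-proven lemmata of \textbf{Part~1} (no saddle-nodes, no hyperbolic singularities, no $\R_+$-resonant singularities, all divisor components invariant) to know what the singularities look like, and then applying Theorem~\ref{TMM} to pass from holonomy to foliation. I should be careful to note that in all the ``bad'' subcases the contradiction is always with Condition~$1$ or Condition~$2$ depending on whether the invariant axis carrying the relevant holonomy lies inside the exceptional divisor (whose image is the point $(0,0)$, so infinitely many accumulating leaves violate Condition~$1$) or is transverse to it (so it projects to a separatrix, and infinitely many leaves accumulating on it violate Condition~$2$, using properness of $\pi$).
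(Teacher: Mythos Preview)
Your proposal is correct and follows essentially the same approach as the paper: the paper's proof is a terse two-sentence summary (``the preceding discussion allows us to conclude \ldots'') of exactly the case analysis you have written out in full, ending with the same invocation of Theorem~\ref{TMM} and the explicit first integral $f(x,y)=x^ny^m$. Your unpacking of the rational/irrational dichotomy, the use of Propositions~\ref{p6} and~\ref{p8} for the irrational non-linearizable case, and the Leau-flower argument for $h^m\neq\mathrm{id}$ in the rational case all match the ``preceding discussion'' that the paper's proof is summarizing.
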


\begin{proof}
Indeed, the preceding discussion allows us to
conclude that the only possibility which does not contradict the
hypothesis is when $\lambda_1/\lambda_2=-m/n$ is a rational
number. It follows from Theorem~\ref{TMM} that the vector field
associated to $\fol$ is holomorphically conjugate to a linear one,
locally given by Equation~(\ref{eq31}). Notice that its solution
is given by $\phi(T) = (x_0e^{mT}, y_0e^{-nT})$. Hence there exists a local first
integral. More precisely, $f(x,y)=x^ny^m$ is constant over the
orbits of the vector field.
\end{proof}

Now we go back to Theorem~\ref{TMM} and give an idea of the
basic principles behind its proof.

\begin{proof}[Proof of Theorem~\ref{TMM}]
Suppose that there exists a
holomorphic diffeomorphism $\varphi$ conjugating the holonomies
$h_1$ and $h_2$ relative to the foliations $\fol_1$ and $\fol_2$,
respectively and to a loop $S^1$ encircling the origin of a separatrix.
Let $D(\varepsilon)$ be a disk of radius $\varepsilon>0$ on a local
section $\Sigma$ transverse to the separatrix (which we can assume to
be given, in local coordinates, by $\{y=0\}$) at the point $(1,0)$, whose
image under $h_1$ is still contained in the previously chosen
neighborhood.

Recall that the vector field $X$, associated to the blown-up
foliation $\widetilde{\fol}$ is given by:
\[
X = \lambda_1 x (1 + {\rm h.o.t.}) \partial /\partial x + \lambda_2 y (1
+ {rm h.o.t.}) \partial /\partial y \, .
\]
Note that away from $\{x=0\}$, the leaves of the foliation
$\widetilde{\fol}$ are transverse to the vertical complex lines,
i.e. to the fibers of $\pi_1(x,y)=x$.

First, we define a holomorphic application $\phi$ on the solid
torus $S^1\times D(\varepsilon)$. This is done by lifting, with
respect to $\pi_1$ the path $\gamma$ along $S^1$ that connects $(1,0)$
to $(e^{i \theta_0},0)$, to a path $\gamma_1$ on the leaf $L_1$ of
$\fol_1$ that passes by $z \in \Sigma$. Note that this lift is
well-defined since the leaves are transverse to the fibers of
$\Pi_1$. We also lift, with respect to $\pi_1$, $\gamma$ to a path
$\gamma_2$ on the leaf $L_2$ of $\fol_2$ passing by $\varphi(z)$.
Now we extend the diffeomorphism $\varphi$ by declaring that
$\varphi$ takes the final extremity of $\gamma_1$ to the final
extremity of $\gamma_2$. We have to show that this extension of
$\phi$ is well-defined when $\gamma$ becomes a loop around
$0\in\{y=0\}$. This is however clear, since the final extremity of
$\gamma_1$ (resp. $\gamma_2$) is, by construction the image of $z$
by $h_1$ (resp. $h_2$) and we have $\varphi \circ
h_1 = h_2 \circ \varphi$. Thus we conclude that $\phi$ is well-defined
on $S^1\times D(\varepsilon)$. $\phi$ is moreover an extension of
$\varphi$, i.e. $\phi|_{1 \times D(\varepsilon)} = \varphi$.

Next we wish to extend $\phi$ holomorphically to a neighborhood of
$(0,0)$ and so that it still conjugates the foliations. Consider
them the radial lines $R_{\theta_0}$ connecting each point
$(e^{i\theta_0},0)$ on $S^1$ to the origin. Let $L_1$ be the leaf of
$\fol_1$ passing by $(e^{i \theta_0}, z)\in S^1\times
D(\varepsilon)$ and let $\gamma_{\theta_0}$ be the path over $L_1$ such
that $\Pi(\gamma_{\theta_0})=R_{\theta_0}$. Analogously, let $L_2$
be the leaf of $\fol_2$ that passes by $\phi(e^{i \theta_0}, z)$
and let $\eta_{\theta_0}$ be the path over $L_2$ that projects on
$R_{\theta_0}$ by $\pi_1$. We define $\phi$ to be the application
that takes $\gamma_{\theta_0}$ into $\eta_{\theta_0}$ by
preserving the projection $\pi_1$. By construction, $\phi$
is a holomorphic conjugation between $\fol_1$ and $\fol_2$ on its
domain. Note that the domain of $\phi$ is precisely the saturated
of $\Sigma$ by $\fol$ which is going to be denoted by
$\fol_\Sigma$.

However note that, {\it a priori}, as we follow the radial lines
approaching the origin, the union of $\fol_\Sigma$ and  $\{x=0\}$
may not contain a neighborhood of $(0,0)$.

To be more precise, let us revisit the construction of $\phi$
along the radial lines $R_{\theta_0}$. For example, consider this
construction over the radial line $R_0$ contained in $\{y = 0\}$
and joining the point $(1,0) = \Sigma\cap \{y=0\}$ to $(0,0)$. By
definition, we begin with a point $z_0\in\Sigma$ and consider the path
$\gamma_{z_0}:[0,1]\rightarrow L_{z_0}$, $\gamma(0)=z_0$, that
lifts only a segment of the radial line $R_0$ on $\{y=0\}$, going
from $(1,0)$ to a point $(q,0)$, with $q$ close to $0\in \C$. Let $\Sigma_{z_0}$ be
the set consisting of the final extremities, $\gamma_{z_0}(1)$ of
the paths $\gamma_{z_0}$ as above for every $z\in \Sigma$. In
particular, $\Sigma_{z_0}$ is contained in the vertical line
$\{x=q\}$. The difficulty to ensure that $\phi$ will lead to a
conjugacy defined around $(0,0)\in\C^2$ is related to the fact
that $\Sigma_{z_0}$ may not contain a uniform disc about $0\in \C$
as $q \rightarrow 0$.

In his manuscript \cite{M}, J.-F. Mattei estimates the size of the
sets $\Sigma_{z_0}$ as above (for all the radial leaves involved
and not only the example considered above of $R_0$). The
fundamental result in \cite{M} is the existence of a uniform
$\varepsilon>0$, such that every set $\Sigma_{z_0}$ contains the
ball of radius $\varepsilon$ about $0\in\C$. In particular, he
obtains:

\begin{prop}[Mattei \cite{M}]
\label{p7} Let $\fol$ be a foliation with eigenvalues $\lambda_1$
and $\lambda_2$, such that ${\rm Re}(\lambda_1/\lambda_2)<0$, then
$\fol_\Sigma \cup \{x=0\}$ contains a neighborhood of $(0,0)\in
\C^2$.
\end{prop}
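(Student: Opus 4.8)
The plan is to reduce to the normal form of Lemma~\ref{Siegel} and then to control the holonomy transport of $\Sigma$ along the fibration $\pi_1(x,y)=x$ by a Gr\"onwall-type estimate. Working in suitable coordinates near the singularity (here $\lambda_1/\lambda_2\in\R_-$ and Lemma~\ref{Siegel} applies), we may assume that the two separatrices are $\{x=0\}$ and $\{y=0\}$, that $\Sigma=\{x=1\}\cap\{|y|<\de\}$ for some small $\de>0$, and that $\fol$ is generated by
\[
X=\lambda_1 x(1+A(x,y))\,\partial/\partial x+\lambda_2 y(1+B(x,y))\,\partial/\partial y \, ,
\]
with $A,B$ holomorphic on the coordinate chart and vanishing at the origin. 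Away from $\{x=0\}$ the $\partial/\partial x$-component of $X$ does not vanish, so the leaves are transverse to the fibers $\{x={\rm const}\}$ there; hence over a path contained in $\C^{\ast}$ a leaf is locally a graph $y=y(x)$ solving
\[
\frac{dy}{dx}=\frac{\lambda_2}{\lambda_1}\,\frac{y}{x}\,g(x,y)\, ,\qquad g=\frac{1+B}{1+A}\, ,\quad g(0,0)=1 \, .
\]

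First I would fix a point $(q,w)$ with $q\neq 0$ small and $|w|<\de$, and transport it outward to $\Sigma$ along the path $x(s)=q\,e^{-s\log q}$, $s\in[0,1]$ (for a fixed branch of $\log q$), so that $x(0)=q$, $x(1)=1$ and $|x(s)|=|q|^{1-s}\in[|q|,1]$ stays bounded away from $0$. Writing $y(s)$ for the lifted leaf one gets, as long as $(x(s),y(s))$ remains in the chart,
\[
\log y(s)=\log w+\frac{\lambda_2}{\lambda_1}(-\log q)\int_0^s g(x(s'),y(s'))\,ds' \, .
\]
Now shrink the chart so that $|g-1|\le\delta$ on it. Since $\lambda_2/\lambda_1$ is a negative real number, say of modulus $c>0$, while ${\rm Re}(-\log q)=-\log|q|>0$, the ``linear'' part of the exponent has real part $-c(-\log|q|)\le 0$, whereas the error coming from $g-1$ is bounded in modulus by $s\cdot c\,(-\log|q|+\pi)\,\delta$. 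Carrying the parameter $s$ through, one obtains for $\delta<1$ an estimate of the form $|y(s)|\le |w|\,C(\delta)$, \emph{uniform} in $s\in[0,1]$ and in all sufficiently small $q$, where $C(\delta)$ can be made as close to $1$ as desired by shrinking the chart. A standard continuity (bootstrap) argument then shows the graph is in fact defined over the whole path and never leaves the chart, so it reaches $\Sigma$ at $x=1$ with $|y(1)|<\de$ once $|w|$ is taken $<\de/C(\delta)$. Hence every such $(q,w)$ lies on a leaf meeting $\Sigma$; points with $x=0$ lie on $\{x=0\}$ itself, so $\fol_\Sigma\cup\{x=0\}$ contains a full polydisc about the origin, which is the assertion. (This is exactly Mattei's estimate in \cite{M} that all the transported sections $\Sigma_{z_0}$ contain a disc of a fixed radius about $0$.)

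The hard part is precisely this uniformity as $q\to 0$: one must check that the higher-order terms $A,B$ do not spoil the estimate. This is where the hypothesis ${\rm Re}(\lambda_1/\lambda_2)<0$ is essential — it forces the contraction factor $|q|^{c}$ ($c>0$) produced by the linear part to dominate the $|\log q|\,\delta$ error produced by the perturbation, once the chart is chosen small enough (and the same mechanism covers the general case ${\rm Re}(\lambda_1/\lambda_2)<0$, where $\lambda_2/\lambda_1$ contributes an additional bounded cross-term). The remaining verifications are routine: transversality of the leaves to the $\pi_1$-fibers on the chart, that $x(s)$ stays inside the $x$-disc of the chart, and the usual ODE continuation argument turning the a~priori bound into existence of the graph over the entire path.
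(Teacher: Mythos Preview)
Your argument is correct and follows essentially the same route as the paper, which does not prove Proposition~\ref{p7} in dimension~$2$ directly but instead proves its higher-dimensional version (Proposition~\ref{p7higher}, via Lemma~\ref{extensao}) by the very same mechanism: lift a path in the $x$-variable to the leaf through $(q,w)$ and control $\log|y|$ by a Gr\"onwall estimate, the sign condition on $\Re(\lambda_1/\lambda_2)$ ensuring that the linear contraction of order $-\log|q|$ dominates the perturbative error of order $\delta\,(-\log|q|)$. The only cosmetic difference is the choice of path: the paper uses rays $\arg x={\rm const}$ (to which its logarithmic spirals degenerate when $\lambda_1/\lambda_2\in\R_-$), whereas you use the spirals $x(s)=q^{1-s}$ joining $q$ to~$1$.
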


The estimate of a uniform $\varepsilon>0$ as above is done by
integrating along $R_{\theta_0}$ the differential equation
inducing the foliation $\fol_1$ (resp. $\fol_2$). It is exactly at
this point that the assumptions $\lambda_1/\lambda_2\in \R_-$
plays its role.

This behavior is particularly clear when we consider {\it real}
ODEs with real eigenvalues $\lambda_1$ and $\lambda_2$ satisfying
$\lambda_1/\lambda_2\in\R_-$. We have that $|y(t)|$ increases as
$|x(t)|$ decreases, where $\gamma_{z_0}(t) = (x(t), y(t))$. Indeed
this would yield the classical picture of the phase space of a
saddle (cf. Figure~\ref{f4}).

\begin{figure}[h]
\begin{center}
\includegraphics[scale=0.9]{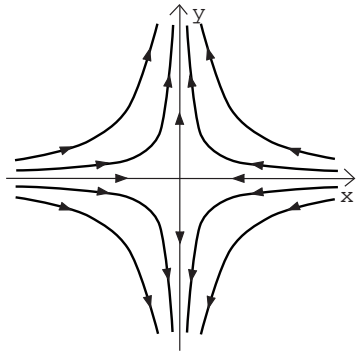}
\caption{Phase space of a saddle.} \label{f4}
\end{center}
\end{figure}

Note also that, if we consider the case
$\lambda_1/\lambda_2\in\R_+$, the statement would be false.
Indeed, with analogous notations, the sets $\Sigma_{z_0}$ would be
contained in balls with radii converging to {\it zero} when
$q\rightarrow 0\simeq (0,0)$.

Although Mattei's manuscript was not published, an extension to
higher dimension of this result can be found in \cite{R2} and will
be treated in the next section.

Let us now go back to the proof of Theorem~\ref{TMM}. Whereas
$\phi$ is holomorphic on its domain, this domain clearly does not
contain $\{x=0\}$. To extend $\phi$ to $\{x=0\}$ it is however
sufficient to note that $\phi$ is bounded by construction. On the
other hand, we already know that $\phi$ is defined on $U\setminus
\{x=0\}$, where $U$ is a neighborhood of $(0,0)\in\C^2$. Hence the
extension of $\phi$ to $\{x=0\}$ follows immediately from the
Riemann Extension Theorem. Thus the proof of Theorem~\ref{TMM}
is over.
\end{proof}

This concludes the second part of the proof of the Mattei-Moussu
Theorem.

%--------------------------------------------------------------------------------------------------------------------------------------------------------------------------------------------------------------------
%--------------------------------------------------------------------------------------------------------------------------------------------------------------------------------------------------------------------
%--------------------------------------------------------------------------------------------------------------------------------------------------------------------------------------------------------------------

\subsubsection{Part $3$: Extension to a Global First Integral}

In this part we wish to extend the local first integrals, obtained
in the previous section, to a global first integral defined on a
neighborhood of the exceptional divisor of the blown-up foliation.
Naturally this implies the existence of a first integral of the
original foliation, hence yielding Theorem~\ref{t12}.

To explain the idea of the construction suppose first that a
single blow-up application is sufficient to obtain reduced
singularities $p_1,\ldots, p_n$ on the exceptional divisor
$E\simeq \C\mathbb{P}(1)$. It follows from what precedes that
these singularities admit local first integrals. It will also
become clear that this case already encloses the main difficulties
of the general construction.

The basic idea is to extend the local first integrals around the
singularities along the leaves of the blown-up foliation. If it is
shown that such an extension is well-defined, then the first
integral will be defined on a neighborhood of the exceptional
divisor. Indeed, due to Mattei's estimate (Proposition~\ref{p7}),
the saturated of the local transverse sections $\Sigma_i$ by the
foliation contains a neighborhood of the singularities $p_i$
(except for the separatrizes at each $p_i$ that are transverse to
$E=\pi^{-1}(0,0)$). Consequently the extension would be defined on
a neighborhood of $\C\mathbb{P}(1)$, since they can again be
extended over the transverse separatrizes thanks to the Riemann
Extension Theorem.

To extend the mentioned first integral, it is necessary to study
the {\it projective holonomy} of $\widetilde{\fol}$. Namely, note
that $E \setminus \{p_1, \ldots, p_n\}$ is a regular leaf of $\widetilde{\fol}$.
The holonomy associated to this leaf, which is called the projective holonomy
of $\widetilde{\fol}$ (or of $\fol$), gives us a representation
\[
\rho: \Pi_1(E\setminus \{p_1, \ldots, p_n\})\rightarrow {\rm
Diff}(\C, 0)
\]
where $\Pi_1(E\setminus \{p_1, \ldots, p_n\})$ stands for the
fundamental group of $E\setminus \{p_1, \ldots, p_n\}$. In the
sequel we often make no distinction between the projective
holonomy of $\widetilde{\fol}$ and the subgroup of ${\rm
Diff}(\C,0)$ defined as the image of $\rho$.

\begin{lema}\label{l13}
The projective holonomy of $\widetilde{\fol}$ is
finite $($i.e. $\rho(\Pi_1(E\setminus \{p_1, \ldots, p_n\}))$ is a
finite subgroup of ${\rm Diff}(\C,0))$.
\end{lema}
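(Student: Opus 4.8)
The idea is to exploit the fact, established in Part~1 and Part~2, that every singularity $p_i$ on the exceptional divisor lies in the Siegel domain with rational eigenvalue ratio $\lambda_1/\lambda_2 = -m_i/n_i$ and, by Proposition~\ref{p4}, admits a holomorphic first integral of the form $x^{n_i} y^{m_i}$ (in suitable local coordinates). Consequently the local holonomy of $\widetilde{\fol}$ around each $p_i$, computed along a small loop in $E$ encircling $p_i$, is \emph{periodic}: by the explicit linearization it is conjugate to the rotation $z \mapsto e^{-2\pi i n_i/m_i} z$, hence of finite order $m_i$ (or trivial). The plan is to promote this local finiteness to global finiteness of the image $\rho(\Pi_1(E\setminus\{p_1,\ldots,p_n\}))$.

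First I would recall that $\Pi_1(E\setminus\{p_1,\ldots,p_n\})$, with $E\simeq\cpum$, is generated by small loops $\gamma_1,\ldots,\gamma_n$ around the punctures subject to the single relation $\gamma_1\cdots\gamma_n = 1$ (each $\gamma_i$ taken with a consistent base point and orientation). By the previous paragraph each generator $\gamma_i$ maps under $\rho$ to a finite-order element $h_i = \rho(\gamma_i) \in {\rm Diff}(\C,0)$. So the projective holonomy group $G = \rho(\Pi_1(\cdots))$ is a finitely generated subgroup of ${\rm Diff}(\C,0)$ \emph{all of whose generators are periodic}. This alone is not enough to conclude $G$ is finite, so the key step is to bring in Conditions~$1$ and~$2$ again: if $G$ were infinite it would, by a Lewowicz-type argument as in Proposition~\ref{p6} (applied now to the whole group rather than a single non-periodic element), produce a non-discrete orbit in a transverse section $\Sigma$ to $E$, and such an orbit would correspond either to infinitely many leaves accumulating on $(0,0)$ (if the relevant piece of $E$ blows down to the origin) or to a leaf failing to be closed near a separatrix --- contradicting Condition~$1$ or~$2$ exactly as in the lemmata of Part~1. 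I expect the cleanest route is: show $G$ has finite orbits (in the sense of Definition~\ref{d3}) on some $\Sigma$, invoke that a finitely generated subgroup of ${\rm Diff}(\C,0)$ acting with finite orbits on arbitrarily small neighborhoods is itself finite (this follows from an Identity-Theorem argument analogous to Lemma~\ref{l11}: one bounds the orbit sizes uniformly, then the stabilizer relations become polynomial identities forcing finiteness), and conclude.

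The main obstacle will be the step asserting that a finitely generated subgroup of ${\rm Diff}(\C,0)$ all of whose elements have finite orbits must be finite; unlike the cyclic case (Proposition~\ref{p6}) one cannot simply iterate one map, and one must control the interplay of the several generators. The resolution I would use is that a non-discrete, finitely generated subgroup of ${\rm Diff}(\C,0)$ either contains a non-periodic element --- handled by Proposition~\ref{p6} and excluded by Conditions~$1$,~$2$ --- or is a torsion group with a common periodic-point structure, in which case the uniform bound on orbit lengths together with the Identity Theorem (as in Lemmata~\ref{l11}, \ref{l12}, \ref{l16}) forces the existence of a common neighborhood on which a fixed power of \emph{every} element is the identity, i.e.\ $G$ is finite. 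Once $G$ is finite, linearizing its action (a finite subgroup of ${\rm Diff}(\C,0)$ is conjugate to a finite group of rotations) gives the desired conclusion and, as a bonus, provides the holomorphic first integral on the transverse section that Part~3 will extend along the leaves.
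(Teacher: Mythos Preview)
Your proposal is not wrong in spirit, but it misses the key simplifying idea and replaces it with a much harder step that you do not justify. The paper's proof is a two-liner once one observes the following: take any $f,g\in G$ and form the commutator $h=f\circ g\circ f^{-1}\circ g^{-1}$. Then $h'(0)=f'(0)g'(0)f'(0)^{-1}g'(0)^{-1}=1$, so $h$ is tangent to the identity; if $h\neq{\rm id}$ its local dynamics (Flower/Leau, Section~\ref{sectionsaddlenode}) force infinitely many leaves to accumulate on $E$, contradicting Condition~1. Hence every commutator is trivial, i.e.\ $G$ is \emph{Abelian}. Since the generators $h_i$ (local holonomies around the $p_i$) have finite order by Part~2, and an Abelian group generated by finitely many torsion elements is finite, the lemma follows immediately.

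By contrast, you try to go directly from ``all generators periodic'' to ``$G$ finite'' via a finite-orbits argument for the whole group. The step you flag as the main obstacle --- that a finitely generated subgroup of ${\rm Diff}(\C,0)$ with finite orbits is finite --- is a genuine theorem (essentially in \cite{M-M}), but your sketch does not prove it: the dichotomy ``either $G$ contains a non-periodic element or is torsion with common periodic-point structure'' is not justified, and even in the torsion case you would still need to bound the orders uniformly, which does not follow from the Identity-Theorem manipulations you cite (Lemmata~\ref{l11}, \ref{l12}, \ref{l16} treat a \emph{single} map, and the passage to several non-commuting generators is exactly the hard point). The commutator trick sidesteps all of this: once $G$ is Abelian, finiteness is elementary, and the subsequent Lemma shows $G$ is in fact cyclic.
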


\begin{proof}
Note that $\rho(\Pi_1(E\setminus \{p_1, \ldots,
p_n\}))$ is Abelian. Otherwise we can consider $h \neq {\rm id}$
having the form $h = f \circ g \circ f^{-1} \circ g^{-1}$ for $f$, $g\in
\rho(\Pi_1(E\setminus \{p_1, \ldots, p_n\}))$. Note however that
$h'(0)=f'(0)g'(0)(f'(0))^{-1}(g'(0))^{-1}=1$, i.e. $h$ is a
non-trivial difeomorphism tangent
to the identity. Then the local dynamics of $h$ is given by the
cardioid-shaped region (cf. Section~\ref{sectionsaddlenode}), and thus there are
infinitely many leaves of $\widetilde{\fol}$ accumulating on
$E=\pi^{-1}(0,0)$. These leaves produce infinitely many leaves of
$\fol$ accumulating on $(0,0)$ what is impossible.

To complete the proof note that $\Pi_1(E\setminus \{p_1, \ldots,
p_n\})$ is generated by small loops around the singularities
$p_i$, ($i=1,\ldots,n$). Hence $\rho(\Pi_1(E\setminus \{p_1,
\ldots, p_n\}))$ is generated by the local holonomies $h_i$
associated to those singularities. Moreover each $h_i$ has finite
order thanks to Proposition~\ref{p6}. Since $\rho(\Pi_1(E\setminus
\{p_1, \ldots, p_n\}))$ is Abelian, the statement follows.
\end{proof}

\begin{lema}\label{14}
The group $G = \rho(\Pi_1(E\setminus \{p_1, \ldots,
p_n\}))$ is indeed cyclic.
\end{lema}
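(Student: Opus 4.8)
The plan is to show that a finite abelian subgroup of $\mathrm{Diff}(\C,0)$ whose elements all linearize is necessarily cyclic, and then apply this to $G = \rho(\Pi_1(E \setminus \{p_1, \ldots, p_n\}))$. By Lemma~\ref{l13} we already know that $G$ is a finite abelian subgroup of $\mathrm{Diff}(\C,0)$; by Proposition~\ref{p4} (or directly from the discussion preceding it) each generator $h_i$ associated to a singularity $p_i$ is linearizable, being conjugate to a root of unity times $z$. So the work reduces to a statement of pure one-variable local dynamics.

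First I would record that a \emph{finite} subgroup of $\mathrm{Diff}(\C,0)$ is automatically linearizable \emph{as a group}: given $G$ finite of order $N$, the map $\sigma(z) = \frac{1}{N}\sum_{g \in G} g'(0)^{-1} g(z)$ (the Bochner linearization trick, exactly the averaging already used in the excerpt for a single $h$ in the $\lambda_1/\lambda_2 = -m/n$ case) is a local diffeomorphism tangent to the identity conjugating every element of $G$ into its linear part. Hence, after this single change of coordinate, we may assume $G \subset \mathrm{GL}(1,\C) = \C^{\ast}$, i.e. every element of $G$ acts as $z \mapsto \mu z$ for some root of unity $\mu$ (a root of unity because $G$ is finite). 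Thus $G$ is isomorphic, via $g \mapsto g'(0)$, to a finite subgroup of the multiplicative group $\C^{\ast}$.

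The conclusion is then immediate: every finite subgroup of $\C^{\ast}$ is cyclic, since $\C^{\ast}$ is (the multiplicative group of) a field, and a finite subgroup of the multiplicative group of any field is cyclic — concretely, the subgroup of order $N$ is exactly $\{\zeta : \zeta^N = 1\}$, the group of $N$-th roots of unity, generated by a primitive one. Pulling this back through the linearizing conjugacy $\sigma$, we get that $G$ is cyclic, generated by a single $h_0 \in \mathrm{Diff}(\C,0)$ with $h_0'(0)$ a primitive $N$-th root of unity.

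I do not expect a genuine obstacle here — the lemma is essentially a packaging of facts already in play (the averaging operator, finiteness from Lemma~\ref{l13}, linearizability of the generators). The one point deserving a word of care is that one must linearize \emph{the whole group at once} rather than each generator separately: linearizing one generator need not linearize the others, which is why the group-averaged $\sigma$ (rather than the single-element average) is the right object. Once that is observed, the argument is two lines. An alternative route, avoiding even the averaging, would be to note that $G$ is a finite group acting faithfully on $(\C,0)$, hence embeds in $\mathrm{GL}(1,\C)$ by $g \mapsto g'(0)$ — faithfulness of this map follows because a nontrivial element with derivative $1$ at the origin would be tangent to the identity and, having finite order, would have to be the identity (a nontrivial parabolic germ $z + a z^{k+1} + \cdots$ has infinite order, as its iterates are $z + na z^{k+1} + \cdots$) — and then invoke cyclicity of finite subgroups of $\C^{\ast}$ as before.
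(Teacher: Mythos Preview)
Your proposal is correct. In fact, the ``alternative route'' you sketch at the end is precisely the paper's own proof: the paper considers the homomorphism $\alpha:G\to\C^\ast$, $f\mapsto f'(0)$, shows it is injective because a nontrivial element of $G$ tangent to the identity would have infinite order (contradicting finiteness of $G$), and concludes since finite subgroups of $\C^\ast$ (indeed of $S^1$) are cyclic.

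Your main approach via the group-averaged linearization $\sigma(z)=\frac{1}{N}\sum_{g\in G}g'(0)^{-1}g(z)$ is a mild strengthening: it not only shows $G$ is abstractly cyclic but simultaneously conjugates $G$ into $\C^\ast$. This is perfectly fine and arguably more natural, but the paper deliberately avoids it, needing only the injectivity of $g\mapsto g'(0)$. Either way the heart of the argument is the same observation---a finite-order germ tangent to the identity must be the identity---so the two routes differ only in packaging.
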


\begin{proof}Consider the following group:
\[
G'=\{f'(0);\,\,f \in G\}
\]
and consider also the homomorphism $\alpha: G (\subset {\rm
Diff}(\C,0)) \rightarrow G' (\subset \C^\ast)$ associating to each
element $f$ of $G$ its derivative at $0$. We claim that the
homomorphism is one-to-one and hence a bijection onto its image.
Indeed, suppose that $f_1$ and $f_2$ are two distinct elements
of $G$ such that $\alpha(f_1)=\alpha(f_2)$. Then $f = f_1\circ
f_2^{-1}$ is a non-trivial diffeomorphism tangent to the identity,
i.e. such that $f'(0)=1$. In other words, $f$ has
infinite order, which is impossible because the group is finite.

Now let $f$ be an element of $G$. The derivative $f'(0)$ has modulus
equal to $1$ since $f$ has finite order. In other words, $G'$ is
actually a subgroup of $S^1$ identified with the complex number of
norm $1$. A discrete subgroup of $S^1$ is cyclic: to find a
generator it suffices to choose the element of ``smallest''
argument (different from zero). In particular, $G'$ is cyclic since
it is finite. Finally, because $\alpha$ induces a bijection
between $G$ and $G'$ we conclude that $G$ is cyclic as
desired.
\end{proof}

Before continuing, let us work out in detail a simple example of a
foliation having a global first integral $f$.
As before we are considering the case where a single blow-up of
the foliation gives us singularities $p_1,p_2,p_3$ in the
exceptional divisor $E$, all in the Siegel domain. They are
obtained by the intersection of $E$ with the proper transforms of
the separatrizes of $\widetilde{\fol}$ (which is simply the set
$f^{-1}(0)$). Suppose that the first integral is given by the
polynomial
\[
f(x,y)=x^{n_1}y^{n_2}(x-y)^{n_3} \, .
\]
with $n_1, \, n_2, \, n_3 \in \N$.

Note that the vector field $X$, associated to the foliation
$\fol$, having $f$ as its first integral is given by
\[
X = \frac{\partial f}{\partial y} \frac{\partial}{\partial x} - \frac{\partial f}{\partial
x} \frac{\partial}{\partial y}
\]
Indeed, the $1$-form associated to $X$ is $\omega= \frac{\partial
f}{\partial x} dx + \frac{\partial f}{\partial y} dy$ and, since
$\omega\wedge df=0$, we have that $f$ is a first integral of $\fol$.

Let us analyze the blown-up foliation on a neighborhood of its
singularity given by the intersection of the exceptional divisor
$\pi^{-1}(0,0)$ with the proper transform of the separatrix
$\{y=0\}$. In coordinates $(x,t)$, where $\pi(x,t)=(x,tx) = (x,y)$
the vector field is given by $x^{(n_1+n_2+n_3-2)}t^{(n_2-1)}(1-t)^{(n_3-1)} \widetilde{X}$
where
\begin{eqnarray*}
\widetilde{X} &=& (n_2x(1-t)- n_3xt) \frac{\partial}{\partial x} +
t(-n_1-n_2-n_3 + t(n_1+n_2+n_3))\frac{\partial}{\partial t}\\
&=& n_2x [1+ h.o.t] \frac{\partial}{\partial x} -
(n_1+n_2+n_3)t[1+h.o.t]\frac{\partial}{\partial t}\,.
\end{eqnarray*}

Thus the eigenvalues associated to $\widetilde{X}$ on a
neighborhood of the singularity $p_2$ at the separatrix $\{y=0\}$
are $\lambda_1=-(n_1+n_2+n_3)$ and $\lambda_2=n_2$. Recalling the
earlier discussion, we conclude that the local holonomy is given
by $h_2(z)=e^{-2\pi i n_2/(n_1+n_2+n_3)}z$. Analyzing the other
two singularities $p_1$ at the separatrix $\{x=0\}$ and $p_3$ at
$\{x=y\}$, by analogous calculations we see that their holonomies
are respectively given by $h_1(z)=e^{-2\pi i n_1/(n_1+n_2+n_3)}z$
and $h_3(z)=e^{-2\pi i n_3/(n_1+n_2+n_3)}z$.

The general case of a polynomial first integral given by $k$
separatrizes
\[P(x,y)=x^{n_1}y^{n_2}(x-y)^{n_3}(x-\alpha_4 y)^{n_4}\ldots (x-\alpha_k y)^{n_k} \]

is analogous. At each $p_i$ the holonomy is given by
$h_i(z)=e^{-2\pi i n_i/(n_1+\cdots+n_k)}z$, where $n_i$ is the
multiplicity of the separatrix that contains $p_i$. Note that we
have fixed $3$ separatrizes, i.e., $\{x=0\}$, $\{y=0\}$ and
$\{x-y=0\}$. This can always be done without loss of generality,
i.e., we may suppose that $3$ of the singularities of the blown-up
foliation are produced by these separatrizes. In fact, since ${\rm
PSL}(2, \C)$ acts transitively on triples of points in
$\C\mathbb{P}(1)$, i.e., there always exists a transformation
taking $p_1$, $p_2$ and $p_3$ on any other $3$ distinct points of
$\C\mathbb{P}(1)$. Hence the desired normalization can be obtained
by a linear change of coordinates.

Naturally, the vector field $X$ having $P$ as its first integral
is a homogeneous polynomial of degree $n_1+\cdots+n_k-1$. And so,
the foliation associated to $X$ is preserved by homotheties.
Indeed, one has $X(\lambda x, \lambda y)=
\lambda^{(n_1+\cdots+n_k-1)}X(x,y)$, for $\lambda\in \C^\ast$, so
that the ``direction'' determined by $X$ at the points  $(x,y)$
and $(\lambda x, \lambda y)$ is the same. This implies that the
holonomy maps $h$ associated to the leaf $E\setminus
\{p_1\ldots,p_k\}$ of $\widetilde{\fol}$ (i.e., the elements in
the projective holonomy group of $\fol$, $\widetilde{\fol}$)
commutes with homotheties. This can easily be established in
details as follows.

\begin{lema}
If $X(x,y)=F(x,y)\partial/\partial x+G(x,y)\partial/\partial y$ is
homogeneous, then the projective holonomy group $h$ is linear. In
other words, the elements of the group
$\rho(\Pi_1(\C\mathbb{P}(1)\setminus \{p_1, \ldots, p_k\}))$ are
linear applications of ${\rm Diff}(\C,0)$. In particular the whole
group is Abelian. \end{lema}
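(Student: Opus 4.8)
The plan is to exploit the fact that a homogeneous vector field $X$ on $\C^2$ is invariant, up to a scalar factor, under the group of homotheties $\sigma_\mu(x,y) = (\mu x, \mu y)$, $\mu \in \C^\ast$, and that these homotheties descend, via the blow-up map $\pi$, to the flow of a vector field transverse to the exceptional divisor $E \simeq \cpum$. More precisely, if $X$ is homogeneous of degree $N = n_1 + \cdots + n_k - 1$, then $(\sigma_\mu)_\ast X = \mu^{N-1} X$, so $\sigma_\mu$ preserves the foliation $\fol$ associated to $X$ away from the origin. Passing to $\wdc^2$, the diffeomorphisms $\sigma_\mu$ lift to diffeomorphisms $\widetilde{\sigma}_\mu$ of a neighborhood of $E$ that preserve $\widetilde{\fol}$, fix $E$ pointwise (since $\pi(E)$ is the origin and $\sigma_\mu$ fixes the origin), and act on the normal direction of $E$ by multiplication by $\mu$. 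In the blow-up coordinates $(x,t)$ with $\pi(x,t) = (x,tx)$, one has $\widetilde{\sigma}_\mu(x,t) = (\mu x, t)$, which makes all of this transparent.

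First I would fix the regular leaf $\ell = E \setminus \{p_1, \ldots, p_k\}$ of $\widetilde{\fol}$ and a local transverse section $\Sigma$ at a base point $q \in \ell$; choosing $\Sigma$ to be (a piece of) the fiber $\{x = x_0\}$ of $\pi_1$ is convenient because the leaves of $\widetilde{\fol}$ are transverse to these fibers away from $E$. The projective holonomy $\rho(\gamma)$ of a loop $\gamma \subset \ell$ is then computed by lifting $\gamma$ to the leaves through points of $\Sigma$. The key point is that $\widetilde{\sigma}_\mu$ maps leaves of $\widetilde{\fol}$ to leaves of $\widetilde{\fol}$, fixes $\ell$ pointwise, hence fixes each loop $\gamma$, and carries the section $\Sigma = \{x=x_0\}$ to the section $\{x = \mu x_0\}$ by the linear map $z \mapsto \mu z$ in the transverse coordinate. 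Therefore the holonomy map $\rho(\gamma)$, computed on $\Sigma$, must commute with $z \mapsto \mu z$ for every $\mu \in \C^\ast$: this is just naturality of holonomy under a symmetry of the foliation that fixes the leaf and the loop. A diffeomorphism germ of $(\C,0)$ commuting with all the homotheties $z \mapsto \mu z$ is necessarily linear, i.e.\ of the form $z \mapsto cz$. Hence every element of the projective holonomy group is linear, so the group is contained in $\C^\ast$ acting linearly, and in particular it is Abelian.

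The step I expect to require the most care is the bookkeeping that identifies $\widetilde{\sigma}_\mu$ precisely in blow-up coordinates and checks that it genuinely conjugates the local transverse sections by a \emph{linear} map with no lower-order correction — one must be sure the section is chosen $\sigma_\mu$-equivariantly (a fiber of $\pi_1$) rather than an arbitrary transversal, since for a general transversal the conjugating map would only be linear up to higher-order terms. Once the section is a fiber of $\pi_1$, the identity $\widetilde{\sigma}_\mu(x,t) = (\mu x, t)$ gives the conjugacy on the nose. A secondary point worth stating cleanly is the elementary lemma that $f \in {\rm Diff}(\C,0)$ commuting with $z \mapsto \mu z$ for all $\mu$ forces $f(z) = f'(0) z$: expand $f(z) = \sum_{j \geq 1} a_j z^j$, impose $f(\mu z) = \mu f(z)$, and read off $a_j \mu^j = \mu a_j$ for all $\mu$, whence $a_j = 0$ for $j \geq 2$. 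Combining, the projective holonomy group embeds in the linear group $\C^\ast$, so it is linear and Abelian, which is exactly the assertion; note this also gives an independent route to the previous lemma's conclusion that the group is cyclic, since a finite subgroup of $\C^\ast$ is cyclic.
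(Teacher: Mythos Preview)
Your argument via homothety-invariance is correct and is precisely the route the paper \emph{alludes to} in the paragraph just before the lemma (``the holonomy maps \ldots\ commute with homotheties'') but then does \emph{not} carry out in the proof itself. The paper instead writes down the blown-up field $\widetilde{X}= xF(1,t)\,\partial_x + (G(1,t)-tF(1,t))\,\partial_t$, notes that the $\partial_t$-component depends only on $t$, so leaves are graphs $t\mapsto(\varphi(t),t)$ with $\varphi$ solving the \emph{linear} scalar ODE $\varphi'(t)=A(t)\varphi(t)$, $A(t)=F(1,t)/(G(1,t)-tF(1,t))$; linearity of the holonomy then follows from linear superposition of solutions. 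Your symmetry argument is more conceptual and bypasses the ODE; the paper's argument is more computational and identifies the holonomy explicitly as the monodromy of a scalar linear equation, which is handy for later explicit calculations of the $h_i$.

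One labeling slip to fix: in the $(x,t)$ chart with $E=\{x=0\}$, the transversal $\Sigma$ at a base point $q=(0,t_0)\in\ell$ should be the Hopf fiber $\{t=t_0\}$, not the set $\{x=x_0\}$ (which does not meet $E$ when $x_0\neq 0$ and so cannot serve as a transversal at $q$). On $\Sigma=\{t=t_0\}$ the transverse coordinate is $x$, the lifted homothety $\widetilde{\sigma}_\mu(x,t)=(\mu x,t)$ sends $\Sigma$ to \emph{itself} by $x\mapsto\mu x$, and naturality of holonomy gives $\rho(\gamma)(\mu x)=\mu\,\rho(\gamma)(x)$ for every $\mu\in\C^\ast$, whence $\rho(\gamma)(x)=\rho(\gamma)'(0)\,x$. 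With that correction your proof is clean and complete.
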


{\it Proof}\,. let $\fol$ be the foliation associated to $X$.
Since $X$ is homogeneous, the separatrizes of $\fol$ are radial
lines. In the complement of the separatrizes, the leaves of $\fol$
are transverse to the complex vertical lines of $\C$. The blown-up
vector field $\widetilde{X}$ is of the form

\begin{equation}
\label{eq41} \widetilde{X}= x F(1,t)\frac{\partial}{\partial x}+
(G(1,t)-t F(1,t)\frac{\partial}{\partial t}\,,
\end{equation}

We can parameterize the leaves of $\widetilde{\fol}$ by
$t\rightarrow (\varphi(t),t)$. Indeed, away from the proper
transforms of the separatrizes of $\fol$, $\widetilde{\fol}$ is
transverse to the Hopf fibration given in these coordinates by
$(x,t)\mapsto t$, where $t$ is the natural coordinate along the
exceptional divisor. Hence the fact that there is a
parametrization for the leaves of $\fol$, gives us the
differential equation:
\begin{equation}
\label{eq37} \varphi'(t)=A(t)\varphi(t)\, ,
\end{equation}

where $A(t)=\frac{F(1,t)}{G(1,t)-tF(1,t)}$. Note that the mapping
$\varphi_{t_0}:x\mapsto \varphi(t_0,x)$ where $t_0$ is fixed and
$x$ is taken as the initial condition is {\it linear}. In fact, if
$\varphi_1$, $\varphi_2$ are solutions of (\ref{eq37}), their sum
$\varphi_1+\varphi_2$ as well as the product $c\varphi_1$, $c\in
\C$, are also solutions of (\ref{eq37}). Hence,
$\varphi_{t_0}(x_1,x_2)=\varphi_{t_0}(x_1)+\varphi_{t_0}(x_2)$ and
$\varphi_{t_0}(ct_0)=c\varphi_{t_0}(x)$. It follows that
$\varphi_{t_0}(x)$ has the form $\lambda(t_0).x$. In particular,
the holonomy maps have to be linear. Indeed, the coordinate
$\partial/\partial t$ in (\ref{eq41}) depends only on $t$, so that
holonomy maps actually agree with ``time $t_0$'' flows.\qed

Now going back to our example, we have in addition that the
generators $h_1,\ldots, h_k$ are finite. Therefore, the projective
holonomy corresponds to the group of rotations of order
$n_1+\cdots+n_k$. Indeed, they can be chosen as the local
holonomies around the singularities of $\widetilde{\fol}$.

Let us consider our initial foliation $\fol$ satisfying Conditions
$1$ and $2$ of the Mattei-Moussu Theorem. Recall that we are
assuming that the singularities of the Seidenberg tree of $\fol$
can be reduced by performing a single blow-up. To fix notations,
suppose that $\widetilde{\fol}$ has $k$ reduced singularities,
$p_1,\cdots, p_k$.

Now consider the foliation $\fol_P$ that admits a polynomial first
integral $P$, such that $\widetilde{\fol}_P$ has precisely the
same singularities $p_1 ,\cdots, p_k$ as $\widetilde{\fol}$.
Recall that the foliation $\widetilde{\fol}_P$ admits a fibration
transverse to the exceptional divisor and such that the
separatrizes are fibers, which is simply the Hopf fibration,
realizing $\widetilde{\C}^2$ as a line bundle over
$\C\mathbb{P}(1)$ (projection along the proper transforms of the
radial lines of $\C^2$).

Naturally we can wonder if $\fol$ is actually conjugate to our
``model'' foliation $\fol_P$. If this were the true, then the
existence of a first integral for $\fol$ would follow as a
corollary. To obtain some evidence that this might be the case,
let us take the obvious identification of the exceptional divisor
$E$ (associated to the blown-up of $\fol$) with the exceptional
divisor of the blown-up of $\fol_P$, which will also be denoted by
$E$. Naturally, $E\setminus\{p_1,\cdots, p_k\}$ is a leaf of the
foliation $\widetilde{\fol}$. Moreover, recall that the projective
holonomy in both cases is cyclic, so that they actually coincide.

The problem of deciding whether or not $\fol$, $\fol_P$ are
conjugate is a special case of the study of moduli spaces for
holomorphic foliations. The next theorem is a small piece in this
direction.

\begin{teo}
\label{t13} Suppose that there is a fibration $\Pi$, transverse to
the exceptional divisor, and such that the proper transforms of
$\fol$ are fibers. Then the foliations $\fol$ and $\fol_P$ are
holomorphically conjugate.
\end{teo}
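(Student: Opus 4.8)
The plan is to reduce the statement to a comparison of \emph{projective holonomies} and then to reconstruct the conjugacy leaf by leaf, in the spirit of the proof of Theorem~\ref{TMM} and its Part~$3$. We identify the exceptional divisor $E\simeq\C\Pp(1)$ of the blow-up of $\fol$ with that of the blow-up of $\fol_P$ so that the reduced singularities $p_1,\ldots,p_k$ of $\widetilde{\fol}$ coincide with those of $\widetilde{\fol}_P$; recall that this is exactly how the model $\fol_P$ and the exponents $n_1,\ldots,n_k$ were chosen, the position of $p_4,\ldots,p_k$ being free thanks to the parameters $\alpha_4,\ldots,\alpha_k$ of $P$ and that of $p_1,p_2,p_3$ being normalized by ${\rm PSL}(2,\C)$. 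By hypothesis the proper transforms of the separatrices of $\fol$ are fibers of $\Pi$; since $\fol$ has only finitely many separatrices, these are the only invariant fibers, so $\widetilde{\fol}$ is transverse to the generic fibers of $\Pi$ near $E$ and has exactly the $k$ invariant fibers $\Pi^{-1}(p_i)$, and the blown-up model $\widetilde{\fol}_P$ has the analogous property with respect to the Hopf fibration $\Pi_P$. In particular each of the two foliations is determined, near $E$, by its projective holonomy representation
\[
\rho\colon\pi_1(E\setminus\{p_1,\ldots,p_k\})\longrightarrow{\rm Diff}(\C,0)
\]
obtained by lifting loops of the base into the leaves.

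The first step is to match the two projective holonomy groups. By Lemmas~\ref{l13} and~\ref{14} both are finite cyclic; being finite subgroups of ${\rm Diff}(\C,0)$ they are linearizable by averaging, hence analytically conjugate to groups of rotations. A small loop $\gamma_i$ around $p_i$ is sent to the local holonomy $h_i$ along the corresponding separatrix, whose linear part is a root of unity: for $\widetilde{\fol}$ this follows from Lemma~\ref{Siegel}, Theorem~\ref{TMM} and the holonomy computation carried out before for singularities with eigenvalue ratio in $\R_-$, the $n_i$ having been chosen precisely so that this linear part is $e^{-2\pi i n_i/N}$ with $N=n_1+\cdots+n_k$; for $\widetilde{\fol}_P$ it is the explicit computation done for the model. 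Consequently, after linearizing both representations the induced homomorphisms $\pi_1(E\setminus\{p_1,\ldots,p_k\})\to\Z/N$ coincide, and therefore there is a germ $\varphi_0\in{\rm Diff}(\C,0)$ with $\varphi_0\circ\rho(\gamma)=\rho_P(\gamma)\circ\varphi_0$ for every $\gamma$.

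The second step is to construct the conjugacy away from the invariant fibers by transport along the leaves. Fix a point $b_0\in E\setminus\{p_1,\ldots,p_k\}$ and a disc $\Sigma_0$ in the fiber of $\Pi$ over $b_0$ centered at $b_0\in E$, realizing the domain of $\rho$; let $\Sigma_0'$ be the analogous disc for $\Pi_P$ and regard $\varphi_0$ as an isomorphism $\Sigma_0\to\Sigma_0'$. Given a point $q$ in the $\widetilde{\fol}$-saturation $\fol_{\Sigma_0}$ of $\Sigma_0$ with $\Pi(q)\notin\{p_1,\ldots,p_k\}$, join $\Pi(q)$ to $b_0$ by a path $c$ in $E\setminus\{p_i\}$, lift $c$ inside the leaf of $\widetilde{\fol}$ through $q$ (possible by transversality to the fibers) until it reaches a point $z\in\Sigma_0$, apply $\varphi_0$, and then lift $c^{-1}$ inside the leaf of $\widetilde{\fol}_P$ through $\varphi_0(z)$; let $H(q)$ be the resulting endpoint. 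Since $\varphi_0$ conjugates $\rho$ to $\rho_P$, the value $H(q)$ is independent of $c$, so $H$ is a well-defined holomorphic diffeomorphism carrying the leaves of $\widetilde{\fol}$ to those of $\widetilde{\fol}_P$ on a neighbourhood of $E\setminus\bigcup_i\Pi^{-1}(p_i)$; because $E$ itself is a leaf and $\varphi_0$ fixes $b_0$, $H$ restricts to the identity on $E$.

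Finally, $H$ must be extended across the invariant fibers. Each $p_i$ lies in the Siegel domain with eigenvalue ratio in $\R_-$ (Proposition~\ref{l10}), so Proposition~\ref{p7} applies at $p_i$: the saturation $\fol_{\Sigma_0}$ together with the invariant fiber $\Pi^{-1}(p_i)$ contains a full neighbourhood of $p_i$, that is, $\fol_{\Sigma_0}$ contains a neighbourhood of $p_i$ punctured only along $\Pi^{-1}(p_i)$. Hence $H$ is holomorphic on a neighbourhood of $E$ with the finitely many analytic curves $\Pi^{-1}(p_i)$ deleted; being bounded there, it extends holomorphically across them by the Riemann Extension Theorem. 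The resulting $H$ is a holomorphic conjugacy between $\widetilde{\fol}$ and $\widetilde{\fol}_P$ on a neighbourhood of $E$ that is the identity on $E$, and therefore descends, under the two blow-down maps, to a holomorphic diffeomorphism of a neighbourhood of $(0,0)$ in $\C^2$ fixing the origin and conjugating $\fol$ to $\fol_P$. I expect the delicate point to be precisely this passage across the invariant fibers: it is Proposition~\ref{p7} — Mattei's uniform estimate, where the hypothesis $\lambda_1/\lambda_2\in\R_-$ enters — that guarantees $\fol_{\Sigma_0}$ spreads out to a neighbourhood of all of $E$ minus a codimension-one set, so that boundedness and the Riemann Extension Theorem can be invoked; a secondary technical point, already addressed in the surrounding discussion, is the simultaneous linearization matching the two projective holonomies, which relies on having placed the $p_i$ in a common configuration and on the fact that a finite cyclic subgroup of ${\rm Diff}(\C,0)$ is determined up to conjugacy by the derivatives at $0$ of its generators.
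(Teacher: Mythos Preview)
Your proof is correct and follows essentially the same route as the paper: conjugate the projective holonomies, transport the conjugacy along the leaves using the two fibrations $\Pi$ and $\Pi_P$, and then fill in across the invariant fibers via Proposition~\ref{p7} and the Riemann Extension Theorem. The paper's proof is terser---it simply takes the holonomy conjugacy as given from the preceding discussion and does not spell out the blow-down step---whereas you make both of these explicit, but the architecture is identical.
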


{\it Proof}\,. The assumptions on the existence a transverse
fibration on a neighborhood of the origin, allows us to lift paths
on $E\setminus\{p_1,\cdots, p_k\}$ to paths in the leaves of
$\widetilde{\fol}$. The method we use here is essentially the same
as the one used in the proof of Theorem~\ref{TMM}.

Let $h$ be a conjugacy between the projective holonomy groups of
$\widetilde{\fol}$ and $\widetilde{\fol}_P$ represented in
$\Sigma$. Given a point $z\in \Sigma$ and a path $\gamma\subseteq
E\setminus\{p_1,\cdots, p_k\}$ we can lift $\gamma$ with respect
to $\Pi$ to a path $\tilde{\gamma}$ contained in the leaf of
$\widetilde{\fol}$ passing by $z\in\Sigma$. Similarly $\gamma$ can
also be lifted with respect to the Hopf fibration into a path
contained in a leaf of $\widetilde{\fol}_P$.

To construct the conjugacy between $\widetilde{\fol}$ and
$\widetilde{\fol}_P$ we proceed as follows. Take $z\in \Sigma$ and
a path $\gamma\subseteq E\setminus\{p_1,\cdots, p_k\}$ as above.
Let $\tilde{\gamma}_1$ be the lift of $\gamma$ with respect to
$\Pi$ in the leaf of $\widetilde{\fol}$ passing by $z$. Similarly,
let $\tilde{\gamma}_2$ be the lift of $\gamma$ with respect to the
Hopf fibration in the leaf of $\widetilde{\fol}_P$ passing by
$h(z)$. We then impose that $H$ to take the endpoint of
$\tilde{\gamma}_1$ to the end of $\tilde{\gamma}_2$. This
application is well-defined because the projective holonomies are
conjugate. Moreover, thanks to Mattei's estimate
(Proposition~\ref{p7}), $H$ is defined on the neighborhood of each
singularity $p_i$, except for the separatrizes not contained in
the exceptional divisor. As before, we use the Riemann Extension
Theorem to define $H$ on the separatrizes.\qed

Taking this into account, the problem of finding a conjugacy as
desired, relies only on the existence of a fibration over
$E\setminus\{p_1,\cdots, p_k\}$ having the mentioned separatrizes
as fibers. However, the existence of this fibration can only be
guaranteed if the number of singularities is at most $4$. A
counter-example for the case of $5$ singularities can be found in
\cite{B-M-S}. This prevents us to deduce Theorem~\ref{t12} from a
strong result related to the moduli space of foliations. So, in
order to treat the general case we cannot rely on the conjugation
of $\fol$ and $\fol_P$. Fortunately, the existence the existence
of the fibration is an assumption much stronger than what is
really needed to solve the problem of obtaining a first integral.

Indeed, we do not really need an application that conjugates the
leaves of $\fol$ and $\fol_P$, rather, it suffices to find $\phi$
that is {\it constant} along the leaves of $\fol$ and defined on a
neighborhood of $E$. This is what we shall do in what follows.

{\it Proof of Theorem~\ref{t12} in the case where the foliation is
resolved with a single blow-up}\,. From now on we consider the
foliation $\fol$ along with its blow-up $\widetilde{\fol}$.
Consider again the transverse section $\Sigma$ for
$\widetilde{\fol}$. As it was seen, the projective holonomy of
$\widetilde{\fol}$ is finite and cyclic. Let $z$ be a local
coordinate in $\Sigma$ in which the generator of the holonomy of
$\widetilde{\fol}$ is given by
\[z\mapsto e^{2\pi i/m}z\,.\]
Next we consider the function $h:\Sigma\rightarrow \C$ given in
the above coordinates by $h(z)=z^m$. Clearly $h$ is invariant by
the holonomy group of the leaf $E\setminus\{p_1,\cdots, p_k\}$.
Now we extend $h$ to a function $H$ by imposing the following,
\begin{enumerate}
\item $H$ coincides with $h$ in $\Sigma$.

\item $H$ is constant over the leaves of $\widetilde{\fol}$
intersecting $\Sigma$.
\end{enumerate}

The invariance of $h$ by the projective holonomy group implies
that $H$ is well-defined. Now Mattei's estimate ensures that $H$
is defined on a neighborhood of $E$ minus the separatrizes. By
Riemann Extension Theorem a continuous extension of $H$ to the
separatrizes is automatically holomorphic. Finally we just need to
check that $H$ can be continuously extended to the separatrizes
setting $H=0$ over them. This follows immediately from observing
that a sequence of leaves $L_i$ of $\widetilde{\fol}$ accumulating
on the separatrix must accumulate on the exceptional divisor as
well. Obviously $h(0)=0$ so that $H=0$ over $E$ and the theorem
follows.\qed

%Next we shall prove Theorem~\ref{t12} by induction on the number
%of blow-ups appearing in the Seidenberg tree.
Now we are ready to prove the general case of the Theorem of
Mattei and Moussu.

{\it Proof of Theorem~\ref{t12}}\,. We will show this for the case
where $2$ blow-ups are sufficient to reduce the singularity. It
will become clear that the procedure is the same for any number of
blow-ups. Consider a foliation $\fol$ such that its Seidenberg
tree only contains reduced singularities after two blow-ups. More
precisely, suppose that the exceptional divisor $E_1$ of the
blow-up $\widetilde{\fol}_1$ by $\pi_1$ at $(0,0)$ contains
singularities $p_1,\ldots, p_k$, such that the ratio of their
eigenvalues is a negative rational number, along with a single
degenerated singularity $q$.

Now we consider the blow-up $\pi_2$ of $q\in E_1$, resulting the
foliation $\widetilde{\fol}_2$. We must show that there is a
holomorphic application defined on a neighborhood of the
exceptional divisor $E_2=(\pi_1\circ\pi_2)^{-1}(0,0)$ of
$\widetilde{\fol}_2$, which is constant along its leaves. Denote
by $E$ the set $\pi_2^{-1}(p)$ (isomorphic to $\C\mathbb{P}(1)$),
which only contains reduced singularities, say $q_1,\ldots, q_l$.
Consider a local section $\Sigma_1$ transverse to $E_1$ close to
$p$, and the section $\Sigma_2$ on $E$. Consider the ``local'' (in
the sense that it is comes from the singularity $p$) holonomy $h$
associated to the leaf $E_2\setminus\{p_1,\cdots, p_k,q_1,\ldots,
q_l\}$ and a path with initial and final points, $c(0)$ and $c(1)$
at the intersection of $\Sigma_1$ with $E_1$, and ``encircling''
$E$. This group of holonomies is cyclic and finite, since there is
a natural correspondence between itself and the projective
holonomy group associated to $\Sigma_2$ and
$E\setminus\{q_1,\ldots, q_l\}$. The last is the cyclic group of
rotations as was seen earlier. The other local holonomies
associated to loop around each singularity $p_i$ are also finite
and cyclic. Thus the ``global'' holonomy, i.e., the group of
holonomies associated to the leaf $E_2\setminus\{p_1,\cdots,
p_k,q_1,\ldots, q_l\}$ of $\widetilde{\fol}_2$ and {\it any} path
on this leaf is a cyclic group of rotations. And so we may define
the first integral as was done previously and extend it to the
neighborhoods of the singularities by Mattei's estimate and
Riemann Extension Theorem.\qed

%\begin{figure}[h]
%\begin{center}
%\includegraphics[scale=0.5]{fig8.eps}
%\caption{} \label{f8}
%\end{center}
%\end{figure}

%--------------------------------------------------------------------------------------------------------------------------------------------------------------------------------------------------------------------
%--------------------------------------------------------------------------------------------------------------------------------------------------------------------------------------------------------------------
%--------------------------------------------------------------------------------------------------------------------------------------------------------------------------------------------------------------------

\section{Singularities in the Siegel domain}

Let us consider two holomorphic foliations $\fol_1 ,\fol_2$ on
$(\C^2, 0)$ with common non-vanishing eigenvalues $\dl_1, \, \dl_2$
in the Siegel, i.e. satisfying $\lambda_1 /\lambda_2 \in \R_-$.
There are local coordinates where the foliations have the form given
by Equation~(\ref{eq32}). Denote by $h_1$ (resp. $h_2$) the holonomy of
$\fol_1$ (resp. $\fol_2$) relative to the axis $\{y=0\}$. According to
Theorem~\ref{TMM} the foliations $\fol_1 ,\fol_2$ are analytically
equivalent if and only if the correspondent holonomies are analytically
conjugated. This section is devoted to prove a generalization of this
ressult for $1$-dimensional foliations in the Siegel domain satisfying
an open condition and defined on a higher dimensional ambient space.

Let $\fol$ be a foliation on $(\C^n, 0)$ and let $X$ be a representative
of $\fol$ ($X$ is a vector field whose singular set has codimension at
least equal to~$2$). Suppose that the origin is a singular point of $\fol$
and denote by $\dl_1, \ldots, \dl_n$ the eigenvalues of $DX$ at the correspondent
point. Assume that
\begin{enumerate}
\item $\fol$ has an isolated singularity the origin
\item the singularity of $\fol$ is of Siegel type
\item the eigenvalues $\dl_1, \ldots, \dl_n$ are all distinct from zero and
there exists a straight line through the origin, in the complex plane, separating
$\dl_1$ from the other eigenvalues
\item up to a change of coordinates, $X = \sum_{i=1}^n \dl_ix_i(1+f_i(x))
\partial /\partial x_i$, where $x=(x_1,\ldots,x_n)$ and $f_i(0)=0$ for all $i$
\end{enumerate}
On $4.$ we are assuming the existence of $n$~invariant hyperplanes
through the origin.

The analogous of Mattei and Moussu's result for 1-dimensional foliation on higher
dimensional spaces can be stated in the following way:

\begin{teo}\label{TMMhigher}
Let $X$ and $Y$ be two vector fields verifying conditions $1., 2., 3.$ and $4.$.
Denote by $h_1^X$ (resp. $h_1^Y$) the holonomy of $X$ (resp. $Y$) relatively to
the separatrix of $X$ (resp. $Y$) tangent to the eigenspace associated to the
first eigenvalue. Then $h_1^X$ and $h_1^Y$ are analytically conjugated if and only
if the foliations associated to $X$ and $Y$ are analytically equivalent.
\end{teo}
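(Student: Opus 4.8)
The plan is to follow closely the strategy used for Theorem~\ref{TMM} (the two-dimensional Mattei--Moussu result), adapting the lifting construction to the higher dimensional ambient together with Mattei's estimate in its generalized form (cf. \cite{R2}, mentioned after Proposition~\ref{p7}). One direction is essentially trivial: if $\phi$ is an analytic diffeomorphism of $(\C^n,0)$ conjugating the foliations associated to $X$ and $Y$, then since $\phi$ must send the invariant hyperplane tangent to the eigenspace of $\dl_1$ for $X$ to the corresponding one for $Y$ (these are the only separatrices with the prescribed tangency, by the normal form in condition~$4.$), the restriction of $\phi$ to a transverse section induces an analytic conjugacy between $h_1^X$ and $h_1^Y$. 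So the whole content is in the converse.

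For the converse, suppose $\varphi$ is an analytic diffeomorphism of $(\C,0)$ with $\varphi \circ h_1^X = h_1^Y \circ \varphi$. First I would fix the geometric setup: by condition~$4.$, in suitable coordinates $X = \sum_{i=1}^n \dl_i x_i(1+f_i(x))\partial /\partial x_i$ with the hyperplane $S^X = \{x_1 = 0\}$ invariant and with the ``axis'' $\ell^X = \{x_2 = \cdots = x_n = 0\}$ a one-dimensional separatrix tangent to the $\dl_1$-eigendirection. Pick a transverse section $\Sigma^X$ to $\ell^X$ at a point $p_0 \in \ell^X \setminus \{0\}$; it is an $(n-1)$-dimensional polydisc, and similarly $\Sigma^Y$. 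The holonomy $h_1^X$ acts on $\Sigma^X$. Away from $\{x_1 = 0\}$ the leaves of $\fol_X$ are transverse to the fibers of the projection $\pi_1(x) = x_1$ onto the $x_1$-axis, exactly as in the proof of Theorem~\ref{TMM}. I would then define $\phi$ on the saturation of $\Sigma^X$ by the foliation: given $z \in \Sigma^X$, lift a path $\gamma$ in $\ell^X \setminus \{0\}$ (first along a loop $S^1$ around the origin, then along the radial segment from $S^1$ to $0$) with respect to $\pi_1$ to a path in the leaf $L_z$ through $z$, and simultaneously lift the same path with respect to the analogous fibration for $\fol_Y$ in the leaf through $\varphi(z)$, declaring $\phi$ to carry endpoints to endpoints while preserving the $\pi_1$-coordinate. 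The relation $\varphi \circ h_1^X = h_1^Y \circ \varphi$ guarantees that $\phi$ is well-defined on the solid torus $S^1 \times \Sigma^X$, and then the radial extension produces a holomorphic map defined on $\fol_\Sigma^X \cup \{x_1 = 0\}$ that conjugates the foliations; boundedness of $\phi$ by construction and the Riemann Extension Theorem give the extension across $\{x_1 = 0\}$.

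The main obstacle is exactly the same as in dimension two, only harder: showing that the domain $\fol_\Sigma^X$ together with the invariant hyperplane $\{x_1 = 0\}$ actually contains a full neighborhood of the origin of $\C^n$ --- equivalently, that as the radial parameter $q \to 0$, the sets $\Sigma_{z_0}$ (consisting of endpoints of the lifted radial paths, sitting in fibers $\{x_1 = q\}$) contain a uniform polydisc of radius $\varepsilon > 0$ about $0$. In dimension two this is Mattei's estimate (Proposition~\ref{p7}), and its higher-dimensional analogue is precisely where condition~$3.$ (the existence of a line through the origin separating $\dl_1$ from $\dl_2, \ldots, \dl_n$) is used: integrating the differential equation defining $\fol_X$ along the radial line, condition~$3.$ ensures that $|x_1(t)|$ decreasing forces the other coordinates to stay controlled, so that the lifted leaves do not escape. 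This is the ``open condition'' referred to in the statement; I would invoke the generalization of Mattei's estimate from \cite{R2} rather than reprove it. Once that uniform estimate is in hand, the proof finishes as before: $\phi$ is holomorphic, bounded, defined off the invariant hyperplanes, hence extends holomorphically by Riemann's theorem to a diffeomorphism of $(\C^n,0)$ conjugating $X$ and $Y$ up to a multiplicative unit, i.e. the foliations are analytically equivalent.
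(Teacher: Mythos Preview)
Your overall architecture matches the paper's: construct $\Phi$ by lifting paths in the $x_1$-axis simultaneously to the leaves of $\fol_X$ and $\fol_Y$, invoke the higher-dimensional Mattei estimate (Proposition~\ref{p7higher}) so that the domain of $\Phi$ together with $\{x_1=0\}$ fills a neighborhood of the origin, and then extend across $\{x_1=0\}$ via Riemann. Two points need correction.

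First, the paths to lift are in general \emph{logarithmic spirals}, not radial segments. Condition~3 only provides a line $l$ through $0$ separating $\lambda_1$ from $\lambda_2,\ldots,\lambda_n$; it is the direction $v$ orthogonal to $l$ that makes $\Re(\lambda_i/v)<0$ for $i\geq 2$, and this is the inequality driving the estimate in Lemma~\ref{extensao}. Radial lines correspond to $v$ real and only work when every angle between $\lambda_1$ and $\lambda_i$ exceeds $\pi/2$, which condition~3 does not guarantee (the paper remarks on this explicitly after Lemma~\ref{extensao}). Since you cite Proposition~\ref{p7higher} as a black box this is cosmetic for the \emph{definition} of $\Phi$, but it matters for what follows.

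Second, and this is the substantive gap, ``boundedness of $\phi$ by construction'' is unjustified here. Lemma~\ref{extensao} says the $X$-lift from $x^0$ \emph{outward} to $\Sigma_\theta^X$ stays in the polydisc $P_{\varepsilon,\delta}$. It says nothing about the \emph{reverse} $Y$-lift from $h_\theta(\cdot)\in\Sigma_\theta^Y$ back to the fiber over $x_1^0$: along that direction $|x_1|$ decreases and the transverse coordinates $|y_i|$ \emph{grow}, so there is no a priori bound on $\Phi(x^0)$ as $x_1^0\to 0$. The paper earns boundedness by computing $y_i(t)/x_i(t)=(y_i^0/x_i^0)\exp\bigl(\int_0^t (\lambda_i/v)(B_i-A_i)\,ds\bigr)$ along the spiral, splitting $B_i-A_i$ into a Lipschitz piece in the transverse variables, a piece depending only on $x_1$, and a remainder, and showing each integral converges. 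That argument (or an equivalent one) cannot be skipped. Finally, once $\Phi$ is extended to $\widetilde\Phi$ on $\{x_1=0\}$, one must still check that $\widetilde\Phi$ sends leaves to leaves there; the paper does this by writing $Z=D\widetilde\Phi(X\circ\widetilde\Phi)$ and extending the function $f$ with $fY=Z$ across the codimension-two set $\{x_1=x_2=0\}$ by Hartogs.
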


Note that if $\fol$ is a foliation on a 3-dimensional space and with an
isolated singularity at the origin of strict Siegel type (i.e. such that $0 \in \C$ is
contained in the interior of the convex hull of $\{\dl_1,\ldots,\dl_n\}$) then conditions
$3.$ and $4.$ are immediately satisfied (cf. \cite{C}).

The rest of this section is devoted to the proof of this result. The proof of
this theorem can be found in either in \cite{EI} as also in \cite{R2}. Here we
follow the approach presented in \cite{R2}. This proof is more detailed than the
one presented in the correspondent paper.

The proof of Theorem~\ref{TMM} is based on the following fact. Let $\Sigma$ be a transverse
section to one of the separatrizes of $\fol$ (as in Theorem~\ref{TMM}). Then the union of
the saturated of $\Sigma$ by $\fol$ with the other separatrix contains a neighborhood of the
singular point (cf. Proposition~\ref{p7}). In order to prove Theorem~\ref{TMMhigher} we shall
obtain the correspondent to Proposition~\ref{p7} for foliations on a higher dimensional space:

\begin{prop}\label{p7higher}
Let  $X$ be a holomorphic vector field verifying conditions $1.$, $2.$, $3.$ and
$4.$. Denote by $S$ the separatrix tangent to the eigenspace associated to
$\dl_1$. Then the union of the saturated of a transverse section to $S$ by $\fol$ $($the
foliation induced by $X)$ with the invariant manifold transverse to $S$
contains a neighborhood of the origin.
\end{prop}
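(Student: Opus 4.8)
The plan is to mimic the argument sketched for Proposition~\ref{p7} (Mattei's estimate in dimension~$2$), following \cite{R2}, but carrying the radial integration along the separatrix $S$ through the extra dimensions. First I would set up coordinates as guaranteed by condition~$4.$: write $X = \sum_{i=1}^n \lambda_i x_i (1+f_i(x)) \partial /\partial x_i$ with $f_i(0)=0$, so that the $n$ coordinate hyperplanes $\{x_i=0\}$ are invariant. The separatrix $S$ is the axis tangent to the eigenspace of $\lambda_1$, which after the normalization is simply $\{x_2=\cdots=x_n=0\}$; the invariant manifold ``transverse to $S$'' is the hyperplane $\Pi_0 = \{x_1=0\}$. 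Fix a point $p_0 \in S$ close to $0$ and a transverse section $\Sigma$ to $S$ at $p_0$, of some small radius; $\Sigma$ is an $(n-1)$-disc sitting inside a hyperplane $\{x_1 = \text{const}\}$. The goal is to show that the saturated $\fol_\Sigma$ of $\Sigma$ by $\fol$, together with $\Pi_0$, covers a full neighborhood of $0$.

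The key step is the integration estimate. For each point $q$ in a small ball, I would connect the base point $p_0$ to the point $\pi_1(q)$ (the projection of $q$ onto $S$ along the "first" coordinate, where $\pi_1(x)=(x_1,0,\dots,0)$) by a path in $S$ — concretely a "radial" path $x_1(t) = x_1(p_0) e^{-\mu t}$ with $\mu$ chosen so that $\lambda_1/\mu$ lies in the appropriate half-plane dictated by condition~$3.$ — and lift this path in the leaf of $\fol$ through a nearby point of $\Sigma$. Along such a lift, the transverse coordinates $(x_2,\dots,x_n)$ satisfy a linear (non-autonomous) system $\dot{x_i} = (\lambda_i/\mu)(1 + \text{h.o.t.})\, x_i$. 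The half-plane separation hypothesis in condition~$3.$ is exactly what makes $\mathrm{Re}(\lambda_i/\lambda_1)$ have a uniform sign for $i \geq 2$, hence the moduli $|x_i(t)|$ grow (resp. do not shrink to zero) in a controlled way as we flow toward $\{x_1=0\}$. This yields a \emph{uniform} $\varepsilon > 0$ such that, for every leaf involved, the corresponding "transported section" $\Sigma_q$ contains a ball of radius $\varepsilon$ about its center in the hyperplane $\{x_1 = x_1(q)\}$; exactly as in the two-dimensional saddle picture (Figure~\ref{f4}), $|x_i(t)|$ increases as $|x_1(t)|$ decreases. Consequently $\fol_\Sigma$ reaches every point of a neighborhood of $0$ off the hyperplane $\{x_1=0\}$, and adding $\Pi_0$ itself closes the gap.

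The main obstacle I expect is precisely obtaining the estimate \emph{uniformly} over all the relevant radial leaves simultaneously, rather than leaf by leaf: one must control the higher-order terms $f_i$ in the linearized transverse system along paths that approach the invariant hyperplane $\{x_1=0\}$, where a priori the naive Gronwall bound could blow up. The resolution is to exploit that the $f_i$ vanish at the origin together with the invariance of all coordinate hyperplanes, so that on the region swept out the relevant ratios stay close to the linear model $\dot{x_i} = (\lambda_i/\mu) x_i$; the Siegel/half-plane condition guarantees the exponents $\mathrm{Re}(\lambda_i/\lambda_1)$ have the right sign to absorb the perturbation. A secondary, more bookkeeping-type point is checking that the lifts of the radial paths are globally well-defined in the leaves — this is where transversality of $\fol$ to the fibers of $\pi_1$ away from the invariant hyperplanes is used, just as in the proof of Theorem~\ref{TMM}.

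Once Proposition~\ref{p7higher} is in hand, Theorem~\ref{TMMhigher} follows by the same scheme as Theorem~\ref{TMM}: one builds the conjugacy $\phi$ between the foliations of $X$ and $Y$ first on the solid "torus" $S^1 \times \Sigma$ using the given conjugacy of the holonomies $h_1^X, h_1^Y$, then extends it along the radial leaves (well-definedness being guaranteed by the holonomy conjugacy), obtaining a holomorphic map on $\fol_\Sigma$; Proposition~\ref{p7higher} ensures this domain together with $\{x_1=0\}$ is a full punctured neighborhood of $0$, and the map is bounded, so the Riemann Extension Theorem extends it across $\{x_1=0\}$ to give the desired analytic equivalence. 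The converse implication (equivalence of foliations forces conjugacy of holonomies) is the easy direction, as the holonomy is a foliation invariant.
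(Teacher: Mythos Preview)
Your overall plan is the right one and matches the paper's approach: normalize via condition~4, lift paths in the $x_1$-axis to the leaves, and control the transverse coordinates $x_2,\ldots,x_n$ through a differential inequality driven by the eigenvalue configuration. But there is a genuine error at the heart of your estimate. You claim that condition~3 ``is exactly what makes $\Re(\lambda_i/\lambda_1)$ have a uniform sign for $i\geq 2$''; this is false. Take for instance $\lambda_1=1$, $\lambda_2=\tfrac{1}{10}+i$, $\lambda_3=-2-i$: the line $\{y=x\}$ separates $\lambda_1$ from $\lambda_2,\lambda_3$, the origin lies in their convex hull (so the singularity is Siegel), yet $\Re(\lambda_2/\lambda_1)=\tfrac{1}{10}>0$ while $\Re(\lambda_3/\lambda_1)=-2<0$. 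What condition~3 actually delivers is a direction $v$ (perpendicular to the separating line, on the side of $\lambda_1$) for which $\Re(\lambda_i/v)<0$ for \emph{all} $i\geq 2$. The paper accordingly lifts not radial lines but \emph{logarithmic spirals} $x_1(t)=\varepsilon\,e^{z+t/v}$ in the complex $x_1$-plane, so that the transverse equations become $\dot x_i=(\lambda_i/v)(1+A_i)\,x_i$; only then does the sign condition hold uniformly. The paper even remarks that these spirals reduce to radial lines precisely when every angle between $\lambda_1$ and the remaining $\lambda_i$ exceeds $\pi/2$ --- the special case your argument implicitly assumes. Your parameter $\mu$ must be this $v$, and the sign control must be stated for $\Re(\lambda_i/v)$, not $\Re(\lambda_i/\lambda_1)$.

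A smaller tactical point: you integrate from $\Sigma$ \emph{inward} toward $q$, where $|x_i|$ grows, and then must argue that the transported section covers a ball of uniform radius while the lift remains in the region where the $f_i$ are small. The paper goes the other way: starting from an arbitrary $x^0$ in the small polydisc $P_{\varepsilon,\delta}$, it flows \emph{outward} along the spiral until $|x_1|=\varepsilon$; since $\Re(\lambda_i/v)<0$, one gets $|x_i(t)|\leq |x_i^0|\leq\delta$ throughout, so the lift never leaves $P_{\varepsilon,\delta}$ and lands directly in some $\Sigma_\theta$. This turns the ``uniform $\varepsilon$'' obstacle you flag into the one-line a~priori bound $|A_i|\leq \tfrac{1}{2}\,|\Re(\lambda_i/v)|/|\lambda_i/v|$ on $P_{\varepsilon,\delta}$, rather than the vaguer Gronwall-plus-smallness argument you sketch.
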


Note that the existence of an invariant manifold transverse to the mentioned separatrix
is a simple consequence condition $4.$.

\begin{proof}
Assume that $X$ is written in coordinates $(x_1, \dots, x_n)$ in the form
\begin{equation}\label{normalform}
X=\sum_{i=1}^n \dl_ix_i(1+f_i(x)) \dfrac{\partial }{\partial x_i}
\end{equation}
Let us normalize the vector fields assuming that $\dl_1 = 1$.

Note that the $x_1$-axis corresponds to the separatrix associated to the eigenvalue
that can be separated from the others by a straight line through the origin.
So fix a positive real constant $\de$ sufficiently close to zero and let
$\Sigma = \{(\de, x_2, \ldots, x_n) :|x_i| \leq \de, \forall i = 2, \ldots, n\}$
be a transverse section to the $x_1$-axis at the point $c(0)$, where $c:[0,2\pi]
\rightarrow \C^n$ is the curve defined by $c(\theta)=(\de e^{i\theta}, 0, \ldots, 0)$.
Since the domain of $c$ is compact, there exists $0< \delta< \de$ such that
$\Sigma_{\theta} = \{(\de e^{i\theta}, x_2, \ldots, x_n) :|x_i| \leq \delta,
\forall i=2, \ldots, n\}$ is contained in the saturated of $\Sigma$.

So let $l$ be, in the complex plane, a straight line through the origin
separating $\dl_1$ from the other eigenvalues. Consider the straight line
through the origin that is orthogonal to $l$ and denote be $L$ the part of this
straight line that is contained in the left half-plane. Finally, denote by $\bar{L}$
the complex conjugate of $L$ (cf. figure \ref{fig1}). Suppose that $v = \da + i \db$,
for a certain $\da > 0$, is a directional vector of $L$. Then let
\[
T=\{z \in \C: z = x + iy, x \in \bar{L}, -\pi < y \leq \pi\} \, .
\]
It is easy to verify that the image of T by the application map $\phi(z)=\de
e^{z}$ covers $\{z: |z| \leq \de\} \setminus \{0\}$. This map is moreover
one-to-one.

Fix $z \in T$ and consider the straight line parallel to $\bar{L}$ passing through
the fixed point. More specifically we consider the line segment contained in
this straight line between its intersection with the imaginary axis and $z$.
This line segment can be parametrized by:
\[
c_z(t) = z + \frac{1}{\da+i\db}t \, \, \, \left(= z + \frac{1}{v}t \right)
\]
The domain of $c_z$ is the interval $[0,t_z]$ were $t_z$ denotes the instant for
which the image of $c_z$ intersects the imaginary axis.

Fix $x_1 \in \C$ such that $|x_1|\leq \de$ and denote by $z = z(x_1) \in T$ the
value for which $\de e^z=x_1$. Consider the logarithmic spiral curve contained in
the $x_1$-axis given by $r_{x_1}(t)=(\de e^{c_z(t)},0,\ldots,0)$ for $t \in [0,t_z[$.
Note that this curve passes through the point $(x_1, 0, \ldots, 0)$. Denote now by
$r_{x}$ the lift of $r_{x_1}$ to the leaf through the point $x=(x_1,\ldots,x_n)$.

The spiral curve $r_{x_1}$ satisfies $r_{x_1}(0)=(x_1,0,\ldots,0)$ and also
$|r_{x_1}(t_z)| =\de$. Consequently, $r_x$ is such that $r_x(0)=x$ and
$|p_1(r_x(t_z))|=\de $, where $p_i$ denoted the projection on the $i^{th}$
component, i.e. $p_{i}(x)=x_i$. In order to simplify the notation we simply
denote by $k_x$ the value of $t_{z(x)}$.

\begin{figure}[ht!]
\centering
\includegraphics[scale=0.7]{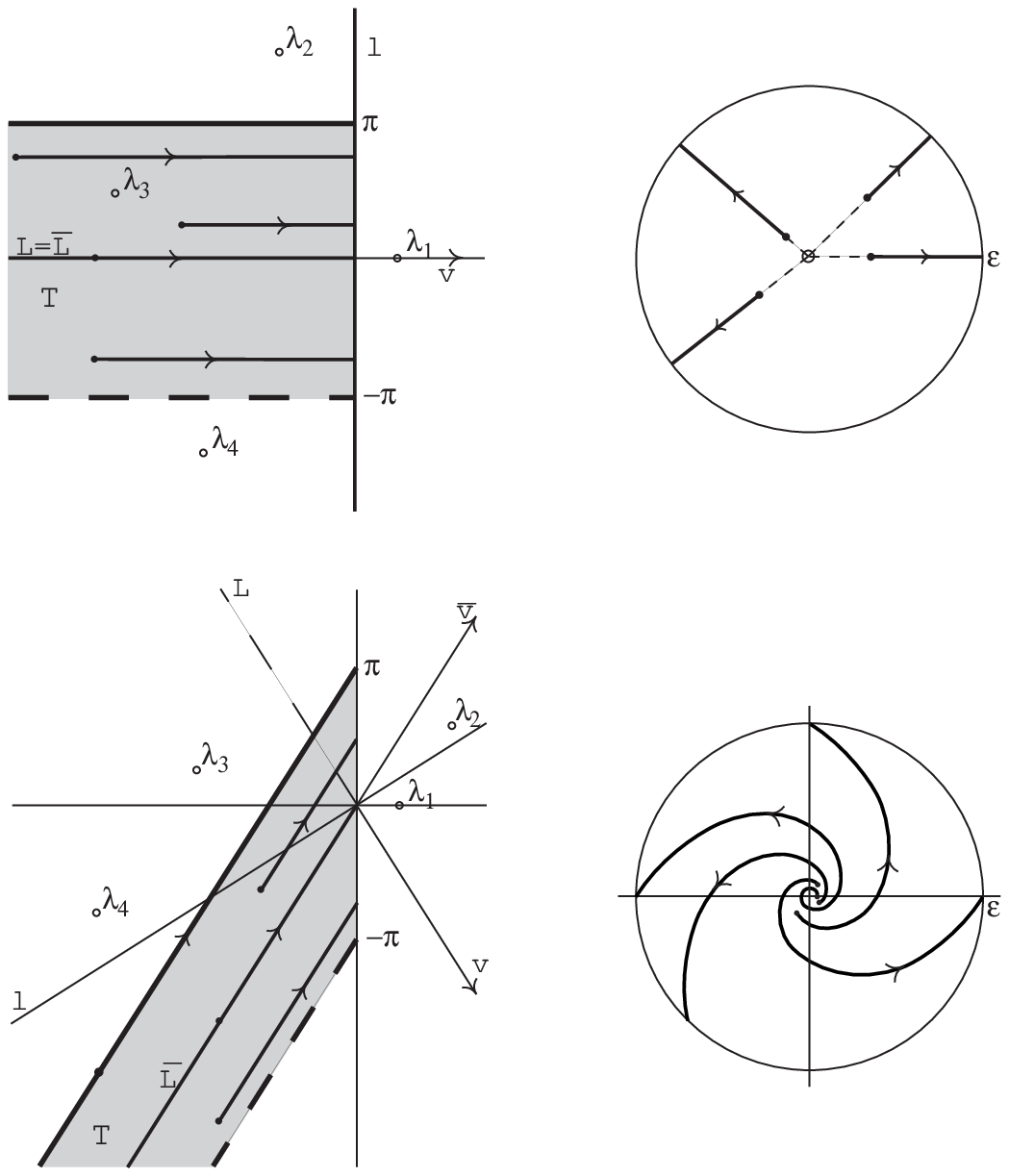}
\caption{}\label{fig1}
\end{figure}

Denote by $\mathfrak{S_n}$ the set of vector fields of type (\ref{normalform})
verifying the conditions $1.$ to $4.$. We have:

\begin{lema}\label{extensao}
Fix $X \in \mathfrak{S}_n$ and let $V$ denote the set of points $x$ in a small
neighborhood of the origin for which $r_x$ intersects $\Sigma_{\theta}$, for some
$\theta \in [0, 2\pi]$. Then $V \cup \{x_1=0\}$ contains a neighbourhood of the origin.
\end{lema}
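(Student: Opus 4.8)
The plan is to integrate the equation defining $\fol$ along the logarithmic spiral $r_{x_1}$ and to control the transverse coordinates $x_2,\dots,x_n$ along the lift $r_x$. Write $X$ in the normal form~(\ref{normalform}) with $\dl_1=1$. A point of the leaf through $x=(x_1,\dots,x_n)$ whose first coordinate traces $r_{x_1}(t)=(\de e^{c_z(t)},0,\dots,0)$ is obtained by reparametrising the $X$-flow: if $s$ denotes the $X$-time, one imposes $ds/dt=1/(v(1+f_1))$, which is consistent since then $dx_1/dt=x_1/v=\frac{d}{dt}(\de e^{c_z(t)})$ because $c_z'(t)=1/v$. Along this reparametrised curve the remaining coordinates satisfy, for $i=2,\dots,n$,
\[
\frac{dx_i}{dt}=\frac{\dl_i}{v}\,x_i\cdot\frac{1+f_i(x)}{1+f_1(x)}\,,\qquad\text{hence}\qquad
\frac{d}{dt}\log|x_i(t)|=\mathrm{Re}\!\left(\frac{\dl_i}{v}\cdot\frac{1+f_i(x)}{1+f_1(x)}\right).
\]

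The geometric input is that the construction of the half-line $L$, and of its direction vector $v=\da+i\db$ with $\da>0$, out of the separating line $l$ forces $\mathrm{Re}(\dl_i/v)<0$ for every $i\ge 2$, while $\mathrm{Re}(\dl_1/v)=\mathrm{Re}(1/v)=\da/|v|^2>0$. This is precisely where condition~$3.$ is used, since $\mathrm{Re}(\dl_i/v)$ has, up to a positive factor, the sign of the signed distance from $\dl_i$ to $l$, and $\dl_2,\dots,\dl_n$ lie strictly on the side of $l$ opposite to $\dl_1$. Fix $\kappa>0$ with $\mathrm{Re}(\dl_i/v)<-\kappa$ for all $i\ge 2$; since $f_i(0)=f_1(0)=0$, there is a polydisc $P_\rho=\{|x_j|\le\rho,\ j=1,\dots,n\}$ inside the domain of $X$ on which $\big|\tfrac{1+f_i}{1+f_1}-1\big|$ is small enough that $\mathrm{Re}\!\big(\tfrac{\dl_i}{v}\tfrac{1+f_i}{1+f_1}\big)<-\kappa/2<0$ for all $i\ge 2$.

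It remains to run a continuation argument on $P_\rho$. Choose $\de\le\rho$ and a radius $\rho'<\rho$, and take any $x$ with $0<|x_1|\le\de$ and $|x_i|\le\rho'$ for $i\ge 2$. The associated $z=z(x_1)$ has $\mathrm{Re}(z)=\log(|x_1|/\de)\le 0$, so $\mathrm{Re}(c_z(t))=\mathrm{Re}(z)+t\,\da/|v|^2$ increases linearly to $0$: the time $k_x$ at which $c_z$ meets the imaginary axis is finite (though not uniformly bounded as $x_1\to 0$, which is harmless), and $|x_1(t)|=\de\,e^{\mathrm{Re}(c_z(t))}$ increases monotonically from $|x_1|$ to $\de$. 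As long as $r_x(t)$ stays in $P_\rho$, the logarithmic-derivative formula gives $\frac{d}{dt}\log|x_i(t)|<-\kappa/2<0$, so each $|x_i(t)|$ is non-increasing and remains $\le\rho'<\rho$; together with $|x_1(t)|\le\de\le\rho$ this shows the lift cannot leave $P_\rho$, hence is defined on all of $[0,k_x]$ (the solution stays in a compact subset of the domain of $X$), and at $t=k_x$ one has $|p_1(r_x(k_x))|=\de$ and $|p_i(r_x(k_x))|\le\rho'\le\delta$, i.e. $r_x(k_x)\in\Sigma_\theta$ for $\theta=\mathrm{Im}\,c_z(k_x)$. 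Thus $x\in V$, so $V$ contains $\{\,0<|x_1|\le\de,\ |x_i|\le\rho'\ (i\ge 2)\,\}$ and $V\cup\{x_1=0\}$ contains the polydisc $\{|x_1|\le\de,\ |x_i|\le\rho'\}$, a neighbourhood of the origin.

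I expect the main obstacle to be making this last step fully rigorous: the equations for $x_2,\dots,x_n$ are coupled — the rate for $x_i$ depends on all coordinates through $f_i$ and $f_1$ — so one must argue about the whole solution simultaneously rather than integrating a single scalar equation, keep track of the domain of definition of $X$ and of the lift, and ensure that $\de,\rho,\rho'$ are chosen uniformly (independently of the starting point $x$). The remaining ingredients — the reparametrisation identity, the sign computation for $\mathrm{Re}(\dl_i/v)$, and the finiteness of $k_x$ — are routine, and the fact that $k_x$ may be arbitrarily large is exactly compensated by the \emph{strictly} negative rate $-\kappa/2$.
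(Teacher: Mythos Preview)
Your proposal is correct and follows essentially the same route as the paper: reparametrise the flow so that $x_1$ traces the logarithmic spiral, obtain $\tfrac{d}{dt}\log|x_i|=\mathrm{Re}\bigl(\tfrac{\dl_i}{v}(1+A_i)\bigr)$ with $A_i$ small on a small polydisc, use condition~3 to get $\mathrm{Re}(\dl_i/v)<0$, and conclude that $|x_i(t)|$ is non-increasing so the lift stays in the polydisc and hits $\Sigma_\theta$ at $t=k_x$. The only cosmetic differences are that the paper absorbs your factor $\tfrac{1+f_i}{1+f_1}$ into a single perturbation $A_i$ and writes the explicit bound $|A_i|\le\tfrac{|\mathrm{Re}(\dl_i/v)|}{2|\dl_i/v|}$ in place of your $\kappa$, and that it performs the integration directly rather than phrasing it as a continuation argument; the content is the same.
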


\begin{proof}[Proof of Lemma~\ref{extensao}]
Naturally there exists a positive real number $\de < 1$ for which the projection
$p_1$ is transverse to the leaves of $\fol$ on a neighborhood of the polydisk
\[
P_{\de,\delta}=\{x \in \C^n: |x_1| \leq \de, |x_i| \leq \delta,
i \geq 2\}
\]
for $\delta$ described as above.

Fix $x_1^0 \ne 0$ such that $|x_1^0| \leq \de$ and denote by $z$ the complex number
for which $\de e^z = x_1^0$. The differential equation associated to $X$
restricted to $x_1=\de e^{c_z(t)}=x_1^0e^{\frac{t}{v}}$ is
equivalent to the system of differential equations:
\begin{equation}\label{eqdif}
\begin{cases}
\dfrac{dx_2}{dt}=\dfrac{\dl_2}{v} x_2(1+A_2(x_1^0
e^{\frac{t}{v}},x_2,\ldots,x_n))\\
\quad \vdots\\
\dfrac{dx_n}{dt}=\dfrac{\dl_n}{v} x_n(1+A_n(x_1^0
e^{\frac{t}{v}},x_2,\ldots,x_n))
\end{cases}
\end{equation}
where $A_i$ are holomorphic functions satisfying $A_i(0)=0$, for all
$i \geq 2$. The constant $\de$ can also be chosen in such a manner that
\begin{equation}\label{ab}
| A_i(x)| \leq
\dfrac{\left|\Re
\left(\dfrac{\dl_i}{v}\right)\right|}{2\left|\dfrac{\dl_i}{v}
\right|}, \, \, \, \forall i\geq 2
\end{equation}
in $P_{\de,\delta}$.

\begin{obs}
Since the eigenvalues $\dl_2,\ldots,\dl_n$ are all contained in the same half plane
defined by the straight line $l$ and not containing the direction of $v$ it follows
that the angle between each one of this eigenvalues and $v$ is greater than $\pi/2$.
This implies that $\dl_i/v$ has negative real part.
\end{obs}

Fix $x^0 \in P_{\de,\delta}$ such that all of its components $x_i^0$ are distinct from
zero. It is going to be proved that the solution of the differential equation~(\ref{eqdif})
satisfies $x(t) \in P_{\de,\delta}$ for all $t \in [0,k_{x_1^0}]$.

First of all denote by $p$ the projection on the $n-1$ last components, i.e. in coordinates
$x = (x_1, x_2, \ldots, x_n)$ we have $p(x) = (x_2, \ldots, x_n)$. Note that $p(r_{x^0}(t))$
corresponds to the solution of the differential equation~(\ref{eqdif}) with initial data
$(x_2^0,\ldots,x_n^0)$. Moreover it satisfies $|p_1(r_{x^0}(t))|=\de$ if and only if
$t = k_{x_1^0}$. This is still valid if we restrict ourselves to points $x$ such that
$x_i = 0$ for some $i \geq 2$. Note that $\{x_i=0\}$ is invariant for the foliation.

Integrating the system~(\ref{eqdif}) we obtain
\[
{\rm Log}\left(\dfrac{x_i(t)}{x_i^0}\right)=
\left(\dfrac{\dl_i}{v}t+ \int_0^t \dfrac{\dl_i}{v}A_i(x_1^0
e^{\frac{s}{v}},x_2(s)\ldots,x_n(s))ds)\right)
\]
for all $i \geq 2$. Since ${\rm Log}(w)=\log{|w|}+i \arg{w}$ we have that
\[
\log{\left| \dfrac{x_i(t)}{x_i^0}\right|}=\Re \left(\dfrac{\dl_i}{v}t+
\int_0^t \dfrac{\dl_i}{v} A_i(x_1^0
e^{\frac{s}{v}},x_2(s),\ldots,x_n(s))ds
\right)
\]
and consequently
\[
|x_i(t)| = |x_i^0|e^{\Re (\frac{\dl_i}{v})t +\Re( \frac{\dl_i}{v}
\int_0^t A_i(x_1^0 e^{\frac{s}{v}},x_2(s),\ldots,x_n(s))ds)}, \quad
\forall i \geq 2
\]

However
\begin{align*}
&\left|\Re \left(\dfrac{\dl_i}{v} \int_0^t A_i(x_1^0
e^{\frac{s}{v}},x_2(s),\ldots,x_n(s))ds
\right)\right|\\
&\leq \left|\dfrac{\dl_i}{v} \right| \int_0^t
|A_i(x_1^0 e^{\frac{s}{v}},x_2(s),\ldots,x_n(s))|ds\\
&\leq \left|\dfrac{\dl_i}{v} \right| \dfrac{\left|
\Re \left(\dfrac{\dl_i}{v} \right)
\right|}{2\left|\dfrac{\dl_i}{v} \right|}t=-\Re \left(\dfrac{\dl_i}{v}
\right)\dfrac{t}{2}
\end{align*}
and consequently we conclude that
\begin{align*}
|x_i(t)| &\leq |x_i^0|e^{\Re (\frac{\dl_i}{v})t
+|\Re(\frac{\dl_i}{v}
\int_0^t A(x_1^0 e^{\frac{s}{v}},x_2(s),\ldots,x_n(s))ds)|}\\
&\leq |x_i^0|e^{\Re (\frac{\dl_i}{v})t -\Re
(\frac{\dl_i}{v})\frac{t}{2}}\\ &\leq |x_i^0|
\quad (\leq
\delta)
\end{align*}
for all $t \in [0,k_{x_1^0}]$, since $\Re (\frac{\dl_i}{v})$ is a negative
real number.
\end{proof}

Condition $4.$ has been used in the proof of this lemma. Although the
condition $4.$ is always verified by vector fields on $\C^3$ of strict Siegel
type, it is not the case for vector fields defined on higher dimensional manifolds
(cf. \cite{C}). The condition is also pertinent for vector fields on $\C^3$ whose
singular point belongs to the boundary of the Siegel domain. In fact there exist
vector fields of that type, verifying $1.$, $2.$ and $3.$ but not admitting any
holomorphic manifold tangent to the formal invariant manifold $\{z=0\}$ (cf.
\cite{can}).

\begin{obs}
{\rm Note that in the case that the angles between $\dl_1$ and the remaining eigenvalues
are all greater than $\pi/2$ we have that the spiral curves $r_{x_1}$ are in fact
radial.}
\end{obs}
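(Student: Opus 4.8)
The plan is to reduce the claim to the elementary fact that a logarithmic spiral degenerates into a radial ray precisely when its ``rotation rate'' vanishes, and that this rotation rate is controlled entirely by the imaginary part of the directional vector $v$ used in the construction.

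First I would translate the angle hypothesis into a condition on real parts. Since we have normalized $\dl_1 = 1$, the eigenvalue $\dl_1$ lies on the positive real axis, so for each $i \geq 2$ the angle between $\dl_1$ and $\dl_i$ equals $\arccos(\Re (\dl_i)/|\dl_i|)$. This angle exceeds $\pi/2$ if and only if $\Re (\dl_i) < 0$. Hence the assumption that all these angles are greater than $\pi/2$ is exactly the statement that $\dl_2, \ldots, \dl_n$ all lie strictly in the open left half-plane.

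Next I would use this to make a convenient choice of the separating line $l$. Because $\dl_1$ has positive real part while $\dl_2, \ldots, \dl_n$ all have negative real part, the imaginary axis itself separates $\dl_1$ from the remaining eigenvalues; we may therefore take $l$ to be the imaginary axis. With this choice the straight line through the origin orthogonal to $l$ is the real axis, so the ray $L$ lies along the real axis and its directional vector can be taken real. In the notation of the construction this means that $v = \da + i\db$ has $\db = 0$, i.e. $v = \da$ with $\da > 0$.

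Finally I would examine the spiral explicitly. The line segment used in the construction is $c_z(t) = z + t/v$, so under the present hypothesis it becomes $c_z(t) = z + t/\da$ with $t/\da \in \R$. Consequently the first coordinate of $r_{x_1}(t)$ is
\[
\de\, e^{c_z(t)} = \de\, e^{z}\, e^{t/\da} \, ,
\]
and since $e^{t/\da}$ is a positive real scalar, multiplication by it alters only the modulus and leaves the argument $\arg (\de e^{z})$ unchanged. Therefore the image of $r_{x_1}$ is contained in the ray $\{\, s\, \de e^{z} : s > 0 \,\}$ emanating from the origin, which is exactly the assertion that $r_{x_1}$ is radial. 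I expect no genuine obstacle here: once the hypothesis is seen to force $\db = 0$, the factor $e^{t/v}$ loses its oscillatory part $e^{-it\db/|v|^2}$ and the logarithmic spiral collapses onto a radial line. The only point requiring a little care is the correct identification of the angle condition with the sign of $\Re (\dl_i)$.
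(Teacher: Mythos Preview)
Your argument is correct and is exactly the justification the paper has in mind: the remark is stated in the paper without proof, and your unpacking of it---that the angle hypothesis forces $\Re(\dl_i)<0$ for $i\geq 2$, so one may take $l$ to be the imaginary axis, hence $v$ real, hence $e^{t/v}$ a positive real scalar---is precisely the intended reading of the construction. There is nothing to add.
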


Let us now finish the proof of Proposition~\ref{p7higher}.
Assume that in coordinates $(x_1, \ldots, x_n)$ the vector field $X$ is written
in the form~(\ref{normalform}). Let us consider the transverse sections to $S$
defined above and given by $\Sigma_{\theta}=\{(\de e^{i\theta}, x_2, \ldots, x_n):
|x_i| \leq \delta, \forall i\geq 2\}$. We have that the saturates of $\Sigma = \Sigma_0$
contains $\Sigma_{\theta}$ for all $\theta \in [0, 2\pi]$.

Fix a point $x \in P_{\de,\delta}$ whose first component does not vanish, i.e. such that
$x_1 \ne 0$. By Lemma \ref{extensao} the lift of $r_x$ belongs to $P_{\de,\delta}$ and is
such that $r_x(k_{x_1}) \in \Sigma_{\theta}$, for some $\theta$. So $x$ belongs to the
saturated of $\Sigma$ by $\fol$
\end{proof}

We are now able to prove the main result of this section.

\begin{proof}[Proof of Theorem~\ref{TMMhigher}]
Let $X, \, Y$ be two holomorphic vector fields as stated above and denote by $\fol_X$ (resp.
$\fol_Y$) the foliation associated to $X$ (resp. $Y$). We can assume that $X$
and $Y$ are written in its normal form (\ref{normalform}). Denote by $(\dl_1, \ldots,\dl_n)$
(resp. $(\db_1,\ldots,\db_n)$) the eigenvalues of $DX(0)$ (resp. $DY(0)$). Note that we can
assume $\dl_1 = 1 = \db_1$ simultaneously. Denote by $h^X$ (resp. $h^Y$) the holonomy of $X$
(resp. $Y$) relative to the $x_1$-axis. It is obvious that if $\fol_X, \, \fol_Y$ are
analytically equivalent then $h_X, \, h_Y$ are analytically conjugated. So let us assume that
$h_X, \, h_Y$ are analytically conjugated. It will be constructed a holomorphic diffeomorphism
defined on a neighborhood of the origin and taking the leaves of $\fol_X$ into the leaves of
$\fol_Y$.

Let $P_{\de,\delta}$, $l$ and $L$ be as in the proof of Lemma~\ref{extensao}. We assume that
$\de (< 1)$ is such that Inequality~(\ref{ab}) is satisfied in $P_{\de,\delta}$ and
also that the projection $p_1$ is transverse to the leaves of $\fol_X \cap P_{\de,\delta}$
and of $\fol_Y \cap P_{\de,\delta}$ with exception to the ones contained in the invariant
manifold $\{x_1=0\}$.

Let $c$ be the loop around the singular point defined above ($c(\theta) = (\de e^{i\theta},0,\ldots,0)$)
and let $\Sigma_{\theta}^X, \, \Sigma_{\theta}^Y$ be the ``vertical" transverse sections to the
common separatrix $\{x_i = 0, i \geq 2\}$ of $\fol_X, \, \fol_Y$ at the point $c(\theta)$.

Let $h_0: \Sigma_0^X \rightarrow \Sigma_0^Y$ be the analytic diffeomorphism conjugating $h_1^X$
and $h_1^Y$. Denote by $l_{\theta}: \Sigma_0^X \rightarrow \Sigma_{\theta}^X$ (resp.
$\bar{l_{\theta}}: \Sigma_0^Y \rightarrow \Sigma_{\theta}^Y$) the applications obtained by
lifting the curve $c$ to the leaves of $\fol_X$ (resp. $\fol_Y$). Consider the holomorphic
diffeomorphism $h_t = \bar{l}_t \circ h_0 \circ l_t^{-1}$. Note that this family of
diffeomorphisms is well defined in the sense that $h_0$ coincides with $h_{2\pi}$.
In fact the conjugacy relation $\bar{l}_{2\pi} \circ h_0 = h_0 \circ l_{2\pi}$ (note
that $l_{2\pi}, \, \bar{l}_{2\pi}$ represent the holomomy map of $\fol_X, \, \fol_Y$,
respectively) together with the fact that $h_{2\pi}=\bar{l}_{2\pi} \circ h_0 \circ l_{2\pi}^{-1}$
implies that $h_{2\pi}=h_0$. Then it has been constructed a diffeomorphism defined on a transverse
section to the $x_1$-axis along $c$ taking the leaves of $\fol_X$ into the leaves of $\fol_Y$.
It remains to extend this diffeomorphism to a neighborhood of the origin. This will be done
showing that this diffeomorphism can be transported along the spiral curves $r_x$.

Denote by $r^X_{x}$ (resp. $r^Y_{x}$) the lift of $r_{x_1}$ to the leaf of $\fol_X$ (resp.
$\fol_Y$) through the point $x = (x_1, \ldots, x_n)$. Lemma~\ref{extensao} ensures that
for all $x$ in $P_{\de, \delta}$ there exists $\theta \in [0, 2\pi]$ such that
$r^X_{x}(k_{x_1}) \in \Sigma_{\theta}^X$ as also $r^Y_{x}(k_{x_1}) \in \Sigma_{\theta}^Y$.
This is equivalent to saying that the diffeomorphism constructed above can also be transported
over the spiral curves $r_x^X, \, r_x^Y$. Denote by $\Phi$ the resulting diffeomorphism.

An analytic equivalence between $\fol_X \setminus \{x_1=0\}$ and $\fol_Y \setminus \{x_1=0\}$
has been constructed in a neighborhood of the origin. Let us now prove that this equivalence
can be extended to the invariant manifold $\{x_1 = 0\}$. Since $\Phi$ is holomorphic in
$P_{\de,\delta} \setminus \{x_1 = 0\}$ and since $\{x_1 = 0\}$ is a thin set, it is sufficient
to check that $\Phi$ is bounded (cf. \cite{G}). In order to do this we need to revisit the
construction of $\Phi$.

From now on we use the variables $x$ to $\fol_X$ and $y$ to $\fol_Y$. Fix a point $x^0 \in P_{\de,\delta}$
and let $x_1(t) = x_1^0 e^{\frac{t}{v}}$ be a spiral curve in the complex plane identified with the $x_1$-axis.
Let $r^X_{x^0}(t)$ (resp. $r^Y_{x^0}(t)$) be the lift of the spiral curve to the leaf of $\fol_X$ (resp. $\fol_Y$)
through the point $x^0$. This lift is such that its components are given by
\begin{align*}
|x_i(t)| &= x_i^0 e^{\int_0^t \frac{\dl_i}{v} (1 + A_i(x_1^0e^{\frac{t}{v}},x_2(t),\ldots,x_n(t)))}\\
|y_i(t)| &= y_i^0 e^{\int_0^t \frac{\dl_i}{v} (1 + B_i(x_1^0e^{\frac{t}{v}},y_2(t),\ldots,y_n(t)))}
\end{align*}
for all $2 \leq i \leq n$ and holomorphic functions $A_i, \, B_i$ such that $A_i(0) = B_i(0) = 0$. In this
way we have that
\[
\frac{y_i(t)}{x_i(t)} = \frac{y_i^0}{x_i^0} e^{\int_0^t \frac{\dl_i}{v}(B_i - A_i)}
\]
It is now sufficient to prove that $\int_0^t \frac{\dl_i}{v}(B_i - A_i)$ converges.

On $B_i - A_i$ the function $B_i$ is evaluated at the point $(x, y_2, \ldots, y_n)$ while $A_i$ is evaluated
at $(x, x_2, \ldots, x_n)$. Let us sum and subtract $A_i(x, y_2, \ldots, y_n)$ on the expression of $B_i - A_i$.
The new function has a Lipshitz component relative to the variables $x_i, \, y_i$ for $i \geq 2$ (the component
$A_i(x, y_2, \ldots, y_n) - A_i(x, x_2, \ldots, x_n)$). The remaining expression can be written as the sum of a
holomorphic function $F$ that only depends on $x$ and another one whose Taylor expansion does not have monomials
only depending on $x$. We have to estimate the integral above on each case separately.

Let us begin with the Lipshitz component. We have that
\[
|A_i(x_1,y_2,\ldots, y_n) - A_i(x_1,x_2, \ldots,x_n)| \leq \sum_{i=2}^n k_i |y_i - x_i| \, .
\]
We first estimate the value of $|y_i(t) - x_i(t)|$ for all $i \geq 2$. Taking account that
$|y_i(t) - x_i(t)| \leq |y_i(t)| + |x_i(t)|$ we estimate the absolute value of each variable
separately. We have
\begin{align*}
\dfrac{d}{dt}|x_i(t)|^2 &= 2\Re (\overline{x_i(t)}x_i^{\prime}(t))\\
&=2 |x_i(t)|^2 \Re \left( \frac{\dl_i}{v} (1+A_i(x_1^0e^{\frac{t}{v}},y(t),z(t))) \right)
\end{align*}
Inequality~\ref{ab} implies that $| \Re \left( \frac{\dl_1}{v} A_i \right)|$ is less than or equal to
$-\frac{\Re \left( \frac{\dl_1}{v} \right)}{2}$. In this way
\[
\dfrac{d}{dt}\log |x_i(t)|^2 \leq \Re \left(\frac{\dl_i}{v}\right) \, \,
\Longrightarrow \, \,
|x_i(k_{x_1^0})| \leq |x_i^0|e^{\frac{1}{2} \Re (\frac{\dl_i}{v}) t}
\]
The same inequality can be deduced for $|y_i|$. We conclude therefore that
\[
|A_i(x_1,y_2,\ldots, y_n) - A_i(x_1,x_2, \ldots,x_n)| \leq K e^{-\frac{1}{2} \da t}
\]
for some positive constants $K$ and $\da$. Let us estimate now the value of the integral of the Lipshitz component.
We have
\begin{align*}
&\left| \int_0^t \frac{\dl_i}{v} [A_i(x_1^0e^{\frac{s}{v}},y_2(s),\ldots, y_n(s)) - A_i(x_1^0e^{\frac{s}{v}},x_2(s), \ldots,x_n(s))] ds \right| \\
&\leq \int_0^t \left|\frac{\dl_i}{v} [A_i(x_1^0e^{\frac{s}{v}},y_2(s),\ldots, y_n(s)) - A_i(x_1^0e^{\frac{s}{v}},x_2(s), \ldots,x_n(s))] \right| ds \\
&\leq \int_0^t K \left| \frac{\dl_i}{v} \right| e^{-\frac{1}{2} \da s} ds\\
&= -\left| \frac{\dl_i}{v} \right| \frac{2K}{\da} \left( e^{-\frac{1}{2}\da t} - 1 \right)
\end{align*}
and, consequently, the integral converges.

Let us consider now the holomorphic component $F$ depending only on $x_1$. Suppose that the Taylor's expansion of $F$
around the origin is given by $F(x) = \sum a_j x^j$. Then
\begin{align*}
\int_0^t \frac{\dl_i}{v} F(x_1^0e^{\frac{s}{v}}) ds &= \frac{\dl_i}{v} \int_0^t \sum a_j (x_1^0)^j e^{\frac{js}{v}} ds\\
&= \dl_i \left( G(x_1^0 e^{\frac{t}{v}}) - G(x_1^0) \right)
\end{align*}
where the Taylor's expansion of $G$ is given by $G(x) = \sum \frac{a_j}{j} x^j$. Since $F$ is holomorphic, so is $G$.
In particular the above integral converges.

Finally, in the remaining case we have that the holomorphic function is bounded by a linear combination of the absolute
value of $y_i(t)$, i.e. it is bounded by $\sum_{i=2}^n c_i |y_i|$. This is a simple consequence of the fact that the last
component is holomorphic on a neighborhood of the origin and the estimate of the variables $y_i$. The convergence of
the integral in that case follows as in the Lipshitz case.

We have just proved that both $\Phi^{-1}$ and $\Phi$ are bounded. Thus
$\Phi$ admits a holomorphic extension $\tilde{\Phi}$ to the invariant
manifold $\{x_1=0\}$. As $\tilde{\Phi}$ has a holomorphic inverse map
(the inverse map is constructed taking now the leaves of $Y$ into the
leaves of $X$ in a similar way), $\tilde{\Phi}$ is a diffeomorphism in a
neighbourhood of the origin.

It remains be proved that $\tilde{\Phi}$ takes the leaves of
$\fol_X|_{\{x_1=0\}}$ into the leaves of $\fol_Y|_{\{x_1=0\}}$.
Let $Z=D\tilde{\Phi} (X \circ \tilde{\Phi})$. As
$\fol_Y|_{P_{\de,\delta} \setminus \{x_1=0\}}$ coincides with
$\fol_Z|_{P_{\de,\delta} \setminus \{x_1=0\}}$, there exists a
holomorphic function $f$, defined on $P_{\de,\delta} \setminus
\{x_1=0\}$, such that $fY=Z$. In particular $f=\frac{Z_2}{Y_2}$, where
$Y_2$ ($Z_2$) is the second component of $Y$ ($Z$). As
$Y_2=x_2(\db_2+\ldots)$, where dots means terms of order greater than or
equal to $1$, and both $Y_2$ and $Z_2$ are holomorphic, $f$ can be
holomorphically extended to $U \setminus \{x_1=0,x_2=0\}$. Finally, as
$\{x_1=0,x_2=0\}$ is a set of complex codimension greater than $1$, $f$
admits a holomorphic extension, $\tilde{f}$, to $\{x_1=0,x_2=0\}$
\cite[pag. 31]{G} and this extension verifies
$\tilde{f}Y=Z$ in $U$. Thus $X$ and $Y$ are analytically equivalent.
\end{proof}

%--------------------------------------------------------------------------------------------------------------------------------------------------------------------------------------------------------------------
%--------------------------------------------------------------------------------------------------------------------------------------------------------------------------------------------------------------------
%--------------------------------------------------------------------------------------------------------------------------------------------------------------------------------------------------------------------

\section{Saddle-Node in Higher Dimension}

The classification of saddle-nodes in dimension~$2$ has been discussed in Section 2.3.
We shall now approach the case of a saddle-node singularity in higher dimension. In
dimension~$2$ a singularity of saddle-node type has exactly one eigenvalue equal to
zero. In higher dimension we can have more eigenvalues equal to zero. We will consider
here the case of the so called codimension~$1$ saddle-nodes, i.e. singularities with
exactly one eigenvalue equal to zero.

More specifically, we will consider saddle-node foliations whose representatives belongs
up to a linear change of coordinates to $\mathfrak{X}$, the subset of holomorphic vector
fields on $(\C^n,0)$ with an isolated singularity at the origin whose linear part at
the singular point has eigenvalues $\dl_1, \ldots, \dl_n$ such that $\dl_1 = 0$ and
$0 \not \in \mathcal{H}(\dl_2,\ldots,\dl_n)$. By $\mathcal{H}(\dl_2,\ldots,\dl_n)$ we
mean the convex hull of $\{\dl_i: i=2,\ldots,n\}$. Furthermore there are no resonance
relations between the non-vanishing eigenvalues.

Note that for $n=3$ the considered vector fields are generic between the vector fields with an
isolated singular point that is a codimension~$1$ saddle-node. It is not difficult to verify
that the elements of $\mathfrak{X}$ are analytically equivalent to a vector field of the form
\begin{equation}\label{campo}
Y_p:
\begin{cases}
\dot{x_1}=x_1^{p+1}\\
\dot{x_i}=\dl_i x_i+x_1a_i(x), \quad i=2,\ldots,n
\end{cases}
\end{equation}
where $x=(x_1,\ldots, x_n)$ and where $a_i$ are holomorphic functions such
that $a_i(0)=0$, $\forall i=2,\ldots,n$. This corresponds to the Dulac's normal
form for a codimension~$1$ saddle-node in $\C^n$.

Let us assume that $p=1$. The case $p>1$ can be treated in a similar way but some comments
will be made at the end of this section. For simplicity we denote by $Y_{1,\da}$ a vector
field of the type
\[
Y_{1,\da}:
\begin{cases}
\dot{x_1}=x_1^2\\
\dot{x_i}=x_i(\gamma_i +\da_i x_1)+x_1h_i(x), \quad i=2, \ldots,n
\end{cases}
\]
where $\da=(\da_2,\ldots,\da_n) \in \R^{n-1}$ and $h_i$ are holomorphic functions such that
$\dfrac{\partial h_i}{\partial x_i}|_{0}=0$, $\forall i=2,\ldots,n$. We assume further
$\gamma_2 = 1$.

\begin{lema}
The vector field $Y_{1,\da}$ is formally conjugated to
\begin{equation}\label{zda}
Z_{\da}:
\begin{cases}
\dot{x_1}=x_1^2\\
\dot{x_i}=x_i(\gamma_i +\da_i x_1), \quad i=2, \ldots, n
\end{cases}
\end{equation}
\end{lema}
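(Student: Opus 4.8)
The plan is to mimic, in the several-variables setting, the formal normalization arguments already used repeatedly in the excerpt (e.g. in the proof of Dulac's theorem and in Theorem~\ref{saddlenode3}). We seek a formal change of coordinates $x_i = y_i + \zeta_i(y)$ with $\zeta_i$ of order $\geq 2$ conjugating $Y_{1,\da}$ to $Z_{\da}$; equivalently, we want to kill the terms $x_1 h_i(x)$. First I would write down the conjugacy equation obtained by substituting $x = y + \zeta(y)$ into $Y_{1,\da}$ and comparing with $Z_\da$, exactly as in Equations~(\ref{eq10})--(\ref{eq11}): for each multi-index $Q$ one gets an equation of the form $\delta_{i,Q}\zeta_{i,Q} + \psi_{i,Q} = (\text{terms depending only on } \zeta_{j,P},\ \|P\| < \|Q\|)$, where now $\delta_{i,Q} = q_1\cdot 0 + \sum_{j\geq 2} q_j\gamma_j - \gamma_i$ for $i \geq 2$, and for $i=1$ the first equation forces $\zeta_1 \equiv 0$ since the $x_1$-component $x_1^2$ of both vector fields coincides and is already in normal form (more precisely, the monomial $y_1^2$ has $\delta_{1,Q}=0$ only for $Q=(2,0,\dots,0)$, and one checks that term is automatically matched, so $\zeta_1$ can be taken to vanish). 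The point is that $\gamma_1 = 0$ does \emph{not} enter the denominators $\delta_{i,Q}$ for $i \geq 2$, so the only possible resonances are among $\gamma_2,\dots,\gamma_n$, which are excluded by hypothesis.

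The key steps, in order, are: (1) record the conjugacy equations and observe $\zeta_1\equiv 0$; (2) check that for $i \geq 2$ one has $\delta_{i,Q} = (Q',\gamma) - \gamma_i \neq 0$ whenever $\|Q\| \geq 2$, where $Q' = (q_2,\dots,q_n)$ — this is precisely the no-resonance assumption on $\gamma_2,\dots,\gamma_n$, noting that the presence of $q_1$ (with coefficient $\gamma_1 = 0$) is harmless; (3) solve recursively on $\|Q\|$ for $\zeta_{i,Q}$ with $\psi_{i,Q}=0$, using that $h_i$ (and the nonlinear parts) have order $\geq 2$ so the right-hand side at level $\|Q\|$ depends only on lower-order coefficients of $\zeta$; this produces the formal conjugacy. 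Since the statement asserts only \emph{formal} conjugacy (unlike the analytic statements proved via the Cauchy Majorant Method), step (3) already finishes the proof and no convergence estimate is needed.

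The main obstacle is bookkeeping rather than analysis: one must be careful that the terms $\da_i x_1 x_i$ present in $Z_\da$ are genuinely kept (they are \emph{not} to be removed — they are the formal invariant), so the ``target'' is $Z_\da$ and not the linear part, and correspondingly $\psi_{i,Q}$ should be set to zero only for those $Q$ not equal to the multi-index of $x_1 x_i$, for which one instead prescribes $\psi_{i,Q} = \da_i$. Equivalently, one writes $Y_{1,\da} = Z_\da + (\text{perturbation } x_1 h_i\,\partial/\partial x_i)$ with the perturbation of order $\geq 2$ and \emph{divisible by } $x_1$, and shows the perturbation can be removed; the divisibility by $x_1$ is exactly what guarantees that no term with $q_1 = 0$ ever needs to be solved for, so the vanishing eigenvalue never obstructs. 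The only subtlety worth spelling out is that for $i\geq 2$ the would-be bad index $Q$ with $\delta_{i,Q}=0$ and $q_1$ arbitrary is excluded because $(q_2,\dots,q_n,\gamma) = \gamma_i$ with $q_2+\dots+q_n$ possibly equal to $1$ only if $q_1 \geq 1$, i.e. it corresponds exactly to a term already present in $Z_\da$ (the $\da_i x_1 x_i$ term) or to one forbidden by non-resonance; this case analysis is the heart of the argument and is where I would be most careful.
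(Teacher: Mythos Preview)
Your overall plan is the right one, and indeed the paper offers no proof of this lemma at all (it simply states the result and immediately records the shape~(\ref{mudformal}) of the formal conjugacy), so there is nothing to compare against. However, your step~(2) contains a genuine error that makes the argument as written fail.

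You claim that for $i\geq 2$ one has $\delta_{i,Q}=(Q',\gamma)-\gamma_i\neq 0$ whenever $\|Q\|\geq 2$. This is false: with $Q'=(q_2,\dots,q_n)=e_i$ and \emph{any} $q_1\geq 1$, one has $\|Q\|=q_1+1\geq 2$ and $\delta_{i,Q}=\gamma_i-\gamma_i=0$. The non-resonance hypothesis on $\gamma_2,\dots,\gamma_n$ only rules out $(Q',\gamma)=\gamma_i$ with $|Q'|\geq 2$; it says nothing about $|Q'|=1$. Your last paragraph notices this family but then asserts these indices ``correspond exactly to a term already present in $Z_\da$ (the $\da_i x_1x_i$ term)''. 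That is only the case for $q_1=1$; for $q_1\geq 2$ the monomials $x_1^{q_1}x_i\,\partial/\partial x_i$ are resonant and are \emph{not} in $Z_\da$, so you cannot simply leave them as target terms.

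What actually makes the lemma true is a second mechanism you have not invoked. Two equivalent ways to see it: (a)~First pass to the full Poincar\'e--Dulac normal form $\dot x_i=x_i(\gamma_i+\sum_{k\geq 1}\da_{i,k}x_1^k)$, then perform the change $x_i\mapsto x_i\,u_i(x_1)$ with $u_i$ a formal unit; the equation $u_i'/u_i=-\sum_{k\geq 2}\da_{i,k}x_1^{k-2}$ is formally solvable and kills all $\da_{i,k}$ with $k\geq 2$, while the term $\da_{i,1}x_1$ survives precisely because $x_1^2\,u_i'/u_i$ has $x_1$-order $\geq 2$. (b)~Equivalently, in your recursion the homological operator at $x_1$-order $k$ is $(L-\gamma_i)$ with one-dimensional kernel $\C\,x_i$; you must \emph{use} this freedom: the free coefficient $c_{i,k-1}$ of $x_i$ in $f_{i,k-1}$ enters the $x_i$-coefficient of the right-hand side at order $k$ with weight $-(k-1)$ (coming from the $x_1^2\partial_{x_1}$ piece of the conjugacy equation), and for $k\geq 2$ one chooses $c_{i,k-1}$ to make that obstruction vanish. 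The hypothesis $\partial h_i/\partial x_i|_0=0$ is exactly what makes the $k=1$ step (where there is no such freedom) go through. Either route completes the proof; your outline, as it stands, does not.
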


%\begin{proof}
%\end{proof}

Summarizing, there exists a formal change of coordinates of the form
\begin{equation}\label{mudformal}
\hat{H}(x)=(x_1,x_2+\sum_{k=1}^\infty f_{2k}(\bar{x})x_1^k,\ldots,
x_n+\sum_{k=1}^\infty f_{nk}(\bar{x})x_1^k)
\end{equation}
conjugating $Y_{1,\da}$ and $Z_{\da}$. The functions $f_{ik}$ on (\ref{mudformal}) are
holomorphic functions on a neighborhood of $0 \in \C^{n-1}$ verifying $f_{i1}(0)=0$
for all $i \in \{2,\ldots,n\}$. Let us denote by $\hat{G}_0$ the set of formal maps of type
above.

Although formally conjugated $Y_{1,\da}$ and $Z_{\da}$ are not, in general, analytically
conjugated. In fact, the formal changes of coordinates presented above are, in general,
divergent. Nonetheless $Y_{1,\da}$ and $Z_{\da}$ are analytically conjugated by sectors
as the next result states. We note that the union of the sectors were $Y_{1,\da}$ and
$Z_{\da}$ are analytically conjugated constitutes a neighborhood of the origin.

\begin{teo}[Theorem of Malmquist]\cite{Malm,tesecanille}
Let $\hat{H}$ be the unique formal transformation of the form (\ref{mudformal})
conjugating $Y_{1,\da}$ and $Z_{\da}$. Then there exists a holomorphic transformation $H$
defined in $S \times (\C^{n-1},0)$, where $S$ a sector with vertex at the origin of $\C$
and angle less then $2 \pi$, such that
\begin{itemize}
\item[a)] $dH(Y_{1,\da})=Z_{\da}(H)$, in $S \times
(\C^{n-1},0)$
\item[b)] $H \tilde{\rightarrow} \hat{H}$ in $S$, as $x_1
\rightarrow 0$
\end{itemize}
The union of the sectors satisfying the conditions above constitutes a neighborhood of the
origin $0 \in \C$.
\end{teo}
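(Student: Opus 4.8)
The plan is to produce $H$ as a genuine, sectorially defined solution of the very functional equation that $\hat H$ solves formally, obtained through a fixed point argument. First I would write $H=\mathrm{id}+\Phi$ with $\Phi=(0,\varphi_2,\ldots,\varphi_n)$, each $\varphi_i$ vanishing at the origin, and substitute into $dH(Y_{1,\da})=Z_{\da}\circ H$. Using that both vector fields have first component $\dot x_1=x_1^2$, the equation splits into the system
\[
x_1^2\,\frac{\partial \varphi_i}{\partial x_1}+\sum_{j=2}^n\bigl[x_j(\gamma_j+\da_j x_1)+x_1 h_j(x)\bigr]\frac{\partial \varphi_i}{\partial x_j}-(\gamma_i+\da_i x_1)\,\varphi_i \;=\; x_1 h_i(x)+N_i(x,\Phi),
\]
where $N_i$ gathers the terms at least quadratic in $\Phi$. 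This is a nonlinear singular PDE with an irregular singularity along $\{x_1=0\}$, and the hypotheses defining $\mathfrak{X}$ (isolated singularity, $\da\in\R^{n-1}$, no resonance among $\gamma_2,\ldots,\gamma_n$, and $0\notin\mathcal H(\gamma_2,\ldots,\gamma_n)$) are precisely what is needed to tame it. The formal solvability and uniqueness of $\hat H$ are already granted by the preceding lemma.

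Next I would integrate along the flow of the ``diagonal'' part $X_0=x_1^2\partial_{x_1}+\sum_j x_j(\gamma_j+\da_j x_1)\partial_{x_j}$, whose orbits are explicit: $x_1(t)=x_1/(1-tx_1)$ and $x_j(t)=x_j^0\,e^{\gamma_j t}(1-tx_1)^{-\da_j}$. Along such an orbit each $\varphi_i$ satisfies a scalar linear ODE $\dot\varphi_i-(\gamma_i+\da_i x_1(t))\varphi_i=(\text{source}_i)$, solved by variation of parameters with integrating factor $\mu_i(t)=e^{\gamma_i t}(1-tx_1)^{-\da_i}$; since $1/x_1(t)=1/x_1-t$, one has $e^{\gamma_i t}=e^{\gamma_i/x_1}e^{-\gamma_i/x_1(t)}$, so the solution operator carries the essential singularity $e^{\gamma_i/x_1}$. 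I would choose the bisecting direction $\theta_0$ of the sector $S$ so that $\mathrm{Re}(\gamma_j e^{-i\theta_0})>0$ for all $j$, which is possible exactly because $0\notin\mathcal H(\gamma_2,\ldots,\gamma_n)$ forces the $\gamma_j$ into an open half-plane. For $x_1\in S$ the orbits $t\mapsto x_1(t)$, $t\in[0,\infty)$, converge to $0$ and, after shrinking $S$ and the polydisc, remain in a mildly enlarged sector on which $\mathrm{Re}(\gamma_i/x_1(t))$ is eventually negative and decreasing; the improper integrals defining the candidate $\varphi_i$ then converge and assemble into a well-defined nonlinear integral operator $\mathcal T$ acting on functions holomorphic on $S\times D^{n-1}$.

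I would then run the contraction in the Banach space $\mathcal B$ of functions holomorphic on $S\times D^{n-1}$, continuous on $\bar S\times D^{n-1}$, vanishing to first order on $\{x_1=0\}$, with a sup (or Gevrey-weighted sup) norm, showing that for $S$ of opening close to $2\pi$ and $D^{n-1}$ small $\mathcal T$ maps a small ball of $\mathcal B^{\,n-1}$ into itself and is a contraction there: the quadratic nature of $N_i$, the decay of the $\mu_i$, and the extra factor $x_1$ in the terms $x_1 h_j\partial_{x_j}\varphi_i$ supply the estimates. The fixed point $\Phi$ gives a holomorphic $H$ on $S\times(\C^{n-1},0)$ with $dH(Y_{1,\da})=Z_{\da}(H)$, which is (a). For (b), subtracting the $N$-th truncation of $\hat H$ and repeating the same estimate shows the remainder is $O(x_1^N)$ uniformly on $\bar S$; since $\hat H$ is the unique formal solution, it is the asymptotic expansion of $H$ as $x_1\to0$ in $S$ (alternatively one invokes a Ramis--Sibuya argument once two overlapping sectorial solutions are available). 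Finally, letting $\theta_0$ range over the admissible interval of directions yields finitely many sectors $S$ whose union is a punctured neighborhood of $0\in\C$; together with $\{x_1=0\}$, where $Y_{1,\da}$ and $Z_{\da}$ already agree on the invariant manifold, these cover a full neighborhood of the origin. The case $p>1$ is handled identically after the substitution $z=-1/(p x_1^p)$, which regularizes the leading term, the only difference being that $2p$ sectors are needed, in the spirit of the $p$ petals in the Flower Theorem~\ref{t14}.

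The hard part will be the uniform sectorial estimates that make $\mathcal T$ a contraction: controlling the essential singularity $e^{\gamma_i/x_1}$ through the integrating factors $\mu_i$, verifying that the flow of $X_0$ keeps orbits inside a slightly enlarged sector on which every relevant real part has the correct sign, and simultaneously absorbing the nonlinear composition terms $N_i$ and the ``transport'' terms $x_1 h_j\partial_{x_j}\varphi_i$. Once those estimates are in place, the remaining steps — formal solvability, reduction to the integral equation, extraction of the asymptotic expansion from the fixed point, and the covering of a neighborhood by rotated sectors — are routine.
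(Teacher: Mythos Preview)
The paper does not prove this theorem: it is stated with citations to Malmquist \cite{Malm} and Canille Martins' thesis \cite{tesecanille} and then used as a black box, so there is no ``paper's own proof'' to compare against. Your outline is the classical route (integral equation along characteristics plus a contraction), which is indeed how results of Hukuhara--Kimura--Matuda/Malmquist/Sibuya type are established, and the broad strategy is sound.

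Two technical points deserve sharpening before this becomes a proof. First, integrating along the flow of the \emph{diagonal} part $X_0$ leaves the transport terms $x_1 h_j(x)\,\partial_{x_j}\varphi_i$ on the right-hand side; these involve derivatives of the unknown and are not obviously contractive in a sup-norm space. The standard cure is to integrate along the true characteristics of $Y_{1,\da}$ (or equivalently to view the system as ODEs in $x_1$ with $(x_2,\ldots,x_n)$ evolving along the actual flow), which eliminates these derivative terms from the fixed-point operator altogether; alternatively one works in a space with built-in control of $\partial_{x_j}$ via Cauchy estimates on slightly shrunk polydiscs. Second, your criterion $\mathrm{Re}(\gamma_j e^{-i\theta_0})>0$ for all $j$ selects only bisecting directions inside the attractor arc, which has opening $<\pi$; this alone does not yet explain why each sector can be taken of opening $>\pi$ (as the paper needs later, cf.\ the construction of $S_1,S_2$ in Section~2.5.2). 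The point is that once the integration ray is fixed, the resulting holomorphic solution extends analytically in $x_1$ until one meets the nearest singular (Stokes) direction of the sheaf $\Lambda_{Z_\da}$, and it is this analytic continuation, not the mere rotation of $\theta_0$, that produces sectors of opening strictly between $\pi$ and $2\pi$ whose union covers a punctured neighborhood of $0$. With these two adjustments your plan matches the classical argument.
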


A holomorphic map $H$ as above is called a normalizing application. Denoting by $S_i$ the
different sectors covering the neighborhood of $0 \in \C$ nd by $H_i$ the correspondent
normalizing application we note that $Y_{1,\da}$ and $Z_{\da}$ are analytically conjugated
if and only if $H_i=H_j$ in $S_i \cap S_j$, $\forall i \ne j$. The description of $H_i$ on
the corresponding sector goes as in Section~$2.3$.

\subsection{Sectorial Isotropy of the formal normal form}

The study of the Sectorial Isotropy for saddle-node foliations as above was considered in
\cite{tesecanille}. In that work the properties of $(H_i \circ H_j^{-1})$ on the overlaps
$(S_i \cap S_j) \times (\C^{n-1},0)$ are deduced, leading us towards a classifications of
codimension~$1$ saddle-nodes as above.

Let us begin this section with a description of the solutions of the formal normal form. The
solutions of (\ref{zda}) out of $\{0\} \times (\C^{n-1},0)$ are given by
\[
x_j(x_1)=c_jx_1^{\da_j} e^{-\frac{\gamma_j}{x}}, \quad j \geq 2
\]
where $(c_2,\ldots,c_n) \in \C^{n-1}$. Basically they are parameterized by the constants
$(c_2,\ldots,c_n)$. The main objective in this subsection is to relate the solutions of $Z_{\da}$
with the solutions of $Y_{1,\da}$ on each sector given by the Theorem of Malmquist.

Denote by $\varphi_i$, for $i=2,\ldots,n$, the argument of the eigenvalue $\gamma_i$ and let
$x_1=re^{i\theta}$. Since
\[
x_j(re^{i\theta})=c_jr^{\da_j}e^{i\theta \da_j}e^{-\frac{|\gamma_j|}{r}
(\cos(\varphi_j-\theta) +i\sin(\varphi_j-\theta))}
\]
for a fixed $\theta$, the behavior of $x_j(x_1)$ along the curve $x_1=re^{i\theta}$ as
$r \rightarrow 0$ is given by the term $\frac{|\gamma_j|}{r}\cos(\varphi_j-\theta)$.
More specifically, if $\cos(\varphi_j-\theta) > 0$ (resp. $\cos(\varphi_j-\theta) < 0$)
then $x_j(re^{i\theta})$ goes to zero (resp. infinity) as $r$ goes to zero.

A sector where $(x_2(x_1),\ldots,x_n(x_1))\rightarrow (0,\ldots,0)$ as $r \rightarrow 0$
is called an attractor sector. This kind of sector is constituted by the directions $\theta$
for which $\cos(\varphi_j-\theta)>0$, $\forall j=2,\ldots,n$. A sector where $\cos(\varphi_j-\theta)<0$,
$\forall j=2,\ldots,n$, is called a saddle sector (in this case $|x_j(x_1)| \rightarrow \infty$,
$\forall j=2,\ldots,n$). Contrary to the case of saddle-nodes in $\C^2$ there exists, in general,
sectors that are neither attractors nor saddles. Those sectors are called mixed and are characterized
by the condition $\cos(\varphi_i-\theta)\cos(\varphi_j-\theta)<0$, for some $i \ne j$. In fact mixed
sectors does not exists only in the case that $\gamma_i \in \R$ (or $\R^+$ since we are assuming
$\gamma_2, \ldots, \gamma_n$ in the Poincar\'e domain).

The directions for which there exists $j$ such that $\cos(\varphi_j-\theta)=0$ are called singular
directions of the solution. They are given by $\theta=\varphi_j \pm \frac{\pi}{2}$, $j=2,\ldots,n$.
For simplicity we sometimes say that $\theta \in S$ instead of that $x=re^{i\theta} \in S$.

\subsection{The sectors where the Theorem of Malmquist is valid}

Let $S$ be a sector of $\C$ with vertex at the origin and angle less then $2\pi$.
Denote by $\Lambda_{Z_{\da}}(S)$ the group of holomorphic transformations $H: S
\times (\C^{n-1},0) \rightarrow S \times (\C^{n-1},0)$ verifying:
\begin{itemize}
\item[a)] $dH(Z_{\da})=Z_{\da}(H)$
\item[b)] $H$ is asymptotic to the identity $Id$ as $x_1 \rightarrow 0$, $x_1 \in S$
\end{itemize}
$\Lambda_{Z_{\da}}(S)$ is a presheaf and we denote by $\Lambda_{Z_{\da}}$ the sheaf
associated to the given presheaf.

An element $H$ tangent to the identity and preserving the first component can be written
in the form
\begin{equation}\label{formafeixe}
\begin{aligned}
H(x)=(x_1,x_2+a_{20}(x_1)+&\sum_{|Q|\geq 1}a_{2Q}(x_1)\bar{x}^Q,
\ldots,\\
&x_n+a_{n0}(x_1)+ \sum_{|Q|\geq 1}a_{nQ}(x_1)\bar{x}^Q)
\end{aligned}
\end{equation}
where $Q=(q_2,\ldots,q_n)$, $|Q|=q_2+\ldots+q_n$ and $\bar{x}^Q=x_2^{q_2}\ldots x_n^{q_n}$.
We look for conditions under which $H \in \Lambda_{Z_{\da}}(S)$. In order to do that we need
to interpret the conditions a) and b) above.

Condition a) expresses that the leaves of $Z_{\da}|_{S \times (\C^{n-1},0)}$ are preserved by
the elements of $\Lambda_{Z_{\da}}(S)$. This implies that
\begin{equation}\label{expa}
a_{jQ}(x_1)=a_{jQ}x_1^{-((Q,\da)-\da_j)}e^{\frac{(Q,\gamma)-\gamma_j}{x_1}}
\end{equation}
where $\gamma=(\gamma_2,\ldots,\gamma_n)$.

Condition b) says that $H \tilde{\rightarrow} Id$ as $x_1$ goes to~$0$, with $x_1 \in S$. This is
simply equivalent to say that $a_{jQ}(x_1)\tilde{\rightarrow} 0$ as $x_1$ goes to~$0$, with $x_1 \in S$,
for all $j \in \{2,\ldots,n\}$ and for all $Q\in \N_0^{n-1}$.

Denote by $\varphi_{jQ}$ the argument of the complex number $(Q,\gamma)-\gamma_j$ and let $x_1=re^{i\theta}$.
For a fixed $\theta$ the behavior of $a_{jQ}(x_1)$ as $r \rightarrow 0$, is given by $\cos (\varphi_{jQ}-\theta)$.
The directions $\theta$ for which $\cos (\varphi_{jQ} -\theta)=0$ for some $j$ and $Q$, are called singular
directions of the sheaf $\Lambda_{Z_{\da}}$ and are given by $\theta_{jQ}^{\pm} =\varphi_{jQ} \pm \frac{\pi}{2}$,
$j=2,\ldots,n$. We note that the singular directions of the solution are singular directions of the sheaf either.
In fact the eigenvalue $\gamma_j$ corresponds to the complex number $\varphi_{j0}$.

In order to study the behavior of the arguments of $(Q,\gamma)-\gamma_j$, for $Q \in \N_0^{n-1}$, we shall
represent all these numbers in the complex plane (see figure \ref{novo}). We note that the singular
directions of the sheaf are dense in the mixed sectors, while they are discrete in the attractor and saddle
ones. Although discrete in the attractor and saddle sectors they accumulate on the singular directions of
the solution.

\begin{figure}[ht!]
\centering
\includegraphics{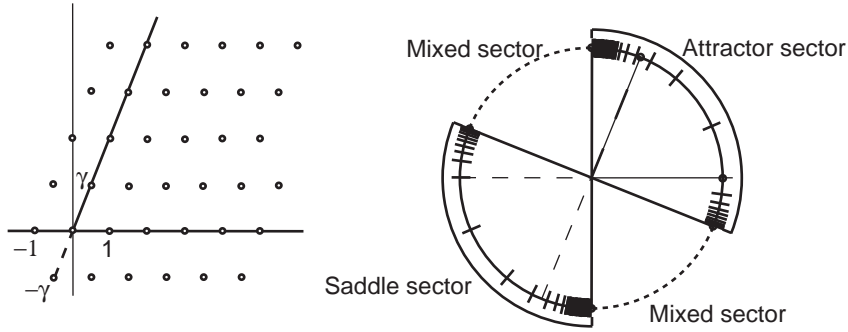}
\caption{On the left the complex numbers $(Q,\gamma)-\gamma_j$. On the right the singular
directions of the sheaf $\Lambda_{Z_{\da}}$ which are dense in the mixed sectors.}\label{novo}
\end{figure}

Let us now describe how to construct the sectors where the Theorem of Malmquist is valid. First of all
let us consider a direction $\varphi_0$ in the attractor sector that is not a singular direction of the
sheaf $\Lambda_{Z_{\da}}$. The sectors where the Theorem of Malmquist is valid are the sectors obtained
by extending the sectors between the angles $\varphi_0$ and $\varphi_0 \pm \pi$ till reach a singular
direction of the sheaf $\Lambda_{Z_{\da}}$ (see (figure \ref{escolhasector})). Since singular direction
are discrete on the attracting and saddle sectors, both sectors are well defined and have amplitude greater
than $\pi$. Denote each one of this sectors by $S_1$ and $S_2$.

By definition $S_1 \cap S_2$ is the union of two open sets, $S_+$ and $S_-$, contained in the attractor
sector and in the saddle sector respectively. In particular $S_+ \cap S_- = \emptyset$. The saddle and
attractor sectors, denoted by $S_s$ and $A_s$ respectively, are antipodes, i.e. $S_s = A_s + \pi =
\{e^{\pi i}a : a \in A_s\}$. The sets $S_+$ and $S_-$ are also antipodes.

\begin{figure}[ht!]
\centering
\includegraphics{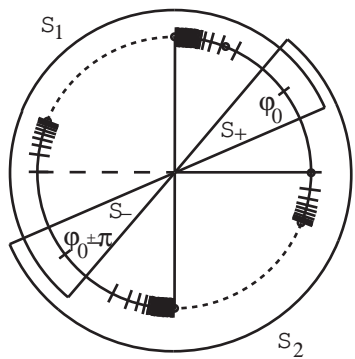}
\caption{How to construct sectors where the Theorem of Malmquist is valid}\label{escolhasector}
\end{figure}

\subsection{The importance of the pre-sheaves $\Lambda_{Z_{\da}}(S_+)$ and $\Lambda_{Z_{\da}}(S_-)$}

Let $\hat{H}$ be the unique formal diffeomorphism of $\C \{\bar{x}\}[[x_1]]$ conjugating $Y_{1,\da}$
and $Z_{\da}$.

\begin{prop}
Let $S_1$ and $S_2$ be the sectors where the Theorem of Malmquist is valid. Denote by $H_1$ (resp. $H_2$)
the normalizing application defined on $S_1$ (resp. $S_2$). Then, $H_j \circ H_i^{-1}|_{S_+}$ (resp.
$H_j \circ H_i^{-1}|_{S_-}$) belongs to $\Lambda_{Z_{\da}}(S_+)$ (resp. $\Lambda_{Z_{\da}}(S_-)$).
\end{prop}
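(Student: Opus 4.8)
The plan is to imitate closely the two-dimensional argument given in the proof of Proposition~\ref{p1} (the case $p=1$ there). The statement to be proved has two symmetric halves, so it suffices to treat $S_+$; the argument on $S_-$ is identical. Write $\phi = H_j \circ H_i^{-1}$ restricted to $S_+ \times (\C^{n-1}, 0)$. By definition we must check that $\phi$ satisfies the two conditions defining $\Lambda_{Z_{\da}}(S_+)$: that it preserves the leaves of $Z_{\da}$ (condition a)), and that it is asymptotic to the identity as $x_1 \to 0$ with $x_1 \in S_+$ (condition b)).

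First I would verify condition a), the invariance of the foliation. Each normalizing application $H_k$ ($k=i,j$) satisfies $dH_k(Y_{1,\da}) = Z_{\da}(H_k)$ on its sector by part a) of the Theorem of Malmquist. Then, exactly as in the computation displayed in the proof of Proposition~\ref{p1}, one writes
\[
d(H_j \circ H_i^{-1})(Z_{\da})(H_j \circ H_i^{-1})^{-1} = dH_j \circ dH_i^{-1}(Z_{\da}) \circ H_i \circ H_j^{-1},
\]
and substituting $dH_i^{-1}(Z_{\da}) \circ H_i = dH_i^{-1} \circ dH_i (Y_{1,\da}) = Y_{1,\da}$ (inverting the conjugacy relation for $H_i$) and then $dH_j(Y_{1,\da}) = Z_{\da}(H_j)$, the whole expression collapses to $Z_{\da}$. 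Hence $\phi$ conjugates $Z_{\da}$ to itself on $S_+ \times (\C^{n-1}, 0)$, which is precisely condition a). This step is routine bookkeeping with the chain rule and the two Malmquist conjugacy identities.

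Next I would verify condition b), that $\phi$ is asymptotic to the identity. Here I would use part b) of the Theorem of Malmquist: each $H_k$ is asymptotic to the (same) formal normalizing map $\hat H$ of the form~(\ref{mudformal}) as $x_1 \to 0$ within $S_k$. On the overlap $S_+ = S_1 \cap S_2 \cap \{$attractor sector$\}$ both asymptotic expansions are valid, and since they agree with the \emph{unique} formal transformation $\hat H$, the composition $H_j \circ H_i^{-1}$ has asymptotic expansion $\hat H \circ \hat H^{-1} = \mathrm{Id}$ as $x_1 \to 0$ in $S_+$. Equivalently, writing $\phi$ in the form~(\ref{formafeixe}) with coefficients $a_{jQ}(x_1)$, one shows each $a_{jQ}(x_1)$ tends to $0$ asymptotically; combined with condition a), formula~(\ref{expa}) forces $a_{jQ}(x_1) = a_{jQ} x_1^{-((Q,\da)-\da_j)} e^{((Q,\gamma)-\gamma_j)/x_1}$, and one reads off from the fact that $S_+$ lies in the attractor sector — where $\cos(\varphi_{jQ}-\theta) > 0$ for the relevant directions — the correct decay, so that $\phi \in \Lambda_{Z_{\da}}(S_+)$.

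The main obstacle I anticipate is not in the formal manipulation of part a), which is mechanical, but in making the asymptotic statement in part b) rigorous on a sector of amplitude \emph{greater than} $\pi$: one must know that the composition of two maps each asymptotic to $\hat H$ is again asymptotic to the composed formal series, uniformly on the relevant subsector, and that the overlap $S_+$ is genuinely contained in the attractor sector so that no singular direction of the sheaf $\Lambda_{Z_{\da}}$ is crossed — this is exactly why the sectors $S_1, S_2$ were constructed by extending up to (but not past) singular directions. The care needed is in invoking the standard fact that asymptotic expansions compose, together with the description of $S_+$ relative to the singular directions of $\Lambda_{Z_{\da}}$; once these are in place the conclusion is immediate. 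The treatment on $S_-$ is word-for-word the same, with the saddle sector replacing the attractor sector throughout.
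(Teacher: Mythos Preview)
Your core argument is correct and is exactly the natural one: the paper states this proposition without proof, treating it as the routine higher-dimensional analogue of Proposition~\ref{p1}, whose proof you have faithfully transposed. The chain-rule computation for condition a) and the observation that $H_i$ and $H_j$ share the unique asymptotic expansion $\hat H$ (hence $H_j\circ H_i^{-1}$ is asymptotic to the identity) are all that is needed, and both are sound.

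One small inaccuracy to clean up: in your ``equivalently'' aside you write that on $S_+$ one has $\cos(\varphi_{jQ}-\theta)>0$ ``for the relevant directions.'' This is not right and is in any case unnecessary. The attractor sector is defined by $\cos(\varphi_j-\theta)>0$ for the arguments $\varphi_j$ of the eigenvalues $\gamma_j$, not by conditions on the $\varphi_{jQ}$ (arguments of $(Q,\gamma)-\gamma_j$). Proposition~\ref{condsuf}, which comes \emph{after} the proposition you are proving, shows that for a nonzero coefficient $a_{jQ}$ one must have $\cos(\varphi_{jQ}-\theta)<0$ throughout the sector --- the opposite sign. That proposition is a \emph{consequence} of membership in $\Lambda_{Z_{\da}}(S_+)$, not an ingredient in proving it. Your first argument for condition b) (composition of asymptotic expansions) already suffices; the second paragraph can simply be dropped.
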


Let $g_{+}$ (resp. $g_{-}$) be the restriction of $H_2 \circ H_1^{-1}$ to $S_{+}$ (resp. $S_{-}$). We have that
$Y_{1,\da}$ is analytically conjugated to its formal normal form $Z_{\da}$ if and only if $H_1$ coincides
with $H_2$ on the overlaps, i.e. if and only if both $g_+$ and $g_-$ reduces to the identity. The diffeomorphisms
$g_+, \, g_-$ represents the gluing of the leaves on the overlaps. As already mentioned they take the
form~\ref{formafeixe} with $a_{jQ}(x_1)=a_{jQ}x_1^{-((Q,\da)-\da_j)}e^{\frac{(Q,\gamma)-\gamma_j}{x_1}}$ and
$a_{jQ}(x_1)$ asymptotic to zero as $x_1$ goes to zero.

\subsection{Gluing of the leaves}

In order to describe the gluing of the leaves on the overlaps, it is important to obtain a description of the
maps of the form (\ref{formafeixe}) on $S_+$ and $S_-$. In other words, we shall obtain a description of the
sheaves $\Lambda_{Z_{\da}}(S_+)$ and $\Lambda_{Z_{\da}}(S_-)$. The next two propositions describe their
fundamental properties:

\begin{prop}\label{condsuf}
Fix a sector $S$ and assume that $a_{jQ} \ne 0$ on $S$. Then $\cos(\varphi_{jQ}-\theta)<0$, for all
$\theta$ such that $re^{i\theta} \in S$.
\end{prop}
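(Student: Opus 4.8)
The statement asserts that if a coefficient $a_{jQ}$ appearing in a normalizing diffeomorphism of the form~(\ref{formafeixe}) is nonzero on a sector $S$, then necessarily $\cos(\varphi_{jQ}-\theta)<0$ for all directions $\theta$ with $re^{i\theta}\in S$. The idea is to exploit condition (b) in the definition of $\Lambda_{Z_{\da}}(S)$, namely that $H$ is asymptotic to the identity as $x_1\to 0$ inside $S$, together with the explicit formula~(\ref{expa}) for the coefficients. The plan is to show that if $\cos(\varphi_{jQ}-\theta_0)\geq 0$ for even a single direction $\theta_0$ in $S$, then $a_{jQ}(x_1)$ fails to be asymptotic to zero along the ray $x_1=re^{i\theta_0}$, forcing $a_{jQ}=0$.

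First I would recall from~(\ref{expa}) that
\[
a_{jQ}(x_1)=a_{jQ}\,x_1^{-((Q,\da)-\da_j)}\,e^{\frac{(Q,\gamma)-\gamma_j}{x_1}}\,,
\]
and that $\varphi_{jQ}$ denotes the argument of $(Q,\gamma)-\gamma_j$. Writing $x_1=re^{i\theta}$ one computes
\[
\left|a_{jQ}(re^{i\theta})\right| = |a_{jQ}|\,r^{-\Re((Q,\da)-\da_j)}\,\exp\!\left(\frac{|(Q,\gamma)-\gamma_j|}{r}\cos(\varphi_{jQ}-\theta)\right),
\]
where I should note that $(Q,\da)-\da_j$ is real since $\da\in\R^{n-1}$, so the power of $r$ contributes only a polynomial (in $1/r$) factor. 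The dominant term as $r\to 0$ is the exponential. Then I would argue by contraposition: suppose $a_{jQ}\neq 0$ and suppose toward a contradiction that there is $\theta_0$ with $re^{i\theta_0}\in S$ and $\cos(\varphi_{jQ}-\theta_0)\geq 0$. If $\cos(\varphi_{jQ}-\theta_0)>0$ the exponential blows up and $|a_{jQ}(re^{i\theta_0})|\to\infty$, which contradicts asymptoticity to the identity (in particular boundedness). If $\cos(\varphi_{jQ}-\theta_0)=0$, the exponential factor has modulus $1$, and since by definition of the singular directions $(Q,\gamma)-\gamma_j\neq 0$ so that $(Q,\da)-\da_j$ may be positive, zero, or negative; in the cases where the monomial $\bar x^Q$ genuinely appears one checks (using $|Q|\geq 1$ and the fact that $a_{jQ}(x_1)$ must tend to $0$, not merely stay bounded) that $|a_{jQ}(re^{i\theta_0})|$ does not tend to $0$, again a contradiction — here a small argument is needed to rule out that the polynomial factor $r^{-\Re((Q,\da)-\da_j)}$ decays, but since $a_{jQ}(x_1)$ has to be $\tilde\to 0$ uniformly and the exponential is exactly of modulus one along this ray, decay would require $\Re((Q,\da)-\da_j)<0$; one then invokes the geometric arrangement (the $\gamma_i$ in the Poincar\'e domain, the role of $\da$) to exclude this, or more simply observes that asymptotic expansions force the leading behaviour to come from the exponential and that a borderline direction is excluded by openness of $S$.

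The main obstacle I expect is the borderline case $\cos(\varphi_{jQ}-\theta_0)=0$: handling it cleanly requires using that $S$ is an \emph{open} sector, so that if $\cos(\varphi_{jQ}-\theta_0)=0$ for some interior direction then $\cos(\varphi_{jQ}-\theta)>0$ for nearby directions $\theta\in S$ on one side, reducing to the divergent case already treated. Thus the cleanest route is: assume $a_{jQ}\neq 0$; if some $\theta_0\in S$ had $\cos(\varphi_{jQ}-\theta_0)\geq 0$, then by openness of $S$ some nearby $\theta_1\in S$ has $\cos(\varphi_{jQ}-\theta_1)>0$ (unless $\varphi_{jQ}-\theta$ is constantly a right angle on $S$, which cannot happen on an open set of $\theta$'s), whence $|a_{jQ}(re^{i\theta_1})|\to\infty$ as $r\to 0$, contradicting condition (b). Therefore $\cos(\varphi_{jQ}-\theta)<0$ throughout $S$, as claimed. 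The remaining bookkeeping — that the polynomial factor does not interfere, and that ``asymptotic to the identity'' indeed forces at least boundedness of each $a_{jQ}(x_1)$ — is routine and follows directly from the definition of asymptotic expansion given earlier in the text.
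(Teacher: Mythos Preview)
Your proposal is correct and follows essentially the same line as the paper's proof: use the explicit formula~(\ref{expa}) together with condition~(b) to see that if $\cos(\varphi_{jQ}-\theta)>0$ for some $\theta\in S$ then $|a_{jQ}(re^{i\theta})|\to\infty$, contradicting asymptoticity to the identity. Your handling of the borderline case $\cos(\varphi_{jQ}-\theta_0)=0$ via the openness of $S$ is slightly more careful than the paper, which simply treats the strict case and leaves this point implicit.
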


\begin{proof}
Since the elements of $\Lambda_{Z_{\da}}(S)$ are asymptotic to the Identity it follows that
$a_{jQ}(x_1) \tilde{\longrightarrow} 0$ as $x_1$ goes to $0$. Let $x_1 = re^{i\theta}$. For a
fixed $\theta $ the behavior of $a_{jQ}(x_1)$, as $r$ goes to~$0$, is given by the real part of
$\frac{(Q,\gamma)-\gamma_j}{x_1}$, i.e. by $\dfrac{|(Q,\gamma)-\gamma_j|}{r} \cos(\varphi_{jQ}-\theta)$.

Suppose that $a_{jQ} \ne 0$. Assume for a contradiction that there exists $\theta \in S$ such that $\cos(\varphi_{jQ}-\theta)>0$.
Then
\[
\dfrac{|(Q,\gamma)-\gamma_j|}{r}\cos(\varphi_{jQ}-\theta)
\stackrel{r\rightarrow 0} \longrightarrow +\infty \,
\]
which means that $a_{jQ}(x_1)$ is not asymptotic to the zero function, contradicting our assumption.
\end{proof}

\begin{prop}
There exists a duality between $\Lambda_{Z_{\da}}(S)$ and $\Lambda_{Z_{\da}}(S+\pi)$ in the following sense:
if $a_{jQ} \ne 0$ in $S$ then $a_{jQ}=0$ in $S+\pi$. Note that by $S + \pi$ we mean the antipode of $S$.
\end{prop}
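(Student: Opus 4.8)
The plan is to derive this statement directly from Proposition~\ref{condsuf} together with the elementary identity $\cos(\alpha-\pi)=-\cos(\alpha)$. First I would recall from formulas~(\ref{formafeixe}) and~(\ref{expa}) that any element of $\Lambda_{Z_{\da}}(S)$ has components whose coefficients are of the form $a_{jQ}(x_1)=a_{jQ}\,x_1^{-((Q,\da)-\da_j)}e^{((Q,\gamma)-\gamma_j)/x_1}$, so that along a ray $x_1=re^{i\theta}$ the behavior as $r\to 0^{+}$ is governed by the exponential factor $\exp\left(\tfrac{|(Q,\gamma)-\gamma_j|}{r}\cos(\varphi_{jQ}-\theta)\right)$, the prefactor $x_1^{-((Q,\da)-\da_j)}$ contributing only a power of $r$. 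This is exactly the observation already used in the proof of Proposition~\ref{condsuf}.

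Next I would fix an index $(j,Q)$ and a direction $\theta$ with $re^{i\theta}\in S$ for small $r>0$. If $a_{jQ}\neq 0$ on $S$, then Proposition~\ref{condsuf} yields $\cos(\varphi_{jQ}-\theta)<0$ for every such $\theta$. The point $re^{i(\theta+\pi)}$ lies in the antipodal sector $S+\pi=\{e^{\pi i}a:a\in S\}$, and
\[
\cos\bigl(\varphi_{jQ}-(\theta+\pi)\bigr)=\cos\bigl((\varphi_{jQ}-\theta)-\pi\bigr)=-\cos(\varphi_{jQ}-\theta)>0 .
\]
Hence for every direction $\theta'$ with $re^{i\theta'}\in S+\pi$ one has $\cos(\varphi_{jQ}-\theta')>0$.

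To conclude, I would argue by contradiction: suppose the coefficient $a_{jQ}$ were nonzero for some element of $\Lambda_{Z_{\da}}(S+\pi)$. Applying Proposition~\ref{condsuf} now to the sector $S+\pi$ forces $\cos(\varphi_{jQ}-\theta')<0$ for all $\theta'$ with $re^{i\theta'}\in S+\pi$, contradicting the strict inequality obtained in the previous step; here one uses that $S$, and hence $S+\pi$, has nonempty interior, so the two strict sign conditions genuinely clash. Therefore $a_{jQ}=0$ in $S+\pi$, which is the asserted duality; and since $(S+\pi)+\pi=S$, the same argument with the roles of $S$ and $S+\pi$ exchanged shows the property is symmetric in the two sectors. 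I do not anticipate any real obstacle: the whole content is the sign flip of the cosine under rotation by $\pi$, while all the analytic input — that the exponential term controls the asymptotics and that asymptoticity to the identity forces the cosine to be nonpositive — has already been isolated in Proposition~\ref{condsuf}.
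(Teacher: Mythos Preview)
Your proof is correct and is exactly the argument the paper has in mind: the proposition is stated in the paper without proof, immediately after Proposition~\ref{condsuf}, precisely because it follows from that proposition together with the sign flip $\cos(\varphi_{jQ}-(\theta+\pi))=-\cos(\varphi_{jQ}-\theta)$, which is what you do.
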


In particular there exists a duality between $\Lambda_{Z_{\da}}(S_+)$ and $\Lambda_{Z_{\da}}(S_-)$. Therefore
we just need to know the constants that can be non zero in $\Lambda_{Z_{\da}}(S_+)$, i.e. we need to know for
which pair $(j,Q)$ we have $\cos(\varphi_{jQ}-\theta)<0$, for all $\theta$ such that $re^{ i\theta} \in S_+$
($r \in \R^+$).

The next  result expresses how the gluing of the leaves is done.

\begin{prop}\label{transform}
Fix a sector $S$ and take an element $H$ of the presheaf $\Lambda_{Z_{\da}}(S)$. Thus $H$ transforms the
solution of the differential equation associated to the formal normal form given by:
\[
x_j(x_1)=c_jx_1^{\da_j}e^{-\frac{\gamma_j }{x_1}}, \quad j \geq 2
\]
into the solution of the same equation given by:
\[
x_j(x_1)=(c_j+a_{j0}+\sum_{|Q|\geq 1}a_{jQ}c^Q)x_1^{\da_j}e^{-\frac{\gamma_j }{x_1}}, \quad j\geq 2
\]
where $c^Q=c_2^{q_2}\ldots c_n^{q_n}$.
\end{prop}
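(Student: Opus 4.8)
The plan is to prove the identity by direct substitution, exploiting the fact that the sheaf $\Lambda_{Z_{\da}}(S)$ has already been described explicitly. First I would recall from~(\ref{formafeixe}) together with~(\ref{expa}) that any $H \in \Lambda_{Z_{\da}}(S)$ fixes the first coordinate and has $j$-th component (for $j \geq 2$)
\[
H_j(x) = x_j + \sum_{|Q| \geq 0} a_{jQ}\, x_1^{-((Q,\da)-\da_j)}\, e^{\frac{(Q,\gamma)-\gamma_j}{x_1}}\, \bar{x}^Q ,
\]
where the $Q=0$ summand is precisely $a_{j0}(x_1) = a_{j0}\, x_1^{\da_j}\, e^{-\gamma_j/x_1}$, and all real powers of $x_1$ are taken with respect to a fixed branch of $\log x_1$ on the simply connected sector $S$. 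It is convenient to set $\bar{x}^0 = c^0 = 1$ so that the $Q=0$ term can be handled uniformly with the others. Since $H$ preserves the foliation $\fol_{Z_{\da}}$, it already sends leaves to leaves, so the real content is to identify the new constants.

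Next I would substitute the solution $x_j(x_1) = c_j x_1^{\da_j} e^{-\gamma_j/x_1}$ of $Z_{\da}$ into the monomial $\bar{x}^Q = x_2^{q_2}\cdots x_n^{q_n}$, which yields
\[
\bar{x}^Q = \Big(\prod_{j=2}^n c_j^{q_j}\Big)\, x_1^{q_2\da_2 + \cdots + q_n\da_n}\, e^{-\frac{q_2\gamma_2 + \cdots + q_n\gamma_n}{x_1}} = c^Q\, x_1^{(Q,\da)}\, e^{-\frac{(Q,\gamma)}{x_1}} .
\]
Multiplying by the coefficient $a_{jQ}\, x_1^{-((Q,\da)-\da_j)}\, e^{((Q,\gamma)-\gamma_j)/x_1}$, the exponents of $x_1$ telescope to $\da_j$ and the arguments of the exponentials telescope to $-\gamma_j/x_1$, so each summand collapses to $a_{jQ}\, c^Q\, x_1^{\da_j}\, e^{-\gamma_j/x_1}$. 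Adding back the original term $c_j x_1^{\da_j} e^{-\gamma_j/x_1}$ and factoring out $x_1^{\da_j} e^{-\gamma_j/x_1}$, the $j$-th coordinate of $H$ evaluated along the solution becomes $\big(c_j + a_{j0} + \sum_{|Q| \geq 1} a_{jQ} c^Q\big)\, x_1^{\da_j}\, e^{-\gamma_j/x_1}$, and since the first coordinate is untouched this is again a solution of the equation associated with $Z_{\da}$, with exactly the new constants claimed in the statement.

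The one point that is not purely formal — and hence the main obstacle — is to justify that the series $\sum_{|Q| \geq 1} a_{jQ}(x_1)\bar{x}^Q$ really converges when evaluated along the solution, so that the term-by-term substitution above is legitimate and the new constant $c_j + a_{j0} + \sum_{|Q| \geq 1} a_{jQ} c^Q$ is well defined. I would derive this from the holomorphy of $H$ on $S \times (\C^{n-1}, 0)$: the series defining $H_j$ converges uniformly for $\bar{x}$ in some fixed polydisc $P$ about the origin (shrinking $P$ if necessary), and — restricting $x_1$ to the portion of $S$ near $0$, where $|x_j(x_1)| = |c_j|\,|x_1|^{\da_j} e^{-{\rm Re}(\gamma_j/x_1)}$ is small (for instance in the attractor sector, where $x_j(x_1) \to 0$ as $x_1 \to 0$) — the point $(x_2(x_1),\dots,x_n(x_1))$ lies in $P$, which legitimizes the evaluation and the rearrangement into the single factor $x_1^{\da_j} e^{-\gamma_j/x_1}$. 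Finally I would observe that Proposition~\ref{condsuf}, together with the asymptotic-to-identity condition in the definition of $\Lambda_{Z_{\da}}(S)$, singles out exactly the pairs $(j,Q)$ with $\cos(\varphi_{jQ}-\theta) < 0$ as the ones that can contribute, which is precisely what keeps the resulting correction finite and controlled on $S_+$ (resp. $S_-$) when this proposition is later applied to the gluing maps $g_+$ and $g_-$.
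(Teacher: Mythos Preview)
Your computation is correct and is exactly the natural verification: substitute the explicit solution into the explicit form of $H$ coming from~(\ref{formafeixe}) and~(\ref{expa}), observe that the powers of $x_1$ and the exponentials telescope, and read off the new constants. The paper states this proposition without proof, presumably because the authors regard the substitution you carried out as routine once~(\ref{expa}) has been established; your write-up makes that routine step explicit and is fine as is. The extra paragraph on convergence and on Proposition~\ref{condsuf} is not needed for the bare statement (holomorphy of $H$ on $S\times(\C^{n-1},0)$ already guarantees the series converges for $\bar x$ small, and the telescoping shows the result is a finite constant times $x_1^{\da_j}e^{-\gamma_j/x_1}$), but it does no harm and correctly anticipates how the proposition is used for $g_\pm$.
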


Fixed a sector $S$, each $c=(c_2,\ldots,c_n) \in (\C^{n-1},0)$ represents a leaf of the foliation
associated to the restriction of $Z_{\da}$ to $S \times (\C^{n-1},0)$. More specifically, $c$ works
like a parametrization of the leaves in $S$. If $S$ does not contain singular directions of the sheaf,
we can identify $\Lambda_{Z_{\da}}(S)$ with the set of transformations in the space of the leaves
given by
\[
\{c \mapsto (c_2+a_{20}+\sum_{|Q|\geq 1}a_{2Q}c^Q,\ldots,
c_n+a_{n0}+\sum_{|Q|\geq 1}a_{nQ}c^Q)\}
\]
We also denote this set by $\Lambda_{Z_{\da}}(S)$. The presheaf $\Lambda_{Z_{\da}}(S)$ expresses then
that the leaf of $Z_{\da}|_{S \times (\C^{n-1},0)}$ parameterized by $(c_2,\ldots,c_n)$ is taken into the
leaf parameterized by $(c_2+a_{20}+\sum_{|Q|\geq 1}a_{2Q}c^Q, \ldots, c_n+a_{n0}+\sum_{|Q|\geq 1}a_{nQ}c^Q)$.
Since $S_+$ (resp. $S_-$) does not contain singular direction of the sheaf the interpretation above can be given
to the presheaf $\Lambda_{Z_{\da}}(S_+)$ (resp. $\Lambda_{Z_{\da}}(S_-)$).

Let us now explain how $\Lambda_{Z_{\da}}(S_+)$ can be determined for a given sector $S_+$. Note that
$S_1, \, S_2$ are not uniquely determined and so do $S_+, \, S_-$. The first step is to choose sectors
$S_1$ and $S_2$ or, equivalently, sectors $S_+$ and $S_-$ we are going to work on. Then we consider the
set of complex numbers $C = \{(Q,\gamma)-\gamma_i:i=2,\ldots,n, Q\in \N_0^{n-1}\}$. We can assume without
 loss of generality that $0 = \arg(\gamma_2) \leq \ldots  \leq \arg(\gamma_n)<\pi$.

Denote by $K$ the sector, with vertex at the origin, whose elements have arguments between $0 = \arg(\gamma_2)$
and $\arg(\gamma_n)$. Then we choose two directions not contained in $K$ such that the two of the four sectors
defined by those directions do not contain any element of $C$ in its interior (figure \ref{geometria1}).
Fix one of those two sectors and denote it by $S$. If $S + \frac{\pi}{2}$ is contained in the attractor
sector we take $S_+ = S + \frac{\pi}{2}$ otherwise we take $S_+ = S - \frac{\pi}{2}$. Note that $S$ can be
chosen so close to the real axis as we want since we can chose the two directions above so close to $\pi$ as
we want.

\begin{figure}[ht!]
\centering
\includegraphics{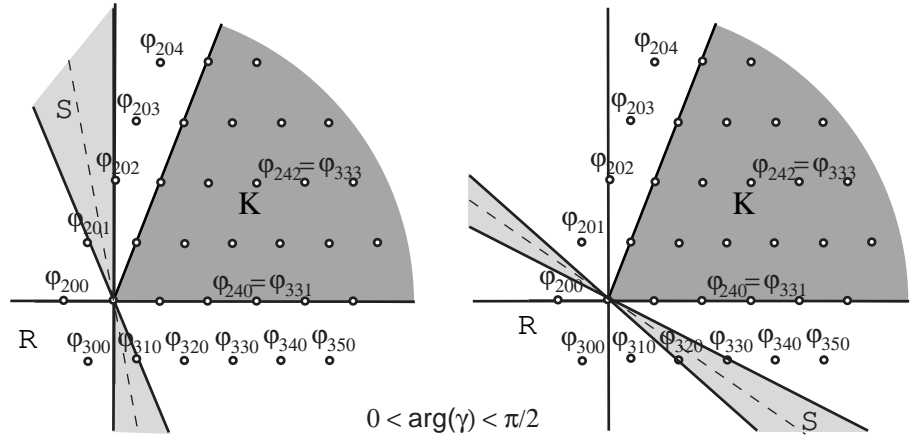}
\caption{Each point represents an element of the form $(Q,\gamma)-\gamma_i$ for some $i=2,\ldots,n$ and $Q \in
\N^{n-1}$}\label{geometria1}
\end{figure}

The constants $a_{jQ}$ that can be non zero in $\Lambda_{Z_\da}(S_+)$ are those for which $(Q,\gamma)-\gamma_j$ is
in the half plane, defined by the bisectrix of $S$, not containing $S_+$. Denote this region by $R$. For example,
if we look at figure \ref{geometria1}, on the left case $\Lambda_{Z_{\da}}(S_+)$ is given by
\[
\{(y,z) \mapsto (y+a_{200}+a_{201}z,z+a_{300})\}
\]
while on the right one $\Lambda_{Z_{\da}}(S_+)$ is given by
\[
\{(y,z) \mapsto (y+a_{200},z+a_{300}+a_{310}y+a_{320}y^2)\}.
\]
More specifically, the monomial coefficient of $c^Q$ on the $(i-1)^{th}$-component of $g_+$ can be non-zero if and only
if $(Q,\gamma)-\gamma_i \in R$. We should note that the elements of $\Lambda_{Z_{\da}}(S_+)$ are always polynomial while
the elements $\Lambda_{Z_{\da}}(S_-)$ are always tangent to the identity, i.e, $a_{i0}$ must be equal to $0$ in $S_-$ for
all $i=2,\ldots,n$.

In the case that $0 = \arg(\gamma_2) = \ldots = \arg(\gamma_n)$ the constants $a_{jQ}$ that can be non zero in
$\Lambda_{Z_\da}(S_+)$ are those such that $\arg((Q,\gamma)-\gamma_j) = \pi$. In particular, we have only one
element $Q$ such that $\arg((Q,\gamma)-\gamma_j) = \pi$ in the case of a saddle-node in dimension~$2$. Therefore
the element on $S_+$ is just a translation as already mentioned.

\newpage

\end{document}